\newtheorem{theorem}{Theorem}[section]
\newtheorem{lemma}[theorem]{Lemma}
\newtheorem{proposition}[theorem]{Proposition}
\newtheorem{corollary}[theorem]{Corollary}
\theoremstyle{definition}
\newtheorem{definition}[theorem]{Definition}
\newtheorem{example}[theorem]{Example}
\newtheorem{remark}[theorem]{Remark}
\newcommand{\g}{\mathfrak{g}}
\newcommand{\h}{\mathfrak{h}}
\newcommand{\C}{{\mathbb C}}
\newcommand{\ben}{\begin{enumerate}}
\newcommand{\een}{\end{enumerate}}
\newcommand{\nc}{\newcommand}
\nc{\on}{\operatorname}
\nc{\wh}{\widehat}
\nc{\wt}{\widetilde}
\nc{\sw}{{\mathfrak s}{\mathfrak l}}
\nc{\ghat}{\wh{\g}}
\nc{\hhat}{\wh{\h}}
\nc{\mc}{\mathcal}
\nc{\Bun}{\on{Bun}}
\nc{\ol}{\overline}
\nc{\OO}{\mathcal O}
\nc{\pone}{{\mathbb P}^1}
\nc{\pa}{\partial}
\nc{\Pic}{\on{Pic}}
\nc{\ga}{\gamma}
\nc{\orr}{\underline}
\nc{\mbb}{\mathbb}
\nc{\mbf}{\mathbf}
\nc{\V}{{\mc V}}
\newcommand{\norm}[1]{\left\lVert#1\right\rVert}
\newcommand{\bmu}{{\mbox{\boldmath$\mu$}}}
\theoremstyle{plain}
\newtheorem*{sol}{Solution}
\theoremstyle{definition}
\theoremstyle{remark}
\newcommand{\solu}[1]{\begin{sol}{\bf (\ref{#1})}}
\newcommand{\ho}{\hookrightarrow}
\nc{\Bunc}{\on{Bun}^{\circ}}
\def\g{\mathfrak{g}}
\def\h{\mathfrak{h}}
\def\M{\mathcal{M}}
\def\LG{{}^L\hspace*{-0.4mm}G}
\nc{\ppart}{(\!(t)\!)}
\nc{\zpart}{(\!(z)\!)}
\nc{\hl}{h_{\ell}}
\nc{\hr}{h_{r}}
\nc{\mb}{\mathbf}
\numberwithin{equation}{section}
\begin{document}

\title[Analytic Langlands correspondence for $PGL_2$ on $\Bbb P^1$]{Analytic Langlands correspondence for $PGL_2$ on $\Bbb P^1$ with parabolic structures over local fields}

\author{Pavel Etingof}

\address{Department of Mathematics, MIT, Cambridge, MA 02139, USA}

\author{Edward Frenkel}

\address{Department of Mathematics, University of California,
  Berkeley, CA 94720, USA}

\author{David Kazhdan}

\address{Einstein Institute of Mathematics, Edmond J. Safra Campus,
  Givaat Ram The Hebrew University of Jerusalem, Jerusalem, 91904,
  Israel}
  
\maketitle

\begin{abstract} We continue to develop the analytic Langlands program for curves over local fields initiated in \cite{EFK,EFK2} following a suggestion of Langlands and a work of Teschner. Namely, we study the Hecke operators introduced in \cite{EFK2} in the case of $\Bbb P^1$ over a local field with parabolic structures at finitely many points for the group $PGL_2$. We establish most of the conjectures of \cite{EFK,EFK2} in this case.
\end{abstract} 

\tableofcontents

\section{Introduction} 

In our previous papers \cite{EFK,EFK2},
motivated by a suggestion of Langlands (\cite{La}) and a work of Teschner (\cite{Te}), we proposed an ``analytic Langlands program" for curves defined over local fields. In particular, in \cite{EFK2} we constructed analogues of the Hecke operators for the moduli space of stable
$G$-bundles on a smooth projective curve $X$ over a local field $F$ with parabolic
structures at finitely many points. We conjectured that they define
commuting compact normal operators on the Hilbert space of
half-densities on this moduli space. In the case $F=\C$, we also
conjectured that their joint spectrum is essentially in bijection with
the set of $\LG$-opers on $X$ with real monodromy. Moreover, we conjectured an explicit formula relating the
eigenvalues of the Hecke operators and the global differential
operators studied in \cite{EFK}.

The main goal of this paper is to prove the conjectures of \cite{EFK,EFK2} for $G=PGL_2$ in genus 0 with parabolic points. In particular, we establish a spectral decomposition for Hecke operators acting on the Hilbert space $L^2(Bun^\circ(F))$ of square integrable complex half-densities on the analytic manifold $Bun^\circ(F)$ of isomorphism classes of stable quasiparabolic $PGL_2$-bundles on $\Bbb P^1$ with $m+2$ marked points defined over a local field $F$. We also study the corresponding eigenfunctions and eigenvalues. Thus this paper implements the analytic Langlands correspondence for $G=PGL_2$ in genus 0.

The content of the paper is as follows. 

After setting up preliminaries for a general curve $X$ and $G=PGL_2$ (Section \ref{preli}), we focus on the case $X=\Bbb P^1$ with parabolic points $t_0,...,t_{m+1}$. 
We first define birational parametrizations of the moduli spaces $Bun^\circ_0$ 
and $Bun^\circ_1$ of bundles of degree $0$ and $1$ (Subsection \ref{birpar}), and observe that $Bun^\circ$ has a natural action of the group $(\Bbb Z/2)^{m+2}$, whose generators $S_i,i=0,...,m+1$ switch $Bun^\circ_0$ with $Bun^\circ_1$. This yields an action of $\Bbb V:=(\Bbb Z/2)^{m+1}$ on each component $Bun^\circ_j$, $j=0,1$ by $S_i\mapsto S_iS_{m+1}$. 

This allows us to explicitly describe the Hecke correspondence (Subsection \ref{heckecorrr}) and derive an explicit formula for the Hecke operator $H_x$ in this case (Subsection \ref{Hecke operators}). More precisely, we identify $L^2(Bun^\circ_0(F))$ with $L^2(Bun^\circ_1(F))$ using the map $S_{m+1}$, which allows us to view $H_x$ as an (initially, densely defined) operator on the Hilbert space $\mathcal H:=L^2(Bun^\circ_0(F))$, and we write a formula for this operator in terms of the birational parametrization of $Bun^\circ_0$. We then use this formula and basic representation theory of $PGL_2(F)$ to prove the compactness of $H_x$ 
(Subsections \ref{Boundedness of Hecke operators}, \ref{Compactness of Hecke operators}). Then we compute the asymptotics of the Hecke operators when $x$ approaches one of the parabolic points, and show that the leading 
terms of this asymptotics are given by the action of the generators 
$S_i$ of the group $(\Bbb Z/2)^{m+2}$ on $L^2(Bun^\circ_0(F))$, with $S_{m+1}\mapsto 1$ (Subsection \ref{Asymptotics of Hecke operators}). This implies that the common kernel of all $H_x$ vanishes, hence we have a spectral decomposition of $L^2(Bun^\circ_0(F))$ into their finite dimensional joint eigenspaces (Subsection \ref{The spectral theorem}). 

Since the Hecke operators commute, their product $H_{x_1}...H_{x_n}$ is symmetric in $x_1,...,x_n$, but the formula for this product arising from the definition of $H_x$ is not manifestly symmetric. Using the Cauchy-Jacobi interpolation formula for rational functions, we give a manifestly symmetric formula for this product in genus $0$. This formula can then be extended to the case when $(x_1,...,x_n)\in S^nX(F)$ but individual coordinates $x_i$ are not necessarily defined over $F$ (Subsection \ref{A formula for the product of Hecke operators}).  

In the archimedian case ($F=\Bbb R,\Bbb C$) we reprove by an explicit computation (for $X=\Bbb P^1$) the result from \cite{EFK2} showing that Hecke operators $H_x$ commute with quantum Hitchin (i.e., Gaudin) hamiltonians (Subsection \ref{Schwartz space}) and satisfy a second order ODE with respect to $x$ -- an operator version of the oper equation (Subsection 
\ref{Differential equations for Hecke operators}). This implies that each 
eigenvalue of $H_x$ is a solution of an $SL_2$-oper with respect to $x$ (Subsection \ref{The differential equation for eigenvalues}). This gives rise  to natural commuting normal extensions  of Gaudin hamiltonians, yielding their joint spectral decomposition (namely, the same decomposition as for Hecke operators), which confirms Conjecture 1.5 of \cite{EFK} and its Corollary 1.6. 

From this decomposition we deduce that for $F=\Bbb C$
the spectrum $\Sigma$ of Hecke operators is simple. Moreover, there is a natural injective map from $\Sigma$ to a subset of the set $\mathcal{R}$ of $SL_2$-opers with real
monodromy, as conjectured in \cite{EFK}, Conjecture 1.8,(1) (Subsection \ref{The complex case}). Conjecturally, this map is bijective (i.e., $\Sigma\cong \mathcal{R}$), and we prove this for 4 and 5 points. Moreover, we express the eigenvalues of the Hecke operators as bilinear combinations of the solutions of the second order differential equation representing the corresponding oper, which proves Conjecture 1.11 of \cite{EFK2} in this case.\footnote{\label{foot1}More precisely, Conjecture 1.11 of \cite{EFK2} concerns the Hecke operators on $L^2(Bun^\circ(F))=L^2(Bun^\circ_0(F))\oplus L^2(Bun^\circ_1(F))$, which have the form $\begin{pmatrix} 0 & H_x\\ H_x & 0\end{pmatrix}$. The eigenvalues of these operators thus come in pairs $(\beta_k(x),-\beta_k(x))$, where $\beta_k(x)$ are the eigenvalues of $H_x$ acting on $L^2(Bun^\circ_0(F))$ (using the identification $S_{m+1}: L^2(Bun^\circ_0(F))\to L^2(Bun^\circ_1(F))$). So each real oper (which occurs in the spectrum) defines two eigenvalues of Hecke operators on $L^2(Bun^\circ(F))$ differing by sign, exactly as stated in \cite{EFK2}, Conjecture 1.11.}
 
To describe the spectrum of Hecke operators for $F=\Bbb R$ (Subsection \ref{The real case}), we introduce the notion of a {\bf balancing} of an  
$SL_2$ local system on $\Bbb C\Bbb P^1\setminus \lbrace t_0,...,t_{m+1}\rbrace$. An  $SL_2$ local system admits at most two balancings, and when it does, 
then generically only one. The space of local systems that admit a balancing is a middle-dimensional subvariety of the variety of all local systems, which we identify with the space of solutions of the {\bf T-system of type $A_1$}. 
 
 Let $\mathcal B$ be  the set of balanced local systems that come from oper connections.  It is equipped with a natural, at most 2-to-1 map to the space of opers (1-to-1 for generic positions of parabolic points), whose image is expected to be discrete. In Subsection \ref{The real case} we show 
that the spectrum of Hecke operators in genus zero for $F=\Bbb R$ can be realized as a subset of $\mathcal B$.    

While in general we do not expect eigenfunctions or eigenvalues of Hecke operators to be explicitly computable in terms of special functions of hypergeometric type,\footnote{E.g., in the case of $4$ points they express via Lam\'e functions with parameter $-1/2$, which don't have an explicit integral representation for generic eigenvalue (see \cite{De,CC}).} in some special cases this is possible. As an example, we compute the largest eigenvalue of the Hecke operator in the case when the configuration of parabolic points admits a cyclic symmetry, over $F=\Bbb R$ and $\Bbb C$. In these cases solutions of the 
oper equation are expressible via the classical hypergeometric function, which gives a hypergeometric expression for this eigenvalue (Subsection \ref{hyge}). 

We then proceed to study in detail the simplest nontrivial special case $m=2$, when we have $4$ parabolic points,  over a general local field $F$ (Section \ref{Four parabolic points}). Then the varieties of stable bundles of degree $0$ and $1$ are $Bun_0^\circ=Bun_1^\circ=\Bbb P^1\setminus \lbrace 0,t,1,\infty\rbrace$, the Hitchin hamiltonian is the Lam\'e operator with parameter $-1/2$, and eigenfunctions and eigenvalues of Hecke operators can be written in terms of Lam\'e functions with this parameter. In this case the Schwartz kernel $K(x,y,z)$ 
of the suitably normalized Hecke operator $\Bbb H_x$ is an explicit locally $L^1$-function, given by the formula from \cite{K} (Subsection \ref{The Hecke correspondence and Hecke operators}). We also  
reprove the compactness of Hecke operators by direct analysis of this kernel (Subsection \ref{Boundedness and compactness}).\footnote{In the case $F=\Bbb C$, $X=\Bbb P^1$ with $4$ parabolic points and $G=PGL_2$ the existence of the natural normal extension for the Hitchin hamiltonian (=Lam\'e operator) was shown in \cite{EFK}, but Hecke operators, which are the main new objects in the present paper, provide a much simpler proof.}

It turns out that the kernel $K(x,y,z)$ is symmetric not just under the switch of $y$ and $z$ (which is equivalent to the fact that the operator $\Bbb H_x$ is symmetric) but has the full $\Bbb S_3$-symmetry. This is a special feature of the 4-point case which is related to the Okamoto symmetries of the Painlev\'e VI equation. As a result, eigenvalues of $\Bbb H_x$ as functions of $x$ and eigenvectors of $\Bbb H_x$ are actually the same functions. In the case $F=\Bbb C$ they are single-valued  solutions of the 
Lam\'e equation real analytic outside the four singular points, as considered in \cite{Be}, and for $F=\Bbb R$ they are solutions satisfying appropriate gluing conditions at the real parabolic points. Moreover, in the 
case $F=\Bbb C$ we show that the spectrum of the Hecke operators coincides with the full set of Lam\'e opers with real monodromy. We also describe the spectrum in the case $F=\Bbb R$ in terms of a suitable Sturm-Liouville problem for the Lam\'e operator (Subsections \ref{archim}, \ref{archim1}). Finally, in Subsections \ref{subleading}, \ref{ruij} we compute the subleading term 
of the asymptotics of the Hecke operator $H_x$ and explain how it is connected to the work of Ruijsenaars \cite{Ru}. 

In Section \ref{nonar} we study the example of $4$ points over a non-archimedian local field. Namely, in Subsection \ref{nonar1} we give a proof of the statement from \cite{K} that the eigenvalues of Hecke operators are algebraic numbers. It is expected that this holds not just in this example but for a general group and general curve. 
In Subsection \ref{nonar2} we compute the ``first batch" of eigenvalues of Hecke operators, and in 
Subection \ref{nonar3} we show that (except for $5$ special eigenvalues) they are the same 
as eigenvalues of the usual Hecke operators over the finite (residue) field. This agrees with predictions in \cite{K}. 

In Section \ref{sing} we study the behavior of eigenfunctions near their singularities in the archimedian case, using the quantum Gaudin system (we expect the same type of singularities over a non-archimedian field, even though there is no obvious analogue of the Gaudin system in that case). These singularities occur on the so-called {\bf wobbly divisor}, which is the divisor of bundles that admit a nonzero nilpotent Higgs field. Namely, in Subsection \ref{sing1} we compute
the local behavior of solutions of the quantum Gaudin system near a generic point of the wobbly divisor. This allows us to describe the local behavior of eigenfunctions and monodromy (Subsection \ref{sing2}). 
Moreover, in the case of $5$ points we prove that eigenfunctions of Hecke 
operators are continuous (but not differentiable) with square root singularities near the wobbly divisor (which has normal crossings), 
and give a geometric description of the Schwartz space (Subsection \ref{The case of 5 points}). This settles all the main conjectures from \cite{EFK},\cite{EFK2} over the complex field in the case of $5$ points.\footnote{For $N>5$ points, it still remains to be proved that all single-valued eigenfunctions of Gaudin operators (as distributions) are in $L^2$, and thus every real oper defines a point in the spectrum of Hecke operators.}

Finally, in the appendix we collect auxiliary results. 

{\bf Acknowledgements.} We are grateful to M. Kontsevich for sharing his ideas on Hecke operators and to A. Braverman, V. Drinfeld, D. Gaiotto, D. 
Gaitsgory and E. Witten for useful discussions. We thank T. Pantev for his help with the proof of Prop. \ref{dpan}. P. E.'s work was partially supported by the NSF grant DMS - 1916120. The project has received funding from ERC under grant agreement No. 669655.

\section{Preliminaries} \label{preli}

\subsection{Measures on analytic manifolds over local fields} 
Let $F$ be a local field with absolute value $x\mapsto \norm{x}$ (i.e., the Haar measure on $F$ multiplies by $\norm{\lambda}$ under rescaling by $\lambda\in F$). 
For instance, for $F=\Bbb R$ we have $\norm{x}=|x|$, for $F=\Bbb C$ 
we have $\norm{x}=|x|^2$, and for $F=\Bbb Q_p$ we have $\norm{x}=p^{-v(x)}$, where $v(x)$ is the $p$-adic valuation of $x$. 

Let $\norm{dx}$ denote the Haar measure on $F$ normalized so that 
$$
\int_{1\le \norm{x}<R}\frac{\norm{dx}}{\norm{x}}=\log R
$$
where $R\ge 1$ and $R\in \norm{F^\times}$. 
(This normalization differs from the usual one by a factor of $\frac{1}{2}$ for $F=\Bbb R$, $\frac{1}{\pi}$ for $F=\Bbb C$, and $\frac{\log q}{1-q^{-1}}$ for a non-archimedian local field with residue field $\Bbb F_q$). Then for a top degree differential form $\omega$ on an open set in $F^n$ or, more generally, an analytic $F$-manifold $Y$, we can define the corresponding measure (or density) $\norm{\omega}$, see \cite{We}.\footnote{Indeed, recall that the bundle of densities (or measures) on $Y$ is the $\Bbb R$-bundle whose sections transform under local coordinate changes $\lbrace x_i\rbrace\mapsto \lbrace x_i'\rbrace$ 
according to the rule of change of variable in the integral: 
$$
\mu'=\norm{\det\left(\tfrac{\partial x_i}{\partial x_j'}\right)}\mu.
$$ 
On the other hand, a top differential form on $M$ changes according to the rule 
 $\omega'=\det(\tfrac{\partial x_i}{\partial x_j'})\omega$. Thus any top form $\omega$ defines a measure $\norm{\omega}$.}
 This agrees with the notation 
$\norm{dx}$ for the Haar measure on $F$. 

Given an analytic $F$-line bundle $L$ on an analytic $F$-manifold $Y$, we will denote by 
$\norm{L}$ the associated complex line bundle (using the character $\norm{\cdot}: F^\times \to \Bbb R_{>0}\subset \Bbb C^\times$). For example, $\norm{K_Y}$ is the bundle of densities on $Y$. Note that since the structure group of $\norm{L}$ is contained in $\Bbb R_{>0}$, it makes sense to consider the complex power  $\norm{L}^s$ of this line bundle for any $s\in \Bbb C$. 

Denote by $L^2(Y)$ the space of measurable half-densities on $Y$ (sections of $\norm{K_Y}^{\frac{1}{2}}$) such that $\int_Y f\overline{f}<\infty$, modulo ones that vanish outside of a set of measure zero. This is a Hilbert space with inner product 
$$
(f,g)=\int_Y f\overline g.
$$
Note that if $U\subset Y$ is a subset whose complement has measure zero 
then we have a natural isometry 
$L^2(U)\cong L^2(Y)$. Thus if $Y=\bold Y(F)$ for a smooth irreducible algebraic variety $\bold Y$ over $F$ and $\Gamma$ is the group of birational automorphisms of $\bold Y$ defined over $F$ then we have a natural unitary representation $\rho: \Gamma\to {\rm Aut}L^2(Y)$. 

\subsection{Moduli spaces of stable bundles} 
The Langlands correspondence over the field $\Bbb F_q(X)$ of rational functions on a curve $X$ over $\Bbb F_q$ is formulated in terms of complex-valued functions on $\Bbb F_q$-points of the moduli stack ${\rm Bun}={\rm Bun}_G(X)$, which includes isomorphism classes of {\bf all} $G$-bundles 
on $X$ (including unstable bundles with arbitrarily large automorphism groups); this is needed because the Hecke operators arising in this correspondence involve a summation over all Hecke modifications of a given bundle at a point 
$x\in X(\Bbb F_q)$, and each modification gives a nonzero contribution. Thus one cannot define the action of Hecke operators on the space of functions on the set of stable bundles, since a Hecke modification of a stable 
bundle could be unstable.

On the other hand, in our setting, when $\Bbb F_q$ is replaced by a local 
field $F$, summation is replaced by integration. Thus if the locus of stable bundles is open and dense (which happens under the conditions given in the next paragraph) then non-stable bundles constitute ``a set of measure zero". So we can restrict ourselves to the space of square integrable functions (or, more precisely, half-densities) defined only on stable bundles, which (at least in the case when $G=PGL_n$) form a smooth quasiprojective variety. This is convenient for doing harmonic analysis. 

{\it From now on let $X$ be a smooth projective curve with distinct marked points $t_0,...,t_{N-1}$ defined over a field $F$ of characteristic $\ne 2$, where $N\ge 1$ for genus $1$ and $N\ge 3$ for genus $0$, and let $G=PGL_2$} (this guarantees that the locus of stable bundles is open and dense). 
 \footnote{Our analysis can be generalized to the case when 
 $(t_0,...,t_{N-1})\in S^NX(F)$ but $t_i$ are not necessarily defined over $F$.} 
 
Recall that a $G$-bundle on $X$ is a $GL_2$-bundle (i.e., a rank $2$ vector bundle) up to tensoring with line bundles. Let ${\rm Bun}_G(X,t_0,...,t_{N-1})$ be the moduli stack of principal $G$-bundles on $X$ with parabolic structures at $t_0,...,t_{N-1}$ (i.e., elements $y_i$ in the fibers at $t_i$ of the associated $\Bbb 
P^1$-bundle, which we will call {\bf parabolic lines}). Such bundles are called {\bf quasiparabolic}.\footnote{Note that this is slightly different from the notion of a {\bf parabolic bundle} where all parabolic points are equipped with parabolic weights.}
 
 \begin{definition}(\cite{S,MS}) The {\bf parabolic slope} of a rank 2 vector bundle $E$ on $X$ with parabolic structures $y_0,...,y_{N-1}$ at $t_0,...,t_{N-1}$ is the number
$$
\mu(E):=\tfrac{1}{2}\deg(E)+\tfrac{N}{4}.
$$
If $L\subset E$ is a line subbundle of $E$ then the parabolic slope of $L$ is 
$$
\mu(L):=\deg(L)+\tfrac{N_L}{2},
$$
where $N_L$ is the number of those $i$ for which $L_{t_i}=y_i$. 
The quasiparabolic bundle $E$ is called {\bf stable}, respectively {\bf semistable},\footnote{More precisely, this is the notion of (semi)stability for weights $0,1/2$ (see \cite{S,MS}).} 
if for any line subbundle $L\subset E$, one has 
$\mu(L)<\mu(E)$, respectively $\mu(L)\le \mu(E)$.\footnote{If $N=0$ (no parabolic points) then these notions coincide with the usual notions of slope 
and (semi)stability for vector bundles. Also, these notions coincide with 
the usual notions of (semi)stability in GIT, for weights $0,\frac{1}{2}$ at parabolic points, see \cite{Mu}, Section 12, and also the original papers \cite{S,MS}.}  
\end{definition} 
 
 Let ${\rm Bun}_G^\circ(X,t_0,...,t_{N-1})\subset {\rm Bun}_G(X,t_0,...,t_{N-1})$ be the open substack of {\bf stable} quasiparabolic bundles (\cite{S,MS}). 
 Every stable quasiparabolic bundle has a trivial automorphism group, 
 so ${\rm Bun}_G^\circ(X,t_0,...,t_{N-1})$ can be viewed as a scheme, 
 and moreover it is known to be a smooth quasiprojective variety of dimension $3(g-1)+N$. 
 We will denote this variety by $Bun_G^\circ(X,t_0,...,t_{N-1})$ or shortly by $Bun^\circ$ when no confusion is possible.\footnote{Namely, as explained in \cite{S,MS}, 
 $Bun_G^\circ(X,t_0,...,t_{N-1})$ is a smooth open subset of the normal projective variety 
 $Bun_G(X,t_0,...,t_{N-1})$ -- the (coarse) moduli space of {\bf semistable bundles}. This space, however, may be singular and not contained in the stack ${\rm Bun}_G(X,t_0,...,t_{N-1})$, since different semistable bundles with the same associated graded under a Harder-Narasimhan filtration may correspond to the same point of $Bun_G(X,t_0,...,t_{N-1})$. Also a semistable bundle may have a nontrivial group of automorphisms.} 
In our case, when $G=PGL_2$,  the variety $Bun^\circ_G(X,t_0,...,t_{N-1})$ is the union of two connected components $Bun_G^\circ(X,t_0,...,t_{N-1})_0$ and $Bun_G^\circ(X,t_0,...,t_{N-1})_1$, the moduli spaces of bundles of even and odd degrees, respectively. Moreover, the varieties $Bun_G^\circ(X,t_0,...,t_{N-1})_0$ and $Bun_G^\circ(X,t_0,...,t_{N-1})_1$ are naturally identified with the moduli spaces of stable rank $2$ quasiparabolic vector bundles on $X$ of degree $0$ and $1$, respectively, modulo tensoring 
with line bundles of degree $0$. 

\begin{remark} If $N=3,g=0$ then $Bun_G^\circ(X,t_0,...,t_{N-1})_0$ and $ Bun_G^\circ(X,t_0,...,t_{N-1})_1$ each consist of one point. 
Therefore, if $g=0$, we will usually assume that $N\ge 4$. 
\end{remark} 

\subsection{Hecke modifications and the Hecke correspondence}\label{hmhc} Assume that 
$X(F)\ne \emptyset$. Let $HM_{x,s}(E)$ be the {\bf Hecke modification} of 
a rank $2$ vector bundle $E$ on $X$ at the point $x\in X(F)$
along the line $s\in E_x$. Namely, regular sections of $HM_{x,s}(E)$ are rational sections of $E$ with no poles outside of $x$ and a possible first order pole at $x$ with residue belonging to $s$ (more precisely, this residue is well defined only up to scaling, but it still makes sense to say that it belongs to $s$). So we have a short exact sequence 
of coherent sheaves
$$
0\to E\to HM_{x,s}(E)\to \delta_x^s\to 0
$$
where $\delta_x^s$ is the skyscraper sheaf supported at $x$ whose space of sections over 
an open set containing $x$ is the line $s\otimes T_xX$. This gives rise to 
a natural short exact sequence of vector spaces 
\begin{equation}\label{natuiso} 
0\to E_x/s\to HM_{x,s}(E)_x\to s\otimes T_xX\to 0,
\end{equation} 
where the second non-trivial map takes the residue of a section. 

Note that if $E$ is stable then $HM_{x,s}(E)$ need not be stable (nor even semistable) in general. However, it is easy to see that if $E$ is stable and $x$ is fixed then for generic $s$ the bundle $HM_{x,s}(E)$ is stable. Moreover, if $Z$ denotes the (smooth) variety of triples $(x,E,s)$ such that $x\in  X$, $E\in Bun_G^\circ(X,t_0,...,t_{N-1})$, $s\subset E_x$, and the bundle $HM_{x,s}(E)$ is stable, then the assignment 
$(x,E,s)\mapsto HM_{x,s}(E)$ defines a regular map $Z\to Bun_G^\circ(X,t_0,...,t_{N-1})$. The variety $Z$ is called the {\bf universal Hecke correspondence} for stable bundles.

The notion of a Hecke modification is also defined on $PGL_2$-bundles with parabolic structures. Namely, if $x\ne t_i$, then there is no change at 
$t_i$, and if $x=t_i$ then we define the fixed line $y_i'$ in $HM_{x,s}(E)_x$ to be $E_x/s$ (regardless of $y_i$). In particular, if $s=y_i$ then $y_i'=E_x/y_i$.\footnote{It is easy to show that $HM_{t_i,y}$ is the limit of $HM_{x,s}$ as $x\to t_i$, $s\to y$, as long as $y\ne y_i$. This is not so, however, if $y=y_i$, in which case this limit does not exist. This follows, for instance, from Proposition \ref{heckcor}(i) below.} 

This construction gives rise to the {\bf Hecke correspondence} $Z$ for stable bundles. Namely, let $$Z\subset Bun_G^\circ(X,t_0,...,t_{N-1})\times 
Bun_G^\circ(X,t_0,...,t_{N-1})\times (X\setminus \lbrace{t_0,...,t_{N-1}\rbrace})$$ 
be the set of triples $(E,F,x)$ such that $F$ is obtained from $E$ by a Hecke modification at $x$ along some $s\in E_x$, and 
$q_1,q_2: Z\to Bun_G^\circ(X,t_0,...,t_{N-1})$ and $q_3: Z\to X \setminus 
\lbrace{t_0,...,t_{N-1}\rbrace}$ be the natural projections. We define the {\bf Hecke correspondence} at $x\in X \setminus \lbrace{t_0,...,t_{N-1}\rbrace}$ by $Z_x:=q_3^{-1}(x)$. Note that the maps $q_1,q_2:Z_x\to Bun_G^\circ(X,t_0,...,t_{N-1})$ are $\Bbb P^1$-bundles restricted to a dense 
open subset of the total space, so $Z, Z_x$ are irreducible varieties of dimensions $3g+N-1$ and $3g+N-2$. 

The following lemma is easy. 

\begin{lemma} \label{comm} There are unique isomorphisms 
$$
HM_{x,E_x/s}\circ HM_{x,s}(E)\cong E
$$
and 
$$
 HM_{x,s}\circ HM_{x',s'}(E)\cong HM_{x',s'}\circ HM_{x,s}(E)
 $$
  if $x\ne x'$, restricting to the identity outside $x,x'$. 
\end{lemma}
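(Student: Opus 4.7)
The plan is to exploit the locality of Hecke modifications: both isomorphisms reduce to explicit computations on formal discs around the modification points. Uniqueness of the asserted isomorphism is automatic, since any morphism of torsion-free coherent sheaves on $X$ that agrees with a given one on a dense open subset is the same morphism; so the task is only to produce an isomorphism restricting to the identity outside $\{x\}$, respectively $\{x,x'\}$. Following the context of Subsection \ref{hmhc}, I take $x$ and $x'$ to be non-parabolic points, so the parabolic lines at $t_0,\dots,t_{N-1}$ are untouched by the modifications and require no separate verification.

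For the first isomorphism, I would fix a local uniformizer $z$ at $x$ and trivialize $E$ on the formal disc at $x$ with a basis $(e_1,e_2)$ such that $e_1$ spans $s\subset E_x$. Then $HM_{x,s}(E)$ has local basis $(z^{-1}e_1,e_2)$, and under \eqref{natuiso} the line $E_x/s$ inside $HM_{x,s}(E)_x$ is spanned by the class of $e_2$. Consequently, $HM_{x,E_x/s}(HM_{x,s}(E))$ has local basis $z^{-1}(e_1,e_2)$, so as a subsheaf of the sheaf of rational sections of $E$ it is canonically equal to $E\otimes\O(x)$. Since a $PGL_2$-bundle is a rank $2$ vector bundle modulo twisting by line bundles, the canonical inclusion $E\hookrightarrow E\otimes\O(x)$ yields the required isomorphism of $PGL_2$-bundles, which is the identity outside $x$.

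For the second isomorphism, since $x\ne x'$ the two modifications are supported on disjoint formal discs. Both $HM_{x,s}\circ HM_{x',s'}(E)$ and $HM_{x',s'}\circ HM_{x,s}(E)$ coincide, as coherent subsheaves of the sheaf of rational sections of $E$, with the subsheaf of sections regular outside $\{x,x'\}$ that have at most a simple pole at $x$ with residue in $s$ and at most a simple pole at $x'$ with residue in $s'$. The identity morphism on this common subsheaf provides the required isomorphism. There is no substantive obstacle in the proof; the only point that requires a little care is the explicit identification of $E_x/s\subset HM_{x,s}(E)_x$ via \eqref{natuiso}, which is handled by the basis computation above.
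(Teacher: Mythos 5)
The paper itself supplies no proof here — Lemma~\ref{comm} is introduced with only the remark ``the following lemma is easy'' — so there is nothing to compare against; what you have written is a correct filled-in argument, and the local-trivialization computation and the identification of the double modification with $E\otimes\mathcal{O}(x)$ (trivial as a $PGL_2$-bundle) is exactly the standard way to see both identities. Uniqueness via density is also the right mechanism.

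One small caveat worth flagging: you explicitly restrict to non-parabolic $x,x'$, but the lemma is invoked in Proposition~\ref{invo} with $x=t_i$ (to prove $S_i^2=\mathrm{Id}$ and $S_iS_j=S_jS_i$), where a Hecke modification also rewrites the parabolic line at $t_i$. Your local computation still settles this case with no extra work when $s=y_i$: if $e_1$ spans $y_i$, then after $HM_{t_i,y_i}$ the parabolic line is $\langle\bar e_2\rangle=E_{t_i}/y_i$, and after the second modification $HM_{t_i,E_{t_i}/y_i}$ the line is $\langle\overline{z^{-1}e_1}\rangle$, which under $E''\cong E\otimes\mathcal{O}(t_i)$ is again $y_i$. (For general $s\neq y_i$ at a parabolic point the double modification returns the underlying $PGL_2$-bundle but with parabolic line $s$ rather than $y_i$; the lemma is only used at parabolic points with $s=y_i$, so the statement is applied correctly, but it is worth being aware that at parabolic points the quasiparabolic isomorphism requires $s=y_i$.) The commutativity for $x\neq x'$ likewise extends verbatim to parabolic points since the two modifications act on disjoint formal discs and each changes only its own parabolic line.
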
 

It follows from Lemma \ref{comm} that $Z$ is symmetric under the swap of the two copies of $Bun_G^\circ(X,t_0,...,t_{N-1})$. 

For any $i=1,...,N$ we denote by 
 $S_i: Bun_G^\circ(X,t_0,...,t_{N-1})\to Bun_G^\circ(X,t_0,...,t_{N-1})$ the (a priori) rational morphism 
given by $S_i:=HM_{t_i,y_i}$.  

\begin{proposition}\label{invo} 
(i) The map $S_i$ extends to 
an involutive automorphism of the variety $Bun_G^\circ(X,t_0,...,t_{N-1})$ that 
exchanges $Bun_G^\circ(X,t_0,...,t_{N-1})_0$ 
and $Bun_G^\circ(X,t_0,...,t_{N-1})_1$.\footnote{The involutions $S_i$ extend to 
$Bun_G(X,t_0,...,t_{N-1})$, since they preserve semistability and $S$-equivalence. 
Thus, if $N>0$ 
then $Bun_G(X,t_0,...,t_{N-1})_0\cong Bun_G(X,t_0,...,t_{N-1})_1$. 
 However, if $N=0$ (no parabolic points) then $Bun_G(X)_0$ may be non-isomorphic to $Bun_G(X)_1$. For example, for genus $2$, by a theorem of Narasimhan and Ramanan (\cite{NR}), $Bun_G(X)_0\cong \Bbb P^3$, while $Bun_G(X)_1$ 
is the intersection of two general quadrics in $\Bbb P^5$ (\cite{C}), so these 3-folds (both smooth in this case) have different middle cohomology: $b_3(Bun_G(X)_1)=4\ne b_3(Bun_G(X)_0)=0$.}

(ii) $S_iS_j=S_jS_i$. 
\end{proposition}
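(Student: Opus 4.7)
The plan is to verify the statement in three stages: (a) $S_i$ is a regular involutive automorphism on the stable locus, (b) it exchanges the two connected components, and (c) $S_i$ and $S_j$ commute for $i\ne j$. For all three stages the main tool is Lemma \ref{comm} together with a careful bookkeeping of how parabolic lines are transported under a Hecke modification at a parabolic point.

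For (a), I would first observe that if $E$ is any quasiparabolic bundle, then the definition of $S_i$ prescribes the new parabolic line at $t_i$ to be $y_i':=E_{t_i}/y_i\subset HM_{t_i,y_i}(E)_{t_i}$, while all other parabolic lines stay unchanged. Applying $S_i$ a second time therefore performs $HM_{t_i,y_i'}$, and by Lemma \ref{comm} this recovers $E$ as a bundle, while the new parabolic line at $t_i$ becomes $HM_{t_i,y_i}(E)_{t_i}/y_i'$, which under the isomorphism from Lemma \ref{comm} corresponds precisely to the original $y_i$. Hence $S_i^2=\id$ on the level of quasiparabolic bundles. Next I would check that $S_i$ preserves stability: for any line subbundle $L\subset E$, either $L_{t_i}\ne y_i$, in which case $L$ extends to $L'\subset S_i(E)$ with $\deg L'=\deg L$ and $L'_{t_i}=y_i'$ (so $N_{L'}=N_L+1$), or $L_{t_i}=y_i$, in which case $L$ extends to $L':=L(t_i)$ with $\deg L'=\deg L+1$ and $L'_{t_i}\ne y_i'$ (so $N_{L'}=N_L-1$). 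In both cases $\mu(L')-\mu(L)=\tfrac12=\mu(S_i(E))-\mu(E)$, and the correspondence $L\leftrightarrow L'$ is a bijection between line subbundles of $E$ and of $S_i(E)$ (apply the same construction to the inverse $S_i$). Hence $\mu(L)<\mu(E)\iff \mu(L')<\mu(S_i(E))$, so stability is preserved. Being the restriction of a birational involution that sends the stable locus into itself, $S_i$ is a regular involutive automorphism of $Bun_G^\circ$.

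For (b), since $HM_{t_i,y_i}$ raises the degree of the underlying vector bundle by $1$, and $Bun_G^\circ$ has two components distinguished by the parity of the degree, $S_i$ swaps $Bun_G^\circ(X,t_0,\ldots,t_{N-1})_0$ and $Bun_G^\circ(X,t_0,\ldots,t_{N-1})_1$.

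For (c), I would apply the second isomorphism of Lemma \ref{comm} for $x=t_i$, $x'=t_j$, $s=y_i$, $s'=y_j$: this gives
$$
HM_{t_i,y_i}\circ HM_{t_j,y_j}(E)\;\cong\; HM_{t_j,y_j}\circ HM_{t_i,y_i}(E),
$$
restricting to the identity outside $\{t_i,t_j\}$. Because $S_j$ does not affect the parabolic line at $t_i$ and $S_i$ does not affect the one at $t_j$, both compositions $S_iS_j(E)$ and $S_jS_i(E)$ carry the parabolic line $E_{t_i}/y_i$ at $t_i$, the line $E_{t_j}/y_j$ at $t_j$, and the unchanged lines $y_k$ for $k\ne i,j$. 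Under the canonical isomorphism above these quasiparabolic structures match, giving $S_iS_j=S_jS_i$.

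The only real content is the slope bookkeeping in (a); the rest is a direct unwinding of Lemma \ref{comm}. The slight subtlety I would be careful about is the case where the line subbundle $L$ satisfies $L_{t_i}=y_i$, where the extension jumps in degree; this is the step where one must use the description \eqref{natuiso} of $HM_{t_i,y_i}(E)_{t_i}$ to identify the new value of $L'_{t_i}$ and check that it is distinct from $y_i'$.
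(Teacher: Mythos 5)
Your proof is correct and takes essentially the same approach as the paper: the stability check via parabolic-slope bookkeeping ($\mu(L')=\mu(L)+\tfrac12$ in both cases, whether or not $L_{t_i}=y_i$) is the same computation, and both $S_i^2=\mathrm{Id}$ and $S_iS_j=S_jS_i$ are deduced from Lemma \ref{comm}. You spell out a few points the paper leaves implicit (the bijection between line subbundles of $E$ and of $S_i(E)$ needed to conclude stability of $S_i(E)$, and the compatibility of the canonical isomorphisms of Lemma \ref{comm} with the transported parabolic lines), but the substance is identical.
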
 

Thus the automorphisms $S_i$ define an action of $(\Bbb Z/2)^N$ 
on $Bun^\circ_G(X,t_0,...,t_{N-1})$, which gives rise to an action 
of the subgroup $\Bbb V:=(\Bbb Z/2)^N_0\cong (\Bbb Z/2)^{N-1}$ (vectors with zero 
sum of coordinates) 
on each of the two components of $Bun^\circ_G(X,t_0,...,t_{N-1})$.\footnote{This action is faithful for $N\ge 5$, but for $N=4$ there is a kernel generated by $\prod_{i=0}^3S_i$.}  

\begin{proof} Let us first show that the morphisms $S_i$ are regular  (not just rational). 
For this it is sufficient to check that $S_i$ preserves stability. 

Let $E':=S_i(E)$. Then $\mu(E')=\mu(E)+\frac{1}{2}$, where $\mu$ denotes the parabolic slope. Let $L$ be a line subbundle of $E$. If $L_{t_i}\ne y_i$ then $L$ defines a subbundle $L'$ of the same degree in $E'$, but 
now $L_{t_i}'=y_i'$.  
On the other hand, if $L_{t_i}=y_i$ then the corresponding subbundle $L'\subset E'$ 
has degree $\deg(L)+1$ but $L'_{t_i}\ne y_i'$. So in both cases $\mu(L')=\mu(L)+\frac{1}{2}$.
So if $\mu(L)< \mu(E)$ then $\mu(L')<\mu(E')$.  

The equalities $S_i^2={\rm Id}$ and $S_iS_j=S_jS_i$ follow from Lemma 
\ref{comm}. 
\end{proof} 

\begin{remark}\label{invo1} If the number $N$ of parabolic points is odd then every semistable rank $2$ bundle is stable (since $2\mu(E)$ is not an integer while $2\mu(L)$ is an integer for every line subbundle $L\subset E$). Thus, in this case 
the varieties $Bun_G^\circ(X,t_0,...,t_{N-1})_0$ and $Bun_G^\circ(X,t_0,...,t_{N-1})_1$ are smooth projective varieties, isomorphic to each other via $S_i$. 
Likewise, if $N=0$ then every rank $2$ bundle of odd degree is stable, so $Bun_G^\circ(X,t_0,...,t_{N-1})_1$ is a smooth projective variety. 
\end{remark} 

\subsection{Higgs fields, Hitchin system, nilpotent cone, very stable bundles}\label{hit} 

In this subsection we recall basics about Hitchin systems for quasiparabolic bundles for $G=PGL_2$ (much of it is also recalled in \cite{EFK,EFK2}, but we repeat it here for reader's convenience).  

Let $E$ be a quasiparabolic $PGL_2$-bundle 
on a smooth irreducible projective curve $X$ of genus $g$ with parabolic points $t_0,...,t_{N-1}$. 

\begin{definition} A {\bf Higgs field} for $E$ is 
an element $\phi\in H^0(X,{\rm ad}(E)\otimes K_X\otimes \bigotimes_{i=0}^{N-1} {O}(t_i))$ such that for all $i=0,...,N-1$, the residue of $\phi$ at $t_i$ is nilpotent and preserves the parabolic structure at $t_i$ (i.e., acts by zero on the corresponding line). A {\bf quasiparabolic Higgs bundle}
is a pair $(E,\phi)$ of a quasiparabolic bundle and a Higgs field for this bundle. 
\end{definition} 

Thus, if $E$ is stable then a Higgs field for $E$ is just a cotangent vector $\phi$ at $E$ to the moduli space $Bun^\circ$ of stable quasiparabolic bundles on $X$ with parabolic points $t_0,...,t_{N-1}$, and $T^*Bun^\circ$ is the variety of quasiparabolic Higgs bundles $(E,\phi)$ such that $E$ is stable. 

Let $\mathcal{B}:=H^0(X,K_X^{\otimes 2}\otimes \bigotimes_{i=0}^{N-1} {O}(t_i))$. 
This is a vector space of the same dimension $d=3g-3+N$ as $Bun^\circ$, and it is called {\bf the Hitchin base}. The {\bf Hitchin map} $\det: T^*Bun^\circ\to \mathcal{B}$ 
is defined by the formula $(E,\phi)\mapsto \det \phi$ (note that since the residue of $\phi$ is nilpotent at $t_i$, $\det\phi$ has at most first order poles). It is well known that the Hitchin map is flat and generically a Lagrangian fibration, so it defines an algebraic integrable system called the {\bf Hitchin system} (\cite{Hi,BD}). 
Namely, a choice of linear coordinates on $\mathcal{B}$ defines 
a collection of algebraically independent Poisson commuting regular functions $H_1,...,H_d$ on $T^*Bun^\circ$ (quadratic on fibers), which are called the {\bf Hitchin hamiltonians}.   

The {\bf nilpotent cone} $\mathcal N\subset T^*Bun^\circ$ is the zero-fiber of the Hitchin map, i.e., the subvariety of $(E,\phi)$ such that $\phi$ is nilpotent (that is, 
$\phi^2=0$). Since the Hitchin map is flat and generically a Lagrangian fibration, $\mathcal{N}$ is a Lagrangian 
subvariety of $T^*Bun^\circ$.

\begin{remark} The variety $T^*Bun^\circ$ is an open subset in the {\bf Hitchin moduli space} $\mathcal{M}_H$ of Higgs pairs $(E,\phi)$ which are stable in the sense of geometric invariant theory. If $E$ is stable then so is $(E,\phi)$ for any $\phi$, but $(E,\phi)$ may be stable for unstable $E$, so the inclusion $T^*Bun^\circ\hookrightarrow \mathcal{M}_H$ is strict. The Hitchin system and nilpotent cone are actually defined for the whole Hitchin moduli space, but here we restrict ourselves only to the open subset $T^*Bun^\circ \subset \mathcal{M}_H$. 
\end{remark} 

Beilinson and Drinfeld (\cite{BD}) quantized the Hitchin integrable system and defined the {\bf quantum Hitchin system} $(\widehat H_1,...,\widehat H_d)$
where $\widehat H_i$ are the {\bf quantum 
Hitchin hamiltonians} -- commuting 
twisted differential operators on $Bun^\circ$ whose symbols are $H_i$. 
The twisting here is by half-forms on $Bun^\circ$. 

We may therefore consider the system of differential equations
$$
\widehat H_i\psi=\mu_i\psi
$$
with respect to a (multivalued) half-form $\psi$ on $Bun^\circ$, 
where $\mu_i$ are scalars. 
It follows that this system defines a holonomic twisted $D$-module $\Delta_\mu$ on $Bun^\circ$ whose singular support is contained 
in the nilpotent cone $\mathcal{N}$. 
These $D$-modules were studied in \cite{BD} and play a key role in the geometric Langlands correspondence, as well as in our previous works \cite{EFK,EFK2} and this paper.  

Let $D$ be the projection of $\mathcal{N}$ to $Bun^\circ$. This is a divisor in $Bun^\circ$ which is called the {\bf wobbly divisor} (\cite{DP}). The complement of $D$ is thus the locus of bundles $E$ such that every nilpotent Higgs field for $E$ is zero. 
Such bundles are called {\bf very stable}. So we will denote the locus of such bundles by $Bun^{\rm vs}=Bun^\circ\setminus D\subset Bun^\circ$. 
 
Since the singular support of the quantum Hitchin $D$-module $\Delta_\mu$ is contained in $\mathcal{N}$, this $D$-module is $O$-coherent (i.e., smooth) on $Bun^{\rm vs}$. Thus it defines a de Rham local system (vector bundle with a flat connection) $\mathcal V_\mu$ on $Bun^{\rm vs}$. Moreover, the rank of this local system equals the degree of the Hitchin map restricted to the generic fiber of the cotangent bundle of ${ Bun^\circ}$, i.e., the product of degrees of $H_i$ restricted to this fiber, which is $2^d=2^{3g-3+N}$. 

The same definitions and results apply verbatim to $GL_2$-bundles (or, equivalently, rank $2$ vector bundles). 

\begin{lemma}\label{ves} Let $E$ be a very stable quasiparabolic rank $2$ vector bundle on a curve $X$ of genus $g$ with $N$ parabolic points, and $L\subset E$ a line subbundle. Then 
$\mu(E)-\mu(L)\ge \frac{N}{4}+\frac{g-1}{2}$. 
\end{lemma}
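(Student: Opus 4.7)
The plan is to prove the contrapositive: I will show that if some line subbundle $L\subset E$ violates $\mu(E)-\mu(L)\ge \tfrac{N}{4}+\tfrac{g-1}{2}$, then $E$ admits a nonzero parabolic nilpotent Higgs field, contradicting very stability.

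First I would translate the parabolic-slope inequality into a plain degree inequality. Using $\mu(E)=\tfrac{1}{2}\deg(E)+\tfrac{N}{4}$ and $\mu(L)=\deg(L)+\tfrac{N_L}{2}$, the hypothesis $\mu(E)-\mu(L)<\tfrac{N}{4}+\tfrac{g-1}{2}$ becomes $\deg(E)-2\deg(L)<g-1+N_L$, and since both sides are integers this strengthens to
$$\deg(E)-2\deg(L)\le g-2+N_L.$$

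Next I would identify the space of parabolic nilpotent Higgs fields whose saturated kernel contains $L$ with the sections of an explicit line bundle on $X$. Any nonzero nilpotent $\phi$ with $\phi(L)=0$ factors as
$$E\twoheadrightarrow E/L\xrightarrow{\ \psi\ } L\otimes K_X(\textstyle\sum_i t_i)\hookrightarrow E\otimes K_X(\sum_i t_i),$$
and since $E/L\cong \det(E)\otimes L^{-1}$, the map $\psi$ is a section of $L^{\otimes 2}\otimes \det(E)^{-1}\otimes K_X(\sum_i t_i)$. The parabolic condition at $t_i$ is that $\mathrm{res}_{t_i}\phi$ kills $y_i$; because $\phi$ factors through $L$, its residue at $t_i$ has both kernel and image equal to $L_{t_i}$ whenever it is nonzero. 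Hence the parabolic condition is automatic at points where $L_{t_i}=y_i$, and forces $\psi(t_i)=0$ at points where $L_{t_i}\ne y_i$. Setting $T_L:=\sum_{i:\,L_{t_i}=y_i}t_i$ (a divisor of degree $N_L$), parabolic nilpotent Higgs fields on $E$ with kernel containing $L$ are thus in bijection with $H^0(X,M_L)$, where
$$M_L:=L^{\otimes 2}\otimes \det(E)^{-1}\otimes K_X(T_L).$$

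Finally I would apply Riemann--Roch. Computing $\deg M_L=2\deg(L)-\deg(E)+(2g-2)+N_L$ and substituting the inequality from the first step gives $\deg M_L\ge g$, so $h^0(M_L)\ge \deg(M_L)+1-g\ge 1$. Any nonzero section produces a nonzero parabolic nilpotent Higgs field on $E$, contradicting very stability. The main delicate point is the parabolic bookkeeping in the middle step: verifying carefully that exactly $N-N_L$ vanishing conditions survive on $\psi$ (one at each $t_i$ with $L_{t_i}\ne y_i$), which is precisely what converts a naive bound involving $N$ into the sharper bound involving $N_L$ stated in the lemma.
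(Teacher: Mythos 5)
Your proof is correct and is essentially the same as the paper's. Both proofs count parabolic nilpotent Higgs fields with kernel containing $L$ by identifying them with sections of a line bundle and then applying Riemann--Roch; the only cosmetic difference is that you absorb the $N-N_L$ pointwise vanishing conditions directly into the line bundle by twisting down to $M_L=L^{\otimes 2}\otimes\det(E)^{-1}\otimes K_X(T_L)$, whereas the paper keeps the larger bundle $L\otimes(E/L)^*\otimes K_X\bigl(\sum_i t_i\bigr)$ and subtracts $N-N_L$ from its $h^0$ --- a marginally less tidy bookkeeping, since it only gives an inequality $\dim V \ge h^0 - (N-N_L)$ rather than an equality. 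Your argument by contrapositive and the paper's direct argument are logically the same.
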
 

\begin{proof} Suppose $L$ contains $k$ of the parabolic lines. 
Let $M=E/L$. Then 
$$
2(\mu(E)-\mu(L))=\deg E+\frac{N}{2}-2\deg L-k=
\deg M-\deg L-k+\frac{N}{2}.
$$
Let $\sigma\in H^0(L\otimes M^*\otimes K_X\otimes \bigotimes_{i=0}^{N-1} O(t_i))$ be such that $\sigma$ acts by zero on the parabolic line $s_i$ for all $i\in [0,N-1]$. 
The last condition  is vacuous if $s_i=L_{t_i}$, so the dimension of the space $V$ of such elements $\sigma$ satisfies the inequality 
$$
\dim V\ge H^0(L\otimes M^*\otimes K_X\otimes \bigotimes_{i=0}^{N-1} O(t_i))-N+k\ge \deg L-\deg M+k+g-1,
$$
where in the last inequality we used the Riemann-Roch theorem. But every $\sigma\in V$ defines a nilpotent Higgs field $\phi_\sigma$, so for a very stable $E$ 
we must have $V=0$. Thus 
$$
\deg M-\deg L-k-g+1\ge 0, 
$$ 
which yields
$$
2(\mu(E)-\mu(L))\ge \frac{N}{2}+g-1, 
$$
as claimed.
\end{proof} 

\begin{corollary}\label{ves0}
A very stable bundle $E$ remains stable after $< \frac{N-2+2g}{2}$ Hecke modifications at non-parabolic points, and remains semistable under 
$\le \frac{N-2+2g}{2}$ such modifications. 
\end{corollary}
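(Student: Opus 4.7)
The strategy is to iteratively track how the parabolic slope of $E$ and of line subbundles of $E$ change under Hecke modifications at non-parabolic points, and then combine this with the lower bound for $\mu(E)-\mu(L)$ provided by Lemma \ref{ves}. Note that the threshold $\frac{N-2+2g}{2}$ in the statement is exactly twice the bound $\frac{N}{4}+\frac{g-1}{2}$ of Lemma \ref{ves}, which strongly suggests that each modification consumes the ``stability surplus'' at rate $\tfrac12$.

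First I would analyze a single modification. Let $x\in X\setminus\{t_0,\dots,t_{N-1}\}$, $s\in E_x$, and $E':=HM_{x,s}(E)$. From the defining short exact sequence $0\to E\to E'\to \delta_x^s\to 0$ we read off $\deg E'=\deg E+1$, and since $x$ is not parabolic no parabolic datum changes, so $\mu(E')=\mu(E)+\tfrac12$. For any line subbundle $L'\subset E'$, the canonical identification of $E$ and $E'$ away from $x$ followed by saturation produces a unique line subbundle $L\subset E$; a local computation at $x$ shows that either $L'=L$ (when $L_x\ne s$) with $\deg L'=\deg L$, or $L'=L(x)$ (when $L_x=s$) with $\deg L'=\deg L+1$. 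Since $x$ is not parabolic, $N_{L'}=N_L$, so $\mu(L')-\mu(L)\in\{0,1\}$.

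Iterating $k$ times, if $E^{(k)}$ denotes the result of $k$ successive Hecke modifications at non-parabolic points, then $\mu(E^{(k)})=\mu(E)+k/2$, and every line subbundle $L^{(k)}\subset E^{(k)}$ arises in this way from a line subbundle $L\subset E$ with $\mu(L^{(k)})-\mu(L)\le k$. Combining this with Lemma \ref{ves} applied to the very stable $E$ yields
$$
\mu(E^{(k)})-\mu(L^{(k)}) \;\ge\; \mu(E)-\mu(L)-\tfrac{k}{2} \;\ge\; \tfrac{N+2g-2}{4}-\tfrac{k}{2},
$$
which is strictly positive when $k<\frac{N-2+2g}{2}$ and non-negative when $k\le \frac{N-2+2g}{2}$, giving stability and semistability of $E^{(k)}$ respectively.

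The only thing that needs checking is the bookkeeping in the second step: that every line subbundle $L^{(k)}\subset E^{(k)}$ really does come from a line subbundle $L\subset E$ with $\deg L^{(k)}-\deg L$ at most $k$. This is not a genuine obstacle, since at each step the birational map $\mathbb{P}(E^{(k-1)})\dashrightarrow\mathbb{P}(E^{(k)})$ is an isomorphism away from the modification point, making the correspondence $L\leftrightarrow L^{(k)}$ canonical (and consistent with Lemma \ref{comm}), while the local degree jump at the modification point is either $0$ or $1$ by the formula above.
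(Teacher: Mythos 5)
Your proposal is correct and takes essentially the same route as the paper: each Hecke modification at a non-parabolic point raises $\mu(E)$ by $\tfrac12$ and raises $\mu(L)$ by $0$ or $1$, so $\mu(E)-\mu(L)$ drops by at most $\tfrac12$ per step, and Lemma \ref{ves} supplies exactly the needed initial surplus. Your added bookkeeping on the correspondence of line subbundles under a Hecke modification is a more explicit version of the fact the paper simply asserts.
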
 

\begin{proof} A Hecke modification increases in $\mu(E)$ by $\frac{1}{2}$ 
and either increases $\mu(L)$ by $1$ or keeps it unchanged. 
Thus $\mu(E)-\mu(L)$ either increases or decreases by $\frac{1}{2}$. 
Hence the statement follows from Lemma \ref{ves}. 
\end{proof} 

\begin{corollary}\label{ves1} Let $H_x$ be the Hecke operator on $L^2(Bun^\circ(F))$ at a point $x\in X(F)$ defined in \cite{EFK2}. Let $\psi$ be a smooth 
compactly supported half-density on $Bun^\circ(F)$ with support in $Bun^{\rm vs}(F)$. 
Then for any positive integer $r< \frac{N-2+2g}{2}$ and 
any non-parabolic points $x_1,...,x_r\in X(F)$, the half-density 
$H_{x_1}...H_{x_r}\psi$ is smooth and compactly supported on $Bun^\circ(F)$. 
\end{corollary}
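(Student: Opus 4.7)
The plan is to realize the composition $H_{x_1}\cdots H_{x_r}$ geometrically via the iterated Hecke correspondence
$$Z^{(r)}:=Z_{x_1}\times_{Bun^\circ}Z_{x_2}\times_{Bun^\circ}\cdots\times_{Bun^\circ}Z_{x_r},$$
whose points parametrize chains $(E_0,s_1,E_1,\ldots,s_r,E_r)$ with $E_i=HM_{x_i,s_i}(E_{i-1})$ and every $E_j$ stable. Let $p_0,p_r:Z^{(r)}\to Bun^\circ$ denote the projections sending such a chain to $E_0$ and $E_r$. Following the definition of the Hecke operator in \cite{EFK2}, the composition $H_{x_1}\cdots H_{x_r}$ acts on half-densities as $\psi\mapsto(p_r)_*\bigl(k\cdot(p_0)^*\psi\bigr)$, where $k$ is a smooth half-density on $Z^{(r)}$ built step by step out of the natural sequences \eqref{natuiso}. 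In particular, the support of $H_{x_1}\cdots H_{x_r}\psi$ is contained in $p_r\bigl(p_0^{-1}(\supp\psi)\bigr)$.

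The geometric heart of the argument is Corollary \ref{ves0} applied at every intermediate step. Since $\supp\psi\subset Bun^{\rm vs}(F)$ and $r<\frac{N-2+2g}{2}$, the proof of that corollary, applied for each $k\le r$, shows that any chain starting at a very stable $E_0$ and performing $k$ modifications at non-parabolic points yields a stable $E_k$; hence every bundle along such a chain is stable. Therefore the stability conditions in the definition of $Z^{(r)}$ impose no restriction over $p_0^{-1}(Bun^{\rm vs}(F))$, and the restriction of $p_0$ to this preimage is a smooth proper iterated $\Bbb P^1$-bundle over $Bun^{\rm vs}(F)$. In particular, $p_0^{-1}(\supp\psi)$ is compact.

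Compact support of $H_{x_1}\cdots H_{x_r}\psi$ is then immediate: $p_r\bigl(p_0^{-1}(\supp\psi)\bigr)$ is the continuous image of a compact set and, by Corollary \ref{ves0}, is contained in $Bun^\circ(F)$. For smoothness, note that on the open subvariety $p_0^{-1}(Bun^{\rm vs}(F))$ the map $p_r$ is itself a smooth morphism of fixed relative dimension $r$: using Lemma \ref{comm} to reverse the Hecke modifications realizes $p_r$ locally around each stable chain as a tower of $\Bbb P^1$-bundles. Standard fibre integration along such a smooth morphism with smooth compactly supported input then produces a smooth half-density on $Bun^\circ(F)$.

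The step that most needs care is the identification of the operator $H_{x_1}\cdots H_{x_r}$ from \cite{EFK2} with the geometric pull-push described above and the verification that the kernel $k$ is actually smooth on $Z^{(r)}$; this is a routine but somewhat intricate book-keeping with the sequences \eqref{natuiso} at each step. Granted this, the genuinely new input is Corollary \ref{ves0}, which ensures that the entire iterated correspondence remains inside the stable locus above $Bun^{\rm vs}(F)$ and thereby allows us to treat the composition geometrically rather than merely as a densely-defined operator on $L^2$.
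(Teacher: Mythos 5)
Your proof is correct and follows the same approach as the paper: the essential input is Corollary \ref{ves0}, which guarantees that every intermediate bundle in the iterated Hecke chain starting from a very stable bundle stays stable, so the integration domain for $H_{x_1}\cdots H_{x_r}\psi$ is compact and the integrand has no singularities. The paper compresses this into a single sentence; your elaboration via the iterated correspondence $Z^{(r)}$, the properness of $p_0$ over $Bun^{\rm vs}(F)$, and the submersion property of $p_r$ is a faithful and correct unpacking of that argument.
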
 

\begin{proof} Corollary \ref{ves0} implies that the integral defining $H_{x_1}...H_{x_r}\psi$ is over a compact set and has no singularities, which implies the statement. 
\end{proof} 

\subsection{The wobbly divisor in genus zero} \label{sing2}
In the case of $X=\Bbb P^1$ with $N$ parabolic points the wobbly divisor can be described explicitly. Namely, for generic $E\in Bun_0^\circ$, a line subbundle $O(1-r)\subset E$ contains at most 
$2r-1$ parabolic lines, and it turns out that $D$ is exactly the locus on which 
this condition is violated for some $r$. 

In more detail, for a subset $S\subset [0,N-1]$ of even cardinality $2r$, 
let $D_S\subset Bun_0^\circ$ be the locus of bundles $E$ which contain 
a subbundle $O(1-r)$ containing the parabolic lines $y_i$ at $t_i$, $i\in S$. 

Also let $\Bbb S_N$ be the symmetric group 
and recall that $\Bbb V$ denotes its $N-1$-dimensional reflection representation over $\Bbb 
F_2$, i.e, the space of functions $f: [0,N-1]\to \Bbb F_2$ with sum of all values zero. Let us identify 
$\Bbb V$ with the set of subsets of $[0,N-1]$ of even cardinality by mapping $f\in \Bbb V$ to its support. 
Consider the Weyl group $\Bbb W:=W(D_N):=\Bbb S_N\ltimes \Bbb V$. This group acts naturally on the 
set $W(D_N)/W(A_{N-1})=\Bbb S_N\ltimes \Bbb V/\Bbb S_N=\Bbb V$, where $\Bbb V$ acts on itself by translations and $\Bbb S_N$ acts on $\Bbb V$ by permutations.

The group $\Bbb W$ also acts on the set of components of $D$, 
with $\Bbb S_N$ acting as the geometric Galois group permuting the parabolic points 
and $\Bbb V$ acting by the maps $S_i$, and it is easy to see that for all $g\in \Bbb W$ we have 
$g(D_S)=D_{g(S)}$.  

\begin{proposition}\label{dpan}(\cite{DP2}) (i) For each $S\subset [0,N-1]$, $D_S$ is an irreducible divisor in $Bun_0^\circ$, and these are distinct for $N\ge 5$.  

(ii) The components of the wobbly divisor $D$ are exactly the $D_S$. Thus for $N\ge 5$, 
$D$ has $2^{N-1}$ irreducible components permuted transitively by the group $\Bbb W$. 
\end{proposition}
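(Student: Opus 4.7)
The plan is to prove (i) by building, for each $S\subset[0,N-1]$ of even cardinality $2r$, an incidence variety $\widetilde{D}_S$ whose image under forgetting the subbundle is $D_S$, and to deduce (ii) essentially by repackaging the dimension estimate in the proof of Lemma \ref{ves} together with the action of $\Bbb W$ built from Proposition \ref{invo}.

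For (i), let $\widetilde{D}_S$ parametrize pairs $(E,\iota\colon O(1-r)\hookrightarrow E)$ with $\iota(O(1-r)_{t_i})=y_i$ for all $i\in S$. Since every rank $2$ degree $0$ bundle on $\Bbb P^1$ splits and the dense open stratum has $E\cong O\oplus O$, such a subbundle corresponds to a nowhere vanishing pair of sections $(s_1,s_2)\in H^0(O(r-1))^{\oplus 2}$ modulo scaling, i.e., an open subset of $\Bbb P^{2r-1}$. The lines $y_i=\langle(s_1(t_i),s_2(t_i))\rangle$ for $i\in S$ are determined by the section while $y_j\in\Bbb P^1$ for $j\notin S$ are free; after modding out by the $PGL_2$-action on $O\oplus O$, one computes $\dim\widetilde{D}_S=(2r-1)+(N-2r)-3=N-4$, and irreducibility follows since $\widetilde{D}_S$ is a $PGL_2$-quotient of an open subset of $\Bbb P^{2r-1}\times(\Bbb P^1)^{N-2r}$. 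For the forgetful projection $\pi\colon\widetilde{D}_S\to D_S$, generic injectivity follows from the standard observation that two distinct saturated line subbundles $L_1\ne L_2\subset E$ of the same degree $1-r$ satisfy $L_1\cap L_2=0$, so $L_1\oplus L_2\hookrightarrow E$ is a full-rank subsheaf, pinning $E$ to a strict lower-dimensional stratum. Hence $D_S=\pi(\widetilde{D}_S)$ is an irreducible divisor in $Bun_0^\circ$, and distinctness for $N\ge 5$ follows because for generic $E\in D_S$ the unique $L$ recovers both $r=|S|/2$ (from $\deg L$) and $S$ (as $\{i:L_{t_i}=y_i\}$).

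For (ii), the containment $\bigcup_S D_S\subset D$ is immediate from the proof of Lemma \ref{ves}: with $k=|\{i:L_{t_i}=y_i\}|\ge|S|=2r$, the space $V$ of nilpotent Higgs fields for $(E,L)$ has $\dim V\ge k-2r+1\ge 1$, yielding a nonzero nilpotent Higgs field. Conversely, for an irreducible component $Z\subset D$ with generic point $E$ carrying a nonzero nilpotent Higgs field $\phi$ and kernel $L\cong O(1-r)$, a parallel dimension count shows that at the generic point of $Z$ the set $S:=\{i:L_{t_i}=y_i\}$ has cardinality exactly $2r$, so $Z\subset D_S$; by dimension and irreducibility, $Z=D_S$. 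For the $\Bbb W$-action: $\Bbb S_N$ acts by relabeling parabolic points and sends $D_S$ to $D_{\sigma(S)}$, while the generator $S_iS_j\in\Bbb V$ acts by two successive Hecke modifications; by the analysis in the proof of Proposition \ref{invo}, each modification either removes or adds the corresponding index from the intersection set $\{i:L_{t_i}=y_i\}$ depending on whether it was already in $S$, so the net effect is $D_S\mapsto D_{S\triangle\{i,j\}}$. Since $\Bbb V$ is generated by $\{i,j\}$-type symmetric differences, its action on even-cardinality subsets is transitive, and so is the $\Bbb W$-action on $\{D_S\}$, giving $2^{N-1}$ components for $N\ge 5$.

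The main obstacle I anticipate is the generic injectivity of $\pi$ and the parallel distinctness statement $D_S\ne D_{S'}$ for $S\ne S'$. Although the $L_1\cap L_2=0$ observation is conceptually clean, ensuring the exceptional locus has codimension $\ge 2$ in $Bun_0^\circ$ (so the generic fiber of $\pi$ is a single point and $D_S$ sits in codimension exactly $1$) requires careful dimension bookkeeping uniformly across $1\le r\le(N-1)/2$, especially at extremal values of $r$ where $\Ext^1(O(r-1),O(1-r))$ is large and bundles tend to have richer subbundle structures.
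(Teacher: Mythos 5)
Your approach to part (i) is genuinely different from the paper's. The paper first proves irreducibility of $D_S$ only for $|S|=2$, where it is transparent that $D_{\{i,j\}}$ is the closure of the locus $y_i=y_j$, and then \emph{propagates} both irreducibility and the divisor property to all $S$ via the transitivity of the $\Bbb V$-action, with distinctness (for $N\ge 5$) obtained from the triviality of the stabilizer of $D_S$ in $\Bbb V$, read off the explicit formulas of Proposition \ref{Si}. You instead attack every $S$ uniformly with an incidence variety and a dimension count. That route can be made to work and is arguably more self-contained, but the bottleneck you flag — generic injectivity of $\pi\colon\widetilde D_S\to D_S$ — is a real gap, not merely a technicality. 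The ``$L_1\cap L_2=0$'' observation does not by itself pin $E$ to a lower-dimensional stratum: for $r=1$ one has $L_1\oplus L_2\cong O\oplus O\cong E$ (the generic stratum!), so that argument outright fails there and a separate, easier, argument is needed (a trivial subbundle of $O\oplus O$ is a constant line, hence determined by its fiber at a single point). For $r\ge 2$ the inclusion $L_1\oplus L_2\hookrightarrow E$ has cokernel of length $2(r-1)$, and one still has to bound the locus of such $E$ inside $D_S$ by a further parameter count across all $1\le r\le (N-1)/2$; this is exactly the "careful dimension bookkeeping" you mention. The paper avoids all of this by the $\Bbb V$-transitivity trick, which only requires the $|S|=2$ case where irreducibility is immediate.

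For part (ii), your forward containment $\bigcup_S D_S\subset D$ via Lemma \ref{ves} is correct (the dimension estimate $\dim V\ge k-2r+1\ge 1$ is exactly what is needed). But the converse direction is a genuine gap: ``a parallel dimension count shows that $|\{i:L_{t_i}=y_i\}|$ has cardinality exactly $2r$'' is not something that follows from any count in Lemma \ref{ves}, whose inequality points the other way. You need the paper's argument here: normalize $\phi$ to have the minimal number of poles and let $S$ be its \emph{pole set}; then the bundle $(E/L)^*\otimes L\otimes K\otimes\bigotimes_{i\in S}O(t_i)$ receiving $\phi$ has nonnegative degree, giving $\deg L\le |S|/2-1$, and strict inequality would contradict minimality. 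This forces $|S|=2r$ and $L\cong O(1-r)$, hence $E\in D_S$. (You also do not actually need $|\{i:L_{t_i}=y_i\}|$ to equal $2r$; $\ge 2r$ is enough to place $E$ in some $D_{S'}$ with $|S'|=2r$, and then irreducibility of $Z$ locks in a single $S'$.) Your description of the $\Bbb W$-action is essentially the paper's; the only thing to keep straight is that $S_iS_j$ changes the ambient degree by $2$, so after the identification of $Bun_0^\circ$ with itself (twist by $O(-1)$) the degree of the subbundle does match $1-r'$ with $r'=|S\triangle\{i,j\}|/2$ in all three parity cases $|\{i,j\}\cap S|\in\{0,1,2\}$.
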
 

\begin{proof} (i) If $S=\lbrace i,j\rbrace$ then $D_S$ is the locus 
of bundles isomorphic to $O\oplus O$ which contain a trivial line subbundle $O$ 
containing $y_i$ and $y_j$. It follows that $D_S$ is just the closure of the locus 
where $y_i=y_j$ and there are no other equalities between 
$y_k$. Thus for $|S|=2$, $D_S$ is an irreducible divisor in $Bun^\circ_0$. Since $D_S$ are transitively 
permuted by $\Bbb V$, this holds for any $S$. Moreover, it follows from the formulas for $S_i$ (see Proposition \ref{Si} below) that for $N\ge 5$ the stabilizer of $D_S$ in $\Bbb V$ is trivial, i.e., all $D_S$ are distinct. 

(ii) Let $E\in D_S$, $|S|=2r$. So we have a line subbundle $O(1-r)\subset E$ containing 
the parabolic lines $y_i$ at $t_i, i\in S$. Thus we have a short exact sequence 
$$
0\to O(1-r)\to E\to O(r-1)\to 0.
$$
Recall that $K=O(-2)$. So the line bundle $O(r-1)^*\otimes O(1-r)\otimes K\otimes \bigotimes_{i\in S} O(t_i)$ has degree zero, hence is trivial. Thus there exists a unique up to scaling nonzero 
$$
\phi \in  H^0(\Bbb P^1,O(r-1)^*\otimes O(1-r)\otimes K\otimes \bigotimes_{i\in S} O(t_i))\subset 
H^0(\Bbb P^1,{\rm ad}(E)\otimes K\otimes \bigotimes_{i\in S} O(t_i)).
$$
Moreover, since $O(1-r)$ contains $y_i$, $\phi$ is 
a nilpotent Higgs field for $E$. Thus $D_S\subset D$. 

Conversely, let $E$ be not very stable and $\phi$ be a nonzero nilpotent Higgs field for $E$. 
We have $E=O(k)\oplus O(-k)$ for some $k\ge 0$, so the adjoint $\mathfrak{sl}_2$-bundle has the triangular decomposition ${\rm ad}(E)=O(k)\oplus O\oplus O(-k)$. Thus if $\phi$ is regular 
then $k\ge 1$, so $E\in D_\emptyset$. So assume that $\phi$ is not regular and has minimal possible number of poles occurring at the points $t_i, i\in S$ for some $\emptyset \ne S\subset [0,N-1]$. 
Then $\phi$ is nonvanishing (otherwise the number of poles of $\phi$ can be reduced by renomalizing it by a rational function). Thus $L:={\rm Ker}\phi$ is a line subbundle of $E$ containing the parabolic lines $y_i$ at $t_i,i\in S$. So we have a short exact sequence 
$$
0\to L\to E\to E/L\to 0
$$
and 
$$
\phi\in H^0(\Bbb P^1,(E/L)^*\otimes L\otimes K\otimes \bigotimes_{i\in S} O(t_i)).
$$
Thus the degree of the bundle $(E/L)^*\otimes L\otimes K\otimes \bigotimes_{i\in S} O(t_i)$, which equals 
$|S|-2\deg L-2$, is nonnegative, so $\deg(L)\le \frac{|S|}{2}-1$. Moreover, if 
this inequality is strict then we can reduce the number of poles of $\phi$ by renormalizing it by a 
section of $O(1)$, a contradiction. Thus $|S|$ is an even number $2r$ and $L\cong O(1-r)$, i.e., $E\in D_S$, as claimed. 
\end{proof} 

Let us give an explicit description of the components $D_S$ for $S\ne \emptyset$. In this case the bundle $E$ is trivial for a Zariski dense open subset of $D_S$: $E=O\oplus O$. Note that an inclusion $\iota: O(1-r)\hookrightarrow O\oplus O$ is defined by a rational function $f(z)=p(z)/q(z)$ of degree $r$ (i.e., $p,q$ are polynomials of degree $\le 
r$ without common roots, and at least one of them has degree exactly $r$). So the condition cutting out $D_S$ is that there exists such $f$ with 
$$
f(t_i)=y_i,\ i\in S, 
$$
i.e., 
$$
p(t_i)-y_iq(t_i)=0,\ i\in S.  
$$
As explained in Subsection \ref{interpo}, this condition can be written as the vanishing condition of 
the determinant of a $2r$ by $2r$ matrix:  
$$
\det(t_i^j,y_it_i^j,i\in S,0\le j\le r-1)=0.
$$

\section{Hecke operators in genus zero}

In this section we assume that $X=\Bbb P^1$ with $N=m+2$ parabolic points. Also until Subsection \ref{A formula for the product of Hecke operators}  we assume that $t_i\in X(F)$. In such a case we may assume that $t_0=0$ and $t_{m+1}=\infty$.

\subsection{Birational parametrizations of $Bun_G(\Bbb P^1,t_0,...,t_{m+1})_0$ and $Bun_G(\Bbb P^1,t_0,...,t_{m+1})_1$}\label{birpar} 
We start with a construction of  birational parametrizations of $Bun_G(\Bbb P^1,t_0,...,t_{m+1})_0$ and $Bun_G(\Bbb P^1,t_0,...,t_{m+1})_1$.

Since a generic quasiparabolic bundle $E\in Bun_G(\Bbb P^1,t_0,...,t_{m+1})_0$ 
is isomorphic to $O\oplus O$ as an ordinary vector bundle, it is determined by an $m+2$-tuple of one-dimensional subspaces (lines) in $\Bbb A^2$. Moreover, since ${\rm Aut}(O\oplus O)=GL_2$, we may assume that these lines are defined by vectors $(1,0)$, $(1,y_1)$,...,$(1,y_m)$, $(0,1)$, where $y_i$ are uniquely determined up to simultaneous scaling.  Let $E_{\bold y,0}$ be the bundle corresponding to $\bold y=(y_1,...,y_m)$. The assignment $\bold y\mapsto E_{\bold y,0}$ gives rise to a rational parametrization of $Bun_G(\Bbb P^1,t_0,...,t_{m+1})_0$ by $\Bbb P^{m-1}$.

A generic quasiparabolic bundle of degree $1$ is isomorphic to $O\oplus O(1)$ as an ordinary bundle. We realize $O(1)$ by gluing the charts $U_0=\Bbb 
P^1\setminus \infty$ and $U_\infty=\Bbb P^1\setminus 0$ using the gluing map $g(w)=w$. Thus a rational section of such bundle is given by a pair 
of rational functions $f_0$ on $U_0$ and $f_\infty$ on $U_\infty$ 
such that $f_0(w)=wf_\infty(w)$. So the vector bundle $O\oplus O(1)$ 
can realized similarly using the equation 
\begin{equation}\label{vecbun}
f_0(w)=\begin{pmatrix} 1 & 0\\ 0& w\end{pmatrix}f_\infty(w),
\end{equation} 
where $f_0,f_\infty$ are now {\bf pairs} of rational functions (written as column vectors). 
Namely, a rational section of this bundle is given by a pair 
of $\Bbb A^2$-valued rational functions $f_0$ on $U_0$ and $f_\infty$ on $U_\infty$ satisfying 
\eqref{vecbun}. We will realize lines in fibers of this bundle in the chart $U_0$ except at $\infty$ where we will use the chart $U_\infty$. 
 
To define a point on $Bun_G(\Bbb P^1,t_0,...,t_{m+1})_1$, we have to fix $m+2$ lines in the fiber of $O\oplus O(1)$ 
at $0,t_1,...,t_m,\infty$. Recall that the group ${\rm Aut}(O\oplus O(1))$ 
consists of maps of the form $(u,v)\to \lambda(u,(aw+b)u+cv)$. Thus
at $0,\infty$ we can fix standard lines spanned by the vectors $(1,0)$, $(1,0)$ (the latter in the chart $U_\infty$), and the remaining lines spanned by $(1,z_i)$, $i=1,...,m$, where $z_i$ are again uniquely determined up to simultaneous scaling. Let $E_{\bold z,1}$ be the bundle corresponding to $\bold z=(z_1,...,z_m)$. The assignment $\bold z\mapsto E_{\bold z,1}$ gives rise to a rational parametrization of $Bun_G(\Bbb P^1,t_0,...,t_{m+1})_1$
by $\Bbb P^{m-1}$.

We thus obtain 

\begin{lemma}\label{param} The assignments $\bold y\mapsto E_{\bold y,0}$, 
$\bold z\mapsto E_{\bold z,1}$ define birational isomorphisms 
$\Bbb P^{m-1}\to Bun_G(\Bbb P^1,t_0,...,t_{m+1})_0$ and $\Bbb P^{m-1}\to Bun_G(\Bbb P^1,t_0,...,t_{m+1})_1$. 
\end{lemma}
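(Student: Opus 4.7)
The plan is to verify both claimed birational isomorphisms by assembling three observations: (i) a Zariski-dense open subset of $Bun_G(\Bbb P^1,t_0,\ldots,t_{m+1})_j$ consists of bundles whose underlying vector bundle has the expected splitting type; (ii) fixing such a splitting rigidifies the bundle up to an explicit automorphism group that can be used to normalize two ``anchor'' parabolic lines at $t_0$ and $t_{m+1}$; (iii) the residual stabilizer then acts on the remaining $m$ lines through simultaneous rescaling, with quotient $\Bbb P^{m-1}$.

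For (i), Grothendieck's theorem writes every rank $2$ bundle on $\Bbb P^1$ as $\O(a)\oplus \O(b)$, and upper semicontinuity of $h^0(\mathrm{End}(E))$ shows that the generic degree $0$ (resp.\ degree $1$) bundle is $\O\oplus \O$ (resp.\ $\O\oplus \O(1)$). The locus in $Bun_G(\Bbb P^1,t_0,\ldots,t_{m+1})_j$ where the underlying vector bundle has this generic splitting type is therefore open and nonempty, and intersecting with the open stability condition yields the dense open set on which the proposed parametrization is defined.

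For (ii), in degree $0$ the group $\mathrm{Aut}(\O\oplus \O)=GL_2$ acts diagonally on all fibers; using $PGL_2$ to fix two points of $\Bbb P^1$, one sends the parabolic lines at $0$ and $\infty$ to $[1:0]$ and $[0:1]$, and the stabilizer of these two anchor lines is the diagonal torus. In degree $1$, the automorphism group $(u,v)\mapsto \lambda(u,(aw+b)u+cv)$ described in the text acts in the chart $U_0$ near $w=0$ by sending $[1:0]\mapsto [1:b]$ and fixing $[0:1]$. Translating via the gluing $f_0=\mathrm{diag}(1,w)f_\infty$, in the chart $U_\infty$ near $w=\infty$ it acts as $(u,v)\mapsto \lambda(u,(a+b/w)u+cv)$, so at $\infty$ it sends $[1:0]\mapsto [1:a]$ and again fixes $[0:1]$. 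Demanding that both anchor lines be preserved forces $a=b=0$, leaving once more the diagonal torus as residual stabilizer.

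For (iii), in both cases this torus acts on the remaining $m$ lines $(1,y_i)$ or $(1,z_i)$ by $y_i\mapsto \mu y_i$ (resp.\ $z_i\mapsto \mu z_i$) for a common $\mu\in \Bbb G_m$, since the overall scalar $\lambda$ is invisible on projective lines. The quotient by this simultaneous rescaling is literally $\Bbb P^{m-1}$, so the prescriptions $\bold y\mapsto E_{\bold y,0}$ and $\bold z\mapsto E_{\bold z,1}$ define well-defined regular maps on the dense open subset of $\Bbb P^{m-1}$ where the resulting quasiparabolic bundle is stable; injectivity on that open subset is immediate from the same stabilizer computation, and dominance onto the open subset from step (i) is tautological, giving the two birational isomorphisms. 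The only step requiring genuine attention is the chart translation in the degree $1$ case, which is what pins down the exact shape of the residual stabilizer; I expect this to be the main, but still routine, obstacle.
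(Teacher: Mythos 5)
Your argument is correct and follows essentially the same route as the paper: the paper states the lemma as an immediate consequence of the preceding normalization of the parabolic lines by $\mathrm{Aut}(\mathcal O\oplus\mathcal O)$ in degree $0$ and by $\mathrm{Aut}(\mathcal O\oplus\mathcal O(1))$ in degree $1$, with the residual $\Bbb G_m$ giving the simultaneous rescaling of the remaining $m$ coordinates. The only detail you add beyond what the paper records is the explicit chart computation at $w=\infty$ pinning down $a=b=0$, which is a welcome (and correct) unpacking of the phrase ``the latter in the chart $U_\infty$.''
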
 

Thus we have a rational map 
\begin{equation}\label{param1} 
\pi: Bun_G^\circ(\Bbb P^1,t_0,...,t_{m+1})_i\to  \Bbb P^{m-1}.
\end{equation} 
inverse to the map defined in Lemma \ref{param}. 
This map is, in fact, regular
(see e.g. \cite{C}).

\subsection{The Hecke correspondence for $X=\Bbb P^1$ with $m+2$ parabolic points}\label{heckecorrr}  
 In this subsection we
 provide an explicit description of the Hecke correspondence on bundles over  $X=\Bbb P^1$ with $m+2$ parabolic points in terms of the rational parametrization of Lemma \ref{param}.

By definition, the {\bf Hecke correspondence} for stable bundles at $x\in 
X$ is the variety $Z_x$ of pairs $(E,s)$ where $E\in Bun_G^\circ(\Bbb P^1,t_0,...,t_{m+1}),s\in \Bbb PE_x$ is such that $HM_{x,s}(E)$ is stable; i.e., it is the fiber over $x\in X$ of the natural map $Z\to X$, where $Z$ is the universal Hecke correspondence for stable bundles defined in Subsection \ref{hmhc}. This variety is 
equipped with maps $q_i: Z_x\to Bun_G^\circ(\Bbb P^1,t_0,...,t_{m+1})$, $i=1,2$, where $q_1(E,s)=E$ and 
$q_2(E,s)=HM_{x,s}(E)$. 

As before, set $t_{m+1}=y_{m+1}=z_{m+1}=\infty$, $t_0=y_0=z_0=0$. 
 
\begin{proposition}\label{heckcor} (i) $HM_{x,s}(E_{\bold y,0})\cong E_{\bold z,1}$, where 
\begin{equation}\label{heckecor} 
z_i(\bold t,x,\bold y,s)=\frac{t_is-xy_i}{s-y_i}.
\end{equation}
In particular, $S_{m+1}(E_{\bold y,0})\cong E_{\bold y,1}$. 

(ii) $HM_{x,s}(E_{\bold y,1})\cong E_{\bold z,0}$.\footnote{Note however that $HM_{x',s'}(HM_{x,s}(E_{\bold y,1}))\ncong HM_{x',s'}(E_{\bold z,0})$, since $HM_{x,s}$ does not preserve $s'$, but maps it to $\frac{x's-xs'}{s-s'}$. 
So we have $HM_{x',s'}(HM_{x,s}(E_{\bold y,1}))\cong HM_{x',\frac{x's-xs'}{s-s'}}(E_{\bold z,0})$.}
\end{proposition}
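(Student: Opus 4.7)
The plan is to verify the formula sheaf-theoretically, by describing $HM_{x,s}(E_{\bold y, 0})$ directly as a subsheaf of $E_{\bold y, 0}(x)$ and constructing an explicit isomorphism with $O \oplus O(1)$ in the realization of Subsection \ref{birpar}. For (i), use the trivialization $E_{\bold y, 0} = O \oplus O$ on $\Bbb P^1$ and parametrize $s \in \Bbb P E_x = \Bbb P^1$ by the affine coordinate $s$ via $s \leftrightarrow \langle (1, s)\rangle$, matching the scalar $s$ appearing in the paper's formula. By definition, $HM_{x,s}(E_{\bold y, 0})$ consists of pairs $(u, v)$ of rational sections of $O$ on $\Bbb P^1$ that are regular away from $x$ and have at most a simple pole at $x$ satisfying $\mathrm{Res}_x v = s \cdot \mathrm{Res}_x u$.

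First I would verify that $h^0 = 3$ by listing the global sections $(a + c/(w - x),\ b + s c/(w - x))$ with $a, b, c$ scalars; since $\deg HM_{x,s}(E_{\bold y, 0}) = 1$, this forces the bundle to be $O \oplus O(1)$. I then construct an explicit isomorphism $\phi = (\psi, \tau) : HM_{x,s}(E_{\bold y, 0}) \to O \oplus O(1)$ from two natural sheaf maps: $\psi(u, v) = v - s u$, which is regular at $x$ by the Hecke residue condition and so defines a map to $O$; and $\tau(u, v) = (w - x) u$, which is regular at $x$ since $u$ has at most a simple pole there, and defines a section of $O(1)$ in the $U_0$-chart of the realization of Subsection \ref{birpar} (regularity at $\infty$ follows from $u$ being regular there). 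Evaluating $\phi$ on the three-dimensional space of global sections confirms that $\phi$ is an isomorphism of bundles.

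The map $\phi$ does not match the convention of Subsection \ref{birpar} at the distinguished parabolic points: the line $\langle (1, 0)\rangle$ at $t_0 = 0$ is sent to $(-s, -x)$ in the $U_0$-chart, not to $(1, 0)$. I would correct this by composing with the automorphism $(u, v) \mapsto (u,\ x u - s v)$ of $O \oplus O(1)$ (of the form recalled in Subsection \ref{birpar}): at $t_0 = 0$ it sends $(-s, -x) \mapsto (-s, 0)$, and a direct check in the $U_\infty$-chart (where the second component transforms by $1/w$) shows that the line $(1, 0)$ at $\infty$ is preserved. Applying the composed isomorphism to the line $\langle (1, y_i)\rangle$ at $t_i$ then yields the vector $(y_i - s,\ x y_i - s t_i)$, which rescales to $\bigl(1,\ (t_i s - x y_i)/(s - y_i)\bigr)$, as claimed. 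The statement $S_{m+1}(E_{\bold y, 0}) \cong E_{\bold y, 1}$ corresponds to the Hecke modification at $x = \infty$ along $\langle (0, 1)\rangle$, which is not covered by the above formula; it is handled by an entirely analogous computation in the $U_\infty$-chart using the coordinate $w' = 1/w$, which yields $z_i = y_i$. Part (ii) is proved by the same argument with $E_{\bold y, 1} = O \oplus O(1)$ in place of $E_{\bold y, 0} = O \oplus O$.

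The main obstacle will be the bookkeeping of the two trivializations of $O \oplus O(1)$ with transition $\mathrm{diag}(1, w)$ across $U_0 \cap U_\infty$, together with the use of $\mathrm{Aut}(O \oplus O(1))$ to normalize the identification simultaneously at $t_0$ and $t_{m+1}$. In particular, verifying that the automorphism $(u, v) \mapsto (u,\ x u - s v)$ preserves the parabolic line at $\infty$ requires evaluating both $\tau$ and the automorphism in the $U_\infty$-chart, where $O(1)$-sections transform by division by $w$; this is where the two "distinguished" normalizations interact nontrivially.
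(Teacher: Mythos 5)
Your proof of the main formula in (i) is essentially the same as the paper's: the two component maps $\psi(u,v) = v - su$, $\tau(u,v) = (w-x)u$ are exactly the change of variables $(g,h)\mapsto (h - sg, (w-x)g)$ used in the paper, and the corrective automorphism $(u,v)\mapsto(u,\, xu - sv)$ is the paper's $a=0,\ b=x,\ c=-s$. The subsequent bookkeeping is identical.

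Two small remarks on the filled-in details. First, the claim that $h^0 = 3$ together with $\deg = 1$ forces $O\oplus O(1)$ is not quite a complete argument: $O(-1)\oplus O(2)$ also has $h^0=3$ and degree $1$. What rules it out is the inclusion $HM_{x,s}(E_{\bold y,0})\subset E_{\bold y,0}(x)\cong O(1)\oplus O(1)$, which bounds the degree of any line subbundle by $1$. (Alternatively, injectivity of $\phi$ on global sections only shows $\phi$ is nonzero; to conclude it is an isomorphism you should observe that $\det\phi$ is a section of a degree-zero line bundle and is nonvanishing at a point, e.g.\ by examining the fiber at $x$.) Second, where the paper handles the $S_{m+1}$ statement of (i) by taking the iterated limit $x\to\infty$, $s\to\infty$ of the established formula, and derives (ii) formally from (i) via the commutation $HM_{x,s}\circ S_{m+1} = S_{m+1}\circ HM_{x,s}$ of Lemma \ref{comm}, you propose direct sheaf-theoretic recomputations in both cases. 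This works, but the direct computation for (ii) is not literally ``the same argument'': the target is now $O(1)\oplus O(1)$ and must be twisted back to $O\oplus O$, and the normalization at $t_0$ and $\infty$ uses the degree-$0$ convention $((1,0),(0,1))$ rather than $((1,0),(1,0))$, so a bit of extra bookkeeping is needed to recover the same formula. The paper's use of Lemma \ref{comm} avoids this entirely.
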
 

\begin{proof} (i)
Since $E_{\bold y,0}$ is trivial as an ordinary bundle, we have an identification $\Bbb PE_x\cong \Bbb P^1$ for any point $x \in X=\Bbb P^1 $. Let $x$ be a point distinct from any of the parabolic points. Assume that the line  $s\in \Bbb P^1$ is spanned by the vector $(1,s)$.\footnote{Here 
and below we abuse notation by using the same letter $s$ to denote a point of $\Bbb A^1\subset \Bbb P^1$ and the corresponding line in the fiber of our vector bundle at $x$.} 
By definition, regular sections of $HM_{x,s}(E_{\bold y,0})$ (over some open set) are then pairs of functions $(g,h)$ regular except possible first order poles at $x$, such that $h-sg$ is regular at $x$. This bundle 
is isomorphic to $O\oplus O(1)$ as an ordinary bundle, 
but to compute $\bold z$ we need to identify it with the standard 
realization of this bundle and see what happens at the parabolic points. 
This is achieved by using the change of variable 
$$
(g,h)\mapsto (h-sg,(w-x)g).
$$ 

So, consider what happens to the lines in the fibers at $0,t_1,...,t_m,\infty$ under this change of variable. 
At $0$ we had the vector $(1,0)$, so after the change we get 
$(-s,-x)$. At $t_i$ we had $(1,y_i)$, so after the change we will get 
$(y_i-s,t_i-x)$. At $\infty$ we had $(0,1)$, so 
$w-x$ drops out and we get $(1,0)$. 

Now we need to bring this $m+2$-tuple of lines to the standard form. 
As noted above, automorphisms of $O\oplus O(1)$ 
have the form $(u,v)\to \lambda(u,(aw+b)u+cv)$, so in terms of $\zeta:=v/u$ we
have $\zeta\mapsto   aw+b+c\zeta$. Thus we have $a=0$ and $
b+cxs^{-1}=0$, which gives 
$b=x,\ c=-s$
up to scaling.  
So, we obtain 
$$
z_i(\bold t,x,\bold y,s)=x+s\frac{t_i-x}{s-y_i}=\frac{t_is-xy_i}{s-y_i},
$$
as claimed. This proves the first statement of (i). The second statement then follows 
by taking the limit $x\to \infty$ and then $s\to \infty$ (note that the order of limits 
is important here!).  

(ii) follows from (i) and the fact that $HM_{x,s}$ commutes with $S_{m+1}$. 
\end{proof} 

It will be convenient to have two more variants of the formula for the Hecke modification. 
First, we can quotient out the dilation symmetry, i.e., impose the condition $t_m=y_m=z_m=1$. Then the formula for the Hecke modification looks like 
\begin{equation}\label{HMlesssym}
z_i=\frac{(t_is-xy_i)(s-1)}{(s-x)(s-y_i)},\ 1\le i\le m-1. 
\end{equation} 
On the other hand, instead of breaking the dilation symmetry, we may keep 
it and moreover restore the translation symmetry, no longer requiring that $t_0=y_0=z_0=0$. 
Then the formula for the Hecke modification takes the form 
\begin{equation}\label{HMmoresym}
z_i=\frac{t_i-x}{s-y_i},\ 0\le i\le m. 
\end{equation} 

From now on we identify $Bun_G(\Bbb P^1,t_0,...,t_{m+1})_0$ with $Bun_G(\Bbb P^1,t_0,...,t_{m+1})_1$ using the map $S_{m+1}$. By Proposition \ref{heckcor}, in the coordinates $y_i,z_i$ this will just be the identity map. 

Now we express the maps $S_i$ in terms of the parametrizations of $Bun_G(\Bbb P^1,t_0,...,t_{m+1})_0$, and $Bun_G(\Bbb P^1,t_0,...,t_{m+1})_1$ given by Lemma \ref{param} (assuming that $t_0=y_0=z_0=0$). 

\begin{proposition}\label{Si} We have 
$$
S_0(y_1,...,y_m)=\left(\frac{t_1}{y_1},...,\frac{t_m}{y_m}\right),
$$
and for $1\le i\le m$ 
$$
S_i(y_1,...,y_m)=(z_1,...,z_m),
$$
where 
$$
z_j=\frac{y_jt_i-y_it_j}{y_j-y_i},\ j\ne i,\quad z_i=t_i.
$$
\end{proposition}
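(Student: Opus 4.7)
The plan is to compute $S_i = HM_{t_i, y_i}$ directly on a generic bundle $E_{\bold y, 0}$ using the chart/change-of-variables technique of Proposition \ref{heckcor}. Formula \eqref{heckecor} was derived assuming that $x$ is a non-parabolic point, and, as noted in the footnote in Subsection \ref{hmhc}, $HM_{t_i, y_i}$ does not arise as a limit of $HM_{x, s}$ with $s \to y_i$, so the parabolic case requires a direct treatment.

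For $1 \le i \le m$, I would describe regular sections of $HM_{t_i, y_i}(E_{\bold y, 0})$ as pairs $(g, h)$ of rational functions on $\Bbb P^1$ which are regular away from $t_i$, admit at most a simple pole at $t_i$, and satisfy that $h - y_i g$ is regular at $t_i$. The change of variable $(g, h) \mapsto (h - y_i g, (w - t_i) g)$ identifies the modified bundle with $O \oplus O(1)$ in the presentation \eqref{vecbun}. In these new coordinates the line $(1 : y_j)$ at $t_j$ (for $j \ne i$, $j \ne 0$, $j \ne m+1$) maps to $(y_j - y_i : t_j - t_i)$; the new parabolic line at $t_i$, which by definition equals $E_{t_i}/y_i$, i.e., the span of $(0, 1)$, maps to $(1 : 0)$; the line at $t_0 = 0$ has naive slope $t_i/y_i$; and in the $U_\infty$-chart the line at $\infty$ has naive slope $0$. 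To restore the standard normalization ($z_0 = 0$ in $U_0$ and slope $0$ at $\infty$ in $U_\infty$), I apply an automorphism $(u, v) \mapsto (u, (aw + b) u + cv)$ of $O \oplus O(1)$, which acts on slopes by $\zeta \mapsto aw + b + c\zeta$ in $U_0$ and by $\zeta_\infty \mapsto a + c\zeta_\infty$ in $U_\infty$. The two normalization conditions force $a = 0$, $b = t_i$, $c = -y_i$ up to overall scalar, and substituting into the naive slope at $t_j$ produces $z_j = t_i - y_i(t_j - t_i)/(y_j - y_i) = (y_j t_i - y_i t_j)/(y_j - y_i)$; the slope at $t_i$ is then $a t_i + b = t_i$, giving $z_i = t_i$.

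For $i = 0$, the analogous change of variable is $(g, h) \mapsto (h, wg)$ (corresponding to $s = 0$, $x = 0$), which identifies $HM_{0, y_0}(E_{\bold y, 0})$ with $O \oplus O(1)$. Here the line $(1 : y_j)$ at $t_j$ maps to $(y_j : t_j) = (1 : t_j/y_j)$; the new parabolic line at $0$ (the span of $(0, 1)$) maps to $(1 : 0)$ automatically; and in $U_\infty$ the line at $\infty$ remains $(1 : 0)$. Thus no further automorphism is needed and $z_j = t_j/y_j$.

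The only real subtlety is the bookkeeping of the normalizing automorphism --- in particular, reconciling the normalization at $0$ (in $U_0$) with that at $\infty$ (in $U_\infty$) and checking that a single automorphism of $O \oplus O(1)$ achieves both simultaneously. The involutivity $S_i^2 = \mathrm{Id}$ from Proposition \ref{invo} together with the compatibility with $S_{m+1}$ (used to identify $Bun_0^\circ$ with $Bun_1^\circ$) provides a useful sanity check on the resulting formulas.
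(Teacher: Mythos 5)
Your proof is correct and takes a genuinely different route from the paper's. The paper's proof is a formal one-liner: it takes the iterated limit $x \to t_i$, then $s \to y_i$, in formula \eqref{heckecor}. Contrary to your reading of the footnote in Subsection \ref{hmhc}, this iterated limit does exist in the stated order: substituting $x = t_i$ first makes the $j = i$ coordinate collapse to $\frac{t_i(s - y_i)}{s - y_i} = t_i$, which then passes continuously to $s = y_i$, while for $j \ne i$ the substitution $x = t_i$, $s = y_i$ is regular; the footnote rules out only the \emph{joint} limit, not this ordering. Still, the paper's proof leaves implicit both the justification that this formal limit computes $HM_{t_i, y_i}$ (where the modified parabolic line at $t_i$ is defined ad hoc as $E_{t_i}/y_i$) and the rescaling required for $S_0$: the naive iterated limit of each $z_j$ as $x \to 0$, $s \to 0$ is $0$, and one must divide by $s$ before sending $s \to 0$ to recover the projective point $(t_1/y_1, \dots, t_m/y_m)$. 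Your direct computation — re-running the chart argument of Proposition \ref{heckcor} at the parabolic point $t_i$, tracking the new parabolic line $E_{t_i}/y_i$ as $(1{:}0)$ in the new trivialization, and normalizing via $a = 0$, $b = -ct_i/y_i$ — sidesteps both of these subtleties, handles $i=0$ separately and cleanly, and makes the definitional origin of $z_i = t_i$ visible in the final formula. It is longer but more self-contained. Both arguments are sound.
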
 

\begin{proof}
The proposition follows by taking a limit in 
formula \eqref{heckecorrr}, first $x\to t_i$ and then $s\to y_i$. 
\end{proof} 

\subsection{Hecke operators}\label{Hecke operators}

We now pass from algebraic geometry to analysis. Consider the set $Bun_G^\circ(\Bbb P^1,t_0,...,t_{m+1})_i(F)$ of $F$-points
of the variety $Bun_G^\circ(\Bbb P^1,t_0,...,t_{m+1})_i$, $i=0,1$, an analytic 
$F$-manifold. To simplify notation, we will denote this analytic manifold 
by 
$Bun_i^\circ(F)$. 

Consider the Hilbert spaces $\mathcal H^0:=L^2(Bun_0^\circ(F))$, $\mathcal H^1= L^2(Bun_1^\circ(F))$, the spaces of square integrable half-densities. The birational 
isomorphism $S_{m+1}$ provides an identification $\mathcal H^0 \cong \mathcal H^1$, so for brevity we will denote this space by $\mathcal H$. 

The map $\pi$ given by \eqref{param1} defines an identification $\mathcal H\cong L^2(\Bbb P^{m-1}(F))$ of $\mathcal H$ with the space of square integrable half-densities on $\Bbb P^{m-1}(F)$. As well known, the bundle of half-densities on  $\Bbb P^{m-1}(F)$ is $\norm{K}^{\frac{1}{2}}$ where $K=O(-m)$ is the canonical bundle of $\Bbb P^{m-1}$. Thus we may (and will) realize half-densities on $Bun_i^\circ(F)$ as homogeneous complex-valued functions $\psi$ of $(y_1,...,y_m)\in F^m\setminus 0$ of homogeneity degree $-\frac{m}{2}$, i.e., functions $\psi$ such that 
$$
\psi(\lambda \bold y)=\norm{\lambda}^{-\frac{m}{2}}\psi(\bold y),\ \lambda\in F.
$$

\begin{definition} Define $U\subset F^m\setminus 0$ to be the open set of such points $\bold y=(y_1,...,y_m)$ that for any $y\in F$ the equality $y_i=y$ holds for fewer than $\frac{m}{2}+1$ values of $i$ for $y\ne 0$ 
and fewer than $\frac{m}{2}$ values of $i$ for $y=0$.
\end{definition} 
  
\begin{definition} Let $V\subset \mathcal{H}$ be the space of continuous complex-valued functions $\psi$ on $F^m\setminus 0$ of homogeneity degree $-\frac{m}{2}$ (a 
dense subspace in $\mathcal{H}$). Also let $\widetilde{V}\supset V$ be the space of functions $\psi$ of homogeneity degree $-\frac{m}{2}$ defined and continuous on $U$.\footnote{Note that $\widetilde{V}$ is {\bf not} contained in $\mathcal{H}$.}
\end{definition} 

Recall that in \cite{EFK2}, Subsection 1.2 we defined the Hecke operator $H_x$ depending on a point $x\in X(F)$, which is given by the convolution with the ``$\delta$-function" of the Hecke correspondence $Z_x$. Recall also that $H_x$ is not a function of $x$ but rather a $-\frac{1}{2}$-density, i.e., a section of $\norm{K_X}^{-\frac{1}{2}}$. For the purposes of computation, however, 
we will treat $H_x$ as a function, so that the actual Hecke operator is 
$H_x\norm{dx}^{-\frac{1}{2}}$. Then an explicit formula for the Hecke operators 
in the case of genus $0$ is given by the following theorem (in which we set $t_0=0$).\footnote{Our space $V$ (the initial domain for $H_x$) here 
is slightly different than that in \cite{EFK2}. This is a very minor modification that has no effect on the final result.}

\begin{theorem}\label{heckeop1}
The Hecke operator $H_x$ is the operator $V\to \widetilde V$ given by the 
formula
$$
H_x=\norm{\prod_{i=0}^m(t_i-x)}^{\frac{1}{2}}\Bbb H_x, 
$$
where 
\begin{equation}\label{Hxfor}
(\Bbb H_x\psi)(y_1,...,y_m):=\int_{F}\psi\left(\frac{t_1s-xy_1}{s-y_1},...,\frac{t_ms-xy_m}{s-y_m}\right)\frac{\norm{s}^{\frac{m-2}{2}}\norm{ds}}{\prod_{i=1}^m \norm{s-y_i}}.
\end{equation} 
\end{theorem}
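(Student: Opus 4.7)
The plan is to unfold the abstract definition of $H_x$ from \cite{EFK2}, Subsection 1.2 --- namely, the pull-push integral transform $\psi \mapsto (q_1)_* q_2^* \psi$ along the Hecke correspondence $Z_x$, with respect to the natural half-density structures --- into the explicit coordinates given by Lemma \ref{param} and Proposition \ref{heckcor}. A Zariski-open subset of $Z_x$ is parametrized by pairs $(y, s) \in F^m \times F$ (modulo a diagonal $F^\times$-scaling), with $q_1(y, s) = E_{y,0}$ and $q_2(y, s) = E_{z(y,s), 1}$ where $z_i = (t_i s - x y_i)/(s - y_i)$. After identifying $\mathcal{H}^0 \cong \mathcal{H}^1$ via $S_{m+1}$, which is the identity in these coordinates by Proposition \ref{heckcor}(i), $H_x$ acts on $\mathcal{H}$; half-densities on $Bun_0^\circ(F)$ and $Bun_1^\circ(F)$ are represented as continuous functions of homogeneity $-m/2$ on $F^m \setminus 0$, so that $q_2^* \psi$ is simply the function $(y, s) \mapsto \psi(z(y, s))$.

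To carry out the computation I would work in an affine chart with $y_m = z_m = t_m = 1$. In this gauge, \eqref{HMlesssym} gives $z_i = (t_i s - x y_i)(s-1)/[(s-x)(s-y_i)]$ for $1 \le i \le m-1$, and a direct differentiation shows that $\partial z_i/\partial y_j = 0$ for $i \ne j$ while
\[
\frac{\partial z_i}{\partial y_i} = \frac{(t_i - x)\, s(s-1)}{(s-x)(s-y_i)^2}.
\]
The Jacobian $\norm{\det(\partial z_i/\partial y_j)_{1 \le i, j \le m-1}}$ is therefore the product of the norms of these diagonal entries. The half-density on $Z_x$ obtained by pulling back $\psi \cdot \norm{dz}^{1/2}$ under $q_2$ and expressing it in the coordinates $(y_1, \ldots, y_{m-1}, s)$ is then $\psi(z(y,s))\, \norm{\det(\partial z_i/\partial y_j)}^{1/2}\, \norm{dy\,ds}^{1/2}$, and pushing it forward under $q_1$ amounts to integration over $s \in F$ against $\norm{ds}$.

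The final step is to translate the result back to the homogeneous parametrization of Lemma \ref{param}. Here one uses $\psi(z_1, \ldots, z_m) = \norm{z_m}^{-m/2} \psi(z_1/z_m, \ldots, z_{m-1}/z_m, 1)$ with $z_m = (s-x)/(s-1)$ in the chosen gauge; this introduces a factor $\norm{(s-x)/(s-1)}^{-m/2}$, which, combined with the square-root Jacobian and the symmetry restoration (reinserting $y_m$ and $t_m$), causes the parasitic factors $\norm{s-x}$ and $\norm{s-1}$ to cancel, leaving the integrand $\psi(z_1, \ldots, z_m)\,\norm{s}^{(m-2)/2}/\prod_{i=1}^{m}\norm{s-y_i}$. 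The global prefactor $\norm{\prod_{i=0}^m (t_i - x)}^{1/2}$ assembles the $\norm{t_i - x}^{1/2}$ contributions coming from the Jacobian for $i = 1, \ldots, m$ (the $i = m$ contribution arising once one restores $t_m$), together with the factor $\norm{t_0 - x}^{1/2} = \norm{x}^{1/2}$ from the $(-1/2)$-density normalization of $H_x$ in $x$, which reflects the interaction of the Hecke operator with the parabolic point $t_0 = 0$.

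The principal obstacle is bookkeeping: tracking the three overlapping scaling ambiguities (in $y$, $z$, and $s$), the various half-density conventions, and the passage between the affine gauge and the homogeneous parametrization, so that all competing powers of $s$, $s-1$, $s-x$ and $s-y_i$ cancel correctly. Two useful consistency checks are available: first, the right-hand side of \eqref{Hxfor} must have homogeneity degree $-m/2$ in $y$, as it does for any element of $\mathcal{H}$; second, the manifest $PGL_2(F)$-equivariance under Möbius transformations permuting $(t_0, \ldots, t_{m+1}, x)$, which is apparent from the more symmetric form \eqref{HMmoresym} of the Hecke modification, pins down the prefactor up to a multiplicative constant independent of $x$ and the $t_i$.
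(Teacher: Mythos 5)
Your Jacobian computation is correct and does reproduce the factor appearing inside the unitary operator $U_{s,x}$ defined after Proposition~\ref{heckeop3}. However, there is a genuine gap at the crucial step where you assert that ``pushing it forward under $q_1$ amounts to integration over $s\in F$ against $\norm{ds}$.'' The pullback $q_2^*(\psi\,\norm{dz}^{1/2})$ is a section of $q_2^*\norm{K_{Bun_1^\circ}}^{1/2}$, which is a half-density only transverse to the fibers of $q_2$; it is \emph{not} a half-density on the $m$-dimensional variety $Z_x$, so tensoring it with $\norm{ds}^{1/2}$ (and subsequently integrating against $\norm{ds}$) amounts to an arbitrary, coordinate-dependent choice. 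The content of the pull--push definition of $H_x$ in \cite{EFK2}, Subsection 1.2 is precisely a canonical isomorphism (the map $a$ of \cite{EFK2}, Theorem 1.1, called $\eta$ in the paper's own proof) relating $q_2^*\norm{K_{Bun_1^\circ}}^{1/2}$, $q_1^*\norm{K_{Bun_0^\circ}}^{1/2}$, the relative density along $q_1$, and the $-\tfrac12$-density weight in $x$. Determining this isomorphism is the actual work of the proof, and it produces the nontrivial measure
\[
d\nu_x(s)=\sqrt{\norm{\tfrac{x(x-1)}{s(s-1)(s-x)}}}\,\norm{ds},
\]
not $\norm{ds}$.

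This is not merely a presentational shortcut: if you carry your computation to the end, the ``parasitic factors'' do \emph{not} cancel as claimed. Converting your affine-gauge answer back to the homogeneous $\psi$, one finds an integrand proportional to $\norm{s-x}^{1/2}\norm{s}^{(m-1)/2}\norm{s-1}^{-1/2}/\prod_{i=1}^{m-1}\norm{s-y_i}$, which exceeds the target integrand of \eqref{Hxfor} by exactly $\norm{s(s-1)(s-x)}^{1/2}$; and your prefactor $\norm{\prod_{i=1}^{m-1}(t_i-x)}^{1/2}$ is missing the factor $\norm{x(x-1)}^{1/2}=\norm{t_0-x}^{1/2}\norm{t_m-x}^{1/2}$. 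Both discrepancies are resolved at once by the missing density $d\nu_x$. Your suggestion that $PGL_2$-equivariance ``pins down the prefactor up to a multiplicative constant'' is a good sanity check, but it cannot substitute for the computation: equivariance constrains the measure to be one of the form $\sqrt{\norm{C(x)/(s(s-1)(s-x))}}\norm{ds}$ (since the poles at $s=0,1,x,\infty$ are forced by the locations of the zeros of the isomorphism $\eta_*$), but finding that $C(x)=x(x-1)$ requires the further analysis in the paper (tracking the $x$-dependence of $\eta$ or using the asymptotics $x\to 0,1,\infty$). You should insert the computation of this measure --- e.g., by analyzing, as the paper does, the tangent spaces $T_{1,s}$, $T_{2,s}$ to the fibers of $q_1,q_2$ and the canonical isomorphism between the corresponding line bundles on the $\Bbb P^1$ parametrized by $s$.
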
 

We will call $\Bbb H_x$ the {\bf modified Hecke operator}.  

We  prove Theorem \ref{heckeop1} (in particular, showing that this integral converges) in the next subsection. 
For now, let us record its two equivalent formulations 
corresponding to breaking the dilation symmetry and to restoring the translation symmetry. 

For both formulations, as before, we set $t_{m+1}=y_{m+1}=\infty$.
For the first variant we assume that $t_0=y_0=0,t_m=y_m=1$ (this can be achieved by shift and rescaling). We now realize $\mathcal H$ as the space $L^2(F^{m-1})$ of $L^2$-functions in $m-1$ variables  $w_1,...,w_{m-1}$ (without a homogeneity condition) with the norm given by the formula 
$$
\norm{\phi}^2=\int_{F^{m-1}}|\phi(w_1,...,w_{m-1})|^2\norm{dw_1...dw_{m-1}},
$$
via 
$$
\phi(w_1,...,w_{m-1})=\psi(w_1,...,w_{m-1},1).
$$  
\begin{proposition}\label{heckeop3} In terms of $\phi$, the modified Hecke operator takes the form
\begin{gather}\label{t24}
(\Bbb H_x\phi)(u_1,...,u_{m-1})=\\ 
\int_{F}\phi\left(\frac{(t_1s-u_1x)(s-1)}{(s-u_1)(s-x)},...,
\frac{(t_{m-1}s-u_{m-1}x)(s-1)}{(s-u_{m-1})(s-x)}\right)
\frac{\norm{s(s-1)}^{\frac{m}{2}-1}\norm{ds}}{\norm{s-x}^{\frac{m}{2}}\prod_{i=1}^{m-1}\norm{s-u_i}}. \nonumber
\end{gather}
\end{proposition}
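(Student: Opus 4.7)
The plan is to derive Proposition \ref{heckeop3} as a straightforward change-of-realization of the modified Hecke operator $\Bbb H_x$ from Theorem \ref{heckeop1}, using the homogeneity property of $\psi$ to pass from the $\mathbb{P}^{m-1}(F)$-picture (homogeneous functions in $y_1,\ldots,y_m$) to the affine chart picture (unconstrained functions in $w_1,\ldots,w_{m-1}$). Nothing needs to be reproved about compactness or convergence; the formula is purely an algebraic rewriting, so the argument is essentially a substitution.

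First I would verify that the identification $\phi(w_1,\ldots,w_{m-1})=\psi(w_1,\ldots,w_{m-1},1)$ gives an isometry $\mathcal{H}\cong L^2(F^{m-1})$. This follows because $|\psi|^2$ is homogeneous of degree $-m$ on $F^m\setminus 0$, so it is naturally a density on $\mathbb{P}^{m-1}(F)$, whose integral over the full projective space agrees with its integral over the chart $\{y_m=1\}$ (the complement having measure zero), and in that chart the density is $|\phi(w_1,\ldots,w_{m-1})|^2\,\norm{dw_1\cdots dw_{m-1}}$. So $(\Bbb H_x\phi)(u_1,\ldots,u_{m-1})$ is by definition $(\Bbb H_x\psi)(u_1,\ldots,u_{m-1},1)$, and the problem reduces to evaluating \eqref{Hxfor} at $y_i=u_i$ for $i<m$ and $y_m=1$ (with the prescribed normalizations $t_0=y_0=0$, $t_m=y_m=1$).

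Next, I substitute $t_m=y_m=1$ into \eqref{Hxfor}. The $m$-th component of the argument of $\psi$ becomes $z_m=\dfrac{s-x}{s-1}$, and the remaining components are $z_i=\dfrac{t_is-xu_i}{s-u_i}$. The measure factor contributes $\dfrac{\norm{s}^{(m-2)/2}}{\prod_{i=1}^{m-1}\norm{s-u_i}\cdot\norm{s-1}}\,\norm{ds}$. Now invoke the homogeneity of $\psi$ of degree $-m/2$ to factor out $z_m$:
\[
\psi(z_1,\ldots,z_{m-1},z_m)
=\norm{z_m}^{-m/2}\,\phi\!\left(\tfrac{z_1}{z_m},\ldots,\tfrac{z_{m-1}}{z_m}\right).
\]
The ratio $z_i/z_m$ is exactly $\dfrac{(t_is-u_ix)(s-1)}{(s-u_i)(s-x)}$, which matches the argument of $\phi$ appearing in \eqref{t24}. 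The extra factor $\norm{z_m}^{-m/2}=\norm{s-1}^{m/2}\norm{s-x}^{-m/2}$ then combines with the measure to produce $\dfrac{\norm{s(s-1)}^{m/2-1}}{\norm{s-x}^{m/2}\prod_{i=1}^{m-1}\norm{s-u_i}}\,\norm{ds}$, yielding precisely \eqref{t24}.

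There is no genuine obstacle here, only bookkeeping: the only thing to watch is the arithmetic of the exponents $\tfrac{m-2}{2}$, $-\tfrac{m}{2}$, and $\tfrac{m}{2}$ combining to the correct $\tfrac{m}{2}-1$ in both $\norm{s}$ and $\norm{s-1}$, and the cancellation of the $\norm{s-1}$ coming from $y_m=1$ in the denominator against the $\norm{s-1}^{m/2}$ coming from the homogeneity factor. Convergence of the integral in its new form is inherited from the already-established (in the next subsection) convergence in the homogeneous form, so no independent analytic estimate is needed for this proposition.
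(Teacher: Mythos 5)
Your proof is correct and follows the same route the paper uses: the paper's proof is a one-line appeal to Theorem \ref{heckeop1}, formula \eqref{HMlesssym} (which is exactly the ratio $z_i/z_m$ you compute), and the homogeneity of $\psi$. The bookkeeping you carry out — setting $y_m=t_m=1$ in \eqref{Hxfor}, extracting $\norm{z_m}^{-m/2}=\norm{s-1}^{m/2}\norm{s-x}^{-m/2}$ from homogeneity, and combining with the $\norm{s-1}^{-1}$ from the original measure to get the exponent $\tfrac{m}{2}-1$ on $\norm{s(s-1)}$ and $\tfrac{m}{2}$ on $\norm{s-x}$ — is exactly what the paper's proof tacitly relies on.
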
 

\begin{proof} This is equivalent to Theorem \ref{heckeop1} using formula (\ref{HMlesssym}) and the homogeneity of $\psi$. 
\end{proof} 

Define the operator $U_{s,x}$ on $L^2(F^{m-1})$ for $x\ne t_i$ by 
$$
(U_{s,x}\phi)(u_1,....,u_{m-1}):=
$$
$$
\norm{\prod_{i=1}^{m-1}\frac{s(s-1)(t_i-x)}{(s-x)(s-u_i)^2}}^{\frac{1}{2}}\phi\left(\frac{(t_1s-u_1x)(s-1)}{(s-u_1)(s-x)},...,
\frac{(t_{m-1}s-u_{m-1}x)(s-1)}{(s-u_{m-1})(s-x)}\right). 
$$
It is easy to check that $U_{s,x}$ is a unitary operator on $\mathcal{H}$. Namely, let $\bold G:=PGL_2^{m-1}$, and $W$ be the unitary representation of $PGL_2(F)$ of principal series, on half-densities on $\Bbb P^1(F)$ (cf. Subsection \ref{prinse}). Then $U_{s,x}$ is the operator defined 
 in $W^{\otimes m-1}$ by the element 
\begin{equation}\label{gsx}
g_{s,x}:=(g_{s,x,1},...,g_{s,x,m-1})\in \bold G(F),\ g_{s,x,i}(u):=\frac{(t_is-ux)(s-1)}{(s-u)(s-x)}.
\end{equation}

We thus obtain the following equivalent form of Proposition \ref{heckeop3} (and hence Theorem \ref{heckeop1}).

\begin{proposition}\label{heckeop4}  
\begin{equation}\label{intunit}
H_x=\int_{F}U_{s,x}\sqrt{\norm{\frac{x(x-1)}{s(s-1)(s-x)}}}\norm{ds}.
\end{equation}
\end{proposition}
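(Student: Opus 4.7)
The plan is to verify Proposition \ref{heckeop4} by direct algebraic manipulation, showing that unwinding the definition of $U_{s,x}$ and absorbing the scalar factor $\sqrt{\norm{x(x-1)/(s(s-1)(s-x))}}$ into the integrand reproduces the kernel of formula \eqref{t24} up to the prefactor $\norm{\prod_{i=0}^m(t_i-x)}^{1/2}$ appearing in Theorem \ref{heckeop1}. Since Proposition \ref{heckeop3} is already known to be equivalent to Theorem \ref{heckeop1}, this will suffice.

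First I would unfold $U_{s,x}\phi$ using the definition of $g_{s,x}$ in \eqref{gsx}: each component $g_{s,x,i}(u_i)$ is exactly the argument of $\phi$ in the right-hand side of \eqref{t24}, so when $U_{s,x}$ is applied the composed argument of $\phi$ matches the one in \eqref{t24}. The scalar coefficient in front of $\phi$ is $\bigl\|\prod_{i=1}^{m-1}\tfrac{s(s-1)(t_i-x)}{(s-x)(s-u_i)^2}\bigr\|^{1/2}$. Multiplying this by the integrand factor $\sqrt{\norm{x(x-1)/(s(s-1)(s-x))}}$ yields
\[
\norm{s(s-1)}^{(m-2)/2}\,\norm{s-x}^{-m/2}\,\norm{x(x-1)}^{1/2}\prod_{i=1}^{m-1}\norm{t_i-x}^{1/2}\prod_{i=1}^{m-1}\norm{s-u_i}^{-1}.
\]

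Next I would use the normalization $t_0=0$, $t_m=1$ to rewrite $\norm{x(x-1)}^{1/2}\prod_{i=1}^{m-1}\norm{t_i-x}^{1/2} = \norm{\prod_{i=0}^{m}(t_i-x)}^{1/2}$, since $\norm{x} = \norm{t_0-x}$ and $\norm{x-1} = \norm{t_m-x}$. Pulling this factor outside the integral, the remaining kernel is
\[
\frac{\norm{s(s-1)}^{m/2-1}\,\norm{ds}}{\norm{s-x}^{m/2}\prod_{i=1}^{m-1}\norm{s-u_i}},
\]
which is precisely the density appearing in \eqref{t24}.

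Putting these two observations together,
\[
\int_{F}U_{s,x}\sqrt{\norm{\tfrac{x(x-1)}{s(s-1)(s-x)}}}\,\norm{ds}\;=\;\norm{\textstyle\prod_{i=0}^{m}(t_i-x)}^{1/2}\,\Bbb H_x,
\]
and the right-hand side equals $H_x$ by Theorem \ref{heckeop1} (interpreted in the coordinates of Proposition \ref{heckeop3}). Convergence of the integral is inherited from the convergence of \eqref{t24}, which was established in the proof of Theorem \ref{heckeop1}. There is no real obstacle beyond bookkeeping: the only point that requires attention is matching exponents of the absolute values and confirming that $U_{s,x}$, whose unitarity is automatic since it comes from the element $g_{s,x}\in \bold G(F)$ acting on $W^{\otimes m-1}$, produces exactly the Jacobian-type factor needed to convert the density $\norm{ds}/\prod_i\norm{s-u_i}$ of \eqref{t24} into the much simpler scalar density $\sqrt{\norm{x(x-1)/(s(s-1)(s-x))}}\,\norm{ds}$ in \eqref{intunit}.
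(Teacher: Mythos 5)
Your proof is correct and takes essentially the same route as the paper: Proposition \ref{heckeop4} is obtained from Proposition \ref{heckeop3} by recognizing that the scalar prefactor built into the definition of $U_{s,x}$, combined with the density $\sqrt{\norm{x(x-1)/(s(s-1)(s-x))}}\,\norm{ds}$, reproduces the kernel of \eqref{t24} up to the overall factor $\norm{\prod_{i=0}^m(t_i-x)}^{1/2}$ relating $H_x$ and $\Bbb H_x$. The paper states this equivalence as immediate (``we thus obtain the following equivalent form''); you have simply carried out the exponent bookkeeping explicitly, and it checks out.
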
 

For the second variant, we will think of $\psi$ as a function of $y_0,...,y_m$ which is semi-invariant under the group $y\mapsto ay+b$ of degree $-\frac{m}{2}$, no longer assuming that $t_0=y_0=0$. Then we have 

\begin{proposition}\label{heckeop2} For the curve $X=\Bbb P^1$ with $m+2$ marked points $t_0,...,t_m,\infty$ the modified Hecke operator is given by the formula
$$
(\Bbb H_x\psi)(y_0,...,y_m)=\int_{F}\psi\left(\frac{t_0-x}{s-y_0},...,\frac{t_m-x}{s-y_m}\right)\frac{\norm{ds}}{\prod_{i=0}^m \norm{s-y_i}}. 
$$
\end{proposition}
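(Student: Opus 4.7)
The plan is to deduce Proposition \ref{heckeop2} directly from Theorem \ref{heckeop1} by exploiting the translation-covariant form (\ref{HMmoresym}) of the Hecke modification. The function $\psi$ in Theorem \ref{heckeop1} is defined for $t_0=y_0=0$ and is homogeneous of degree $-\tfrac{m}{2}$; extending it by translation invariance via
$$\tilde\psi(y_0,y_1,\ldots,y_m):=\psi(y_1-y_0,\ldots,y_m-y_0)$$
produces a function on $F^{m+1}$ satisfying the stated semi-invariance $\tilde\psi(ay_0+b,\ldots,ay_m+b)=\norm{a}^{-m/2}\tilde\psi(y_0,\ldots,y_m)$ under $y\mapsto ay+b$. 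This is the $\tilde\psi$ appearing on the right-hand side of the proposition.

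First I would observe that the expression $z_i':=(t_i-x)/(s-y_i)$ from (\ref{HMmoresym}) is manifestly invariant under the joint affine action $(t_i,x,y_i,s)\mapsto(at_i+b,\,ax+b,\,ay_i+b,\,as+b)$, while the measure $\norm{ds}\prod_{i=0}^m\norm{s-y_i}^{-1}$ scales homogeneously as $\norm{a}^{-m}$ under this action. Combined with the semi-invariance of $\tilde\psi$, this shows that both sides of the proposed formula transform the same way under the affine group acting simultaneously on $(t_\bullet,x,y_\bullet)$. Consequently it suffices to verify the identity in the normalization $t_0=y_0=0$ used in Theorem \ref{heckeop1}.

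In this normalization I would perform the direct substitution. Specializing the argument of $\tilde\psi$ to $y_0=0$, $t_0=0$ gives $\tilde\psi(-x/s,\,z_1',\ldots,z_m')=\psi(z_1'+x/s,\ldots,z_m'+x/s)$, and the simple algebraic identity
$$\frac{t_i-x}{s-y_i}+\frac{x}{s}=\frac{t_is-xy_i}{s(s-y_i)}$$
rewrites the integrand as $\psi\!\left(\tfrac{t_1s-xy_1}{s(s-y_1)},\ldots,\tfrac{t_ms-xy_m}{s(s-y_m)}\right)\cdot\frac{\norm{ds}}{\norm{s}\prod_{i=1}^m\norm{s-y_i}}$. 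Applying the homogeneity relation $\psi(\bold{y}/s)=\norm{s}^{m/2}\psi(\bold{y})$ converts this to
$$\psi\!\left(\frac{t_1s-xy_1}{s-y_1},\ldots,\frac{t_ms-xy_m}{s-y_m}\right)\frac{\norm{s}^{(m-2)/2}\norm{ds}}{\prod_{i=1}^m\norm{s-y_i}},$$
which is precisely the integrand of (\ref{Hxfor}).

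The only delicate point is bookkeeping: one must carefully track how the single extra factor $\norm{s-y_0}^{-1}=\norm{s}^{-1}$ in the new measure combines with the rescaling weight $\norm{s}^{m/2}$ extracted from $\psi$ to yield exactly the factor $\norm{s}^{(m-2)/2}$ appearing in Theorem \ref{heckeop1}. No new convergence question arises, since the two integrals are related by the identity transformation on $s$ and the convergence has already been established in the proof of Theorem \ref{heckeop1}.
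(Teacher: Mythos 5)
Your proof is essentially correct and follows the route the paper intends (the paper dispatches this in one line by invoking formula \eqref{HMmoresym} and the translation invariance of $\psi$); your second paragraph fills in exactly the right bookkeeping, and the identity $\tfrac{t_i-x}{s-y_i}+\tfrac{x}{s}=\tfrac{t_is-xy_i}{s(s-y_i)}$ together with the homogeneity factor $\norm{s}^{m/2}$ recovers the integrand of \eqref{Hxfor} correctly.

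Two small imprecisions in your affine-invariance preamble are worth flagging. First, the diagonal action $(t_i,x,y_i,s)\mapsto(at_i+b,\,ax+b,\,ay_i+b,\,as+b)$ preserves $t_0-y_0$ up to scaling, so by itself it cannot carry a general configuration to $t_0=y_0=0$; you must also invoke the \emph{independent} $y$-affine symmetry (the semi-invariance of $\tilde\psi$, which is precisely the "translation invariance" cited in the paper) to move $y_0$ to $0$ without touching the $t_i$. Together the two actions do generate enough to reach the normalized configuration, but the reduction should say so explicitly. Second, you assert that the left-hand side $\Bbb H_x\tilde\psi$ transforms the same way under the base-affine change of $(t_\bullet,x)$, but do not check it: one needs that $\Bbb H_x=\norm{\prod_{i=0}^m(t_i-x)}^{-1/2}H_x$ and that $H_x$ is a $(-\tfrac12)$-density in $x$, so $\Bbb H_x$ picks up $\norm{a}^{-(m+1)/2}\cdot\norm{a}^{1/2}=\norm{a}^{-m/2}$, matching the $y$-semi-invariance of $\tilde\psi$. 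Both points are routine to fill in, and the core substitution is exactly the paper's argument.
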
 

\begin{proof} This is equivalent to Theorem \ref{heckeop1} using formula \eqref{HMmoresym} and the translation invariance of $\psi$. 
\end{proof} 

\subsection{Proof of Theorem \ref{heckeop1}} 

We start with showing that $H_x$ is precisely the Hecke operator defined in \cite{EFK2}, Subsection 1.2. We will use the formulation from Proposition \ref{heckeop4}. 
By Proposition \ref{heckcor}, the Hecke operator defined in \cite{EFK2} 
has the form 
\begin{equation}\label{intunit.1}
(H_x\psi)(\bold y)=\int_{F}(U_{s,x}\psi)(\bold y)d\mu(s),
\end{equation}
where $d\mu(s)$ is a certain measure on $\Bbb P^1(F)$, and our job is to compute $d\mu(s)$ (we will see that it is, in fact, independent on $\bold y$, which has to do with the fact that the bundle $E_{\bold y}$ is trivialized for all $\bold y$).  

To any point $(\bold y,\bold z,s)\in Z_x$ we can attach two 1-dimensional 
spaces $T_{1,s},T_{2,s}$, where $T_{1,s}$ is the tangent space at $(\bold 
y,\bold z,s)$ to $q_1^{-1}(\bold y)$ and $T_{2,s}$ is the tangent space at $(\bold y,\bold z,s)$ to $q_2^{-1}(\bold z)$. These spaces define line bundles $T_1,T_2$ on the 
projective line $\Bbb P^1$ with coordinate $s$ when $\bold y$ is fixed. 
The bundle $T_1$ is just the anticanonical bundle $K_{\Bbb P^1}^{-1}$, i.e., $T_{1,s}=T_s\Bbb P^1=s^{*\otimes 2}$, 
while the bundle $T_2$, in view of \eqref{natuiso}, is naturally isomorphic to the canonical bundle $K_{\Bbb P^1}$, i.e., 
$T_{2,s}\cong T^*_s\Bbb P^1=s^{\otimes 2}$. Thus we have a canonical isomorphism $\eta: T_1\to T_2\otimes K_{\Bbb P^1}^{-2}$, and one can see that it is given by the formula $\eta=(a^*)^{\otimes 2}$ where $a$ is the isomorphism of \cite{EFK2}, Theorem 1.1 (for fixed $x$). This isomorphism has the form $\eta(v)=\eta_*(v)\gamma(s)(ds)^{-2}$, where $v\in T_{1,s}$, $\eta_*: T_{1}\to T_{2}$ is inverse to the map induced by the projection $(\bold y,\bold z,s)\mapsto s$, and $\gamma(s)(ds)^{-2}$ is a regular section of $K_{\Bbb P^1}^{-2}$ serving to cancel the poles of $\eta_*$, to make 
sure that $\eta$ is a well defined isomorphism. Moreover, we have $d\mu(s)=\norm{\gamma(s)}^{-\frac{1}{2}}\norm{ds}$. 
Thus the explicit form of the map $\eta_*$ completely determines 
$\gamma(s)$ and hence $d\mu(s)$, at least up to a scalar (depending on $x$).  

To compute $\eta_*$, note that the fiber of $q_2$ near $(\bold y,\bold z,s)$ 
is the parametrized curve $(\bold y(u),\bold z, u)$ with $\bold y(s)=\bold y$, where 
the corresponding Hecke modification $\bold z(u)=\bold z$ remains constant. 
Thus for $v\in T_{1,s}$, 
$$
\eta_*(v)=(\bold y'(s),0,1)ds(v). 
$$
Setting $t_i=t,y_i=y,z_i=z$ for brevity, we have $dz(s)=0$, which 
yields
$$
y'(s)=-\frac{\partial_s z}{\partial_{y} z}.
$$
By formula (\ref{HMlesssym}) we have 
$$
z=\frac{(ts-xy)(s-1)}{(s-y)(s-x)}.
$$
Thus 
$$
z^{-1}\partial_{y}z=\frac{(t-x)s}{(s-y)(ts-xy)}, 
$$
$$
z^{-1}\partial_s z=\frac{1}{s-xyt^{-1}}+\frac{1}{s-1}-\frac{1}{s-x}-\frac{1}{s-y}.
$$
So 
$$
dy=-\frac{(s-y)(ts-xy)}{(t-x)s}\left(\frac{1}{s-xyt^{-1}}+\frac{1}{s-1}-\frac{1}{s-x}-\frac{1}{u-y}\right)ds.
$$
This 1-form has first order poles at $s=0,1,x,\infty$ and no other singularities. 
Thus the same holds for $\eta_*$.
It follows that $\gamma(s)(ds)^{-2}$ has simple zeros at $0,1,x,\infty$, i.e.,  
$$
\gamma(s)^{-1}(ds)^2=C(x)\frac{(ds)^2}{s(s-1)(s-x)}.
$$

It remains to show that $C(x)=x(x-1)$. This can be shown by a slightly more careful analysis, taking into account the variation of $x$. One can also see that $C(x)$ is proportional to $x(x-1)$ by looking at the asymptotics $x\to 0,1,\infty$. 
Thus, we get 
$$
d\mu(s)=\sqrt{\norm{\frac{x(x-1)}{s(s-1)(s-x)}}}\norm{ds}, 
$$
hence the operator of Theorem \ref{heckeop1} coincides with the Hecke operator defined in \cite{EFK2}, Subsection 1.2. 

Now we show that \eqref{Hxfor} defines a linear operator $V\to \widetilde{V}$. 
Suppose $\psi$ is continuous, and that less than $\frac{m}{2}+1$ points 
$y_i$ coincide. Let us show that the integral defining $\Bbb H_x$ converges (uniformly on compact sets in $U$). 
At $s=\infty$ the density in \eqref{Hxfor} behaves as $\norm{s^{-2}ds}$, so we only need to check convergence 
near $s=y_i$, say, for $i=1$. Using the homogeneity of $\psi$, we can 
rewrite \eqref{Hxfor} as follows: 
$$
(\Bbb H_x\psi)(y_1,...,y_m)=
$$
$$
=\int_{F}\psi\left(t_1s-xy_1,\frac{(s-y_1)(t_2s-xy_2)}{s-y_2},...,\frac{(s-y_1)(t_ms-xy_m)}{s-y_m}\right)\frac{\norm{s-y_1}^{\frac{m}{2}}\norm{s}^{\frac{m-2}{2}}\norm{ds}}{\prod_{i=1}^m \norm{s-y_i}}.
$$
By the definition of $U$, this density behaves near $s=y_1$ as $\norm{s-y_1}^{\frac{m}{2}-k}\norm{ds}$, where $k< \frac{m}{2}+1$ if $y_1\ne 0$ and as $\norm{s-y_1}^{\frac{m-2}{2}-k}\norm{ds}$, where $k< \frac{m}{2}$ if $y_1=0$. This density is integrable, so the integral in \eqref{Hxfor} converges. Finally, observe that $\Bbb H_x\psi$ is homogeneous of degree $-\frac{m}{2}$. This implies that $\Bbb H_x: V\to \widetilde{V}$. 

\subsection{Boundedness of Hecke operators}\label{Boundedness of Hecke operators}

We now show that Hecke operators extend to bounded operators on $\mathcal 
H$. 
\begin{proposition}\label{bou} We have $H_x(V)\subset \mathcal{H}$, and $H_x$ extends to a bounded operator on $\mathcal{H}$ which depends continuously on $x$ when $x\ne t_i,\infty$. Moreover,
$$
\norm{H_x}=O\left(\norm{x-t_i}^{\frac{1}{2}}\log\tfrac{1}{\norm{x-t_i}}\right), \ x\to t_i;\ \norm{H_x}=O(\norm{x}^{\frac{1}{2}}\log\norm{x}),
\ x\to \infty. 
$$
\end{proposition}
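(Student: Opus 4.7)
The plan is to exploit the unitary integral representation
\[
H_x=\int_{F}U_{s,x}\,\sqrt{\norm{\tfrac{x(x-1)}{s(s-1)(s-x)}}}\,\norm{ds}
\]
from Proposition \ref{heckeop4} (in the gauge $t_0=0$, $t_m=1$), in which each $U_{s,x}$ is a unitary operator on $\mathcal H$. By Minkowski's integral inequality applied to the operator-valued integrand, for any $\psi\in V$ we get
\[
\norm{H_x\psi}_{\mathcal H}\le \norm{\psi}_{\mathcal H}\cdot \Phi(x),\qquad
\Phi(x):=\int_{F}\sqrt{\norm{\tfrac{x(x-1)}{s(s-1)(s-x)}}}\,\norm{ds}.
\]
Thus the first step is to show $\Phi(x)<\infty$ for $x\ne 0,1,\infty$, which will give both the inclusion $H_x(V)\subset \mathcal H$ and the extension to a bounded operator of norm $\le \Phi(x)$.

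Next I would check convergence and continuity: near each of $s=0,1,x$ the integrand behaves like $\norm{s-a}^{-1/2}\norm{ds}$, which is locally integrable, and near $s=\infty$ it behaves like $\norm{s}^{-3/2}\norm{ds}$, which is integrable. So $\Phi(x)$ is finite for $x\notin\{0,1,\infty\}$. Continuity of $x\mapsto H_x$ in the operator norm follows from dominated convergence: on compact subsets of $F\setminus\{0,1,\infty\}$ the integrand is uniformly dominated by an integrable function of $s$, and the integrand depends continuously on $x$ away from the singular points.

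For the asymptotics, I would do the standard scaling analysis. As $x\to t_i$ (where $t_i\in\{0,1\}$ in this gauge and the general case follows by an affine change of variable), the prefactor $\sqrt{\norm{x(x-1)}}$ behaves as $\norm{x-t_i}^{1/2}$ (times a bounded constant), while $\int_F\norm{s(s-1)(s-x)}^{-1/2}\norm{ds}$ picks up a logarithmic divergence: splitting the $s$-integral into the three regions $\{\norm{s-t_i}\le 2\norm{x-t_i}\}$, $\{2\norm{x-t_i}<\norm{s-t_i}<\tfrac12\}$, and the complement, the inner region gives an $O(1)$ contribution (by the substitution $s-t_i=(x-t_i)u$), the intermediate annulus gives $O(\log\tfrac1{\norm{x-t_i}})$, and the outer region is $O(1)$. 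Combining, $\Phi(x)=O(\norm{x-t_i}^{1/2}\log\tfrac1{\norm{x-t_i}})$ as $x\to t_i$. The case $x\to\infty$ is handled identically by the involution $s\mapsto 1/s$, or directly: $\sqrt{\norm{x(x-1)}}\sim\norm{x}$ and the integral contributes $\norm{x}^{-1/2}\log\norm{x}$ after the same three-region split.

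The main obstacle (such as it is) is bookkeeping the constants in the three-region estimate cleanly and carrying the gauge $t_0=0$, $t_m=1$ back to a general configuration of the $t_i$. Conceptually nothing is deep here: everything reduces to the elementary scalar integral $\Phi(x)$ because the $L^2$-norm of the integrand is absorbed into the unitaries $U_{s,x}$.
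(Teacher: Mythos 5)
Your approach is the same as the paper's. The paper bounds $\norm{H_x}$ by the scalar integral $\Phi(x)=\sqrt{\norm{x(x-1)}}\,E(x)$ exactly as you do (the bound follows from Proposition \ref{heckeop4} since the $U_{s,x}$ are unitary), recognizes it as the elliptic integral $E(x)$ of Subsection \ref{ellint}, and then invokes Lemma \ref{l2ell} together with the projective symmetry exchanging $t_i$ and $\infty$. Your three-region splitting is just a hands-on rederivation of Lemma \ref{l2ell}, so the two arguments are essentially identical.

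One point does need correcting: you claim continuity of $x\mapsto H_x$ \emph{in the operator norm} by dominated convergence, but this does not work. For fixed $s$, the map $x\mapsto U_{s,x}$ is only \emph{strongly} continuous (the principal series representation of $\bold G(F)$ is not norm continuous), so in the decomposition
$$
\norm{H_x-H_{x'}}\le \int_F |\nu_x(s)-\nu_{x'}(s)|\,\norm{ds}\;+\;\int_F \norm{U_{s,x}-U_{s,x'}}\,\nu_{x'}(s)\,\norm{ds}
$$
the second integral does not tend to zero pointwise in $s$, and dominated convergence gives nothing. What your argument does yield is \emph{strong} continuity: for fixed $\psi$, $\norm{(U_{s,x}-U_{s,x'})\psi}\to 0$ pointwise in $s$ with the integrand dominated by $2\norm{\psi}\,\nu_{x'}(s)$, so $H_{x'}\psi\to H_x\psi$. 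That is presumably all that Proposition \ref{bou} intends. Operator-norm continuity is a genuinely harder statement which the paper proves separately in Proposition \ref{compa}, by a different mechanism: pushing the measure $\nu_x^{\boxtimes n}$ forward along $\xi_n: F^n\to\bold G(F)$ to realize $H_x^n$ as integration against an $L^1$ density on the group, and then using Harish-Chandra's theorem to approximate by trace-class operators, with $L^1$-continuity in $x$ of the pushed-forward density giving norm continuity.
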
 

\begin{proof} 
By Proposition \ref{heckeop4}, 
\begin{equation}\label{normbound}
\norm{H_x}\le \int_{F}\sqrt{\norm{\frac{x(x-1)}{s(s-1)(s-x)}}}\norm{ds}.
\end{equation}
This integral is convergent (indeed, it is the elliptic integral considered in Subsection \ref{ellint}), so, using Lemma \ref{l2ell} and the symmetry 
between $t_i$ and $0,1,\infty$, we get 
the result. 
\end{proof}

We will see in Proposition \ref{asym} below that the bound of Proposition \ref{bou} is in fact sharp. 

We also have

\begin{proposition}\label{selfad} The operators $H_x$ are self-adjoint and pairwise commuting.  
\end{proposition}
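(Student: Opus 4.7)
The plan is to deduce both assertions from the geometric identities in Lemma \ref{comm}, using the explicit integral representation of $H_x$ from Theorem \ref{heckeop1}. Since $V$ is dense in $\mathcal H$ and $H_x$ is bounded by Proposition \ref{bou}, it will suffice to verify both claims on a suitable dense subspace and then extend by continuity.

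For self-adjointness I would start from the Schwartz kernel of $\Bbb H_x$ read off \eqref{Hxfor} and show that it is symmetric in its two arguments. The first identity in Lemma \ref{comm}, $HM_{x,E_x/s}\circ HM_{x,s}\cong \id$, equips the Hecke correspondence $Z_x$ with a canonical involution swapping the two projections; in the birational parametrization of Subsection \ref{birpar} this becomes an explicit substitution $(\bold y,s)\leftrightarrow (\bold z,s^*)$, where $s^*$ corresponds to the line $E_x/s$ in $HM_{x,s}(E)_x$. The measure $\sqrt{\norm{x(x-1)/s(s-1)(s-x)}}\,\norm{ds}$ computed in the proof of Theorem \ref{heckeop1} is intrinsic to $Z_x$ (it arises from the canonical isomorphism $\eta$ of line bundles on $Z_x$ and does not privilege either projection), so it is invariant under this involution. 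Performing the change of variable in \eqref{Hxfor} then produces the integral with $\bold y$ and $\bold z$ interchanged, giving symmetry of the (real) kernel and hence $H_x^*=H_x$.

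For pairwise commutativity the nontrivial case is $x\ne x'$. I would identify the Schwartz kernel of $H_x H_{x'}$ as the pushforward of the natural measure on the composed correspondence $Z_x\circ Z_{x'}\subset Bun^\circ\times Bun^\circ\times Bun^\circ$ to the outer two factors. The second identity in Lemma \ref{comm}, $HM_{x,s}\circ HM_{x',s'}\cong HM_{x',s'}\circ HM_{x,s}$, gives a canonical isomorphism $Z_x\circ Z_{x'}\cong Z_{x'}\circ Z_x$ respecting the outer projections, and one checks from the explicit formulas \eqref{heckecor} that it preserves the intrinsic measures. Pushing forward then yields the same kernel for $H_{x'}H_x$, whence $H_x H_{x'}=H_{x'}H_x$.

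The main technical obstacle I anticipate is making these kernel manipulations rigorous: one application of $\Bbb H_x$ already lands in $\widetilde V\not\subset V$, so the naive iterated integral representing $H_x H_{x'}\psi$ only converges on a proper open subset of parameter space, and one cannot Fubini-swap on all of $V$. I would circumvent this by first establishing both identities for test half-densities $\psi$ compactly supported inside a suitable generic open subset of the stable locus (chosen so that the composed integrand is jointly absolutely integrable and Fubini applies directly); the boundedness of the Hecke operators from Proposition \ref{bou}, together with density, then extends the identities from this subspace to all of $\mathcal H$.
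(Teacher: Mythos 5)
Your proposal is correct and follows essentially the same strategy as the paper: both derive self-adjointness from the symmetry of the Hecke correspondence (first part of Lemma \ref{comm}) together with invariance of the integration measure, derive commutativity from the second part of Lemma \ref{comm}, and handle the analytic subtleties via boundedness (Proposition \ref{bou}) plus density. The one place where the paper is more concrete than your sketch is the measure-invariance step: rather than appealing to the intrinsic nature of $\eta$, the paper writes $H_x=\int_F U_{s,x}\,d\nu_x(s)$ (Proposition \ref{heckeop4}), computes $U_{s,x}^\dagger=U_{\sigma_x(s),x}$ with $\sigma_x(s)=\frac{x(s-1)}{s-x}$, and verifies directly that $d\nu_x$ is $\sigma_x$-invariant; this explicit change of variable in $s$ alone is what replaces your proposed substitution $(\bold y,s)\leftrightarrow(\bold z,s^*)$ and avoids the need to argue that the construction of $\eta$ in the proof of Theorem \ref{heckeop1} "does not privilege either projection," a point which as stated is a little loose since $\eta$ is defined as a map from $T_1$ to $T_2\otimes K^{-2}$ and the symmetry is not manifest until one actually does the computation.
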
 

\begin{proof} By Proposition \ref{bou}, the operators $H_x$ are bounded. Also, it is easy to see that 
\begin{equation}\label{sigmax}
U_{s,x}^\dagger=U_{\sigma_x(s),x},\ \sigma_x(s):=\frac{x(s-1)}{s-x}
\end{equation}
and the measure of integration in \eqref{intunit} is invariant under the involution $s\mapsto \sigma_x(s)$.
This implies the first statement. The second statement follows from Lemma 
\ref{comm}. 
\end{proof} 

\begin{example}\label{ellint1} Let $m=1$. In this case the sets $Bun^\circ_0(F)$
and $Bun^\circ_1(F)$ consist of one point, so the space $\mathcal{H}$ 
is 1-dimensional. Thus the operator $U_{s,x}$ is the identity on this 1-dimensional space. So we see that the operator $\Bbb H_x$ is just a scalar 
function of $x$ given by the formula
$$
\Bbb H_x=E(x),
$$
where $E(x)$ is the elliptic integral defined in Subsection \ref{ellint}. 

\end{example} 

\subsection{Compactness of Hecke operators}\label{Compactness of Hecke operators}

Let $t_{m+1}=\infty$, $t_0,...,t_m\in \Bbb C$.

\begin{proposition}\label{compa} The Hecke operator $H_x$ is compact and continuous in the operator norm in $x$ when $x\ne t_i,\infty$.  
\end{proposition}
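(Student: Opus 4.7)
I would express $H_x$ as a norm limit of operators whose compactness can be verified directly, exploiting the integral representation of Proposition \ref{heckeop4}. For each $R > 1$, let $\Omega_R \subset F$ be the compact set $\{s \in F : \norm{s}, \norm{s-1}, \norm{s-x} \ge R^{-1} \text{ and } \norm{s} \le R\}$, and define
\[
H_x^R := \int_{\Omega_R} U_{s,x}\, \sqrt{\norm{\tfrac{x(x-1)}{s(s-1)(s-x)}}}\, \norm{ds}.
\]
Applying the boundedness estimate \eqref{normbound} to the complement $F \setminus \Omega_R$ shows $\norm{H_x - H_x^R} \to 0$ as $R \to \infty$, at a rate controlled by the elliptic integral of Subsection \ref{ellint}, uniformly for $x$ in any compact subset $K \subset F \setminus \{t_0, \ldots, t_{m+1}\}$. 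It therefore suffices to prove that each $H_x^R$ is compact; the norm-continuity of $x \mapsto H_x$ will then follow, since the integrand defining $H_x^R$ is jointly norm-continuous in $(s,x)$ on $\Omega_R \times K$.

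\textbf{Compactness of $H_x^R$.} Expanding
\[
(H_x^R)^\ast H_x^R = \int_{\Omega_R \times \Omega_R} U_{\sigma_x(s_1), x}\, U_{s_2, x}\, \rho(s_1)\, \rho(s_2)\, \norm{ds_1\, ds_2},
\]
with $\rho(s) := \sqrt{\norm{x(x-1)/(s(s-1)(s-x))}}$ and using \eqref{sigmax}, each factor $U_{\sigma_x(s_1), x} U_{s_2, x}$ is the tensor product over $i = 1, \ldots, m-1$ of the unitary action on the principal series induced by a two-parameter family of Möbius transformations of $\mathbb{P}^1(F)$. I would argue that for $N$ sufficiently large in terms of $m$, the $N$-fold power $((H_x^R)^\ast H_x^R)^N$ is Hilbert--Schmidt: the change of variables associated to the map $F^{2N} \to F^{m-1}$ sending $(s_1, s_1', \ldots, s_N, s_N')$ to the coordinate-wise iterated Möbius composition applied to a generic basepoint has full rank $m-1$ on an open set once $N \ge m-1$, by a dimension count on one-parameter families of Möbius transformations, and the resulting function kernel is square-integrable because $\Omega_R$ is compact and both the weight and the Jacobian remain bounded there. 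Since $(H_x^R)^\ast H_x^R$ is positive self-adjoint and admits a compact $N$-th power, it is itself compact by the spectral theorem; polar decomposition then forces $H_x^R$ to be compact as well.

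\textbf{Conclusion and main obstacle.} Passing to the norm limit and using that the ideal of compact operators is norm-closed in $\mathcal{B}(\mathcal{H})$, we conclude $H_x$ is compact. Combining the uniform convergence $H_x^R \to H_x$ on compact subsets of $x$ with norm-continuity of each $H_x^R$ in $x$ yields norm-continuity of $x \mapsto H_x$ on $F \setminus \{t_0, \ldots, t_{m+1}\}$. The main obstacle is the transversality/rank assertion in the middle step: one must show that compositions of $N \ge m-1$ two-parameter families of coordinate-wise Möbius transformations really do produce a map to $F^{m-1}$ of full rank on a nonempty open set, with Jacobian bounded on the compact integration domain, so that the iterated kernel is a genuine $L^2$-function rather than a distribution. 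This reduces to an explicit finite-dimensional computation inside $PGL_2(F)^{m-1}$, but it is the only nontrivial step beyond the boundedness already established in Proposition \ref{bou} and the formulas of Subsection \ref{Hecke operators}.
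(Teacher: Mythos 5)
Your outline shares the central idea with the paper's argument --- representing powers of $H_x$ via the composition map into $\bold G(F) = PGL_2(F)^{m-1}$ and using dominance of the iterated composition (which the paper isolates as Lemma \ref{gt}, with $n = \dim \bold G = 3(m-1)$). However, the paper takes a different and cleaner route from dominance to compactness, and the shortcut you propose contains a genuine gap.

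The gap is in the sentence asserting that the iterated kernel is square-integrable "because $\Omega_R$ is compact and both the weight and the Jacobian remain bounded there." Boundedness of the Jacobian is not the relevant condition. The Schwartz kernel of $((H_x^R)^\ast H_x^R)^N$ at a point $(y,z)$ is an integral of the original (smooth, bounded) density over the fiber of the evaluation map, divided by the Jacobian of that map; the density one obtains on $\Bbb P^1(F)^{m-1} \times \Bbb P^1(F)^{m-1}$ therefore blows up precisely where the Jacobian vanishes. Since the map from the compact domain $\Omega_R^{2N}$ (dimension $2N$) to the target (dimension $2(m-1)$) cannot be submersive everywhere once $2N > 2(m-1)$, the critical locus is nonempty, and you must actually estimate the order of vanishing of the Jacobian there and show that the resulting blowup is square-integrable. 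Compactness of $\Omega_R$ eliminates problems at infinity but says nothing about this local singularity. That this is genuinely delicate is visible elsewhere in the paper: Lemma \ref{tra} has to work in $L^{r/(r-1)}$ for a carefully chosen large even $r$ and invoke Young's inequality to control a closely related iterated kernel, and even then it only reaches $L^\infty$ after $r$ further convolutions.

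The paper sidesteps the issue entirely. Rather than pushing forward to $\Bbb P^1(F)^{m-1}\times \Bbb P^1(F)^{m-1}$ and trying to show the kernel is in $L^2$, it pushes $\nu_x^{\boxtimes n}$ forward to the \emph{group} $\bold G(F)$. Absolute continuity of the pushforward (which follows from dominance of $\xi_n$) together with finiteness of the total mass already gives $f_x \in L^1(\bold G(F))$ --- no $L^2$ control is needed. It then invokes Harish-Chandra's theorem (Theorem \ref{hcthm}): for smooth compactly supported $\phi$ the operator $\int \rho(g)\phi(g)\,dg$ is trace class, and since $\|\rho(g)\| = 1$ one has $\|H_x^n - A_\phi\| \le \|f_x - \phi\|_{L^1}$, so approximating $f_x$ in $L^1$ by such $\phi$ exhibits $H_x^n$ as a norm limit of compact operators. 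You do not use Harish-Chandra at all, which is what forces you into the harder $L^2$ question. The trade is: your version, if completed, is more elementary and avoids admissibility of the principal series, but the completion requires real analytic work on the critical locus of the evaluation map; the paper's version is shorter but uses a nontrivial black box. The norm-continuity in $x$ is handled compatibly in both: it reduces to $L^1$-continuity of $\nu_x$ and hence of $f_x$, which is also how the paper closes.

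A smaller point: the transversality/rank assertion you flag as the "main obstacle" is actually the easy part --- it is Lemma \ref{gt}, a two-line argument using only that the $Z_n$ are increasing irreducible closed subvarieties of a connected group generated by the image of $\xi$.
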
 

\begin{proof} We first show that $H_x$ is compact. Let $n=3m-3$. 
We will approximate the operator $H_x^n$ in the operator norm 
by trace class (hence compact) operators. Since the space of compact operators 
is closed with respect to the operator norm, this will imply that $H_x^n$ 
is compact, hence $H_x$ is also compact (as it is self-adjoint). 

We have 
$$
H_x^n=\int_{F^n}U_{s_1,x},...,U_{s_n,x}d\nu_x(s_1)...d\nu_x(s_n),
$$
where $\nu_x$ is the measure of integration in \eqref{intunit}, i.e., 
\begin{equation}\label{nuxs}
d\nu_x(s)=\sqrt{\norm{\frac{x(x-1)}{s(s-1)(s-x)}}}\norm{ds}.
\end{equation} 
Recall that $\bold G=PGL_2^{m-1}$. Let $\xi_n: \Bbb A^n\to \bold G$ be the rational map given by 
$$
\xi_n(s_1,...,s_n):=g_{s_1,x}...g_{s_n,x},
$$ 
where $g_{s,x}\in \bold G$ is defined by \eqref{gsx} (except that here we 
work with algebraic varieties and not yet with their points over $F$). 
Let $\lambda:=\xi_{n*}(\nu_x^{\boxtimes n})$ be the direct image of the 
measure $\nu_x^{\boxtimes n}$ under $\xi_n$. 
Since $n=\dim \bold G$, $\xi_n$ is dominant by Lemma \ref{gt}. 
Hence 
$$
d\lambda=f_x(g)dg,
$$
where $dg$ is the Haar measure on $\bold G(F)$ and $f_x$ is an $L^1$-function on $\bold G(F)$. Indeed, this follows from the fact that the preimage of a set of measure zero under
$\xi_{n}$ has measure zero. 

Recall that $W$ denotes the principal series representation of $PGL_2(F)$ 
on half-densities on $\Bbb P^1(F)$, and 
$W^{\otimes m-1}$ is the corresponding unitary representation of $\bold G(F)$. Let $\rho$ be the corresponding representation map. We have 
$$
H_x^n=\int_{\bold G(F)} \rho(g)f_x(g)dg.
$$

By Harish-Chandra's Theorem (Theorem \ref{hcthm}), for any smooth compactly supported function $\phi$ on $\bold G(F)$, the operator 
$$
A_\phi:=\int_{\bold G(F)} \rho(g)\phi(g)dg
$$
is trace class, therefore compact. But since the function $f_x$ is $L^1$, 
it can be approximated in $L^1$-norm by a
smooth compactly supported function $\phi$ with any precision $\varepsilon>0$. This implies that the operator $H_x^n$ is compact, hence so is $H_x$. 

It remains to show that $H_x$ is norm-continuous in $x$. To this end, fix $x$ and $\varepsilon>0$. Note that 
$\nu_y$ depends continuously on $y$ in the $L^1$ metric, hence so does $\nu_y^{\boxtimes n}$. Thus $f_y=\xi_{n*}(\nu_y^{\otimes n})/dg$ depends continuously on $y$ in the $L^1$ metric.
Therefore, there is $\delta>0$ such that if $\norm{y-x}<\delta$ then $\norm{f_y-f_x}_{L^1}<\varepsilon$. Then $\norm{H_y-H_x}<\varepsilon$, as claimed. 
\end{proof} 

\subsection{The leading eigenvalue} By the spectral theorem for compact self-adjoint operators, 
the commuting operators $H_x$, being compact, have a common orthogonal eigenbasis. Moreover, we have the following proposition. 

\begin{proposition}\label{posi}
The largest eigenvalue $\beta_0(x)$ of $H_x$ is positive and has multiplicity $1$, with a unique positive normalized eigenfunction $\psi_0$ (independent on $x$), and $\norm{H_x}=\beta_0(x)$. 
\end{proposition}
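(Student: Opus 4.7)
The plan is to apply a Perron--Frobenius/Jentzsch-type theorem to the positive compact operator $H_x$, and then use commutativity of the family $\{H_x\}$ to force the top eigenfunction to be independent of $x$.

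First I would observe that $H_x$ is a positive operator on $\mathcal{H}$, i.e., preserves the cone of non-negative half-densities. From formula (\ref{intunit}), $H_x=\int_F U_{s,x}\,d\nu_x(s)$ with $d\nu_x$ a strictly positive measure; and each $U_{s,x}$, being pullback by the birational map $g_{s,x}$ multiplied by a positive Jacobian factor, preserves the positive cone. Evaluating $\langle \psi,H_x\psi\rangle$ on any strictly positive $\psi\in V$ yields a strictly positive number, so $\norm{H_x}>0$. Since $H_x$ is compact and self-adjoint by Propositions \ref{compa} and \ref{selfad}, positivity forces $\norm{H_x}$ (rather than $-\norm{H_x}$) to be an extremal eigenvalue; call this eigenvalue $\beta_0(x)$.

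Next I would invoke the Perron--Frobenius theorem for compact positive operators: if $T$ is compact positive on $L^2$ and irreducible, then $\norm{T}$ is a simple eigenvalue with a strictly positive eigenfunction. Irreducibility here means no non-trivial measurable $A\subset Bun_0^\circ(F)$ satisfies $H_xL^2(A)\subset L^2(A)$. Since the Schwartz kernel of $H_x$ is supported on the proper subvariety $Z_x$, the Jentzsch theorem cannot be applied to $H_x$ directly; I would instead pass to the iterate $H_x^n$ for $n=3m-3=\dim(PGL_2^{m-1})$. By the argument used in the proof of Proposition \ref{compa}, the composition map $\xi_n\colon \Bbb A^n\to PGL_2^{m-1}$ is dominant, so the pushforward $(\xi_n)_*(\nu_x^{\boxtimes n})$ is absolutely continuous with respect to Haar measure, and the Schwartz kernel of $H_x^n$ is positive on an open set of full measure in $Bun_0^\circ(F)\times Bun_0^\circ(F)$. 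Applying the Perron--Frobenius/Jentzsch theorem to $H_x^n$ yields a simple top eigenvalue $\beta_0(x)^n$ with a unique strictly positive unit eigenfunction $\psi_0$; since the top eigenspace of $H_x$ is contained in that of $H_x^n$, the same $\psi_0$ is the unique positive eigenfunction of $H_x$, and the simplicity descends.

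Independence of $\psi_0$ from $x$ then follows from commutativity (Proposition \ref{selfad}): $H_y$ preserves the one-dimensional top eigenspace of $H_x$, so $H_y\psi_0=c(y)\psi_0$ for some scalar $c(y)$; positivity of $H_y$ and $\psi_0$ forces $c(y)>0$, and uniqueness of the positive eigenfunction of $H_y$ forces $c(y)=\beta_0(y)$. The main obstacle is the strict-positivity input needed for Perron--Frobenius: a single Hecke correspondence has positive codimension in $Bun_0^\circ\times Bun_0^\circ$, so one must pass to a power and invoke the dominance of $\xi_n$ to obtain an essentially everywhere positive kernel. Fortunately the required geometric input is already assembled in the compactness argument of Proposition \ref{compa}.
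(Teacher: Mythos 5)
Your argument is essentially the paper's own proof, which cites the Krein--Rutman theorem applied to the iterate $H_x^n$ whose Schwartz kernel becomes strictly positive for large $n$. You simply supply the details the paper leaves implicit — using the dominance of $\xi_n$ from the compactness argument to get the almost-everywhere positivity of the kernel, and using commutativity of the $H_x$ to deduce that $\psi_0$ is independent of $x$ — so the route is the same.
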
 

\begin{proof} This follows by the Krein-Rutman theorem (an infinite-dimensional analog of the Frobenius-Perron theorem, see \cite{KR}), since the Schwartz kernel of $H_x^n$ is a strictly positive function for large enough $n$.
\end{proof} 

\subsection{Asymptotics of Hecke operators as $x\to t_i$ and $x\to \infty$}\label{Asymptotics of Hecke operators}

When $x\to t_i$ and $x\to \infty$, Hecke operators have singularities. So 
we would like to compute the leading coefficient of the asymptotics. 

\begin{proposition}\label{asym} (i) In the sense of strong convergence, we have\footnote{This implies that the ``true" Hecke operator $H_x\norm{dx}^{-\frac{1}{2}}$ has the strong asymptotics $\norm{\frac{dz}{z}}^{-\frac{1}{2}}\log\norm{z^{-1}}S_i$ near the parabolic point $t_i$, where $z$ is the local coordinate near the parabolic point. Note that this statement is independent on the choice of the local coordinate $z$.}  
$$
H_x\sim \norm{x-t_i}^{\frac{1}{2}}\log(\norm{x-t_i}^{-1})S_i, \ x\to t_i; 
$$
in other words, for any $\psi\in \mathcal H$ we have 
$$
\frac{H_x\psi}{\norm{x-t_i}^{\frac{1}{2}}\log(\norm{x-t_i}^{-1})}\to S_i\psi
$$ 
as $x\to t_i$. Similarly, in the sense of strong convergence
$$
H_x\sim \norm{x}^{\frac{1}{2}}\log\norm{x},\ x\to \infty.\footnote{Note that the asymptotics formula near $t_i\in F$ appears to be different than at $x=\infty$: the power of the local coordinate is $\frac{1}{2}$ in the first case and $-\frac{1}{2}$ in the second case. This is in full agreement with projective invariance, however, since $H_x$ is actually a $-\frac{1}{2}$-density with respect to $x$, but we are treating it as a function by multiplying by $dx^{\frac{1}{2}}$ and thus breaking the symmetry between the parabolic points.}
$$

(ii) We have 
$$
\norm{H_x}\sim \norm{x-t_i}^{\frac{1}{2}}\log(\norm{x-t_i}^{-1}), \ x\to t_i;\ \norm{H_x}\sim \norm{x}^{\frac{1}{2}}\log\norm{x},
\ x\to \infty.
$$
\end{proposition}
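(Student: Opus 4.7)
The plan is to use the unitary-integral representation of Proposition \ref{heckeop4},
\[
H_x = \int_F U_{s,x}\,d\nu_x(s), \qquad d\nu_x(s)=\sqrt{\left\|\tfrac{x(x-1)}{s(s-1)(s-x)}\right\|}\,\|ds\|,
\]
and to analyze where the measure $d\nu_x$ concentrates as $x$ approaches a singularity of the kernel. I treat $x\to t_0=0$ in detail; the case $x\to t_i$ ($0<i\le m$) is identical after relabeling, and $x\to\infty$ is obtained by the change of variable $s\mapsto xt$, see Step~5 below.

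\textbf{Step 1 (total mass).} By Lemma \ref{l2ell} applied to the elliptic integral (see Example \ref{ellint1}),
\[
\int_F d\nu_x(s)\;\sim\;\|x\|^{1/2}\log(\|x\|^{-1}) \qquad (x\to 0).
\]

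\textbf{Step 2 (localization).} Fix a small $\epsilon\in(0,\tfrac12)$ and split $F=A_x\sqcup B_x\sqcup C_x$ with
$A_x=\{\|s\|\le\|x\|^{1-\epsilon}\}$, $B_x=\{\|x\|^{1-\epsilon}<\|s\|<\|x\|^{\epsilon}\}$, $C_x=\{\|s\|\ge \|x\|^{\epsilon}\}$.
A direct estimate (the substitution $s=xr$ handles $A_x$; elementary bounds handle $C_x$) gives
\[
\int_{A_x\cup C_x} d\nu_x(s)=O(\|x\|^{1/2}), \qquad \int_{B_x} d\nu_x(s)\sim \|x\|^{1/2}\log(\|x\|^{-1}),
\]
so the whole logarithmic mass is carried by $B_x$.

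\textbf{Step 3 (uniform limit of $g_{s,x}$ on $B_x$).} On $B_x$ we have simultaneously $\|s\|\to 0$ and $\|x/s\|\to 0$. Factoring,
\[
g_{s,x,i}(u)=\frac{(t_is-ux)(s-1)}{(s-u)(s-x)}=\frac{t_i(s-1)}{s-u}\cdot\frac{1-ux/(t_is)}{1-x/s} \;\xrightarrow[\,x\to 0,\,s\in B_x\,]{}\; \frac{t_i}{u},
\]
uniformly on compact sets of $u\in F\setminus\{0\}$. By Proposition \ref{Si}, the $m$-tuple of M\"obius transformations $u\mapsto t_i/u$ is exactly $S_0$, so $g_{s,x}\to g^{(0)}\in\mathbf G(F)$ with $\rho(g^{(0)})=S_0$ on $\mathcal H$.

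\textbf{Step 4 (strong convergence of the integral).} Since $\rho$ is strongly continuous and all $U_{s,x}$ are unitary, for fixed $\psi\in \mathcal H$ split
\[
H_x\psi = \int_{A_x\cup C_x}U_{s,x}\psi\,d\nu_x(s)+\int_{B_x}U_{s,x}\psi\,d\nu_x(s).
\]
The first term is $O(\|x\|^{1/2})\|\psi\|$ by Step 2, so it is negligible after division by $\|x\|^{1/2}\log(\|x\|^{-1})$. In the second term, by Step 3 and strong continuity of $\rho$, $U_{s,x}\psi\to S_0\psi$ in $\mathcal H$ uniformly in $s\in B_x$; combined with Step 2 this gives
\[
\frac{H_x\psi}{\|x\|^{1/2}\log(\|x\|^{-1})}\longrightarrow S_0\psi\qquad\text{in }\mathcal H.
\]
This proves (i) for $x\to t_0$; the other finite parabolic points are symmetric.

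\textbf{Step 5 ($x\to\infty$ and part (ii)).} For $x\to\infty$, substitute $s=xt$: the logarithmic mass of $d\nu_x$ concentrates in $\|x\|^{-1+\epsilon}<\|t\|<\|x\|^{-\epsilon}$, and in that range $g_{s,x,i}(u)\to u$, corresponding to $S_{m+1}$, which is the identity under the chosen identification $\mathcal H^0\cong\mathcal H^1$. For part (ii), the upper bounds are Proposition \ref{bou}; the matching lower bounds follow from (i), taking any $\psi\in\mathcal H$ with $S_i\psi\ne 0$ (existing because $S_i$ is unitary) and observing $\|H_x\psi\|/\|\psi\|\sim\|x-t_i\|^{1/2}\log(\|x-t_i\|^{-1})$.

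\textbf{Main obstacle.} The delicate point is Step~4: one needs the strong convergence $U_{s,x}\psi\to S_0\psi$ to be uniform in $s\in B_x$ even though $B_x$ itself moves with $x$, so that the error can be integrated against the logarithmically divergent measure $d\nu_x$ and still be $o(\|x\|^{1/2}\log(\|x\|^{-1}))$. This requires a quantitative modulus of continuity for $\rho$ on a compact subset of $\mathbf G(F)$ containing $\{g_{s,x}:s\in B_x\}$, matched to the explicit rate at which $g_{s,x}\to g^{(0)}$ coming from Step 3. The absolute constant in the asymptotic is pinned down by Step 1 and the uniformity of the convergence.
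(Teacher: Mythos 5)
Your proposal follows the same skeleton as the paper's proof: start from the unitary-integral representation of Proposition \ref{heckeop4}, use the elliptic-integral asymptotics of Lemma \ref{l2ell} for the total mass, observe that the measure concentrates where the M\"obius transformations $g_{s,x}$ tend to the limit element giving $S_i$, and conclude strong convergence. The genuine difference is in how you handle arbitrary $\psi\in\mathcal H$: the paper first invokes Proposition \ref{bou} to note that $\norm{H_x}/(\norm{x}^{1/2}\log\norm{x})$ is uniformly bounded, so strong convergence need only be checked on the dense subspace of continuous $\psi$, where the measure convergence together with pointwise evaluation gives the result immediately. You instead carry out an explicit three-region decomposition $F=A_x\sqcup B_x\sqcup C_x$ and argue directly for general $\psi$. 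This works, but two corrections are needed.

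First, the estimate in Step 2 is not right as stated: with $A_x=\{\norm{s}\le\norm{x}^{1-\epsilon}\}$, the substitution $s=xr$ gives $\nu_x(A_x)\approx\norm{x}^{1/2}\int_{\norm{r}\le\norm{x}^{-\epsilon}}\norm{dr}/\sqrt{\norm{r(r-1)}}\sim\epsilon\,\norm{x}^{1/2}\log\norm{x}^{-1}$, and similarly for $C_x$; the tails are therefore $O(\epsilon\,\norm{x}^{1/2}\log\norm{x}^{-1})$ rather than $O(\norm{x}^{1/2})$. This is still enough: using $\norm{U_{s,x}\psi-S_0\psi}\le 2\norm{\psi}$ on the tails, you get $\limsup_{x\to 0}\norm{H_x\psi/(\norm{x}^{1/2}\log\norm{x}^{-1})-S_0\psi}\le C\epsilon\norm{\psi}$, and $\epsilon$ is free, so let $\epsilon\to 0$. (Or let $\epsilon=\epsilon(x)\to 0$ slowly.) Second, the ``main obstacle'' you flag in Step~4 is not actually there. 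Step 3 already gives uniform convergence $g_{s,x}\to g^{(0)}$ in $\mathbf{G}(F)$ over $s\in B_x$: for any $\mathbf{G}(F)$-neighborhood $N$ of $g^{(0)}$, one has $g_{s,x}\in N$ for all $s\in B_x$ once $\norm{x}$ is small. Strong continuity of $\rho$ at $g^{(0)}$ — a purely qualitative fact, no modulus needed — then yields $\sup_{s\in B_x}\norm{U_{s,x}\psi-S_0\psi}\to 0$ directly. Both of these points evaporate if you adopt the paper's reduction to a dense subspace first, which is a simpler route and worth knowing. Your proof of (ii), using the upper bound of Proposition \ref{bou} together with (i) applied to any nonzero $\psi$, is correct and arguably slightly cleaner than the paper's, which routes through the Perron--Frobenius eigenvector of Proposition \ref{posi}.
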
 

\begin{proof} (i) We establish the asymptotics at $x\to \infty$; the case 
$x\to t_i$ then follows by symmetry. By Proposition \ref{bou}, $\frac{\norm{H_x}}{\norm{x}^{\frac{1}{2}}\log\norm{x}}$ is bounded when $x\to \infty$. Therefore, it suffices to show 
that 
\begin{equation}\label{limi}
\lim_{x\to \infty}\frac{H_x\psi}{\norm{x}^{\frac{1}{2}}\log\norm{x}}=\psi
\end{equation} 
for $\psi$ belonging to a dense subspace of $\mathcal H$. In particular, it is enough to prove this in the case when $\psi$ is continuous. 

We have 
$$
(U_{s,\infty}\phi)(u_1,....,u_{m-1})=
$$
$$
\norm{\prod_{i=1}^{m-1}\frac{s(s-1)}{(s-u_i)^2}}^{\frac{1}{2}}\phi\left(\frac{u_1(s-1)}{s-u_1},...,
\frac{u_{m-1}(s-1)}{s-u_{m-1}}\right). 
$$
Also by Lemma \ref{l2ell} the density 
$$
\frac{1}{\norm{x}^{\frac{1}{2}}\log\norm{x}}\sqrt{\norm{\frac{x(x-1)}{s(s-1)(s-x)}}}\norm{ds}=
\norm{1-\frac{1}{x}}^{\frac{1}{2}}\frac{E_x}{\norm{x}^{-\frac{1}{2}}\log\norm{x}}
$$
converges as $x\to \infty$ to $\delta_\infty$.  
This implies \eqref{limi} for continuous $\psi$ 
by formula \eqref{intunit}, since 
$$
\lim_{s\to \infty}\frac{u(s-1)}{s-u}=u.
$$ 

(ii) By Proposition \ref{posi}, 
$\norm{H_x}=(H_x\psi_0,\psi_0)$, 
so the statement follows from (i).  
\end{proof} 

\begin{remark} It follows from Proposition \ref{compa} 
that the convergence in Proposition \ref{asym} is only strong and not in the norm, since a sequence of compact operators cannot converge in the norm to an invertible operator.  
\end{remark} 

\subsection{The spectral theorem} \label{The spectral theorem}
Here is one of our main results. 

\begin{theorem}\label{specthe} (i) There is an orthogonal decomposition 
$$
\mathcal H=\oplus_{k=0}^\infty \mathcal H_k,
$$ 
where $\mathcal H_k$ are the eigenspaces of $H_x$: 
$$
H_x\psi=\beta_k(x)\psi,\ \psi\in \mathcal H_k, 
$$
where $\beta_k$ are continuous real functions of $x$ defined for $x\ne t_0,...,t_m,\infty$. 

(ii) 
$$
\beta_k(x)\sim \norm{x}^{\frac{1}{2}}\log\norm{x},
\ x\to \infty.
$$
In particular, the function $\beta_k$ is not identically zero for any $k$. Thus the spaces $\mathcal H_k$ are finite dimensional.

(iii) There is a leading positive eigenvalue $\beta_0(x)$ of $H_x$ such that 
$$
|\beta_i(x)|<\beta_0(x)
$$ 
for all $i>0$, and the corresponding eigenspace $\mathcal H_0$ is 1-dimensional, spanned by a positive eigenfunction $\psi_0(\bold y)$. Moreover, $\norm{H_x}=\beta_0(x)$. 
\end{theorem}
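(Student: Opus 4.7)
The strategy is to combine three ingredients already in hand: compactness and norm-continuity of $H_x$ (Proposition \ref{compa}), commutativity and self-adjointness of the family (Proposition \ref{selfad}), and the strong asymptotics (Proposition \ref{asym}). The bridge from the spectral theorem for a single compact self-adjoint operator to a genuine joint decomposition is the triviality of the common kernel $\bigcap_x \ker H_x$, and establishing this is the central technical step.

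To prove (i), I would first show $\bigcap_x \ker H_x = 0$: if $H_x\psi = 0$ for all $x \ne t_i,\infty$, then dividing by $\norm{x-t_i}^{1/2}\log\norm{x-t_i}^{-1}$ and letting $x \to t_i$, Proposition \ref{asym}(i) forces $S_i\psi = 0$, and the invertibility of $S_i$ (Proposition \ref{invo}) gives $\psi = 0$. With this, the standard joint spectral theorem for a commuting family of compact self-adjoint operators with trivial common kernel applies. Concretely, fix $x_0$, split $\mathcal{H}$ into the (finite-dimensional) eigenspaces of $H_{x_0}$ for its nonzero eigenvalues plus $\ker H_{x_0}$, simultaneously diagonalise the commuting $\{H_x\}$ on each nonzero eigenspace, and iterate on the residual kernel using a different $x_1$; since the common kernel vanishes, the procedure exhausts $\mathcal{H}$ and yields the orthogonal decomposition $\mathcal{H} = \bigoplus_k \mathcal{H}_k$ into finite-dimensional joint eigenspaces. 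The function $\beta_k(x) = \langle H_x\psi_k,\psi_k\rangle$ for any unit $\psi_k \in \mathcal{H}_k$ is continuous in $x$ by the norm-continuity half of Proposition \ref{compa}.

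For (ii), the $x\to\infty$ asymptotic in Proposition \ref{asym}(i) reads $H_x/(\norm{x}^{1/2}\log\norm{x}) \to I$ strongly, since under the present identification $S_{m+1}$ is the identity on $\mathcal{H}$. Evaluating on a unit vector $\psi_k \in \mathcal{H}_k$, the left-hand side equals $\beta_k(x)/(\norm{x}^{1/2}\log\norm{x})\cdot\psi_k$, and passing to the limit gives $\beta_k(x) \sim \norm{x}^{1/2}\log\norm{x}$. In particular $\beta_k \not\equiv 0$, and picking any $x$ with $\beta_k(x) \ne 0$ shows $\mathcal{H}_k$ sits inside the finite-dimensional eigenspace of the compact operator $H_x$ for a nonzero eigenvalue.

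For (iii), positivity and simplicity of the leading eigenvalue $\beta_0(x)$, together with $\norm{H_x} = \beta_0(x)$, are already the content of Proposition \ref{posi}. The strict inequality $|\beta_i(x)| < \beta_0(x)$ for $i>0$ requires one additional step: apply Krein-Rutman not to $H_x$ but to a power $H_x^n$ with strictly positive Schwartz kernel (such $n$ exists by the argument in Proposition \ref{posi}); the strong form of the theorem then yields that $\beta_0(x)^n$ strictly dominates in absolute value every other eigenvalue of $H_x^n$, and since the action on $\mathcal{H}_i$ is by $\beta_i(x)^n$, we conclude $|\beta_i(x)|^n < \beta_0(x)^n$. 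The main obstacle in the whole argument is the iterative construction in (i): ensuring that the orthogonal complement of $\bigoplus_k \mathcal{H}_k$ is genuinely zero rather than merely orthogonal to each individual eigenvector, which is precisely where the asymptotics-based triviality of the common kernel is indispensable.
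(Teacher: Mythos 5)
Your proof is correct and takes the same route as the paper's terse argument: the spectral theorem for the commuting compact self-adjoint family $\{H_x\}$ combined with the strong asymptotics of Proposition \ref{asym} to kill the common kernel and extract $\beta_k(x)\sim\norm{x}^{1/2}\log\norm{x}$. Your supplementary observation for (iii) is a fair one: the \emph{statement} of Proposition \ref{posi} alone does not exclude $-\beta_0(x)$ from the spectrum, and the strict inequality $|\beta_i(x)|<\beta_0(x)$ requires invoking the strong Jentzsch/Krein--Rutman conclusion on $H_x^n$ with strictly positive kernel, which is precisely what the \emph{proof} of Proposition \ref{posi} does even though its statement does not record the strict modulus domination.
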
 

\begin{proof} (i) This follows from Propositions \ref{selfad} and \ref{compa} and the 
spectral theorem for compact self-adjoint operators. 

(ii) This follows from (i) and Proposition \ref{asym}. 

(iii) Follows from Proposition \ref{posi}. 
\end{proof} 

Theorem \ref{specthe} implies Conjecture 1.2 of \cite{EFK2} 
for $G=PGL_2$ and curves of genus zero.  

\begin{corollary} On every compact subset $C\subset F\setminus \lbrace{t_0,...,t_m\rbrace}$,  
the sequence $\beta_k(x)$ is equicontinuous and converges uniformly to $0$ as $k\to \infty$. 
\end{corollary}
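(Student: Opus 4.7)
The plan is to combine two facts established above: the joint eigenspace decomposition of the commuting family $\{H_x\}$ from Theorem \ref{specthe}(i), which ties $\beta_k(x)-\beta_k(y)$ directly to the operator norm $\|H_x-H_y\|$, and the compactness of each individual $H_x$ from Proposition \ref{compa}, which forces the eigenvalue sequence at any fixed point to decay. Combining these via a subsequence argument yields the uniform statement.

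First, for equicontinuity. Since all $H_x$ commute and are self-adjoint, Theorem \ref{specthe}(i) asserts that the spaces $\mathcal{H}_k$ form a single (joint) orthogonal decomposition of $\mathcal{H}$ on which each $H_x$ acts by the scalar $\beta_k(x)$. Picking any unit vector $\psi_k\in \mathcal{H}_k$, we get
$$
|\beta_k(x)-\beta_k(y)|=\|(H_x-H_y)\psi_k\|\le \|H_x-H_y\|_{\mathrm{op}}.
$$
The right-hand side is independent of $k$. By Proposition \ref{compa}, the map $x\mapsto H_x$ is norm-continuous on $F\setminus\{t_0,\ldots,t_m,\infty\}$, hence uniformly norm-continuous on the compact set $C$. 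Thus for every $\eps>0$ there is $\delta>0$ so that $x,y\in C$ with $|x-y|<\delta$ implies $\|H_x-H_y\|_{\mathrm{op}}<\eps$, giving the claimed equicontinuity of $\{\beta_k\}$ on $C$, uniformly in $k$.

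Next, for uniform convergence to $0$. Note the family is uniformly bounded on $C$: by Theorem \ref{specthe}(iii), $|\beta_k(x)|\le \|H_x\|_{\mathrm{op}}=\beta_0(x)$, and $\beta_0$ is continuous on $C$. Suppose, for contradiction, that $\beta_k\not\to 0$ uniformly on $C$. Then there exist $\eps>0$, a subsequence $k_j\to\infty$, and points $x_j\in C$ with $|\beta_{k_j}(x_j)|\ge \eps$. Since $C$ is compact, we may pass to a further subsequence and assume $x_j\to x^*\in C$. Equicontinuity, applied at $x^*$, yields $|\beta_{k_j}(x^*)-\beta_{k_j}(x_j)|<\eps/2$ for $j$ sufficiently large, hence $|\beta_{k_j}(x^*)|\ge \eps/2$ for all such $j$. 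But $H_{x^*}$ is compact and self-adjoint, so its eigenvalues (listed with multiplicity in the fixed joint decomposition) tend to zero as $k\to\infty$; in particular $\beta_{k_j}(x^*)\to 0$, contradicting the lower bound.

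The two main ingredients, namely joint diagonalizability and norm-continuity in $x$, are already in place, so there is no serious obstacle; the only delicate point is to use the \emph{joint} eigenspace decomposition so that the label $k$ indexes the \emph{same} subspace $\mathcal H_k$ for every $x$, which is what makes the inequality $|\beta_k(x)-\beta_k(y)|\le \|H_x-H_y\|_{\mathrm{op}}$ meaningful and allows the Arzel\`a--Ascoli-type subsequence argument to go through.
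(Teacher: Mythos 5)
Your proof is correct and follows essentially the same route as the paper: equicontinuity from $|\beta_k(x)-\beta_k(y)|\le\|H_x-H_y\|$ plus norm-continuity of $x\mapsto H_x$ on the compact set $C$, and then a compactness argument for uniform decay. The only cosmetic difference is that the paper invokes Arzel\`a--Ascoli to extract a uniformly convergent subsequence and then contradicts pointwise decay, whereas you run the subsequence argument directly (transfer the lower bound to a limit point $x^*$ via equicontinuity and contradict $\beta_k(x^*)\to 0$); these are interchangeable.
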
 

\begin{proof} The function $x\mapsto H_x$ is continuous on $C$ by Proposition \ref{compa}, therefore by Cantor's theorem it is uniformly continuous. So for any $\varepsilon>0$ there is $\delta>0$ 
such that if $|y-x|<\delta$ then $\norm{H_y-H_x}<\varepsilon$. Then 
$|\beta_k(y)-\beta_k(x)|<\varepsilon$ for all $i$, which proves equicontinuity. 

Suppose $\beta_k$ does not go uniformly to $0$ on $C$. Then 
there is a sequence $k_j$ such that ${\rm sup}_C|\beta_{k_j}(x)|\ge \varepsilon$ 
for some $\varepsilon>0$. The sequence $\beta_k$ is uniformly bounded (since so are the operators $H_x$ for $x\in C$), so by the Ascoli-Arzela theorem, the subsequence 
$\beta_{k_j}$ has a uniformly convergent subsequence, to some 
nonzero function $\beta(x)$. This is a contradiction since 
by Proposition \ref{compa}, for any $x$ we have $\lim_{k\to \infty}\beta_k(x)=0$. 
\end{proof} 

Now recall that we have an action of the group $\Bbb V=(\Bbb Z/2)^{m+1}$ on $\mathcal H$ 
by the operators $S_i$. Since by Proposition \ref{asym}, $S_i$ are the leading 
coefficients
of $H_x$ as $x$ approaches $t_i$, it follows that $S_i$ act by scalars ($\pm 1$) on each eigenspace 
$\mathcal H_k$. Let us denote this character of $\Bbb V$ by $\chi_k$; i.e., 
$S_iv=\chi_k(S_i)v$ for $v\in \mathcal{H}_k$. Note that $\chi_0=1$. We thus obtain 

\begin{corollary}\label{asymeig} One has 
$$
\beta_k(x)\sim \norm{x-t_i}^{\frac{1}{2}}\log(\norm{x-t_i}^{-1})\chi_k(S_i), \ x\to t_i.
$$ 
\end{corollary}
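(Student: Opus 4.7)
The plan is to combine the three ingredients already in place: the strong asymptotics of $H_x$ near $t_i$ from Proposition \ref{asym}(i), the fact that each finite dimensional eigenspace $\mathcal{H}_k$ from Theorem \ref{specthe} is preserved by every Hecke operator (since the $H_x$ all commute), and the scalar action of $S_i$ on $\mathcal{H}_k$ that underlies the definition of $\chi_k$.

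Concretely, I would pick a normalized eigenvector $\psi \in \mathcal{H}_k$, so that $H_x \psi = \beta_k(x) \psi$ for all $x \ne t_0,\ldots,t_m,\infty$ and in particular
$$
\beta_k(x) = (H_x \psi, \psi).
$$
Dividing by the scalar $\norm{x-t_i}^{1/2}\log(\norm{x-t_i}^{-1})$ and applying Proposition \ref{asym}(i) in the form of strong convergence of $H_x/(\norm{x-t_i}^{1/2}\log(\norm{x-t_i}^{-1}))$ to $S_i$, we obtain
$$
\frac{\beta_k(x)}{\norm{x-t_i}^{1/2}\log(\norm{x-t_i}^{-1})} = \left(\frac{H_x\psi}{\norm{x-t_i}^{1/2}\log(\norm{x-t_i}^{-1})},\psi\right) \longrightarrow (S_i\psi,\psi) = \chi_k(S_i)(\psi,\psi) = \chi_k(S_i)
$$
as $x \to t_i$, since $S_i$ acts on the eigenspace $\mathcal{H}_k$ by the scalar $\chi_k(S_i) \in \{\pm 1\}$. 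This is exactly the asserted asymptotic.

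The only point worth checking carefully is that $S_i$ does in fact act by a scalar on $\mathcal{H}_k$, which is what justifies the notation $\chi_k$. This was stated in the paragraph preceding the corollary, but to make the argument self-contained one observes that Proposition \ref{asym}(i) together with the strong convergence $H_x\psi/(\norm{x-t_i}^{1/2}\log(\norm{x-t_i}^{-1})) \to S_i\psi$ identifies $S_i\psi$ as the strong limit of elements in $\mathcal{H}_k$ (each $H_x\psi = \beta_k(x)\psi$ lies on the line $\mathbb{C}\psi$), so $S_i$ preserves each $\mathcal{H}_k$; since $S_i^2 = \mathrm{Id}$ and the $S_i$ mutually commute with each other and with $H_x$, they are simultaneously diagonalizable on the finite dimensional space $\mathcal{H}_k$. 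There is no real obstacle here — this is essentially a one-line deduction from the strong asymptotics and the spectral theorem — so the only subtlety is simply making explicit that the diagonal action of $\Bbb V$ commutes with the decomposition and descends to a single character on each $\mathcal{H}_k$.
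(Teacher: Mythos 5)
Your argument is correct and matches the paper's intended proof: the paper derives the corollary directly from Proposition \ref{asym}(i) applied to an eigenvector $\psi \in \mathcal{H}_k$, together with the observation (stated in the paragraph preceding the corollary) that $S_i$ acts on $\mathcal{H}_k$ by the scalar $\chi_k(S_i)$. Your fleshed-out justification that $S_i$ preserves each line $\mathbb{C}\psi$ — because $S_i\psi$ is the strong limit of scalar multiples $\beta_k(x)\psi / \bigl(\norm{x-t_i}^{1/2}\log\norm{x-t_i}^{-1}\bigr)$ and a line is closed — is exactly what underlies the paper's brief assertion that "$S_i$ act by scalars on each eigenspace."
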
 

The following corollary may be interpreted as the statement that the operator 
$H_x$ is smooth in $x$. 

\begin{corollary} The function $\beta_k(x)$ is smooth in $x$ for $x\ne t_i,\infty$ 
(meaning locally constant in the non-archimedian case). 
\end{corollary}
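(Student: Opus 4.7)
The plan is to reduce the statement to a smoothness claim about scalar matrix coefficients and then handle the two cases separately. Fix $k$ and a nonzero eigenvector $\psi\in\mathcal{H}_k$. Since $H_x\psi=\beta_k(x)\psi$ for every admissible $x$, we may write
$$
\beta_k(x)=\frac{(H_x\psi,\psi)}{\|\psi\|^{2}},
$$
so it suffices to show that the scalar function $x\mapsto (H_x\psi,\psi)$ is smooth in the appropriate sense on $F\setminus\{t_0,\ldots,t_m,\infty\}$. This is a much weaker requirement than smoothness of $H_x$ as an operator-valued function, and in particular does not require any control over how eigenspaces branch as $x$ varies.

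For the archimedean case ($F=\mathbb{R}$ or $\mathbb{C}$), I would invoke the result announced in the introduction and proved in Subsection \ref{The differential equation for eigenvalues}: every eigenvalue $\beta_k(x)$ is a solution in $x$ of the $SL_2$-oper equation associated with the eigenvalues of the quantum Hitchin (Gaudin) Hamiltonians on $\mathcal{H}_k$. This oper equation is a second-order linear ODE whose coefficients are rational functions of $x$ with poles only at $t_0,\ldots,t_{m+1}$. Since the coefficients are real-analytic on the complement of these points, every solution is real-analytic there; in particular $\beta_k$ is $C^\infty$ in $x$, finishing this case.

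For the non-archimedean case, I would argue directly that the matrix coefficient $(H_x\psi,\psi)$ is locally constant. Using the representation-theoretic form of $H_x$ from Proposition~\ref{heckeop4},
$$
H_x=\int_F U_{s,x}\sqrt{\norm{\tfrac{x(x-1)}{s(s-1)(s-x)}}}\norm{ds},
$$
and recalling that $U_{s,x}$ acts via the element $g_{s,x}\in PGL_2(F)^{m-1}$ in the principal-series representation $W^{\otimes m-1}$, I would split the integral at a small ultrametric disk $D_\epsilon=\{s:\norm{s-x_0}<\epsilon\}$ around a fixed base point $x_0$. The ultrametric property forces $\norm{s-x}=\norm{s-x_0}$ on the complement of $D_\epsilon$ for all $x$ with $\norm{x-x_0}<\epsilon$, so on that complement both the density $d\nu_x$ and (by smoothness of the principal-series representation, applied to the finite-dimensional, hence smooth, space $\mathcal{H}_k$) the vector $U_{s,x}\psi$ are locally constant in $x$. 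On $D_\epsilon$ one handles the contribution by a change of variable $s\mapsto s+(x-x_0)$, which is measure-preserving and, for $\epsilon$ small, maps $g_{s,x}$ to an element differing from $g_{s-(x-x_0),x_0}$ by a group element that fixes $\psi$. Combining these, $(H_x\psi,\psi)=(H_{x_0}\psi,\psi)$ on a neighborhood of $x_0$.

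The main obstacle is the non-archimedean step: arranging the substitution in $s$ near the singular point so that the product of the transported measure and the transported group action exactly reproduces the integrand at $x_0$ on the tiny disk $D_\epsilon$, while simultaneously using that $\psi\in\mathcal{H}_k$ lies in a smooth vector space for the ambient group action on a cofinite-dimensional compact-open subgroup. Once the neighborhood is chosen so that this subgroup stabilizes the relevant projection of $\psi$, the identity $H_x\psi=H_{x_0}\psi$ holds in $\mathcal{H}$ and the local constancy of $\beta_k$ follows.
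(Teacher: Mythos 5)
Your opening reduction to the matrix coefficient $(H_x\psi,\psi)$ with $\psi\in\mathcal{H}_k$ fixed is exactly the first step of the paper's proof, and it is the right move.

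For the archimedean case you invoke the oper differential equation of Subsection \ref{The differential equation for eigenvalues}, which is established later in the paper (via the Gaudin operators and Proposition \ref{opereq}). That route does give the conclusion (indeed real-analyticity), but it is a much heavier tool than needed and a forward reference at this point in the exposition; the paper's own argument is simply that the integral defining $(H_x\psi,\psi)$, with $\psi$ held fixed, depends smoothly on $x$, and needs no differential equation.

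The non-archimedean branch has a genuine gap. The step that closes your $D_\epsilon$-decomposition is the assertion that $\mathcal{H}_k$ is "finite-dimensional, hence smooth," so that $\psi$ is fixed by a compact open subgroup $K\subset\bold G(F)$, and one can conclude $U_{s,x}\psi=U_{s,x'}\psi$ whenever $g_{s,x}^{-1}g_{s,x'}\in K$. But $\mathcal{H}_k$ is a joint eigenspace of the Hecke operators $H_x$, not a $\bold G(F)$-subrepresentation (if it were invariant and finite-dimensional under $\bold G(F)$ it would have to be trivial), so finite-dimensionality tells you nothing about $K$-invariance of its vectors. In fact the explicit eigenfunctions computed in Section \ref{nonar} for four parabolic points contain terms of the form $-(1-q^{-1})\log_q\norm{y-r}-1$, which are not locally constant near $y=r$, hence are not smooth vectors for the $PGL_2(F)^{m-1}$-action. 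So your proposed stabilizer argument fails, and the ultrametric substitution on the small disk $D_\epsilon$ does not close. The correct route, as in the paper, is to exploit only the explicit $x$-dependence of the integral: outside a shrinking ball around $x_0$ the ultrametric inequality makes $d\nu_x(s)$ literally equal to $d\nu_{x_0}(s)$, and the remaining $x$-dependence is handled by the regularity of $\psi$ (which follows from the eigenvalue relation $\psi=\beta_k(x_0)^{-1}H_{x_0}\psi$), not by any claim that $\psi$ is a $\bold G(F)$-smooth vector.
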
 

\begin{proof} This follows from the fact that $\beta_k(x)=(H_x\psi,\psi)$ 
for an ($x$-independent) normalized eigenfunction $\psi\in \mathcal{H}_k$. The integral defining 
$(H_x\psi,\psi)$ is a smooth function of $x$. 
\end{proof} 

Let us also derive a formula for the  Schwartz kernel $K(x_1,...,x_r,\bold y,\bold z)$ 
of the product $H_{x_1}...H_{x_r}$ of Hecke operators 
(which may be a singular distribution) in terms of eigenfunctions, i.e. as a ``reproducing kernel". We may choose an orthonormal basis $\psi_{k,j}$ in each $\mathcal{H}_k$, so that all $\psi_{k,j}$ are real-valued; this 
is possible since the Schwartz kernel of a power of the Hecke operator is 
real-valued. Then we have 
\begin{equation}\label{reproker} 
K(x_1,...,x_r,\bold y,\bold z)=\sum_{k,j}\beta_k(x_1)...\beta_k(x_r)\psi_{k,j}(\bold y)\psi_{k,j}(\bold z). 
\end{equation} 

In particular, for $n\ge 1$ we have 
\begin{equation}\label{trac}
{\rm Tr}(H_{x_1}...H_{x_n})=\sum_k d_k\beta_k(x_1)...\beta_k(x_N), 
\end{equation} 
where $d_k:=\dim \mathcal H_k$, provided that this series is absolutely 
convergent. 

\subsection{The subleading term of the asymptotics of $H_x$ as $x\to \infty$.} 

\begin{proposition}\label{limexists} The operator $\widetilde{H}_x:=\norm{x}^{-\frac{1}{2}}H_x-\log\norm{x}$ has a strong limit $M=\widetilde{H}_\infty$ as $x\to \infty$, which is an unbounded self-adjoint operator on 
$\mathcal H$, acting diagonally in the basis of eigenfunctions $\psi_n(\bold y)$: 
$$
M \psi_k=\mu^{(k)}\psi_k.
$$
That is, for any $\psi$ in the domain of $M$, we have 
$$
\lim_{x\to \infty}\widetilde{H}_x\psi=M\psi.
$$
Here 
$$
\mu^{(k)}=\lim_{x\to \infty} (\norm{x}^{-\frac{1}{2}}\beta_k(x)-\log\norm{x}).
$$

Namely, the operator $M$ is given by the formula
\begin{gather*}\label{Hxfor1} 
(M\psi)(y_1,...,y_m):=\\
\int_F \left( \psi(y_1-t_1s,...,y_m-t_ms)+
\frac{\psi\left(\frac{y_1}{1-y_1s^{-1}},...,\frac{y_m}{1-y_ms^{-1}}\right)}{\prod_{i=1}^m \norm{1-y_is^{-1}}}
-\psi(y_1,...,y_m)\right)\frac{\norm{ds}}{\norm{s}}.
\end{gather*}
\end{proposition}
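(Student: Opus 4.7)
The plan is to derive the formula by asymptotic analysis of the integral from Theorem~\ref{heckeop1}, splitting the $s$-integration into three ranges whose contributions match the three pieces of the formula for $M$, and then to extract the diagonal action on eigenspaces from Theorem~\ref{specthe}.

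Concretely, I would work on a dense subspace of smooth $\psi$ with compact support in $U$ and write $\norm{x}^{-1/2}H_x\psi = \norm{x}^{-1/2}\norm{\prod_{i=0}^m(t_i-x)}^{1/2}\Bbb H_x\psi$. Using $\norm{\prod_i(t_i-x)}^{1/2} = \norm{x}^{(m+1)/2}(1+O(1/x))$, this reduces to analyzing $\norm{x}^{m/2}\Bbb H_x\psi$, and I would split the $s$-integration into $\norm{s}\le R$, $R \le \norm{s} \le \norm{x}/R$, and $\norm{s} \ge \norm{x}/R$ for a large fixed parameter $R$. In the first range, homogeneity gives $\psi((t_is-xy_i)/(s-y_i)) = \norm{x}^{-m/2}\psi(y_i/(1-y_i/s)) + o(\norm{x}^{-m/2})$, and combining with the measure factor (using the identity $\prod_i\norm{s-y_i} = \norm{s}^m\prod_i\norm{1-y_i/s}$) produces exactly the second term in the formula for $M\psi$. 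In the third range, the substitution $s = xu$ followed by $r = 1/u$ brings the argument of $\psi$ to $y_i - t_ir + O(1/\norm{x})$, and another application of homogeneity delivers the first term $\psi(y - tr)\norm{dr}/\norm{r}$.

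The middle range $R \le \norm{s} \le \norm{x}/R$ is where the $\log\norm{x}$ term arises: uniform continuity of $\psi$ shows that the integrand converges to $\psi(y)\norm{ds}/\norm{s}$, whose integral equals $\psi(y)\log\norm{x} + O(1)$ with the $O(1)$ vanishing as $R\to\infty$. To combine the three ranges into a single expression, I would add and subtract $\psi(y)\norm{ds}/\norm{s}$ throughout, so that the $\log\norm{x}$ contribution from the middle range cancels the $-\log\norm{x}$ in $\widetilde H_x$, while the two boundary contributions produce precisely the $-\psi(y)$ subtraction appearing in the claimed formula. Elementary estimates (using $m\ge 2$ and compact support of $\psi$ in $U$) show that the combined integrand is absolutely integrable at $s = 0$ and $s = \infty$, yielding the formula for $M\psi$ by dominated convergence.

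For the spectral statement, each $\mathcal H_k$ from Theorem~\ref{specthe} is finite-dimensional and $H_x$-invariant, so $\widetilde H_x$ acts on $\mathcal H_k$ as the scalar $\norm{x}^{-1/2}\beta_k(x) - \log\norm{x}$. Once strong convergence is established on a dense subspace containing the algebraic sum $\bigoplus_k \mathcal H_k$, this scalar must tend to some limit $\mu^{(k)}$, and $M$ acts diagonally in the eigenbasis; symmetry of $M$ is inherited from Proposition~\ref{selfad} by passing to the limit, and self-adjointness follows since the $\psi_k$ form an orthonormal basis diagonalizing $M$. The main obstacle will be the careful matching in the middle range: proving that the $\log\norm{x}$ coefficient extracted is exactly $\psi(y)$ (with coefficient $1$, to match the subtraction in $\widetilde H_x$), and that the leftover boundary terms from each of the three ranges combine cleanly into the single $-\psi(y)\norm{ds}/\norm{s}$ regularization in the formula for $M$. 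The non-archimedean case requires only cosmetic modifications since $\log\norm{\cdot}$ is still a well-defined additive function of the valuation.
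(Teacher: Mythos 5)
Your approach is correct and captures the same underlying asymptotic analysis as the paper, but it is organized in a genuinely different way, so it is worth comparing.

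The paper's proof proceeds in two stages. First, it establishes the formula for $(M\psi)(\bold y)$ only for test functions $\psi$ with $\psi(\bold y)=0$; for such $\psi$ the logarithmic divergence is absent and no $-\psi(\bold y)$ regularization term is needed. This is proved geometrically: the fiber $Z_{x,\bold y}$ of the Hecke correspondence is a degree-$m$ rational curve in $\Bbb P^{m-1}$, and one identifies its flat limit $Z_{\infty,\bold y}$ as $x\to\infty$ as a union of two components (of degrees $m-1$ and $1$), whose parametrizations are exactly the two arguments of $\psi$ in the claimed formula. This yields $M$ only up to a multiplication operator by an unknown function $h(\bold y)$. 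Second, $h$ is pinned down by applying the (tentative) operator to the explicit test half-density $\prod_i\norm{y_i}^{-1/2}$ and comparing to an explicit asymptotic of the corresponding elliptic-type integral. Your approach bypasses the geometry of degenerating curves and the "determine $h$" computation entirely; you attack the asymptotics of the $s$-integral directly by splitting into the three ranges $\norm{s}\le R$, $R\le\norm{s}\le\norm{x}/R$, $\norm{s}\ge\norm{x}/R$, with the middle range producing $\psi(\bold y)\log\norm{x}$ and the boundary mismatches producing the regularizing $-\psi(\bold y)\norm{ds}/\norm{s}$. Each approach buys something: the paper's is more conceptual, dovetails with the general degeneration picture of Hecke correspondences used elsewhere in the paper, and avoids having to regularize a divergent integral; yours is more elementary and self-contained and produces the regularized formula (and the crucial $h=0$) in one shot.

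Two technical slips to fix. In Range 3, the substitution that works cleanly is simply $s = xu$ with $r = u$; then $\frac{t_is-xy_i}{s-y_i}\to \frac{t_iu-y_i}{u}$, which by homogeneity equals (projectively) $y_i-t_iu$ with the $i$-independent factor $\norm{u}^{m/2}$ combining with the measure to give $\psi(y-tr)\,\norm{dr}/\norm{r}$. The extra inversion $r = 1/u$ you propose sends the argument to $t_i - ry_i$, not $y_i - t_ir$; one can repair this with another homogeneity step plus the change of variable $r\mapsto 1/r$ in the final integral, but it is simpler to omit it. Likewise, in Range 1 your displayed asymptotic $\psi(\cdots) = \norm{x}^{-m/2}\psi(y_i/(1-y_i/s)) + o(\cdot)$ is missing the factor $\norm{s}^{m/2}$; that factor is essential and is precisely what combines with the measure identity $\prod_i\norm{s-y_i}=\norm{s}^m\prod_i\norm{1-y_i/s}$ to give $\norm{ds}/\norm{s}$. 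Neither slip affects the soundness of the plan, but the bookkeeping of $\norm{s}$-powers is the whole content of the computation and should be carried out carefully. Finally, note that neither you nor the paper explicitly upgrades the pointwise-in-$\bold y$ limit to convergence in the $\mathcal{H}$-norm, which is what "strong limit" requires; this does need a uniform (in $\bold y$) domination of the error terms for $\psi$ with compact support in $U$, but the argument is routine given the compact support, $m\ge 2$, and the decay of $\psi$ at infinity by homogeneity.
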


Note that integral (\ref{Hxfor1}) is convergent (say, for $\psi$ smooth) since the third summand cancels the logarithmic divergences at $s=0$ and $s=\infty$ generated by the first and the second summand, respectively.

\begin{proof} 
We first prove that $M$ is given by the claimed formula up to adding the operator of multiplication by a function $h(\bold y)$. For this it suffices to check that for a fixed generic $\bold y=(y_1,...,y_m)$, if 
$\psi$ is smooth with $\psi(\bold y)=0$ then 
\begin{equation}\label{twosum}
(M\psi)(\bold y)=\int_F \psi(y_1-t_1s,...,y_m-t_ms)\frac{\norm{ds}}{\norm{s}}+\int_F
\frac{\psi\left(\frac{y_1}{1-y_1s^{-1}},...,\frac{y_m}{1-y_ms^{-1}}\right)}{\prod_{i=1}^m \norm{1-y_is^{-1}}}\frac{\norm{ds}}{\norm{s}}.
\end{equation} 
To prove this formula, consider the closure $Z_{x,\bold y}$ in  $\Bbb P^{m-1}$ of the preimage of $\bold y$ in $Z_x$.
This is a parametrized rational curve of degree $m$ in $\Bbb P^{m-1}$ 
given by 
$$
z_i(s)=\frac{t_is-xy_i}{s-y_i}.
$$
Since $(H_x\psi)(\bold y)$ is defined by integration over $Z_{x,\bold y}(F)$, the function $(M\psi)(\bold y)$ is defined by integration over $Z_{\infty,\bold y}(F)$, where $Z_{\infty,\bold y}$ is the degeneration of the 
curve $Z_{x,\bold y}$ as 
$x\to \infty$. By definition, $\bold z\in Z_{\infty,y}$ if there exist $s,\lambda\in \Bbb C((x^{-1/n}))$ for some $n$ such that 
$$
\lim_{x\to \infty}\lambda(x)\frac{t_is(x)-xy_i}{s(x)-y_i}=z_i
$$
(the factor $\lambda(x)$ is needed since $z_i$ are defined only up to simultaneous scaling). 
If $s$ has a finite limit $s_0$ at $x=\infty$, we have 
$$
z_i=\frac{y_i}{1-y_is_0^{-1}}
$$
up to scaling (namely, if $s_0=y_i$ for some $i$, we get 
$z_i=1$ and $z_j=0$ for $j\ne i$, and if $s_0=0$ then $z_i=1$ for 
all $i$).  
On the other hand, if the order of $s(x)$ at
$\infty$ is $-r$ for a rational number $r>0$ then if $0<r<1$, we get $z_i=y_i$ for all $i$, for $r>1$ we get $z_i=t_i$ for all $i$, 
so the only interesting case is $r=1$, i.e., $s(x)=s_0x+o(x)$, $s_0\ne 0$. In this case we have 
$$
z_i=y_i-t_is_0
$$
up to scaling. This shows that $Z_{\infty,\bold y}$ is the union of two components:  
$Z_{\infty,\bold y}^1$ of degree $m-1$
defined by the parametric equations 
$$
z_i(s)=\frac{y_i}{1-y_is^{-1}},
$$
and $Z_{\infty,\bold y}^2$ of degree $1$ (a line) defined by the parametric equations
$$
z_i(s)=y_i-t_is
$$
(which are permuted by the birational involution $S_0$ of $\Bbb P^{m-1}$ 
transforming $y_i$ into $\frac{t_i}{y_i}$). 
Thus $(M\psi)(\bold y)$ is the sum of two integrals, over $Z_{x,\bold y}^1(F)$ and $Z_{x,\bold y}^2(F)$, which yields the desired formula. 

It remains to show that $h=0$. To this end,  
let us apply the operator $M$ 
to the function $\psi(y_1,...,y_m)=\prod_{i=1}^m \norm{y_i}^{-\frac{1}{2}}$. 
We have 
$$
(M\psi)(y_1,...,y_m)=I(y_1,...,y_m)\prod_{i=1}^m \norm{y_i}^{-\frac{1}{2}},
$$
where 
$$
I(y_1,...,y_m):=\lim_{x\to \infty}\left(\int_F \frac{1}{\norm{\prod_{i=1}^m (1-s^{-1}y_i)(1-st_iy_i^{-1}x^{-1})}^{\frac{1}{2}}}\frac{\norm{ds}}{\norm{s}}-\log\norm{x}\right).
$$
But direct asymptotic analysis shows that 
$$
I(y_1,...,y_m)=\int_F \left(\frac{1}{\prod_{i=1}^m \norm{1-s^{-1}y_i}^{\frac{1}{2}}}+
\frac{1}{\prod_{i=1}^m \norm{1-st_iy_i^{-1}}^{\frac{1}{2}}}-1\right)\frac{\norm{ds}}{\norm{s}},
$$
which implies that $h=0$. 
\end{proof} 

We note that the two summands in formula \eqref{twosum} are permuted by $S_0$, 
while $S_i$ for $1\le i\le m$ preserves each summand (in fact, this holds 
even before integration if we change $s$ to $s^{-1}$ in one of the summands). Thus we have 
$$
S_0M=MS_0=S_0Q+QS_0,
$$
where $Q$ is a self-adjoint operator such that 
$$
(Q\psi)(\bold y)=\int_F \psi(y_1-t_1s,...,y_m-t_ms)\frac{\norm{ds}}{\norm{s}}
$$
for $\psi$ smooth with $\psi(\bold y)=0$ for a given point $\bold y$. Moreover,  
$[Q,S_i]=0$ for $1\le i\le m$. 

For $m=2$ ($4$ parabolic points), the operator $M$ is studied in more detail 
in Subsection \ref{subleading}. In this case, it is easy to check that 
the two summands in formula \eqref{twosum} coincide with each other. 
This agrees with the fact that in this special case $\prod_{i=0}^mS_i=S_0S_1S_2=1$
(as we have mentioned, this is not so for $m>2$).

\subsection{Traces of powers of $|H_x|$} Let $m\ge 2$. 
Note that the series \eqref{trac} (say, with all $x_i$ equal) cannot be absolutely convergent if $n\le 2(m-2)$. Indeed, the  Schwartz kernel of the operator $H_x^{m-2}$ is supported on the $m-2$-th convolution power of the Hecke correspondence $Z_x$, which is a proper subvariety (since ${\rm 
codim}Z_x=m-2$), hence a set of measure zero. So this operator cannot be Hilbert-Schmidt and thus ${\rm Tr}(H_x^{2(m-2)})=\infty$. 

On the other hand, if $n$ is sufficiently large, the series \eqref{trac} does have to converge absolutely. To show this, let ${\bf n}(\varepsilon)={\bf n}(H_x,\varepsilon)$ be the number of eigenvalues of $H_x$ (counted with multiplicities) of magnitude $\ge \varepsilon$, and define  
$$
b_m:={\rm limsup}_{\varepsilon\to 0}\frac{\log {\bf n}(\varepsilon)}{\log (\varepsilon^{-1})}.
$$

\begin{lemma}\label{tracelem}
For any $a>b_m$,  
$$
{\rm Tr}|H_x|^a=\sum_{k\ge 0}d_k|\beta_k(x)|^a<\infty,
$$
and for any $a<b_m$ 
$$
{\rm Tr}|H_x|^a=\sum_{k\ge 0}d_k|\beta_k(x)|^a=\infty,
$$
where $d_k=\dim\mathcal H_k$. 
\end{lemma}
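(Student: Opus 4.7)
The plan is to reformulate the trace sum in terms of eigenvalues listed with multiplicity and then to apply the standard correspondence between the growth rate of the counting function and the decay rate of eigenvalues.

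First, I would list the eigenvalues of $H_x$ in non-increasing order of absolute value, counted with multiplicity: $|\lambda_1|\ge|\lambda_2|\ge\cdots$. Since the $H_x$ are compact and self-adjoint (Propositions \ref{compa}, \ref{selfad}) and their eigenspaces $\mathcal H_k$ are finite-dimensional (Theorem \ref{specthe}(ii)), this enumeration repeats each $\beta_k(x)$ exactly $d_k=\dim\mathcal H_k$ times, so
$$
\sum_{k\ge 0} d_k|\beta_k(x)|^a=\sum_{n\ge 1}|\lambda_n|^a.
$$
The defining relation between $\lambda_n$ and ${\bf n}(\varepsilon)$ is the tautology $|\lambda_n|\ge\varepsilon$ iff ${\bf n}(\varepsilon)\ge n$, which gives ${\bf n}(|\lambda_n|)\ge n$, and $|\lambda_n|$ equals the infimum of $\varepsilon>0$ with ${\bf n}(\varepsilon)<n$.

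Next I translate the definition of $b_m$ into pointwise eigenvalue bounds. For the convergence direction, fix $a>b_m$ and choose $a'$ with $b_m<a'<a$. By the definition of $b_m$ as a $\limsup$, there exists $\varepsilon_0>0$ and $C>0$ such that ${\bf n}(\varepsilon)\le C\varepsilon^{-a'}$ for all $\varepsilon<\varepsilon_0$. Substituting $\varepsilon=|\lambda_n|$ yields $n\le {\bf n}(|\lambda_n|)\le C|\lambda_n|^{-a'}$, hence $|\lambda_n|\le C^{1/a'}n^{-1/a'}$ for all sufficiently large $n$. Therefore
$$
\sum_{n\ge 1}|\lambda_n|^a\le \text{(finite)} + C^{a/a'}\sum_{n\gg 1}n^{-a/a'}<\infty,
$$
since $a/a'>1$. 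This proves the first assertion.

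For the divergence direction, fix $a<b_m$ and choose $a'$ with $a<a'<b_m$. By the $\limsup$ definition of $b_m$, there exists a sequence $\varepsilon_j\downarrow 0$ with ${\bf n}(\varepsilon_j)\ge \varepsilon_j^{-a'}$. Setting $n_j:={\bf n}(\varepsilon_j)$, we have $\varepsilon_j\ge n_j^{-1/a'}$, and by the tautology above $|\lambda_{n_j}|\ge \varepsilon_j\ge n_j^{-1/a'}$. Since the sequence $|\lambda_n|$ is non-increasing, $|\lambda_k|\ge n_j^{-1/a'}$ for all $k\le n_j$, so
$$
\sum_{k=1}^{n_j}|\lambda_k|^a\ge n_j\cdot n_j^{-a/a'}=n_j^{\,1-a/a'}\to\infty
$$
because $a<a'$ forces $1-a/a'>0$ and $n_j\to\infty$. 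This gives $\sum_n|\lambda_n|^a=\infty$, as required.

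The only subtlety worth flagging is the consistent use of the $\limsup$ definition: one must be careful that for $a>b_m$ the bound ${\bf n}(\varepsilon)\le C\varepsilon^{-a'}$ holds \emph{for all} small $\varepsilon$ (a consequence of taking $a'$ strictly greater than $b_m$), whereas for $a<b_m$ the reverse bound is only known along a subsequence; the argument above is arranged so that a subsequence suffices, since it is enough to exhibit partial sums tending to infinity. No harder input is needed; in particular the specific form of $H_x$ plays no role beyond compactness and self-adjointness.
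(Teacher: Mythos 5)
Your proof is correct and amounts to essentially the same argument as the paper's, whose proof is the single line ``apply Lemma \ref{seq} to the sequence $|\beta_i(x)|$ repeated $d_i$ times''; you have simply re-derived the content of Lemma \ref{seq} inline. The only (inessential) difference is internal to that elementary fact about null sequences: you pass from the counting bound to a direct pointwise estimate $|\lambda_n|\le Cn^{-1/a'}$ and compare with a $p$-series, whereas the paper's proof of Lemma \ref{seq} uses an Abel-summation (layer-cake) estimate for the convergence direction and a proof by contradiction for the divergence direction. Both are standard and equivalent, and your direct treatment of the divergence case (partial sums along a subsequence tending to $\infty$) is, if anything, slightly cleaner; the only hygiene point is that it tacitly requires $a'>0$ so that $n_j\to\infty$, which is harmless since divergence is trivial for $a\le 0$ and otherwise one may choose $a'\in(a,b_m)$ with $a'>0$.
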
 

\begin{proof} 
This follows from Lemma \ref{seq} applied to the sequence $|\beta_i(x)|$ (repeated $d_i$ times). 
\end{proof} 

\begin{proposition}\label{numeig} (i) We have  
$2(m-1)\le b_m<\infty$.  

(ii) For sufficiently large $n$ the series \eqref{trac} converges absolutely. 
\end{proposition}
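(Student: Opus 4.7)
The plan is to deduce (ii) from the finiteness of $b_m$ in (i), to prove $b_m<\infty$ by showing $H_x^N$ is Hilbert--Schmidt for $N$ large, and to prove $b_m\ge 2(m-1)$ by showing $H_x^{m-1}$ fails to be Hilbert--Schmidt.

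For (ii), pick any integer $n>b_m$. The AM--GM inequality applied to $|\beta_k(x_1)\cdots \beta_k(x_n)|$ yields
$$
\sum_k d_k\,|\beta_k(x_1)\cdots \beta_k(x_n)| \;\le\; \frac{1}{n}\sum_{i=1}^{n}\sum_k d_k\,|\beta_k(x_i)|^{n},
$$
and each term on the right is finite by Lemma \ref{tracelem}, giving absolute convergence of \eqref{trac}.

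For the upper bound in (i), it suffices to show that $H_x^N$ is Hilbert--Schmidt for some $N$: then $\sum_k d_k\,|\beta_k(x)|^{2N} = \|H_x^N\|_{\mathrm{HS}}^2 < \infty$, so Lemma \ref{tracelem} yields $b_m\le 2N$. The Schwartz kernel $K_N(\bold y,\bold z)$ of $H_x^N$ is the iterated integral arising from \eqref{intunit}, i.e.\ the pushforward of $\prod_{j=1}^N d\nu_x(s_j)$ along the map $(\bold y,s_1,\dots,s_N)\mapsto(\bold y,\,g_{s_N,x}\cdots g_{s_1,x}\cdot\bold y)$ into $Bun^\circ(F)\times Bun^\circ(F)$. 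The strategy is to show that for $N$ sufficiently large, the successive convolutions in the variables $s_j$ smooth out the fractional singularities $\norm{s(s-1)(s-x)}^{-1/2}$ of \eqref{nuxs}: Young's inequality applied to the local $L^p$-regularity of $\nu_x$ (which lies in $L^p_{\mathrm{loc}}$ for some $p>1$, since the exponent $-1/2$ on each factor is strictly greater than $-1$) implies that iterated convolutions produce an eventually bounded, and then continuous, density on $\bold G(F)$; combined with the $L^2$-boundedness of $\rho$ and a tail cutoff argument based on the norm bound from Proposition \ref{bou}, this gives the $L^2$-integrability of $K_N$.

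For the lower bound $b_m\ge 2(m-1)$, the plan is to show $\|H_x^{m-1}\|_{\mathrm{HS}}^2=\sum_k d_k\,\beta_k(x)^{2(m-1)}=\infty$, which by Lemma \ref{tracelem} forces $b_m\ge 2(m-1)$. Write the Schwartz kernel $K_{m-1}(\bold y,\bold z)$ as the pushforward of $\prod_{j=1}^{m-1}d\nu_x(s_j)$ along the map
$$
(\bold y,s_1,\dots,s_{m-1})\longmapsto \bigl(\bold y,\,g_{s_{m-1},x}\cdots g_{s_1,x}\cdot\bold y\bigr);
$$
source and target both have dimension $2(m-1)$, so the map is generically a local diffeomorphism, and $K_{m-1}$ is locally a finite sum of inverse-Jacobian factors weighted by the fractional densities from \eqref{nuxs}. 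The main obstacle will be the local singularity analysis: the Jacobian vanishing on the ramification locus combined with the fractional $\norm{s(s-1)(s-x)}^{-1/2}$ factors must produce a kernel lying in $L^1_{\mathrm{loc}}$ but failing $L^2$-integrability. The $m=2$ case, where the explicit Kontsevich formula for $K(x,y,z)$ from Subsection \ref{The Hecke correspondence and Hecke operators} realizes exactly this $L^1\setminus L^2$ behavior, serves as the model, and the general case will follow by extending this analysis along the $m-2$ additional convolution directions.
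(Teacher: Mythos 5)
Your treatment of (ii) via AM--GM is exactly the paper's, so that part is fine. Both halves of (i), however, have real gaps, and your plan diverges significantly from what the paper does.

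For the upper bound $b_m<\infty$, you aim to show $H_x^N$ is Hilbert--Schmidt by applying Young's inequality to iterated convolutions of the pushforward density $f_x$ on $\bold G(F)$. The Young step (convolving $f_x\in L^{r/(r-1)}$ with itself to land in $L^\infty$) is sound and is in fact used in the paper. The gap is in the claim that ``a tail cutoff argument based on the norm bound from Proposition \ref{bou}'' then yields $L^2$-integrability of $K_N$. The operator norm of $H_x$ gives no control over the decay of $f_x^{*N}$ at infinity in $\bold G(F)$, and the Harish-Chandra character $\chi_W$ is not globally $L^1$ on $PGL_2(F)$; so $\int\chi^{\otimes m-1}f_x^{*2N}\,dg$ is not bounded merely by $\|f_x^{*2N}\|_\infty$ times a finite constant. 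The paper avoids this issue entirely: it truncates the \emph{measure} $\nu_x$ near its singular points to get a cutoff operator $H_{x,\varepsilon}$ whose associated density $f_{x,\varepsilon}$ has compact support, proves ${\rm Tr}H_{x,\varepsilon}^{\ell}=O(\varepsilon^{-c})$ for the cutoff operator, and then transfers information to $H_x$ via the operator-norm estimate $\norm{H_{x,\varepsilon}-H_x}\le C\varepsilon^{1/2}$ and the eigenvalue-counting Lemma \ref{compalem}. It never asserts that $H_x^N$ itself is Hilbert--Schmidt; the argument runs through counting functions $\mathbf{n}(\,\cdot\,,\varepsilon)$ and a Chebyshev bound, not a direct $L^2$-integrability of $K_N$.

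For the lower bound $b_m\ge 2(m-1)$, your idea --- that the pushforward along an equidimensional map with a singular Jacobian and fractional-power densities fails to be $L^2$ --- is plausible but is stated as an assertion, not proved, and the singularity type of such a pushforward is genuinely delicate (it depends on the geometry of the discriminant locus, and even determining whether $K_{m-1}$ lies in $L^2$ is not straightforward for $m>2$). The paper sidesteps the local analysis with the abstract Lemma \ref{Schatt}, which exploits the tensor-power trick (applying the bound to $T^{\otimes N}$ and letting $N\to\infty$) together with the simple fact that the Schwartz kernel of $H_x$ is supported on the Hecke correspondence $Z_x$, of codimension $m-2$ in $Bun^\circ_0\times Bun^\circ_0$. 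That lemma yields $p\ge (m-1)/1=m-1$ whenever ${\rm Tr}|H_x|^{2p}<\infty$, hence $b_m\ge 2(m-1)$, without ever needing to decide whether $H_x^{m-1}$ itself is Hilbert--Schmidt. I suggest you adopt the paper's Lemma \ref{Schatt} for the lower bound and, for the upper bound, replace the direct Hilbert--Schmidt claim by the cutoff/eigenvalue-counting strategy.
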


\begin{proof} (i) We first show that $b_m<\infty$. 
Let $I_\varepsilon$ be the union of the $\varepsilon$-neighborhood of 
the points $0,x\in F$ and its image under the map $\sigma_x$ defined by \eqref{sigmax}. Consider the cutoff measure $d\nu_{x,\varepsilon}(s)$ on $F$ which equals $d\nu_x(s)$ (defined by \eqref{nuxs}) when $s\notin I_\varepsilon$ and zero otherwise. 
Set 
$$
H_{x,\varepsilon}=\int_{F}U_{s,x}d\nu_{x,\varepsilon}(s).
$$
This operator is self-adjoint since $\nu_{x,\varepsilon}$ is invariant under 
$\sigma_x$. Moreover, it is compact (the proof is the 
same as Proposition \ref{compa}). Finally, $\norm{\nu_{x,\varepsilon}-\nu_x}_{L^1}\le C\varepsilon^{\frac{1}{2}}$ for some $C>0$. Thus 
$$
\norm{H_{x,\varepsilon}-H_x}\le C\varepsilon^{\frac{1}{2}}.
$$
By Lemma \ref{compalem}(ii), this implies that 
\begin{equation}\label{doubling}
{\bf n}(H_x,2C\varepsilon^{\frac{1}{2}})\le {\bf n}(H_{x,\varepsilon},C\varepsilon^{\frac{1}{2}}).
\end{equation} 

\begin{lemma}\label{tra} There exist an even integer $\ell>0$ and a real $c>0$ such that 
${\rm Tr}H_{x,\varepsilon}^\ell=O(\varepsilon^{-c})$ as $\varepsilon\to 
0$. 
\end{lemma} 

\begin{proof} Similarly to the proof of Proposition \ref{compa}, one can show that 
$$
H_{x,\varepsilon}^{3(m-1)}=\int_{\bold G(F)}\rho(g)f_{x,\varepsilon}(g)dg,
$$
where $f_{x,\varepsilon}(g)\le f_x(g)$ is an $L^1$-function with compact support. 
Since $f_x(g)dg$ is the direct image 
of the norm of a rational function under a finite covering map $\xi_{3(m-1)}$, 
it follows that $f_x\in L^{\frac{r}{r-1}}$ for sufficiently large even integer $r$. 
Then using Young's inequality for convolutions (\cite{We1}, p.54-55), it follows by induction that the convolution power $f_x^{*k}$ belongs to $L^{\frac{r}{r-k}}$ when $k\le r$; 
in particular, $f_{x}^{*r}$ belongs to $L^\infty$. We then denote by 
$M_{r}$ the $L^\infty$-norm of this function, and by $S_\varepsilon$ 
its (compact) support. Clearly, 
$$
\norm{f_{x,\varepsilon}^{*r}}_{L^\infty}\le M_{r}.
$$ 

Let $\ell:=3(m-1)r$. Then 
$$
{\rm Tr}H_{x,\varepsilon}^\ell=\int_{\bold G(F)}\chi^{\otimes m-1}(g)f_{x,\varepsilon}^{*r}(g)dg,
$$
where $\chi=\chi_W$ is the character of the principal series representation $W$ (a locally $L^1$ function given by Proposition \ref{charsl2}). It follows from Proposition \ref{charsl2} that $\chi(g)\ge 0$ and for some 
$c>0$, 
$$
\int_{S_{\varepsilon}}\chi^{\otimes m-1}(g)dg=O(\varepsilon^{-c}).
$$
Thus  
$$
\int_{\bold G(F)}\chi^{\otimes m-1}(g)f_{x,\varepsilon}^{*r}(g)dg=\int_{S_\varepsilon}\chi^{\otimes m-1}(g)f_{x,\varepsilon}^{*r}(g)dg\le 
M_r\int _{S_\varepsilon}\chi^{\otimes m-1}(g)dg=O(\varepsilon^{-c}).
$$ 
This implies the statement. 
\end{proof}

Now, using Lemma \ref{tra} and \eqref{doubling}, we get 
$$
{\bf n}(H_x,2C\varepsilon^{\frac{1}{2}})\le {\bf n}(H_{x,\varepsilon},C\varepsilon^{\frac{1}{2}})\le (C\varepsilon^{\frac{1}{2}})^{-\ell}{\rm Tr}H_{x,\varepsilon}^\ell=O(\varepsilon^{-c-\ell/2}).
$$
This implies that 
$$
{\bf n}(H_x,\varepsilon)=O(\varepsilon^{-2c-\ell}), 
$$ 
i.e., $b_m\le 2c+\ell<\infty$. 

On the other hand, by Lemma \ref{Schatt} 
$b_m\ge 2(m-1)$, which completes the proof (i). 

(ii) follows from (i), Lemma \ref{tracelem} and the arithmetic and geometric mean inequality. 
\end{proof} 

Thus, the exponent of the sequence $|\beta_i(x)|$ repeated $d_i$ times (Subsection \ref{exponent}) is $\frac{1}{b_m}$. 

\begin{remark} It would be interesting to determine this exponent precisely. As we have shown, it is $\le \frac{1}{2(m-1)}$. Moreover, one can show that for $m=2$ 
the exponent equals $\frac{1}{2(m-1)}=\frac{1}{2}$. 
\end{remark} 

\subsection{A formula for the Hecke operator $H_{\bold x}$ for $\bold x\in S^nX(F)$}
\label{A formula for the product of Hecke operators} 

Let $x_1,...,x_n\in F$ and $\bold x=(x_1,...,x_n)$. Consider the operator
$$
H_{\bold x}:=H_{x_1}...H_{x_n}. 
$$
Since the factors commute, this product is invariant under permutations of $x_i$. 
However, if we write an explicit formula for $H_{\bold x}$ by iterating the definition of $H_x$, 
the resulting expression won't be manifestly $\Bbb S_n$-invariant. The goal of 
this subsection is 
to write another, manifestly symmetric formula for $H_{\bold x}$. More precisely, we will write a symmetric formula for 
$$
\Bbb H_{\bold x}:=\Bbb H_{x_1}...\Bbb H_{x_n}.
$$ 
This formula will then also make sense 
for $\bold x=(x_1,...,x_n)\in S^nX(F)$, where the individual coordinates $x_i$ are not 
required to be defined over $F$. 

Let $\bold y=(y_0,...,y_m)$
and $E_{\bold y,0}$, $E_{\bold y,1}$  be the vector bundle $O\oplus O$, respectively $O\oplus O(1)$,  with parabolic structures at $t_i$ given by the vectors $(1,y_i)$, $0\le i\le m$ and the vector $(0,1)$, respectively $(1,0)$ at $\infty$ (see Subsection \ref{birpar}).   
Let $E$ be the vector bundle obtained from $E_{\bold y,0}$ by simultaneous Hecke modification at points $x_1,...,x_n$ along the lines $s_1,...,s_n$. The sections of $E$ are pairs of rational functions 
$(g,h)$ with at most simple poles at $x_1,...,x_{n}$ and $h-s_ig$ regular 
at $x_i$ for $i=1,...,n$. 

We are now ready to write a formula for $\Bbb H_{\bold x}$. We will use Lemma \ref{interp}. 
First consider the case of even $n=2r$. In this case generically $E\cong O(r)\oplus O(r)$. Explicitly, this isomorphism is given by the map 
$$
(g,h)\mapsto (q_1h-p_1g, q_2h-p_2g), 
$$
where $p_1,q_1,p_2,q_2$ are polynomials of degrees $r,r-1,r-1,r$ with $p_1,q_2$ monic 
such that the rational functions 
$$
f_k(\bold x,\bold s,w)=f_k(w)=\frac{p_k(w)}{q_k(w)}
$$ 
satisfy the conditions 
$$
f_k(x_i)=s_i,\ i=1,...,n.
$$ 
(this uniquely determines the coefficients of $p_1,q_1,p_2,q_2$ from a system of linear equations). This means that 
$$
f_1=\frac{p_1}{q_1}=\iota_{\bold x,\infty}^{-1}(\bold s,\infty),\quad 
f_2=\frac{p_2}{q_2}=\iota_{\bold x,\infty}^{-1}(\bold s,0). 
$$
where $\iota$ is the map from Lemma \ref{interp}. 
Thus we get $E=E_{\bold z,0}\otimes O(r)$, where 
\begin{equation}\label{ziform} 
z_i=\frac{p_2(\bold x,\bold s,t_i)-q_2(\bold x,\bold s,t_i)y_i}{p_1(\bold x,\bold s,t_i)-q_1(\bold x,\bold s,t_i)y_i}.
\end{equation} 

Now consider the case of odd $n=2r+1$. Then generically  $E\cong O(r)\oplus O(r+1)$.
Explicitly this isomorphism is given as in the even case, except with $p_1,q_1,p_2,q_2$ being polynomials of degrees $r,r,r+1,r-1$ with $q_1,p_2$ monic, and 
$$
f_1=\frac{p_1}{q_1}=\iota_{\bold x}^{-1}(\bold s),\quad f_2=\frac{p_2}{q_2}=\iota_{\bold x,\infty}^{-1}(\bold s,\infty^2),
$$
where 
$$
\iota_{\bold x,\infty}^{-1}(\bold s,\infty^2):=\lim_{b\to \infty} \iota_{\bold x,b,\infty}^{-1}(\bold s,\infty,\infty)
$$
is the rational function of degree $r+1$ taking values $s_i$ at $x_i$ for 
$1\le i\le 2r+1$ 
and growing quadratically at $\infty$. So we get $E=E_{\bold z,1}\otimes O(r)$, with $z_i$ given by \eqref{ziform}. 

Alternatively, formula \eqref{ziform} for $n\ge 2$ can be written as 
$$
z_i=\frac{\sum_{j=1}^n(-1)^{j-1}R(\widehat{\bold x}_j,\widehat{\bold s}_j)\frac{s_j-y_i}{t_i-x_j}}{\sum_{j=1}^n(-1)^{j-1}T(\widehat{\bold x}_j,\widehat{\bold s}_j)\frac{s_j-y_i}{t_i-x_j}},
$$
where $\widehat{\bold x}_i,\widehat{\bold s}_i$ are obtained from $\bold x$, $\bold s$ 
by omitting the $i$-th coordinate, and $R,T$ 
are the cofactors arising from Lemma \ref{interp} when expanding the determinants 
in the numerator and denominator in the $0$-th column. To write explicit formulas for these cofactors, 
define the matrix $M_{k,l}(\bold x,\bold s)$ with $k+l=n$ by the formula 
$$
M_{k,l}(\bold x,\bold s)_{ij}=s_ix_i^{j-1},\ 1\le j\le k;\ M_{k,l}(\bold x,\bold s)_{ij}=x_i^{j-k-1},\ k+1\le j\le n.
$$
Then for $n=2r-1$ we have 
$$
T(\bold x,\bold s)=\det M_{r-1,r}(\bold x,\bold s),\ R(\bold x,\bold s)=\det M_{r,r-1}(\bold x,\bold s),
$$
while for $n=2r$ we have 
$$
T(\bold x,\bold s)=\det M_{r,r}(\bold x,\bold s),\ R(\bold x,\bold s)=\det M_{r+1,r-1}(\bold x,\bold s).
$$

Altogether we obtain the following proposition. 

\begin{proposition}\label{prodheck} We have 
$$
(\Bbb H_{\bold x}\psi)(y_0,...,y_m)=
$$
\scriptsize
$$
\int_{F^n}\psi\left(\frac{p_2(\bold x,\bold s,t_0)-q_2(\bold x,\bold s,t_0)y_0}{p_1(\bold x,\bold s,t_0)-q_1(\bold x,\bold s,t_1)y_0},...,\frac{p_2(\bold x,\bold s,t_m)-q_2(\bold x,\bold s,t_m)y_m}{p_1(\bold x,\bold s,t_m)-q_1(\bold x,\bold s,t_m)y_m}\right)\norm{\frac{\Delta(\bold x)T(\bold 
x,\bold s)^{-2}d\bold s}{\prod_{i=0}^m(p_1(\bold x,\bold s,t_i)-q_1(\bold x,\bold s,t_i)y_i)}},
$$ \normalsize
where $\Delta(\bold x):=\prod_{i<j}(x_i-x_j)$ is the Vandermonde determinant. 
\end{proposition}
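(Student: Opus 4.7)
The plan is to iterate the one-step formula of Theorem \ref{heckeop1} (in its translation-invariant form, Proposition \ref{heckeop2}) and then rewrite the resulting $n$-fold integral over the iterated parameters $s_1,\dots,s_n$ in terms of the symmetric data $(\bold x,\bold s)$ via the Cauchy--Jacobi interpolation described in Lemma \ref{interp}. The idea is that $\Bbb H_{\bold x}\psi$ is, by iteration of Proposition \ref{heckcor}, an integral over all $(\bold x,\bold s,E_{\bold y,\epsilon})\mapsto E$ where $E$ is obtained by $n$ successive Hecke modifications at the $x_j$ along the lines $s_j\subset E_{x_j}$. Because $E_{\bold y,0}$ is trivial as a bundle and all intermediate bundles are likewise trivializable at the $x_j$, one may describe the section space of $E$ uniformly (rather than step by step) as the space of pairs of rational functions $(g,h)$ on $\Bbb P^1$ with at worst simple poles at $x_1,\dots,x_n$ and such that $h-s_j g$ is regular at $x_j$. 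This is precisely the content of the Cauchy--Jacobi interpolation problem: finding polynomials $p_k,q_k$ with $p_k/q_k$ taking value $s_j$ at $x_j$ gives an explicit isomorphism $E\cong E_{\bold z,\epsilon}\otimes O(\lfloor n/2\rfloor)$ via $(g,h)\mapsto(q_1h-p_1g,\,q_2h-p_2g)$, with $\epsilon\in\{0,1\}$ according to the parity of $n$.

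First I would carry out the algebraic identification. Evaluating $(q_kh-p_kg)$ at $t_i$ on sections whose parabolic line is spanned by $(1,y_i)$ shows that the image direction is spanned by $(q_1(t_i)y_i-p_1(t_i),\,q_2(t_i)y_i-p_2(t_i))$, so the new parabolic data is
\[
z_i=\frac{p_2(\bold x,\bold s,t_i)-q_2(\bold x,\bold s,t_i)y_i}{p_1(\bold x,\bold s,t_i)-q_1(\bold x,\bold s,t_i)y_i},
\]
which recovers \eqref{ziform}. The degree assignments ($\deg p_1=\deg q_2=\lfloor n/2\rfloor$ with $p_1,q_2$ monic and the remaining degrees $\lfloor n/2\rfloor-1+\epsilon$) are forced by the parity and by the requirement that $(g,h)$ must have at most simple poles at the $x_j$; this fits exactly into Lemma \ref{interp}. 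Writing the two rational functions as $f_1=\iota^{-1}(\bold s)$ and $f_2=\iota^{-1}(\bold s,\infty)$ (or $\iota^{-1}(\bold s,\infty^2)$ in the odd case) makes the coefficients of $p_k,q_k$ affine in $\bold s$ with determinants equal to the $T$ and $R$ cofactors given in the statement.

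The main analytic step is the computation of the measure. Iterating Proposition \ref{heckeop2} produces a measure on $F^n$ that is the product, over $j=1,\dots,n$, of $\|ds_j\|/\prod_{i=0}^m\|s_j-y_i^{(j-1)}\|$, where $y_i^{(j-1)}$ denote the intermediate parabolic parameters after the first $j-1$ modifications. The key observation is that the triangular (in $j$) substitution carrying $(s_1,\dots,s_n)$ into the ``symmetric'' coordinates on the iterated Hecke correspondence has a Jacobian that telescopes: one proves by induction on $n$ that
\[
\prod_{j=1}^n \frac{\|ds_j\|}{\prod_{i=0}^m\|s_j-y_i^{(j-1)}\|}
\;=\;\Biggl\|\frac{\Delta(\bold x)\,d\bold s}{T(\bold x,\bold s)^2\prod_{i=0}^m\bigl(p_1(\bold x,\bold s,t_i)-q_1(\bold x,\bold s,t_i)y_i\bigr)}\Biggr\|,
\]
where the Vandermonde $\Delta(\bold x)$ and the $T(\bold x,\bold s)^{-2}$ are precisely the Jacobian factors arising from solving the Cauchy--Jacobi linear system (this is where Lemma \ref{interp} is invoked). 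The identity can be checked by induction on $n$: the inductive step amounts to a single application of Proposition \ref{heckeop2} combined with the cofactor expansion of $T(\bold x,\bold s)$ and $R(\bold x,\bold s)$ in the last column, together with the identity $p_1(\bold x,\bold s,t_i)-q_1(\bold x,\bold s,t_i)y_i^{(n-1)}=(s_n-y_i^{(n-1)})^{-1}\cdot(\text{next-step denominator at $t_i$})$ that follows from the one-step formula \eqref{heckecor}.

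The hard part is precisely this Jacobian computation: one must verify that the iterated, asymmetric measure collapses into the manifestly symmetric form with Vandermonde and $T^{-2}$. My approach would be to do the cases $n=1,2$ by hand to fix signs and normalizations (using Proposition \ref{heckcor} and \eqref{HMmoresym} directly), then run induction, using the fact that both sides are rational differential forms of the same homogeneity in $\bold s$ and have the same polar behaviour along the hypersurfaces $s_j=x_k$, $s_j=t_i$, and $T(\bold x,\bold s)=0$. Once the measure identity is proved, the proposition follows immediately by substituting the explicit formulas for $z_i$ into the iterated integral $\int_{F^n}\psi(\bold z)\cdot(\text{measure})$ and invoking the homogeneity of $\psi$ to absorb the common scaling factor coming from the twist by $O(\lfloor n/2\rfloor)$. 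Finally, since the right-hand side is manifestly rational in $(\bold x,\bold s)$ and the integrand is $\Bbb S_n$-invariant in $\bold x$ (as the data $(E,\bold y)\mapsto E$ is), the formula extends to $\bold x\in S^nX(F)$ by continuity, as claimed.
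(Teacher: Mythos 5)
Your proposal reproduces the paper's approach: view $\Bbb H_{x_1}\cdots\Bbb H_{x_n}$ as a single simultaneous Hecke modification, trivialize the resulting bundle as $E_{\bold z,\epsilon}\otimes O(r)$ via the Cauchy--Jacobi interpolation of Lemma \ref{interp}, and read off $z_i$ from the isomorphism $(g,h)\mapsto(q_1h-p_1g,\,q_2h-p_2g)$. That geometric part is correct, and it is exactly how the paper sets up the formula before asserting the Proposition.

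The gap, which you yourself flag as ``the hard part,'' is that the density identity is stated but never established. Iterating Proposition \ref{heckeop2} produces a measure $\prod_{j}\norm{ds_j}/\prod_i\norm{s_j-y_i^{(j-1)}}$ in triangular coordinates; you assert, but do not prove, that it equals $\norm{\Delta(\bold x)T(\bold x,\bold s)^{-2}d\bold s/\prod_i(p_1(\bold x,\bold s,t_i)-q_1(\bold x,\bold s,t_i)y_i)}$. The Jacobian of the substitution to symmetric coordinates --- which is what produces the Vandermonde $\Delta(\bold x)$ and the $T(\bold x,\bold s)^{-2}$ factor --- is the nontrivial content of the Proposition, and your appeal to ``homogeneity and polar behaviour'' is a plan, not a computation. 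The induction you outline should succeed (the remark after Example \ref{symfor} points to exactly this verification, recording the $n=2$ change of variables $s=s_2$, $s'=\tfrac{x_2-x_1}{s_1-s_2}$), but the inductive step has to be actually carried out; alternatively one can compute the canonical isomorphism between the two tangent line bundles on the $n$-fold Hecke correspondence directly in symmetric coordinates, as the paper does for $n=1$ in the proof of Theorem \ref{heckeop1}. Until one of these is done, the measure in the stated formula is asserted rather than derived.
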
 

\begin{example}\label{symfor} {\bf 1.} $n=1$, so $r=0$. Then $\bold s=s$, $\bold x=x$, 
so\footnote{This case $n=1$ is somewhat degenerate since 
$q_2=0$, so $f_2$ is not well defined. 
Nevertheless, computation of the coefficients 
of $p_k,q_k$ using the corresponding system of linear equations 
produces the given answer.}  
$$
p_1(w)=s,\ q_1(w)=1,\ p_2(w)=w-x,\ q_2(w)=0,\ \Delta(\bold x)=T(\bold x,\bold s)=1.
$$
Thus we recover exactly formula \eqref{heckeop2} for $\Bbb H_x$. 

{\bf 2.} $n=2$, so $r=1$. Then $\bold s=(s_1,s_2)$, $\bold x=(x_1,x_2)$, and 
$$
T(x_j,s_j)=1,\ R(x_j,s_j)=s_j,\ T(\bold x,\bold s)=s_1-s_2,
$$
$$
p_1(w)=w-\frac{s_1x_2-s_2x_1}{s_1-s_2},\ q_1(w)=\frac{x_1-x_2}{s_1-s_2}.
$$

Thus we get 
\begin{gather}\label{2pts} 
(\Bbb H_{x_1,x_2}\psi)(y_0,...,y_m)=\nonumber \\
\int_{F^2}\psi\left(
\frac{
\frac{s_1-y_0}{t_0-x_1}s_2-
\frac{s_2-y_0}{t_0-x_2}s_1
}
{\frac{s_1-y_0}{t_0-x_1}-
\frac{s_2-y_0}{t_0-x_2}
},...,
\frac{
\frac{s_1-y_m}{t_m-x_1}s_2-
\frac{s_2-y_m}{t_m-x_2}s_1
}
{\frac{s_1-y_m}{t_m-x_1}-
\frac{s_2-y_m}{t_m-x_2}
}
\right)\norm{\frac{(x_1-x_2)(s_1-s_2)^{-2}ds_1ds_2}{\prod_{i=1}^m(t_i-\frac{s_1x_2-s_2x_1}{s_1-s_2}- \frac{x_1-x_2}{s_1-s_2}y_i)}}. 
\end{gather}
\end{example} 

\begin{remark} Formula \eqref{2pts} (as well as its analogs for $n>2$) may be checked by 
direct composition of 1-point Hecke operators. Namely, in the case $n=2$, if $s$ is the integration variable associated to $x_2$ and $s'$ to $x_1$ then the composition formula for $\Bbb H_{x_1}\Bbb H_{x_2}$ turns into 
the symmetric formula \eqref{2pts} by the change of variable $s=s_2,s'=\frac{x_2-x_1}{s_1-s_2}$. Thus the change of variable in the composition formula needed to exhibit the commutativity of $H_{x_1}$ and $H_{x_2}$ is 

$$
((s',x_1),(s,x_2))\mapsto ((s+\tfrac{x_2-x_1}{s'},x_1),(s',x_2)).
$$
\end{remark} 

Proposition \ref{prodheck} is useful for generalizing the theory of Hecke 
operators 
to the case when the point $x\in \overline{F}$ is not defined over $F$. 
In this case, we cannot define the Hecke operator $H_x$ attached to $x$ but can define the Hecke operator attached to the Galois orbit $\bold x:=\Gamma x=\lbrace{x_1,...,x_n\rbrace}$, where 
$\Gamma:={\rm Gal}(\overline{F}/{F})$; 
it is an analog of $H_{x_1}...H_{x_n}$ but the individual factors in this 
product are now not defined. Let $\Gamma_i\subset \Gamma$ be the stabilizers of $x_i$, and let $\Bbb K_i:=\overline{F}^{\Gamma_i}=F(x_i)$. Let 
$\Bbb K$ 
be the set of $\bold s=(s_1,...,s_n)\in \overline{F}^n$ such that $s_i\in \Bbb K_i$ 
and $gs_i=s_j$ whenever $gx_i=x_j$, $g\in \Gamma$. Note that $\Bbb K$ is a local field, and we have a natural identification 
$\Bbb K\cong \Bbb K_i$ sending $\bold s$ to $s_i$. 

Now we can define the modified Hecke operator $\Bbb H_{\bold x}: V\to \widetilde{V}$ attached to $\bold x$ by the same formula as in Proposition \ref{prodheck} but with integration over $\Bbb K$: 
$$
(\Bbb H_{\bold x}\psi)(y_0,...,y_m)=
$$ \scriptsize
$$
\int_{\Bbb K}\psi\left(\frac{p_2(\bold x,\bold s,t_0)-q_2(\bold x,\bold s,t_0)y_0}{p_1(\bold x,\bold s,t_0)-q_1(\bold x,\bold s,t_0)y_0},...,\frac{p_2(\bold x,\bold s,t_m)-q_2(\bold x,\bold s,t_m)y_m}{p_1(\bold x,\bold s,t_m)-q_1(\bold x,\bold s,t_m)y_m}\right)\norm{\frac{\Delta(\bold x)T(\bold x,\bold s)^{-2}d\bold s}{\prod_{i=0}^m(p_1(\bold x,\bold s,t_i)-q_1(\bold x,\bold s,t_i)y_i)}},
$$ \normalsize
and define the usual Hecke operator by the formula 
$$
H_{\bold x}=\norm{\prod_{i,j}(t_i-x_j)}^{\frac{1}{2}}\Bbb H_{\bold x}
$$
(note that $p_k(\bold x,\bold s,t_i),q_k(\bold x, \bold s,t_i)\in F$ because of symmetry, and that $\norm{\cdot}$ denotes the norm for $F$). 
In fact, using this formula, we can define the Hecke operator corresponding to any effective 
divisor $\bold x$ on $\Bbb P^1$ defined over $F$ and not containing the parabolic points; namely, we write $\bold x$ as the sum of Galois orbits and then take the product of the corresponding operators $H_\bold x$. 

\begin{theorem}\label{compa1} The operators $H_{\bold x}$ extend to commuting self-adjoint compact operators on $\mathcal H$. 
\end{theorem}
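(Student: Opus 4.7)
The plan is to extend the analysis of Subsections \ref{Boundedness of Hecke operators}--\ref{Compactness of Hecke operators} to the divisor case $\bold x\in S^nX(F)$. The formula displayed just above the theorem presents $H_{\bold x}$, up to the scalar $\norm{\prod_{i,j}(t_i-x_j)}^{1/2}$, as an integral over the local field $\Bbb K$ of unitary operators $U_{\bold s,\bold x}$ (acting on the $(m-1)$-fold tensor power of the principal series of $PGL_2(F)$) against a scalar density $d\nu_{\bold x}(\bold s)$. The corresponding element $g_{\bold s,\bold x}\in \bold G(F)$ is built from the rational functions $f_k=p_k/q_k$ of Proposition \ref{prodheck}, and is genuinely $F$-rational because the defining linear system has Galois-invariant coefficients.

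First I would prove boundedness and self-adjointness. Boundedness follows, exactly as in Proposition \ref{bou}, from $\norm{H_{\bold x}}\le \int_{\Bbb K}\norm{d\nu_{\bold x}(\bold s)}<\infty$; the integral has the same type of mild singularities as the elliptic integrals of Subsection \ref{ellint}, one such singularity contribution per Galois orbit composing $\bold x$. Self-adjointness follows, as in Proposition \ref{selfad}, from an involution $\sigma_{\bold x}\colon\Bbb K\to\Bbb K$ exchanging a Hecke modification with its canonical inverse (Lemma \ref{comm}); under $\sigma_{\bold x}$ the density $d\nu_{\bold x}$ is invariant and $U^\dagger_{\bold s,\bold x}=U_{\sigma_{\bold x}(\bold s),\bold x}$. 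Commutativity of $H_{\bold x}$ and $H_{\bold x'}$ for disjoint divisors reduces to showing that both orderings coincide with the operator $H_{\bold x+\bold x'}$ attached to the combined divisor; this extends the two-point change of variables from the remark following Example \ref{symfor} to the general divisor case, performed inside $\Bbb K_{\bold x+\bold x'}\cong \Bbb K_{\bold x}\times\Bbb K_{\bold x'}$.

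For compactness I would mimic Proposition \ref{compa}. Choose an integer $N$ large enough that $Nn\ge 3(m-1)=\dim_F\bold G$, and write
\[
H_{\bold x}^N=\int_{\bold G(F)}\rho(g)\,f_{\bold x}(g)\,dg,
\]
where $f_{\bold x}$ is the Radon--Nikodym derivative with respect to Haar measure of the pushforward $\xi_{N,*}(\nu_{\bold x}^{\boxtimes N})$ along the multiplication map $\xi_N(\bold s_1,\ldots,\bold s_N)=g_{\bold s_1,\bold x}\cdots g_{\bold s_N,\bold x}$. Assuming $f_{\bold x}\in L^1(\bold G(F))$, one approximates it by smooth compactly supported functions on $\bold G(F)$ and invokes Harish-Chandra's trace class theorem to conclude that $H_{\bold x}^N$ is compact; compactness of $H_{\bold x}$ itself then follows from self-adjointness.

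The main obstacle is the absolute continuity of the pushforward: because the $\bold s_i$ are constrained to lie in the non-split extension $\Bbb K$, one cannot apply Lemma \ref{gt} directly to the source $F^{nN}$. The fix is to note that $\Bbb K$ has topological dimension $n$ as an $F$-analytic manifold, so $\Bbb K^N$ has $F$-dimension $Nn\ge \dim_F\bold G$; then the algebraic dominance of $\xi_N$ over $\overline{F}$ (via Lemma \ref{gt}) upgrades, after an explicit Jacobian computation at a generic $F$-point, to the statement that $\xi_N$ is generically submersive as an $F$-analytic map, which gives the required absolute continuity of the pushforward and hence $f_{\bold x}\in L^1$. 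I expect this Jacobian verification (and the bookkeeping of Galois-invariance in the formulas for $p_k,q_k$) to be the most delicate step.
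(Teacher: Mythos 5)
Your proposal follows the same overall scaffold the paper points to (an integral of unitaries $U_{\bold s,\bold x}=\rho(g_{\bold s,\bold x})$ against a scalar density, then Harish--Chandra via an $L^1$ density on a reductive $p$-adic/real group), and the treatment of boundedness, self-adjointness via $\sigma_{\bold x}$, and commutativity is all sound. But there are two concrete issues in the compactness step.

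First, the bound on $N$. You ask only for $Nn\ge 3(m-1)$, which is the \emph{dimension count} but not a sufficiency condition for dominance. The proof of Lemma \ref{gt} only shows that the increasing chain of closures $Z_1\subset Z_2\subset\cdots$ has strictly increasing dimension until it saturates $\bold G$, so it guarantees $Z_k=\bold G$ only once $k\ge\dim\bold G=3(m-1)$, regardless of the dimension of the source. If you want to exploit the fact that each copy of $\Bbb K$ is $n$-dimensional over $F$ to take fewer iterations, you would first have to prove that the closure of the image of a \emph{single} factor $\xi_1:{\rm Res}_{\Bbb K/F}\Bbb A^1\to\bold G$ already has dimension $n$ (which is plausible but is an additional claim, not automatic). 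The safe choice, paralleling the paper's proof of Proposition \ref{compa}, is simply $N=3(m-1)$. Moreover, Lemma \ref{gt} as stated is for a \emph{fixed} curve $C$ iterated $n$ times; over $\overline{F}$ the map $\xi_N$ is a product of Hecke modifications cycling through the distinct points $x_1,\ldots,x_n$, so you also need the ``mixed'' variant of the lemma (if $Z_k$ stabilizes through a full cycle then it is invariant under every $\xi_j(\Bbb A^1)$ and hence equals $\bold G$). This is easy but should be said.

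Second, the absolute continuity. You propose ``an explicit Jacobian computation at a generic $F$-point'', but this is the hard way around. Once $\xi_N$ is dominant over $\overline{F}$ and separable (which over $\Bbb R,\Bbb C$ is automatic, and in the non-archimedean case follows since the Hecke modification formulas are separable in $\bold s$), generic smoothness gives a proper closed critical locus $\mathcal C\subsetneq V:=({\rm Res}_{\Bbb K/F}\Bbb A^1)^N$ defined over $F$; since $V$ is $F$-rational, $V(F)\setminus\mathcal C(F)$ is dense and of full measure, and $\xi_N$ is submersive there, so the pushforward is absolutely continuous with no Jacobian bookkeeping at all. No explicit computation is needed, and trying to do one would be quite painful given the complexity of the interpolation polynomials $p_k,q_k$.

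Finally, a remark on the paper's approach: the proof in the paper is a one-liner that says to argue ``analogously, using the representation theory of $PGL_2(\Bbb K)$.'' Your argument keeps the ambient group as $\bold G(F)=PGL_2(F)^{m-1}$ (which is what $\rho$ actually represents on $\mathcal H$), and this works; the role played by $\Bbb K$ in your version is only in the parameter space of the integral. The paper's phrasing suggests one can (and perhaps should) instead view the single Hecke modification at $x_1\in\Bbb P^1(\Bbb K)$ as a \emph{curve over} $\Bbb K$ into $PGL_2^{m-1}$ and run Lemma \ref{gt} directly over $\Bbb K$, which is a cleaner route to the dominance statement (one avoids having to generalize Lemma \ref{gt} to higher-dimensional sources or to mixed curves). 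Your argument is correct in spirit but would benefit from either adopting that $\Bbb K$-linear reformulation or carefully spelling out the generalization of Lemma \ref{gt} that you implicitly use.
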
 
 
\begin{proof} The proof is analogous to the proof of Theorem \ref{compa}, using the representation theory of $PGL_2(\Bbb K)$.  
\end{proof}  
 
\begin{example} Let $F=\Bbb R$ and $\bold x=(x,\overline x)$ where $x\in \Bbb C$. 
Let $\bold s=(s,\overline s)$. Then the formula of Example \ref{symfor}(2) yields 
$$
(\Bbb H_{x,\overline x}\psi)(y_0,...,y_m)=
$$
$$
\frac{1}{4}\int_{\Bbb C}\psi\left(\frac{{\rm Im}((s-y_0)(t_0-\overline{x})\overline s)}{{\rm Im}((s-y_0)(t_0-\overline x))},...,\frac{{\rm Im}((s-y_m)(t_m-\overline{x})\overline s)}{{\rm Im}((s-y_m)(t_m-\overline x))}
\right)\frac{|{\rm Im}(x){\rm Im}(s)^{m-1}|dsd\overline{s}}{\prod_{i=0}^m|{\rm Im}((s-y_j)(t_j-\overline{x}))|}.
$$
where $dsd\overline s$ is the ordinary Lebesgue measure on $\Bbb C$. Thus, introducing real coordinates by 
$s:=u+iv,x:=a+ib$, we get 
$$
(\Bbb H_{x,\overline x}\psi)(y_0,...,y_m)=
$$
\scriptsize
$$
\frac{1}{4}\int_{\Bbb R^2}\psi\left(\frac{(v(t_0-a)-ub)y_0+(u^2+v^2)b}{by_0+v(t_0-a)-ub},...,\frac{(v(t_m-a)-ub)y_m+(u^2+v^2)b}{by_m+v(t_m-a)-ub}
\right)\frac{|bv^{m-1}|dudv}{\prod_{i=0}^m|by_i+v(t_i-a)-ub|}.
$$
\normalsize
\end{example} 

\section{Genus zero, the archimedian case} 

In this section we will consider the archimedian case, i.e., we assume that $F=\Bbb R$ or $F=\Bbb C$.\footnote{For simplicity for $F=\Bbb R$ we restrict to the case $t_i\in \Bbb R$, although the results can be generalized 
to the case when there are complex conjugate pairs of parabolic points.} 
The modified Hecke operators $\Bbb H_x$ are given by the formula
$$
(\Bbb H_x\psi)(y_0,...,y_m)=\frac{1}{2}\int_{\Bbb R}\psi\left(\frac{t_0-x}{s-y_0},...,\frac{t_m-x}{s-y_m}\right)\frac{ds}{\prod_{i=0}^m |s-y_i|} 
$$
for $F=\Bbb R$ and 
$$
(\Bbb H_x\psi)(y_0,...,y_m)=\frac{1}{\pi}\int_{\Bbb C}\psi\left(\frac{t_0-x}{s-y_0},...,\frac{t_m-x}{s-y_m}\right)\frac{dsd\overline{s}}{\prod_{i=0}^m |s-y_i|^2}
$$
for $F=\Bbb C$.  

\subsection{The Gaudin system} 

\begin{definition} The {\bf Gaudin operators} are second order differential operators in the variables $y_0,...,y_m$ defined by 
the formula 
$$
G_i:=\sum_{0\le j\le m, j\ne i}\frac{1}{t_i-t_j}\left(-(y_i-y_j)^2\partial_i\partial_j+(y_i-y_j)(\partial_i-\partial_j)+\frac{1}{2}\right). 
$$
\end{definition} 

It is easy to see that 
$$
\sum_i G_i=0,\ G_i^*=G_i
$$ 
(i.e., $G_i$ are algebraically symmetric), and on translation invariant functions of homogeneity degree $-\frac{m}{2}$ 
one has 
$$
\sum_i t_iG_i=\frac{m}{4}.
$$
It is well known and easy to check that $[G_i,G_j]=0$. Therefore the operators $G_i$ form a 
quantum integrable system. Namely, it is a the
quantum Hitchin system for $G=PGL_2$ for $X=\Bbb P^1$ with parabolic points.\footnote{For more details on the Gaudin operators see \cite{EFK}, Section 7 and references therein.} 
Hence for every $\bold \bmu=(\mu_0,...,\mu_m)\in \Bbb C^{m+1}$ such that $\sum_i \mu_i=0$ and $\sum_i t_i\mu_i=\frac{m}{4}$, we have 
the holonomic system of differential equations
$$
G_i\psi=\mu_{i}\psi, i=1,...,m-1
$$
on a translation-invariant function $\psi(y_0,...,y_m)$ (under simultaneous translation of all variables) of homogeneity degree $-\frac{m}{2}$, which is called the {\bf Gaudin system}. As explained in Subsection \ref{hit}, the Gaudin system defines an $O$-coherent twisted $D$-module\footnote{The twisting here is by the line bundle of half-forms $K^{\frac{1}{2}}$. We will often regard the Gaudin system as a usual $D$-module by tensoring it with the dual bundle.}  $M(\bmu)$ on the open subset $Bun^{\rm vs}_0\subset Bun^\circ_0$ of very stable bundles, of rank $2^{m-1}$.  

\begin{proposition} \label{irredu} The $D$-module $M(\bmu)$ is
irreducible. 
\end{proposition}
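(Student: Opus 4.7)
The plan is to prove irreducibility by reducing to a statement about monodromy of the associated local system, and then exploiting the Hecke eigensheaf structure together with the local geometry near the wobbly divisor.

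First, I would invoke the Riemann--Hilbert correspondence. Since $M(\bmu)$ is an $O$-coherent (twisted) $D$-module on the smooth variety $Bun^{\rm vs}_0$ (its singular support lies in $\mathcal{N}$, whose image is contained in the wobbly divisor $D$), for $F=\Bbb C$ it is equivalent to a rank-$2^{m-1}$ local system $\mathcal{L}_\bmu$ on $Bun^{\rm vs}_0$, and irreducibility as a $D$-module becomes irreducibility of the monodromy representation $\rho_\bmu : \pi_1(Bun^{\rm vs}_0) \to GL_{2^{m-1}}(\Bbb C)$. The non-archimedian and real cases follow by base change from the algebraic model over $\Bbb C$.

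Second, I would identify the fiber of $\mathcal{L}_\bmu$ with a natural space attached to the associated $SL_2$-oper. By the Beilinson--Drinfeld construction (extended to the parabolic setting), $M(\bmu)$ is a Hecke eigensheaf whose eigenvalue is the $SL_2$-oper $\tau_\bmu$ on $\Bbb P^1 \setminus \{t_0,\ldots,t_{m+1}\}$ determined by $\bmu$; the rank $2^{m-1}$ matches the generic number of $B$-reductions of the oper bundle. Any proper sub-$D$-module $N \subset M(\bmu)$ would give a proper sub-local-system of $\mathcal{L}_\bmu$ still eigen for $\tau_\bmu$, hence a proper $\pi_1$-invariant collection of reductions.

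Third, I would rule out such a collection using the local monodromy around the wobbly divisor. By Proposition \ref{dpan}, $D$ has $2^{N-1}$ components $D_S$ permuted transitively by the Weyl group $\Bbb W = \Bbb S_N \ltimes \Bbb V$. By the exponent analysis carried out in Subsection \ref{sing1}, the two local exponents of the Gaudin system at a generic point of $D_S$ differ by $\tfrac{1}{2}$, so the monodromy of $\mathcal{L}_\bmu$ around $D_S$ is a complex reflection. The subgroup of $GL_{2^{m-1}}(\Bbb C)$ generated by these reflections, together with their $\Bbb W$-conjugates (realized as loops in $Bun^{\rm vs}_0$), acts transitively enough on the natural basis of reductions to leave no proper invariant subspace.

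The main obstacle is making this local-to-global monodromy argument precise: one must establish the reflection form of monodromy around each $D_S$ (which requires the full content of Subsection \ref{sing1}, not yet available at this stage) and then show combinatorially that the resulting group of reflections is irreducible on the fiber. A potentially cleaner alternative is to first establish irreducibility for \emph{generic} $\bmu$ by invoking completeness of the $SL_2$ Bethe ansatz, which yields a basis of joint Gaudin eigenfunctions with pairwise distinct eigenvalues (so the commutant of the monodromy is diagonal, and combined with any nontrivial off-diagonal element from the wobbly monodromy forces irreducibility), and then extending to all $\bmu$ by a rigidity/upper-semicontinuity argument, using that the rank of $M(\bmu)$ is constant equal to $2^{m-1}$.
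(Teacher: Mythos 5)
Your proposal takes a genuinely different route from the paper, but both of its branches have gaps that prevent them from closing.

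The paper's proof is a one-step reduction via separation of variables: it identifies $M(\bmu)$ with the $D$-module obtained by Drinfeld's first construction from (a symmetric power of) the $D$-module on $\Bbb P^1$ attached to the oper $\tau_\bmu$, following Sklyanin and Frenkel; since an oper connection is always irreducible, the output of Drinfeld's construction is irreducible, and hence so is $M(\bmu)$. This gets the result for all $\bmu$ at once, with no need to analyze $\pi_1(Bun^{\rm vs}_0)$ or its image.

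Your first branch (local monodromy around the wobbly divisor) has a circularity problem: the paper's later analysis (Section \ref{sing}) shows the local monodromy $\gamma$ around a component $D_S$ is a reflection or transvection, but the nontriviality of $\gamma$ is deduced precisely from Proposition \ref{irredu} --- if $\gamma=1$ the $D$-module is trivialized on an open set, contradicting irreducibility. The exponent analysis alone only shows $\gamma$ is unipotent (even $N$) or has eigenvalues $\pm 1$ (odd $N$) and that $\gamma-1$ has rank $\le 1$; it does not rule out $\gamma=1$. Beyond that, even granting the reflections, you have not supplied the combinatorial argument that the group they generate has no proper invariant subspace, and the Weyl group $\Bbb W=\Bbb S_N\ltimes\Bbb V$ does not literally act by deck transformations on $Bun^{\rm vs}_0$ for a fixed configuration of parabolic points (only $\Bbb V$ does, via the $S_i$), so "$\Bbb W$-conjugates realized as loops" needs justification.

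Your second branch (Bethe ansatz plus rigidity) conflates two setups. Completeness of the $SL_2$ Bethe ansatz concerns the joint spectrum of the Gaudin Hamiltonians on a finite-dimensional weight space in a tensor product of representations, which amounts to letting $\bmu$ range; it does not produce a basis of the $2^{m-1}$-dimensional solution space of the holonomic system $G_i\psi=\mu_i\psi$ for a single fixed $\bmu$, which is what a statement about the commutant of $\rho_\bmu$ would require. Finally, irreducibility is a Zariski-open condition on $\bmu$, so "irreducible for generic $\bmu$" does not automatically yield irreducibility for all $\bmu$; the upper-semicontinuity goes the wrong way, and you would need a genuine rigidity input (which, in effect, is what the separation-of-variables identification provides).
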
 

\begin{proof}
The lemma follows from the explicit identification of $M(\bmu)$ with the
$D$-module obtained by Drinfeld's first construction \cite{Dr} from
the symmetric power of the $D$-module on $\Bbb P^1$ attached to
the corresponding oper. This identification (called the {\bf separation of variables transform}) is explained in \cite{F:icmp}, Subsections 6.5 and 6.6, using the results of
\cite{Skl}. The irreducibility of the oper $D$-module on $\Bbb P^1$ then
implies the irreducibility of Drinfeld's $D$-module, and hence
the irreducibility of the $D$-module $M(\bmu)$ corresponding to the Gaudin
system.\footnote{Note that using the results of Gaitsgory
\cite{Gai:outline} on the uniqueness of Hecke eigensheaves one can
identify the quantum Hitchin $D$-module on $\Bun_{PGL_2}$ constructed
by Beilinson and Drinfeld \cite{BD}, which generalizes the Gaudin
$D$-module, with the $D$-module obtained by Drinfeld's first
construction \cite{Dr} for an arbitrary curve.} 
\end{proof} 

\subsection{Differential equations for Hecke operators}\label{Differential equations for Hecke operators}

In this section we show that the Hecke operators $H_x$ satisfy a second order differential equation with respect to $x$ which can be used to describe their spectrum more explicitly.

Let 
$$
\widehat G_i:=G_i-\sum_{j\ne i}\frac{1}{2(t_i-t_j)}.
$$

\begin{proposition}\label{opereq} (i) We have 
$$
\left(\partial_x^2+\sum_{j\ge 0}\frac{1}{x-t_j}\partial_x\right)\Bbb H_x-\Bbb H_x\sum_{i\ge 0}\frac{\widehat G_i}{x-t_i}=0.
$$
More precisely, let $U\subset F^{m}\setminus 0$ be the open set defined in Subsection \ref{Hecke operators} and $\psi$ be a smooth function on $U$ 
homogeneous of degree $-\frac{m}{2}$ whose support modulo dilations is compact. Then the function $x\mapsto \Bbb H_x\psi\in \mathcal{H}$ is smooth 
for $x\ne t_i,\infty$ and we have 
$$
\left(\partial_x^2+\sum_{j\ge 0}\frac{1}{x-t_j}\partial_x\right)(\Bbb H_x\psi)-\Bbb H_x\sum_{i\ge 0}\frac{\widehat G_i\psi}{x-t_i}=0
$$
in $\mathcal{H}$. 

(ii) In the same sense, we have 
$$
\left(\partial_x^2+\sum_{i\ge 0}\frac{1}{4(x-t_i)^2}\right)H_x-
H_x\sum_{i\ge 0}\frac{G_i}{x-t_i}=0. 
$$ 
\end{proposition}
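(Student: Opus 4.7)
The plan is to prove (i) by direct computation with the integral formula of Proposition \ref{heckeop2}, and then to deduce (ii) from (i) via a scalar conjugation. Starting from the translation-invariant form $(\Bbb H_x\psi)(\bold y) = \int_F \psi(w)\,D(s,\bold y)\,\norm{ds}$ with $w_i = (t_i-x)/(s-y_i)$ and $D(s,\bold y) = \prod_i\norm{s-y_i}^{-1}$, I will exploit that $D$ is independent of $x$ and that $\partial_x^2 w_i = 0$. Differentiating under the integral sign and applying the chain rule give
\[
L_x\psi(w) \;=\; \sum_{i,k}\frac{\partial_{w_i}\partial_{w_k}\psi(w)}{(s-y_i)(s-y_k)} \;-\; \Bigl(\sum_j\frac{1}{x-t_j}\Bigr)\sum_k\frac{\partial_{w_k}\psi(w)}{s-y_k},
\]
where $L_x := \partial_x^2 + \sum_j(x-t_j)^{-1}\partial_x$ is the operator in (i). Applying $\widehat G_i$ as a differential operator in the $y$-variables and then evaluating at $y = w$ yields
\[
(\widehat G_i\psi)(w) = \sum_{j\ne i}\frac{1}{t_i-t_j}\Bigl[-(w_i-w_j)^2\partial_{w_i}\partial_{w_j}\psi + (w_i-w_j)(\partial_{w_i}-\partial_{w_j})\psi\Bigr](w).
\]
Assertion (i) will then reduce to the pointwise identity
\[
L_x\psi(w) - \sum_i\frac{(\widehat G_i\psi)(w)}{x-t_i} \;=\; \partial_s J
\]
for some rational-in-$s$ expression $J$ built from $\psi(w)$ and its $w$-derivatives, whose $s$-integral over $F$ vanishes for $\psi$ in the given class.

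To verify this identity I would match coefficients of $\partial_{w_i}\partial_{w_j}\psi$ and $\partial_{w_k}\psi$ separately. For the second-order terms with $i\ne j$, combining the contributions of $\widehat G_i/(x-t_i)$ and $\widehat G_j/(x-t_j)$ via the partial-fraction identity $\tfrac{1}{t_i-t_j}(\tfrac{1}{x-t_i} - \tfrac{1}{x-t_j}) = \tfrac{1}{(x-t_i)(x-t_j)}$ gives coefficient $-(w_i-w_j)^2/[(x-t_i)(x-t_j)]$, against $2/[(s-y_i)(s-y_j)]$ from $L_x$. Substituting the closed form $w_i-w_j = \tfrac{(t_i-t_j)s - x(y_i-y_j) - (t_iy_j - t_jy_i)}{(s-y_i)(s-y_j)}$, the sum should expand as an exact $\partial_s$-derivative; the diagonal term $1/(s-y_i)^2 = -\partial_s[1/(s-y_i)]$ is manifestly exact. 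The remaining first-order part will be handled using the relation $\partial_s\psi(w) = -\sum_i\partial_{y_i}\psi(w)$ (which follows from $\partial_s w_i = -\partial_{y_i}w_i$ and the translation invariance of $\psi$) together with integration by parts against $D$.

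For (ii) I would set $\alpha(x) := \norm{\prod_{i=0}^m(t_i-x)}^{1/2}$, so that $H_x = \alpha\Bbb H_x$. Since $\alpha'/\alpha = \tfrac{1}{2}\sum_i(x-t_i)^{-1}$, a direct conjugation gives $\alpha L_x\alpha^{-1} = \partial_x^2 - \alpha''/\alpha$. Expanding via partial fractions,
\[
\frac{\alpha''}{\alpha} = (\alpha'/\alpha)^2 + (\alpha'/\alpha)' = -\tfrac{1}{4}\sum_i\frac{1}{(x-t_i)^2} + \tfrac{1}{2}\sum_i\frac{c_i}{x-t_i},\quad c_i := \sum_{j\ne i}\frac{1}{t_i-t_j}.
\]
Multiplying (i) through by $\alpha$ and invoking the defining relation $\widehat G_i = G_i - c_i/2$, the $\sum c_i/(x-t_i)$ contributions cancel exactly, and (ii) follows.

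The main obstacle will be the algebraic verification of the pointwise identity in (i): while elementary, the bookkeeping of rational expressions in $s, x, \{t_i\}, \{y_i\}$ needed to match coefficients of $\partial_{w_i}\partial_{w_j}\psi$ and of the first-order terms is lengthy, and the choice of $J$ is not canonical. As a conceptual cross-check, I would invoke the result of Beilinson-Drinfeld that the Hecke $D$-module on $X \times \Bun^\circ$ at critical level is an oper eigensheaf for $\wh{\sw}_2$; the identity above is the coordinate incarnation of this fact for $X = \Bbb P^1$.
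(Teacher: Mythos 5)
Your plan is, at its core, the same direct-computation strategy the paper uses: starting from the translation-invariant integral formula of Proposition \ref{heckeop2}, applying the chain rule in $x$ (using $\partial_x w_i = -1/(s-y_i)$ and $\partial_x^2 w_i = 0$), recognizing the Gaudin operators via the partial-fraction identity $\frac{1}{t_i-t_j}(\frac{1}{x-t_i}-\frac{1}{x-t_j})=\frac{1}{(x-t_i)(x-t_j)}$, reducing the remainder by integration by parts in $s$ against the density $D$, and then deducing (ii) from (i) by conjugating with $\alpha(x)=\norm{\prod_i(t_i-x)}^{1/2}$. Your computation of $\alpha''/\alpha$ and the cancellation of the $\sum_i c_i/(x-t_i)$ terms via $\widehat G_i = G_i - c_i/2$ is correct and is exactly the (implicit) last step of the paper's proof. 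One small caveat on the algebra: the identity you seek must be of the form $\bigl[L_x\psi(w)-\sum_i\frac{(\widehat G_i\psi)(w)}{x-t_i}\bigr]\,D = \partial_s J$, i.e.\ with the density $D=\prod\norm{s-y_i}^{-1}$ included; without it, $\int\partial_s J\cdot D\,\norm{ds}$ need not vanish, and integrating by parts against $D$ is precisely what mixes the first- and second-order coefficient equations, so the ``match coefficients of $\partial_{w_i}\partial_{w_k}\psi$ and $\partial_{w_k}\psi$ separately'' plan has to be interpreted as solving a coupled linear system for the coefficients of $J$, not as two independent matchings.

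The genuine gap, however, is analytic, not algebraic. Proposition \ref{opereq} asserts that $x\mapsto\Bbb H_x\psi$ is a \emph{smooth $\mathcal{H}$-valued} function and that the ODE holds \emph{in $\mathcal{H}$}. Even granting the pointwise identity you aim for, this requires showing that $\Bbb H_x\psi$ is twice differentiable in $x$ as an $L^2$-valued map, which is not automatic: one must justify differentiation under the $s$-integral and then control the $L^2$-norm over $\bold y\in U$, despite the singularities of the kernel. The paper circumvents this by first establishing the ODE for matrix coefficients $(\eta,\Bbb H_x\psi)$ against smooth compactly supported $\eta$ (where integration by parts is legitimate), and then performing a \emph{double integration in $x$} to convert the ODE into an integral equation; the integral representation makes twice differentiability in $\mathcal{H}$ manifest, and density of the $\eta$'s lets one drop the test function. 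Your proposal omits this step entirely. Either incorporate the double-integration trick or supply a direct dominated-convergence argument for differentiating $\Bbb H_x\psi$ under the integral sign in the $\mathcal{H}$-norm; as written, the proof is incomplete on this point.
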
 

\begin{remark} This is essentially a special case of \cite{EFK2}, Theorem 
1.15, but here we will give a more elementary proof by direct computation. 
\end{remark}

\begin{proof} Let $u_i=u_i(s):=y_i-s$ and $\psi_i,\psi_{ij}$ be the first and second derivatives of $\psi$ evaluated at the point $\bold z$ with coordinates $z_i:=\frac{t_i-x}{u_i}$. Also let $d\mu(s):=\norm{\frac{ds}{\prod_{i=0}^m (s-y_m)}}$. Let $\eta$ be another smooth function on $U$ homogeneous of degree $-\frac{m}{2}$ with compact support modulo dilations. 
Note that $\sum_{j\ge 0}\psi_j=0$ (as $\psi$ is translation invariant)
and $\sum_{j\ge 0}\partial_j\psi_0=-\partial_s\psi_0$. Thus we have 
$$
\partial_x^2(\eta,\Bbb H_x\psi)=\left(\eta,\int_{F}\sum_{i,j\ge 0}\frac{\psi_{ij}}{u_iu_j}d\mu(s)\right), 
$$
$$
\left(\eta,\sum_{i\ge 0}\frac{\Bbb H_x\widehat G_i\psi}{x-t_i}\right)=\left(\eta,\int_{F}\sum_{i\ne j}\frac{-(\frac{t_i-x}{u_i}-\frac{t_j-x}{u_j})^2\psi_{ij}+(\frac{t_i-x}{u_i}-\frac{t_j-x}{u_j})(\psi_i-\psi_j)}{(x-t_i)(t_i-t_j)}d\mu(s)\right).
$$
Subtracting, we get 
$$
\partial_x^2(\eta,\Bbb H_x\psi)-\left(\eta,\Bbb H_x\sum_{i\ge 0}\frac{\widehat G_i\psi}{x-t_i}\right)=
$$
$$
\left(\eta,\int_{F}\left(\sum_i\frac{\psi_{ii}}{u_i^2}-\sum_{i\ne j}\frac{-(\frac{(t_i-x)^2}{u_i^2}+\frac{(t_j-x)^2}{u_j^2})\psi_{ij}-(\frac{t_i-x}{u_i}-\frac{t_j-x}{u_j})(\psi_i-\psi_j)}{(x-t_i)(t_i-t_j)}\right)d\mu(s)\right)=
$$
$$
\left(\eta,\int_{F}\left(\sum_{i,j\ge 0}\frac{(t_i-x)\psi_{ij}}{(t_j-x)u_i^2}-\sum_{i\ne j}\frac{(\frac{t_i-x}{u_i}-\frac{t_j-x}{u_j})(\psi_i-\psi_j)}{(x-t_i)(t_i-t_j)}\right)d\mu(s)\right).
$$
Now, using integration by parts (justified since $\eta,\psi$ have compact 
support modulo dilations which is contained in $U$), we have 
$$
\left(\eta,\int_{F}\sum_{i,j\ge 0}\frac{(t_i-x)\psi_{ij}}{(t_j-x)u_i^2}d\mu(s)\right)=
\left(\eta,\int_{F}\sum_{j\ge 0}\frac{1}{x-t_j}\sum_{i\ge 0}\partial_i\psi_{j}d\mu(s)\right)=
$$
$$
\left(\eta,\int_{F}\sum_{j\ge 0}\frac{1}{t_j-x}\partial_s \psi_jd\mu(s)\right)=
\left(\eta,\int_{F}\sum_{i,j\ge 0}\frac{1}{(t_j-x)u_i}\psi_jd\mu(s)\right). 
$$
Thus we get 
$$
\partial_x^2(\eta,\Bbb H_x\psi)-\left(\eta,\Bbb H_x\sum_{i\ge 0}\frac{\widehat G_i\psi}{x-t_i}\right)=
$$
$$
\left(\eta,\int_{F}\left(\sum_{i,j\ge 0}\frac{1}{(t_j-x)u_i}\psi_j+\frac{1}{2}\sum_{i\ne j}\frac{(\frac{t_i-x}{u_i}-\frac{t_j-x}{u_j})(\psi_i-\psi_j)}{(t_i-x)(t_j-x)}\right)d\mu(s)\right)=
$$
$$
\left(\eta,\int_{F}\left(\sum_{i\ge 0}\frac{1}{(t_j-x)u_i}\psi_i+\sum_{i\ne j}\frac{1}{(t_j-x)u_i}\psi_i\right)d\mu(s)\right)=
$$
$$
\left(\eta,\sum_{j\ge 0}\frac{1}{t_j-x} \int_{F}\sum_{i\ge 0}\frac{\psi_i}{u_i}d\mu(s)\right)=\sum_{j\ge 0}\frac{1}{x-t_j}\partial_x\left(\eta,\Bbb H_x\psi\right).
$$
Thus
$$
\left(\partial_x^2-\sum_{j\ge 0}\frac{1}{x-t_j}\partial_x\right)(\eta,\Bbb H_x\psi)=\left(\eta,\Bbb H_x\sum_{i\ge 0}\frac{\widehat G_i\psi}{x-t_i}\right),
$$
which implies that 
\begin{equation}\label{diffe}
\partial_x^2(\eta,H_x\psi)=
\left(\eta,H_x\sum_{i\ge 0}\left(\frac{G_i}{x-t_i}-\frac{1}{4(x-t_i)^2}\right)\psi\right). 
\end{equation} 
At the algebraic level, we are now done, as this is the claimed equation 
for the matrix coefficient $(\eta,H_x\psi)$. Analytically, however,  
getting rid of $\eta$ is not automatic, as we are working in an infinite dimensional Hilbert space. To start with, we need to show that $H_x\psi$ is twice differentiable. So in order to dispose of $\eta$ and complete the proof, we will perform a double integration to turn differential equation \eqref{diffe} into an integral equation.\footnote{This helps because integrals are better behaved analytically than derivatives, and is similar to 
the beginning of the standard proof of Picard's theorem on the existence of solutions of ODE.}
 
Namely, picking a point $x_0\ne t_i$ for any $i$, we get (for fixed $\psi$): 
$$
(\eta,H_x\psi)=
\int_{x_0}^x\int_{x_0}^y\left(\eta,H_x\sum_{i\ge 0}\left(\frac{G_i}{x-t_i}-\frac{1}{4(x-t_i)^2}\right)\psi\right)dtdy+c_0(\eta)+c_1(\eta)x= 
$$
$$
\left(\eta,\int_{x_0}^x\int_{x_0}^yH_x\sum_{i\ge 0}\left(\frac{G_i}{x-t_i}-\frac{1}{4(x-t_i)^2}\right)\psi dtdy\right)+c_0(\eta)+c_1(\eta)x.
$$
By Proposition \ref{compa}, the left hand side of this equation and the first summand on the right hand side are continuous in $\eta$ in the metric of $\mathcal{H}$. Therefore, the second and third summands on the right 
hand side are also continuous. So, since the possible $\eta$ are dense in 
$\mathcal{H}$,  we have 
$$
H_x\psi=
\int_{x_0}^x\int_{x_0}^yH_x\sum_{i\ge 0}\left(\frac{G_i}{x-t_i}-\frac{1}{4(x-t_i)^2}\right)\psi dtdy+c_0+c_1 x. 
$$
for some $c_0,c_1\in \mathcal H$. This implies that $H_x\psi$ is twice differentiable in $x$. So differentiating twice, we get
$$
\left(\partial_x^2+\sum_{i\ge 0}\frac{1}{4(x-t_i)^2}\right)(H_x\psi)
-H_x\sum_{i\ge 0}\frac{G_i\psi}{x-t_i}=0
$$ 
and 
$$
\left(\partial_x^2+\sum_{j\ge 0}\frac{1}{x-t_j}\partial_x\right)(\Bbb H_x\psi)-\Bbb H_x\sum_{i\ge 0}\frac{\widehat G_i\psi}{x-t_i}=0,
$$
as claimed. 
\end{proof}

\begin{example}\label{ellint2} Let $m=1$, $t_0=0,\ t_1=1$. In this case, as explained in Example \ref{ellint1}, $\mathcal{H}$ is 1-dimensional 
and $H_x=\norm{x(x-1)}^{\frac{1}{2}}E(x)$. Thus $\Bbb H_x=E(x)$ is given by the elliptic integral. Also $G_i$, $i=0,1$ act by the numbers $\mu_i$ such that $\mu_0=-\frac{1}{4},\mu_1=\frac{1}{4}$, so $\widehat G_0=\frac{1}{4},\widehat G_1=-\frac{1}{4}$. So the equation of Proposition 
\ref{opereq} takes the form
$$
\left(\partial_x^2+\left(\frac{1}{x}+\frac{1}{x-1}\right)\partial_x+\frac{1}{4x(x-1)}\right)E(x)=0.
$$ 
This is the classical Picard-Fuchs equation for the elliptic integral. 
\end{example} 

Now recall that we have a spectral decomposition $\mathcal H=\oplus_k \mathcal H_k$ 
with respect to the action of the operators $H_x$. Recall also that $\mathcal H$ 
is naturally a subspace of the space of distributions on $U$. 

\begin{proposition}\label{eigenfun} 
Let $\eta\in \mathcal H_k$. Then for all $i$ the distribution $G_i\eta$ on $U$ belongs to $\mathcal H$ and equals $\mu_{i,k}\eta$ for some scalar $\mu_{i,k}\in \Bbb C$. \end{proposition}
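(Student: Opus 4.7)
The plan is to pair the operator identity of Proposition \ref{opereq}(ii) with $\eta$ and exploit the rigidity of the resulting scalar equation in $x$. First, I would fix a smooth half-density $\psi$ on $F^m\setminus 0$, homogeneous of degree $-m/2$ with compact support modulo dilations contained in $U$; such $\psi$ form a dense subspace of $\mathcal{H}$ and are admissible test inputs for Proposition \ref{opereq}(ii). Applying that proposition to $\psi$ and taking the $L^2$-pairing with $\eta \in \mathcal{H}_k$, I would use self-adjointness of $H_x$ together with $H_x\eta = \beta_k(x)\eta$, and the fact that $G_i$ is formally self-adjoint (so $\langle G_i\psi,\eta\rangle$ equals the distributional pairing $\langle\psi,G_i\eta\rangle$), to obtain the scalar identity
$$
\beta_k''(x)\,c(\psi) + \beta_k(x)\,c(\psi)\sum_{j\ge 0}\frac{1}{4(x-t_j)^2} = \beta_k(x)\sum_{i\ge 0}\frac{a_i(\psi)}{x-t_i},
$$
valid for $x\neq t_0,\dots,t_m,\infty$, where $c(\psi):=\langle\psi,\eta\rangle$ and $a_i(\psi):=\langle\psi,G_i\eta\rangle$.

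Next I would extract the $\mu_{i,k}$ from this identity by separating cases on $c(\psi)$. If $c(\psi)=0$, the identity reduces to $\beta_k(x)\sum_i \frac{a_i(\psi)}{x-t_i}=0$; since $\beta_k$ is not identically zero (Theorem \ref{specthe}) and the rational fractions $\frac{1}{x-t_i}$ are linearly independent, this forces $a_i(\psi)=0$ for every $i$. If $c(\psi)\neq 0$, dividing by $c(\psi)\beta_k(x)$ at a point where $\beta_k(x)\neq 0$ yields
$$
\frac{\beta_k''(x)}{\beta_k(x)} + \sum_{j\ge 0}\frac{1}{4(x-t_j)^2} = \frac{1}{c(\psi)}\sum_{i\ge 0}\frac{a_i(\psi)}{x-t_i}.
$$
The left-hand side depends only on $\beta_k$, so the rational function on the right is the same for all admissible $\psi$ with $c(\psi)\neq 0$; again by linear independence of the $\frac{1}{x-t_i}$, the ratio $a_i(\psi)/c(\psi)=:\mu_{i,k}$ is a scalar independent of $\psi$. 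In both cases we have $\langle\psi,G_i\eta\rangle = \mu_{i,k}\langle\psi,\eta\rangle$ for every admissible $\psi$. Since such $\psi$ are dense in $\mathcal{H}$, the distribution $G_i\eta-\mu_{i,k}\eta$ annihilates a dense family of test half-densities and hence vanishes, giving $G_i\eta=\mu_{i,k}\eta$ as distributions; in particular $G_i\eta\in\mathcal{H}$, as required.

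The main obstacle is interpretational rather than analytic: Proposition \ref{opereq}(ii) is stated as an operator identity on smooth compactly supported test half-densities, so I cannot plug $\eta$ in directly (it need not be smooth). Pairing against $\eta$ sidesteps this by converting the identity into one involving only scalars $c(\psi)$, $a_i(\psi)$ and the scalar function $\beta_k(x)$. The key structural point is that the equation forces $\beta_k''(x)/\beta_k(x)$ to be a rational function of $x$ whose simple poles at $t_i$ encode precisely the residues $\mu_{i,k}$; the Cauchy-type linear independence of the elementary fractions $\frac{1}{x-t_i}$ then lets one disentangle the individual $G_i\eta$.
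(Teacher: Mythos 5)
Your proposal is correct and follows the paper's proof closely: both pair the identity of Proposition \ref{opereq} with $\eta$, use self-adjointness of $H_x$ together with $G_i^*=G_i$ to produce a scalar identity involving $\beta_k(x)$, and then disentangle the individual $G_i\eta$ from the combination $\sum_i G_i\eta/(x-t_i)$. The only cosmetic difference is in the last linear-algebra step: the paper evaluates at $m+1$ distinct values $x_j$ and invokes the non-vanishing of the Cauchy determinant $\det\bigl(1/(x_j-t_i)\bigr)$, while you use the equivalent linear independence of the simple fractions $1/(x-t_i)$ as rational functions of $x$, organized by a case split on $\langle\psi,\eta\rangle$.
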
 

\begin{proof} Let $\eta\in \mathcal{H}_k$. By Proposition \ref{opereq}, 
$$
\left(\partial_x^2+\sum_{i\ge 0}\frac{1}{4(x-t_i)^2}\right)(H_x\psi,\overline\eta)
-\left(H_x\sum_{i\ge 0}\frac{G_i\psi}{x-t_i},\overline\eta\right)=0.
$$ 
But $\overline\eta\in \mathcal H_k$, so $H_x\overline\eta=\beta_k(x)\overline\eta$. Thus, using that $H_x$ is self-adjoint and $G_i^*=G_i$, we 
get 
$$
\left(\partial_x^2+\sum_{i\ge 0}\frac{1}{4(x-t_i)^2}\right)\beta_k(x)(\psi,\overline\eta)
-\sum_{i\ge 0}\frac{1}{x-t_i}\beta_k(x)(\psi,\overline{G_i\eta})=0.
$$ 
This holds for all test functions $\psi$, so we get 
$$
\sum_{i\ge 0}\frac{G_i}{x-t_i}\eta=\left(\frac{\partial_x^2\beta_k(x)}{\beta_k(x)}+\sum_{i\ge 0}\frac{1}{4(x-t_i)^2}\right)\eta.
$$ 
Thus, for any $x$ the operator $\sum_{i\ge 0}\frac{G_i}{x-t_i}$ acts on $\mathcal H_k$ by a scalar. Applying this statement for distinct $x_0,...,x_m$ and using that 
$\det(\frac{1}{x_j-t_i})\ne 0$ (the Cauchy determinant), we deduce that 
for all $i$ the operator $G_i$ acts on the space $\mathcal H_k$ by some scalar $\mu_{i,k}$, as claimed. 
\end{proof}

\subsection{The Schwartz space} \label{Schwartz space} 
For $\psi\in \mathcal H$ let $\psi_k$ be the projection of $\psi$ to $\mathcal H_k$.

\begin{definition} Define the {\bf Schwartz space} $\mathcal S\subset \mathcal H$ to be the space of vectors $\psi\in \mathcal H$ such that 
for any $i_1,...,i_r$ 
$$
\sum_k |\mu_{i_1,k}...\mu_{i_r,k}|^2\norm{\psi_k}^2<\infty,
$$
where $\mu_{i,k}$ are the scalars appearing in Proposition \ref{eigenfun}. 
\end{definition} 

Let $\mathcal{A}$ be the commutative algebra of differential operators on 
$Bun_0^\circ(F)$ (regarded as a real analytic manifold) generated 
by $G_i$ for $F=\Bbb R$ and by $G_i,\overline G_i$ for $F=\Bbb C$. 
This algebra has a conjugation map $\dagger$ given by $G_i^\dagger=\overline G_i$ 
(where $\overline{G_i}=G_i$ for $F=\Bbb R$). 

Let $\bmu_k: \mathcal{A}\to \Bbb C$ be the conjugation-equivariant character defined by $\bmu_k(G_i)=\mu_{i,k}$. Thus $\mathcal S$ is the space of $\psi$ such that for every $A\in \mathcal A$ one has 
$$
\sum_k |\bmu_k(A)|^2\norm{\psi_k}^2<\infty.
$$

Recall that $U\subset F^{m}\setminus 0$ denotes the open set defined in Subsection \ref{Hecke operators}. 

\begin{definition} Define $\mathcal V=C^\infty_0$ to be the space of smooth functions on  $U$ of homogeneity degree $-\frac{m}{2}$ with compact support modulo dilations.  
\end{definition} 

Note that $\mathcal A$ acts on $\mathcal V$. 

\begin{proposition}\label{VinS} One has $\mathcal V\subset \mathcal S$. Moreover, for $\phi\in \mathcal V$, $A\in \mathcal A$ we have 
$$
A\phi=\sum_k \bmu_k(A)\phi_k.
$$
\end{proposition}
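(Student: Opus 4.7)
The plan is to verify first that $\mathcal{A}$ preserves $\mathcal{V}$ so that the spectral decomposition of $A\phi$ makes sense in $\mathcal H$, then to identify the $k$-th spectral component of $A\phi$ with $\bmu_k(A)\phi_k$ by a distributional integration-by-parts argument based on Proposition \ref{eigenfun}, and finally to deduce the Schwartz-space estimate from Parseval.

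For the first step, each generator $G_i$ (and $\bar G_i$ when $F=\mathbb{C}$) is a second-order differential operator with polynomial coefficients that preserves the homogeneity degree $-\tfrac{m}{2}$ as well as smoothness and compact support modulo dilations. Hence $A\phi\in\mathcal V\subset\mathcal H$ for every $A\in\mathcal A$ and $\phi\in\mathcal V$, so $A\phi$ admits a convergent spectral decomposition $A\phi=\sum_k(A\phi)_k$ in $\mathcal H$.

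For the second step I argue by induction on the word length of $A$ in the generators. The case $A=1$ is trivial. For the inductive step $A=G_iB$, I pair against an arbitrary eigenfunction $\eta\in\mathcal H_k$: since $B\phi\in\mathcal V$ is smooth with compact support modulo dilations inside $U$ and $G_i^t=G_i$ as a formal bilinear transpose, integration by parts gives
\[
\langle G_iB\phi,\eta\rangle_{\mathcal H}=\int_U (B\phi)\,G_i\bar\eta.
\]
Now $\bar\eta\in\mathcal H_k$, since the Schwartz kernel of $H_x$ is real and hence each $\mathcal H_k$ is stable under complex conjugation; Proposition \ref{eigenfun} then yields the distributional identity $G_i\bar\eta=\mu_{i,k}\bar\eta$, and combining with the inductive hypothesis $(B\phi)_k=\bmu_k(B)\phi_k$ gives $\langle G_iB\phi,\eta\rangle=\mu_{i,k}\bmu_k(B)\langle\phi,\eta\rangle=\bmu_k(G_iB)\langle\phi,\eta\rangle$. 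The case $A=\bar G_iB$ is handled identically using $\bar G_i\bar\eta=\overline{G_i\eta}=\bar\mu_{i,k}\bar\eta$ together with the conjugation-equivariance of $\bmu_k$. Summing over an orthonormal basis of $\mathcal H_k$ produces $(A\phi)_k=\bmu_k(A)\phi_k$, and hence $A\phi=\sum_k\bmu_k(A)\phi_k$ in $\mathcal H$.

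For the last step, Parseval applied to this identity gives $\|A\phi\|_{\mathcal H}^2=\sum_k|\bmu_k(A)|^2\|\phi_k\|^2$, which is finite because $A\phi\in\mathcal V\subset\mathcal H$. Specializing to $A=G_{i_1}\cdots G_{i_r}$, for which $\bmu_k(A)=\mu_{i_1,k}\cdots\mu_{i_r,k}$, yields the convergence condition in the definition of $\mathcal S$, so $\phi\in\mathcal S$. The main technical obstacle is the integration by parts, since $\eta$ is only an $L^2$ half-density rather than a smooth function; this is precisely what Proposition \ref{eigenfun} enables by identifying the distributional action $G_i\bar\eta$ with the scalar $\mu_{i,k}\bar\eta$, so that $G_i$ can legitimately be transferred onto the test function $B\phi$. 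Once this step is justified, the remaining verifications -- invariance of $\mathcal V$ under $\mathcal A$ and stability of $\mathcal H_k$ under complex conjugation -- are straightforward.
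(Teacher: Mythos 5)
Your proof is correct and follows essentially the same route as the paper's: both reduce to pairing $A\phi$ against $v\in\mathcal H_k$, move $A$ onto $v$ using the formal transpose structure, and invoke Proposition \ref{eigenfun} to evaluate the result as $\bmu_k(A)$ times the pairing, with Parseval supplying the Schwartz-space estimate. The paper compresses the word-length induction into a single line $(v,A\phi)=(\overline A v,\phi)=\overline{\bmu_k(A)}(v,\phi)$, implicitly using the conjugation-stability of $\mathcal H_k$ established earlier when choosing real orthonormal bases; you unpack both of these points explicitly, which is a harmless elaboration rather than a different argument.
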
 

\begin{proof} Let $\phi\in \mathcal V$. Then for any $A\in \mathcal A$, 
we have $A\phi\in \mathcal V\subset \mathcal H$. 
Thus we get 
$$
\norm{A\phi}^2=\sum_k \norm{(A\phi)_k}^2<\infty.
$$
But by Proposition \ref{eigenfun} for $v\in \mathcal H_k$ we have 
$$
(v,(A\phi)_k)=(v,A\phi)=(\overline A v,\phi)=\overline{\bmu_k(A)}(v,\phi)=\overline{\bmu_k(A)}(v,\phi_k)=(v,\bmu_k(A)\phi_k).
$$
Thus $(A\phi)_k=\bmu_k(A)\phi_k$. This implies both statements.  
\end{proof} 

We can now define a representation of the algebra $\mathcal{A}$ on $\mathcal{S}$ by the formula
$$
A\psi:=\sum_k\bmu_k(A)\psi_k.
$$
By Proposition \ref{VinS}, this extends to $\mathcal{S}$ the usual action 
of $\mathcal{A}$ on $\mathcal V$. So from now on let us regard $\mathcal{A}$ as an algebra of endomorphisms of $\mathcal{S}$. Note that for each $v,u\in \mathcal{S}$, we have 
$$
(Av,u)=\sum_k((Av)_k,u_k)=\sum_k \bmu_k(A)(v_k,u_k)=\sum_k(v_k,(A^\dagger u)_k)=(v,A^\dagger u).
$$
 Thus the subalgebra $\mathcal{A}_{\Bbb R}$ of elements $A\in \mathcal{A}$ such that $A^\dagger=A$ acts on $\mathcal{S}$ by symmetric operators.

Recall (\cite{EFK}, Definition 11.7) that $S(\mathcal{A})$ denotes 
the space of $u\in \mathcal{H}$ such that the linear functional 
$v\mapsto (Av,u)$ on $\mathcal{S}$ is continuous in the metric of $\mathcal{H}$ 
for all $A\in \mathcal{A}$. 

\begin{proposition} (i) One has $S(\mathcal{A})=\mathcal{S}$. 

(ii) Any $f\in \mathcal{S}$ is smooth on the set $Bun^{\rm vs}(F)$ of very stable bundles. 
\end{proposition}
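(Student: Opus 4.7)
The plan for (i) is to verify the two inclusions directly, using the spectral decomposition $\mathcal{H}=\bigoplus_k\mathcal{H}_k$ and the action $Af:=\sum_k\bmu_k(A)f_k$ of $\mathcal{A}$ on $\mathcal{S}$. For $\mathcal{S}\subseteq S(\mathcal{A})$: given $u\in\mathcal{S}$ and $v\in\mathcal{S}$, a one-line calculation yields $(Av,u)=(v,A^\dagger u)$, and since $A^\dagger u\in\mathcal{H}$ by the definition of $\mathcal{S}$, the functional $v\mapsto(Av,u)$ is bounded in the $\mathcal{H}$-norm by $\|A^\dagger u\|\,\|v\|$. For the reverse inclusion, I would fix an orthonormal basis $\{\psi_{k,j}\}_{k,j}$ of $\mathcal{H}$ adapted to the spectral decomposition and test the continuity hypothesis on $u\in S(\mathcal{A})$ against the finite vectors
\begin{equation*}
v_N:=\sum_{k\le N,\,j}\overline{\bmu_k(A)(\psi_{k,j},u)}\,\psi_{k,j}\in\mathcal{S}.
\end{equation*}
A short calculation gives $(Av_N,u)=\|v_N\|^2=\sum_{k\le N,\,j}|\bmu_k(A)|^2|(\psi_{k,j},u)|^2$, so the assumption $|(Av_N,u)|\le C_A\|v_N\|$ forces $\|v_N\|^2\le C_A^2$ uniformly in $N$; letting $N\to\infty$ yields $\sum_k|\bmu_k(A)|^2\|u_k\|^2<\infty$ for every $A$, i.e.\ $u\in\mathcal{S}$.

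For (ii), the plan is to invoke iterated elliptic regularity on $Bun^{\rm vs}(F)$. The principal symbol of $G_i$ at $y$ is the quadratic form
\begin{equation*}
\sigma_2(G_i)(\xi)=-\sum_{j\ne i}\frac{(y_i-y_j)^2}{t_i-t_j}\xi_i\xi_j,
\end{equation*}
and, as recalled in Subsection~\ref{hit}, the common zero locus of $\sigma_2(G_0),\dots,\sigma_2(G_m)$ inside $T^*Bun^\circ_0$ is precisely the nilpotent cone $\mathcal{N}$. By the very definition of a very stable bundle, $\mathcal{N}\cap T^*_E Bun^\circ_0=\{0\}$ for every $E\in Bun^{\rm vs}$, so the system $\{G_i\}_{i=0}^m$, supplemented by $\{\overline{G_i}\}_{i=0}^m$ when $F=\C$ and we view $Bun_0^\circ(\C)$ as a real-analytic manifold, is an overdetermined elliptic system of order two on $Bun^{\rm vs}(F)$.

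Given $f=\sum_k f_k\in\mathcal{S}$, for any finite product $A=G_{i_1}\cdots G_{i_r}$ the series $\sum_k\bmu_k(A)f_k$ converges in $L^2$ and hence in the space of distributions on $U$. Since each $G_i$ is a differential operator, hence continuous on distributions, and Proposition~\ref{eigenfun} supplies $G_i f_k=\mu_{i,k}f_k$ distributionally for each $k$, this limit coincides with the iterated distributional derivative $G_{i_1}\cdots G_{i_r}f$. Iterated local elliptic regularity for the above system now yields, for every compact $K\Subset Bun^{\rm vs}(F)$ and every $r\ge 0$, an estimate
\begin{equation*}
\|f\|_{H^{2r}(K)}\le C_{K,r}\Bigl(\|f\|_{L^2}+\sum_{|I|\le r}\|A_I f\|_{L^2}\Bigr),
\end{equation*}
where $A_I$ runs over products of length $\le r$ in the $G_i$ (and the $\overline{G_i}$ when $F=\C$). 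Since $f\in\mathcal{S}$ makes every such norm finite, $f\in H^{2r}_{\mathrm{loc}}(Bun^{\rm vs}(F))$ for all $r$, and Sobolev embedding yields $f\in C^\infty(Bun^{\rm vs}(F))$.

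The main obstacle I anticipate is the rigorous identification, for $f\in\mathcal{S}$, of the spectrally defined element $Af=\sum_k\bmu_k(A)f_k$ with the image of $f$ under the differential operator $A$ in the distributional sense. This amounts to justifying the commutation of an $L^2$-limit with a finite-order differential operator, which is legitimate by the continuity of $A$ on distributions combined with Proposition~\ref{eigenfun} applied termwise, but deserves to be spelled out with care. Once this identification and the symbol computation are in place, the ellipticity on $Bun^{\rm vs}$ and the resulting bootstrap in Sobolev scales are standard.
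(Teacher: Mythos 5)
Your argument is correct, and for part (i) it is essentially the paper's proof: both directions match, the only cosmetic difference being that the paper invokes the Riesz representation theorem to produce $w\in\mathcal{H}$ with $(Av,u)=(v,w)$ and then tests on $v\in\mathcal{H}_k$ to read off $w_k=\overline{\bmu_k(A)}u_k$, whereas you obtain the same conclusion directly from uniform boundedness of $(Av_N,u)$ on your truncated test vectors. The two are interchangeable.

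For part (ii) your route differs from the paper's in a mild but genuine way. The paper does not work with the overdetermined system $\{G_i\}$ (or $\{G_i,\overline{G_i}\}$) directly. Instead it observes that, by holonomicity, near any $E\in Bun^{\rm vs}(F)$ one can choose \emph{real} operators $D_1,\dots,D_r\in\mathcal{A}$ (i.e. with $D_j^\dagger=D_j$; over $\Bbb C$ these would be real combinations such as $G_i+\overline{G_i}$, $i(G_i-\overline{G_i})$) whose symbols have no common nonzero real zero, and then forms the single operator $D:=D_1^2+\dots+D_r^2\in\mathcal{A}$, whose symbol is a sum of real squares and hence nonnegative and nondegenerate; $D$ is a bona fide elliptic operator and $D^nf\in L^2$ for all $n$, so standard elliptic bootstrap applies. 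Your version keeps the $G_i$ and $\overline{G_i}$ as an overdetermined second-order elliptic system and invokes the corresponding regularity theory with iterated estimates. Both arguments are sound; the paper's reduction to a single $D$ buys the convenience of textbook (determined) elliptic regularity and hides the reality issue inside the choice of $D_j$, while yours is more explicit about the symbol computation and the role of the nilpotent cone. You are also more careful than the paper in spelling out the identification of the spectrally defined $Af=\sum_k\bmu_k(A)f_k$ with the distributional derivative $Af$; the paper takes this for granted after Proposition~\ref{VinS}, and your justification (continuity of differential operators on distributions plus Proposition~\ref{eigenfun} term by term, combined with $L^2$ convergence) is the right one.
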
 

\begin{proof} (i) If $u\in \mathcal{S}$ then $(Av,u)=(v,A^\dagger u)$, so 
it is continuous in $v$, hence $u\in S(\mathcal A)$. 
Conversely, suppose $u\in S(\mathcal{A})$. Then $(Av,u)=(v,w)$ for some 
$w\in \mathcal{H}$. 
So we have 
$$
\sum_k (v_k,w_k)=(v,w)=(Av,u)=\sum_k((Av)_k,u_k)=\sum_k \bmu_k(A)(v_k,u_k). 
$$
Taking $v=v_k\in \mathcal{H}_k \subset \mathcal{S}$, we have 
$w_k=\overline{\bmu_k(A)}u_k$ for all $k$. Thus 
$$
\sum_k |\bmu_k(A)|^2\norm{u_k}^2=\sum_k \norm{w_k}^2<\infty, 
$$
i.e., $u\in \mathcal{S}$. 

(ii) This follows since the quantum Hitchin system is holonomic, hence elliptic on $Bun^{\rm vs}$ (cf. \cite{EFK}, Example 11.12(3)). Namely, the holonomicity of the quantum Hitchin system on $Bun^{\rm vs}$ implies that for every $E\in Bun^{\rm vs}(F)$ there exist {\it real} differential operators $D_1,...,D_r\in \mathcal A$ such that the common zero set of their symbols in $T^*_E Bun^{\rm vs}$ is $\lbrace 0\rbrace$. Then the operator $D:=D_1^2+...+D_r^2\in \mathcal A$ is elliptic near $E$, and for any $n\ge 0$ the function $D^nf$ is square integrable. This implies that $f$ is smooth near $E$. 
\end{proof} 

Recall (\cite{EFK}, Definition 11.8) that the algebra $\mathcal A_{\Bbb R}$ is said to be {\bf essentially self-adjoint} on $\mathcal{S}$ if every 
$A\in \mathcal{A}_{\Bbb R}$ 
is essentially self-adjoint on $S(\mathcal{A})$. 

\begin{proposition}\label{strocomm} 
The algebra $\mathcal A_{\Bbb R}$ is essentially self-adjoint on $\mathcal{S}$. 
In particular, $G_i, \overline{G_i}$ are unbounded normal operators on $\mathcal{H}$ (self-adjoint for $F=\Bbb R$) which (strongly) commute with 
each other and with the Hecke operators $H_x$ (see \cite{EFK}, Subsection 
11.2). 
\end{proposition}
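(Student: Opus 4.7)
The plan is to construct, for each $A\in\mathcal{A}$, a normal extension $\widehat{A}$ that acts diagonally in the orthogonal Hecke eigendecomposition $\mathcal{H}=\bigoplus_{k}\mathcal{H}_{k}$ of Theorem~\ref{specthe}, and then to show that $\mathcal{S}$ is a core for each such $\widehat{A}$.

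First I would extend the scalar-action of Proposition~\ref{eigenfun} from the generators $G_{i}$ to all of $\mathcal{A}$. For products of $G_{i}$'s this is automatic, since commuting scalar actions on $\mathcal H_k$ compose to scalar actions. In the case $F=\Bbb{C}$ one also needs this for $\overline{G_{i}}$; this follows by running the proof of Proposition~\ref{eigenfun} with the complex conjugate of the operator-valued ODE of Proposition~\ref{opereq}, which holds because the Hecke integral kernel is a real-analytic function of $x$ on $\Bbb{C}$. The output is a well-defined conjugation-equivariant character $\bmu_{k}:\mathcal{A}\to\Bbb{C}$ with $\bmu_{k}(A)\in\Bbb{R}$ whenever $A\in\mathcal{A}_{\Bbb{R}}$.

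Next, for each $A\in\mathcal{A}$ I define the diagonal operator
$$
\widehat{A}\psi:=\sum_{k}\bmu_{k}(A)\psi_{k},\qquad D(\widehat{A}):=\Big\{\psi\in\mathcal{H}\,:\,\sum_{k}|\bmu_{k}(A)|^{2}\|\psi_{k}\|^{2}<\infty\Big\}.
$$
Standard spectral theory for diagonal operators on $\bigoplus_{k}\mathcal{H}_{k}$ (with every $\mathcal{H}_{k}$ finite-dimensional) shows that $\widehat{A}$ is closed and normal, and self-adjoint if $A\in\mathcal{A}_{\Bbb{R}}$. All the spectral projections of the operators $\widehat{A}$ then lie in the abelian von Neumann algebra $\mathfrak{R}$ generated by the finite-rank projections $P_{k}$ onto $\mathcal{H}_{k}$. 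Because the bounded self-adjoint Hecke operators $H_{x}$ are themselves diagonal with eigenvalues $\beta_{k}(x)$ on $\mathcal{H}_{k}$, their spectral projections likewise lie in $\mathfrak{R}$, yielding the asserted strong commutativity of the family $\{\widehat{A},\,H_{x}\}$.

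It remains to identify $\widehat{A}$ with the closure of $A|_{\mathcal{S}}$. The inclusion $A|_{\mathcal{S}}\subset\widehat{A}$ is the definition of the $\mathcal{A}$-action on $\mathcal{S}$. Conversely, every finite-dimensional eigenspace $\mathcal{H}_{k}$ lies in $\mathcal{S}$ (only one defining-sum term is nonzero), and the algebraic direct sum $\bigoplus_{k}^{\mathrm{alg}}\mathcal{H}_{k}\subset\mathcal{S}$ is a core for $\widehat{A}$: for $\psi\in D(\widehat{A})$, the truncations $\psi^{(N)}:=\sum_{k\le N}\psi_{k}$ converge to $\psi$ in the graph norm of $\widehat{A}$. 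Hence $\mathcal{S}$ itself is a core, so the closure of $A|_{\mathcal{S}}$ equals $\widehat{A}$. Applied to $A\in\mathcal{A}_{\Bbb{R}}$ this is precisely essential self-adjointness; applied to $G_{i}+\overline{G_{i}}$ and $i(G_{i}-\overline{G_{i}})$, combined with the strong commutativity above, it yields the normality of the closures of $G_{i}$ and $\overline{G_{i}}$.

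The main obstacle is the very first step: extending Proposition~\ref{eigenfun} to cover $\overline{G_{i}}$ for $F=\Bbb{C}$. Everything downstream is formal spectral theory for simultaneously diagonalizable operators. One can circumvent the antiholomorphic ODE by noting directly that $\overline{G_{i}}$ commutes with $H_{x}$ (using the real-analyticity of the Hecke integrand) and then repeating the proof of Proposition~\ref{eigenfun} verbatim.
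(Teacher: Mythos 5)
Your proof is correct and follows essentially the same strategy as the paper's; the paper's own proof is a single sentence (``This follows immediately from the fact that $S(\mathcal A)=\mathcal S$ contains $\mathcal H_k$ for all $k$''), and your argument is a faithful expansion of the spectral-theoretic content it compresses: diagonalize every $A\in\mathcal A$ in the Hecke eigenbasis, note that the algebraic sum $\bigoplus_k^{\rm alg}\mathcal H_k\subset\mathcal S$ is a graph-norm core for the diagonal closure $\widehat A$, and read off self-adjointness, normality, and strong commutativity from the resulting abelian von Neumann algebra generated by the $P_k$.

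One remark on what you single out as ``the main obstacle,'' namely extending Proposition \ref{eigenfun} to the antiholomorphic generators $\overline{G_i}$ for $F=\Bbb C$. Both of your proposed fixes (running the conjugate of the ODE in Proposition \ref{opereq}, or first proving $[\overline{G_i},H_x]=0$) do work, but there is a one-line shortcut that the paper implicitly uses: since the Schwartz kernel of $H_x^n$ is real-valued, each $\mathcal H_k$ admits a basis of real-valued half-densities (this is stated explicitly just before Corollary \ref{asymeig}, in the discussion of the reproducing kernel). Complex-conjugating the eigenvalue equation $G_i\eta=\mu_{i,k}\eta$ for real $\eta$ immediately gives $\overline{G_i}\eta=\overline{\mu_{i,k}}\eta$, so $\overline{G_i}$ acts on $\mathcal H_k$ by $\overline{\mu_{i,k}}$ and the conjugation-equivariance of $\bmu_k$ comes for free. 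This avoids re-running any analytic argument. Otherwise your write-up is accurate and, if anything, makes the logic more transparent than the paper's terse formulation, in particular by stating explicitly that it is the containment $\bigoplus_k^{\rm alg}\mathcal H_k\subset\mathcal S\subset D(\widehat A)$ and the truncation argument that yield the core property.
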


\begin{proof} This follows immediately from the fact that $S(\mathcal{A})=\mathcal{S}$ contains $\mathcal{H}_k$ for all $k$. 
\end{proof} 

Proposition \ref{strocomm} immediately yields

\begin{corollary}\label{commuta} For every $\phi\in \mathcal V$, $\bold x=(x_1,...,x_n)$ (with $x_i\ne t_j,\infty)$
and $0\le i\le m$, the distribution $G_iH_{\bold x}\phi$ belongs to $\mathcal{H}$ 
and we have $G_iH_{\bold x}\phi=H_{\bold x}G_i\phi$. 
\end{corollary}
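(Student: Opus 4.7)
The plan is to deduce Corollary \ref{commuta} from Proposition \ref{strocomm} together with the spectral decomposition $\mathcal{H}=\bigoplus_k\mathcal{H}_k$ and Proposition \ref{eigenfun}.

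First I would fix $\phi\in\mathcal{V}\subset\mathcal{S}$ (via Proposition \ref{VinS}) and expand $\phi=\sum_k\phi_k$ with $\phi_k\in\mathcal{H}_k$. Since each bounded operator $H_{x_j}$ acts on $\mathcal{H}_k$ by the scalar $\beta_k(x_j)$, one has the $\mathcal{H}$-convergent expansion
$$H_{\bold x}\phi=\sum_k a_k\phi_k,\qquad a_k:=\prod_{j=1}^n\beta_k(x_j),\qquad |a_k|\le\prod_{j=1}^n\|H_{x_j}\|.$$
The uniform bound on $|a_k|$ together with $\phi\in\mathcal{S}$ gives $H_{\bold x}\phi\in\mathcal{S}$: for every monomial $A=G_{i_1}\cdots G_{i_r}$,
$$\sum_k|\mu_{i_1,k}\cdots\mu_{i_r,k}|^2|a_k|^2\|\phi_k\|^2\le\Bigl(\prod_{j=1}^n\|H_{x_j}\|^2\Bigr)\sum_k|\mu_{i_1,k}\cdots\mu_{i_r,k}|^2\|\phi_k\|^2<\infty.$$
In particular, $\sum_k\mu_{i,k}a_k\phi_k$ converges in $\mathcal{H}$.

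Next I would identify the distribution $G_iH_{\bold x}\phi$ with this $\mathcal{H}$-vector. By Proposition \ref{eigenfun}, the differential operator $G_i$ acts on each $\phi_k$ as the scalar $\mu_{i,k}$ in the sense of distributions, so for every $N$ the equality $G_i\sum_{k\le N}a_k\phi_k=\sum_{k\le N}\mu_{i,k}a_k\phi_k$ holds both as distributions and as elements of $\mathcal{H}$. As $N\to\infty$, the left side tends to $G_iH_{\bold x}\phi$ in $\mathcal{D}'$ (since $G_i$ is continuous on distributions and the partial sums converge to $H_{\bold x}\phi$ in $\mathcal{H}\subset\mathcal{D}'$), while the right side converges in $\mathcal{H}$ to $\sum_k\mu_{i,k}a_k\phi_k$, and hence also in $\mathcal{D}'$. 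Uniqueness of distributional limits gives $G_iH_{\bold x}\phi=\sum_k\mu_{i,k}a_k\phi_k\in\mathcal{H}$.

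Finally, the same scalar rearrangement yields $H_{\bold x}G_i\phi=H_{\bold x}\bigl(\sum_k\mu_{i,k}\phi_k\bigr)=\sum_k\mu_{i,k}a_k\phi_k$, matching the previous expression and establishing $G_iH_{\bold x}\phi=H_{\bold x}G_i\phi$. The only genuine content is the interchange of distributional differentiation with the infinite spectral sum in the second paragraph; this is what the uniform bound $|a_k|\le\prod_j\|H_{x_j}\|$ supplies, and I expect no further obstacle.
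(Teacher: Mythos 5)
Your proof is correct and is essentially the argument the paper has in mind: the paper simply cites Proposition \ref{strocomm} (strong commutativity of the $G_i$ with the Hecke operators, via essential self-adjointness on $\mathcal{S}$), while you unpack that statement directly from the spectral decomposition $\mathcal{H}=\bigoplus_k\mathcal{H}_k$ together with Propositions \ref{eigenfun} and \ref{VinS}. The content — both families act by scalars on each $\mathcal{H}_k$, $H_{\bold x}$ preserves $\mathcal{S}$ because of the uniform bound $|a_k|\le\prod_j\|H_{x_j}\|$, and distributional differentiation commutes with the $L^2$-convergent spectral sum — is exactly what underlies Proposition \ref{strocomm}, so this is the same route made explicit rather than a genuinely different proof.
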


We also obtain 

\begin{corollary} For $\psi\in \mathcal{H}$, the map $x\mapsto H_x\psi$ 
is twice differentiable in $x$ as a function with values in distributions 
on $U$ for $x\ne t_i,\infty$, and we have 
$$
\left(\partial_x^2+\sum_{i\ge 0}\frac{1}{4(x-t_i)^2}\right)(H_x\psi)=
\sum_{i\ge 0}\frac{G_i}{x-t_i}H_x\psi 
$$ 
as distributions on $U$. 
\end{corollary}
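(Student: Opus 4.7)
The strategy is to extend Proposition \ref{opereq}, which is already proved for $\phi \in \mathcal V$, to arbitrary $\psi \in \mathcal H$ by pairing against test functions and transferring all derivatives onto the test function side via adjointness. Fix $\phi \in \mathcal V$ and consider the scalar function $F_\phi(x) := (H_x\psi,\phi)$, where $(\cdot,\cdot)$ denotes the sesquilinear inner product on $\mathcal H$. By self-adjointness of $H_x$ (Proposition \ref{selfad}), $F_\phi(x) = (\psi, H_x\phi)$. Since $\phi \in \mathcal V$, Proposition \ref{opereq}(ii) applied to $\phi$ says that the $\mathcal H$-valued map $x \mapsto H_x\phi$ is twice norm-differentiable for $x \neq t_i,\infty$ and satisfies
\begin{equation*}
\left(\partial_x^2+\sum_{i\ge 0}\frac{1}{4(x-t_i)^2}\right)(H_x\phi) = H_x\sum_{i\ge 0}\frac{G_i\phi}{x-t_i}.
\end{equation*}
By continuity of the inner product, $F_\phi$ is then twice differentiable and the derivatives can be taken under the pairing with $\psi$.

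Pairing the displayed identity with $\psi$ and using self-adjointness of $H_x$ once more gives
\begin{equation*}
\left(\partial_x^2+\sum_{i\ge 0}\frac{1}{4(x-t_i)^2}\right) F_\phi(x) = \sum_{i\ge 0}\frac{(\psi, H_x G_i\phi)}{x-t_i}=\sum_{i\ge 0}\frac{(H_x\psi, G_i\phi)}{x-t_i}.
\end{equation*}
The next step is to recognize each term on the right as a distributional pairing. Since $G_i$ is a differential operator on $U$ with real coefficients that is formally self-adjoint (as recorded by the identity $G_i^*=G_i$ on smooth functions of compact support), for any $u \in \mathcal H$ the distribution $G_i u$ on $U$ is defined by $\langle G_i u,\phi\rangle = (u, G_i\phi)$ for $\phi \in \mathcal V$. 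Applied to $u=H_x\psi$ this yields $(H_x\psi, G_i\phi)=\langle G_i H_x\psi,\phi\rangle$. Combining, for every $\phi \in \mathcal V$,
\begin{equation*}
\left(\partial_x^2+\sum_{i\ge 0}\frac{1}{4(x-t_i)^2}\right)\langle H_x\psi,\phi\rangle = \sum_{i\ge 0}\frac{\langle G_i H_x\psi,\phi\rangle}{x-t_i}.
\end{equation*}
Since $\mathcal V$ separates distributions on $U$, this is precisely the statement that $x \mapsto H_x\psi$ is twice differentiable as a function with values in distributions on $U$ and that the asserted distributional equation holds.

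The main technical subtlety to watch is the interchange of the $x$-derivative with the $\mathcal H$-inner product: this relies on the strong norm-differentiability of $H_x\phi$ that Proposition \ref{opereq}(ii) provides for the smooth input $\phi$. One cannot differentiate $H_x\psi$ directly in $\mathcal H$, because $\psi$ is merely $L^2$; the proof circumvents this by moving the derivative onto the test-function side via the self-adjointness of $H_x$, after which only strong differentiability of the smooth object $H_x\phi$ is needed. A minor bookkeeping point is the distinction between the sesquilinear inner product and the bilinear distributional pairing, but since $\mathcal V$ is closed under complex conjugation and $G_i$ has real coefficients, the identities above extend to the distributional pairing by linearity.
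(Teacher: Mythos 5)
Your proof is correct and follows essentially the same route as the paper: pair $H_x\psi$ against a test function $\phi\in\mathcal V$, use self-adjointness to rewrite $(H_x\psi,\phi)=(\psi,H_x\phi)$, apply Proposition \ref{opereq}(ii) to the smooth object $H_x\phi$, and then transfer the result back into a distributional identity. Your explicit appeal to the formal self-adjointness $G_i^*=G_i$ to identify $(H_x\psi,G_i\phi)$ with $\langle G_i H_x\psi,\phi\rangle$ makes a step fully explicit that the paper treats more compactly, but it is the same argument.
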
 

\begin{proof} For a test function $\phi\in \mathcal V$, we have $(H_x\psi,\phi)=(\psi,H_x\phi)$, which is twice differentiable in $x$ by Proposition \ref{opereq}. Thus again using Proposition \ref{opereq}, 
$$
\left(\left(\partial_x^2+\sum_{i\ge 0}\frac{1}{4(x-t_i)^2}\right)(H_x\psi),\phi\right)=
\left(\partial_x^2+\sum_{i\ge 0}\frac{1}{4(x-t_i)^2}\right)(H_x\psi,\phi)=
$$
$$
\left(\partial_x^2+\sum_{i\ge 0}\frac{1}{4(x-t_i)^2}\right)(\psi,H_x\phi)=
\left(\psi,\left(H_x\sum_i \frac{G_i}{x-t_i}\right)^\dagger\phi\right)=
\left(\sum_i \frac{G_i}{x-t_i}H_x\psi,\phi\right),
$$
as claimed. 
\end{proof} 

\subsection{The differential equation for eigenvalues}\label{The differential equation for eigenvalues} 

Proposition \ref{opereq} implies

\begin{corollary}\label{spec2} The function $\beta_k(x)$ 
satisfies the differential equation 
\begin{equation}\label{ope}
L(\bmu_k)\beta_k(x)=0,
\end{equation}
where 
$$
L(\bmu):=\partial_x^2+\sum_{i\ge 0}\frac{1}{4(x-t_i)^2}-\sum_{i\ge 0}\frac{\mu_{i,k}}{x-t_i}
$$ 
is an {\bf $SL_2$-oper} (where $\sum_i \mu_i=0,\ \sum_i t_i\mu_i=\frac{m}{4}$). 
\end{corollary}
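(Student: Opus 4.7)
The plan is to read off the differential equation directly from Proposition \ref{opereq}(ii) by pairing it against an eigenvector $\psi\in\mathcal H_k$. The subtlety is that Proposition \ref{opereq}(ii) is stated for a test function $\phi\in \mathcal V$ (smooth, compactly supported modulo dilations), whereas a joint eigenfunction $\psi\in \mathcal H_k$ need not lie in $\mathcal V$. So I would work weakly: apply the operator identity
\[
\Bigl(\partial_x^2+\sum_{i\ge 0}\frac{1}{4(x-t_i)^2}\Bigr)H_x\phi \;=\; H_x\sum_{i\ge 0}\frac{G_i\phi}{x-t_i}
\]
to $\phi\in\mathcal V$ and take the Hilbert-space inner product with a fixed nonzero $\psi\in\mathcal H_k$ (the identity is valued in $\mathcal H$ by Proposition \ref{opereq}, so pairing is legitimate).

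Next, I would move $H_x$ and $G_i$ across the inner product. Since $H_x$ is self-adjoint (Proposition \ref{selfad}) with real eigenvalue $\beta_k(x)$, we have $(H_x\phi,\psi)=(\phi,H_x\psi)=\beta_k(x)(\phi,\psi)$. Since $G_i$ is algebraically symmetric and acts on $\mathcal H_k$ by the scalar $\mu_{i,k}$ (Proposition \ref{eigenfun}), we have $(G_i\phi,\psi)=(\phi,G_i\psi)=\overline{\mu_{i,k}}(\phi,\psi)$, but we will use it in the form $(H_xG_i\phi,\psi)=(G_i\phi,H_x\psi)=\beta_k(x)(\phi,G_i\psi)=\beta_k(x)\,\overline{\mu_{i,k}}(\phi,\psi)$. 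Since $H_x$ with real kernel commutes with complex conjugation and $G_i$ are strongly commuting normal operators (Proposition \ref{strocomm}), we may take $\psi$ real so that $\overline{\mu_{i,k}}=\mu_{i,k}$ on this eigenspace (in the real case $G_i$ is self-adjoint and this is automatic; in the complex case it only matters that $\mu_{i,k}$ and $\overline{\mu_{i,k}}$ label the same joint eigenspace, so that the constants appearing opposite $\beta_k(x)$ and $\beta_k(x)$ under conjugation are the complex conjugate of one another and the argument still works componentwise).

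Plugging these simplifications in, the weak identity becomes
\[
\Bigl(\partial_x^2+\sum_{i\ge 0}\frac{1}{4(x-t_i)^2}\Bigr)\bigl(\beta_k(x)(\phi,\psi)\bigr) \;=\; \sum_{i\ge 0}\frac{\mu_{i,k}}{x-t_i}\,\beta_k(x)(\phi,\psi).
\]
Choosing $\phi\in \mathcal V$ with $(\phi,\psi)\neq 0$ (possible because $\mathcal V$ is dense in $\mathcal H$) and dividing, we obtain
\[
\beta_k''(x) + \sum_{i\ge 0}\frac{\beta_k(x)}{4(x-t_i)^2} - \sum_{i\ge 0}\frac{\mu_{i,k}}{x-t_i}\beta_k(x) \;=\; 0,
\]
which is precisely $L(\bmu_k)\beta_k(x)=0$.

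The only real obstacle is bookkeeping the weak-versus-strong formulation and the conjugate-linearity of the inner product, which is why I would phrase everything through the pairing with a test function $\phi\in\mathcal V$ rather than trying to evaluate $\left(\partial_x^2+\sum_i\frac{1}{4(x-t_i)^2}\right)H_x\psi$ directly. Once phrased that way, the conclusion is a one-line consequence of Proposition \ref{opereq}(ii), Proposition \ref{eigenfun}, and the symmetry of $H_x$ and $G_i$.
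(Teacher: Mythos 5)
Your overall strategy — pair the weak form of Proposition \ref{opereq}(ii) against an eigenvector of $\mathcal H_k$, move $H_x$ and $G_i$ across the inner product using self-adjointness and Proposition \ref{eigenfun}, then divide by $(\phi,\psi)$ — is exactly what the paper has in mind; it is the very manipulation already carried out in the proof of Proposition \ref{eigenfun}, which is why the paper simply says that Proposition \ref{opereq} implies the corollary.

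There is, however, a conjugate-linearity slip in the step $(G_i\phi,\psi)=(\phi,G_i\psi)=\overline{\mu_{i,k}}(\phi,\psi)$. Over $F=\Bbb C$ the Hermitian adjoint of $G_i$ is $\overline{G_i}$, not $G_i$ (Proposition \ref{strocomm}), so the first equality fails; what actually holds is the formal transpose symmetry $\int G_i\phi\cdot\overline{\psi}=\int\phi\cdot G_i\overline{\psi}$, with no conjugation applied to $G_i$. Since a joint eigenfunction satisfies both $G_i\psi=\mu_{i,k}\psi$ and $\overline{G_i}\,\psi=\overline{\mu_{i,k}}\psi$, conjugating the latter gives $G_i\overline\psi=\mu_{i,k}\overline\psi$, and therefore
\[
(G_i\phi,\psi)=\int\phi\cdot G_i\overline\psi=\mu_{i,k}(\phi,\psi),
\]
not $\overline{\mu_{i,k}}(\phi,\psi)$. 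This produces $\mu_{i,k}$ on the nose in the resulting ODE, yielding $L(\bmu_k)\beta_k=0$ directly — note that your intermediate step gives $\overline{\mu_{i,k}}$, and then your final display silently switches to $\mu_{i,k}$. The parenthetical remark about choosing $\psi$ real does not repair this: the eigenvalues $\mu_{i,k}$ are genuinely complex in general, so $\overline{\mu_{i,k}}\ne\mu_{i,k}$. (The paper sidesteps the issue by pairing against $\overline\eta$ for $\eta\in\mathcal H_k$, which makes the transpose symmetry apply cleanly without any conjugation.) As it happens, both $L(\bmu_k)\beta_k=0$ and $\overline{L(\bmu_k)}\beta_k=0$ hold because $\beta_k$ is real-valued, so the endpoint is not at risk; but the derivation as written is internally inconsistent, and the computation above is what actually delivers the stated equation. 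For $F=\Bbb R$, $G_i$ has real coefficients and is genuinely self-adjoint, so your argument is correct as written in that case.
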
 

Note that equation \eqref{ope} is Fuchsian at the points $t_i$ with characteristic exponents $\frac{1}{2},\frac{1}{2}$ and, since 
$$
\sum_i \mu_{i,k}=0,\ \sum_i t_i\mu_{i,k}=\frac{m}{4},
$$
 it is also Fuchsian at $\infty$ with characteristic exponents 
 $-\frac{1}{2},-\frac{1}{2}$. In other words, basic solutions behave 
 as $(x-t_i)^{\frac{1}{2}}$ and $(x-t_i)^{\frac{1}{2}}\log(x-t_i)$ 
 near $t_i$ and as $x^{\frac{1}{2}},x^{\frac{1}{2}}\log x$ at $\infty$.\footnote{We remind the reader that since the oper $L(\bmu)$ is a map from $K^{-\frac{1}{2}}$ to $K^{\frac{3}{2}}$, solutions of the equation $L(\bmu)\beta=0$ are sections of 
 $K^{-\frac{1}{2}}$, but we view them as functions by multiplying by $(dx)^{-\frac{1}{2}}$.}
Thus the monodromy operators of \eqref{ope} at $t_i$ and $\infty$ are conjugate 
 to $\begin{pmatrix} -1 & 1\\ 0 & -1\end{pmatrix}$.
  
 \subsection{Spectral decomposition in the complex case}\label{The complex 
case} Let $F=\Bbb C$ and
 $\mathcal{R}\subset \Bbb C^{m-1}$ be the set of points $\bmu$ such that the oper $L(\bmu)$, when viewed as a rank $2$ local system, has {\bf a real monodromy representation} (i.e., landing in $SL_2(\Bbb R)$ up to conjugation). It is 
known that $\mathcal{R}$ is discrete (see \cite{F}, Section 7, Corollary to Theorem 12).

\begin{theorem}\label{mainth} 
(i) The Hecke operators $H_x$ have a simple joint spectrum $\Sigma$ on $\mathcal H$. 

(ii) For any $\bmu\in \mathcal{R}$ there is a unique up to scaling nonzero single-valued real analytic half-density $\psi_\bmu$, defined outside the wobbly divisor $D$
in $Bun_0^\circ(\Bbb C)$ (see Subsectioin \ref{hit}), such that 
\begin{equation}\label{Gau}
G_i\psi_\bmu=\mu_i\psi_\bmu,\ \overline{G_i}\psi_\bmu=\overline\mu_i\psi_\bmu.
\end{equation} 

(iii) There is a natural inclusion $\Sigma\hookrightarrow \mathcal{R}$.

(iv) Let $0\ne \psi\in \mathcal{H}$ be such that $G_i\psi=\mu_i\psi$ as distributions on $U$ for some $\mu_i\in \Bbb C$. Then $H_x\psi=\beta(x)\psi$ for some function $\beta(x)$. 
Thus $\psi\in \mathcal{H}_k$ for some $k$ and $\beta(x)=\beta_k(x)$. 

(v)  
$\bmu\in \mathcal{R}$ belongs to $\Sigma$ (i.e., $\psi_\bmu$ is an eigenfunction) if and only if $\psi_\bmu$ belongs to the Schwartz space $\mathcal{S}$. 
\end{theorem}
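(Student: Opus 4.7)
The plan is to establish part (ii) first and deduce the remaining parts from it. For (ii), I would use the separation-of-variables identification of the Gaudin $D$-module $M(\bmu)$ with Drinfeld's $D$-module built from the $(m-1)$-st symmetric power of the rank two $SL_2$-oper local system $L$ on $\Bbb P^1\setminus\{t_0,\dots,t_m,\infty\}$ associated to $L(\bmu)$, as already invoked in the proof of Proposition \ref{irredu} (cf. \cite{F:icmp} and \cite{Skl}). A single-valued real analytic solution of the combined system $G_i\psi=\mu_i\psi$, $\overline{G_i}\psi=\overline{\mu_i}\psi$ on $Bun^{\rm vs}_0(\Bbb C)$ corresponds to a $\pi_1$-invariant sesquilinear pairing on the flat sections of $M(\bmu)$, equivalently to a nonzero element of $(M(\bmu)\otimes\overline{M(\bmu)})^{\pi_1}$. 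By Schur's lemma together with the irreducibility of $M(\bmu)$ (Proposition \ref{irredu}), this invariant space is at most one-dimensional, and is nonzero precisely when $M(\bmu)$ is isomorphic to its complex conjugate dual as a $\pi_1$-representation; since $M(\bmu)$ corresponds to $\operatorname{Sym}^{m-1}$ of the self-dual local system $L$, this reduces to $L\cong\overline L$, equivalently to $L$ having monodromy conjugate into $SL_2(\Bbb R)$, which is the defining condition $\bmu\in\mathcal R$. I would then produce $\psi_\bmu$ as the Gram-matrix half-density associated to the induced $\pi_1$-invariant Hermitian pairing on $M(\bmu)$, transferred to $Bun^{\rm vs}_0$ via separation of variables.

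Parts (i) and (iii) should follow directly from (ii). By Proposition \ref{eigenfun} each Hecke eigenspace $\mathcal H_k$ is a joint $G_i$-eigenspace with some tuple $\bmu_k=(\mu_{i,k})$; ellipticity of the Gaudin system on $Bun^{\rm vs}$ (used in the proof that $\mathcal S=S(\mathcal A)$) forces elements of $\mathcal H_k$ to be real analytic there, hence single-valued real analytic joint eigenfunctions. By (ii) such eigenfunctions are unique up to scaling, so $\dim\mathcal H_k\le 1$, giving simplicity of the joint Hecke spectrum as required for (i). The same argument will show $\bmu_k\in\mathcal R$, producing the map $\Sigma\to\mathcal R$, $\mathcal H_k\mapsto\bmu_k$, required for (iii); injectivity follows because if $\bmu_k=\bmu_{k'}$ then both one-dimensional spaces would be spanned by the unique (up to scaling) $\psi_{\bmu_k}$, forcing $k=k'$ by orthogonality of distinct Hecke eigenspaces.

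For (iv), I would take $0\ne\psi\in\mathcal H$ with $G_i\psi=\mu_i\psi$ as distributions on $U$ and write $\psi=\sum_k\psi_k$ along $\mathcal H=\bigoplus_k\mathcal H_k$. Strong commutativity of the $G_i$ with the $H_x$ (Proposition \ref{strocomm}) implies that each $\mathcal H_k$ is $G_i$-invariant; combined with Proposition \ref{eigenfun} this gives $G_i\psi_k=\mu_{i,k}\psi_k$ in $\mathcal H$. Equating $\sum_k\mu_{i,k}\psi_k=G_i\psi=\mu_i\sum_k\psi_k$ in $\mathcal H$ and using orthogonality, $\psi_k=0$ unless $\bmu_k=\bmu$; by the distinctness of the $\bmu_k$ established above, exactly one such $k$ survives, so $\psi\in\mathcal H_k$ and $H_x\psi=\beta_k(x)\psi$. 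For (v): if $\bmu\in\Sigma$, the one-dimensional $\mathcal H_k$ with $\bmu_k=\bmu$ is spanned by a single-valued real analytic joint $G_i$-eigenfunction which, by the uniqueness in (ii), must be a scalar multiple of $\psi_\bmu$; thus $\psi_\bmu\in\mathcal H_k\subset\mathcal S$ trivially, since $\mathcal H_k$ is a single joint eigenline for $\mathcal A$. Conversely, if $\psi_\bmu\in\mathcal S\subset\mathcal H$, part (iv) places $\psi_\bmu$ into some $\mathcal H_k$, giving $\bmu=\bmu_k\in\Sigma$.

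The principal obstacle will be the rigorous execution of the construction in (ii): one must check that the Gram-matrix half-density built from flat sections of the symmetric power of the real oper local system indeed transfers under separation of variables to a globally well-defined single-valued real analytic half-density on $Bun^{\rm vs}_0(\Bbb C)$, with the correct half-form twist matching the quantum Hitchin twisting and with controlled behavior near the wobbly divisor $D$. A secondary subtlety arises in (iv), where the hypothesis provides $G_i\psi=\mu_i\psi$ only as distributions on $U$; I will need to justify that this distributional eigenvalue equation propagates term-by-term through the Hecke spectral decomposition in $\mathcal H$, which is handled by Proposition \ref{strocomm} together with the $\mathcal H$-valued content of Proposition \ref{eigenfun}.
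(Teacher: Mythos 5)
Your plan inverts the paper's order: it tries to prove (ii) first by a separation-of-variables and Schur's lemma argument, then deduce (i), (iii) from it. The paper instead reaches $\bmu_k\in\mathcal R$ by a more elementary route that you do not use: Corollary \ref{spec2} shows that the Hecke \emph{eigenvalue} $\beta_k(x)$ itself satisfies the rank $2$ oper and anti-oper equations $L(\bmu_k)\beta_k=\overline{L(\bmu_k)}\beta_k=0$, so the existence of a nonzero single-valued $\beta_k$ forces the oper local system to have real monodromy directly, without passing through the rank $2^{m-1}$ Gaudin $D$-module and the identification $M(\bmu)\cong\overline{M(\bmu)}^*\Leftrightarrow L\cong\overline L$. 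Your route for (ii) is workable in principle, but it is strictly harder: besides the transfer of the Gram pairing that you flag yourself, you also quietly pass from ``$L\cong\overline L$'' to ``monodromy in $SL_2(\Bbb R)$,'' which needs the unipotent local monodromies to rule out the compact real form, and the phrase ``$M(\bmu)$ corresponds to $\operatorname{Sym}^{m-1}$ of $L$'' conflates the symmetric power of the local system with the pushforward from the symmetric power of the curve used in Drinfeld's construction. For (i), (iii) your argument is essentially fine once (ii) is in hand, and your injectivity argument for $\Sigma\hookrightarrow\mathcal R$ is clean.

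The real gap is in (iv). You write ``Equating $\sum_k\mu_{i,k}\psi_k=G_i\psi=\mu_i\sum_k\psi_k$ in $\mathcal H$,'' but the hypothesis only gives $G_i\psi=\mu_i\psi$ as a distribution on $U$, i.e., $(\psi,G_i^\dagger\phi)=\mu_i(\psi,\phi)$ for $\phi\in C^\infty_c(U)$. This places $\psi$ in the domain of the \emph{maximal} (adjoint) operator built from $C^\infty_c(U)$, not in the domain $\mathcal S$ of the particular self-adjoint extension $\bar G_i$ defined via the Hecke spectral decomposition. The two need not coincide: $C^\infty_c(U)$ is not a core for $\bar G_i$ --- the $4$-point discussion in Subsection \ref{selfadjo} exhibits inequivalent self-adjoint extensions with the same distributional eigenfunctions, and the one relevant to Hecke operators ($V_1$) is distinguished precisely by nontrivial gluing/boundary data that a $C^\infty_c(U)$ test cannot see. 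Hence you cannot conclude $\psi\in\mathcal S$, you cannot write $G_i\psi=\sum_k\mu_{i,k}\psi_k$ in $\mathcal H$, and the term-by-term interchange in the infinite sum is unjustified. The paper avoids all of this by a different mechanism: it restricts test functions to $\mathcal V_0$ (supported on the very stable locus), uses Corollary \ref{ves1} to guarantee $H_x\phi\in\mathcal V$ and Corollary \ref{commuta} to get $G_iH_x\phi=H_xG_i\phi$, then moves $G_i$ across the pairing twice to obtain $G_iH_x\psi=\mu_iH_x\psi$ as distributions, and finishes with irreducibility of the Gaudin monodromy to force proportionality $H_x\psi=\beta(x)\psi$. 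You should replace your argument for (iv) with this localization-and-commutation argument (and handle the $N=4$ case separately, as the paper does).
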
 

\begin{remark} 1. We expect that the condition in (v) always holds, so $\Sigma=\mathcal{R}$ (this is Conjecture 1.8(2) in \cite{EFK2}). We will see below that this is true in the case of four and five points. 

2. See Footnote \ref{foot1} on the difference in normalization of eigenvalues of the Hecke operators used in the present paper and in Conjecture 1.11 of \cite{EFK2}.
\end{remark} 

\begin{proof} (i),(ii),(iii) Consider a joint eigenspace $\mathcal H_k$ of $H_x$.  
By Proposition \ref{eigenfun}, the operators $G_i$ act on this space by some scalars 
$\mu_{i,k}$, and by Corollary \ref{spec2} the eigenvalue $\beta_k(x)$ of $H_x$ on $\mathcal H_k$ 
satisfies the differential equations 
\begin{equation}\label{opersys}
L(\bmu_k)\beta_k=0,\ \overline{L(\bmu_k)}\beta_k=0. 
\end{equation} 
Since these equations have a non-zero single-valued solution $\beta_k(x)$, the oper $L(\bmu_k)$ has a real monodromy representation, i.e., $\bmu_k\in \mathcal{R}$ (see e.g. \cite{EFK}, Subsection 3.4, \cite{EFK2}, Subsection 5.1). Furthermore, since an oper connection 
is always irreducible (\cite{BD,BD:opers}), this single-valued solution is unique up to scaling. Hence by Corollary \ref{asymeig} it is uniquely determined by $\bmu_k$ (namely, its asymptotics at $\infty$ fixes the scaling). This gives an inclusion $\Sigma\hookrightarrow \mathcal R$.  

Also every element $\psi\in \mathcal H_k$ is a single-valued real analytic solution
of the holonomic system \eqref{Gau}. By Proposition \ref{irredu}, 
the holomorphic part of this system is an irreducible $D$-module. This implies that
$\psi$ is unique up to scaling, i.e., $\psi=\psi_{\bmu_k}$, and $\dim
\mathcal H_k=1$.

(iv) For $N=4$ this is shown in the next section, so assume that $N\ge 5$. 
In this case let $\mathcal V_0\subset \mathcal V$ be the subspace of functions $\phi$ supported on the open set $Bun^{\rm vs}(F)\subset U$ of very stable bundles (i.e., ones that have no nonzero nilpotent Higgs field, see Subsection \ref{hit}).Then by Corollary \ref{ves1}, 
 $H_x\phi\in \mathcal V$. By Corollary \ref{commuta}, 
$G_iH_x\phi=H_xG_i\phi$ as distributions, hence as functions on $U$ (as 
they are both elements of $\mathcal V$).
 Thus for $\phi\in \mathcal V_0$ we have 
$$
(G_iH_x\psi,\overline{\phi})=(H_x\psi,\overline{G_i\phi})=(\psi,\overline{H_x G_i\phi})=
(\psi,\overline{G_iH_x\phi}),
$$
Moreover, since $H_x\phi\in \mathcal V$, we have 
$$
(\psi,\overline{G_iH_x\phi})=(G_i\psi,\overline{H_x\phi})=\mu_i(\psi,\overline{H_x\phi})=(\mu_iH_x\psi,\overline{\phi}).
$$
Thus 
$$
G_iH_x\psi=\mu_iH_x\psi
$$ 
as distributions on $Bun^{\rm vs}$. Since the monodromy representation of the system 
$G_i\psi=\mu_i\psi$ is irreducible, there is at most one single-valued solution up to scaling. Therefore generically on $U$ we have $H_x\psi=\beta(x)\psi$, as claimed.  
 
(v) It is clear that 
$\psi_{\bmu_k}\in \mathcal S\subset \mathcal{H}$. 
Conversely, if $\psi_{\bmu}\in \mathcal H$ 
for some $\bmu\in \mathcal{R}$ then by (iv) we have $\psi_{\bmu}\in \mathcal{H}_k$ for some $k$. 
\end{proof} 

Theorem \ref{mainth} implies the validity of the main conjectures of \cite{EFK} and \cite{EFK2} (namely Conjectures 1.4, 1.9, 1.10 and 1.11 of \cite{EFK} and Conjectures 1.5, 1.11 of \cite{EFK2}) for $G=PGL_2$ and curves of  genus zero.\footnote{Note that the normalization of the eigenvalues $\beta_k(x)$ is uniquely determined by Theorem \ref{specthe}(ii). This way of normalization is possible due to presence of parabolic points.}

\begin{remark} Let $Y$ be the (2-dimensional) space of solutions of the oper equation
$$
L(\bmu_k)f=0
$$
 near some point $x_0\in \Bbb C\Bbb P^1$, $x_0\ne t_0,...,t_{m+1}$.
Then $Y$ carries a monodromy-invariant symplectic form sending $(f,g)$ to the Wronskian $W(f,g)$ (which is a constant function since $L(\bmu_k)$ has no first derivative term). It also carries a monodromy-invariant pseudo-Hermitian inner product $(f,g)\mapsto B(f,g)$, since the monodromy of the corresponding local system is in $SL_2(\Bbb R)\cong SU(1,1)$. Let $f_0,f_1$ be a basis of $Y$ with $B(f_0,f_0)=B(f_1,f_1)=0$ and $B(f_0,f_1)=1$. Assume that $B$ is normalized so
that $|W(f_0,f_1)|=1$ (there are two such normalizations differing by sign). 

Then by Corollary \ref{spec2} the corresponding eigenvalue $\beta_k$ has the form 
$$
\pm\beta_k(x,\overline x)=f_0(x)\overline{f_1(x)}+f_1(x)\overline{f_0(x)}.
$$
Similarly, if $B(f_0,f_0)=1$, $B(f_1,f_1)=-1$ and $B(f_0,f_1)=0$, then we have
$$
\pm\beta_k(x,\overline x)=|f_0(x)|^2-|f_1(x)|^2.
$$
These are special cases of the formula in \cite{EFK2}, Conjecture 1.11. 
Note that we can fix the sign by imposing the condition that $\beta_k(x,\overline x)$ is positive near $\infty$. 
\end{remark} 

\subsection{The leading eigenvalue of the Hecke operator} 
The positive (leading) eigenvalue $\beta_0(x)$ of $H_x$ has a special meaning.

\begin{proposition} 
$\beta_0(x)$ is a single-valued solution of the system \ref{opersys} corresponding to the analytic uniformization of the punctured Riemann surface $X^\circ:=\Bbb C\Bbb P^1\setminus \lbrace{t_0,...,t_{m+1}\rbrace}$ (see \cite{F, Go, Ta}). 
\end{proposition}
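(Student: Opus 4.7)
The plan is to identify $\beta_0(x)$, up to a positive multiplicative constant, with the reciprocal of the Poincar\'e hyperbolic metric density on $X^\circ := \Bbb C\Bbb P^1 \setminus \lbrace t_0, \ldots, t_{m+1}\rbrace$, which is the canonical positive single-valued solution attached to the uniformization oper. First, by Corollary \ref{spec2} the function $\beta_0$ satisfies $L(\bmu_0)\beta_0 = 0$ for some $\bmu_0 \in \mathcal R$; since $\beta_0$ is real-valued it also solves $\overline{L(\bmu_0)}\beta_0 = 0$. By Proposition \ref{posi} the character $\chi_0$ is trivial (the involution $S_i$ carries the positive eigenfunction $\psi_0$ to a positive function, so $\chi_0(S_i)=+1$), hence Corollary \ref{asymeig} yields the cusp asymptotic
$$
\beta_0(x) \sim \norm{x - t_i}^{\frac{1}{2}} \log(\norm{x - t_i}^{-1}), \qquad x \to t_i,
$$
which in the complex case reads $\beta_0 \sim 2|x-t_i|\log|x-t_i|^{-1}$.

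Next I analyze the monodromy $\rho$ of $L(\bmu_0)$. Any single-valued solution of both $L(\bmu_0)$ and $\overline{L(\bmu_0)}$ corresponds to a $\rho$-invariant Hermitian form $B$ on $\Bbb C^2$, so that $\beta_0(x) = B(f(x), f(x))$ for a local basis of solutions $f = (f_0, f_1)^T$. Irreducibility of oper connections (cf.\ the proof of Theorem \ref{mainth}(i)) forces $B$ to be non-degenerate (else $\ker B$ would be a $\rho$-invariant line), and positivity of $\beta_0$ combined with the nontrivial parabolic behaviour above forces $B$ to be indefinite (a definite $B$ would put $\rho \subset SU(2)$, whose only elements of trace $\pm 2$ are $\pm I$, contradicting the logarithmic factor in the asymptotic). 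Hence $B$ has signature $(1,1)$ and $\rho$ lands in $SU(1,1) \cong SL_2(\Bbb R)$. Choosing $(f_0, f_1)$ so that $B(z,z) = |z_0|^2 - |z_1|^2$ gives $\beta_0 = c(|f_0|^2 - |f_1|^2)$ with $c>0$, so the developing map $\phi := f_1/f_0$ sends the universal cover $\widetilde{X^\circ}$ into the open unit disk $\Bbb D$.

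Finally, the exponents $\tfrac{1}{2},\tfrac{1}{2}$ of $L(\bmu_0)$ at each $t_i$ together with the logarithmic factor in the cusp asymptotic identify the local monodromy around each puncture as a nontrivial parabolic element of $PSL_2(\Bbb R)$. Thus $\rho : \pi_1(X^\circ) \to PSL_2(\Bbb R)$ is a representation with parabolic holonomy at every puncture, whose holomorphic developing map $\phi$ is a local diffeomorphism landing in $\Bbb D$. By classical Fuchsian rigidity for punctured spheres (\cite{F}, \cite{Go}, \cite{Ta}), the only such representation is the uniformization representation, and the only such $\phi$ is the universal covering $\widetilde{X^\circ} \to \Bbb D$. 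Consequently $L(\bmu_0)$ is the uniformization oper and
$$
\beta_0(x) = c \,\frac{1 - |\phi(x)|^2}{|\phi'(x)|}
$$
is a positive constant multiple of the reciprocal Poincar\'e density, the constant $c > 0$ being fixed by the Step~1 asymptotic.

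The hard part is this final identification: promoting the purely analytic data (developing map lands in $\Bbb D$, holonomy is parabolic at every cusp, $\phi$ is locally univalent) to the global statement that $\phi$ is a homeomorphism onto $\Bbb D$ and $\rho$ is the genuine uniformization representation rather than some a priori distinct Fuchsian-like one with the same local invariants. This rigidity is exactly the classical input I draw from the cited uniformization literature.
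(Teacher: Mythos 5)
Your proof is correct and follows essentially the same line as the paper: both arguments hinge on the classical fact that positivity of a single-valued solution of the oper and anti-oper system singles out the uniformization oper. The difference is one of exposition. Where the paper simply cites \cite{Ta} for the claim that the uniformization oper is the \emph{only} real oper admitting a positive single-valued solution, and then verifies it by exhibiting the explicit formula $\beta(x,\overline{x}) = \tfrac{2\,\mathrm{Im}\,K(x)}{|K'(x)|}(dx\,d\overline{x})^{-\frac12}$ attached to the covering $J$, you unwind the cited fact into an argument: the monodromy-invariant Hermitian form $B$ associated to a real single-valued solution is nondegenerate by irreducibility, cannot be definite because $SU(2)$ has no nontrivial unipotents (so the log factor at the cusps would be impossible), hence has signature $(1,1)$, forcing the monodromy into $SU(1,1)\cong SL_2(\Bbb R)$ and the developing map $\phi = f_1/f_0$ into the disk. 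That scaffolding is correct (one should note $\phi' = W(f_1,f_0)/f_0^2 \ne 0$ since the Wronskian is a nonzero constant and $\beta_0 > 0$ forces $f_0 \ne 0$, which you use implicitly), and you are right to flag the final step --- that a locally univalent $\rho$-equivariant map into $\Bbb D$ with parabolic cusp behavior is the universal covering --- as the genuine classical input; that completeness/rigidity argument is precisely what the paper outsources to \cite{Ta} as well. So the two proofs defer to the same external ingredient; yours just makes the route to it explicit.
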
 

\begin{proof} As explained, e.g., in \cite{Ta}, the only oper with real monodromy and positive single-valued solution $\beta(x)$ of the system \eqref{opersys} is the uniformization oper, and in this case the complete hyperbolic conformal metric of constant negative curvature on $X^\circ$ is given by the formula 
$$
ds^2=\beta(x)^{-2}
$$
(which is well defined as a conformal metric independently of the choice of the coordinate $x$ since $\beta(x)$ is naturally a $-\frac{1}{2}$-density on $X^\circ$). Since the leading eigenvalue 
$\beta_0(x)$ of $H_x$ is positive, it must correspond to the uniformization oper, i.e., $\beta_0(x)=\beta(x)$. 

In more detail, let $J: \Bbb C_+\to \Bbb C\Bbb P^1\setminus \lbrace{t_0,...,t_{m+1}}\rbrace$ be an analytic unformization map of the Riemann surface $\Bbb C\Bbb P^1\setminus \lbrace{t_0,...,t_{m+1}}\rbrace$ by the upper half-plane (recall that it is unique up to the action of $PSL_2(\Bbb R)$ on $\Bbb C_+$). 
Then we have the multivalued holomorphic function $K(x):=J^{-1}(x)$ on $\Bbb C\Bbb P^1\setminus \lbrace {t_0,...,t_{m+1}}\rbrace$. Define the multivalued holomorphic 
$-\frac{1}{2}$-densities
$$
f_0(x):=\frac{e^{\frac{\pi i}{4}}}{\sqrt{K'(x)}}(dx)^{-\frac{1}{2}},\
f_1(x):=\frac{e^{-\frac{\pi i}{4}}K(x)}{\sqrt{K'(x)}}(dx)^{-\frac{1}{2}}.
$$
One can show that they form a basis of solutions of the oper equation 
$L(\bmu_0)\beta=0$ (near some point $x_0\in \Bbb C\Bbb P^1\setminus \lbrace{t_0,...,t_{m+1}}\rbrace$) with Wronskian $1$ in which the monodromy of this equation is in $SU(1,1)$, the group of symmetries of the pseudo-Hermitian form 
${\rm Re}(z_1\overline{z_2})$ (namely, it is the corresponding Fuchsian group $\Gamma\subset SL_2(\Bbb R)\cong SU(1,1)$), and the action of $SL_2(\Bbb R)$ on $J$ is simply transitive on bases of solutions with this property.\footnote{Here the group $PSL_2(\Bbb R)$ gets replaced by its double cover $SL_2(\Bbb R)$ due to the ambiguity of the square root 
in the definition of $f_i(x)$.} Thus the real analytic $-\frac{1}{2}$-density
\begin{equation}\label{betafor} 
\beta(x,\overline x):=f_0(x)\overline {f_1(x)}+f_1(x)\overline{f_0(x)}=
\frac{2{\rm Im}K(x)}{|K'(x)|}(dxd\overline x)^{-\frac{1}{2}}
\end{equation} 
satisfies the system of the oper and anti-oper equations \eqref{opersys}: 
$$
L(\bmu_0)\beta=0,\ \overline{L}(\bmu_0)\beta=0,
$$
and is the only nonzero single-valued solution of these equations up to scaling 
(it is single-valued because the expression $\frac{2{\rm Im}K(x)}{|K'(x)|}$ is invariant under the action of $SL(2,\Bbb R)$ given by $K\mapsto \frac{aK+b}{cK+d}$). It is manifestly positive, and the hyperbolic metric is expressed as $\beta^{-2}$, as explained above. 
\end{proof} 

Thus we obtain an explicit formula \eqref{betafor} for the leading eigenvalues $\beta_0(x) = \beta(x)$ of the Hecke operators $H_x$ in terms of the uniformization map. Other eigenvalues $\beta_k(x)$ can also be written in the form \eqref{betafor} but with $K(x)$ now taking values in the complex plane (rather than the upper half-plane). For this reason, $\beta_k(x)$ vanishes on the union of finitely many analytic contours in $\Bbb C\Bbb P^1\setminus \lbrace{t_0,...,t_{m+1}}\rbrace$.
Thus the corresponding metric $\beta^{-2}$ has constant negative curvature away from these contours and the singularities at the contours which locally look like 
the Poincar\'e metric $\frac{dx^2+dy^2}{y^2}$ near the boundary $y=0$ of the upper half-plane (see \cite{Go} and Section 4 of \cite{Ta}).

\subsection{Spectral decomposition in the real case and balanced local systems}\label{The real case} Let $F=\Bbb R$ and $t_0<t_1<...<t_m$. 
In this case the oper equation $L(\bmu)\beta=0$ for the eigenvalue $\beta(x)$ of the Hecke operator $H_x$ derived in Subsection \ref{The differential equation for eigenvalues} is a second order linear differential equation on the circle $\Bbb R\Bbb P^1=S^1$ with regular singularities at $t_0,...,t_m,t_{m+1}=\infty$. 

To characterize the spectrum of Hecke operators in the real case, we need a replacement for the reality condition for opers used in the complex case.
To this end, even though we are now working over $\Bbb R$, we will need to consider oper connections in the complex domain. This agrees with the general principle that on the spectral side of the Langlands correspondence one should always consider a complex Lie group, regardless of the field $F$. 

Let ${\rm Loc}_m$ be the variety of irreducible rank $2$ local systems 
(i.e., locally constant sheaves) on $\Bbb C\Bbb P^1\setminus \lbrace t_0,...,t_{m+1}\rbrace$ with 
monodromies around $t_i$ conjugate to $\begin{pmatrix} -1 & 1\\ 0 & -1\end{pmatrix}$. By Lemma \ref{irreduci}, this is an irreducible 
smooth variety of dimension $2(m-1)$ (and for odd $m$ any local system 
with such monodromies is automatically irreducible).  

Let $\nabla\in {\rm Loc}_m(\Bbb C)$. 
Let $V_j^+$ be the space of sections of $\nabla$ on $(t_j,t_j+\varepsilon)$, 
and $V_j^-$ be the space of sections of $\nabla$ on $(t_j,t_j-\varepsilon)$
for small $\varepsilon>0$ (where we use the addition law of the circle). We have an isomorphism $\xi_j: V_j^+\cong V_j^-$ which assigns to a section $f$ the section $\xi_j(f):=\frac{i}{2}(f_--f_+)$ when $j\ne m+1$ and $\xi_j(f):=-\frac{i}{2}(f_--f_+)$ for $j=m+1$, where $f_+,f_-$ are the continuations of $f$ above and below the real axis, respectively. Using these isomorphisms, we identify $V_j^-$ with $V_j^+$ and denote the resulting space just by $V_j$.\footnote{The different sign for $j=m+1$ is chosen because for opers we identify half-densities with functions using the half-density $|dx|^{1/2}$ which has a singularity at $t_{m+1}=\infty$.} 
For $j\in \Bbb Z/(m+2)$ we denote by  
$$
B_j: V_j\cong V_j^-\to V_{j-1}^+\cong V_{j-1}
$$
the operator of continuation of sections along the real axis, and define  
$$
\Bbb B:=B_0...B_{m+1}.
$$ 

\begin{definition}\label{nonde} Let us say that $\nabla$ is {\bf nondegenerate} if there exists nonzero $f\in V_{m+1}$ which is not an eigenvector of the monodromy of $\nabla$ around $t_{m+1}=\infty$ but $\Bbb Bf=\lambda f$ for some $\lambda\in \Bbb C^\times$. 
\end{definition} 

It is clear that for a generic $\nabla$ the operator $\Bbb B$ is regular semisimple, so $\nabla$ is nondegenerate and there are two choices of $f$ corresponding to two different eigenvalues $\lambda_1,\lambda_2$ of $\Bbb B$, up to scaling. However, if $\Bbb B$ is a Jordan block (for nondegenerate $\nabla$) then there is only one choice of $f$ up to scaling, while for scalar $\Bbb B$ such choices form an affine line. 

Now let $\nabla$ be nondegenerate. Choose an eigenvector $f$ as in Definition \ref{nonde} and let 
$$
f_j:=B_{j+1}...B_{m+1}f, \ g_j:=f_j^+-if_j, \ j\ne m+1,\ g_{m+1}:=-f_{m+1}^+-if_{m+1}.
$$

\begin{definition} Let us say that the {\bf pair} $(\nabla,f)$ is {\bf nondegenerate} if
 for all $j\in [0,m+1]$, the vector 
 $f_j$ is not an eigenvector of the monodromy around 
$t_j$. 
\end{definition} 

In this case $f_j,g_j$ form a basis of $V_j$.

Again, it is clear that for generic $\nabla$ any pair $(\nabla,f)$ is nondegenerate. 

Let $(\nabla,f)$ be nondegenerate. Then in the bases $f_j,g_j$, the half-monodromy around $t_j$ in the negative direction above and below the real axis 
is given by the matrices $J,J^{-1}$, where $J:=\left(\begin{matrix} i & 
0 \\ 1 & i\end{matrix}\right)$, except $j=m+1$ when we get $-J,-J^{-1}$. 

Moreover, we have
\begin{equation}\label{matrixeq0}
B_j=\left(\begin{matrix} 1 & b_j \\ 0 & -a_j\end{matrix}\right),\ j\ne 0;\ B_0=\left(\begin{matrix} \lambda & b_0 \\ 0 & -a_0\end{matrix}\right).
\end{equation} 
It is clear that these matrices don't change under rescaling of $f$. 

Finally, we have two matrix equations
\begin{equation} \label{matrixeq} 
\prod_{j=0}^{m+1} B_jJ=-1,\ \prod_{j=0}^{m+1}B_jJ^{-1}=-1
\end{equation} 
(the monodromy around $\Bbb R\pm i\varepsilon$ is trivial).  
Taking the determinant, this gives 
\begin{equation}\label{matrixeq1}
\lambda=\prod_{j=0}^{m+1} a_j^{-1}, 
\end{equation} 

Conversely, a collection of matrices $B=(B_i)$ satisfying 
\eqref{matrixeq0}, \eqref{matrixeq}, \eqref{matrixeq1}
defines a local system $\nabla_B$ on $\Bbb C\Bbb P^1\setminus \lbrace t_0,...,t_{m+1}\rbrace $
as well as a vector $f_B=\begin{pmatrix} 1\\ 0\end{pmatrix}$. 

Now let $\widehat {\rm Loc}_m$ be the variety of nondegenerate pairs 
$(\nabla,f)$ (up to scaling of $f$) where $\nabla\in {\rm Loc}_m$. We have seen 
that the map $\pi: \widehat {\rm Loc}_m\to {\rm Loc}_m$ given by 
$(\nabla,f)\mapsto\nabla$ has degree $2$, with at most $1$-dimensional fibers which occur in codimension $\ge 2$, for scalar $\Bbb B$. Since ${\rm Loc}_m$ is irreducible, it follows that so is $\widehat {\rm Loc}_m$. 

\begin{proposition} Let $Y_m$ be the variety of $m+2$-tuples 
of matrices $(B_0,...,B_{m+2})$ of the form 
\eqref{matrixeq0} which satisfy the equations 
\eqref{matrixeq},\eqref{matrixeq1} such that the local system 
$\nabla_B$ is irreducible. Then the assignment 
$(\nabla,f)\mapsto B=(B_0,...,B_{m+2})$ is an isomorphism 
$\eta: \widehat {\rm Loc}_m\to Y_m$. In particular, $Y_m$ 
is an irreducible variety of dimension $2(m-1)$. 
\end{proposition}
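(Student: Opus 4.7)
The plan is to exhibit $\eta$ and its candidate inverse $B\mapsto(\nabla_B,f_B)$, both constructed in the paragraphs preceding the proposition, as mutually inverse regular morphisms of algebraic varieties. Once this is established, the irreducibility of $Y_m$ and the formula $\dim Y_m = 2(m-1)$ follow immediately from the corresponding properties of $\widehat{\mathrm{Loc}}_m$ obtained just above via the double cover $\pi\colon \widehat{\mathrm{Loc}}_m\to \mathrm{Loc}_m$.

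First I would verify that $\eta$ is a well-defined regular morphism landing in $Y_m$. The chain $(\nabla,f)\mapsto (f_j,g_j)_{j}\mapsto (B_j)_{j}$ consists of rational operations on the local-system data: the $f_j=B_{j+1}\cdots B_{m+1}f$ are obtained by iterated continuation along the real axis (here $B_j$ is the already-defined continuation map, \emph{not} yet its matrix), the $g_j=f_j^+-if_j$ are defined algebraically (with the sign reversal at $j=m+1$), and the coefficients of $B_j$ in the bases $(f_j,g_j)\to(f_{j-1},g_{j-1})$ are then extracted by linear algebra. The triangular shape \eqref{matrixeq0} is forced by the definition: $B_j f_j=f_{j-1}$ by construction, with the only exception at $j=0$ where the scalar $\lambda$ enters through the closing relation $\mathbb{B}f=\lambda f$ coming from nondegeneracy of $\nabla$. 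Equations \eqref{matrixeq} encode the triviality of monodromy along $\mathbb{R}\pm i\varepsilon$ in the fixed basis $(f_{m+1},g_{m+1})$, written as a product of the continuations $B_j$ with the half-monodromies $\mp J^{\pm 1}$ at each singular point; \eqref{matrixeq1} is their determinant. Since $\eta$ plainly preserves isomorphism class of $\nabla$, the image lies in $Y_m$.

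For the inverse, given $B\in Y_m$ I would reassemble $\nabla_B$ by the standard cut-and-glue construction: take trivial rank-two local systems on small tubular neighborhoods of each arc of $\mathbb{R}\Bbb P^1\setminus\{t_0,\dots,t_{m+1}\}$, one above and one below the real axis, and glue by $J$ and $J^{-1}$ (with sign reversal at $t_{m+1}$) on small semicircles around each $t_j$ and by $B_j$ across the real axis. The matrix equations \eqref{matrixeq} are precisely the cocycle conditions ensuring that the monodromies around any loop contained in the upper (resp.\ lower) half plane are trivial, so $\nabla_B$ is globally well-defined. The distinguished vector $f_B:=(1,0)^T$ in the fiber immediately to the right of $t_{m+1}$ provides a candidate for $f$. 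Nondegeneracy of the pair $(\nabla_B,f_B)$ is automatic: the global relation $\mathbb{B}f_B=\lambda f_B$ follows at once from \eqref{matrixeq0} and \eqref{matrixeq1}, while local nondegeneracy at each $t_j$ holds because, in the basis $(f_j,g_j)$, the full local monodromy at $t_j$ is $\pm J^2$, and
\[
J^2\begin{pmatrix}1\\ 0\end{pmatrix}=\begin{pmatrix}-1\\ 2i\end{pmatrix}\notin \Bbb C\begin{pmatrix}1\\ 0\end{pmatrix},
\]
so $f_j$ is never a monodromy eigenvector. That the two compositions are identities is essentially tautological: restarting the basis construction from $(\nabla_B,f_B)$ recovers the standard basis of every $V_j$, hence the same $B$; and the local system reconstructed from $\eta(\nabla,f)$ is canonically identified with $\nabla$ via the trivialization provided by the $(f_j,g_j)$. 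Regularity of the inverse is clear since the gluing depends polynomially on the matrix entries.

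The main obstacle, and the step requiring the most care, is the sign and orientation bookkeeping around $t_{m+1}=\infty$: the sign flip in the definition of $\xi_{m+1}$ and in the half-monodromies $-J,\,-J^{-1}$ must be tracked consistently through the derivation of \eqref{matrixeq} and \eqref{matrixeq1}. Concretely, one must check that the determinants of both relations in \eqref{matrixeq} collapse \emph{exactly} to \eqref{matrixeq1}, so that among the $8$ scalar equations contained in \eqref{matrixeq} exactly $2$ are redundant; this produces the expected dimension count $2(m+2)+1-1-6=2(m-1)$ for $Y_m$ and confirms, in combination with the bijectivity argument above, that $\eta$ is an isomorphism of varieties. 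The last assertion of the proposition is then immediate from the irreducibility of $\widehat{\mathrm{Loc}}_m$.
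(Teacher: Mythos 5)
Your proof is correct and follows the same approach as the paper, which simply observes that $\eta^{-1}(B)=(\nabla_B,f_B)$; you supply the (correct) verifications that the paper leaves implicit, including that $J^2$ has no eigenvector proportional to $(1,0)^T$ so nondegeneracy of the reconstructed pair is automatic, and that the determinant relations make exactly two of the eight scalar equations in \eqref{matrixeq} redundant, confirming $\dim Y_m = 2(m-1)$.
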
 

\begin{proof} The inverse to $\eta$ is given by 
$\eta^{-1}(B)=(\nabla_B,f_B)$. 
\end{proof} 

\begin{definition} Let us say that $(\nabla,f)\in \widehat{\rm Loc}_m$ is a {\bf balanced pair} if $a_j=1$ for all $j$ (hence $\lambda=1$). In this case $f$ is said to be a {\bf balancing} of $\nabla$. 
\end{definition} 

In other words, $(\nabla,f)$ is a balanced pair if 
$$
B_j=\left(\begin{matrix} 1 & b_j \\ 0 & -1\end{matrix}\right).
$$

\begin{lemma}\label{equ} If $(\nabla,f)$ is a balanced pair then 
the two equations in \eqref{matrixeq} are equivalent to each other.
\end{lemma}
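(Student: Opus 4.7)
The strategy is to reduce both matrix equations in \eqref{matrixeq} to the same set of scalar conditions on the parameters $(b_0,\ldots,b_{m+1})$, via a three-term linear recursion that is shared between them.

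The starting observation is that in the balanced case, the matrices
$$B_j J=\begin{pmatrix} i+b_j & ib_j\\ -1 & -i\end{pmatrix}, \qquad B_j J^{-1}=\begin{pmatrix} -i+b_j & -ib_j\\ -1 & i\end{pmatrix}$$
have bottom rows independent of $b_j$: the second component of $(B_j J^{\pm 1})(a,b)^T$ equals $-a\mp ib$. Writing $v_k:=(B_k J)(B_{k+1}J)\cdots(B_{m+1}J)v$ with $v_{m+2}:=v$ and decomposing $v_k=(\beta_k,\alpha_k)^T$, I eliminate $\beta_k$ using $\beta_k=-\alpha_{k-1}-i\alpha_k$ to obtain the three-term recursion
$$\alpha_{k+1}+\alpha_{k-1}=b_k\alpha_k.$$
The same calculation for $v'_k:=(B_k J^{-1})(B_{k+1}J^{-1})\cdots(B_{m+1}J^{-1})v$ yields the \emph{same} recursion on the sequence $\alpha'_k$, with the first component now recovered from $\beta'_k=-\alpha'_{k-1}+i\alpha'_k$.

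Let $\phi_k,\psi_k$ denote the fundamental solutions of this recursion determined by $(\phi_{m+2},\phi_{m+1})=(0,1)$ and $(\psi_{m+2},\psi_{m+1})=(1,0)$. Running the construction with $v=e_1$ and $v=e_2$, the initial data give
$$\alpha^{e_1}=-\phi,\quad \alpha^{e_2}=\psi-i\phi;\qquad \alpha'^{e_1}=-\phi,\quad \alpha'^{e_2}=\psi+i\phi.$$
Expanding each equation $v_0=-v$ (respectively $v'_0=-v$) into conditions on $\alpha_0,\alpha_{-1},\beta_0$, the $\mp i$-discrepancy between $\alpha^{e_2}$ and $\alpha'^{e_2}$ is exactly compensated by the opposite $\mp i$ in the formulas for $\beta$ versus $\beta'$. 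Both equations thereby reduce to the identical system
$$\phi_0=0,\quad \phi_{-1}=-1,\quad \psi_0=-1,\quad \psi_{-1}=0$$
(three independent conditions in view of the constant Wronskian $\phi_{k+1}\psi_k-\phi_k\psi_{k+1}=-1$).

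Since the two equations impose the same constraints on $(b_0,\ldots,b_{m+1})$, they are equivalent. The main point of care is bookkeeping the initial conditions at $k=m+2,m+1$ and tracking the cancellation of the $\pm i$ signs between the $e_2$-sequence and the $\beta$-recovery formula; once the shared recursion is isolated, the verification is essentially mechanical.
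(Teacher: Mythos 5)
Your proof is correct, but it takes a genuinely different route from the paper's. The paper's argument is a one-line conjugation: the fixed matrix $S=\left(\begin{matrix} 1 & 2i \\ 0 & 1\end{matrix}\right)$ satisfies $SB_jJS^{-1}=B_jJ^{-1}$ for every $j$, so the two products $\prod_j B_jJ$ and $\prod_j B_jJ^{-1}$ are conjugate by $S$, and the condition of equaling $-1$ is manifestly conjugation-invariant. You instead eliminate the first component and show that both products drive the second component along the \emph{same} three-term recursion $\alpha_{k+1}+\alpha_{k-1}=b_k\alpha_k$, with the $\pm i$ mismatch in the initial data at $e_2$ canceling against the opposite $\mp i$ in the reconstruction formula for $\beta_k$; both matrix equations thereby reduce to the same scalar conditions $\phi_0=0$, $\phi_{-1}=-1$, $\psi_0=-1$, $\psi_{-1}=0$ (three of which are independent, as you note, because the Wronskian is constantly $-1$). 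I checked the bookkeeping --- the initial data at $k=m+2,m+1$, the cancellation of the $\pm i$ terms when passing through $\beta$ versus $\beta'$ --- and it is sound. The paper's argument is shorter and more elegant, but yours is structurally more revealing: your recursion is essentially the Euler continuant recursion \eqref{recur} that the paper later exploits in Proposition \ref{irredra} to describe $X_m$, so your proof already makes visible at Lemma \ref{equ} the same continuant structure that governs the variety of balanced pairs, whereas the paper's conjugation by $S$ is a clever ad hoc shortcut that keeps this connection hidden.
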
 

\begin{proof} We have 
\begin{equation}\label{bj}
B_jJ^{\pm 1}=\left(\begin{matrix} b_j\pm i& \pm ib_j \\  -1 & \mp i\end{matrix}\right)
\end{equation} 
Thus, conjugating $B_jJ$ by $S:=\left(\begin{matrix} 1 & 2i \\ 0 & 1\end{matrix}\right)$, we get 
$$
SB_jJS^{-1}=\left(\begin{matrix} b_j-i & -ib_j \\  -1 & i\end{matrix}\right)=B_jJ^{-1},
$$
which implies the statement. 
\end{proof} 

\begin{remark} Note that if $a_j=1$ and $b_j=0$ for all $j$ then 
equations \eqref{matrixeq} are not satisfied. Thus 
every collection $B=(B_0,...,B_{m+1})$ satisfying the equations
for a balanced pair automatically defines an {\it irreducible} local system
$\nabla_B$. 
\end{remark} 

A description of the spectrum of the Hecke operators in the real case results from the following proposition. 

\begin{proposition}\label{stron} The local system $\nabla$ of solutions of 
the oper equation $L(\bmu_k)\beta=0$ belongs to ${\rm Loc}_m(\Bbb C)$, admits a balancing by $f=\beta_k$ (the eigenvalue of the Hecke operators $H_x$) and has $b_j\in \Bbb R$.\footnote{Here, similarly to the complex case, $\bmu_k$ is the vector of eigenvalues of the Gaudin operators.}
\end{proposition}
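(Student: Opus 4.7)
The plan is to verify the three claims in order, using two key inputs from earlier in the paper: the single-valuedness and reality of the eigenvalue $\beta_k$ on $\Bbb R\Bbb P^1\setminus\{t_0,\ldots,t_{m+1}\}$, and the asymptotic normalizations $\beta_k(x)\sim\chi_k(S_j)|x-t_j|^{1/2}\log|x-t_j|^{-1}$ (Corollary~\ref{asymeig}) and $\beta_k(x)\sim|x|^{1/2}\log|x|$ at infinity (Theorem~\ref{specthe}(ii)), which force the log coefficient of $\beta_k$ at every singular point to have absolute value $1$.

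Membership $\nabla\in{\rm Loc}_m(\Bbb C)$ is essentially immediate from the Fuchsian structure of $L(\bmu_k)$: the singular points are exactly $t_0,\ldots,t_{m+1}$, the characteristic exponents at each are $\tfrac12,\tfrac12$ with local solutions $(x-t_i)^{1/2}$ and $(x-t_i)^{1/2}\log(x-t_i)$ (as observed after Corollary~\ref{spec2}), so the local monodromy is conjugate to $\begin{pmatrix}-1 & 1\\ 0 & -1\end{pmatrix}$; irreducibility is standard for oper connections and is already used in Proposition~\ref{irredu}.

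For the balancing I would set $f_j := \beta_k$ in $V_j$, using that $\xi_j$ identifies the real local basis $(\sqrt z,\sqrt z\log z)$ on the right of $t_j$ with the real local basis $(\sqrt{|z|},\sqrt{|z|}\log|z|)$ on the left (a direct computation using $\sqrt{z}\mapsto\pm i\sqrt{|z|}$ for the upper and lower continuations). Single-valuedness of $\beta_k$ on each interval $(t_{j-1},t_j)$ then gives $B_j f_j=f_{j-1}$ for every $j\in\Bbb Z/(m+2)$, so the first column of $B_j$ in the basis $(f_{j-1},g_{j-1})$ is $(1,0)^T$ and in particular $\Bbb B f=f$, i.e.\ $\lambda=1$. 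To obtain $a_j=1$ I would exploit that $L(\bmu_k)$ has no first-derivative term, so the Wronskian $W(u,v)=uv'-u'v$ is preserved by continuation along $(t_{j-1},t_j)$. A short calculation of $g_j=f_j^+-if_j=-\pi c_2^{(j)}\sqrt{|x-t_j|}$, where $c_2^{(j)}$ denotes the coefficient of the log solution in the expansion of $\beta_k$ at $t_j$, together with the values $W(\sqrt{|z|},\sqrt{|z|}\log|z|)=\mp 1$ on the two sides of $t_j$, shows that $W(f_j,g_j)=-\pi(c_2^{(j)})^2$ at $t_j^-$ while $W(f_{j-1},g_{j-1})=\pi(c_2^{(j-1)})^2$ at $t_{j-1}^+$; equating them via continuation gives $a_j=(c_2^{(j)})^2/(c_2^{(j-1)})^2=1$, using the asymptotic normalization $|c_2^{(j)}|=1$ for every $j$. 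The same asymptotic also ensures nondegeneracy of the pair.

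Finally, for $b_j\in\Bbb R$ I would invoke the real structure on each $V_j$ coming from the real coefficients of $L(\bmu_k)$ (the $\mu_{i,k}$ are real eigenvalues of the symmetric Gaudin operators by Proposition~\ref{eigenfun}). Reality of $\beta_k$ gives reality of $f_j$, and for $g_j=f_j^+-if_j$ the identity $\xi_j(f_j)=\tfrac{i}{2}(f_j^--f_j^+)=f_j$ implies $f_j^--f_j^+=-2i f_j$; since complex conjugation of a real solution interchanges upper and lower continuations, $\overline{f_j^+}=f_j^-=f_j^+-2i f_j$, hence $\overline{g_j}=g_j$. The continuation $B_j$ preserves the real structure, so in the real basis $(f_j,g_j)\to(f_{j-1},g_{j-1})$ the matrix $B_j$ is real, giving $b_j\in\Bbb R$. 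The main technical obstacle I would expect is the consistent treatment of sign conventions at $t_{m+1}=\infty$, where $\xi_{m+1}$ carries the opposite sign, the local half-monodromy is $-J$ instead of $J$, and $B_0$ involves the wrap-around through infinity; once the local bases are uniformly fixed by the real expansion $(|z_j|^{1/2},|z_j|^{1/2}\log|z_j|)$ and one keeps track of the global normalization of $\beta_k$, all sign issues reduce to the single invariant $(c_2^{(j)})^2=1$ at every singular point.
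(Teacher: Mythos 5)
Your proposal is correct and follows essentially the same route as the paper's proof: membership in ${\rm Loc}_m(\Bbb C)$ from the Fuchsian exponents $\tfrac12,\tfrac12$ and irreducibility of opers; $\lambda=1$ from $B_jf_j=f_{j-1}$ (single-valuedness of $\beta_k$ on each real arc); $a_j=1$ from the Wronskian being constant on each interval together with the normalized asymptotics $|c_2^{(j)}|=1$ from Corollary~\ref{asymeig} (the paper phrases this as $W(f_j,g_j)=\pm\pi$ and $\det B_j=-1$, which is the same computation you sketch via $a_j=(c_2^{(j)})^2/(c_2^{(j-1)})^2$); and $b_j\in\Bbb R$ from reality of the local bases $(f_j,g_j)$. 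Your added detail verifying $\overline{g_j}=g_j$ via $\overline{f_j^+}=f_j^-=f_j^+-2if_j$ makes explicit a step the paper asserts without proof, and is a worthwhile supplement.
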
 

\begin{proof} First, since $\nabla$ is irreducible and has monordromies around $t_i$ 
conjugate to $\begin{pmatrix} -1 & 1\\ 0 & -1\end{pmatrix}$, it belongs to ${\rm Loc}_m(\Bbb C)$. 

Now fix $k$ and let $f_j^\pm$ be the restrictions of the eigenvalue $\beta_k$ to the positive and negative part of a neighborhood of $t_j$. By Propositions \ref{asym} and \ref{limexists}, we have
$\xi_j(f_j^+)=f_j^-$, so $f_j^\pm$ give rise to a well defined vector $f_j\in V_j$. 
We also have $B_jf_j=f_{j-1}$ for all $j$, so 
$\lambda=1$. Also by Proposition \ref{asym}, 
$$
f_j(x)\sim \mp |x-t_j|^{\frac{1}{2}}\log|x-t_j|,\ x\to t_j,\ j\ne m+1,\quad f_{m+1}(x)\sim |x|^{\frac{1}{2}}\log|x|,\ x\to \infty, 
$$
so setting 
$$
g_j:=f_j^+-if_j,\ j\ne m+1,\ g_{m+1}=-f_{m+1}^+-if_{m+1}
$$ 
as above, we get  
$$
g_j(x)\sim \pm \pi |x-t_j|^{\frac{1}{2}},\ x\to t_j,\ j\ne m+1;\quad g_{m+1}(x)\sim \pi|x|^{\frac{1}{2}},\ x\to \infty.
$$ 
This implies that the Wronskian $W(f_j(x),g_j(x))$ equals $\pi$ on the right of $t_j$ and $-\pi$ on the left of $t_j$, which implies that $\det B_j=-1$. Thus $a_j=1$. 

Finally, since the functions $f_j,g_j$ are real-valued, we have $b_i\in \Bbb R$.
\end{proof} 

Now let $\mathcal{B}$ be the {\bf set of balanced opers}, i.e., balanced pairs $(\nabla,f)$ such that $\nabla=\nabla_L$ is the local system of solutions of an oper $L$ with real coefficients. We have a surjective map $\mathcal{B}\to \mathcal{B}_*\subset \Bbb R^{m-1}$ sending 
$(\nabla(\bmu),f)$ to $\bmu$. 

\begin{proposition} Every point of $\mathcal B_*$ has at most two preimages in $\mathcal B$. 
\end{proposition}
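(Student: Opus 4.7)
The plan is to reduce the statement to the claim that every irreducible local system $\nabla \in \on{Loc}_m(\Bbb C)$ admits at most two balancings. This reduction is immediate: $\bmu \in \mathcal{B}_*$ determines the oper $L(\bmu)$ and hence $\nabla = \nabla_{L(\bmu)}$, so the fiber of $\mathcal{B}\to\mathcal{B}_*$ over $\bmu$ is precisely the set of balancings of $\nabla$.

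My strategy is to fix one balancing $(\nabla, f)$ with $B_j = \bigl(\begin{smallmatrix}1 & b_j \\ 0 & -1\end{smallmatrix}\bigr)$ and show that any second balancing $(\nabla, f')$ is forced to be determined by $f$ up to scaling. Writing $f' = \alpha f + \gamma g_{m+1}$ in the basis $(f_{m+1}, g_{m+1})$ of $V_{m+1}$ and iterating the identities $B_j f_j = f_{j-1}$, $B_j g_j = b_j f_{j-1} - g_{j-1}$, I expect the induction to give $f'_j = \alpha_j f_j + \gamma_j g_j$ with $\gamma_j = (-1)^{m+1-j}\gamma$ and the recursion $\alpha_{j-1} = \alpha_j + \gamma_j b_j$. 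The critical structural input is that necessarily $g'_j = \alpha_j g_j$: both $g_j$ and $g'_j$ are characterized up to scaling as the unique $i$-eigenvector of the half-monodromy $J$ on $V_j$, with the scalar pinned down by the normalization $Jf'_j = if'_j + g'_j$.

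The core of the argument would then translate the remaining balancing condition $B_j g'_j = b'_j f'_{j-1} - g'_{j-1}$ into the scalar equations $b'_j\alpha_{j-1} = \alpha_j b_j$ and $b'_j\gamma_{j-1} = \alpha_{j-1}-\alpha_j$. Combined with the recursion above and with $\gamma_{j-1} = -\gamma_j$, these should collapse to the clean identity $b'_j = -b_j$ at every index with $\gamma_j\neq 0$, whence $b_j(\alpha_{j-1}+\alpha_j)=0$. So for any $j$ with $b_j\neq 0$ one obtains $\alpha_{j-1}=-\alpha_j$ and $\gamma_j/\alpha_j = -2/b_j$, which uniquely determines the ratio $\gamma/\alpha$ from the data of the first balancing. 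Irreducibility of $\nabla$ is what ensures such a $j$ exists: if all $b_j$ vanished, every $B_j$ would equal $\diag(1,-1)$, and all monodromies $J^2$ expressed in a common frame would preserve a shared pair of lines, contradicting irreducibility.

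The main obstacle will be the cyclic bookkeeping around $j \in \Bbb Z/(m+2)$ -- verifying $\gamma_{m+2} = \gamma_{m+1}$ and tracking how the sign flips of $\alpha_j$ across indices with $b_j\neq 0$ close up. This produces parity constraints (such as $m$ being even and $\#\{j: b_j \neq 0\}$ being even) that dictate \emph{whether} a second balancing exists, but by the uniqueness of $\gamma/\alpha$ derived above, these constraints never permit more than one such $f'$. Together with the original $f$, this yields the bound of two preimages.
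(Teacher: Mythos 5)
Your argument is correct, and it takes a genuinely different route from the paper's. The paper works with the functions $\beta^i(x)$ on the circle associated to three hypothetical preimages, normalized to have leading coefficient $1$ at $\infty$ and hence $\pm 1$ at each $t_j$: the differences $\beta^i-\beta^j$ then lie in the one-dimensional space of solutions with no $\log$-term at $\infty$, the proportionality constant between any two such differences must be $\pm 1$ because the leading coefficients lie in $\{0,\pm 2\}$, and the resulting relations (essentially $2\beta^i=\beta^j+\beta^k$ for all permutations) force two of the $\beta^i$ to coincide. You instead stay entirely inside the matrix framework of Subsection 4.6 and solve the problem by linear algebra: writing $f'=\alpha f_{m+1}+\gamma g_{m+1}$, propagating through the cycle via the $B_j$, and extracting from the balancing equations the single linear constraint $2\alpha+\gamma\bigl((-1)^{m+1-j_0}b_{j_0}+2c_{j_0}\bigr)=0$ at any index $j_0$ with $b_{j_0}\ne 0$ — whose existence is guaranteed by irreducibility. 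Every step you flag as expected does go through exactly as you describe, and I would note two points. First, the cyclic closure ($\gamma_{-1}=\gamma_{m+1}$, $\alpha_{-1}=\alpha_{m+1}$) is not actually an obstacle for the \emph{upper bound}: a single constraint from one $j_0$ already pins down $(\alpha:\gamma)$, and the closure conditions only affect \emph{whether} the candidate $f'$ is realized. Second, in the irreducibility step, what the common frame preserves is a single shared invariant line (the span of the $g_j$'s, since $J^2$ is a $2\times 2$ Jordan block with eigenvalue $-1$, not diagonalizable), rather than a pair of lines; this is still more than enough to contradict irreducibility. Your argument has the merit of being purely local-system-theoretic and plugging directly into the $X_m$/T-system framework that surrounds this proposition, while the paper's proof leans on the asymptotics of $\beta(x)$ that were established earlier for Hecke eigenvalues and is thus a bit more specific to the oper setting.
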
 

\begin{proof} If $(\nabla,f)\in \mathcal B$ is a preimage of $L\in \mathcal B_*$ 
and $\beta(x)$ the corresponding function on the circle then the leading coefficients of the asymptotics of $\beta(x)$ as $x\to t_i$ are $\pm 1$, while the leading coefficient at $\infty$ is $1$. So if $(\nabla^i,f^i)\in \mathcal B$, $i=1,2,3$ are three distinct preimages of $L$ and $\beta^i$ are the corresponding functions on the circle (thus also distinct) then the leading coefficients for $\beta^1-\beta^2,\beta^1-\beta^3$ are $0$ or $\pm 2$. But since $L$ has order $2$, these functions must be proportional (as they both have leading coefficient $0$ at $\infty$). So, since $\beta^2\ne \beta^3$, we must have 
$$
\beta^1-\beta^2=\beta^2-\beta^3
$$
as well as all transformations of this equation under permutations of indices.
This contradicts the fact that $\beta^i$ span a 2-dimensional space. 
\end{proof} 

\begin{remark} The map $\mathcal B\to \mathcal B_*$ is bijective if $m$ is odd. Indeed, in this case $\det(\Bbb B)=-1$, so $\Bbb B$ has eigenvalues $1,-1$ and $f$ is uniquely determined by $\nabla$ if exists. However, for even $m$  
in a non-generic situation (when $\Bbb B=1$), a fiber of the map $\mathcal B\to \mathcal B_*$ can consist of two points, see e.g. Subsection \ref{hyge}. 
\end{remark} 

\begin{remark} We expect that $\mathcal{B}_*$ is a discrete subset of $\Bbb R^{m-1}$. 
\end{remark} 

\begin{theorem}\label{specreal} The joint spectrum of the Hecke operators 
$H_x$ on $\mathcal H$ is a subset of $\mathcal{B}$.\footnote{Similarly to the complex case, eigenfunctions of the Hecke operators will also be eigenfunctions of the Gaudin operators. These eigenfunctions will be real analytic on the locus of very stable bundles, which is a disjoint union of regions separated by a system of walls comprised by real points of the wobbly divisor. The eigenfunctions can be described as collections of solutions of the quantum Hitchin (=Gaudin) equations in these regions satisfying certain gluing conditions along the walls. However, the precise nature of these gluing conditions is beyond the scope of this paper. We plan to discuss it in future publications.}
\end{theorem}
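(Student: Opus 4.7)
The plan is to construct, for each joint eigenspace $\mathcal{H}_k$ of the Hecke operators, a canonical element $(\nabla,\beta_k)\in\mathcal{B}$, and to check that the resulting assignment $\mathcal{H}_k\mapsto(\nabla,\beta_k)$ is injective. Here $\beta_k(x)$ is the eigenvalue of $H_x$ on $\mathcal{H}_k$ and $\nabla$ is the local system on $\Bbb C\Bbb P^1\setminus\{t_0,\dots,t_{m+1}\}$ attached to the oper $L(\bmu_k)$, where $\bmu_k$ is the tuple of Gaudin eigenvalues on $\mathcal{H}_k$. Most of the work has in fact already been done in Proposition \ref{stron}; what remains is to assemble the pieces and, critically, to handle the reality of $\bmu_k$.

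The first step is to produce $\bmu_k$. I would argue that the proof of Proposition \ref{eigenfun} carries over verbatim from the complex case to $F=\Bbb R$, since it uses only the differential equation of Proposition \ref{opereq}, the self-adjointness of $H_x$ and $G_i$, and the non-vanishing of the Cauchy determinant $\det(1/(x_j-t_i))$. This yields scalars $\mu_{i,k}\in\Bbb C$ with $G_i\eta=\mu_{i,k}\eta$ for $\eta\in\mathcal{H}_k$. To see that $\mu_{i,k}\in\Bbb R$, I would observe that over $\Bbb R$ the Schwartz kernel of $H_x$ read off from Theorem \ref{heckeop1} is real-valued, so complex conjugation is an antilinear isometry of $\mathcal{H}$ commuting with every $H_x$. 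Each $\mathcal{H}_k$ is therefore conjugation-stable and contains a nonzero real half-density $\eta_0$; applying the real differential operator $G_i$ to $\eta_0$ then forces $\mu_{i,k}\in\Bbb R$, and hence the oper $L(\bmu_k)$ has real coefficients. By Corollary \ref{spec2}, $\beta_k(x)$ is a nonzero single-valued solution of $L(\bmu_k)\beta=0$ on $\Bbb R\Bbb P^1\setminus\{t_0,\dots,t_{m+1}\}$, and its local system $\nabla$ lies in ${\rm Loc}_m(\Bbb C)$.

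The final step is to invoke Proposition \ref{stron}, which asserts precisely that $f:=\beta_k$ is a balancing of $\nabla$ with real matrix entries $b_j$. Combined with the reality of the oper, this exhibits $(\nabla,\beta_k)$ as a point of $\mathcal{B}$. Injectivity of the assignment $\mathcal{H}_k\mapsto(\nabla,\beta_k)$ is automatic, since the balancing $f=\beta_k$ recovers the entire family of Hecke eigenvalues and hence the joint spectral point. The main obstacle in the argument is the reality of $\mu_{i,k}$: without it the oper $L(\bmu_k)$ would only be a complex oper and $\nabla$ would not define an element of $\mathcal{B}$. Once this is secured by the conjugation-invariance argument above, everything else is a direct reassembly of Propositions \ref{opereq}, \ref{eigenfun} and \ref{stron} together with Corollary \ref{spec2}.
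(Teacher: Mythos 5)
Your proof is correct and follows the same route as the paper: the key step is Proposition \ref{stron}, which shows that $\beta_k$ balances the oper local system $\nabla(\bmu_k)$ with $b_j\in\Bbb R$, and injectivity comes from the fact that the pair $(\nabla,\beta_k)$ recovers the function $\beta_k$ (the paper normalizes via the asymptotics at $\infty$, you observe that $\beta_k$ itself already determines the joint spectral point — equivalent observations). The one place where you go beyond what the paper writes out is the verification that $\bmu_k\in\Bbb R^{m+1}$ (so that $L(\bmu_k)$ is a real oper and $(\nabla,\beta_k)$ genuinely lands in $\mathcal{B}$ rather than just in the set of balanced pairs); your conjugation-invariance argument — the Schwartz kernel of $H_x$ over $\Bbb R$ is real-valued, hence eigenspaces are conjugation-stable, hence each contains a real eigenfunction on which the real operator $G_i$ forces $\mu_{i,k}\in\Bbb R$ — is the right way to fill this gap, which the paper leaves implicit when it invokes $L(\bmu_k)$ as a "differential equation on the circle" at the start of Subsection \ref{The real case}.
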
 

\begin{proof} This follows from Proposition \ref{stron}. Namely, the function $\beta(x)$ 
is completely determined by the balanced oper $(\nabla,f)$ (namely, $\beta$ is proportional to $f$ and its scaling is determined by the asymptotics of $\beta$ at $\infty$). 
\end{proof} 

\subsection{The variety of balanced pairs and $T$-systems} 
Theorem \ref{specreal} shows that it is interesting to consider the affine scheme $X_m$ which is cut out inside $\Bbb A^{m+2}$ with coordinates $b_0,...,b_{m+1}$ by either of the two equations \eqref{matrixeq} (i.e., we assume that $a_j=1$ for all $j$). To describe 
this scheme, introduce the polynomial 
$P_r(b_1,...,b_r)$ which is the lower left corner entry of the matrix
$-B_0JB_1J...B_rJ$ (it is easy to see that it does not depend on $b_0$). For instance, 
$$
P_0=1,\ P_1(b_1)=b_1,\ P_2(b_1,b_2)=b_1b_2-1,\ P_3(b_1,b_2,b_3)=b_1b_2b_3-b_1-b_3, 
$$
and so on. It is easy to see that these polynomials are determined from the recursion
\begin{equation}\label{recur}
P_r(b_1,...,b_r)=P_{r-1}(b_1,...,b_{r-1})b_r-P_{r-2}(b_1,...,b_{r-2}). 
\end{equation} 
This shows, in particular, that $P_r$ have real (in fact, integer) coefficients, even though 
the matrix $J$ is not real. 

{\bf Remark.}\footnote{We thank M. Kapranov for pointing this out.} The polynomials $P_r$ are called {\it Euler's continuants}. They were studied by Euler in the context of continued fractions. Namely, consider the continued fraction 
$$
Q_r:=b_r-\frac{1}{b_{r-1}-\frac{1}{...-\frac{1}{b_1}}}.
$$
Then it is easy to see that 
$$
Q_r=\frac{P_r(b_1,...,b_r)}{P_{r-1}(b_1,...,b_{r-1})}.
$$
Euler showed that 
$$
P_r(b_1,...,b_r)=\sum_{S\subset [1,...,r]} (-1)^{|S|/2}\prod_{i\notin S}b_i,
$$
where $S$ runs over all subsets of $[1,...,r]$ which are disjoint unions of intervals of even length. 
He also showed that the number of terms in $P_r$ is the $r+1$-th Fibonacci number (which follows immediately from \eqref{recur}).   

\begin{proposition}\label{irredra} (i) The scheme $X_m$ is the irreducible rational hypersurface in $\Bbb A^m$ 
defined by the equation 
\begin{equation}\label{pm}
P_m(b_1,...,b_m)=1.
\end{equation} 
The values of $b_0$ and $b_{m+1}$ are recovered by the formulas 
\begin{equation}\label{pmm1}
b_0=P_{m-1}(b_2,...,b_m),\ b_{m+1}=P_{m-1}(b_1,...,b_{m-1}).
\end{equation} 

(ii) $X_m$ is smooth. 

(iii) $X_m$ has a stratification 
$$
X_m=U_{m-1}\sqcup U_{m-3}\times \Bbb A^1\sqcup U_{m-5}\times \Bbb A^2...
$$
where $U_r$ is the open subset of $\Bbb A^{r}$ defined by the condition $P_{r}(b_1,...,b_{r})\ne 0$.

(iv) $X_m(\Bbb R)$ is a smooth connected real manifold of dimension $m-1$ with stratification as in (iii). 
\end{proposition}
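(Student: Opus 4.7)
The strategy for (i) is to compute the product matrix $M_r := -B_0 J B_1 J \cdots B_r J$ row by row, using the explicit form \eqref{bj} of $B_j J$. By induction on $r$ the bottom row of $M_r$ equals $(P_r(b_1,\dots,b_r),\; iP_r(b_1,\dots,b_r)+P_{r-1}(b_1,\dots,b_{r-1}))$; the induction step is a single matrix multiplication which simultaneously shows that $P_r$ is independent of $b_0$ and satisfies \eqref{recur}. Substituting into $M_{m+1}=I$, the two bottom-row entries force $P_{m+1}(b_1,\dots,b_{m+1})=0$ and $P_m(b_1,\dots,b_m)=1$; combined with \eqref{recur} this gives $b_{m+1}=P_{m-1}(b_1,\dots,b_{m-1})$. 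An analogous computation of the top row (which does depend on $b_0$) supplies $b_0=P_{m-1}(b_2,\dots,b_m)$. Conversely, once these relations hold one retraces the computation to recover $M_{m+1}=I$, so $X_m$ is cut out of $\mathbb{A}^m$ by the single equation \eqref{pm}. Rationality, hence irreducibility, is then immediate: on the open locus $P_{m-1}(b_1,\dots,b_{m-1})\ne 0$ one solves $b_m=(1+P_{m-2}(b_1,\dots,b_{m-2}))/P_{m-1}(b_1,\dots,b_{m-1})$, giving a birational isomorphism $\mathbb{A}^{m-1}\dashrightarrow X_m$.

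For (ii) my plan is to use the determinantal formula $P_r=\det T_r$, where $T_r$ is the tridiagonal matrix with $b_1,\dots,b_r$ on the diagonal and $1$'s on the off-diagonals; this formula follows from comparing the Laplace expansion along the last row with \eqref{recur}. Cofactor expansion then yields $\partial P_m/\partial b_k=P_{k-1}(b_1,\dots,b_{k-1})\cdot P_{m-k}(b_{k+1},\dots,b_m)$. At a hypothetical singular point of $X_m$ all these partials must vanish; taking $k=1$ and $k=m$ (where one factor equals $P_0=1$) forces $P_{m-1}(b_1,\dots,b_{m-1})=P_{m-1}(b_2,\dots,b_m)=0$, which via \eqref{recur} and $P_m=1$ yields $P_{m-2}(b_1,\dots,b_{m-2})=P_{m-2}(b_3,\dots,b_m)=-1$. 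Iterating the cofactor analysis with these new constraints cascades down in length and eventually contradicts $P_0=1$.

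For (iii) I proceed by induction on $m$. The open locus $\{P_{m-1}(b_1,\dots,b_{m-1})\ne 0\}\subset X_m$ is exactly $U_{m-1}$, with $b_m$ determined as in (i). On the complement, the recursion $P_{m-1}=b_{m-1}P_{m-2}-P_{m-3}$ combined with $P_m=1$ forces $P_{m-2}(b_1,\dots,b_{m-2})=-1$ wherever $P_{m-2}\ne 0$; on this set $b_{m-1}$ is uniquely determined and $b_m$ becomes a free parameter, yielding a factor $U_{m-3}\times\mathbb{A}^1$ (the identification of $\{P_{m-2}=-1\}$ with $U_{m-3}$ being given, again via \eqref{recur}, by solving $b_{m-2}$ in terms of $b_1,\dots,b_{m-3}$ when $P_{m-3}\ne 0$). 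Iterating this peeling procedure on the successive closed loci produces the full stratification $X_m=\bigsqcup_{k\ge 0}U_{m-2k-1}\times\mathbb{A}^k$.

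Finally for (iv), smoothness, dimension, and the stratification transfer from (ii)–(iii) to the real points by base change; the one genuinely new point is connectedness. The open stratum $U_{m-1}(\mathbb{R})=\{P_{m-1}\ne 0\}\subset\mathbb{R}^{m-1}$ is in general disconnected, so the lower strata must reconnect its components. The key observation I plan to exploit is that approaching a generic boundary point where $P_{m-1}(b_1,\dots,b_{m-1})\to 0$, the formula $b_m=(1+P_{m-2})/P_{m-1}$ sends $b_m\to\pm\infty$, so two adjacent open components of $U_{m-1}(\mathbb{R})$ are joined through the stratum $U_{m-3}(\mathbb{R})\times\mathbb{R}$ by letting the free $b_m$-coordinate pass through the relevant limit. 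A careful induction on $m$ based on this bridging argument delivers path-connectedness. This connectedness step is the main expected obstacle, since the decomposition from (iii) is itself disconnected stratum-by-stratum, and one must carefully track how the closures of successive strata reglue the disparate pieces into a single real manifold.
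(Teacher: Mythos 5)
Your approach to (i) matches the paper's in spirit: both extract $P_m=1$ and the recovery formulas for $b_0,b_{m+1}$ from the matrix identity $\prod B_jJ=-1$, with rationality following by solving $P_m=1$ for $b_m$ on $\{P_{m-1}\ne 0\}$. Your treatment of (iii) is in fact more careful than the paper's: the paper asserts $X_m\cong U_{m-1}\sqcup X_{m-2}\times\Bbb A^1$, but on the closed stratum $\{P_{m-1}=0\}$ the recursion forces $P_{m-2}=-1$, not $P_{m-2}=1$, so the closed piece is $\{P_{m-2}=-1\}\times\Bbb A^1$, which is not literally $X_{m-2}\times\Bbb A^1$ (and $\{P_r=-1\}$ is isomorphic to $\{P_r=1\}$ via $b\mapsto -b$ only for odd $r$). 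You sidestep this by peeling off directly to $U_{m-3}\times\Bbb A^1$ using the recursion again to eliminate $b_{m-2}$, which is correct.

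For (ii), your route is genuinely different and cleaner: the identification $P_r=\det T_r$ for the symmetric tridiagonal matrix with $1$'s off the diagonal is standard for three-term recurrences, and the cofactor formula $\partial P_m/\partial b_k=P_{k-1}(b_1,\dots,b_{k-1})P_{m-k}(b_{k+1},\dots,b_m)$ is a nice exact statement that replaces the paper's ad hoc manipulation of $\partial P_m/\partial b_m,\partial P_m/\partial b_{m-1}$. You also obtain the correct sign $P_{m-2}=-1$, whereas the paper's proof writes ``hence $P_{m-2}(b_1,\dots,b_{m-2})=1$''; this is a sign error (with $P_{m-1}=0$ and $P_m=1$, the recursion gives $P_{m-2}=-1$), which silently vitiates the paper's induction step, since $(b_1,\dots,b_{m-2})$ then lies on $\{P_{m-2}=-1\}$ rather than on $X_{m-2}$. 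However, the concluding phrase ``cascades down in length and eventually contradicts $P_0=1$'' does not actually yield a contradiction, and it cannot: for $m=4$ the origin satisfies $P_4=b_1b_2b_3b_4-b_1b_2-b_1b_4-b_3b_4+1=1$ and all four partials of $P_4$ vanish there (the Hessian is nondegenerate, so it is an ordinary double point). In your cascade the constraints at the origin give $P_2(b_1,b_2)=P_2(b_3,b_4)=-1$ and $b_1=b_2=b_3=b_4=0$, which are all consistent; nothing bottoms out at $P_0=1$. So (ii) as literally stated for the hypersurface $\{P_m=1\}$ is false at $m=4$ (and more generally at even $m\ge 4$), and neither your argument nor the paper's closes. (One can check that the origin does lie on $X_4$ as defined by the matrix equations: with all $b_j=0$ each factor $B_jJ$ squares to $-I$, and $(-I)^3=-I$ for the $m+2=6$ factors, contrary to the paper's Remark that all-zero $b_j$ never satisfies the equations; this point corresponds to a reducible local system, so if the intended $X_m$ is the open locus of irreducible ones the statement can likely be salvaged, but that qualification is absent from (i).)

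For (iv), you are right that the paper's ``follows from (i),(ii),(iii)'' glosses over connectedness, which is a real issue for the real points of a stratified variety. Your bridging heuristic — that as $P_{m-1}\to 0$ the coordinate $b_m=(1+P_{m-2})/P_{m-1}$ blows up, so components of $U_{m-1}(\Bbb R)$ reconnect through the closed stratum — is the right idea, but as written it is only a sketch, and you should be aware that by choosing the rate at which $P_{m-1}$ and $1+P_{m-2}$ go to zero one can in fact make $b_m$ approach any finite limit as well (so the open stratum does adhere to every point of the closed one, not only at $b_m=\pm\infty$). A helpful structural remark you could add: since $U_{m-2k-1}\times\Bbb A^k$ has dimension $m-1-k$, all strata with $k\ge 2$ have real codimension $\ge 2$, so (assuming smoothness) connectedness of $X_m(\Bbb R)$ reduces to connectedness of the union of just the top two strata. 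Of course, this too depends on (ii), which as noted is problematic.
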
 

\begin{proof} (i) Given $(b_0,...,b_{m+1})\in X_m$, 
equation \eqref{pm} is obtained from the identity 
$$
-B_0JB_1J...B_mJ=(B_{m+1}J)^{-1} 
$$
by comparing left lower corner entries. 
Equations \eqref{pmm1} are obtained similarly from the identities 
$$
-B_1J...B_mJ=(B_{m+1}JB_0J)^{-1}
$$ 
and 
$$
-B_0J...B_{m-1}J=(B_mJB_{m+1}J)^{-1}.
$$ 
Equation \eqref{recur} implies that the hypersurface 
$P_m=1$ is reduced and irreducible. Also it is rational since 
we can solve the equation $P_m=1$ for $b_m$. 
This implies (i), since 
it is clear that $\dim X_m=m-1$.

(ii) The proof is by induction in $m$. The base is trivial. To make the induction step, note that a singular point of $X_m$ would be a solution of the equations 
$P_m(b_1,...,b_m)=1$, $dP_m(b_1,...,b_m)=0$. 
So using \eqref{recur} we have 
$$
0=\frac{\partial P_m}{\partial b_m}(b_1,...,b_m)=P_{m-1}(b_1,...,b_{m-1}),
$$
 hence $P_{m-2}(b_1,...,b_{m-2})=1$. Also 
 $$
0= \frac{\partial P_m(b_1,...,b_m)}{\partial b_{m-1}}=\frac{\partial P_{m-1}(b_1,...,b_{m-1})}{\partial b_{m-1}}b_m=
 P_{m-2}(b_1,...,b_{m-2})b_m=b_m.
 $$
 Thus $b_m=0$. So for $i\le m-2$ we have 
 $$
 \frac{\partial P_m(b_1,...,b_m)}{\partial b_i}=\frac{\partial P_{m-2}(b_1,...,b_{m-2})}{\partial b_i}=0. 
$$
So $(b_1,...,b_{m-2})$ is a singular point of $X_{m-2}$. 
But by the induction assumption there are no such points. 
This is a contradiction which completes the induction step.  

(iii) Equation \eqref{recur} implies that we have a decomposition $X_m\cong U_{m-1}\sqcup X_{m-2}\times \Bbb A^1$, and the result follows by iteration. 

(iv) Follows from (i),(ii),(iii).  
\end{proof}

\begin{remark} Suppose all the $b_j$ are equal: $b_j=b$. Then equation 
\eqref{recur} takes the form 
\begin{equation}\label{recur1}
P_r(b)=bP_{r-1}(b)-P_{r-2}(b), 
\end{equation} 
with $P_0=1$ and $P_1=b$. Hence $P_r$ is the Chebyshev polynomial 
of the second kind encoding $SL_2$-characters, i.e., $P_r(2\cos x)=\frac{\sin (r+1)x}{\sin x}$. 
Thus equations \eqref{pm},\eqref{pmm1} look like 
$$
P_{m-1}(b)=b,\ P_m(b)=1. 
$$
So $P_{m+1}(b)=0$, i.e. $b=2\cos \frac{\pi k}{m+2}$ 
for $1\le k\le m+1$, $k$ odd (so we have $[\frac{m+2}{2}]$ solutions). 
Note that the solution for $k=1$, i.e., $b=2\cos \frac{\pi}{m+2}$, arises from 
a balanced oper for the leading eigenvalue of Hecke operator for 
the $\Bbb Z/(m+2)$-invariant configuration of points, see Subsection \ref{hyge}.  
\end{remark} 

One can interpret $X_m$ as the space of solutions of the {\bf $T$-system} 
of type $A_1$ of level $m$ (also known as Hirota-Miwa equations). Namely, 
recall (\cite{KNS}) that the $T$-system is the following system of equations for a function $T_i(k)$ 
of two integer variables $i,k$ with even $i+k$: 
$$
T_i(k-1)T_i(k+1)=T_{i-1}(k)T_{i+1}(k)+1.
$$
A solution of the $T$-system {\bf of level $m$} is a solution $T_i(k)$ defined for $0\le i\le m$ 
such that $T_0(k)=1,T_{m}(k)=1$ for all $k$. 

\begin{proposition} 
(i) If $(b_0,...,b_{m+1})\in X_m(\Bbb C)$ then the assignment 
$$
T_i(k):=P_i(b_{\frac{k-i}{2}},...,b_{\frac{k+i-2}{2}})
$$
defines a solution of the $T$-system of level $m$. 
Moreover, for generic $(b_j)$ this solution is nonvanishing.  

(ii) Any nonvanishing solution of the $T$-system of level $m$ is of this form for a unique 
$(b_0,...,b_{m+1})\in X_m(\Bbb C)$.  
\end{proposition}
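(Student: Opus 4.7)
The key observation is that $P_r(b_1,\ldots,b_r)$ equals the determinant of the $r\times r$ tridiagonal matrix $M_r$ with the $b_i$ on the main diagonal and $1$'s on the sub- and super-diagonals; this follows immediately from the recursion \eqref{recur} by Laplace expansion along the last row, and it is the cornerstone of both parts. The $T$-system relation will come from a Dodgson condensation (Desnanot--Jacobi) identity applied to $M_r$, and the closure condition $T_m\equiv 1$ will come from the cyclic symmetry of the matrix equations that define $X_m$.

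For part (i), the boundary condition $T_0\equiv 1$ is immediate from $P_0=1$. For $T_m\equiv 1$, as $k$ varies through admissible values, the $m$-tuple $(b_{(k-m)/2},\ldots,b_{(k+m-2)/2})$ runs (modulo $m+2$) through all $m$-element cyclic windows in $(b_0,\ldots,b_{m+1})$, so I must check $P_m(b_j,\ldots,b_{j+m-1})=1$ for every $j\in\mathbb Z/(m+2)$. One window ($j=1$) is the defining equation of $X_m$; the others follow from the cyclic symmetry of the matrix equation $B_0JB_1J\cdots B_{m+1}J=-I$, since conjugation by $B_0J$ gives $B_1J\cdots B_{m+1}JB_0J=-I$, and Proposition \ref{irredra} applied to the cyclically shifted tuple yields the shifted equation. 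The $T$-system relation itself, written with $j=(k-i-1)/2$, becomes
$$P_i(b_j,\ldots,b_{j+i-1})\,P_i(b_{j+1},\ldots,b_{j+i})=P_{i-1}(b_{j+1},\ldots,b_{j+i-1})\,P_{i+1}(b_j,\ldots,b_{j+i})+1,$$
which is exactly the Desnanot--Jacobi identity for $M_{i+1}(b_j,\ldots,b_{j+i})$: the four principal corner minors evaluate to the $P_i$'s and $P_{i\pm1}$ above, while the two off-corner minors (delete row $1$, column $i+1$ or vice versa) are determinants of unitriangular matrices and equal $1$ by direct inspection. Nonvanishing for generic $(b_j)$ then follows because each polynomial $P_i(b_j,\ldots,b_{j+i-1})$ with $i<m$ is non-constant on the irreducible variety $X_m$, so its zero locus is a proper closed subset.

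For part (ii), given a nonvanishing solution set $b_j:=T_1(2j+1)$ and define provisionally $\widetilde T_i(k):=P_i(b_{(k-i)/2},\ldots,b_{(k+i-2)/2})$. By part (i) the $\widetilde T_i$ satisfy the $T$-system. Both $T$ and $\widetilde T$ agree for $i=0,1$, and the $T$-system rearranges (using nonvanishing of $T_{i-1}$) into $T_{i+1}(k)=(T_i(k-1)T_i(k+1)-1)/T_{i-1}(k)$, which determines $T_{i+1}$ from the two preceding rows; induction on $i$ gives $T_i=\widetilde T_i$ for $0\le i\le m$. The boundary condition $T_m\equiv1$ now reads $P_m(b_j,\ldots,b_{j+m-1})=1$ for every $j\in\mathbb Z$. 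Using the left-recursion $P_m(b_j,\ldots,b_{j+m-1})=b_jP_{m-1}(b_{j+1},\ldots,b_{j+m-1})-P_{m-2}(b_{j+2},\ldots,b_{j+m-1})$ and nonvanishing of the $P_{m-1}$'s, one expresses $b_{j+m}$ rationally in terms of $b_j,\ldots,b_{j+m-1}$; iterating and invoking the manifest $\mathbb Z/(m+2)$-cyclic symmetry from part (i) forces $b_{j+m+2}=b_j$. Hence we obtain a well-defined tuple $(b_0,\ldots,b_{m+1})$. The defining equation $P_m(b_1,\ldots,b_m)=1$ is $T_m(m+2)=1$, while the two neighboring cyclic shifts $P_m(b_0,\ldots,b_{m-1})=1$ and $P_m(b_2,\ldots,b_{m+1})=1$, combined with the left-recursion, give exactly $b_0=P_{m-1}(b_2,\ldots,b_m)$ and $b_{m+1}=P_{m-1}(b_1,\ldots,b_{m-1})$; so $(b_0,\ldots,b_{m+1})\in X_m$, and uniqueness is tautological from $b_j=T_1(2j+1)$.

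\emph{Main obstacle.} The hard part is establishing the period-$(m+2)$ periodicity of $b_j$ in step (ii); this is essentially Zamolodchikov periodicity for type $A_1$ at level $m$, and I would prove it by exploiting the cyclic $\mathbb Z/(m+2)$-symmetry of the matrix product $B_0J\cdots B_{m+1}J=-I$ already established in the proof of Proposition \ref{irredra}, rather than by an \emph{ab initio} cluster-algebraic argument. Once periodicity is in hand, every remaining claim reduces to the Desnanot--Jacobi identity and the left/right recursions for Euler's continuants.
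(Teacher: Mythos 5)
Your treatment of part (i) is correct and takes a mildly different route from the paper: you observe that $P_r$ is the determinant of the tridiagonal matrix with the $b$'s on the diagonal and $1$'s on the two off-diagonals, and derive the $T$-system relation from the Desnanot--Jacobi (Dodgson condensation) identity, with the two off-corner minors evaluating to $1$ because they are unitriangular. The paper proves exactly the same continuant identity
$P_{i+1}(b_0,\ldots,b_i)P_{i-1}(b_1,\ldots,b_{i-1})=P_i(b_0,\ldots,b_{i-1})P_i(b_1,\ldots,b_i)-1$
directly by induction on $i$ using the recursion \eqref{recur}; your version is more conceptual but logically equivalent. Your handling of $T_m\equiv1$ via cyclic conjugation of $B_0J\cdots B_{m+1}J=-1$ by $B_0J$ matches what the paper records immediately after the proposition.

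In part (ii) there is a genuine gap, which you correctly flag as the ``main obstacle'' but then close with a circular step. To produce a point of $X_m$ from the bi-infinite sequence $b_j:=T_1(2j+1)$, $j\in\Bbb Z$, you need $b_{j+m+2}=b_j$; but ``the manifest $\Bbb Z/(m+2)$-cyclic symmetry from part (i)'' is not available here, since that symmetry comes from the matrix relation $B_0J\cdots B_{m+1}J=-1$, which is a \emph{consequence} of $(b_0,\ldots,b_{m+1})\in X_m$ --- exactly what you are trying to prove --- and in any case it says nothing about $T_1(2(m+2)+1)$. The correct argument, which is what the paper's terse line ``and $P_{m+1}(b_r,\ldots,b_{r+m})=0$ for all $r$'' encodes, runs as follows. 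From $T_m\equiv1$ you have $P_m(b_j,\ldots,b_{j+m-1})=1$ for every $j\in\Bbb Z$; your Desnanot--Jacobi identity at size $m+1$ then gives
$P_{m+1}(b_j,\ldots,b_{j+m})\,P_{m-1}(b_{j+1},\ldots,b_{j+m-1})=1\cdot1-1=0$,
and since $P_{m-1}(b_{j+1},\ldots,b_{j+m-1})$ is a value of $T_{m-1}$, hence nonzero, $P_{m+1}(b_j,\ldots,b_{j+m})=0$ for all $j$. Applying the left and right forms of \eqref{recur} to this vanishing, and using $P_m\equiv1$, gives $b_j=P_{m-1}(b_{j+2},\ldots,b_{j+m})$ and $b_{j+m}=P_{m-1}(b_j,\ldots,b_{j+m-2})$; shifting the second by $j\mapsto j+2$ and comparing with the first yields $b_{j+m+2}=P_{m-1}(b_{j+2},\ldots,b_{j+m})=b_j$. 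This both establishes periodicity and, on taking $j=0,1$, reproduces \eqref{pmm1} in one stroke, so your separate (correct but roundabout) derivation of \eqref{pmm1} from the shifted equations $P_m(b_0,\ldots,b_{m-1})=P_m(b_2,\ldots,b_{m+1})=1$ becomes unnecessary.
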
 

\begin{proof} (i) We prove that $T_i(k)$ satisfies the $T$-system by induction in $i$. 
The base case $i=1$ is easy, so let us perform the induction step from $i-1$ to $i$, with $i\ge 2$. 
Using \eqref{recur} and the induction assumption, we have 
$$
P_{i+1}(b_0,...,b_i)P_{i-1}(b_1,...,b_{i-1})=
$$
$$
P_i(b_0,...,b_{i-1})b_iP_{i-1}(b_1,...,b_{i-1})-
P_{i-1}(b_0,...,b_{i-2})P_{i-1}(b_1,...,b_{i-1})=
$$
$$
P_i(b_0,...,b_{i-1})b_iP_{i-1}(b_1,...,b_{i-1})-P_i(b_0,...,b_{i-1})P_{i-2}(b_1,...,b_{i-2})-1=
$$
$$
P_i(b_0,...,b_{i-1})P_i(b_1,...,b_i)-1,
$$
which completes the induction step. The fact that 
$T_i(k)\ne 0$ for generic $(b_j)$ is obvious. 

(ii) Let $T_i(k)$ be a nonvanishing solution of the $T$-system of level $m$. The proof of (i) implies by induction in $i$ that 
$$
T_i(k):=P_i(b_{\frac{k-i}{2}},...,b_{\frac{k+i-2}{2}})
$$
where $b_i:=T_1(2i+1)$. In particular, this means that 
$$
P_m(b_r,...,b_{r+m-1})=1
$$ and 
$$
P_{m+1}(b_r,...,b_{r+m})=0
$$ 
for all $r$. Thus by \eqref{recur}, equations \eqref{pmm1} hold. So 
$(b_0,...,b_{m+1})\in X_m(\Bbb C)$, as claimed. 
\end{proof} 

Let us say that a solution $T_i(k)$ of the $T$-system of level $m$ 
is {\bf half-periodic} with period $m+2$ if 
$$
T_{m-i}(k+m+2)=T_i(k)
$$
for all $i,k$ (clearly, such a solution is periodic with period $2(m+2)$). Since 
$$
-B_0J...B_{i-1}J=(B_iJ...B_{m+1}J)^{-1},
$$ 
the solution $T_i(k)$ of the $T$-system obtained from a point of $X_m$ 
is half-periodic with period $m+2$. In particular, we see that any nonvanishing solution is half-periodic with period $m+2$, 
which is the well known ``Zamolodchikov conjecture" (now a theorem, see 
\cite{FS,GT}). 

We also obtain 

\begin{corollary} $X_m$ is the closure of the variety of nonvanishing solutions 
of the $T$-system of level $m$. 
\end{corollary}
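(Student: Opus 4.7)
The plan is that the proof should be a short consequence of the preceding proposition combined with the irreducibility of $X_m$ proved in Proposition \ref{irredra}(i). First, I would set up the bijection: by part (ii) of the preceding proposition, the assignment
$$
(b_0,\ldots,b_{m+1}) \longmapsto \bigl(T_i(k) = P_i(b_{(k-i)/2},\ldots,b_{(k+i-2)/2})\bigr)
$$
identifies $X_m(\mathbb{C})$ with the set of all solutions of the $T$-system of level $m$ that come from this construction, and nonvanishing solutions correspond to the subset
$$
U := \{(b_j)\in X_m(\mathbb{C}) : T_i(k)\neq 0 \text{ for all admissible } i,k\}\subset X_m(\mathbb{C}).
$$
So the corollary reduces to the claim that $U$ is Zariski dense in $X_m$.

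Next I would verify that $U$ is Zariski open in $X_m$. A priori the conditions $T_i(k)\neq 0$ form an infinite family indexed by $(i,k)$ with $0\le i\le m$, but the remark preceding the corollary shows that any solution arising from a point of $X_m$ is half-periodic with period $m+2$. Therefore only finitely many of the polynomials $T_i(k)$ (viewed as regular functions on $X_m$) are actually distinct, and $U$ is the complement in $X_m$ of the union of the finitely many hypersurfaces cut out by these polynomials.

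The next step is to show $U$ is non-empty. This is exactly the last sentence of part (i) of the preceding proposition: for generic $(b_j)\in X_m$ the associated $T_i(k)$ are nonvanishing. Equivalently, the finite union of hypersurfaces in the previous step does not exhaust $X_m$.

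Finally, since $X_m$ is irreducible by Proposition \ref{irredra}(i), any non-empty Zariski-open subset is Zariski dense. Hence $\overline{U} = X_m$, which is precisely the statement of the corollary. There is no serious obstacle at any stage; the whole argument simply packages the preceding proposition with the irreducibility already in hand.
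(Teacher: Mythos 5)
Your proof is correct and is the natural argument the paper has in mind; the paper states this as a corollary without proof precisely because it follows immediately from the preceding proposition together with the irreducibility of $X_m$ from Proposition \ref{irredra}(i). Your packaging (identify nonvanishing solutions with a Zariski-open $U\subset X_m$ via parts (i) and (ii) of the proposition, observe openness via half-periodicity which reduces the nonvanishing conditions to finitely many, nonemptiness from part (i), and then conclude density from irreducibility) is exactly the intended chain of reasoning.
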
 

\begin{example} 1. If $m=1$ then there is only one solution of the $T$-system of level $m$, 
$T_i(k)=1$. This corresponds to the fact that $X_1\subset \Bbb A^1$ is a point, defined by the equation $b_1=1$. So we have $b_j=1$ for all $j$. This reproduces the monodromy representation of the Picard-Fuchs equation, which agrees with Example \ref{ellint2}.\footnote{Note that this representation is real since it can be conjugated into an index $12$ congruence subgroup of $SL(2,\Bbb Z)$. However, for $m>1$ the corresponding representations are not real, in general.} 

2. If $m=2$ then the general solution is defined by the formula $T_2(2j)=b$ if $j$ is even and $T_2(2j)=2/b$ if $j$ is odd, for some nonzero 
number $b$. So we have $b_{2r}=b$ and $b_{2r+1}=2/b$. This corresponds to the fact that $X_2\subset \Bbb A^2$ is the hyperbola defined by the equation $b_1b_2=2$. 

3. If $m=3$ then we have $T_2(2j)=b_j$ for some numbers $b_j$, $j\in \Bbb Z$, and 
$T_3(2j+1)=b_jb_{j+1}-1$. So half-periodic solutions 
correspond to collections of numbers $b_j$, $j\in \Bbb Z/5$ 
such that 
$$
b_{j+2}=b_{j-1}b_j-1. 
$$
The space of such solutions is the surface $X_3\subset \Bbb A^3$ 
defined by the equation $b_1b_2b_3-b_1-b_3=1$. 

So if $b_1=b,b_2=c$ and $bc\ne 1$ then we have  
$$
b_1=b,\ b_2=c,\ b_3=\frac{1+b}{bc-1},\ b_4=bc-1,\ b_0=\frac{1+c}{bc-1}. 
$$ 
If $bc=1$ then we must have $b=c=-1$ and we have 
$$
b_1=-1,\ b_2=-1,\ b_3=d,\ b_4=0,\ b_0=-d-1
$$ 
for some number $d$. In other words, $-b_i$ form a 5-cycle occurring in the 5-term relation for the dilogarithm, see \cite{Z}, Subsection II.2. 
\end{example}

\subsection{A geometric description of balanced pairs} 
Let 
$$
\bold Q:=SL_2(\Bbb C)/SL_2(\Bbb R)=PSL_2(\Bbb C)/PSL_2(\Bbb R).
$$ 
The following lemma is well known. 

\begin{lemma} $\bold Q$ can be naturally identified with the set $\bold H$ of $A\in SL_2(\Bbb C)$ such that $\overline{A}A=1$, 
by $T\in \bold Q\mapsto \overline{T}T^{-1}$.
This identification is $SL_2(\Bbb C)$-equivariant, 
where $SL_2(\Bbb C)$ acts on $\bold H$ by $g\circ A:=\overline{g} Ag^{-1}$. 
\end{lemma}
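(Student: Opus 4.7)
The plan is to produce an explicit map $\phi: SL_2(\mathbb{C}) \to \mathbf{H}$ by $\phi(T) := \overline{T}T^{-1}$, show it descends to a bijection $\bold Q \to \bold H$, and verify equivariance. First, I would check that the image lies in $\bold H$: indeed $\overline{\phi(T)}\phi(T) = T\overline{T}^{-1}\overline{T}T^{-1} = 1$. Next, since complex conjugation fixes any $g \in SL_2(\mathbb{R})$, we have $\phi(Tg) = \overline{T}\overline{g}g^{-1}T^{-1} = \overline{T}T^{-1} = \phi(T)$, so $\phi$ descends to a well-defined map $\bar\phi: \bold Q \to \bold H$.

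For injectivity, I would argue directly: if $\overline{T_1}T_1^{-1} = \overline{T_2}T_2^{-1}$, then $\overline{T_2^{-1}T_1} = T_2^{-1}T_1$, which means $T_2^{-1}T_1 \in SL_2(\mathbb{R})$, so $T_1$ and $T_2$ represent the same coset in $\bold Q$. Surjectivity is the main (but standard) point: given $A \in \bold H$, we must solve $\overline{T} = AT$ for some $T \in SL_2(\mathbb{C})$. The relation $A\overline{A} = 1$ (which follows from $\overline{A}A = 1$ by taking inverses) is precisely the cocycle condition for the Galois group $\mathrm{Gal}(\mathbb{C}/\mathbb{R})$ acting on $SL_2(\mathbb{C})$ by complex conjugation, and so surjectivity amounts to the vanishing of $H^1(\mathrm{Gal}(\mathbb{C}/\mathbb{R}), SL_2(\mathbb{C}))$, i.e.\ Hilbert's Theorem 90 for $SL_2$. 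This is the only nontrivial step; it can also be proved by hand via an averaging argument (if $A + I$ is invertible, take $T = (A+I)^{-1}$ rescaled to have determinant $1$, and reduce the degenerate case $\det(A+I)=0$ to the generic case by replacing $A$ with $-A$ after noting $-I \in \bold H$, or by using $T = A + \lambda I$ for a suitable $\lambda$).

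Finally, for equivariance under the $SL_2(\mathbb{C})$-action, I would compute directly: the natural left action on $\bold Q$ sends the class of $T$ to the class of $gT$, and
\[
\phi(gT) = \overline{gT}(gT)^{-1} = \overline{g}\,\overline{T}\,T^{-1}g^{-1} = \overline{g}\,\phi(T)\,g^{-1} = g\circ \phi(T),
\]
which matches the action on $\bold H$ defined in the lemma. The identification of $SL_2(\mathbb{C})/SL_2(\mathbb{R})$ with $PSL_2(\mathbb{C})/PSL_2(\mathbb{R})$ follows from $-I \in SL_2(\mathbb{R})$, so any two preimages in $SL_2(\mathbb{C})$ of the same element of $PSL_2(\mathbb{C})$ land in the same coset. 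The principal obstacle is really just the surjectivity, but since this is a classical instance of Hilbert 90 it presents no genuine difficulty.
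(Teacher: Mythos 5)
Your proof is correct, and it takes a genuinely different route from the paper's. The paper proceeds geometrically: it parametrizes a generic $A \in \mathbf{H}$ as $\left(\begin{smallmatrix} a & ib \\ ic & d\end{smallmatrix}\right)$, reduces the condition $\overline{A}A = 1$ to $\overline{d}=a$, $b,c\in\mathbb{R}$, $ad+bc=1$, recognizes $\mathbf{H}$ as the one-sheeted hyperboloid $a_1^2+a_2^2+bc=1$ of signature $(3,1)$ in $\mathbb{R}^4$, and then invokes the transitivity of the Lorentz group $PSL_2(\mathbb{C})\cong SO^+(3,1)$ on this de Sitter space, with stabilizer $SO(2,1)\cong PSL_2(\mathbb{R})$ at $\mathrm{Id}$, to conclude by orbit--stabilizer. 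In that approach the fact that the resulting identification is $T\mapsto\overline{T}T^{-1}$ is asserted without verification ("it is easy to see"). You instead take the map $\phi(T)=\overline{T}T^{-1}$ as the starting point and verify each property directly: image in $\mathbf{H}$, descent to cosets, injectivity by noting $\phi(T_1)=\phi(T_2)\iff T_2^{-1}T_1\in SL_2(\mathbb{R})$, surjectivity via Hilbert's Theorem~90 for $SL_2$ (the vanishing of $H^1(\mathrm{Gal}(\mathbb{C}/\mathbb{R}), SL_2(\mathbb{C}))$), and equivariance by direct computation. The trade-off: the paper's argument makes the real algebraic-geometric structure of $\mathbf{H}$ (a smooth affine quadric) explicit, which matches the surrounding discussion, whereas your cohomological argument is more elementary in its use of the map itself, generalizes immediately to $SL_n$ or $GL_n$, and actually fills in the verification that the orbit map is literally $T\mapsto\overline{T}T^{-1}$, which the paper leaves to the reader. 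Your alternative hands-on surjectivity sketch via $T=(A+I)^{-1}$ has a small unaddressed wrinkle (you need the rescaling factor to be real, and $\det(A+I)$, while real, could be negative), but since you present it only as a fallback to the clean Hilbert~90 argument this does not affect the correctness of the proof.
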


\begin{proof} 
 Let $A=\left(\begin{matrix} a& ib\\ ic& d\end{matrix}\right)\in SL_2(\Bbb C)$. Then the equation $\overline{A}A=1$ reduces to the equations
$$
\overline{d}=a,\ \overline{b}=b,\ \overline c=c,\ ad+bc=1. 
$$
This yields 
$$
a_1^2+a_2^2+bc=1
$$
 (where $a_1+ia_2=a$), which defines a one-sheeted hyperboloid $\bold H$ of signature $(3,1)$ in the space $\Bbb R^4$ with coordinates $a_1,a_2,b,c$. The Lorentz group $SO(3,1)=PSL_2(\Bbb C)$ thus acts transitively on $\bold H$, and the stabilizer of the point ${\rm Id}=(1,0,0,0)\in \bold H$ is $SO(2,1)=PSL_2(\Bbb R)$. Thus we get an isomorphism $\xi: \bold Q=SO(3,1)/SO(2,1)\to \bold H$, and it is easy to see that it is given precisely by $\xi(T)= \overline{T}T^{-1}$ and is equivariant. 
\end{proof} 

Now let $N\ge 3$ and $(A_0,...,A_{N-1})\in \bold Q^N$. 
Let $J_j:=A_j^{-1}A_{j-1}$,  $j\in \Bbb Z/N$. 
Let $\bold Q^{N}_{\rm irr}\subset \bold Q^{N}$ be the set 
of points such that the collection of operators $\lbrace J_j\rbrace$ is irreducible. 
Let $\M_N:=\bold Q^{N}_{\rm irr}/PSL_2(\Bbb C)$, where $PSL_2(\Bbb C)$
acts diagonally by 
$$
g\circ (A_0,...,A_{N-1}):=(\overline{g}A_0g^{-1},...,\overline{g}A_{N-1}g^{-1}). 
$$
This action is free by Schur's lemma and the irreducibility of $\lbrace J_j\rbrace$, so 
$\M_N$ is the real locus of a real smooth algebraic variety of dimension $3(N-2)$. 

Let $\M_N^0\subset \M_N$ be the real algebraic subset of tuples 
$(A_0,...,A_{N-1})\in \M_N$ such that for each $j$ the matrix $A_j^{-1}A_{j-1}$  
is conjugate to $\begin{pmatrix} -1 & 1\\ 0 & -1\end{pmatrix}$. It follows from Lemma \ref{irreduci} that the closure of $\M_N^0$ is an irreducible complete intersection inside 
$\M_N$ cut out by $N$ equations ${\rm Tr}(A_j^{-1}A_{j-1})=-2$, so 
$$
\dim \M_N^0=3(N-2)-N=2(N-3).
$$ 

Let $t_j\in \Bbb R\Bbb P^1$, $j=0,...,N-1$ be distinct points occurring in the given order. Let $X^\circ:=\Bbb C\Bbb P^1\setminus \lbrace{t_0,...,t_{N-1}\rbrace}$. 

\begin{definition} A local system on $X^\circ$ equivariant under complex conjugation is a local system $E$ on $X^\circ$ equipped with an isomorphism $g: E\to \overline E$ such that $\overline{g}\circ g=1$.
\end{definition}  

\begin{lemma} The real algebraic set $\M_N^0$ is isomorphic to the moduli space 
of irreducible $SL_2$ local systems on $X^\circ$ equivariant under complex conjugation and having monodromy around $t_j$ conjugate to a Jordan 
block with eigenvalue $-1$. 
\end{lemma}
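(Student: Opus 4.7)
The plan is to build the isomorphism explicitly using the fact that the two open hemispheres $\Bbb C_\pm \subset \Bbb C\Bbb P^1$ minus $\{t_0,\ldots,t_{N-1}\}$ are simply connected, so any local system $E$ on $X^\circ$ becomes trivial on the closure in $X^\circ$ of either hemisphere and is therefore determined by the gluing data along the real arcs $I_j$ between consecutive parabolic points.

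Starting from a representative $(A_0,\ldots,A_{N-1}) \in \bold Q^N_{\rm irr}$, I would construct a $c$-equivariant local system $(E,g)$ as follows. Take $E$ to be trivialized as $\Bbb C^2$ on each closed hemisphere (minus the $t_j$'s), with the equivariance datum $g$ being the identity in these two trivializations (this is compatible with $c$ swapping the hemispheres). Then glue the two halves across each arc $I_j$ via the antilinear map $\sigma_j(u) := A_j \bar u$; the relation $\bar A_j A_j = 1$ defining $\bold H$ is precisely what makes $\sigma_j$ an involution, so this descends to a well-defined $c$-equivariant local system. A direct computation of parallel transport around a small loop encircling $t_j$ (crossing the two arcs meeting at $t_j$) will identify the monodromy with $J_j = A_j^{-1}A_{j-1}$ up to conjugation, so the Jordan block hypothesis on each $J_j$ translates exactly to the required monodromy condition on the local system side, and irreducibility of the tuple is equivalent to irreducibility of the monodromy representation.

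In the reverse direction, given an irreducible $c$-equivariant local system $(E,g)$ with the prescribed monodromies, I would trivialize $E$ on the closed upper hemisphere minus $\{t_0,\ldots,t_{N-1}\}$ and push this trivialization to the closed lower hemisphere via $g$ and $c$. On each arc $I_j$ the equivariance restricts to an antilinear involution of the stalk which, in the chosen trivialization, takes the unique form $u \mapsto A_j\bar u$ for some $A_j \in \bold H \cong \bold Q$, producing a tuple $(A_0,\ldots,A_{N-1}) \in \bold Q^N$; the identity $J_j = A_j^{-1}A_{j-1}$ established above forces this tuple to lie in $\M_N^0$. A different choice of initial trivialization changes the tuple by a diagonal action of $SL_2(\Bbb C)$ on $\bold H^N$ which, after reconciling sign conventions, matches the $PSL_2(\Bbb C)$-action defining $\M_N^0$, so the map descends to the quotient, and the two constructions are inverse to one another by construction.

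The hard part will be careful bookkeeping of orientations and sign conventions so as to pin down the precise formula $J_j = A_j^{-1}A_{j-1}$ (rather than a conjugate or an inverse) compatibly with the identification $\xi(T) = \bar T T^{-1}$ from the previous lemma. Once these conventions are fixed, the rest is routine, since both sides are cut out by the same algebraic conditions on the matrices involved and the two maps are manifestly mutual inverses.
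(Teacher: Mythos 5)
Your approach is essentially the paper's, rephrased in the dual language of hemisphere trivializations: the paper works with the fundamental groupoid $\pi_1(X^\circ,p,\bar p)$ on the two conjugate base points and sets $\rho(\gamma_j)=A_j$ for the path $\gamma_j$ from $p$ to $\bar p$ crossing your arc $I_j$, producing the same monodromy formula $\rho(\delta_j)=A_j^{-1}A_{j-1}$. One caution about your forward construction: the transition functions of the local system $E$ itself must be $\Bbb C$-linear, so the antilinear map $\sigma_j(u)=A_j\bar u$ is properly the composition of the $\Bbb C$-linear gluing of $E$ across $I_j$ with the antilinear equivariance $g$; the groupoid formulation keeps these two pieces of data separate, which dissolves most of the orientation and sign bookkeeping you flag as the hard part.
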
 

\begin{proof} To prove the lemma, we will realize local systems (i.e., locally constant sheaves) on $X^\circ$ as representations of its fundamental groupoid. 

Namely, pick a base point $p$ on the upper half-plane, and let $\gamma_i$ be the path from $p$ to $\overline p$ passing between 
the points $t_{j-1}$ and $t_j$. Then given $A=(A_0,...,A_{N-1})\in \M_N^0$, we can define a local system $\rho=\rho_A$ on $X^\circ$ by the formula $\rho(\gamma_j)=A_j$, giving a representation of the fundamental groupoid $\pi_1(X^\circ,p,\overline p)$. This local system is equivariant under complex conjugation since $\overline{A_j}A_j=1$. Then for the closed paths 
$\delta_i:=\gamma_j^{-1}\gamma_{j-1}$ beginning and ending at $p$ 
we have $\rho(\delta_i)=A_j^{-1}A_{j-1}$, defining a representation of the fundamental group 
$\pi_1(X^\circ,p)$. Conversely, the same formula defines a point $A=A_\rho\in \M_N^0$ from an equivariant local system $\rho$ on $X^\circ$. 
\end{proof} 

Now given $A=(A_0,...,A_{N-1})\in \M_N^0$, let $T_j:=\xi^{-1}(A_j)$, $C_j:=T_{j-1}^{-1}T_j$, and $B_j:={\rm Im}C_j$ be the imaginary part of $C_j$. 
Then the tuple $(B_0,...,B_{N-1})$ gives a well defined real representation $V(A)=\oplus_{j=0}^{N-1}V_j(A)$ of the cyclic quiver with $N$ vertices with dimension vector $(2,...,2)$ and $B_j: V_j(A)\to V_{j-1}(A)$. Also since the eigenvalues of $\overline{C}_j^{-1}C_j$ are $-1$, we have $\overline{C}_j^{-1}C_j=-1+iE_j$, where the map $E_j: V_j(A)\to V_j(A)$ is real and $E_j^2=0$. 

Let $B=\oplus_{j=0}^{N-1}B_j, E=\oplus_{j=0}^{N-1}E_j$ be endomorphisms of 
$V(A)$. 

\begin{proposition} Balancings of the local system defined by the representation $\rho_A$ for $A\in \M_N$ correspond to subrepresentations $L\subset V(A)$ with dimension vector $(1,...,1)$ which are invariant under the operator $BE+EB$. Thus elements of $\M_N^0$ which admit a balancing form a connected irreducible real algebraic subset in $\M_N^{\rm bal}\subset \M_N^0$ of dimension $N-3$ (and every local system admits at most two balancings). 
\end{proposition}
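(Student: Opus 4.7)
The plan is to recast the balancing condition intrinsically in terms of the operators $B$ and $E$ on the quiver representation $V(A)$, and then to deduce the three geometric assertions from Proposition~\ref{irredra} together with elementary linear algebra.

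First, I would set up the dictionary between balancings and one-dimensional subrepresentations of $(V(A), B)$. Given a balancing $f$ of $\nabla_A$, the chain $f_j := B_{j+1}\cdots B_{N-1} f$ and the lines $L_j := \Bbb R f_j \subset V_j(A)$ satisfy $B_j(L_j) \subset L_{j-1}$ by construction, so $L := \oplus_j L_j$ is automatically a subrepresentation of $(V(A), B)$ with dimension vector $(1,\ldots,1)$. Conversely, any one-dimensional subrepresentation of $(V(A), B)$ whose projections to each $V_j(A)$ are nonzero determines, up to a single overall scaling, a chain of vectors $f_j$ and hence a candidate $f := f_{N-1} \in V_{N-1}$.

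The heart of the argument is to identify the balanced normalization $a_j = 1$ for all $j$ (whence $\lambda = 1$ by \eqref{matrixeq1}) with the additional invariance condition $(BE+EB)(L) \subset L$. Using the framings $T_j$, one views $V_j(A)$ as a real $2$-dimensional space on which $B_j = \operatorname{Im}(C_j)$ and $E_j$ act as real endomorphisms, with $\overline{C_j}^{-1}C_j = -1 + iE_j$ encoding the unipotent part of the monodromy at $t_j$. A direct matrix calculation in a basis of $V_j(A)$ adapted to $L_j$ --- using the explicit form \eqref{bj} of $B_j J^{\pm 1}$ to translate between the framed picture $B_j = \operatorname{Im}(C_j)$ and the normal form of \eqref{matrixeq0} --- shows that the restriction of $(BE+EB)$ to $L_j$, viewed modulo $L_{j-1}$, is a multiple of the deficit $a_j - 1$. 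Running this across all $j$ and invoking Lemma~\ref{equ} to collapse the two monodromy relations \eqref{matrixeq} into one yields the claimed equivalence. I expect this step to be the main obstacle, since the two \emph{a priori} distinct objects both denoted $B_j$ (the matrix in the balanced basis versus $\operatorname{Im}(C_j)$) must be carefully matched, and one must control the residual $SL_2(\Bbb R)$-frame ambiguity under replacing $T_j$ by $T_j h$ with $h \in SL_2(\Bbb R)$.

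Once the dictionary is in place, the irreducibility, connectedness, and dimension claims follow from Proposition~\ref{irredra}: the assignment $(\nabla, f) \mapsto (b_0, \ldots, b_{N-1})$ in the balanced normalization identifies $\M_N^{\mathrm{bal}}$ with the real locus of the smooth irreducible hypersurface $X_{N-2} = \{P_{N-2}(b_1,\ldots,b_{N-2}) = 1\} \subset \Bbb A^{N-2}$, which has real dimension $N-3$ and is connected by Proposition~\ref{irredra}(iv). Finally, the bound of at most two balancings per $\nabla$ is immediate from Definition~\ref{nonde}: a balancing is an eigenvector of the cyclic operator $\Bbb B \in SL_2$ with eigenvalue $\lambda$, and such an element has at most two eigendirections up to scaling.
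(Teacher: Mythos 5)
Your overall strategy matches the paper's (very terse) proof: the first statement is established by a direct calculation in a basis adapted to the line subrepresentation, and the geometric assertions are read off from Proposition~\ref{irredra}. You correctly locate the crux — translating the normalization $a_j=1$ into invariance under $BE+EB$ — and you correctly identify that the two different constructions of $B_j$ (the parallel transport in the bases $(f_j,g_j)$ versus $\operatorname{Im}C_j$ coming from the framings $T_j$) must be reconciled. However, you stop short of doing the computation: the assertion that the image of $(BE+EB)$ restricted to $L_j$, modulo $L_{j-1}$, is a scalar multiple of $a_j-1$ is exactly what needs to be \emph{verified}, and your proposal treats it as plausible rather than proved. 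You should carry this out in the normal form \eqref{matrixeq0}: write $E_j$ in the $(f_j,g_j)$-basis (it is the nilpotent part of $-\overline{C}_j^{-1}C_j$, so a rank-one nilpotent), compute $B_jE_j+E_{j-1}B_j$ as a map $V_j\to V_{j-1}$ in these bases, and observe that its $(2,1)$-entry is proportional to $a_j-1$ while preservation of $L_j=\mathbb{R}f_j$ into $L_{j-1}=\mathbb{R}f_{j-1}$ is exactly the vanishing of that entry. Lemma~\ref{equ} is not really needed for this pointwise step; it is needed earlier, when showing the balancing equations cut out a single hypersurface.

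Two smaller but real issues. First, for the ``at most two balancings'' claim, your argument — that $\Bbb{B}$ has at most two eigendirections — fails when $\Bbb{B}$ is scalar, a degeneration the paper explicitly flags (``for scalar $\Bbb{B}$ such choices form an affine line''). The correct finiteness in that case comes from the additional $(BE+EB)$-invariance condition, not from eigendirections of $\Bbb{B}$: after conjugating by $B_j$, invariance is equivalent to $L_j$ being stable under the traceless operator $E_j+B_j^{-1}E_{j-1}B_j$, which has at most two invariant lines unless it vanishes identically. So you should route the bound through the first statement rather than through $\Bbb{B}$ alone. Second, your phrase that the assignment $(\nabla,f)\mapsto(b_j)$ ``identifies $\M_N^{\mathrm{bal}}$ with $X_{N-2}(\Bbb{R})$'' is too strong: as the paper notes in the subsequent remark, the map $X_{N-2}(\Bbb{R})\to\M_N^{\mathrm{bal}}$ is a normalization, not an isomorphism for even $N$. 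The conclusions you want (connectedness, irreducibility, dimension $N-3$) do survive, since they are preserved under the image of a finite surjective morphism from a connected irreducible variety, but you should phrase the argument in terms of the surjection rather than an identification.
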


\begin{proof} The first statement is proved by a direct calculation. 
The rest follows from Proposition \ref{irredra}.
\end{proof} 

\begin{remark} We see that the natural map $X_{N-2}(\Bbb R)\to \M_N^{\rm bal}$ 
is a normalization map (which is an isomorphism for odd $N$ but not for even $N$). 
For example, as we have seen, the variety $X_2$ is isomorphic to $\Bbb A^1\setminus 0$ via $b\mapsto (b,2/b)$. One can show that 
the local system $\nabla_b$ attached to $b$ determines $b$ 
uniquely except when $b=\sqrt{2}$, in which case $b$ and 
$-b$ give rise to the same local system, and that 
$\M_4^{\rm bal}$ is the curve in $\Bbb R^2$ defined by the equation $v^2-u(u-2)^2$, a punctured affine 
nodal cubic. The natural map $X_2(\Bbb R)
\to \M_4^{\rm bal}$ is then the normalization map given by $b\mapsto (b^2,b(b^2-2))$. This example will be revisited in 
Subsection \ref{archim1}. 
\end{remark} 

\subsection{Hypergeometric opers}\label{hyge} 

In general, the solutions of the oper equation appearing in the above formula for the eigenvalues of Hecke operators are not expected to be explicitly computable.
However, if the configuration of points $t_i$ is very symmetric then 
the leading eigenvalue $\beta_0(x)$ of the Hecke operators $H_x$ may be expressed via the  hypergeometric function. Let us describe this situation 
in more detail. 

First consider $F=\Bbb C$ and $t_j=e^{\frac{2\pi ij}{N}}$, where $N=m+2$. Then the curve 
$X=\Bbb P^1$ with marked points $t_j$ has a symmetry group $\Gamma=\Bbb Z/N$, 
and the quotient $X(\Bbb C)/\Gamma$ is the orbifold $\Bbb C\Bbb P^1$ with two orbifold points $0,\infty$ 
with stabilizer $\Bbb Z/N$ and one marked point $1$. Thus the oper corresponding to the leading 
eigenvalue $\beta_0(x)$ of $H_x$ (i.e., the uniformization oper) is invariant 
under $\Gamma$ and reduces to a hypergeometric connection on $X(\Bbb C)/\Gamma=\Bbb C\Bbb P^1$ 
with singularities at $0,1,\infty$. Explicitly, it is easy to compute that the corresponding oper equation for $\beta_0$ has the form 
$$
\left(\partial^2+\frac{N^2}{4}\frac{x^{N-2}}{(x^N-1)^2}\right)\beta_0=0. 
$$
This gives $\beta_0(x)=|x|\gamma(x^N)$, where 
$\gamma(y)$ satisfies the equation 
\begin{equation}\label{gammaeq} 
\left((y\partial)^2-\frac{1}{4N^2}+\frac{y}{4(y-1)^2}\right)\gamma=0,
\end{equation} 
which reduces to a hypergeometric equation. Namely, we have 
\begin{equation}\label{betaform}
\beta_0(x)=C|1-x^N|\left(|F_-(x^N)|^2-\lambda^2 |F_+(x^N)|^2\right), 
\end{equation} 
where 
$$
F_-(y):=F(\tfrac{1}{2},\tfrac{1}{2}-\tfrac{1}{N},1-\tfrac{1}{N};y), \
F_+(y):=y^{\frac{1}{N}}F(\tfrac{1}{2},\tfrac{1}{2}+\tfrac{1}{N},1+\tfrac{1}{N};y)
$$
are the basic solutions of the Euler hypergeometric equation 
$$
y(1-y)F''+(c-(a+b+1)y)F'-abF=0
$$
with parameters 
$$
a=\tfrac{1}{2},\ b=\tfrac{1}{2}-\tfrac{1}{N},\ c=1-\tfrac{1}{N}.
$$ 
Namely, $F$ is the hypergeometric function 
$$
F(x)={}_2F_{1}(x)=\sum_{n\ge 0}\frac{(a)_n(b)_n}{(c)_n n!}x^n.
$$
Note that the function \eqref{betaform} is real analytic at $x=0$ (for any $C,\lambda$). 

It remains to determine the constants $C$ and $\lambda$. The constant $\lambda$ is determined from the condition that the function $\beta_0$ is single-valued. Namely, using the transformation formula for $F$ from $0$ to 
$\infty$, a direct calculation yields
$$
\lambda=\frac{\Gamma\left(\frac{1}{2}+\frac{1}{N}\right)}{\Gamma\left(\frac{1}{2}-\frac{1}{N}\right)}.
$$
Finally, to determine $C$, consider the asymptotics of $\beta_0(x)$ near $x=1$. 
By definition, we should have 
$$
\beta_0(x)\sim |x-1|\log(|x-1|^{-2}),\ x\to \infty.
$$
Using again the transformation formulas for $F$ (this time from $0$ to $1$), 
from this we obtain after a calculation: 
$$
C=\frac{\Gamma\left(\frac{1}{2}-\frac{1}{N}\right)\Gamma\left(1+\frac{1}{N}\right)}{\Gamma\left(\frac{1}{2}+\frac{1}{N}\right)\Gamma\left(1-\frac{1}{N}\right)}.
$$  
Thus, we obtain 

\begin{proposition}
For $F=\Bbb C$ we have 
$$
\beta_0(x)=
$$
$$
\frac{2|1-x^N|\left(\Gamma\left(\frac{1}{2}-\frac{1}{N}\right)^2|F(\frac{1}{2},\frac{1}{2}-\frac{1}{N},1-\frac{1}{N};x^N)|^2-\Gamma\left(\frac{1}{2}+\frac{1}{N}\right)^2|xF(\frac{1}{2},\frac{1}{2}+\frac{1}{N},1+\frac{1}{N};x^N)|^2\right)}{\Gamma(\frac{1}{N})\Gamma(1+\frac{1}{N})\sin \frac{2\pi}{N}}.
$$
\end{proposition}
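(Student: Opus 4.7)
The plan is to follow the outline already sketched in the text preceding the statement and carry it out carefully. The starting point is the formula
\begin{equation*}
\left(\partial^2+\tfrac{N^2}{4}\tfrac{x^{N-2}}{(x^N-1)^2}\right)\beta_0=0
\end{equation*}
for the uniformization oper in the cyclic-symmetric configuration. First I would justify this equation: by $\mathbb Z/N$-symmetry, the oper connection descends to the quotient orbifold $\mathbb P^1$ with orbifold points $0,\infty$ of order $N$ and a parabolic point at $1$, and the coefficients of a Fuchsian oper are determined by the locations and the characteristic exponents at the singularities, which forces the displayed form up to the overall normalization that one reads off from the requirement that the exponents at the roots of unity are $\tfrac12,\tfrac12$.

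Next I would perform the change of variables $\beta_0(x)=|x|\gamma(x^N)$, $y=x^N$. A straightforward computation reduces the oper equation to the equation~\eqref{gammaeq} in $y$, and then the substitution $\gamma(y)=(1-y)^{1/2}\Phi(y)$ (or equivalently a Riemann symbol manipulation) rewrites \eqref{gammaeq} as the Euler hypergeometric equation with parameters $a=\tfrac12,\ b=\tfrac12-\tfrac1N,\ c=1-\tfrac1N$, whose basic solutions at $y=0$ are $F_-$ and $F_+$ as displayed. Since $\beta_0$ is a positive real-analytic solution of the full oper $+$ anti-oper system on $\mathbb C\mathbb P^1\setminus\{t_0,\ldots,t_{m+1}\}$ that is furthermore invariant under $\mathbb Z/N$ and the complex conjugation $x\mapsto\bar x$, and the only real $\mathrm{SU}(1,1)$-invariant Hermitian form compatible with the symmetry at $0$ is diagonal in the basis $(F_-,F_+)$, the uniformization eigenvalue has the shape \eqref{betaform} for some positive constants $C$ and $\lambda$.

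Determining $\lambda$ is the single-valuedness condition around the orbifold point $y=\infty$ (equivalently, $x=\infty$). Applying the standard Kummer connection formulas that write $F_\pm(y)$ near $y=\infty$ in the basis of solutions at infinity and imposing that in \eqref{betaform} the cross terms cancel so that the expression is a nonnegative real combination of $|\cdot|^2$s at infinity forces
\begin{equation*}
\lambda=\frac{\Gamma\!\left(\tfrac12+\tfrac1N\right)}{\Gamma\!\left(\tfrac12-\tfrac1N\right)}.
\end{equation*}
Determining $C$ uses the asymptotics at $x=1$ dictated by Proposition~\ref{asym}: the eigenvalue of the leading branch must satisfy $\beta_0(x)\sim |x-1|\log(|x-1|^{-2})$ as $x\to 1$. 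Applying the $0\to 1$ connection formulas for ${}_2F_1$ to both $F_-$ and $F_+$, extracting the logarithmic term that appears when $c-a-b=0$ (which here equals $\tfrac12-\tfrac1N-\tfrac12+\tfrac1N-\tfrac12=-\tfrac12$, handled by one more half-integer shift coming from the prefactor $|1-x^N|$), and matching coefficients with the prescribed leading asymptotic yields
\begin{equation*}
C=\frac{\Gamma\!\left(\tfrac12-\tfrac1N\right)\Gamma\!\left(1+\tfrac1N\right)}{\Gamma\!\left(\tfrac12+\tfrac1N\right)\Gamma\!\left(1-\tfrac1N\right)}.
\end{equation*}
Plugging these values of $C$ and $\lambda$ into \eqref{betaform} and simplifying using the reflection formula $\Gamma(z)\Gamma(1-z)=\pi/\sin\pi z$ for $z=\tfrac1N$ and $z=\tfrac12+\tfrac1N$ produces exactly the stated expression, with the denominator $\Gamma(\tfrac1N)\Gamma(1+\tfrac1N)\sin\tfrac{2\pi}{N}$ arising from combining the gamma factors in $C^{-1}$ and $\lambda^{-2}$ and applying the duplication/reflection identities.

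The routine parts are the change of variables to hypergeometric form and the algebraic simplification at the end. The main obstacle is the bookkeeping in the two sets of connection formulas: one has to track carefully which combination of the $0\to\infty$ formulas for $F_-$ and $F_+$ produces a single-valued real pseudo-Hermitian pairing, and, more delicately, one has to correctly extract the $\log|1-x^N|$ term at $x=1$, including the contribution of the prefactor $|1-x^N|$, so that the coefficient of the logarithm matches the normalization $|x-1|\log(|x-1|^{-2})=N\,|x-1|\log|x-1|^{-1}+o(|x-1|)$ coming from Proposition~\ref{asym}, $\chi_0(S_i)=1$.
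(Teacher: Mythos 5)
Your proposal reproduces the paper's argument step for step: descend the uniformization oper to the $\Bbb Z/N$-orbifold quotient, reduce to the Euler hypergeometric equation, write $\beta_0$ as the sesquilinear combination \eqref{betaform}, and determine $\lambda$ and $C$ from the $0\to\infty$ and $0\to 1$ Kummer connection formulas together with single-valuedness and the prescribed asymptotics at $x=1$. Two small slips worth fixing when you carry out the calculation: the substitution reducing \eqref{gammaeq} to the Euler hypergeometric equation is $\gamma(y)=y^{-1/(2N)}(1-y)^{1/2}\Phi(y)$ (the extra factor $y^{-1/(2N)}$ is needed to move the exponents $\pm\tfrac{1}{2N}$ at $y=0$ to $\lbrace 0,\tfrac{1}{N}\rbrace$), and with $a=\tfrac12$, $b=\tfrac12-\tfrac1N$, $c=1-\tfrac1N$ one has $c-a-b=0$ rather than $-\tfrac12$; this vanishing is precisely what produces the logarithm in the $0\to 1$ connection formula.
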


\begin{remark} Similar analysis can be carried out 
for the configurations $t_0=0$, $t_j=e^{\frac{2\pi ij}{m+1}}$, $1\le j\le m+1$ 
with $\Bbb Z/(m+1)$-symmetry and $t_0=0$, $t_j=e^{\frac{2\pi ij}{m}}$, $1\le j\le m$, $t_{m+1}=\infty$ with $\Bbb Z/m$-symmetry, giving a hypergeometric formula for the leading eigenvalue. 
\end{remark}

Now consider the case $F=\Bbb R$. Fix the real structure on $\Bbb P^1$ given by $x^*=\overline{x}^{-1}$. Then the real locus is the unit circle $|x|=1$ (with upper half plane $|x|<1$) and the same configuration of 
the points $t_j$ has a symmetry group $\Gamma=\Bbb Z/N$. It is clear that the function $\beta_0$ is invariant under this group, so we can regard 
it as a function of the angle $\theta\in (-\frac{\pi}{2},\frac{\pi}{2})$ such that $x=e^{\frac{2i}{N}(\theta+\frac{\pi}{2})}$. So changing variables (remembering that the eigenvalue is a $-1/2$-form) we find that $\beta_0(\theta)$ is an even solution of the equation
$$
\left(\partial_\theta^2+\frac{1}{N^2}+\frac{1}{4\cos^2\theta}\right)\beta_0=0, 
$$
which is equation \eqref{gammaeq} with $y=-e^{2i\theta}$. 
So we get 
$$
\beta_0(\theta)=C\sqrt{2\cos \theta}\cdot {\rm Re}\left(e^{i\theta(\frac{1}{2}-\frac{1}{N})}F(\tfrac{1}{2},\tfrac{1}{2}-\tfrac{1}{N},1-\tfrac{1}{N};-e^{2i\theta})\right).
$$
The constant $C$ can be found by looking at the asymptotics at $\theta=\pm\pi/2$. 
This yields 
$$
C=\frac{\sqrt{\frac{\pi}{2}} \Gamma(\frac{1}{2}-\frac{1}{N})}{\Gamma(1-\frac{1}{N})\cos\frac{\pi}{2}(\frac{1}{2}-\frac{1}{N})}.
$$
Thus we get 

\begin{proposition} The leading eigenvalue $\beta_0$ of the Hecke operators for $F=\Bbb R$ 
in the $\Bbb Z/N$-symmetric case is given by the formula 
$$
\beta_0(\theta)=\frac{\sqrt{\pi} \Gamma(\frac{1}{2}-\frac{1}{N})}{\Gamma(1-\frac{1}{N})\cos\frac{\pi}{2}(\frac{1}{2}-\frac{1}{N})}\sqrt{\cos \theta}\cdot {\rm Re}\left(e^{i\theta(\frac{1}{2}-\frac{1}{N})}F(\tfrac{1}{2},\tfrac{1}{2}-\tfrac{1}{N},1-\tfrac{1}{N};-e^{2i\theta})\right)
$$
extended periodically. Moreover, if $N$ is even then the same oper admits 
another balancing 
$$
\beta_1(\theta)=\frac{\sqrt{\pi} \Gamma(\frac{1}{2}-\frac{1}{N})}{\Gamma(1-\frac{1}{N})\cos\frac{\pi}{2}(\frac{1}{2}+\frac{1}{N})}\sqrt{\cos \theta}\cdot {\rm Im}\left(e^{i\theta(\frac{1}{2}-\frac{1}{N})}F(\tfrac{1}{2},\tfrac{1}{2}-\tfrac{1}{N},1-\tfrac{1}{N};-e^{2i\theta})\right)
$$
extended antiperiodically, so we have two eigenvalues of Hecke operators corresponding to the same oper. 
\end{proposition}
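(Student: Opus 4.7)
The plan is to exploit the $\Bbb Z/N$-symmetry to reduce the problem to an explicit second-order ODE on the circle and then fix the normalization by matching the known asymptotics of the leading eigenvalue. First, by Proposition \ref{posi} the leading eigenfunction $\beta_0$ is unique up to positive scaling, so the $\Bbb Z/N$-symmetry of the configuration $t_j=e^{2\pi ij/N}$ forces $\beta_0$ to be $\Gamma$-invariant and forces the Gaudin eigenvalues $\mu_{i,0}$ to be cyclically invariant. Combined with $\sum_i\mu_{i,0}=0$ and $\sum_i t_i\mu_{i,0}=\tfrac{m}{4}$, this determines $\bmu_0$ completely. Substituting into the oper equation of Corollary \ref{spec2}, restricting to the real locus $|x|=1$, and passing to the coordinate $\theta$ defined by $x=e^{2i(\theta+\pi/2)/N}$ (absorbing the Jacobian factor $\sqrt{|dx/d\theta|}$ into the half-density) yields
$$
\left(\partial_\theta^2+\tfrac{1}{N^2}+\tfrac{1}{4\cos^2\theta}\right)\beta_0=0.
$$

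Next I would make the hypergeometric substitution $y=-e^{2i\theta}$, which turns this into the Euler hypergeometric equation \eqref{gammaeq} with parameters $a=\tfrac12$, $b=\tfrac12-\tfrac1N$, $c=1-\tfrac1N$. The configuration is also symmetric under $\theta\mapsto-\theta$, and since the positive leading eigenfunction is unique it must inherit this evenness; among the real combinations of the basic complex solutions the unique even one (after pulling out $\sqrt{2\cos\theta}$) is
$$
\beta_0(\theta)=C\sqrt{2\cos\theta}\,\mathrm{Re}\!\left(e^{i\theta(1/2-1/N)}F(\tfrac12,\tfrac12-\tfrac1N,1-\tfrac1N;-e^{2i\theta})\right)
$$
for some real $C$.

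The constant $C$ is pinned down by Proposition \ref{asym}: as $\theta\to\pi/2$ (equivalently $x\to t_0=1$), $\beta_0$ must have asymptotics $|x-1|^{1/2}\log|x-1|^{-1}$. Since here $c=a+b$, we are exactly in the logarithmic resonance case of the Kummer connection formula, so $F(a,b,a+b;y)$ expands near $y=1$ as an analytic piece minus $\tfrac{\Gamma(a+b)}{\Gamma(a)\Gamma(b)}\log(1-y)$ times an analytic piece. Setting $1-y=1+e^{2i\theta}$, taking real parts, and matching the coefficient of $\sqrt{\cos\theta}\log|\cos\theta|$ to the prescribed asymptotic yields the stated value of $C$. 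The hardest step of the argument is precisely this connection-formula bookkeeping, where both the factor $\cos\tfrac{\pi}{2}(\tfrac12-\tfrac1N)$ (coming from $\mathrm{Re}(e^{i\theta(1/2-1/N)})$ at $\theta=\pi/2$) and the relevant $\Gamma$-factors must be tracked carefully.

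For the final assertion, note that for even $N$ the imaginary part of the same complex expression is a second real-valued solution of the ODE; multiplying by $\sqrt{2\cos\theta}$ and extending antiperiodically to $\Bbb R/2\pi\Bbb Z$ yields a single-valued function on the circle because for $N$ even the sign reversal picked up after rotating by $2\pi/N$ a full $N$ times matches the antiperiodic continuation. Re-running the connection-formula calculation with $\mathrm{Re}$ replaced by $\mathrm{Im}$ replaces $\cos\tfrac{\pi}{2}(\tfrac12-\tfrac1N)$ by $\cos\tfrac{\pi}{2}(\tfrac12+\tfrac1N)$ in the normalization and produces $\beta_1$. Since $\beta_0$ and $\beta_1$ come from the same ODE (hence the same oper) and both satisfy the asymptotic constraints of a balancing from Subsection \ref{The real case}, this exhibits two distinct balancings of the single underlying oper, as claimed.
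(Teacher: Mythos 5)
Your proposal follows essentially the same route as the paper: exploit the $\Bbb Z/N$-invariance to reduce the oper equation to a hypergeometric ODE in the variable $\theta$, use the evenness forced by the uniqueness of the leading eigenvalue to pin the answer down to a one-parameter family $C\sqrt{2\cos\theta}\,\mathrm{Re}(\cdots)$, and then fix $C$ by matching the prescribed $|x-t_j|^{1/2}\log|x-t_j|^{-1}$ asymptotics at $\theta=\pm\pi/2$ via the resonant ($c=a+b$) Kummer connection formula. The treatment of $\beta_1$ for even $N$ via the imaginary part and antiperiodic extension also matches.

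One preliminary step in your derivation is off and should be fixed in a careful write-up. You say the $\Bbb Z/N$-symmetry forces the Gaudin eigenvalues $\mu_{i,0}$ to be ``cyclically invariant'' and that together with $\sum_i\mu_{i,0}=0$, $\sum_it_i\mu_{i,0}=\tfrac{m}{4}$ this determines $\bmu_0$. Taken literally (all $\mu_{i,0}$ equal) this is inconsistent: $\sum_i\mu_{i,0}=0$ would force them all to vanish, contradicting the second constraint. What actually holds is \emph{covariance} with the nontrivial half-density twist, $\mu_{j+1,0}=e^{-2\pi i/N}\mu_{j,0}$, coming from the chain-rule factor of the rotation $x\mapsto e^{2\pi i/N}x$; and the normalization constraint $\sum_it_i\mu_{i,0}=\tfrac{m}{4}$ is stated in the paper for the chart with $t_{m+1}=\infty$, whereas in the $\Bbb Z/N$-symmetric configuration all $N=m+2$ points lie on the unit circle and the analogous constraint becomes $\sum_jt_j\mu_{j,0}=\tfrac{N}{4}$. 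Using the correct covariance and constraint one finds $\mu_{j,0}=\tfrac{1}{4t_j}$, which does reproduce the ODE $\bigl(\partial_\theta^2+\tfrac1{N^2}+\tfrac1{4\cos^2\theta}\bigr)\beta_0=0$ that you write down, so the slip does not propagate to the rest of the argument. (The paper avoids this issue by directly identifying the $\Gamma$-invariant oper $\partial^2+\tfrac{N^2}{4}\tfrac{x^{N-2}}{(x^N-1)^2}$ on $\Bbb C\Bbb P^1$, without passing through the individual Gaudin eigenvalues.)
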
 

Now consider the difference $h:=\beta_0-\beta_1$, which  
is a solution of the oper equation on $(-\frac{\pi}{2},\frac{\pi}{2})$ regular 
at $\theta=\pm \pi/2$ (defined for both even and odd $N$). Then analytic continuation 
from $\pi/2$ to $-\pi/2$ gives $\Bbb B\beta_0=\beta_0$, $\Bbb B\beta_1=-\beta_1$, 
so $\Bbb B h=-h+2\beta_0$. Moreover, the half-monodromy around $\pi/2$ is given by 
$Jh=ih$, $J\beta_j=i\beta_j+\lambda h$ for some constant $\lambda$. To find $\lambda$, 
let $c_\pm = \cos\frac{\pi}{2}(\frac{1}{2}\pm \frac{1}{N})$, and consider the function 
$$
\beta=c_-\beta_0+ic_+\beta_1=\frac{\sqrt{\pi} \Gamma(\tfrac{1}{2}-\tfrac{1}{N})}{\Gamma(1-\frac{1}{N})}\sqrt{\cos \theta}\cdot e^{i\theta(\tfrac{1}{2}-\tfrac{1}{N})}F\left(\tfrac{1}{2},\tfrac{1}{2}-\tfrac{1}{N},1-\tfrac{1}{N};-e^{2i\theta}\right).
$$ 
Then $J\Bbb B \beta=\zeta^2 \beta$, where $\zeta:=e^{\frac{\pi i}{2N}}$, so 
$$
J(c_-\beta_0-ic_+\beta_1)=\zeta^2(c_-\beta_0+ic_+\beta_1). 
$$ 
Thus 
$$
i(c_-\beta_0-ic_+\beta_1)+\lambda(c_--ic_+)(\beta_0-\beta_1)=\zeta^2(c_-\beta_0+ic_+\beta_1). 
$$ 
Note that $c_--ic_+=e^{-\frac{\pi i}{4}}\zeta$. 
So we get 
$$
ic_-+\lambda e^{-\frac{\pi i}{4}}\zeta=\zeta^2 c_-,\ c_+-\lambda e^{-\frac{\pi i}{4}}\zeta=i\zeta^2 c_+.
$$
It is easy to see that these equations are equivalent to each other, and yield
$$
\lambda=\frac{1}{2}(\zeta^2+\zeta^{-2})=\cos \frac{\pi}{N}.
$$
Thus setting $g=\lambda h$, $f=\beta_0$, we get 
$Jf=if+g$ and $\Bbb Bg=-g+2\lambda f=-g+2\cos\frac{\pi}{N}\cdot f$. 

This shows that the numbers $b_j$ attached to this balanced oper are all equal to 
$2\cos\frac{\pi}{N}$.

\section{The case of $X=\Bbb P^1$ with four parabolic points}\label{Four parabolic points} 

The goal of this section is to consider in more detail the special case $m=2$, i.e., $X=\Bbb P^1$ with four parabolic points, which we may assume to be $t_0=0,t_1=t,t_2=1,t_3=\infty$. In particular, we will provide more explicit versions and alternative proofs of some of the results of the previous section in this case. 

\subsection{The moduli space of stable bundles}

\begin{proposition}\label{bun0} The variety ${\it Bun}_0^\circ$ is isomorphic to $\Bbb P^1\setminus \lbrace{0,t,1,\infty\rbrace}$. 
\end{proposition}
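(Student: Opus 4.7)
The plan is to use the birational parametrization $\pi : \Bbb P^1 \dashrightarrow Bun_0$ from Lemma \ref{param}, given by $(y_1 : y_2) \mapsto E_{\bold y, 0}$ (the bundle $O \oplus O$ with parabolic lines $(1,0), (1,y_1), (1,y_2), (0,1)$ at $0, t, 1, \infty$), and then explicitly identify the four points of $\Bbb P^1$ that correspond to non-stable configurations. Denote $z = y_1/y_2$.

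First I would promote $\pi$ to an isomorphism of smooth projective curves. The moduli space $Bun_G(\Bbb P^1; t_0, t_1, t_2, t_3)_0$ of semistable bundles is a normal projective variety of dimension $3(g-1) + N = 1$, hence automatically a smooth projective curve. Any rational map from $\Bbb P^1$ to a projective variety extends to a morphism, and a birational morphism between smooth projective curves is an isomorphism. In particular $Bun_G(\Bbb P^1; t_0, t_1, t_2, t_3)_0 \cong \Bbb P^1$, and $Bun_0^\circ$ is obtained from this $\Bbb P^1$ by removing exactly the points that map to semistable-but-not-stable bundles.

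Next I would enumerate those points by bounding destabilizing line subbundles $L \subset E = O \oplus O$. Since $\mu(E) = 1$ and $\mu(L) = -d + N_L/2$ when $L \cong O(-d)$, the inequality $\mu(L) \ge \mu(E)$ combined with $N_L \le 4$ forces $d \in \{0, 1\}$. For $d = 0$ the destabilizer is a constant line $L = O$ containing at least two parabolic lines; this gives the three configurations $l_0 = l_1$ (i.e.\ $z = 0$), $l_0 = l_2$ (i.e.\ $z = \infty$), and $l_1 = l_2$ (i.e.\ $z = 1$). For $d = 1$ the destabilizer $L = O(-1)$ must contain all four parabolic lines; writing $L$ as the image of a pair $(p(x), q(x))$ of polynomials of degree $\le 1$, the condition at $t_3 = \infty$ forces $p$ to be a nonzero constant $c$ and at $t_0 = 0$ forces $q(0) = 0$, after which the remaining two incidence conditions $q(t)/c = y_1$ and $q(1)/c = y_2$ are consistent precisely when $y_1 = t y_2$, i.e.\ $z = t$. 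In this case $L$ is spanned by the section $(1, x)$, which visibly passes through $l_i = (1, t_i)$ at each $t_i$.

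The main point requiring care is verifying that these four cases are exhaustive and yield four distinct points of $Bun_0$. The degree-bound argument rules out subbundles of degree $\le -2$ (there are not enough parabolic lines to destabilize them), so no further non-stable points appear; and the three ``pairwise-coincident lines'' S-equivalence classes at $z = 0, 1, \infty$ are distinct from the polystable class $O(1) \oplus O(-1)$ at $z = t$ since their associated graded bundles have different splitting types. Removing these four points from $\Bbb P^1$ yields the claimed isomorphism $Bun_0^\circ \cong \Bbb P^1 \setminus \{0, t, 1, \infty\}$.
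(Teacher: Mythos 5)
Your proof is correct and follows essentially the same strategy as the paper: reduce to the trivial bundle $O\oplus O$, enumerate destabilizing line subbundles by their degree, and identify the four semistable-but-not-stable configurations. The one genuine difference is in how the $\Bbb P^1$-compactification is produced. The paper directly parametrizes quadruples of \emph{distinct} parabolic lines modulo $PGL_2$ by the cross-ratio $u = \frac{(y_0-y_1)(y_2-y_3)}{(y_0-y_2)(y_1-y_3)}$ (which, in the normalization $y_0 = 0$, $y_3 = \infty$, equals your coordinate $z = y_1/y_2$), and then excises the extra point $t$. You instead invoke the general structure theory of the coarse moduli space of semistable parabolic bundles (normal, projective, of dimension $3(g-1)+N = 1$, hence a smooth projective curve) to upgrade the birational map of Lemma \ref{param} to an isomorphism $Bun_0 \cong \Bbb P^1$ and then locate all four bad points in one pass. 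The two routes buy slightly different things: the paper's cross-ratio argument is self-contained and elementary, while yours leans more on the black box from \cite{S,MS} but in exchange handles the compactification uniformly, avoids having to argue separately that the cross-ratio map is an open embedding, and forces you to actually compute that the $O(-1)$-destabilizer occurs at $z = t$ — a verification the paper only asserts. Both are sound.
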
 

\begin{proof} The proof is well known but we give it for reader's convenience. 
Any bundle $E\in Bun_0^\circ$ has the form $E=O(r)\oplus O(-r)$ for some $r\ge 0$, and its parabolic degree is $4\cdot \frac{1}{2}=2$, so the parabolic slope is $1$. This implies that for a stable bundle we must have $r=0$, since otherwise $O(r)$ would have parabolic degree (hence slope) $\ge 1$ which is forbidden for stable bundles. 

Thus $E=O\oplus O$. So the parabolic structure on $E$ is defined by a choice of four lines $y_0,y_1,y_2,y_3\in \Bbb P^1$ at the four marked points. If a subbundle
 $O\subset O\oplus O$  contains $k$ of these four lines then its parabolic degree (=slope) is $k/2$, so for stable bundles we must have $k\le 1$. This means that all
 $y_i$ are distinct. Such quadruples modulo M\"obius transformations are parametrized by $\Bbb P^1\setminus \lbrace{0,1,\infty\rbrace}$, using the cross ratio $u=\frac{(y_0-y_1)(y_2-y_3)}{(y_0-y_2)(y_1-y_3)}$. 

However, the condition that $y_i\neq y_j$ for $i\neq j$ does not guarantee the 
stability. There are many embeddings $ L\cong O(-1)\ho O\oplus O$ as a subbundle  and if $L$ contains $k$ of the four lines, its parabolic degree will be $-1+\frac{k}{2}$. Thus we must have $k\le 3$.  So we have a single forbidden case $k=4$, i.e. the case when the (distinct!) parabolic lines at the four points are the fibers of $L$. This removes one more point (namely $t$) from the moduli space. Finally, note that if $n\ge 2$ then the parabolic degree $-n+\frac{k}{2}$ of any subbundle $O(-n)\subset O\oplus O$ is automatically $<1$ (as $k\le 4$). Thus we see that the stable locus is ${Bun}_0^\circ=\Bbb P^1\setminus \lbrace{0,t,1,\infty\rbrace}$. 
\end{proof} 

\begin{remark} By Proposition \ref{invo} and Remark \ref{invo1}(3), the Hecke modification at $\infty$ along the parabolic line defines a natural isomorphism between $Bun_0^\circ$ and $Bun_1^\circ$. It  is nevertheless instructive to see directly in this example that the sets of stable bundles of degrees $0$ and $1$ have exactly the same structure.\footnote{It is 
easy to show that the moduli spaces of {\it semistable} bundles 
$Bun_0\cong  Bun_1$ are isomorphic to $\Bbb P^1$, but we will not use these spaces.} 

We may realize $Bun_1^\circ$ as the space of stable rank 2 vector bundles 
of degree $1$. Such a bundle has the form $E=O(-r)\oplus O(r+1)$, where 
$r\ge 0$. 
The parabolic degree of such a bundle is $1+4\cdot\frac{1}{2}=3$, so the parabolic slope is $3/2$. Thus in the stable case we must have $r=0$, 
i.e., $E=O\oplus O(1)$ as a bundle.   

We realize $O(1)$ as we did in Subsection \ref{birpar}. The conditions of stability are that the (unique) subbundle of $E$ isomorphic to $O(1)$ contains none of the fixed lines (as its slope is $1$), while any subbundle isomorphic to $O$ contains at most two. The first condition means that the generating vectors of the four fixed lines over the marked points 
have a nonzero first coordinate, which we may assume to be $1$, and encode the vectors by the second coordinate. 

Now let us see what constraints are imposed by the second condition. It is easy to see that all  embeddings $O\ho  O\oplus O(1)
$ as a subbundle
form a single orbit under $\Bbb P{\rm Aut}(O\oplus O(1))=\Bbb G_m\ltimes \Bbb G_a^2$. Also, it 
is clear that for any $E$, some copy of $O$ in $O\oplus O(1)$ contains any given two of the four fixed lines. We may therefore assume that the standard copy $O\subset O\oplus O(1)$ contains the lines $y_1$ at $0$ and $y_4$ at $\infty$. Thus $y_1,y_4$ are both generated by the vector $(1,0)$. 
Now, the line $y_2$ at the point $1$ cannot be the same, so after rescaling it is spanned by a vector $(1,1)$. Finally, the line $y_3$ at the point $t$ cannot lie in the copy of $O$ passing through $y_1,y_4$, so it is spanned by a vector $(1,z)$ with $z\ne 0$. Also it can't lie in the copy of $O$ through $y_1,y_2$, which yields $z\ne t$. Finally, it can't lie in the copy of $O$ through $y_2,y_4$, which yields $z\ne 1$. 
Thus, $Bun_1^\circ=\Bbb P^1\setminus \lbrace{0,t,1,\infty\rbrace}$, i.e., is isomorphic to $Bun_0^{\circ}$. 
\end{remark} 

If we identify $Bun_0^\circ$ and $Bun_1^\circ$ using the involution $S_3$ 
then 
the involutions $S_i$ take the form 
$$
S_0(y)=\frac{t}{y},\  S_1(y)=\frac{t(y-1)}{y-t},\  S_2(y)=\frac{y-t}{y-1},\ S_3(y)=y.
$$ 
Thus they define an action of the Klein 4-group $(\Bbb Z/2)^2$, which acts transitively on the singular points $0,t,1,\infty$. Note that this is a 
special feature 
of the case of $4$ points: for $N\ge 5$ points we have a {\it faithful} action 
of the group $\Bbb V=(\Bbb Z/2)^N_0$ on $Bun_0^\circ$, while for 
$N=4$ the action of $\Bbb V=(\Bbb Z/2)^4_0=(\Bbb Z/2)^3$ factors 
through $(\Bbb Z/2)^2$, as $S_0S_1S_2=1$. 

\subsection{The Hecke correspondence and Hecke operators} \label{The Hecke correspondence and Hecke operators}

Let $E_{y,0}$ denote the bundle of degree $0$ corresponding to $y\in \Bbb 
P^1\setminus \lbrace{0,t,1,\infty\rbrace}$ 
and $E_{z,1}$ the bundle of degree $1$ corresponding to $z\in \Bbb P^1\setminus \lbrace{0,t,1,\infty\rbrace}$. 
Specializing Proposition \ref{heckcor} to the case $m=2$, we obtain 

\begin{proposition} We have $HM_{x,s}(E_{y,0})=E_{z,1}$, where 
$$
z(t,x,y,s)=\frac{(1-s)(xy-st)}{(x-s)(y-s)}.
$$
\end{proposition}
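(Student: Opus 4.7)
The plan is to derive this formula as a direct specialization of Proposition \ref{heckcor}(i), or equivalently its rescaled form \eqref{HMlesssym}, to the case $m=2$. The preceding discussion of Subsection \ref{birpar} normalizes the first and last parabolic data by $t_0=y_0=z_0=0$ and $t_3=y_3=z_3=\infty$, leaving two free parameters $(y_1,y_2)$ (respectively $(z_1,z_2)$) defined up to simultaneous scaling. The natural way to proceed is to break this scaling ambiguity by imposing $t_2=y_2=z_2=1$ (which is compatible with the chosen $t_2=1$). This reduces the parametrization to a single affine coordinate $y:=y_1$ on $Bun_0^\circ$ and $z:=z_1$ on $Bun_1^\circ$, which is exactly the coordinate $y\in \Bbb P^1\setminus\lbrace 0,t,1,\infty\rbrace$ appearing in Proposition \ref{bun0}.

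With the scaling fixed, formula \eqref{HMlesssym} has only a single remaining index $i=1$ and becomes
\[
z_1 \;=\; \frac{(t_1 s - x y_1)(s-1)}{(s-x)(s-y_1)}.
\]
Substituting $t_1=t$, $y_1=y$, $z_1=z$ yields
\[
z \;=\; \frac{(ts-xy)(s-1)}{(s-x)(s-y)},
\]
and a trivial sign manipulation, using $(s-1)(ts-xy)=(1-s)(xy-ts)$ together with $(s-x)(s-y)=(x-s)(y-s)$, rewrites this as the formula stated in the proposition. Nothing more is needed at the level of the computation itself.

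The only conceptual point to check is that the affine coordinate $y$ obtained from this normalization agrees with the moduli-theoretic coordinate on $Bun_0^\circ$ used in Proposition \ref{bun0}; but that agreement is built into the construction of Subsection \ref{birpar}, since the $m+2$-tuple of parabolic lines $(1,y_0),(1,y_1),(1,y_2),(0,1)$ in the trivial bundle recovers the cross-ratio $u$ of Proposition \ref{bun0} up to a M\"obius transformation. There is no real obstacle here: the statement is essentially a restatement of the general Hecke-modification formula in the simplest nontrivial case, so the main work was already done in Proposition \ref{heckcor}.
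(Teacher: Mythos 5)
Your proposal is correct and is exactly what the paper intends: the paper introduces this proposition with the words ``Specializing Proposition \ref{heckcor} to the case $m=2$'' and gives no further argument, and you have simply carried out that specialization explicitly via \eqref{HMlesssym} with $t_2=y_2=z_2=1$, $t_1=t$, $y_1=y$, $z_1=z$, followed by the sign rearrangement $(ts-xy)(s-1)=(1-s)(xy-st)$ and $(s-x)(s-y)=(x-s)(y-s)$. The remark on matching the affine coordinate $y$ to the cross-ratio coordinate of Proposition \ref{bun0} is correct and harmless, though the paper treats it as self-evident.
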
 

This implies 

\begin{corollary} The modified Hecke operator has the form given by the specialization 
of \eqref{t24}: 
\begin{equation}\label{t24a}
(\Bbb H_x\psi)(y)=
\int_{F}\psi\left(\frac{(1-s)(xy-st)}{(x-s)(y-s)}\right)
\norm{\frac{ds}{(x-s)(y-s)}}. 
\end{equation}
\end{corollary}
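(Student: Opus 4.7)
The plan is to derive formula \eqref{t24a} as a direct specialization of Proposition \ref{heckeop3}. Setting $m=2$ in \eqref{t24} collapses the product $\prod_{i=1}^{m-1}$ to a single factor and trivializes two of the scalar exponents: since $\frac{m}{2}-1=0$, the weight $\norm{s(s-1)}^{m/2-1}$ disappears entirely, while $\frac{m}{2}=1$ reduces $\norm{s-x}^{-m/2}$ to $\norm{s-x}^{-1}$. With the single variable $u_1=y$ and parameter $t_1=t$, the result is
$$(\Bbb H_x\psi)(y) = \int_F \psi\!\left(\frac{(ts-yx)(s-1)}{(s-y)(s-x)}\right)\frac{\norm{ds}}{\norm{s-x}\,\norm{s-y}}.$$

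To match \eqref{t24a} I would then verify the algebraic identity
$$\frac{(ts-yx)(s-1)}{(s-y)(s-x)} \;=\; \frac{(1-s)(xy-st)}{(x-s)(y-s)},$$
which follows by factoring a $-1$ out of each of the four linear factors in numerator and denominator; the four sign changes cancel in pairs. The product $\norm{s-x}\norm{s-y}$ in the measure is likewise invariant under the sign changes $(s-x)\mapsto(x-s)$ and $(s-y)\mapsto(y-s)$, so the measure rewrites as $\norm{ds/((x-s)(y-s))}$ as required.

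As a sanity check I would compare with the preceding proposition of this subsection, which identifies the argument of $\psi$ as the coordinate $z(t,x,y,s)$ of the Hecke modification $HM_{x,s}(E_{y,0})=E_{z,1}$; the two expressions agree on the nose. There is no real obstacle here --- the statement is entirely a bookkeeping exercise --- but it is worth recording because the simplification is special to $m=2$: the vanishing of the exponent $\frac{m}{2}-1$ is precisely what makes the Schwartz kernel in the four-point case a locally $L^1$-function of $(x,y,z)$ alone, without any auxiliary weight in $s$. This is the feature exploited in subsequent subsections to identify the kernel with the explicit Kontsevich formula and to establish the $\Bbb S_3$-symmetry of $K(x,y,z)$.
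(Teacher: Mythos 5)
Your derivation is correct and matches the paper's: the paper presents the corollary as the $m=2$ specialization of \eqref{t24}, and the sign bookkeeping you perform to identify the argument of $\psi$ and the measure with the stated expressions is exactly the content being recorded (the paper invokes the preceding proposition, which already gives $z(t,x,y,s)$ in the stated form). The closing remark about the vanishing exponent being what makes $K(x,y,z)$ locally $L^1$ is a bit of an overstatement — that property really comes from the degree-$2$ change of variable $s\mapsto r\mapsto z$ in Subsection \ref{The Hecke correspondence and Hecke operators} — but this does not affect the correctness of the proof.
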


It turns out that a more convenient coordinate than $s$ is $r:=\frac{(s-x)(t-x)}{(s-y)(t-y)}$. In this coordinate, the equation of the Hecke correspondence looks like 
$$
z=\frac{((x-1)(x-t)-(y-1)(y-t)r)(yr-x)}{r(y-x)^2}.
$$
(see \cite{udB}, Theorem 7.3). This gives rise to a quadratic equation for $r$ in terms of $t,x,y,z$:
$$
y(y-1)(y-t)r^2-(y(x-1)(x-t)+x(y-1)(y-t)-z(x-y)^2)r+x(x-1)(x-t)=0.
$$
The discriminant of this equation is $D:=((x-y)^2rz'(r))^2$, and 
$$
rz'(r)=\frac{x(x-1)(x-t)r^{-1}-y(y-1)(y-t)r}{(x-y)^2}. 
$$
Thus 
$$
(y(y-1)(y-t)r-x(x-1)(x-t)r^{-1})^2=
$$
$$
(y(y-1)(y-t)r+x(x-1)(x-t)r^{-1})^2-4x(x-1)(x-t)y(y-1)(y-t)=
$$
$$
((x-y)^2z-y(x-1)(x-t)-x(y-1)(y-t))^2-4x(x-1)(x-t)y(y-1)(y-t)=(x-y)^2f_t(x,y,z), 
$$
where 
$$
f_{t}(x,y,z):=(t+xy-zx-zy)^2-4(z-t)(z-1)xy=(xy+xz+yz-t)^2+4(1+t-x-y-z)xyz
$$
is the polynomial considered by Kontsevich in 
\cite{K}, p.3. We also see that the degree 2 map $r\mapsto z$ encoding the projection 
$$
p_{1y}: p_0^{-1}(y)\subset {\mathcal H}_x\to Bun_1^\circ 
$$
branches at the points 
$r=r_\pm:=\pm \sqrt{\frac{x(x-1)(x-t)}{y(y-1)(y-t)}}$ (zeros of $D$). 
This means 
that 
$$
p_{1y}^*(K)\cong K\otimes O(r_+)^{-1}\otimes O(r_-)^{-1},
$$
where 
$K$ is the canonical bundle on $\Bbb P^1$. But we have an isomorphism $O(r_+)\otimes O(r_-)\cong K^{-1}$, so 
we get $p_{1y}^*(K)=K^2$. Thus $p_{1y}^*(\norm{K}^{1/2})=\norm{K}$, 
the bundle of densities. This means that integration of $p_{1y}^*\psi$ will 
be well defined as soon as we choose an identification 
$$
O(r_+)\otimes O(r_-)\cong K^{-1}
$$ 
up to a phase factor. 
Such an identification is determined by a 1-form on $\Bbb P^1$ (with coordinate $r$) which has simple poles 
at $r=r_\pm$ and no other singularities. Such a form, up to scaling depending on $x,y$, 
is $\frac{dr}{(r-r_+)(r-r_-)}$, and the correct scaling
turns out to be 
$$
\omega=\frac{1}{y(y-1)(y-t)}\frac{dr}{(r-r_+)(r-r_-)}=\frac{dr}{y(y-1)(y-t)r^2-x(x-1)(x-t)}. 
$$  
This implies that the modified Hecke operator is given by the formula
\begin{equation}\label{heck}
(\Bbb H_x\psi)(y)=2\int_{F}\psi(z)\theta(f_t(x,y,z))\norm{\frac{dr}{r(x-y)}},
\end{equation}
where $\theta(a)=1$ if $a\in F$ is a square and $\theta(a)=0$ otherwise, and we view functions
on $\Bbb P^1(F)$ as half-densities by using the map $\psi(z)\mapsto \psi(z)\norm{dz}^{\frac{1}{2}}$. 
(Here the factor $2$ appears because the map $r\mapsto z$ has degree $2$). 

Hence we have 
$$
(\Bbb H_x\psi)(y)=
2\int_{F}\psi(z)\theta(f_t(x,y,z))\norm{\frac{dz}{rz'(r)(x-y)}}=
$$
$$
2\int_{F}\psi(z)\theta(f_t(x,y,z))\norm{\frac{(x-y)dz}{y(y-1)(y-t)r-x(x-1)(x-t)r^{-1}}}.
$$
Thus we obtain

\begin{proposition}\label{p1} The modified Hecke operator $\Bbb H_x$ is given by the formula
$$
(\Bbb H_x\psi)(y)=2\int_{F}\psi(z)\frac{\theta(f_t(x,y,z))\norm{dz}}{\sqrt{\norm{f_t(x,y,z)}}}.
$$
\end{proposition}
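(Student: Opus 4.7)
The plan is to derive the claimed formula as a straightforward repackaging of the computation carried out in the discussion preceding the statement. Specifically, I start from the expression
$$
(\Bbb H_x\psi)(y)=2\int_{F}\psi(z)\,\theta(f_t(x,y,z))\,\norm{\frac{(x-y)\,dz}{y(y-1)(y-t)r-x(x-1)(x-t)r^{-1}}},
$$
which was already obtained by changing variables from $s$ to $r=(s-x)(t-x)/((s-y)(t-y))$ and then pushing forward under the two-to-one map $r\mapsto z$. The key algebraic input is the identity
$$
\bigl(y(y-1)(y-t)r - x(x-1)(x-t)r^{-1}\bigr)^2 = (x-y)^2\,f_t(x,y,z),
$$
verified by expanding $(rz'(r))^2$ via the explicit formula for $rz'(r)$ and comparing with Kontsevich's polynomial $f_t$.

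Taking norms on both sides of this identity yields
$$
\norm{y(y-1)(y-t)r - x(x-1)(x-t)r^{-1}} = \norm{x-y}\cdot \sqrt{\norm{f_t(x,y,z)}},
$$
so the density inside the integral simplifies to $\norm{dz}/\sqrt{\norm{f_t(x,y,z)}}$. Substituting this back into the displayed integral produces the claimed formula.

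It remains only to justify the factor $2$ and the presence of the indicator $\theta(f_t(x,y,z))$. For this I would note that the map $r\mapsto z$ is a two-sheeted cover branched at $r_\pm=\pm\sqrt{x(x-1)(x-t)/y(y-1)(y-t)}$, and that for a generic point $z\in F$ the fiber consists of two $F$-points precisely when $f_t(x,y,z)$ is a square in $F$, i.e. when $\theta(f_t(x,y,z))=1$: indeed, solving the quadratic equation for $r$ in terms of $z$ gives $r=\big(\text{rational}\pm (x-y)\sqrt{f_t(x,y,z)}\big)/(2y(y-1)(y-t))$. So passing from an integral over $r\in F$ to an integral over $z\in F$ picks up a factor of $2$ on the locus where both preimages are $F$-rational and contributes nothing otherwise, producing exactly the indicator $\theta(f_t(x,y,z))$ and the prefactor $2$ appearing in the proposition.

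The only delicate point is confirming that no extra Jacobian or phase is hidden in the identification $O(r_+)\otimes O(r_-)\cong K^{-1}$ used to push the half-density $\psi$ down from the cover; but the normalization $\omega = dr/(y(y-1)(y-t)r^2-x(x-1)(x-t))$ was already pinned down in the preceding paragraph, so this is automatic. Apart from this bookkeeping, the proof is a direct substitution.
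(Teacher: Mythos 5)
Your proof is correct and follows essentially the same route as the paper: starting from the $r$-coordinate expression for $\Bbb H_x$, applying the algebraic identity $(y(y-1)(y-t)r - x(x-1)(x-t)r^{-1})^2 = (x-y)^2 f_t(x,y,z)$, and taking norms to simplify the density. The remarks on the factor $2$ and the indicator $\theta$ re-derive facts already built into the intermediate formula you start from, so they are slightly redundant, but they are accurate and match the paper's own explanation that $r\mapsto z$ is a degree-$2$ map whose $F$-fibers are controlled by whether $f_t(x,y,z)$ is a square.
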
 

For example, for $F=\Bbb C$ we have $\theta=1$ so we just get 
$$
(\Bbb H_x\psi)(y)=\frac{2}{\pi}\int_{\Bbb C}\psi(z)\frac{dzd\overline z}{|f_t(x,y,z)|}.
$$
Note also that $f_t(x,y,z)$ is symmetric in $x,z$, so the operator $\Bbb H_x$ is manifestly symmetric. 

\subsection{Boundedness and compactness of $\Bbb H_x$}\label{Boundedness and compactness}
We already know that the operator $\Bbb H_x$ is bounded and moreover compact on $\mathcal{H}$ (Proposition \ref{bou}, Proposition \ref{compa}). In 
this subsection we provide alternative proofs of these facts in the case of four points. 

We start with boundedness. Let $\phi$ be a positive half-density 
on $\Bbb P^1(F)\setminus\lbrace{0,t,1,\infty\rbrace}$ with logarithmic singularities at 
$0,1,t,\infty$, i.e. $\phi=\phi(w)\norm{dw}^{1/2}$ in a local coordinate near each of these points, 
with $\phi(w)\sim {\rm const}\cdot \log\norm{w^{-1}}$ as $w\to 0$ (note that such a half-density is automatically in $L^2$). 
Note that for a fixed $y$ the function $f_t(x,y,z)$ is quadratic in $z$ with simple zeros, 
which collide into a double zero when $y=0,t,1,\infty$.\footnote{An exception is $y=x$, where $f_t(x,y,z)$ is linear in $z$.} Therefore, Proposition \ref{p1} and 
Lemma \ref{l2} imply that there exists $C>0$ such that 
$(\Bbb H_x\phi)(y) \le C\phi(y)$
for all $y\in \Bbb P^1(F)\setminus \lbrace{0,t,1,\infty\rbrace}$. Thus by Schur's test (\cite{HS}, Theorem 5.2), $\Bbb H_x$ is bounded with $\norm{\Bbb H_x}\le C$. 

Thus, $\Bbb H_x$ is a bounded self-adjoint operator on $\mathcal H$. 

Now let us establish the compactness of $\Bbb H_x$. The compactness would 
follow from $\Bbb H_x$ 
being trace class. At first sight this appears possible since the Schwartz kernel of 
$\Bbb H_x$ is locally $L^1$ (this is a special feature of the case of $4$ points). However, it turns out that 
$\Bbb H_x$ is {\bf not} trace class, nor even Hilbert-Schmidt, since 
$$
{\rm Tr}(\Bbb H_x^2)=4\int_{F^2}\theta(f_t(x,y,z))\norm{\frac{dydz}{f_t(x,y,z)}}=\infty;
$$
namely, the integral logarithmically diverges at the divisor of zeros of the polynomial $f_t(x,y,z)$ (for fixed $t,x$).\footnote{It is easy to show that the curve $f_t(x,y,z)=0$ always has points over $F$. Indeed, the discriminant of $f_t(x,y,z)$ as a polynomial of $z$ is $16x(x-1)(x-t)y(y-1)(y-t)$, and this is a square if $y$ is close to $x$.} Nevertheless, we have the following result. 

\begin{proposition}\label{comp} The operator $\Bbb H_x^2$ is Hilbert-Schmidt. Hence the operator $\Bbb H_x$ is compact. 
\end{proposition}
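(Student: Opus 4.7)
The plan is to show directly that the Schwartz kernel $K_2(y,w)$ of $\Bbb H_x^2$ lies in $L^2(\Bbb P^1(F)\times\Bbb P^1(F))$; this makes $\Bbb H_x^2$ Hilbert-Schmidt, and since $\Bbb H_x$ is self-adjoint, compactness of $\Bbb H_x^2$ then forces $\Bbb H_x$ to be compact. By Proposition \ref{p1} and the composition law for integral operators,
\[
K_2(y,w) \;=\; 4\int_F \frac{\theta(f_t(x,y,z))\,\theta(f_t(x,z,w))}{\sqrt{\norm{f_t(x,y,z)\, f_t(x,z,w)}}}\,\norm{dz}.
\]
For generic $(y,w)\in F^2$ the two quadratics $z\mapsto f_t(x,y,z)$ and $z\mapsto f_t(x,z,w)$ have four pairwise distinct simple roots, so the integrand has only four integrable singularities of type $\norm{z-z_0}^{-1/2}$ and $K_2(y,w)$ is finite and continuous off a proper real algebraic subset of $F^2$. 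Thus, unlike for $\Bbb H_x$ itself, there is no longer a pointwise obstruction to being $L^2$, and the question becomes the rate at which $K_2$ blows up near its singular set.

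Next I would pin down this singular set and estimate the blow-up. It is the union of the ``collision curve'' $\Res_z\bigl(f_t(x,y,z),f_t(x,z,w)\bigr)=0$, on which a root of the first quadratic coincides with one of the second, and the ``boundary locus'' where $y$ or $w$ lies in $\{0,t,1,\infty\}$ and the $z$-discriminant of $f_t$ degenerates. The core local bound is the elementary model integral
\[
\int_{\norm{z}\le R}\frac{\norm{dz}}{\sqrt{\norm{z(z-\delta)}}}\;=\;O\bigl(|\log\norm{\delta}|\bigr),\qquad \delta\to 0,
\]
verified separately over $\R$, $\C$, and a non-archimedean local field. It shows that upon approach to the collision curve at separation $\delta$ between a pair of colliding roots, $K_2$ blows up at worst like $|\log\norm{\delta}|^c$, and a parallel computation handles the boundary locus. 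Polylogarithmic singularities along codimension-one subvarieties are locally $L^2$, so $|K_2|^2$ is locally integrable on $F^2$.

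The final step is to promote this local integrability to a global $L^2$-bound on $\Bbb P^1(F)\times\Bbb P^1(F)$. The main obstacle will be the global assembly at the ``corners'': the collision curve meets the boundary locus inside $\{0,t,1,\infty\}^2$, and at those corners several singular components collide simultaneously, so one must check that their combined contribution remains $L^2$. To handle this I would combine the fine logarithmic estimate near the collision curve with a Schur-type bound $K_2(y,w)\le C\,\phi(y)\phi(w)$ obtained from the logarithmically-weighted half-density $\phi$ of the boundedness argument just above, and then integrate via a dyadic decomposition around each corner. Thus the technical heart of the plan is not the size of the singularities of $K_2$ individually, but the book-keeping required to show that their interaction at the boundary of $Bun_0^\circ(F)^2$ is still of $L^2$ type.
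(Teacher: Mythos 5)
Your proposal is the same argument as in the paper: compute the Schwartz kernel of $\Bbb H_x^2$ by integrating over the intermediate variable, observe via the model integral $\int \norm{dz}/\sqrt{\norm{z(z-\delta)}}=O(\log\norm{\delta^{-1}})$ (this is Lemma \ref{l2} in the paper) that the singularities of the kernel are only logarithmic, conclude $K_2\in L^2$, hence $\Bbb H_x^2$ is Hilbert-Schmidt and $\Bbb H_x$ is compact. The paper states this quite tersely, only exhibiting the bound $K(u,z)\le K_0\log\norm{1/(u-z)}$ near the diagonal; you are correct that the full singular set is larger --- the resultant locus $\mathrm{Res}_z\bigl(f_t(x,y,z),f_t(x,z,w)\bigr)=0$ has components besides the diagonal, and there is also degeneration when $y$ or $w$ approaches $\{0,t,1,\infty\}$ --- but by the same Lemma \ref{l2} computation each such stratum contributes at worst a $\log$ of the transverse distance, so $K_2^2$ remains locally integrable. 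Flagging this is a reasonable refinement of the paper's terse argument.

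One technical caveat: your proposed pointwise Schur-type bound $K_2(y,w)\le C\,\phi(y)\phi(w)$ cannot hold literally, since $K_2$ diverges to $+\infty$ along the off-diagonal collision curve (a common simple root of the two quadratics produces a non-integrable $\norm{z-z_0}^{-1}$ singularity in the $z$-integral), while $\phi(y)\phi(w)$ is finite away from $\{0,t,1,\infty\}^2$. What survives is an $L^2$ Schur-type estimate rather than a pointwise one; the driving force of the proof remains the logarithmic model bound rather than a pointwise domination. With this corrected, the plan is sound and matches the paper's.
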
 

\begin{proof} 
We have 
$$
(\Bbb H_x^2\psi)(u)=4\int_{F^2}\psi(z)\theta(f_t(x,y,z))\theta(f_t(x,y,u))\frac{\norm{dydz}}{\sqrt{\norm{f_t(x,y,z)f_t(x,y,u)}}}. 
$$
Thus the  Schwartz kernel of $\Bbb H_x^2$ is 
$$
K(u,z):=4\int_{F}\theta(f_t(x,y,z))\theta(f_t(x,y,u))\frac{\norm{dy}}{\sqrt{\norm{f_t(x,y,z)f_t(x,y,u)}}}.
$$
It follows from Lemma \ref{l2} that $K(u,z)\le K_0\log\norm{\frac{1}{u-z}}$ 
near the diagonal $u=z$ for some $K_0>0$. 
This implies that 
$$
{\rm Tr}(\Bbb H_x^4)=\int_{F^2}K(u,z)^2\norm{dudz}<\infty, 
$$
hence the proposition. 
\end{proof} 

\begin{remark} In fact, since the Schwartz kernel of $\Bbb H_x^2$ has a logarithmic singularity on the diagonal, 
we have ${\rm Tr}|\Bbb H_x|^{2+\varepsilon}<\infty$ for any $\varepsilon>0$. 
\end{remark} 

We also recover Proposition \ref{asym} on the asymptotics of Hecke operators near the parabolic points:
$$
H_x\sim \norm{x}^{\frac{1}{2}}\log\norm{x},\ x\to \infty;\quad H_x\sim \norm{x}^{\frac{1}{2}}\log\norm{\tfrac{1}{x}}S_0,\ x\to 0;
$$
$$
\quad H_x\sim \norm{x-t}^{\frac{1}{2}}\log\norm{\tfrac{1}{x-t}}S_1,\ x\to t;\ H_x\sim \norm{x-1}^{\frac{1}{2}}\log\norm{\tfrac{1}{x-1}}S_2,\ x\to 1.
$$
Indeed, this follows from Proposition \ref{p1} and the formulas 
$$
f_t(0,y,z)=(t-yz)^2,\ f_t(1,y,z)=(y+z-t-yz)^2,\ f_t(t,y,z)=(ty+tz-t-yz)^2. 
$$
Namely, these formulas show that for $i=0,1,2$ we have $f_t(t_i,y,z)=h_i(y,z)^2$, where the graph of $S_i$ is defined by the equation $h_i(y,z)=0$. 

\subsection{The spectral decomposition} 
By the spectral theorem for compact self-adjoint operators, 
the commuting operators $\Bbb H_x$ have a common 
eigenbasis $\psi_n(y)$, $n\ge 0$ of $\mathcal H$, with $\norm{\psi_n}=1$ and 
\begin{equation}\label{eigenv}
\Bbb H_x\psi_n=\widetilde\beta_n(x)\psi_n
\end{equation} 
for real-valued functions $\widetilde\beta_n(x)=\norm{x(x-1)(x-t)}^{-1/2}\beta_n(x)$, none of them identically zero. Thus all joint eigenspaces of $\Bbb H_x$ are finite dimensional. Equation \eqref{eigenv} implies that $\psi_n$ are smooth outside $0,t,1,\infty$ (choosing $x$ such that $\beta_n(x)\ne 0$ and writing 
$\psi_n=\frac{H_x\psi_n}{\beta_n(x)}$). Moreover, $\psi_n$ can 
be chosen real-valued since $\Bbb H_x$ has a real-valued Schwartz kernel. 
Finally, we can pick the sign of $\psi_n$ so that it is positive near $\infty$, which fixes $\psi_n$ uniquely. 

\begin{corollary}\label{reprodker} We have $\widetilde\beta_n(x)=c_n\psi_n(x)$ for some $c_n>0$. Thus
$$
\frac{2\theta(f_t(x,y,z))}{\sqrt{\norm{f_t(x,y,z)}}}=\sum_n c_n\psi_n(x)\psi_n(y)\psi_n(z).
$$
\end{corollary}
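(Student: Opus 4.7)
The proof hinges on the $\mathbb{S}_3$-symmetry of the polynomial $f_t(x,y,z)$, which is noted in the introduction as a special feature of the four-point case related to the Okamoto symmetries of Painlev\'e VI. From Proposition \ref{p1}, the Schwartz kernel of $\Bbb H_x$ is
$$
K(x,y,z)=\frac{2\theta(f_t(x,y,z))}{\sqrt{\norm{f_t(x,y,z)}}},
$$
and since $f_t$ is symmetric in its three arguments, $K(x,y,z)$ is fully $\mathbb{S}_3$-symmetric.

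The plan is as follows. First, apply the eigenvalue equation $\Bbb H_x\psi_n=\widetilde\beta_n(x)\psi_n$ in its integral form:
$$
\widetilde\beta_n(x)\psi_n(y)=\int_F K(x,y,z)\,\psi_n(z)\norm{dz}.
$$
Swapping $x\leftrightarrow y$ on the right hand side using $K(x,y,z)=K(y,x,z)$ gives $(\Bbb H_y\psi_n)(x)=\widetilde\beta_n(y)\psi_n(x)$, so
$$
\widetilde\beta_n(x)\psi_n(y)=\widetilde\beta_n(y)\psi_n(x)
$$
for almost every $x,y$. Since neither $\widetilde\beta_n$ nor $\psi_n$ is identically zero and both are continuous on $\mathbb{P}^1(F)\setminus\{0,t,1,\infty\}$, the ratio $\widetilde\beta_n(x)/\psi_n(x)$ is a constant $c_n$, giving $\widetilde\beta_n(x)=c_n\psi_n(x)$. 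To establish $c_n>0$, compare asymptotics near $x\to\infty$: by Proposition \ref{asym}, $\widetilde\beta_n(x)\sim\log\norm{x}>0$, while $\psi_n$ was normalized to be positive near $\infty$, so $c_n>0$.

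For the kernel identity, expand an arbitrary $\psi\in\mathcal{H}$ in the orthonormal basis $\{\psi_n\}$ and apply $\Bbb H_x$ term by term:
$$
(\Bbb H_x\psi)(y)=\sum_n \widetilde\beta_n(x)(\psi,\psi_n)\psi_n(y)=\sum_n c_n\psi_n(x)\psi_n(y)(\psi,\psi_n).
$$
Comparing with $(\Bbb H_x\psi)(y)=\int_F K(x,y,z)\psi(z)\norm{dz}$ for $\psi$ ranging over a dense set of test functions yields the desired equality of distributions
$$
K(x,y,z)=\sum_n c_n\psi_n(x)\psi_n(y)\psi_n(z).
$$

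The main subtlety, which I expect to be the central analytic point, is the interpretation of the sum on the right: since $K(x,y,z)$ is only locally $L^1$ and not in $L^2$ (as noted in Subsection \ref{Boundedness and compactness}, $\Bbb H_x$ is not Hilbert-Schmidt), the series does not converge in $L^2$. However, Proposition \ref{comp} shows that $\Bbb H_x^2$ is Hilbert-Schmidt, so $\sum_n c_n^2\psi_n(x)^2<\infty$ for a.e.\ $x$ and the kernel expansion converges in the weak sense appropriate for the reproducing-kernel interpretation of the operator identity, which is all that is required for the statement.
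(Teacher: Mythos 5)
Your proof is correct and is essentially the same argument as the paper's: both rest on the full $\mathbb{S}_3$-symmetry of $f_t(x,y,z)$ applied to the spectral expansion of the kernel; you merely extract the relation $\widetilde\beta_n(x)\psi_n(y)=\widetilde\beta_n(y)\psi_n(x)$ directly from the eigenvalue equation before writing the kernel expansion, whereas the paper writes the expansion first and then invokes symmetry. The closing remark on convergence is a reasonable addition but not needed for the corollary as stated.
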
 

\begin{proof} It follows from \eqref{eigenv} that 
$$
\frac{2\theta(f_t(x,y,z))}{\sqrt{\norm{f_t(x,y,z)}}}=\sum_{n\ge 0} \widetilde\beta_n(x)\psi_n(y)\psi_n(z).
$$
Since $f_t$ is symmetric, we have $\widetilde\beta_n(x)=c_n\psi_n(x)$, as claimed. 
\end{proof}

\begin{corollary}\label{multlaw} We have 
$$
\Bbb H_x\Bbb H_y=\int_{F}\frac{2\theta(f_t(x,y,z))}{\sqrt{\norm{f_t(x,y,z)}}}\Bbb H_z\norm{dz}.
$$
\end{corollary}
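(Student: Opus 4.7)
The plan is to deduce the identity directly from the reproducing-kernel formula in Corollary \ref{reprodker} by comparing both sides on the orthonormal eigenbasis $\{\psi_n\}$ of the commuting Hecke operators. Let me write $K(x,y,z):=\frac{2\theta(f_t(x,y,z))}{\sqrt{\norm{f_t(x,y,z)}}}$, so that by Proposition \ref{p1} this is the Schwartz kernel of $\Bbb H_x$ acting on half-densities in the $\norm{dz}^{1/2}$-trivialization. Corollary \ref{reprodker} asserts
$$
K(x,y,z)=\sum_n c_n\psi_n(x)\psi_n(y)\psi_n(z),
$$
where the expansion is understood in the $L^2$-sense (it arises from the spectral decomposition of the operator $\Bbb H_x$, cf. Theorem \ref{specthe} specialized to $m=2$).

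First I would verify the identity in the weak sense, that is, compute matrix coefficients $(\Bbb H_x\Bbb H_y\psi_n,\psi_k)$ and $\left(\int_F K(x,y,z)\Bbb H_z\psi_n\norm{dz},\psi_k\right)$ and show they coincide. On the left, $\Bbb H_x\Bbb H_y\psi_n=\widetilde\beta_n(x)\widetilde\beta_n(y)\psi_n=c_n^2\psi_n(x)\psi_n(y)\psi_n$, so the matrix coefficient equals $c_n^2\psi_n(x)\psi_n(y)\delta_{n,k}$. On the right, using $\Bbb H_z\psi_n=c_n\psi_n(z)\psi_n$ and pulling out the $\psi_n$-factor,
$$
\left(\int_F K(x,y,z)\Bbb H_z\psi_n\norm{dz},\psi_k\right)=\delta_{n,k}\int_F K(x,y,z)\,c_n\psi_n(z)\norm{dz}.
$$
Substituting the eigenexpansion of $K(x,y,z)$ and using orthonormality $\int_F\psi_m(z)\psi_n(z)\norm{dz}=\delta_{m,n}$, this equals $\delta_{n,k}\,c_n^2\psi_n(x)\psi_n(y)$, matching the left-hand side.

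Second, I would upgrade this weak equality to the operator identity. Since $\Bbb H_x$ is bounded (Propositions \ref{bou} and the $m=2$ estimate via Schur's test in Subsection \ref{Boundedness and compactness}), the composition $\Bbb H_x\Bbb H_y$ is bounded, and the integral on the right also defines a bounded operator because its matrix coefficients in the orthonormal basis $\{\psi_n\}$ are $c_n^2\psi_n(x)\psi_n(y)\delta_{n,k}$, which is a diagonal operator with bounded diagonal (dominated by $\widetilde\beta_n(x)\widetilde\beta_n(y)$). Two bounded operators that agree on the orthonormal eigenbasis of the self-adjoint family $\{\Bbb H_x\}$ agree as operators on $\mathcal H$.

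The only delicate point — and the main technical obstacle — is the interchange of the sum (from the spectral expansion of $K$) with the integral $\int_F\cdots\norm{dz}$ in the second step. I would justify this by first restricting to finite spectral cutoffs $K_N(x,y,z):=\sum_{n\le N}c_n\psi_n(x)\psi_n(y)\psi_n(z)$, for which the interchange is trivial, and then taking $N\to\infty$. Convergence $K_N\to K$ holds in $L^2_{\text{loc}}$ for the integration variable $z$ (with $x,y$ fixed generic), since by Proposition \ref{comp} and Corollary \ref{reprodker} the eigenfunction series reproduces $K$ in the appropriate Hilbert-space sense; this is enough to pass to the limit after pairing with the $L^2$-function $\psi_n(z)$. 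This completes the argument.
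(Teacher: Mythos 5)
Your argument is correct and is essentially the paper's one-line proof ("both sides act by the same eigenvalues on the basis $\psi_n$"), spelled out in detail. However, the third paragraph identifies the sum–integral interchange as "the main technical obstacle" and then gives a shaky justification for it ($L^2_{\text{loc}}$ convergence of the partial sums $K_N(x,y,\cdot)$ for fixed generic $x,y$ — a pointwise-in-$(x,y)$ statement that Corollary \ref{reprodker} does not directly supply). The obstacle is illusory: by the very definition of the Schwartz kernel, $\int_F K(x,y,z)\psi_n(z)\norm{dz}=(\Bbb H_x\psi_n)(y)=c_n\psi_n(x)\psi_n(y)$, with no expansion of $K$ and no interchange of any sum with the integral. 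With that substitution the whole argument becomes a two-line eigenvalue comparison, which is exactly what the paper does.
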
 

\begin{proof}
Both sides act by the same eigenvalues on the basis $\psi_n$. 
\end{proof} 

\begin{corollary} We have $c_n=\psi_{n,\infty}^{-1}$, where 
$$
\psi_{n,\infty}:=\lim_{x\to \infty}\frac{\psi_n(x)}{\norm{x}^{-1}\log\norm{x}}.
$$ 
In other words, we have
\begin{equation}\label{kerfor}
\frac{2\theta(f_t(x,y,z))}{\sqrt{\norm{f_t(x,y,z)}}}=\sum_n\frac{\psi_n(x)\psi_n(y)\psi_n(z)}{\psi_{n,\infty}}. 
\end{equation} 
\end{corollary}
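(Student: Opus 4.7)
The plan is to identify the constant $c_n$ appearing in Corollary \ref{reprodker} by matching the asymptotics of both sides as $x\to\infty$. Since the relation $\widetilde\beta_n(x)=c_n\psi_n(x)$ holds as continuous functions on $F\setminus\{0,t,1,\infty\}$ with $c_n>0$, it suffices to examine the behavior of $\widetilde\beta_n(x)$ at one parabolic point; infinity is the most convenient choice because the identification $S_{m+1}=S_3$ used to view $\Bbb H_x$ on $\mathcal{H}^0$ means the leading coefficient there is trivial, consistent with the positivity $c_n>0$.

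First I will convert the asymptotic of $H_x$ given by Proposition \ref{asym} (equivalently Theorem \ref{specthe}(ii)), namely $\beta_n(x)\sim \norm{x}^{\frac{1}{2}}\log\norm{x}$ as $x\to\infty$, into an asymptotic for $\widetilde\beta_n$. Using the normalization of Theorem \ref{heckeop1} specialized to $m=2$, $t_0=0,t_1=t,t_2=1$,
$$
H_x=\norm{x(x-1)(x-t)}^{\frac{1}{2}}\Bbb H_x,
$$
so $\widetilde\beta_n(x)=\norm{x(x-1)(x-t)}^{-\frac{1}{2}}\beta_n(x)$. The prefactor grows like $\norm{x}^{\frac{3}{2}}$ at infinity, hence
$$
\widetilde\beta_n(x)\sim \norm{x}^{-1}\log\norm{x},\qquad x\to\infty,
$$
with leading coefficient exactly $1$.

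Next, plugging in $\widetilde\beta_n(x)=c_n\psi_n(x)$ and dividing by $\norm{x}^{-1}\log\norm{x}$, passing to the limit $x\to\infty$ gives
$$
1=\lim_{x\to\infty}\frac{\widetilde\beta_n(x)}{\norm{x}^{-1}\log\norm{x}}=c_n\lim_{x\to\infty}\frac{\psi_n(x)}{\norm{x}^{-1}\log\norm{x}}=c_n\psi_{n,\infty}.
$$
In particular $\psi_{n,\infty}\ne 0$, and $c_n=\psi_{n,\infty}^{-1}$. Substituting this value of $c_n$ into the expansion of Corollary \ref{reprodker} yields the desired formula \eqref{kerfor}. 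There is no real obstacle here; the only subtlety is verifying that the existence of the limit defining $\psi_{n,\infty}$ is guaranteed by the relation $\psi_n=c_n^{-1}\widetilde\beta_n$ together with the existence of the limit for $\widetilde\beta_n$, which follows automatically from the sharp asymptotic in Proposition \ref{asym}.
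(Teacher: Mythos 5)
Your argument is correct and is exactly the paper's approach: the paper's proof is simply the one-line remark "This follows from Proposition \ref{asym}," and you have filled in the details — converting the asymptotic $\beta_n(x)\sim\norm{x}^{1/2}\log\norm{x}$ of Theorem \ref{specthe}(ii) to $\widetilde\beta_n(x)\sim\norm{x}^{-1}\log\norm{x}$ via the prefactor $\norm{x(x-1)(x-t)}^{-1/2}$, then matching with $\widetilde\beta_n=c_n\psi_n$ from Corollary \ref{reprodker} to read off $c_n\psi_{n,\infty}=1$.
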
 

\begin{proof} This follows from Proposition \ref{asym}. 
\end{proof} 

\begin{corollary} Let 
$$
Q:=\int_{F}\Bbb H_x^2\norm{dx}.
$$
Then $Q$ is a positive Hilbert-Schmidt operator such that $Q\psi_n=c_n^2\psi_n$. 
Moreover, 
$$
{\rm Tr}(Q^s)=\sum_n c_n^{2s}. 
$$
In particular, 
$$
\sum_n c_n^4<\infty.
$$
\end{corollary}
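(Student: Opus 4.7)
My plan is to verify the four assertions in order; the first three are essentially formal consequences of the spectral decomposition, while the Hilbert--Schmidt property is the main content. By Corollary \ref{reprodker}, $\Bbb H_x\psi_n = c_n\psi_n(x)\psi_n$, so $\Bbb H_x^2\psi_n = c_n^2\psi_n(x)^2\psi_n$; integrating in $x$ against $\norm{dx}$ and using $\int\psi_n(x)^2\norm{dx} = 1$ yields $Q\psi_n = c_n^2\psi_n$. Positivity of $Q$ follows from $\langle Q\psi,\psi\rangle = \int_F\norm{\Bbb H_x\psi}^2\norm{dx} \geq 0$, via the self-adjointness of each $\Bbb H_x$ (Proposition \ref{selfad}). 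Once the Hilbert--Schmidt property is established, $Q$ is compact and diagonal in the orthonormal eigenbasis $\{\psi_n\}$ with eigenvalues $c_n^2\geq 0$, so ${\rm Tr}(Q^s) = \sum_n c_n^{2s}$ by spectral calculus.

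For $\sum_n c_n^4 < \infty$, I would apply Fubini--Tonelli, legitimate because the integrand below is non-negative, to obtain
$${\rm Tr}(Q^2) = \int_{F^2}{\rm Tr}(\Bbb H_x^2\Bbb H_y^2)\norm{dx\,dy} = \int_{F^2}\norm{\Bbb H_x\Bbb H_y}_{HS}^2\norm{dx\,dy},$$
where the second equality uses that $\Bbb H_x$ and $\Bbb H_y$ commute and are self-adjoint. By Corollary \ref{multlaw}, the Schwartz kernel of $\Bbb H_x\Bbb H_y$ is $k_{x,y}(u,z) = \int_F h(x,y,s)h(s,u,z)\norm{ds}$, where $h(a,b,c) = 2\theta(f_t(a,b,c))/\sqrt{\norm{f_t(a,b,c)}}$ is fully symmetric in $(a,b,c)$. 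Since the vanishing locus $\{f_t = 0\}$ has codimension one, $h$ is in $L^p_{\rm loc}$ for every $p<2$, and H\"older's inequality together with the explicit quadratic structure of $f_t$ in each variable gives an estimate for $\norm{\Bbb H_x\Bbb H_y}_{HS}^2 = \int|k_{x,y}|^2\norm{du\,dz}$ whose integral over $(x,y)\in F^2$ can be verified to be finite by tracking the decay at infinity and the logarithmic singularities at the parabolic points.

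The principal obstacle is controlling $\norm{\Bbb H_x\Bbb H_y}_{HS}^2$ as $(x,y)$ approaches a parabolic point $t_i$: Proposition \ref{asym} shows that $\Bbb H_x$ converges in the strong (but not HS) operator topology to a logarithmically-rescaled multiple of the unitary involution $S_i$, so the naive bound $\norm{\Bbb H_x\Bbb H_y}_{HS}^2 \leq \norm{\Bbb H_x}_{\rm op}^2\norm{\Bbb H_y}_{HS}^2$ is useless because ${\rm Tr}(\Bbb H_y^2) = \infty$ (as noted before Proposition \ref{comp}). Overcoming this requires exploiting the full $\Bbb S_3$-symmetry of $h$---a special feature of the four-point case noted after Proposition \ref{p1}---to trade singularities between the $(x,y)$ and $(u,z)$ variables, combined with the Hilbert--Schmidt finiteness of $\Bbb H_x^2$ from Proposition \ref{comp} to handle the neighborhood of the diagonal $x=y$, where the near-coincident singularities of the two factors of $k_{x,y}$ are strongest.
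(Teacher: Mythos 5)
Your derivations of $Q\psi_n=c_n^2\psi_n$ and of positivity are correct, and you correctly isolate the Hilbert--Schmidt property as the only nontrivial content. One small logical reshuffle is worth making explicit: since $\{\psi_n\}$ is an orthonormal basis and each $\psi_n$ is an eigenvector of the positive operator $Q$ with eigenvalue $c_n^2$, the identity $\mathrm{Tr}(Q^s)=\sum_n c_n^{2s}$ holds unconditionally as an identity in $[0,\infty]$; the Hilbert--Schmidt assertion \emph{is} the statement that this quantity is finite at $s=2$. So the three claims ``$Q$ is HS'', ``$\mathrm{Tr}(Q^2)<\infty$'', ``$\sum_n c_n^4<\infty$'' are one and the same, not a chain of consequences. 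Your Tonelli computation
$$
\sum_n c_n^4 \;=\; \sum_n\left(\int_F\widetilde\beta_n(x)^2\,\norm{dx}\right)^{\!2}
\;=\;\int_{F^2}\sum_n \widetilde\beta_n(x)^2\widetilde\beta_n(y)^2\,\norm{dx\,dy}
\;=\;\int_{F^2}\mathrm{Tr}\bigl(\Bbb H_x^2\Bbb H_y^2\bigr)\norm{dx\,dy}
$$
is the right reduction, and justified by nonnegativity.

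The genuine gap is that you do not actually establish the convergence of the last integral. You correctly observe that the crude bounds fail, but your remedy is only a gesture. Concretely: $\mathrm{Tr}(\Bbb H_x^2\Bbb H_y^2)\le\|\Bbb H_y\|_{\rm op}^2\,\mathrm{Tr}(\Bbb H_x^2)$ is useless because $\mathrm{Tr}(\Bbb H_x^2)=\infty$; and Cauchy--Schwarz in the form $\mathrm{Tr}(\Bbb H_x^2\Bbb H_y^2)\le(\mathrm{Tr}\,\Bbb H_x^4)^{1/2}(\mathrm{Tr}\,\Bbb H_y^4)^{1/2}$ reduces the problem to $\int_F(\mathrm{Tr}\,\Bbb H_x^4)^{1/2}\norm{dx}<\infty$, whose finiteness is not clear: by Fatou, $\liminf_{x\to\infty}\norm{x}^4\log^{-4}\norm{x}\,\mathrm{Tr}(\Bbb H_x^4)=\sum_n 1=\infty$, so $\mathrm{Tr}(\Bbb H_x^4)$ does not decay like the fourth power of $\|\Bbb H_x\|$. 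Any successful estimate must quantify how much smaller $\mathrm{Tr}(\Bbb H_x^2\Bbb H_y^2)$ is than $\mathrm{Tr}(\Bbb H_x^4)$ when $x\ne y$ --- the ``trading of singularities'' you allude to --- by working directly with the kernel of $\Bbb H_x\Bbb H_y$ via Corollary \ref{multlaw} and the full symmetry of $f_t$. You signal awareness of this but leave the actual bound (and in particular the analysis near $x,y\to t_i$ and $x,y\to\infty$) unproved. Since this estimate \emph{is} the corollary, the proposal is incomplete at precisely the critical point. For what it is worth, the paper itself states this corollary without proof, so you have not missed an argument already in the text; you have correctly located where one is needed.
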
 

\begin{corollary} In the case of $4$ points, the joint eigenspaces of the 
Hecke operators $H_x$ are 1-dimensional.
\end{corollary}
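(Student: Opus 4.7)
The plan is to read the claim off directly from Corollary~\ref{reprodker}. First I would fix an orthonormal basis $\{\psi_n\}$ of $\mathcal H$ consisting of simultaneous eigenvectors of the commuting compact self-adjoint family $\{\Bbb H_x\}_{x\in F}$, arranged so that each basis vector lies in a single joint eigenspace $\mathcal H_k$. By Corollary~\ref{reprodker}, the corresponding eigenvalue then satisfies $\widetilde\beta_n(x)=c_n\psi_n(x)$ for a real constant $c_n$; this is the key identity extracted from the full $\Bbb S_3$-symmetry of the kernel $f_t(x,y,z)$, which forces the eigenvalue viewed as a function of $x$ to coincide, up to scale, with the eigenfunction evaluated at $x$.

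Next I would check that $c_n\neq 0$ for every $n$. Theorem~\ref{specthe}(ii) guarantees that each $\beta_n(x)$ has the nontrivial asymptotic behavior $\beta_n(x)\sim\norm{x}^{1/2}\log\norm{x}$ as $x\to\infty$; in particular $\widetilde\beta_n\not\equiv 0$, and since $\psi_n\not\equiv 0$ the identity $\widetilde\beta_n=c_n\psi_n$ forces $c_n\neq 0$.

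Finally, suppose two distinct basis elements $\psi_n,\psi_m$ with $n\neq m$ were to lie in the same joint eigenspace $\mathcal H_k$. Then their eigenvalue functions must agree: $\widetilde\beta_n(x)=\widetilde\beta_m(x)$ for all $x\in\Bbb P^1(F)\setminus\{0,t,1,\infty\}$. Substituting the key identity gives $c_n\psi_n(x)=c_m\psi_m(x)$, so $\psi_n$ and $\psi_m$ are proportional as continuous functions on $Bun_0^\circ(F)$, contradicting orthonormality. Hence every $\mathcal H_k$ contains exactly one basis element, and therefore $\dim\mathcal H_k=1$. I do not anticipate any real obstacle here: all the nontrivial analytic content—compactness of $\Bbb H_x$, the reproducing kernel formula, and the $\Bbb S_3$-symmetry of $f_t$—has already been established, and the conclusion is essentially an observation.
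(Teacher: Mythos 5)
Your proof is correct and follows the same approach as the paper. The paper's own proof is one line ("This follows from formula \eqref{kerfor}"), and what you have written is exactly the fleshed-out version of that observation: the $\Bbb S_3$-symmetry of $f_t$ forces $\widetilde\beta_n=c_n\psi_n$ with $c_n\neq0$, so two orthonormal eigenvectors sharing the same eigenvalue function would have to be proportional, which is impossible.

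One small remark: you re-derive $c_n\neq 0$ from Theorem~\ref{specthe}(ii), which is fine, but Corollary~\ref{reprodker} already asserts $c_n>0$ (this is where the normalization of $\psi_n$ to be positive near $\infty$, fixed just before equation \eqref{eigenv}, enters). So you could simply cite that. Otherwise the argument is complete and matches the paper.
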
 

\begin{proof} This follows from formula \eqref{kerfor}. 
\end{proof} 

This recovers Theorem 2 of \cite{K} as well as the package of properties from \cite{K}, p.3. 

\begin{remark} It is interesting to consider the trace of the modified Hecke operator
$$
{\rm Tr}(\Bbb H_x)=\sum_n \widetilde \beta_n(x).
$$ 
Unfortunately, as we have seen, this trace is not well defined 
since $\Bbb H_x$ is not trace class, i.e., ${\rm Tr}|\Bbb H_x|=\sum_n |\widetilde \beta_n(x)|=\infty$
(so the series $\sum_n \widetilde \beta_n(x)$ is not absolutely convergent and the sum depends on the order of summation). Thus we can only talk about this trace in the regularized sense, 
and have to choose a regularization procedure. Let us choose a ``geometric" regularization procedure, i.e. write the Schwartz kernel $K_t(x,y,z)=\frac{2\theta(f_t(x,y,z))}{\sqrt{\norm{f_t(x,y,z)}}}$ of $\Bbb H_x$ as a limit $\lim_{N\to \infty}K_{t,N}(x,y,z)$ (uniform on compact sets not containing the singularities of $K_t(x,y,z)$)  of a pointwise increasing sequence of positive continuous kernels (for example, we can set $K_{t,N}:={\rm min}(K_t,N)$). Then we can define the regularized trace of $\Bbb H_x$ by the formula 
$$
T(t,x)=\lim_{N\to \infty}\int_{F}K_{t,N}(x,z,z)\norm{dz}. 
$$
It is easy to see that 
$$
T(t,x)=\int_{F}K_t(x,z,z)\norm{dz}. 
$$  
We have 
$$
f_t(x,z,z)=(t-z^2)^2-4z(z-1)(z-t)x.
$$
So we get 
$$
T(t,x)=\int_{F} \frac{2\theta((t-z^2)^2-4z(z-1)(z-t)x)}{\sqrt{\norm{(t-z^2)^2-4z(z-1)(z-t)x}}}\norm{dz}. 
$$
The discriminant of the polynomial $P_{t,x}(z):=(t-z^2)^2-4z(z-1)(z-t)x$ in $z$ is 
$$ 
\Bbb D=2^{12}(x(x-1)t(t-1)(x-t))^2. 
$$
So the trace integral converges for $x\ne 0,1,t$, where it diverges logarithmically. 

It can be shown by a direct computation that the cross-ratio of the roots 
of the polynomial $(t-z^2)^2-4z(z-1)(z-t)x$ (whose Galois group is $\Bbb Z/2\times \Bbb Z/2$) is $\frac{t(1-x)}{x(1-t)}$. 
Thus $T(t,x)$ can be expressed in terms of the modified elliptic integral 
$E_+(\frac{t(1-x)}{x(1-t)})$ (see Subsection \ref{ellint}).
\end{remark} 

\subsection{The archimedian case} \label{archim} 

As we showed in the previous section, in the archimedian case the operators $\Bbb H_x$ commute with Gaudin 
(=quantum Hitchin) Hamiltonians (Proposition \ref{strocomm}) and also satisfy a second order ODE (an oper equation) with respect to $x$ (Proposition \ref{opereq}). In the special case of $4$ points, because of the symmetry of $f_t(x,y,z)$ with respect to permutations of $x,y,z$, these two (in general, completely different) types of equations turn out to be equivalent. Namely, both boil down to the 
following result. 

Let 
$$
\Bbb L_z=\partial_zz(z-1)(z-t)\partial_z +z
$$ 
be the {\bf Lam\'e operator} (=the quantum Hitchin Hamiltonian for $4$ points). 

\begin{proposition}\label{l1aa}  
For any $t,x$ we have 
$$
(\Bbb L_y-\Bbb L_z)\frac{1}{\sqrt{f_t(x,y,z)}}=0.
$$
\end{proposition}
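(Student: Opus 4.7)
The plan is a direct verification by polynomial manipulation. Setting $g = f_t(x,y,z)^{-1/2}$ and $p(u) = u(u-1)(u-t)$ (so $\Bbb L_u = p(u) \partial_u^2 + p'(u) \partial_u + u$), the identities $\partial_u g = -\tfrac{1}{2} f_u g^3$ and $\partial_u^2 g = -\tfrac{1}{2} f_{uu} g^3 + \tfrac{3}{4} f_u^2 g^5$ give
\[
f^{5/2} \Bbb L_u g = -\tfrac{1}{2} f\, \partial_u\bigl(p(u) f_u\bigr) + \tfrac{3}{4} p(u) f_u^2 + u f^2,
\]
where $f = f_t(x,y,z)$. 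Hence $(\Bbb L_y - \Bbb L_z) g = 0$ is equivalent to the polynomial identity
\[
f \cdot \bigl[\partial_y(p(y) f_y) - \partial_z(p(z) f_z) + 2(z-y) f\bigr] = \tfrac{3}{2}\bigl[p(y) f_y^2 - p(z) f_z^2\bigr].
\]

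Both sides of this identity are antisymmetric under the swap $y \leftrightarrow z$ (since $f$ is symmetric in $y,z$), so they are divisible by $y - z$; after dividing out, the claim becomes an identity of polynomials symmetric in $(y,z)$. Using the explicit expansion
\[
f_t(x,y,z) = (x-y)^2 z^2 - 2\bigl[(x+y)(t+xy) - 2xy(t+1)\bigr] z + (t-xy)^2
\]
(quadratic in $z$, and by symmetry also in $y$), one obtains $f_z$ linear in $z$ and $f_{zz} = 2(x-y)^2$ constant, and similarly for the $y$-derivatives. Substituting and expanding reduces the claim to a finite check on coefficients in $y$ and $z$, which is straightforward though tedious.

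The main obstacle is the sheer bulk of the symbolic calculation; once the reduction to a polynomial identity is made, no conceptual difficulty remains, and the verification can equally be confirmed by a short computer-algebra run. A conceptually cleaner alternative, which bypasses direct computation, is to invoke the commutativity $[\Bbb L, \Bbb H_x] = 0$ (Proposition \ref{strocomm}) together with the formal self-adjointness of the Lam\'e operator on half-densities: the Schwartz kernel $K(x,y,z) = 2\theta(f_t)/\sqrt{\norm{f_t}}$ of $\Bbb H_x$ must then satisfy $\Bbb L_y K = \Bbb L_z K$ as distributions, and restricting to the smooth locus $\{f_t > 0\}$ gives the stated identity. However, since the goal of this subsection is to use Proposition \ref{l1aa} to independently recover both the commutativity with Gaudin Hamiltonians and the oper equation in $x$ in the 4-point case (as the opening remark of this subsection indicates), the direct algebraic proof is the preferred route.
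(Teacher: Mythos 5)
Your proposal is correct, and it takes one of the three routes the paper explicitly allows. The paper's own proof is a one-liner: ``This follows from Proposition \ref{strocomm} or Proposition \ref{opereq} or by a (rather tedious) direct computation,'' so the default route in the paper is simply to cite the general-$N$ commutativity (Proposition \ref{strocomm}) or the operator-valued oper equation (Proposition \ref{opereq}), both of which were proved for arbitrary $m$ and hence apply to $m=2$. You flag this route yourself as the ``conceptually cleaner alternative,'' so there is no disagreement about the logical options. Your added value is that you carry the direct computation further than the paper does: the reduction $f^{5/2}\Bbb L_u g = -\tfrac12 f\,\partial_u(p(u)f_u) + \tfrac34 p(u) f_u^2 + u f^2$ and the resulting polynomial identity (antisymmetric in $y,z$, hence divisible by $y-z$) are correct, and this makes the ``rather tedious'' computation explicit and verifiable by a short computer-algebra check, even though you stop short of exhibiting the final coefficient match (as does the paper). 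One small caveat: your stated rationale for preferring the direct route --- that the subsection's goal is to ``independently recover'' commutativity and the oper equation from this proposition --- is one reading, but not forced by the text; the paper presents the equivalence as going both ways and is content to derive Proposition \ref{l1aa} from the earlier general results. This does not affect the mathematics.
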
 

This follows from Proposition \ref{strocomm} or Proposition \ref{opereq} or by a (rather tedious) direct computation. 

In fact, an even stronger statement holds (and can be checked similarly): 
we have 
$$
(\Bbb L_y-\Bbb L_z)\frac{1}{\sqrt{\norm{f_t(x,y,z)}}}=0
$$
in the sense of distributions. So in the real case we have 
$$
(\Bbb L_y-\Bbb L_z)\frac{1}{\sqrt{|f_t(x,y,z)|}}=0
$$
and in the complex case 
$$
(\Bbb L_y-\Bbb L_z)\frac{1}{|f_t(x,y,z)|}=(\overline {\Bbb L}_y-\overline {\Bbb L}_z)\frac{1}{|f_t(x,y,z)|}=0. 
$$
Moreover, in the real case we also have 
$$
({\Bbb L}_y-{\Bbb L}_z)\frac{2\theta(f_t(x,y,z))}{\sqrt{f_t(x,y,z)}}=0
$$
as distributions. These statements are equivalent to the statement that the operator $\Bbb H_x$ commutes with $\Bbb L$ (and in the complex case also with $\overline{\Bbb L}$) and satisfies the oper equation. 

Thus in the archimedian case we see that 
$$
\Bbb L\psi_n=\Lambda_n\psi_n
$$
for certain eigenvalues $\Lambda_n$. So for $F=\Bbb C$ the operators $\Bbb L-\Lambda_n$ for various $n$ correspond exactly to the real opers, and thus the eigenvalues $\Lambda_n$ are distinct. So we have 
\begin{equation}\label{kerfor1}
\frac{2}{\pi |f_t(x,y,z)|}=\sum_n\frac{\psi_n(x)\psi_n(y)\psi_n(z)}{\psi_{n,\infty}}, 
\end{equation} 
where $\psi_n$ runs over single-valued eigenfunctions of $\Bbb L$ 
normalized to be positive near $\infty$ and have norm $1$. 

\subsection{The real case} \label{archim1}
On the other hand, for $F=\Bbb R$ the situation is more subtle. To explain what is going on, fix an oper $L(\bmu)$ that admits an eigenfunction, i.e., a solution $\psi$ of the equation $L(\bmu)\psi=0$ which near $\infty$ looks like 
$$
\psi(x)=|x|^{1/2}(C_3\log|x|+h_3(x))
$$ 
for a continuous function $h_3$, and at each $t_j$, $j=0,1,2$ looks like 
$$
\psi(x)=|x-t_j|^{1/2}(C_j\log|x-t_j|+h_j(x))
$$ 
for continuous $h_j$. One of the constants $C_j$ must be nonzero, so without 
loss of generality we may assume that $C_3\ne 0$, and set $C_3=1$. 
Moreover if $C_0=C_1=C_2=0$ then the function 
$$
\widehat{\psi}(x):=\frac{\psi(x)}{\sqrt{x(x-1)(x-t)}}
$$ 
is entire and vanishes at $\infty$, which is a contradiction, 
so $C_j\ne 0$ for some $j\in \lbrace{0,1,2\rbrace}$. 

It is therefore easy to see that any possible configuration is equivalent to one of following four:

(1) all $C_j$ are nonzero; 

(2) $C_0=0$, $C_1,C_2\ne 0$; 

(3) $C_0,C_1=0$, $C_2\ne 0$; 

(4) $C_0,C_2=0$, $C_1\ne 0$.

We define the functions $f_j,g_j$ near $t_j$ as follows. 
First, $f_3$ is the restriction of $\psi$ and $g_3=-f_3^+-if_3$. 
Next, for $0\le j\le 2$, if $C_j\ne 0$ then we set 
$f_j$ to be the restriction of $\psi$ and $g_j=f_j^+-if_j$. 
Finally, if $C_j=0$, we choose $f_j$ to be any solution 
with leading asymptotics $\pm |x-t_j|^{\frac{1}{2}}\log|x-t_j|$ 
near $t_j$ and $g_j:=f_j^+-if_j$ (thus in this case we have a freedom of replacing $f_j$ by $\pm f_j+\lambda g_j$). Then $(f_j,g_j)$ 
is a basis of solutions for each $j$. 

Now consider cases 1-4 one by one. 

{\bf Case 1:} all $C_j$ are nonzero. In this case we have 
$$
B_jf_j=f_{j-1},\ B_jg_j=-a_jg_{j-1}+b_jf_{j-1}, 
$$
where $a_0a_1a_2a_3=1$ (here $a_j,b_j\in \Bbb R$). 
The equation $\prod_j B_jJ=-1$ then yields by a direct calculation: 
$$
b_0=b_2=b,\ b_1=b_3=2b^{-1},\ a_0=a_2=a,\ a_1=a_3=a^{-1},
$$
and either $a=1$ and $b^2\ne 2$ ({\bf case 1a}) or $b^2=2a$ ({\bf case 1b}). In {\bf case 1a}, 
$C_j=\pm 1$ for all $j$, so $\beta=\psi$ defines a balancing of the local system $\nabla(\bmu)$ corresponding to the oper $L(\bmu)$, and we get that $\bmu\in \mathcal B_*$. Moreover, this balancing is unique, so 
the fiber in $\mathcal B$ over $\bmu$ consists of one point. 

On the other hand, in {\bf case 1b}, we have $B_jB_{j+1}=1$ for all $j$, so besides $\psi$ we have another eigenfunction $\eta$ which is regular 
at $t_1$ and $t_3$. Thus there are two values 
of $c$ such that $\beta=\psi+c\eta$ is a balancing for $\nabla(\bmu)$. So $\bmu\in \mathcal{B}_*$ 
and the fiber over $\bmu$ in $\mathcal B$ consists of two points. 

{\bf Case 2:} $C_0=0$, $C_1,C_2\ne 0$. Thus $B_2,B_3$ are as in Case 1, 
but 
$$
B_1f_1=a_0g_0, \ B_1g_1=a_1f_0,\ B_0f_0=b_0f_3+a_0g_3, B_0g_0=a_0^{-1}f_3
$$
(choosing $f_0$ to be a multiple of $B_1g_1$ and using that $\det B_0=-1$). Then the lower left entry of the equation \ref{matrixeq} written as $B_0JB_1J=-(B_2JB_3J)^{-1}$
yields $b_3=0$, hence the upper right entry yields $a_1=0$, which is a contradiction. 
So this case is impossible. 

{\bf Case 3:} In this case $B_3$ is as in Case $1$ but 
$$
B_2f_2=a_0a_1g_1, \ B_2g_2=a_2f_1,\  B_1f_1=-a_1f_0+b_1g_0,\ B_1g_1=a_1^{-1}g_0,
$$
$$
B_0f_0=a_0g_3, B_0g_0=a_0^{-1}f_3
$$
(choosing $f_0$ to be a multiple of $B_0^{-1}g_3$, $f_1$ to be a multiple 
of $B_2g_2$ and using that $\det B_0=\det B_1=-1$). Then the upper right entry of the same equation as in Case 2 gives $a_0a_1a_2a_3=-1$, while the determinant of this equation gives $a_0a_1a_2a_3=1$, again a contradiction. So this case is impossible as well. 

{\bf Case 4:} $C_0,C_2=0$, $C_1\ne 0$. In this case we have 
$$
B_3f_3=a_3^{-1}g_2,\ B_3g_3=a_3f_2,\ B_2f_2=b_2f_1+a_2g_1,\ B_2g_2=cf_1,
$$
$$
B_1f_1=c^{-1}a_0a_3g_0,\ B_1g_1=a_1f_0+b_1g_0,\ B_0f_0=a_0g_3,\ B_0g_0=a_0^{-1}f_3
$$
(choosing $f_2$ to be a multiple of $B_3g_3$, $f_0$ to be a multiple of $B_0^{-1}g_3$, and using that $\det B_3=\det B_0=-1$). Then a direct calculation shows that for a suitable choice of signs of $f_0,f_2$ the equation 
$B_0JB_1J=-(B_2JB_3J)^{-1}$ is equivalent to the equations 
$$
b_0=b_1=b_2=b_3=0,\ a_0=\frac{1}{\sqrt{2}},\ a_1=a,\ a_2=a^{-1},\ a_3=\sqrt{2},\ c=2a^{-1}.
$$
Since $\det(B_0B_1)=\frac{a^2}{2}$, we have $f_1\sim \pm\frac{a}{\sqrt{2}}|x-t_1|^{\frac{1}{2}}\log|x-t_1|$ near $x=t_1$. Thus defining $\beta_\pm=f_3\pm \frac{1}{\sqrt{2}}g_3$, we see that $\beta_\pm$ gives rise to a balancing for the local system $\nabla(\bmu)$, so again $\bmu\in \mathcal{B}_*$. In other words, Case 4 is equivalent to Case 1b by changing the choice of a periodic eigenfunction $\psi$. Note that this case arises for 4 points in Subsection \ref{hyge}, as $2\cos \frac{\pi}{4}=\sqrt{2}$.  

Now let $\beta_n$, $n\ge 0$, be all possible balancings for local systems attached to opers; so they run over the set $\mathcal{B}$. By the $\Bbb S_3$-symmetry of the Schwartz kernel of Hecke operators, we have 
$$
H_x\beta_n=\beta_n(x)\beta_n,
$$ 
Let $\psi_n=\beta_n/\norm{\beta_n}$. 
Thus we obtain 
\begin{equation}\label{kerfor2}
\frac{\theta(f_t(x,y,z))}{\sqrt{f_t(x,y,z)}}=\sum_n\frac{\psi_{n}(x)\psi_{n}(y)\psi_{n}(z)}{\psi_{n,\infty}}. 
\end{equation}  

Thus we see that the Hecke operators $H_x$ fix a particular self-adjoint extension 
of the operator $\Bbb L$ in the real case and a particular normal extension of $\Bbb L$ in the complex case. 
In the complex case, this is exactly the extension described in \cite{EFK}, Part II, while in the real case it is the extension corresponding to the space $V_1$ of Subsection \ref{selfadjo}. 
 
\subsection{The subleading term of asymptotics of $H_x$ as $x\to \infty$}\label{subleading}

We have shown in Proposition \ref{asym} that the operator $\Bbb H_x$ has the asymptotics 
$\Bbb H_x\sim \norm{x}^{-1}\log\norm{x}$ as $x\to \infty$. So one may  ask for the next (subleading) 
term of the asymptotics.

\begin{proposition} 
There exists a strong limit (on the Schwartz space $\mathcal S$) 
$$
M:=\lim_{x\to \infty} (\norm{x}\Bbb H_x-\log\norm{x}),  
$$
which extends to an  unbounded self-adjoint operator on $\mathcal H$, essentially self-adjoint 
on $\mathcal{S}$. This operator is defined by the formula 
$$
M\psi_k=\mu^{(k)}\psi_k, 
$$
where 
$$
\mu^{(k)}:=\lim_{x\to \infty}(\norm{x}\psi_{k,\infty}^{-1}\psi_k(x)-\log\norm{x})\in \Bbb R. 
$$
\end{proposition}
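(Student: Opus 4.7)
The plan is to deduce this statement from Proposition \ref{limexists} applied to $m=2$, using the explicit normalization that relates $H_x$ to $\Bbb H_x$ in the four-point case, together with the diagonal formula for $\Bbb H_x$ afforded by Corollary \ref{reprodker}.

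First I would compare the two putative limits. Since $H_x = \norm{x(x-1)(x-t)}^{1/2}\Bbb H_x$, one has
$$\norm{x}^{-1/2}H_x - \norm{x}\Bbb H_x \;=\; \bigl(\norm{(x-1)(x-t)}^{1/2} - \norm{x}\bigr)\Bbb H_x.$$
A direct expansion shows that the scalar factor $\norm{(x-1)(x-t)}^{1/2} - \norm{x}$ stays bounded as $x\to\infty$ (in both the real and complex cases), while $\norm{\Bbb H_x} = O(\norm{x}^{-1}\log\norm{x})$ by Proposition \ref{asym} rescaled from $H_x$ to $\Bbb H_x$. Consequently the displayed operator difference tends to $0$ in operator norm, and therefore
$$\lim_{x\to\infty}\bigl(\norm{x}\Bbb H_x - \log\norm{x}\bigr) \;=\; \lim_{x\to\infty}\bigl(\norm{x}^{-1/2}H_x - \log\norm{x}\bigr)$$
whenever either strong limit exists. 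But the existence of the latter, together with its self-adjointness on the natural domain, is exactly the content of Proposition \ref{limexists} specialized to $m=2$. This already produces the operator $M$ with the required abstract properties.

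For the diagonal formula, I would then invoke Corollary \ref{reprodker}, which in the four-point case gives the special identity $\widetilde\beta_k(x) = \psi_{k,\infty}^{-1}\psi_k(x)$ (a consequence of the full $\Bbb S_3$-symmetry of the Schwartz kernel, peculiar to $m=2$). Hence $\Bbb H_x\psi_k = \psi_{k,\infty}^{-1}\psi_k(x)\,\psi_k$, so on each eigenline $\norm{x}\Bbb H_x - \log\norm{x}$ is multiplication by the scalar $\norm{x}\psi_{k,\infty}^{-1}\psi_k(x) - \log\norm{x}$. Combined with the equality of limits from the previous paragraph, this scalar converges to a real number $\mu^{(k)}$, exactly in the form claimed. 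Essential self-adjointness on $\mathcal S$ is then automatic: by Proposition \ref{VinS} each $\psi_k$ lies in $\mathcal S$, so the algebraic span of the $\psi_k$ is contained in $\mathcal S$ and is a core for any diagonal self-adjoint operator in the orthonormal basis $\{\psi_k\}$.

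The only point that needs a direct check is the boundedness estimate for $\norm{(x-1)(x-t)}^{1/2} - \norm{x}$ used above; this is a short explicit computation, separately for $F=\Bbb R$ (where the leading correction is $-(1+t)/2$) and $F=\Bbb C$ (where $|x-1||x-t| - |x|^2 = O(|x|)$, which, paired against $\Bbb H_x = O(|x|^{-2}\log |x|)$, still produces a vanishing product). No other genuinely new argument is needed: everything else reduces to results already established in the paper.
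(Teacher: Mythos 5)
Your reduction to Proposition \ref{limexists} is correct and is essentially the route the paper alludes to when it says the proposition ``will follow from the explicit computation of $M$'' already given for general $m$; but the paper's written-out argument instead recomputes $M$ from scratch via its Schwartz kernel (arriving at Proposition \ref{Mope}), so what you do is a genuine shortcut rather than a retelling. The one step you supply that the paper does not spell out is precisely the right one: the two normalizations $\norm{x}\Bbb H_x$ and $\norm{x}^{-1/2}H_x$ differ by the scalar $\norm{(x-1)(x-t)}^{1/2}-\norm{x}$ times $\Bbb H_x$, and this difference vanishes in operator norm as $x\to\infty$. Two small points of care. First, your phrasing ``stays bounded as $x\to\infty$ in both the real and complex cases'' is literally false for $F=\Bbb C$, where $\norm{(x-1)(x-t)}^{1/2}-\norm{x}=|x-1||x-t|-|x|^2 = O(|x|)$; you do correct this in your final paragraph, but the body of the argument should not assert boundedness in that case (the correct statement is $O(\norm{x}^{1/2})$, which against $\norm{\Bbb H_x}=O(\norm{x}^{-1}\log\norm{x})$ still yields $o(1)$; for non-archimedean $F$ the scalar is eventually $0$). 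Second, for $\psi_k\in\mathcal{S}$ the citation of Proposition \ref{VinS} is not quite right, since that proposition concerns $\mathcal V$; the relevant fact is that each eigenspace $\mathcal H_k$ lies in $\mathcal S$, which is observed in the proof of Proposition \ref{strocomm} (and also follows immediately from the definition of $\mathcal S$). With those corrections, the span of the $\psi_k$ is indeed a core in $\mathcal S$, and essential self-adjointness of the diagonal operator on that core is immediate. Note that your route does not recover the explicit formula for $M$ in Proposition \ref{Mope}, which is the additional output of the paper's Schwartz-kernel computation; but for the proposition as stated your argument suffices.
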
 

This proposition will follow from the explicit computation of $M$. We already gave a formula for $M$ for any number of parabolic points in Proposition \ref{Hxfor1}, but here we would like to do the same computation in a slightly different way. Namely, we will compute the Schwartz kernel $K_M(y,z)$ of $M$. Note that $f_t(x,y,z)$ is a quadratic polynomial in $x$ 
with leading coefficient $(z-y)^2$. This implies that outside of the diagonal we have 
$$
K_M(y,z)=\frac{2}{\norm{z-y}},\ y\ne z. 
$$
However, we are not yet done since $K_M$ turns out to have a singular part concentrated on the diagonal, and in any case the kernel $\frac{2}{\norm{z-y}}$ does not give rise to a well defined operator 
since it is not locally $L^1$. One possible regularization is given by 
$$
(M_0f)(y):=\int_{F}\frac{2f(z)-(1-{\rm sign}(\log\norm{z-y}))f(y)}{\norm{z-y}}\norm{dz}. 
$$
It remains to compute the operator $M-M_0$. We have 
$$
(M-M_0)\psi=h\psi, 
$$
where 
$$
h(y):=
\lim_{x\to \infty}
\left(\int_{F}\left(\frac{2\norm{x}\theta(f_t(x,y,z))}{\sqrt{\norm{f_t(x,y,z)}}}-\frac{1+{\rm sign}(\log\norm{z-y})}{\norm{z-y}}\right)\norm{dz}-\log\norm{x}\right).
$$
By translating $z$ by $-\frac{2(1+t)xy-(x+y)(t+xy)}{(x-y)^2}=y+O(x^{-1})$, $x\to \infty$ to complete the square and neglecting $O(x^{-1})$, we obtain  
$$
h(y):=
\lim_{x\to \infty}
\left(\int_{F}\left(\frac
{2\theta(z^2-\frac{D(x,y)}{4})}
{\sqrt{\norm{z^2-\frac{D(x,y)}{4}}}}-\frac{1+{\rm sign}(\log\norm{z})}{\norm{z}}\right)\norm{dz}-\log\norm{x}\right),
$$
where $D(x,y)$ is the discriminant of the quadratic polynomial $f_t(x,y,z)/(x-y)^2$ in $z$. 
By Lemma \ref{l33}, the integral under the limit equals $-\log\norm{\frac{D(x,y)}{16}}$. But 
$$
\frac{D(x,y)}{16}=\frac{x(x-1)(x-t)y(y-1)(y-t)}{(x-y)^4}\sim \frac{y(y-1)(y-t)}{x},\ x\to \infty.  
$$
Therefore Lemma \ref{l33} implies that 
$$
h(y)=-\log\norm{y(y-1)(y-t)}. 
$$
We thus obtain the following proposition. 

\begin{proposition}\label{Mope} We have 
$$
(Mf)(y)=\int_{F}\frac{2f(z)-(1-{\rm sign}(\log\norm{z-y}))f(y)}{\norm{z-y}}\norm{dz}
-f(y)\log\norm{y(y-1)(y-t)}. 
$$ 
\end{proposition}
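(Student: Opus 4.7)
The plan is to compute the operator $M$ explicitly by analyzing the large-$x$ behavior of the Schwartz kernel of $\norm{x}\Bbb H_x - \log\norm{x}$, and then extracting from this analysis both the off-diagonal kernel and a diagonal multiplication correction. Recall from Proposition \ref{p1} that $\Bbb H_x$ has Schwartz kernel $\frac{2\theta(f_t(x,y,z))}{\sqrt{\norm{f_t(x,y,z)}}}$. Since $f_t(x,y,z)$ is a quadratic polynomial in $x$ with leading coefficient $(z-y)^2$, one immediately sees that away from the diagonal $y=z$ the pointwise limit of $\norm{x}$ times the Schwartz kernel equals $\frac{2}{\norm{z-y}}$. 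The difficulty is that this kernel is not locally $L^1$, so it cannot be used directly to define the operator.

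The first step is to introduce a regularized off-diagonal operator $M_0$ whose kernel is the principal-value symmetric cutoff
$$
K_{M_0}(y,z) = \frac{2}{\norm{z-y}} \;-\; \frac{1-\operatorname{sign}(\log\norm{z-y})}{\norm{z-y}}\,\delta_{\rm diag},
$$
as written in the statement, so that the singular piece of $\frac{2}{\norm{z-y}}$ near the diagonal is cancelled by a compensating multiple of $\delta_{y}(z)$. Concretely, I would define $M_0$ by the formula in the statement and check that it extends to a densely defined symmetric operator on $\mathcal{H}$.

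Next, I would compute the diagonal correction $h(y)$ such that $M-M_0$ acts by multiplication by $h(y)$. This is the content of the limit
$$
h(y) = \lim_{x\to\infty}\!\left(\int_{F}\!\left(\frac{2\norm{x}\,\theta(f_t(x,y,z))}{\sqrt{\norm{f_t(x,y,z)}}}-\frac{1+\operatorname{sign}(\log\norm{z-y})}{\norm{z-y}}\right)\!\norm{dz}\;-\;\log\norm{x}\right).
$$
To evaluate this, I would complete the square in the quadratic polynomial $f_t(x,y,z)/(x-y)^2$ in $z$ by translating $z \mapsto z - \tfrac{2(1+t)xy-(x+y)(t+xy)}{(x-y)^2} = z - y + O(x^{-1})$, reducing the integrand to the canonical form $z^2 - \tfrac{D(x,y)}{4}$ where $D(x,y)$ is the relevant discriminant. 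Then Lemma \ref{l33} (whose invocation here is critical) evaluates the regularized integral as $-\log\bigl\lVert D(x,y)/16\bigr\rVert$, and a direct computation of the discriminant gives $D(x,y)/16 = \tfrac{x(x-1)(x-t)y(y-1)(y-t)}{(x-y)^4} \sim \tfrac{y(y-1)(y-t)}{x}$ as $x\to\infty$. Applying Lemma \ref{l33} once more (or just $\log\norm{\cdot}$-additivity) yields $h(y) = -\log\norm{y(y-1)(y-t)}$, completing the formula.

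The main obstacle is technical rather than conceptual: one needs to justify the interchange of the limit $x\to\infty$ with the integral defining $h(y)$, and to show that the error terms $O(x^{-1})$ introduced when completing the square contribute nothing to the limit. This requires uniform control of the integrand on compact sets in $y$ and a careful analysis of the behavior as $z\to\infty$, where the two pieces under the limit individually contribute logarithmically divergent tails that must cancel with the $-\log\norm{x}$ counterterm. Once these estimates are in place, essential self-adjointness on $\mathcal{S}$ follows from the fact that $M$ is diagonalized by the eigenbasis $\{\psi_k\}$ with real eigenvalues $\mu^{(k)}$ (the existence of the strong limit on $\mathcal{S}$ follows from the general Proposition \ref{limexists} specialized to $m=2$, so no independent argument is needed for that part).
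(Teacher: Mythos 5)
Your proposal reproduces the paper's proof step for step: off-diagonal kernel $\frac{2}{\norm{z-y}}$ from the leading $(z-y)^2$ coefficient of $f_t$ in $x$, the same regularization $M_0$ with the $\operatorname{sign}(\log\norm{z-y})$ cutoff, the same completion of the square in $z$ reducing to $z^2 - D(x,y)/4$, and the same invocation of Lemma \ref{l33} together with the discriminant computation $D(x,y)/16 \sim y(y-1)(y-t)/x$ to extract $h(y)=-\log\norm{y(y-1)(y-t)}$. This is essentially identical to the paper's argument; your extra remarks on interchanging limits and on self-adjointness are harmless but not needed for this particular proposition.
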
 

In other words, we have 
$$
K_M(y,z)=\frac{2}{\norm{z-y}}-\delta(z-y)\log\norm{y(y-1)(y-t)},
$$
where $\frac{2}{\norm{z-y}}$ stands for the Schwartz kernel of the operator $M_0$. 

Note that the distribution $\frac{1}{\norm{y}}$ regularized as above, i.e. by 
setting 
$$
(\tfrac{2}{\norm{y}},f):=\int_F \frac{2f(y)-(1-{\rm sign}(\log\norm{y})f(0)}{\norm{y}}\norm{dy}, 
$$
has Fourier transform 
$$
\mathcal F\left(\frac{2}{\norm{y}}\right)=-2\log\norm{p}.
$$
Thus the operator $M$ can be written as 
$$
M=-\mathcal F\circ 2\log\norm{y}\circ \mathcal F^{-1}-\log\norm{y(y-1)(y-t)}.
$$

Thus for $F=\Bbb R$ we have 
$$
M=-2\log|\partial|-\log|y(y-1)(y-t)|
$$
and for $F=\Bbb C$ we have 
$$
M=-2\log|\partial|^2-\log|y(y-1)(y-t)|^2. 
$$
In these two cases, we can easily reprove directly that $M$
indeed commutes with the Lam\'e operator $\Bbb L=\partial z(z-1)(z-t)\partial+z$. 
Namely, one just needs to establish the formal algebraic identity 
\begin{equation}\label{formalg} 
[2\log \partial+\log P, \partial P\partial+z]=0  
\end{equation} 
for any monic cubic polynomial $P=P(z)$, and then apply it to $P(z)=z(z-1)(z-t)$.\footnote{This identity makes sense, as both $[2\log\partial, 
\partial P\partial+z]$ and $[\log P, \partial P\partial+z]$ are well defined elements in the Weyl algebra $\Bbb C[z,\partial]$.}  
But \eqref{formalg} easily follows by a direct calculation in the fraction field of the 
Weyl algebra.    

\begin{remark} Let $F=\Bbb R$ or $\Bbb C$. Then we have $\Bbb L\psi_n=\Lambda_n\psi_n$. 
Let $f_n,g_n$ be the solutions of this equation near $\infty$ such that $f_n(z)\sim z^{-1}(\log z+o(1))$, 
$g_n(z)\sim z^{-1}(1+o(1))$, $z\to +\infty$. Then for $F=\Bbb C$
$$
\psi_{n,\infty}^{-1}\psi_n(z)=f_n(z)\overline{g_n(z)}+g_n(z)\overline{f_n(z)}+
\mu^{(n)}|g_n(z)|^2, 
$$
and $\mu^{(n)}$ is the unique constant for which this expression is monodromy invariant. 
On the other hand, if $F=\Bbb R$ then 
$$
\psi_{n,\infty}^{-1}\psi_n(z)=f_n(z)+\mu^{(n)}g_n(z).
$$
\end{remark} 

\begin{remark} In the archimedian case, this analysis suggests that the eigenvalues of the operator $M$ go to $+\infty$ logarithmically. Indeed, we expect that $M$ obeys the  {\bf Weyl law}: the number of eigenvalues $\le N$ can be obtained by semiclassical analysis from the asymptotics of the volume of the region 
$$
2\log\norm{p}+\log\norm{x(x-1)(x-t)}\le K
$$ 
in $T^*\Bbb P^1(F)$ as $K\to \infty$. But this volume equals $E(t)e^{K/2}$, where $E(t)$ is the elliptic integral defined in Subsection \ref{ellint}. This would imply that the number of eigenvalues of $M$ which are $\le N$ is asymptotic to $C_FE(t)e^{N/2}$, where $C_{\Bbb R}=\frac{2}{\pi}$ and 
$C_{\Bbb C}=\frac{1}{4}$, i.e., that 
the eigenvalues grow logarithmically. 
\end{remark} 

\subsection{Comparison with the work of S. Ruijsenaars}\label{ruij} 

In this subsection we would like to explain the connection of our results in the case of four points and $F=\Bbb R$ with the work of Ruijsenaars (\cite{Ru}).   

The points $0,t,1,\infty$ divide $Bun_0(\Bbb R)=\Bbb R\Bbb P^1$ into four intervals $I_0=[0,t]$, $I_1=[t,1]$, $I_2=[1,\infty]$, $I_3=[-\infty,0]$. 
Thus 
\begin{equation}\label{deco} 
\mathcal H=\mathcal H_0\oplus \mathcal H_1\oplus\mathcal H_2\oplus\mathcal H_3,
\end{equation} 
where $\mathcal H_j$ is the subspace of half-densities supported on $I_j$. Consider 
the self-adjoint extension $\Bbb L_0$ of the Lam\'e operator $\Bbb L$ corresponding to the space $V_0$ 
of Subsection \ref{selfadjo} (note that it does {\bf not} coincide with the extension $\Bbb L_1$ of $\Bbb L$ defined by the Hecke operators, which 
corresponds to the space $V_1$ of Subsection \ref{selfadjo}) 
It is clear that the operator $\Bbb L_0$ (unlike $\Bbb L_1$) preserves the subspaces $\mathcal H_j$, i.e., it is block-diagonal with respect to decomposition \eqref{deco}, since elements of $V_0$ are not required to satisfy any gluing conditions at $x_i$ (instead, they are just required to be bounded). 

Also, the subleading term $M$ of $H_x$ as $x\to \infty$ computed in Subsection \ref{subleading} is given by a 4-by-4 block matrix with entries $M_{ij}: \mathcal H_j\to \mathcal H_i$. It follows that $[M_{ij},\Bbb L_0]=0$ for all $i,j$. Note also that while $M$ is not bounded, the operators $M_{j,j+2}$ with $j\in \Bbb Z/4$ are Hilbert-Schmidt, since they have a continuous Schwartz kernel $\frac{2}{|y-z|}$ (it is continuous 
since the intervals $I_j,I_{j+2}$ are disjoint). Thus, the operator $\Bbb 
L_0: \mathcal H_j\to \mathcal H_j$ commutes with the Hilbert-Schmidt operator $M_{ij}^\dagger M_{ij}=M_{ji}M_{ij}$ on $\mathcal H_j$, which can thus be used to fix a self-adjoint extension of $\Bbb L$ on $\mathcal{H}_j$ (which, of course, coincides with $\Bbb L_0$). This method was proposed by S. Ruijsenaars in \cite{Ru}, and the kernel $\frac{2}{|y-z|}$ coincides (up to a change of variable) with the kernel $\mathcal S(u,v)$ used in \cite{Ru}. 

This shows that the spectrum of $\Bbb L_0$ on $\mathcal H_j$ (i.e., the spectrum of $\Bbb L$ with bounded boundary conditions on $I_j$) is the same as its spectrum on $\mathcal H_{j+2}$. This is also easy to show directly since the equivalence class of the operator $\Bbb L_0$ on $[t_{j-1},t_j]$  
only depends on the cross-ratio of the points $(t_{j-1},t_j,t_{j+1},t_{j+2})$, and this cross-ratio is unchanged under the map $j\mapsto j+2$. This spectrum is the spectrum of the Sturm-Liouville problem (1) from \cite{EFK}, Subsection 10.5. On the other hand, the spectrum of $\Bbb L_0$ on $\mathcal H_1$ and $\mathcal H_3$ is also the same and coincides with the spectrum of the Sturm-Liouville problem (2) from \cite{EFK}, Subsection 10.5. These singular Sturm-Liouville problems were introduced in \cite{Ta} 
following the work of Klein, Hilbert and V. I. Smirnov.

\section{Hecke operators on $\Bbb P^1$ with four parabolic points 
over a non-archimedian local field} \label{nonar} 

In this subsection we study Hecke operators 
over non-archimedian local fields for $G=PGL_2$ 
in the simplest case of $X=\Bbb  P^1$ with four parabolic points. 
We give a proof of the statement in \cite{K} that eigenvalues of Hecke operators are algebraic numbers, and relate these eigenvalues to eigenvalues of the usual Hecke operators over the residue field. 

\subsection{Mollified Hecke operators} \label{nonar1}
Let $F$ be a non-archimedian local field with residue field $\Bbb F_q$. Let $p$ be the characteristic of $\Bbb F_q$, and assume that $p>2$. Let $x_0\in F$, $m\in \Bbb Z$, and consider the {\bf mollified Hecke operator}
$$
H_{x_0,m}:=\int_{\norm{x-x_0}\le q^{-m}} H_x\norm{dx}.
$$
The Schwartz kernel of this operator 
has the form 
$$
K_{x_0,m}(y,z):=2\int_{\norm{x-x_0}\le q^{-m}} \frac{\theta(f_t(x,y,z))}{\sqrt{\norm{f_t(x,y,z)}}}\norm{dx}.
$$

The main result of this subsection is the following theorem. 

\begin{theorem}\label{molli} If $x_0\ne 0,1,t$, and $m\gg 0$ then the  
operator $H_{x_0,m}$ has finite rank. 
\end{theorem}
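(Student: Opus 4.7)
The plan is to establish finite rank by analyzing the Schwartz kernel $K_{x_0,m}(y,z)$ in detail and exhibiting it as a finite linear combination of simple tensors $\chi_{A_i}(y)\chi_{B_i}(z)$. First I would change variables via $x = x_0 + \pi^m u$, where $\pi$ is a uniformizer of $F$ and $u$ ranges over $O_F$. Since $f_t(x,y,z)$ is quadratic in $x$ with leading coefficient $(y-z)^2$, this gives the Taylor expansion
\begin{equation*}
f_t(x_0 + \pi^m u, y, z) = A(y,z) + \pi^m u\, B(y,z) + \pi^{2m} u^2 (y-z)^2,
\end{equation*}
where $A(y,z) := f_t(x_0,y,z)$ and $B(y,z) := \partial_x f_t(x_0,y,z)$ are polynomials in $(y,z)$ with coefficients determined by $x_0, t$. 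The kernel then becomes $q^{-m}$ times an explicit integral over $O_F$ depending on $(y,z)$ only through the pair $(A(y,z), B(y,z))$.

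Next I would evaluate the $u$-integral using Hensel's lemma, which applies since the residue characteristic $p > 2$: for any $v \in O_F^\times$ with $v \equiv 1 \pmod{\pi}$, $v$ is a square. Partition $(y,z)$-space according to the ``type'' at $x_0$, consisting of the valuations $v(A(y,z))$ and $v(B(y,z))$ (truncated at $2m$) together with the relevant square classes in $F^\times/(F^\times)^2$. On each type class, the integrand is a locally constant function of $u$ and the integral evaluates to a specific constant, computable case by case: when $v(A) < 2m$ the quadratic in $u$ is dominated by $A$ and the integrand is constant in $u$; otherwise there may be a zero of the quadratic inside $O_F$, handled by further change of variable centered at the zero. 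Since there are only finitely many types for a fixed $m$, the kernel $K_{x_0,m}(y,z)$ takes finitely many distinct values as $(y,z)$ varies.

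The crucial remaining step is to show that each type class decomposes as a finite union of products $A_i \times B_j$ of open compact subsets in the $y$ and $z$ variables separately. For $m$ large enough (depending on the residue data of $x_0$ and $t$), the type of $(A(y,z), B(y,z))$ depends on $(y,z)$ only through $(y \bmod \pi^N, z \bmod \pi^N)$ for some $N=N(m)$, since $A, B$ are polynomials with bounded denominators. On an open compact region of $\Bun_0^\circ(F)^2$ bounded away from the parabolic points $\{0,t,1,\infty\}$, there are finitely many such residue pairs, and the kernel is a finite sum of characteristic functions of such boxes; the behavior at the parabolic locus is controlled since the kernel is bounded and these points form a measure-zero set.

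The main obstacle is the product-decomposition step: a priori, the joint type classes are defined by algebraic conditions on $(y,z)$ that need not factor as products of conditions on $y$ and $z$ separately. Overcoming this will rely on the $\mathbb{S}_3$-symmetry of $f_t(x,y,z)$ specific to the four-point case (Subsection~\ref{The Hecke correspondence and Hecke operators}), which allows one to reinterpret conditions on $(A, B)$ at $x_0$ symmetrically in $y$ and $z$, and on a careful ultrametric analysis ensuring that once $m$ is sufficiently large, the contribution of the ``cross terms'' linking $y$ and $z$ in $A$ and $B$ becomes dominated (up to squares) by the diagonal terms, so that the type classes genuinely decompose into finite unions of rectangles. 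Once this is established, $H_{x_0,m}$ has rank bounded by the number of rectangles, completing the proof.
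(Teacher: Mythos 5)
Your proposal correctly identifies the outline (expand around $x_0$ in the uniformizer, stratify $(y,z)$-space by valuation data, evaluate the $u$-integral stratum by stratum), and this is indeed close in spirit to the paper's approach, which works with the discriminant-and-trace pair rather than your $(A,B)$. However there are two genuine gaps that prevent the argument from closing.

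First, the assertion that \emph{``the kernel $K_{x_0,m}(y,z)$ takes finitely many distinct values as $(y,z)$ varies''} is false, and so is the later claim that the kernel is bounded. On the locus where the discriminant $D_t(y,z)=16\,y(y-1)(y-t)z(z-1)(z-t)$ becomes arbitrarily small, the $u$-integral picks up a contribution of the form $-\tfrac{1}{2}(1-q^{-1})(\log_q\norm{D_t(y,z)}+\text{const})$, which diverges as $D_t\to 0$. So the kernel has an unbounded, non--locally-constant piece, and the ``finitely many values'' strategy breaks precisely there. This is not a set you can wave away as measure zero: finite rank is a structural property of the kernel, and the problematic locus $D_t=0$ is exactly where the rectangle decomposition has to be argued, not avoided.

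Second, the step you flag as the ``main obstacle'' — factoring the strata as finite unions of rectangles in $y$ and $z$ — is indeed where the real content lies, but the mechanism you propose ($\Bbb S_3$-symmetry of $f_t$ plus an unspecified ultrametric domination argument) does not supply it. What actually makes the argument go through is the concrete algebraic identity that the discriminant of $f_t(x,y,z)$ in $x$ factors as a \emph{product} of a polynomial in $y$ alone with a polynomial in $z$ alone: $D_t(y,z)=16\,y(y-1)(y-t)\cdot z(z-1)(z-t)$. Consequently $\log_q\norm{D_t(y,z)}$ splits additively into a function of $y$ plus a function of $z$, so even on the divergent locus the kernel is a short sum of simple tensors and hence still locally of finite rank. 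Without isolating this factorization, there is no reason for the non--locally-constant part of the kernel to decompose into rectangles, and the argument does not close. I'd suggest reorganizing your stratification around the valuations of $P_t(x_0,y,z)$ and $D_t(y,z)$ (noting that in your notation $B^2-4(y-z)^2A=D_t$ identically), so that the product structure of $D_t$ becomes visible; then the finite-rank conclusion follows as in the paper.
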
 

The proof of Theorem \ref{molli} is given below. 

\begin{corollary}(\cite{K})\label{alnum} The eigenvalues of the Hecke operators $H_x$ are algebraic numbers.  
\end{corollary}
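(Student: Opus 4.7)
The plan is to derive algebraicity of each eigenvalue $\beta_n(x_0)$, for $x_0\in F\setminus\{0,1,t,\infty\}$, by combining Theorem~\ref{molli} with the local constancy of $\beta_n$ in $x$ (the non-archimedian meaning of smoothness of $x\mapsto\beta_n(x)$, established earlier in the spectral theorem section). For each fixed $n$ there exists $m_n$ such that $\beta_n$ is constant on $B_m(x_0):=\{x\in F:\norm{x-x_0}\le q^{-m}\}$ for all $m\ge m_n$. Since $H_{x_0,m}$ acts on the joint eigenbasis $\{\psi_n\}$ by the scalars $\lambda_{n,m}:=\int_{B_m(x_0)}\beta_n(x)\norm{dx}$, we obtain $\lambda_{n,m}=q^{-m}\beta_n(x_0)$ whenever $m\ge m_n$.

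Choose $m$ large enough that Theorem~\ref{molli} applies to $H_{x_0,m}$, giving it finite rank $r$, and also large enough that $\beta_n$ is constant on $B_m(x_0)$ for the finitely many $n$ with $\psi_n$ in the image $V:=\Image(H_{x_0,m})$. The nonzero eigenvalues of the finite-dimensional self-adjoint operator $H_{x_0,m}|_V$ are the roots of its degree $r$ characteristic polynomial; by Newton's identities, the coefficients of this polynomial are polynomials over $\Bbb Q$ in the power sums $\operatorname{tr}(H_{x_0,m}^k)$ for $k=1,\ldots,r$. It therefore suffices to prove that these traces are algebraic numbers.

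Each $\operatorname{tr}(H_{x_0,m}^k)$ unfolds as an iterated $F$-integral: $k$ averagings over the ball $B_m(x_0)$ (coming from the definition of $H_{x_0,m}$) composed with $k$ integrations over $\Bbb P^1(F)$ against copies of the explicit Schwartz kernel $K(x,y,z)=2\theta(f_t(x,y,z))/\sqrt{\norm{f_t(x,y,z)}}$. The integrand is a product of quadratic residue symbols $\theta$ (valued in $\{0,1\}$) with half-integral powers of $q$ coming from $\norm{f_t}^{1/2}\in q^{\Bbb Z/2}$. Stratifying the domain of integration by the $F$-valuations of the various $f_t(x_i,z_i,z_{i+1})$ and by the relevant quadratic residue classes, each stratum is an $F$-analytic cell whose Haar volume lies in $\Bbb Q(q^{1/2})$, and the total integral evaluates to a geometrically convergent sum of such contributions in $\Bbb Q(q^{1/2})\subset\overline{\Bbb Q}$. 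This gives algebraicity of every trace, hence of every $\lambda_{n,m}$, and finally of $\beta_n(x_0)=q^m\lambda_{n,m}$ whenever $\beta_n(x_0)\ne 0$; the case $\beta_n(x_0)=0$ is trivial.

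The hard part is the third step: rigorously handling the iterated integrals. One must verify that the operator trace on the finite-dimensional image $V$ coincides with the formal Schwartz-kernel iteration despite the singularities of $\norm{f_t}^{-1/2}$ along the vanishing locus of $f_t$ (recall that $H_{x_0,m}$ is not a priori trace class for small $k$), and then extract a clean algebraic value. An attractive alternative is to invoke the $p$-adic/motivic integration formalism of Denef--Loeser, which guarantees that $F$-integrals of semi-algebraic functions of the form $\norm{P}^{s}$ take values in $\Bbb Q(q^{1/2})$; this would bypass the ad hoc stratification and place the algebraicity on a conceptual footing.
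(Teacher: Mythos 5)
Your overall strategy is aligned with the paper's: exploit Theorem~\ref{molli} (finite rank of the mollified operator $H_{x_0,m}$), then extract algebraicity from the explicit nature of the kernel. But you follow a genuinely different route after that, and it is the route where the gap lies. You reduce to showing that the power sums $\operatorname{tr}(H_{x_0,m}^k)$ are algebraic, pass through Newton's identities, and then pass from eigenvalues of $H_{x_0,m}$ back to $\beta_n(x_0)$ via local constancy. You are candid that the step "these traces are iterated Schwartz-kernel integrals with values in $\Bbb Q(q^{1/2})$" is not actually carried out — you only gesture at a stratification or at Denef--Loeser. That is the genuine gap: the iterated integrals are over non-compact regions, the integrand has singularities along the zero locus of $f_t$, the operators $H_{x_0,m}^k$ for small $k$ are not trace class on the full Hilbert space (a point the paper itself emphasizes when it shows $\Bbb H_x$ is not Hilbert--Schmidt), and one must separately justify that the finite-dimensional matrix trace on $\Image(H_{x_0,m})$ equals the naive kernel-iteration integral. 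None of this is free, and it is precisely the sort of thing a referee would bounce.

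The paper's proof is shorter and avoids traces entirely. After Theorem~\ref{molli} gives finite rank, it observes that \emph{commutativity} of the Hecke operators lets $H_x$ (for any fixed $x$, not the mollified operator) act on $\Image(H_{x_0,m})$, and then it uses the concrete piecewise description of $K_{x_0,m}$ proved in Proposition~\ref{locfi} to exhibit a basis of $\Image(H_{x_0,m})$ in which the matrix of $H_x$ has entries in $\overline{\Bbb Q}$ — indeed, Subsection~\ref{nonar2} works this out explicitly, landing in $\Bbb Q(q^{1/2})$ (and in fact in $\Bbb Q$ for the examples shown). Algebraicity of eigenvalues on this finite-dimensional subspace is then immediate from the characteristic polynomial, and density of $\sum_{x_0,m}\Image(H_{x_0,m})$ closes the argument. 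This sidesteps your trace integrals and all convergence issues: it never needs a global trace, only the evaluation of $H_x$ on finitely many explicit locally-constant-or-logarithmic test functions, each of which is a convergent integral of a locally constant integrand. If you want to salvage your approach, you could borrow exactly this observation: once you know the matrix of $H_{x_0,m}$ (or $H_x$) in the explicit basis has entries in $\Bbb Q(q^{1/2})$, the algebraicity of traces follows for free, so the Newton's-identities detour becomes unnecessary. One final caution on bookkeeping: the identity $\lambda_{n,m}=q^{-m}\beta_n(x_0)$ depends on the normalization of $\norm{dx}$ (the paper's global normalization carries a factor $\tfrac{\log q}{1-q^{-1}}$, which is transcendental), so as written your reduction to algebraicity of $\lambda_{n,m}$ only makes sense after fixing the standard normalization $\operatorname{vol}(\mathcal O)=1$; the paper quietly uses that normalization throughout Section~\ref{nonar}, but the eigenvalues $\beta_n(x_0)$ of $H_x$ themselves are normalization-independent, which is another reason the paper's route through the matrix of $H_x$ rather than of $H_{x_0,m}$ is cleaner.
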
 

\begin{proof}  Since Hecke operators commute, they preserve the finite dimensional vector spaces 
${\rm Im}(H_{x_0,m})$. Also it is clear that the restrictions of $H_x$ to 
${\rm Im}(H_{x_0,m})$ are expressed in a suitable basis by matrices with algebraic entries. Thus the eigenvalues of $H_x$ on ${\rm Im}(H_{x_0,m})$
are algebraic numbers. On the other hand, it is clear that the sum of the 
spaces 
${\rm Im}(H_{x_0,m})$ over various $x_0,m$ is dense in $\mathcal H$. This 
implies the statement. 
\end{proof} 

\begin{remark} Note that Corollary \ref{alnum} is a very special property, since eigenvalues of ``generic" 
$p$-adic integral operators are usually transcendental (see \cite{EK}). According to M. Kontsevich, 
this has to do with the fact that Hecke operators comprise an ``integrable system over a local field" in the sense of \cite{K}, Subsection 2.4.
\end{remark} 

The rest of the subsection is devoted to the proof of Theorem \ref{molli}.
For this purpose it is enough to show that the kernel of $K_{x_0,m}$ 
is of finite rank near each point of $\Bbb P^1\times \Bbb P^1$. 
This is achieved in Proposition \ref{locfi} at the end of this subsection. 

The operator $H_{x_0,m}$ is invariant under the group $\Bbb Z/2\times \Bbb Z/2$
acting simply transitively on the points $0,1,t,\infty$. Thus it suffices 
to consider only the finite region $\Bbb A^1\times \Bbb A^1\subset \Bbb P^1\times \Bbb P^1$. 

Recall that 
$$
f_{t}(x,y,z)=(y-z)^2x^2+2(2(1+t)yz-(y+z)(t+yz))x+(t-yz)^2.
$$
Thus 
$$
f_t(x,0,0)=t^2,\ f_t(x,1,1)=(t-1)^2,\ f_t(x,t,t)=t^2(t-1)^2, 
$$
$$
f_x(x,0,1)=(x-t)^2,\ f_t(x,0,t)=t^2(x-1)^2, f_t(x,1,t)=(t-1)^2x^2.
$$
This implies that for $m\gg 0$ the kernel $K_{x_0,m}(y,z)$ 
is locally constant (hence finite rank) near $(y,z)$ where $y,z\in \lbrace 0,1,t\rbrace$. 

Let us now study neighborhoods of other points. 
When $y\ne z$, we have 
$$
f_t(x,y,z)=(y-z)^2(x^2+bx+c),
$$
where 
$$
b:=2\frac{2(1+t)yz-(y+z)(t+yz)}{(y-z)^2},\ c:=\frac{(t-yz)^2}{(y-z)^2}. 
$$

\begin{lemma}\label{intformu} For $m\in \Bbb Z$, $c\in F$ let  
$$
J_m(c):=\int_{\norm{x-1}\le q^{-m}} \frac{\theta(x^2+c)}{\sqrt{\norm{x^2+c}}}.
$$
Then for $y\ne z$ and $x_0+\frac{b}{2}\ne 0$ we have 
$$
K_{x_0,m}(y,z)=
\frac{2J_{m-k}\left(\frac{c-\frac{b^2}{4}}{(x_0+\frac{b}{2})^2}\right)}{\norm{y-z}}.
$$ 
where $k:=-\log_q \norm{x_0+\frac{b}{2}}$.
\end{lemma}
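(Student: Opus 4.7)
The plan is to reduce the integral defining $K_{x_0,m}(y,z)$ to the integral defining $J_{m-k}$ by two successive affine changes of variable, exploiting the fact that both $\theta$ and $\sqrt{\norm{\cdot}}$ are essentially insensitive to multiplication by nonzero squares.

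First I would complete the square: since $y\ne z$ and $p\ne 2$, we may write
$$f_t(x,y,z) \;=\; (y-z)^2\!\left(\bigl(x+\tfrac{b}{2}\bigr)^2 + \bigl(c-\tfrac{b^2}{4}\bigr)\right).$$
Because $(y-z)^2$ is a nonzero square in $F$, we have $\theta\bigl(f_t(x,y,z)\bigr)=\theta\bigl((x+\tfrac{b}{2})^2+(c-\tfrac{b^2}{4})\bigr)$, and $\sqrt{\norm{f_t(x,y,z)}}=\norm{y-z}\cdot\sqrt{\norm{(x+\tfrac{b}{2})^2+(c-\tfrac{b^2}{4})}}$. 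Substituting $u=x+\tfrac{b}{2}$ (which translates the integration region $\norm{x-x_0}\le q^{-m}$ to $\norm{u-a}\le q^{-m}$ with $a:=x_0+\tfrac{b}{2}$) pulls the factor $\norm{y-z}^{-1}$ out of the integral.

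Next I would perform the dilation $u=av$, with $\norm{a}=q^{-k}$. This transforms the integration region into $\norm{v-1}\le q^{-m}/\norm{a}=q^{-(m-k)}$, and the Haar measure transforms via $\norm{du}=\norm{a}\norm{dv}$. Inside the integrand, $u^2+(c-\tfrac{b^2}{4})=a^2\bigl(v^2+\tfrac{c-b^2/4}{a^2}\bigr)$; again $a^2$ is a nonzero square, so $\theta$ is unchanged, while $\sqrt{\norm{\cdot}}$ contributes a factor $\norm{a}$ in the denominator. The two factors of $\norm{a}$ cancel, leaving exactly
$$K_{x_0,m}(y,z)=\frac{2}{\norm{y-z}}\int_{\norm{v-1}\le q^{-(m-k)}}\frac{\theta\!\left(v^2+\tfrac{c-b^2/4}{a^2}\right)}{\sqrt{\norm{v^2+\tfrac{c-b^2/4}{a^2}}}}\norm{dv},$$
which is the claimed formula.

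Both substitutions are straightforward, and the hypothesis $x_0+\tfrac{b}{2}\ne 0$ is precisely what is needed for the dilation $v\mapsto av$ to be well defined; the hypothesis $p>2$ ensures that $b/2$ makes sense in $F$. Thus there is no real obstacle — the only point to watch is the bookkeeping of the Haar measure normalization under the dilation by $a$, which is fixed by our convention $\norm{du}=\norm{a}\norm{dv}$ and produces exactly the shift $m\mapsto m-k$ in the outer radius.
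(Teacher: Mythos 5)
Your proposal is correct and is essentially the paper's proof: the paper performs the same reduction, just compressing your two substitutions (translate by $b/2$, then dilate by $a=x_0+\tfrac{b}{2}$) into a single change of variable $u=\frac{x+\frac{b}{2}}{x_0+\frac{b}{2}}$. The bookkeeping of $\theta$, $\sqrt{\norm{\cdot}}$, the Haar measure, and the radius shift $m\mapsto m-k$ is the same in both.
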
 

\begin{proof} We have 
$$
K_{x_0,m}(y,z)=\frac{2J_m(x_0,b,c)}{\norm{y-z}},
$$
where for $b,c\in F$, 
$$
J_m(x_0,b,c):=\int_{\norm{x-x_0}\le q^{-m}} \frac{\theta(x^2+bx+c)}{\sqrt{\norm{x^2+bx+c}}}\norm{dx}.
 $$
 Making the change of variable $u=\frac{x+\frac{b}{2}}{x_0+\frac{b}{2}}$, we see that 
 $$
 J_m(x_0,b,c)=J_{m-k}\left(\frac{c-\tfrac{b^2}{4}}{(x_0+\frac{b}{2})^2}\right).
 $$
  This implies the statement. 
 \end{proof} 

Let 
$$
D=D_t(y,z):=16y(y-1)(y-t)z(z-1)(z-t)=(b^2-4c)(y-z)^4
$$ 
and
$$
P=P_t(x_0,y,z):=(x_0+\tfrac{b}{2})(y-z)^2=x_0(y-z)^2+2(1+t)yz-(y+z)(t+yz).
$$
Then
$$
\frac{c-\frac{b^2}{4}}{(x_0+\frac{b}{2})^2}=-\frac{D}{4P^2}.
$$
Thus Lemma \ref{intformu} implies 

\begin{corollary} If $y\ne z$ and $x_0+\frac{b}{2}\ne 0$ then
 $$
K_{x_0,m}(y,z)=\frac{2J_{m-k}(-\tfrac{D}{4P^2})}{\norm{y-z}}.
$$ 
\end{corollary}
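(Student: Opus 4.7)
The plan is to apply Lemma \ref{intformu} to the reduced polynomial $x^2+bx+c$ and then verify a single algebraic identity that rewrites the resulting argument of $J_{m-k}$ in the symmetric form $-D/(4P^2)$.

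First I would observe that when $y \ne z$, the factorization
$$
f_t(x,y,z) = (y-z)^2(x^2 + bx + c)
$$
(valid by the definitions of $b,c$ given just before Lemma \ref{intformu}) allows me to pull the constant factor $\norm{(y-z)^2}^{1/2} = \norm{y-z}$ out of the integrand defining $K_{x_0,m}(y,z)$. Since $\theta(\alpha^2 u) = \theta(u)$ for $\alpha \in F^\times$, Lemma \ref{intformu} applied to the polynomial $x^2+bx+c$ directly yields
$$
K_{x_0,m}(y,z) = \frac{2\, J_{m-k}\!\left(\dfrac{c - b^2/4}{(x_0 + b/2)^2}\right)}{\norm{y-z}},
$$
where $k = -\log_q\norm{x_0 + b/2}$ is finite thanks to the hypothesis $x_0 + b/2 \ne 0$.

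What is left is to identify the argument of $J_{m-k}$ with $-D/(4P^2)$. Since $P = (x_0 + b/2)(y-z)^2$ by definition, multiplying numerator and denominator by $(y-z)^4$ reduces the claim to the polynomial identity
$$
(b^2 - 4c)(y-z)^4 \;=\; 16\, y(y-1)(y-t)\, z(z-1)(z-t) \;=\; D.
$$
This is precisely the statement that the discriminant of $f_t(\cdot,y,z)$, viewed as a quadratic polynomial in $x$, equals $16\, y(y-1)(y-t)\, z(z-1)(z-t)$.

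I expect this discriminant computation to be the only step of any real content, but it is not a genuine obstacle: the analogous discriminant of $f_t$ as a quadratic in the variable $z$, namely $16\, x(x-1)(x-t)\, y(y-1)(y-t)$, was already established and used in Subsection \ref{The Hecke correspondence and Hecke operators} to locate the branching of the map $p_{1y}$, and the full $\Bbb S_3$-symmetry of $f_t$ in $(x,y,z)$ -- manifest from the closed form $f_t = (xy+xz+yz-t)^2 + 4(1+t-x-y-z)xyz$ recalled after Proposition \ref{p1} -- transports that computation to the variable $x$. Substituting this identity into the output of Lemma \ref{intformu} gives the claimed formula for $K_{x_0,m}(y,z)$.
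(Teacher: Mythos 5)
Your proposal is correct and follows essentially the same route as the paper: apply Lemma \ref{intformu} and then rewrite the argument of $J_{m-k}$ using the definitions of $D$ and $P$. The only addition is that you justify the algebraic identity $(b^2-4c)(y-z)^4 = D$ via the discriminant computation from Subsection \ref{Boundedness and compactness} and the $\Bbb S_3$-symmetry of $f_t$, which the paper asserts implicitly in the chain of equalities defining $D$.
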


Let us now proceed with computation of $J_m(c)$. 

{\bf Case 1: $m>0$.} In this case, if $\norm{x-1}\le q^{-m}$ then $\norm{x}=1$. 
Note also that 
$$
x^2+c=(x+1)(x-1)+(1+c)
$$ 
and $\norm{x+1}=1$. Thus we have 
the following cases. 

 {\bf Case 1a.} $ \norm{1+c}=q^{-r}>q^{-m}$. Then
 we have 
 $$
 J_m(c)=q^{-m}\frac{\theta(1+c)}{\sqrt{\norm{1+c}}}.
 $$
 In particular, if $\norm{1+c}>1$ (or, equivalently, $\norm{c}> 1$) then 
$$
J_m(c)=q^{-m}\frac{\theta(c)}{\sqrt{\norm{c}}}.
$$

 {\bf Case 1b.} $\norm{1+c}=q^{-r}\le q^{-m}$. In this case we'll need the following lemma. 
 
 \begin{lemma}\label{leee} Let $\norm{a}=q^{-r}<1$. Then 
 $$
 \int_{\norm{u}=q^{-r}}\frac{\theta(u+a)}{\sqrt{\norm{u+a}}}=\tfrac{1}{2}(1-q^{-1})\sum_{\ell=r}^\infty \frac{1+(-1)^\ell}{2}q^{-\frac{\ell}{2}}-q^{-\frac{r}{2}-1}\theta(a). 
 $$
 \end{lemma}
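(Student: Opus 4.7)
The plan is to substitute $w = u + a$ so that the integral becomes $\int_{\norm{w-a}=q^{-r}} \frac{\theta(w)}{\sqrt{\norm{w}}}\,\norm{dw}$, and then decompose the domain of integration by $\ell := -\log_q \norm{w}$. Since $\norm{a} = q^{-r}$, the ultrametric inequality applied to $u = w - a$ forces $\ell \ge r$: if $\ell < r$ then $\norm{w-a} = \norm{w} = q^{-\ell} \ne q^{-r}$. Moreover, for $\ell > r$ the condition $\norm{w-a}=q^{-r}$ is automatic once $\norm{w}=q^{-\ell}$, while for $\ell = r$ it additionally excludes the open disk $D := a + \pi^{r+1}\O_F$ of measure $q^{-r-1}$.

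I would then compute, for each $\ell \ge r$, the measure of the squares of norm exactly $q^{-\ell}$. Writing $w = \pi^\ell w'$ with $w' \in \O_F^\times$, the element $w$ is a square in $F$ iff $\ell$ is even and $w'$ is a square unit; because $p \ne 2$, Hensel's lemma identifies $\O_F^\times/(\O_F^\times)^2$ with $\Bbb F_q^\times/(\Bbb F_q^\times)^2$, which has order two. Hence the squares of norm $q^{-\ell}$ have measure $\tfrac{1}{2}(1-q^{-1})q^{-\ell}$ when $\ell$ is even and are empty when $\ell$ is odd. Since the integrand is constantly $q^{\ell/2}$ on this locus, the contribution of a single $\ell > r$ is exactly $\tfrac{1+(-1)^\ell}{2}\cdot\tfrac{1-q^{-1}}{2}\, q^{-\ell/2}$, which after summing over $\ell > r$ reproduces all but the $\ell = r$ term of the series on the right-hand side.

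To finish the proof I would handle the remaining boundary case $\ell = r$, which accounts for the correction $-q^{-r/2-1}\theta(a)$. When $r$ is odd both the parity factor $\tfrac{1+(-1)^r}{2}$ and $\theta(a)$ vanish (no element of norm $q^{-r}$ is a square in this case), so there is nothing to check. When $r$ is even, the contribution is the previous formula $\tfrac{1-q^{-1}}{2}q^{-r/2}$ minus the contribution from the excluded disk $D$: applying Hensel's lemma once more, $D$ lies inside the set of squares if $a$ itself is a square, and is disjoint from the squares otherwise, so the excluded mass is $\theta(a) q^{-r-1}$, yielding the correction $-q^{r/2} \cdot q^{-r-1}\theta(a) = -q^{-r/2-1}\theta(a)$. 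The only mildly delicate point is this dichotomy at $\ell = r$, which rests squarely on $p \ne 2$ so that square-ness on a residue disk is detected modulo $\pi$; everything else is elementary bookkeeping.
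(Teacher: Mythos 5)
Your proposal is correct and rests on the same two ingredients as the paper's proof: the measure of the squares on each sphere $\norm{w}=q^{-\ell}$ (namely $\tfrac{1}{2}(1-q^{-1})q^{-\ell}$ for even $\ell$, $0$ for odd, via Hensel's lemma and $p\ne 2$), and the observation that square-ness on the residue disk $a+\pi^{r+1}\O_F$ is decided by $\theta(a)$. The only difference is organizational. The paper computes the sphere integral as a difference of two ball integrals: the translate $u\mapsto u+a$ maps the closed ball $\norm{u}\le q^{-r}$ to itself (since $\norm{a}\le q^{-r}$), so $\int_{\norm{u}\le q^{-r}}\frac{\theta(u+a)}{\sqrt{\norm{u+a}}}=\int_{\norm{v}\le q^{-r}}\frac{\theta(v)}{\sqrt{\norm{v}}}$ in one line, and on the inner ball $\norm{u}\le q^{-r-1}$ the integrand is the constant $q^{r/2}\theta(a)$; subtracting gives the result. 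You instead stratify the sphere $\norm{u}=q^{-r}$ directly by $\ell=-\log_q\norm{u+a}$ and carve the removed disk $D$ out of the boundary stratum $\ell=r$. Your $\ell>r$ strata reproduce the paper's first ball integral minus its $\ell=r$ term, and your correction at $\ell=r$ is exactly the paper's inner-ball integral, so the bookkeeping is the same computation in a different order. The paper's version is slightly more economical because the closed-ball integral is disposed of by a one-line translation argument, but your direct stratification is an equally valid and arguably more self-explanatory way to arrive at the subtractive form of the answer.
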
 
 
 \begin{proof} We have 
 $$
 \int_{\norm{u}\le q^{-r}}\frac{\theta(u+a)}{\sqrt{\norm{u+a}}}=
 \int_{\norm{u}=q^{-r}}\frac{\theta(u)}{\sqrt{\norm{u}}}=\tfrac{1}{2}(1-q^{-1})\sum_{\ell=r}^\infty \frac{1+(-1)^\ell}{2}q^{-\frac{\ell}{2}}.
 $$
On the other hand, 
$$
 \int_{\norm{u}\le q^{-r-1}}\frac{\theta(u+a)}{\sqrt{\norm{u+a}}}=q^{-\frac{r}{2}-1}\theta(a).
 $$
 Subtracting, we obtain the desired statement. 
 \end{proof} 
 
 Now, we have 
 $$
 J_m(c)=\sum_{\ell=m}^\infty \int_{\norm{x-1}=q^{-\ell}} \frac{\theta(x^2+c)}{\sqrt{\norm{x^2+c}}}\norm{dx}.
 $$
 Splitting the sum into three parts $\ell<r$, $\ell=r$, $\ell>r$, we get 
 $$
J_m(c)=\tfrac{1}{2}(1-q^{-1})\sum_{\ell=m}^{r-1} \frac{1+(-1)^\ell}{2}q^{-\frac{\ell}{2}}+
\int_{\norm{x-1}=q^{-r}} \frac{\theta(x^2+c)}{\sqrt{\norm{x^2+c}}}\norm{dx}+ q^{-\frac{r}{2}-1}\theta(1+c).
 $$
 To compute the integral, we use the change of variable $x^2-1=u$ and Lemma \ref{leee}. 
 Then we get  
 $$
J_m(c)= \tfrac{1}{2}(1-q^{-1})\sum_{\ell=m}^{r-1} \frac{1+(-1)^\ell}{2}q^{-\frac{\ell}{2}} 
+\tfrac{1}{2}(1-q^{-1})\sum_{\ell=r}^\infty \frac{1+(-1)^\ell}{2}q^{-\frac{\ell}{2}}-q^{-\frac{r}{2}-1}\theta(1+c)+
 q^{-\frac{r}{2}-1}\theta(1+c)
=
$$
$$
\tfrac{1}{2}(1-q^{-1})\sum_{\ell=m}^{\infty} \frac{1+(-1)^\ell}{2}q^{-\frac{\ell}{2}}. 
$$
Thus 
$$
J_m(c)=\tfrac{1}{2}q^{-\frac{m}{2}}
$$
if $m$ is even and 
$$
J_m(c)=\tfrac{1}{2}q^{-\frac{m+1}{2}}
$$
if $m$ is odd. 

{\bf Case 2: $m\le 0$.} In this case we have 
$$
J_m(c)=\int_{\norm{x}\le q^{-m}} \frac{\theta(x^2+c)}{\sqrt{\norm{x^2+c}}}\norm{dx}.
$$

Setting $x=\pi^m y$, where $\pi\in F$ is a uniformizer, we get 
$$
J_m(c)=\int_{\norm{y}\le 1} \frac{\theta(y^2+c\pi^{-2m})}{\sqrt{\norm{y^2+c\pi^{-2m}}}}\norm{dy}=J_0(c\pi^{-2m}). 
$$
So we need to compute $J_0(c)$. 

{\bf Case 2a.} If $\norm{c}>1$ then we have 
$$
J_0(c)=\frac{\theta(c)}{\sqrt{\norm{c}}}.
$$

{\bf Case 2b.} If $\norm{c}=q^{-2r+1}<1$ then
$$
J_0(c)=(1-q^{-1})r=-\tfrac{1}{2}(1-q^{-1})(\log_q\norm{c}-1). 
$$
On the other hand, if $\norm{c}=q^{-2r}\le 1$ then we have 
$$
J_0(c)=-\tfrac{1}{2}(1-q^{-1})\log_q\norm{c}+\int_{\norm{x}=q^{-r}}\frac{\theta(x^2+c)}{\sqrt{\norm{x^2+c}}}\norm{dx}+
q^{-1}\theta(c). 
$$
We also have 
$$
\int_{\norm{x}=q^{-r}}\frac{\theta(x^2+c)}{\sqrt{\norm{x^2+c}}}\norm{dx}=
\int_{\norm{y}=1}\frac{\theta(y^2+c\pi^{-2r})}{\sqrt{\norm{y^2+c\pi^{-2r}}}}\norm{dy}.
$$
We will now use the following lemma. 

\begin{lemma}\label{leee2} Let $\norm{a}=1$. Then 
$$
\int_{\norm{y}=1}\frac{\theta(y^2-a)}{\sqrt{\norm{y^2-a}}}\norm{dy}=\tfrac{1}{2}(1-q^{-1})-q^{-1}\theta(-a).
$$  
\end{lemma}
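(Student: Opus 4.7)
The plan is to reduce the integral to a quadratic character sum on the residue field. Let $\chi\colon\mathbb{F}_q^\times\to\{\pm 1\}$ be the quadratic character, extended by $\chi(0)=0$. Since $p\ne 2$, Hensel's lemma gives $\theta(w)=\tfrac{1}{2}(1+\chi(\bar w))$ for any unit $w\in\mathcal{O}^\times$, where $\bar w$ denotes reduction mod $\pi$; in particular $\chi(-\bar a)=2\theta(-a)-1$. The key input is the classical quadratic character sum
\[
\sum_{\bar y\in\mathbb{F}_q}\chi(\bar y^2-\bar a)=-1 \qquad (\bar a\ne 0),
\]
which yields $\sum_{\bar y\in\mathbb{F}_q^\times}\chi(\bar y^2-\bar a)=-1-\chi(-\bar a)=-2\theta(-a)$.

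First consider the case $\theta(a)=0$. Then $\bar y^2-\bar a\ne 0$ for every $\bar y\in\mathbb{F}_q$, so $y^2-a$ is a unit on the whole sphere $\norm{y}=1$, the factor $\sqrt{\norm{y^2-a}}=1$, and $\theta(y^2-a)$ is constant on each residue class mod $\pi$. Summing $q^{-1}\cdot\tfrac{1}{2}(1+\chi(\bar y^2-\bar a))$ over $\bar y\in\mathbb{F}_q^\times$ and applying the two identities above yields exactly $\tfrac12(1-q^{-1})-q^{-1}\theta(-a)$.

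Next consider $\theta(a)=1$, so $a=c^2$ with $\norm{c}=1$, and $y^2-a=(y-c)(y+c)$ vanishes on the two distinct residue classes $\bar y=\pm\bar c$. The contribution of the remaining $q-3$ residue classes is handled exactly as in the previous case and evaluates to $q^{-1}\cdot\tfrac{1}{2}(q-4-\chi(-\bar a))$. For each of the two ``vanishing'' classes I will substitute $u=y\mp c$ and then perform the Hensel-invertible, measure-preserving change of variables $w=u(u\pm 2c)$ on $\norm{u}\le q^{-1}$ (well-defined because $2c$ is a unit), which reduces each such contribution to
\[
\int_{\norm{w}\le q^{-1}}\frac{\theta(w)}{\sqrt{\norm{w}}}\norm{dw}.
\]
Decomposing this into annuli $\norm{w}=q^{-\ell}$ and using that $\theta$ vanishes on elements of odd valuation while taking value $1$ on exactly half the measure of each even-valuation annulus, the integral telescopes to a geometric series whose value is $q^{-1}/2$. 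Adding the two boundary contributions to obtain $q^{-1}$ and combining with the main sum gives once more $\tfrac12(1-q^{-1})-q^{-1}\theta(-a)$.

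The only nontrivial ingredient is the classical quadratic character sum, which I use as a black box; the remaining steps are direct manipulations with Hensel's lemma, the substitution $u\mapsto u(u\pm 2c)$, and geometric series, so I anticipate no serious obstacle beyond bookkeeping the two residue cases carefully.
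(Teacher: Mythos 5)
Your proof is correct. The overall architecture matches the paper's: split on whether $a$ is a square, and in the square case separate the two residue discs around $\pm\sqrt{a}$ from the rest; for each boundary disc, reduce via a Hensel-type substitution to the model integral $\int_{\norm{w}\le q^{-1}}\theta(w)\norm{w}^{-1/2}\norm{dw}=\tfrac{1}{2}q^{-1}$, exactly as the paper does (it uses $x=2\sqrt{a}(y-\sqrt{a})$ where you use $w=u(u\pm 2c)$; these are the same move up to a unit factor). Where you genuinely diverge is in the evaluation of the "bulk" contribution over the residue classes where $y^2-a$ is a unit. The paper counts $\mathbb{F}_q$-points on the conic $y^2-x^2=a$ directly via the factorization $(y-x)(y+x)=a$, getting $q-1$ solutions and then subtracting the forbidden ones. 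You instead write $\theta(w)=\tfrac{1}{2}(1+\chi(\bar w))$ and invoke the classical character sum $\sum_{\bar y}\chi(\bar y^2-\bar a)=-1$. These encode the same point count, but your version packages it so that the non-square and square cases are handled by one uniform calculation (only the two "vanishing" classes need extra treatment), which is a bit cleaner; the paper's direct count is more elementary in that it avoids quoting the character-sum identity but requires redoing the bookkeeping in each case. Both give the same answer, and your arithmetic checks out: in the square case $\tfrac{q^{-1}}{2}(q-4-\chi(-\bar a))+q^{-1}=\tfrac{q^{-1}}{2}(q-1-2\theta(-a))$, which matches the paper's $\tfrac{q^{-1}}{2}(q-3-2\theta(-a))+q^{-1}$.
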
 

\begin{proof} Denote the integral in question by $I$. Assume first that $a$ is a non-square. Then $\norm{y^2-a}=1$ for all $y$ with $\norm{y}=1$. So $I=q^{-1}N$, where 
$N$ is the number of $y\in \Bbb F_q^\times$ such that $y^2-a=x^2$ for some $x$. 
Then the number of solutions of the equation $y^2-a=x^2$ in $\Bbb F_q$ such that $y\ne 0$ is $2N$. 
But this equation can be written as $(y-x)(y+x)=a$, and $y-x$ can be chosen any nonzero element in $\Bbb F_q$, which completely determines $y+x$, hence $y$ and $x$. However, we need to exclude the case $y=0$, which gives two solutions iff $-a$ is a square. 
Thus $2N=q-1-2\theta(-a)$, which implies the statement. 

Now assume that $a$ is a square. Then we have 
$$
I=\left(\int_{\norm{y}=1, y\ne \pm \sqrt{a}\text{ mod }\pi}+\int_{y=\sqrt{a}\text{ mod }\pi}+
\int_{y=-\sqrt{a}\text{ mod }\pi}\right)\frac{\theta(y^2-a)}{\sqrt{\norm{y^2-a}}}\norm{dy}.
$$
The first integral equals $\frac{1}{2}q^{-1}$ times the number of solutions of the equation $y^2-a=x^2$ over $\Bbb F_q$ excluding $(0,\pm \sqrt{a})$ with $y\ne 0$, which is $\frac{1}{2}q^{-1}(q-3-2\theta(-a))$. 
On the other hand, 
$$
\int_{y=\sqrt{a}\text{ mod }\pi}\frac{\theta(y^2-a)}{\sqrt{\norm{y^2-a}}}\norm{dy}=
\int_{y=\sqrt{a}\text{ mod }\pi}\frac{\theta(2\sqrt{a}(y-\sqrt{a}))}{\sqrt{\norm{y-\sqrt{a}}}}\norm{dy}=
\int_{\norm{x}\le q^{-1}}\frac{\theta(x)}{\sqrt{\norm{x}}}\norm{dx}=\tfrac{1}{2}q^{-1}. 
$$
This implies the statement. 
\end{proof} 

Lemma \ref{leee2} implies that
$$
\int_{\norm{x}=q^{-r}}\frac{\theta(x^2+c)}{\sqrt{\norm{x^2+c}}}\norm{dx}=
\tfrac{1}{2}(1-q^{-1})-q^{-1}\theta(c). 
$$
Thus 
$$
J_0(c)=-\tfrac{1}{2}(1-q^{-1})(\log_q\norm{c}-1).
$$
We see that this formula in fact holds for both odd and even powers of $q$. 

\begin{proposition}\label{locfi} The kernel $K_{x_0,m}(y,z)$ is locally of finite rank. 
That is, for any $y_0,z_0\in \Bbb P^1(F)$ there exists $\varepsilon>0$ 
such that when $\norm{y-y_0}<\varepsilon,\norm{z-z_0}<\varepsilon$, 
we can write $K_{x_0,m}(y,z)$ in the form 
$$
K_{x_0,m}(y,z)=\sum_{i=1}^N a_i(y)b_i(z).
$$ 
\end{proposition}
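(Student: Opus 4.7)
The plan is to show that on any compact polydisk neighborhood $U\times V$ of $(y_0,z_0)$ the function $K_{x_0,m}(y,z)$ takes only finitely many distinct values, and that each of its level sets is a finite union of ``rectangles'' $D_y\times D_z$; since the characteristic function of a rectangle factors as $\mathbf{1}_{D_y}(y)\,\mathbf{1}_{D_z}(z)$, this immediately presents $K_{x_0,m}$ as a finite sum $\sum_i a_i(y)b_i(z)$.

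The cases with $y_0,z_0\in\{0,t,1,\infty\}$ have already been handled at the beginning of this subsection by direct inspection of the factorizations of $f_t(x,y_0,z_0)$ as a perfect square, and the $(\Bbb Z/2)^2$-symmetry of $H_{x_0,m}$ permuting the parabolic points reduces any mixed-type case to a generic one. So assume $(y_0,z_0)$ lies away from those axes. Shrinking $U\times V$ if necessary, we may arrange that $\norm{y-z}$, $\norm{P(x_0,y,z)}$, and $\norm{y(y-1)(y-t)z(z-1)(z-t)}$ are constant on $U\times V$; in particular the integer $k=-\log_q\norm{P/(y-z)^2}$ is constant on $U\times V$, and by Lemma \ref{intformu}
$$K_{x_0,m}(y,z)=\frac{2J_{m-k}(c(y,z))}{\norm{y-z}},\qquad c=-\frac{D}{4P^2}.$$

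Inspecting the explicit formulas for $J_n(c)$ computed above, the value $J_n(c)$ is determined by the (locally constant) integer $n=m-k$ together with the discrete invariants $\theta(c)$, $\theta(1+c)$, $\log_q\norm{c}$ and $\log_q\norm{1+c}$. The first three are locally constant on $U\times V$ once $\norm{c}$ is constant. For $\norm{1+c}$ one uses the identity $4P^2-D=4(y-z)^2 f_t(x_0,y,z)$, so that $1+c=(y-z)^2 f_t(x_0,y,z)/P^2$, whence $\norm{1+c}$ is proportional to $\norm{f_t(x_0,y,z)}$. In the Case~1a regime the defining inequality $\norm{1+c}>q^{-(m-k)}$ becomes $\norm{f_t(x_0,y,z)}>q^{-m}\norm{P}$, and this traps $\norm{f_t}$ between the positive lower bound and the supremum of $\norm{f_t}$ on the compact set $U\times V$, leaving only finitely many possible values. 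In the Case~1b regime $J_{m-k}(c)$ is a constant depending only on the parity of $m-k$, so $\norm{1+c}$ drops out entirely. An analogous dichotomy handles Case~2 for $m\le 0$. Consequently $K_{x_0,m}$ takes only finitely many values on $U\times V$.

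It remains to note that each level set of $K_{x_0,m}$ on $U\times V$ is cut out by conditions on the continuous functions $\norm{f_t(x_0,y,z)}$ and $\theta(f_t(x_0,y,z))$ (each of which has image in a discrete set and so has clopen level sets), and is therefore clopen in the $F$-analytic topology. Any clopen subset of the compact ultrametric space $U\times V$ is a finite disjoint union of basic clopen rectangles $D_y\times D_z$, by a standard compactness argument, which completes the proof. The subtle point is what happens along the Hecke correspondence $\{f_t(x_0,y,z)=0\}$, where $\norm{f_t}$ takes infinitely many values on any neighborhood; this is precisely where Case~1b applies, and the fact that $J_{m-k}(c)$ becomes independent of $\norm{1+c}$ in that regime is exactly what rescues the finiteness.
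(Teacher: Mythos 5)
The overall plan — show $K_{x_0,m}$ is locally constant (hence of rank one) wherever possible and control the exceptional locus — is the same one the paper follows, but there is a genuine gap in the way you dispose of the exceptional locus, and your reduction step does not cover it.

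You reduce to the ``generic'' case $y_0,z_0\notin\{0,1,t,\infty\}$ by invoking (a) the opening computation of the subsection for $y_0,z_0\in\{0,1,t,\infty\}$, and (b) the $(\Bbb Z/2)^2$-symmetry. But (a) handles only the case where \emph{both} coordinates are parabolic points, and (b) cannot help with the mixed case where exactly one of $y_0,z_0$ lies in $\{0,1,t,\infty\}$: the involutions $S_i$ permute the set $\{0,1,t,\infty\}$ among itself, so any $g\in(\Bbb Z/2)^2$ maps a parabolic point to a parabolic point and a non-parabolic point to a non-parabolic point. A mixed pair stays mixed. These mixed points are precisely the eight points listed in the remark following the proposition (e.g.\ $(0,\tfrac{t}{x_0})$), and the paper shows $K_{x_0,m}$ is \emph{not} locally constant there.

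The failure is not cosmetic. At a mixed point one has $D_t(y_0,z_0)=0$ with $\norm{D}$ not locally constant, so $\norm{c}=\norm{D/4P^2}$ is not locally constant either, and your assertion that ``the first three [$\theta(c)$, $\theta(1+c)$, $\log_q\norm{c}$] are locally constant on $U\times V$ once $\norm{c}$ is constant'' is vacuous. In the relevant Case~2b regime $J_{m-k}(c)$ depends affinely on $\log_q\norm{c}=\log_q\norm{D}-\log_q\norm{4P^2}$, which tends to $-\infty$ as you approach the mixed point, so $K_{x_0,m}$ takes infinitely many values on every neighborhood and your level-set/clopen-rectangle decomposition cannot be finite. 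The paper's fix is different in kind: it does not aim for local constancy but notes that $\log_q\norm{D}$ \emph{separates} as a sum $\log_q\norm{16y(y-1)(y-t)}+\log_q\norm{z(z-1)(z-t)}$, so the Case~2b formula is a sum of a function of $y$, a function of $z$, and a constant — rank at most three, hence still locally finite rank. You will need this (or an equivalent separation-of-variables observation) to close the gap; the ``finitely many values'' strategy by itself cannot succeed at the mixed points.

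One further small point: you should also verify the diagonal case $y_0=z_0\notin\{0,1,t,\infty\}$ separately, as the paper does in its Case~C. There $\norm{y-z}$ cannot be made constant by shrinking the neighborhood, so this too falls outside your ``shrink so that $\norm{y-z}$ is constant'' setup; the paper handles it by showing one lands in Case~1b, where the dependence on $\norm{1+c}$ drops out. Your final paragraph does gesture at the Case~1b escape, so this part is more a matter of making the argument explicit than of a missing idea.
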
 

\begin{proof} We first consider the case $y\ne z$, so $\norm{y-z}>0$. 

{\bf Case A.} $\norm{x_0+\frac{b}{2}}>q^{-m}$, i.e., $k<m$. This is equivalent to  
$$
\norm{P}>q^{-m}\norm{y-z}^2
$$ 
and means we are in Case 1. Then $K_{x_0,m}$ is locally constant, as desired.  

{\bf Case B.} $\norm{x_0+\frac{b}{2}}\le q^{-m}$, 
i.e., $k\ge m$. This is equivalent to the condition that
$$
\norm{P}\le q^{-m}\norm{y-z}^2.
$$ 
This means we are in Case 2.  Then the condition for Case 2a is 
$$
\norm{D}> q^{-2m}\norm{y-z}^4
$$ 
which means that $K_{x_0,m}$ is locally constant. 
So it remains to consider the case  
$$
\norm{D}\le q^{-2m}\norm{y-z}^4,
$$ 
which means we are in Case 2b. Then it is easy to check that 
$K_{x_0,m}$ is locally constant unless $D=0$, in which case it is not locally constant but is the product of a function of $y$ and a function of 
$z$ (as so is $D$), so still of finite rank. 

 Finally, consider the case $y=z$. 

{\bf Case C.} $y=z$. In this case $P=-2y(y-1)(y-t)$. So if $P=0$ then we get $y(y-1)(y-t)=0$ 
so $y=z=0,1,t$, a case that has already been considered. 
Thus we may restrict to the case when $\norm{P}>0$. 
In this case $K_{x_0,m}$ is locally constant unless 
$f_t(x_0,y,y)=0$. On the other hand, if $f_t(x_0,y,y)=0$, assume that 
$\norm{P}=q^{-s}$. Then near this point $\norm{x+\frac{b}{2}}\norm{y-z}^{2}=q^{-s}$, 
i.e., $q^{-k}\norm{y-z}^{2}=q^{-s}$, which yields that $\norm{y-z}=q^{\frac{k-s}{2}}$. In particular, this implies that $k-s$ is even, so the parity of $m-k$ is the same as the parity of $m-s$ (a fixed number). Also 
in this case $1-\frac{D}{P^2}=0$, so near this point we are in Case 1b. 
Thus in this case $K_{x_0,m}$ is also locally constant. 
\end{proof} 

This completes the proof of Theorem \ref{molli}. 

\begin{remark} This proof shows that $K_{x_0,m}$ fails to be locally constant near $(y,z)$ 
if and only if $D=0$ and $\norm{P}\le q^{-m}\norm{x-y}^2$. As $m\to \infty$, 
this set shrinks to the following 8 points: 
$$
(0,\tfrac{t}{x_0}),\ (1,\tfrac{x_0-t}{x_0-1}),\ (t,\tfrac{t(x_0-1)}{x_0-t}),\ (\infty, x_0),\ 
(\tfrac{t}{x_0},0),\ (\tfrac{x_0-t}{x_0-1},1),\ (\tfrac{t(x_0-1)}{x_0-t},t),\ (x_0,\infty).
$$
\end{remark} 

\subsection{Computation of eigenvalues of Hecke operators} \label{nonar2}
We would now like to compute the first ``batch" of eigenvalues of 
the Hecke operators, namely the eigenvalues on the finite dimensional space generated under the Hecke operators by the characteristic functions of 
balls of radius $q^{-1}$. We will show that this space has dimension $q+5$ and relate the eigenvalues of the Hecke operators on this space to eigenvalues 
of the usual Hecke operators over the finite field $\Bbb F_q$. 

\subsubsection{Computation of $K_{x,1}(y,z)$.} 
Let $\bold 1_x$ be the indicator function of the ball of radius $q^{-1}$ around $x\in F$, and let us compute $H_z\bold 1_x$, where $\norm{z}=1$, 
$z\ne 0,1,t$ mod $\pi$ (where, as before, $\pi\in F$ is the uniformizer). 
So we may assume that $\norm{x}\ge 1$. We also recall that 
$$
(y-z)^2f_t(x,y,z)=4P^2-D. 
$$

We have 
$$
(H_z\bold 1_x)(y)=K_{x,1}(y,z).
$$
So, as shown in the previous subsection, generically we get 
$$
(H_z\bold 1_x)(y)=\frac{2J_{1-k}(-\tfrac{D}{4P^2})}{\norm{y-z}}.
$$
where $k=-\log_q\norm{x+\frac{b}{2}}$. 
Thus we have the following cases, corresponding to the 
cases with the same numbers considered in the previous subsection. 

{\bf Case 1.} $\norm{P_t(x,y,z)}\ge \norm{y-z}^2$.  

{\bf Case 1a.} $\norm{f_t(x,y,z)}\ge \norm{P_t(x,y,z)}$. Then
 we have 
 $$
 (H_z\bold 1_x)(y)=\frac{2q^{-1}\theta(f_t(x,y,z))}{\sqrt{\norm{f_t(x,y,z)}}}.
 $$
 
 {\bf Case 1b.} $\norm{f_t(x,y,z)}\le q^{-1} \norm{P_t(x,y,z)}$. Then we have 
 $$
  (H_z\bold 1_x)(y)=\frac{q^{-s}\norm{y-z}}{\sqrt{\norm{P_t(x,y,z)}}},
 $$ 
 where $s=1$ if $\norm{P}$ is an even power of $q$ and 
 $s=\frac{1}{2}$ if $\norm{P}$ is an odd power of $q$. 
  
{\bf Case 2.} $\norm{P_t(x,y,z)}\le q^{-1} \norm{y-z}^2$.  
In this case, we have the following cases. 

{\bf Case 2a.} If $\norm{D_t(y,z)}\ge q^{-1}\norm{y-z}^4$ (equivalently, $\norm{f_t(x,y,z)}\ge q^{-1}\norm{y-z}^2$) then we have 
$$
(H_z\bold 1_x)(y)=\frac{2q^{-1}\theta(f_t(x,y,z))}{\sqrt{\norm{f_t(x,y,z)}}}.
$$

{\bf Case 2b.} If $\norm{D_t(y,z)}\le q^{-2}\norm{y-z}^4$ then
$$
(H_z\bold 1_x)(y)=\frac{-(1-q^{-1})(\log_q\norm{\tfrac{D_t(y,z)}{(y-z)^4}}+1)}{\norm{y-z}}.
$$

\subsubsection{Diagonalization of Hecke operators} 
Let $\mathcal O=\mathcal O_{F}$ be the ring of integers of $F$. 
For $x\in \mathcal O$ denote by $x_0$ its reduction to the residue field $\Bbb F_q$. 
Note that the function $\bold 1_x$ 
depends only on $x_0$. 
Let $t\in \mathcal O$ be such that $t_0\ne 0,1$. Let 
$z\in \mathcal O$, $z_0\ne 0,1,t_0$, and $x\in \mathcal O$. 
Assume first that $\norm{y}\le 1$. We have $\norm{y-z}\le 1$ while $\norm{P_t(x,y,z)}\le 1$, $\norm{f_t(x,y,z)}\le 1$. 

So if $P_{t_0}(x_0,y_0,z_0)\ne 0$ and $f_{t_0}(x_0,y_0,z_0)\ne 0$ 
then we are in Case 1a and 
$$ 
(H_z\bold 1_x)(y)=2q^{-1}\theta(f_{t_0}(x_0,y_0,z_0)).
$$
However, if $P_{t_0}(x_0,y_0,z_0)\ne 0$ but $f_{t_0}(x_0,y_0,z_0)=0$ then we are in Case 1b and 
$$ 
(H_z\bold 1_x)(y)=q^{-1}.
$$

On the other hand, if $P_{t_0}(x_0,y_0,z_0)=0$ (which necessarily implies $y_0\ne z_0$) then 
we have $\norm{D_t(y,z)}\le 1$. So if $\norm{D_t(y,z)}=1$ (i.e., $y_0\ne 0,1,t_0$) then we are in Case 2a and we have 
$$ 
(H_z\bold 1_x)(y)=2q^{-1}\theta(f_{t_0}(x_0,y_0,z_0))=2q^{-1}\theta(-D_{t_0}(y_0,z_0)).
$$
If $\norm{D_t(y,z)}\le q^{-1}$ (i.e., $y_0=0,1,t_0$) then we are in Case 2a ($\norm{D_t(y,z)}=q^{-1}$) or Case 2b ($\norm{D_t(y,z)}\le q^{-2}$) but in both cases we have 
$$
(H_z\bold 1_x)(y)=-(1-q^{-1})(\log_q\norm{y-r}+1).
$$
where $r=0,1,t$ respectively. 

Now assume $\norm{y}\ge q$. 
Then $\norm{y-z}=\norm{y}$, $\norm{P_t(x,y,z)}\le \norm{y}^2$, $\norm{D_t(x,y,z)}=\norm{y}^3$, 
so if $\norm{P_t(x,y,z)}=\norm{y}^2$ (i.e., $x_0\ne z_0$)
we are in Case 1a and 
$$
(H_z\bold 1_x)(y)=\frac{2q^{-1}}{\norm{y}},
$$
while if $\norm{P_t(x,y,z)}\le q^{-1}\norm{y}^2$ (i.e., $x_0=z_0$) then 
if $\norm{y}=q$ we are in Case 2a and if $\norm{y}\ge q^2$ then we are in Case 2b, but in both cases we have 
 $$
(H_z\bold 1_x)(y)=\frac{-(1-q^{-1})(\log_q\norm{y^{-1}}+1)}{\norm{y}}.
$$

For $r=0,1,t$ let us introduce the functions 
$$
\phi_r:=-(1-q^{-1})(\log_q\norm{y-r}+1), \norm{y-r}\le q^{-1}
$$
and vanishing otherwise. We also have a similar function $\phi_\infty$ which differs by dividing by $\norm{y}$ (to account for the fact that we have half-forms):
$$
\phi_\infty:=\frac{-(1-q^{-1})(\log_q\norm{y^{-1}}+1)}{\norm{y}}, \norm{y}\ge q
$$
and vanishing otherwise. Also we introduce the function 
$$
\bold 1_\infty(y)=\frac{1}{\norm{y}}\text{ if }\norm{y}\ge q
$$ 
and zero otherwise. 
Then we have 
$$
H_z\bold 1_x=\sum_{j\in \Bbb P^1(\Bbb F_q)} a_{x_0j}\bold 1_y+\sum_{r=0,1,t,\infty}b_{x_0r}\phi_r, 
$$
where the coefficients $a_{ij},b_{ir}$ are as follows. 

{\bf Case A.} $i\ne 0,1,t_0,\infty$.  

{\bf Case A1.} If $j\ne 0,1,t_0,\infty$ then if $f_{t_0}(i,j,z_0)\ne 0$, we have 
$$
a_{ij}=2q^{-1}\theta(f_{t_0}(i,j,z_0))
$$
while if $f_{t_0}(i,j,z_0)= 0$ then 
$$
a_{ij}=q^{-1}.
$$

{\bf Case A2.} If $j=0$ then if $i\ne t_0/z_0$, we have 
$$
a_{ij}=2q^{-1}
$$
while if $i= t_0/z_0$, we have 
$$
a_{ij}=0.
$$
The same holds for $j=1,t_0,\infty$, where the corresponding equations have the form 
$i=\frac{t_0-z_0}{1-z_0}$, $i=\frac{t_0(z_0-1)}{z_0-t_0}$, $i=z_0$ respectively. 

Also if $i\ne t_0/z_0$, we have 
$$
b_{i0}=0
$$
while if $i=t_0/z_0$ then 
$$
b_{i0}=q.
$$
The same holds for $r=1,t,\infty$, where the corresponding equations have the form 
$i=\frac{t_0-z_0}{1-z_0}$, $i=\frac{t_0(z_0-1)}{z_0-t_0}$, $i=z_0$ respectively. 

{\bf Case B.} $i=0,1,t,\infty$. Consider first the case $i=0$. 

{\bf Case B1.} If $j\ne 0,1,t,\infty$ then if $f_{t_0}(0,j,z_0)=(t_0-jz_0)^2\ne 0$ (i.e., $j\ne t_0/z_0$)  
then  
$$
a_{0j}=2q^{-1},
$$
while if $j=t_0/z_0$ then 
$$
a_{0j}=q^{-1}.
$$

{\bf Case B2.} $j=0$. We have $f_{t_0}(0,0,z_0)=t_0^2$ so 
$$
a_{0j}=2q^{-1}.
$$

{\bf Case B3.} $j=1$. We have $f_{t_0}(0,1,z_0)=(z_0-t_0)^2$ so 
$$
a_{0j}=2q^{-1}.
$$

{\bf Case B4.} $j=t_0$. We have $f_{t_0}(0,t_0,z_0)=t_0^2(z_0-1)^2$ so 
$$
a_{0j}=2q^{-1}.
$$

{\bf Case B5.} $j=\infty$. Since $z_0\ne 0$, we have 
$$
a_{0j}=2q^{-1}.
$$

In the cases $i=1,t,\infty$ the answer is the same. 

Also it is easy to see that 
$$
b_{ir}=0
$$
for all $i,r=0,1,t,\infty$. 

Now note that the functions $\bold 1_x$ are orthogonal with $\norm{\bold 1_x}^2=q^{-1}$. 
We would like to correct the functions $\phi_r$ so that 
they become orthogonal to each other and to $\bold 1_x$. 
So let us introduce 
$$
\psi_r:=\phi_r-\beta \bold 1_r
$$
and choose $\beta$ so this is orthogonal to $\bold 1_r$. 
Then
$$
\beta=q(\psi_r,\bold 1_r).
$$
So for $r=0,1,t$ we have 
$$
\beta=-q(1-q^{-1})\int_{\norm{y}\le q^{-1}}(\log_q\norm{y-r}+1)\norm{dy}=
q^{-1}.
$$  
Thus for $r=0,1,t$ 
$$
\psi_r(y)=-(1-q^{-1})\log_q\norm{y-r}-1,\ \norm{y-r}\le q^{-1}
$$
and zero otherwise, and 
$$
\psi_\infty(y)=\frac{-(1-q^{-1})\log_q\norm{y^{-1}}-1}{\norm{y}},\ \norm{y}\ge q
$$
and zero otherwise. 

Let us now normalize these vectors. We have 
$$
\norm{\psi_0}^2=\int_{\norm{y}\le q^{-1}}|\psi_0(y)|^2\cdot \norm{dy}=q^{-2}.
$$
So the normalized basis vectors are 
$$
\widehat{\bold 1}_x:=q^{1/2}\bold 1_x,\ \widehat{\psi}_r:=q\psi_r.
$$
In this basis the matrix of the operator $H_z$ is symmetric, and has the following form. 

\begin{proposition} 1. Let $x_0,y_0\ne 0,1,t_0$. If $f_{t_0}(x_0,y_0,z_0)$ is not a square then 
$$
(\widehat{\bold 1}_x, H_z\widehat{\bold 1}_y)=0.
$$
If $f_{t_0}(x_0,y_0,z_0)=0$ then 
$$
(\widehat{\bold 1}_x, H_z\widehat{\bold 1}_y)=q^{-1}.
$$

2. We have 
$$
(\widehat{\bold 1}_x,H_z\widehat{\bold 1}_y)=q^{-1}
$$
at the following 8 positions: 
\begin{equation}\label{8pos} 
(0,\tfrac{t_0}{z_0}),\ (1,\tfrac{z_0-t_0}{z_0-1}),\ (t,\tfrac{t_0(z_0-1)}{z_0-t_0}),\ (\infty, z_0),\ 
(\tfrac{t_0}{z_0},0),\ (\tfrac{z_0-t_0}{z_0-1},1),\ (\tfrac{t_0(z_0-1)}{z_0-t_0},t_0),\ (z_0,\infty).
\end{equation}

3. In all other cases 
$$
(\widehat{\bold 1}_x,H_z\widehat{\bold 1}_y)=2q^{-1}.
$$

4. We have 
$$
(\widehat{\bold 1}_x,H_z\widehat\psi_r)=(\widehat\psi_r,H_z\widehat{\bold 1}_x)=q^{-\frac{1}{2}}
$$
at the 8 positions of \eqref{8pos}. Otherwise 
$$
(\widehat{\bold 1}_x,H_z\widehat\psi_r)=(\widehat\psi_r,H_z\widehat{\bold 1}_x)=0.
$$

5. We have 
$$
(\widehat\psi_s,H_z\widehat\psi_r)=0.
$$
\end{proposition}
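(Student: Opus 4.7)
The plan is to substitute $\phi_r=\psi_r+q^{-1}\bold 1_r$ into the expansion
$H_z\bold 1_x=\sum_{j\in\Bbb P^1(\Bbb F_q)}a_{x_0,j}\bold 1_j+\sum_{r\in S}b_{x_0,r}\phi_r$
derived above (with $S=\{0,1,t_0,\infty\}$), rewriting it as
$$
H_z\bold 1_x=\sum_j A_{x_0,j}\,\bold 1_j+\sum_r b_{x_0,r}\,\psi_r,\qquad A_{x_0,j}:=a_{x_0,j}+q^{-1}b_{x_0,j}\mathbf 1_{j\in S},
$$
an expansion in the orthogonal family $\{\bold 1_j\}\cup\{\psi_r\}$ with $\|\bold 1_j\|^2=q^{-1}$ and $\|\psi_r\|^2=q^{-2}$ (the latter following from the elementary $p$-adic integrals $\|\phi_r\|^2=q^{-2}(1+q^{-1})$ and $(\phi_r,\bold 1_r)=q^{-2}$ already recorded in the text). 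Since $(\bold 1_j,\psi_r)=0$ for every $j$ and $r$, each matrix element reduces to extracting a single coefficient from this expansion.

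For parts 1--3, $(\widehat{\bold 1}_x,H_z\widehat{\bold 1}_y)=q(\bold 1_x,H_z\bold 1_y)=A_{y_0,x_0}$, and the three subcases correspond exactly to Cases A1, A2 and B of the preceding case analysis. Case A1 with $x_0,y_0\notin S$ gives $A_{y_0,x_0}=a_{y_0,x_0}=2q^{-1}\theta(f_{t_0}(x_0,y_0,z_0))$ off the vanishing locus and $q^{-1}$ on it; Case B with both indices in $S$ gives $A=a=2q^{-1}$; the mixed case (Case A2) gives $A=2q^{-1}$ except at the eight paired configurations of part 2, where the vanishing of $a$ is exactly compensated by the $q^{-1}b$-correction to produce $A=q^{-1}$. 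Self-adjointness of $H_z$, which follows from the $\mathfrak S_3$-symmetry of $f_t(x,y,z)$, forces $b_{x_0,r}$ at the eight paired positions to be $1$ (this is also consistent with a direct estimation of the leading logarithmic asymptotic of $H_z\bold 1_{t_0/z_0}$ at $y=0$).

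For part 4, only the $\psi$-component of $H_z\bold 1_x$ contributes:
$(\widehat{\bold 1}_x,H_z\widehat{\psi}_r)=q^{3/2}(H_z\bold 1_x,\psi_r)=q^{3/2}\,b_{x_0,r}\|\psi_r\|^2=q^{-1/2}\,b_{x_0,r}$,
which is $q^{-1/2}$ at the eight positions of part 2 and $0$ elsewhere.

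Part 5 is the only step not reducing to pure bookkeeping. The vanishing $(\widehat{\psi}_s,H_z\widehat{\psi}_r)=0$ is equivalent to $H_z\psi_r$ having no $\psi$-component, and since Case B already shows $H_z\bold 1_r\in\operatorname{span}\{\bold 1_j\}$, it suffices to prove that $H_z\phi_r$ has no logarithmic singularity at any parabolic point. I would verify this by a direct local analysis of the integral: for $y\in B_s$ and $w\in B_r$ with $s,r\in S$, the reduction of $f_t(y,w,z)$ modulo $\pi$ equals $f_{t_0}(s,r,z_0)$, which a case-by-case check (in the chart $1/y$ when $s$ or $r$ equals $\infty$) shows to be a nonzero square in $\Bbb F_q^{\times}$ under the standing assumption $z_0\neq 0,1,t_0$. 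Hensel's lemma (valid since $p>2$) then forces $\theta(f_t)\equiv 1$ and $\|f_t\|\equiv 1$ throughout $B_s\times B_r$, so the kernel satisfies $K_z\equiv 2$ there; hence $(H_z\phi_r)(y)=2\int\phi_r(w)\,dw=2q^{-2}$ is locally constant on each $B_s$ and thus lies in $\operatorname{span}\{\bold 1_j\}$. The only genuine obstacle is organizing this Hensel-lifting argument uniformly across all four parabolic points, which is routine but case-by-case.
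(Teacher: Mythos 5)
Your proof is correct and takes essentially the same route as the paper's (very terse) argument: plug the explicit $a_{ij},\,b_{ir}$ table into the inner products and use self-adjointness, then handle Part~5 by observing that $f_t$ is a unit square near pairs of parabolic points. Your re-expansion of $H_z\bold 1_x$ in the orthogonal family $\{\bold 1_j\}\cup\{\psi_r\}$ via $\phi_r=\psi_r+q^{-1}\bold 1_r$, with $A_{x_0,j}=a_{x_0,j}+q^{-1}b_{x_0,j}\mathbf 1_{j\in S}$, is a clean way to make explicit the bookkeeping that the paper leaves implicit.

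One point deserves emphasis: you use $b_{x_0,r}=1$ at the eight exceptional positions, whereas Case~A2 of the text asserts $b_{i0}=q$. Your value is the correct one, and the text's $q$ is a normalization slip: the text's own case analysis (Cases~2a/2b) yields $(H_z\bold 1_{t_0/z_0})(y)=-(1-q^{-1})(\log_q\norm{y}+1)=\phi_0(y)$ for $\norm{y}\le q^{-1}$, i.e.\ coefficient $1$ of $\phi_0$; this is also what self-adjointness forces (computing $(\bold 1_{t_0/z_0},H_z\bold 1_0)=a_{0,t_0/z_0}\,q^{-1}=q^{-2}$ from the Case~B table and matching $(\bold 1_0,H_z\bold 1_{t_0/z_0})=b_{t_0/z_0,0}\,(\bold 1_0,\phi_0)=b_{t_0/z_0,0}\,q^{-2}$), and it is the only value consistent with the stated answer $q^{-1}$ in Part~2 and with $q^{-1/2}$ in Part~4. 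So you were right to override the printed value rather than take it at face value; it would strengthen the write-up to flag this explicitly as a typographical correction rather than folding it into a self-adjointness appeal, since the direct computation from Case~2b already pins it down. Your Part~5 argument is exactly the paper's hint carried out with Hensel's lemma, and you correctly observe that constancy of $H_z\phi_r$ on each $B_s$ (not mere boundedness) is what kills the $\psi_s$-projection, since $(\psi_s,\text{const on }B_s)=0$ by the orthogonality built into the definition of $\psi_s$. Minor: $\|\phi_r\|^2=q^{-2}(1+q^{-1})$ is not literally "recorded in the text" (the text records $\|\psi_0\|^2=q^{-2}$), though your value is correct and consistent.
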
 

\begin{proof} (1)-(4) follow from the formulas for $a_{ij},b_{ij}$ and the self-adjointness of $H_z$. 
(5) is checked by an easy direct computation, using that the function $f_t(x,y,z)$ is a square and has 
norm $1$ when the distances from $x$ to $r$ and from $y$ to $s$ are $\le q^{-1}$. 
\end{proof} 

In particular, we see that in the orthogonal (but not orthonormal) basis $\bold 1_x,\psi_r$ the matrix 
$qH_z$ (of size $q+5$) has integer entries $0,1,2,q$ (even though it is non-symmetric in this basis). 

\begin{proposition}\label{5eig}  (i) The Perron-Frobenius eigenvalue of $H_z$ equals 
$$
\lambda_+=(1+q^{-1})\frac{1+\sqrt{1+\frac{32q}{(q+1)^2}}}{2}=1+9q^{-1}+O(q^{-2}). 
$$
The corresponding eigenvector is 
$$
v_+=\lambda_+\sum_{i\in \Bbb P^1(\Bbb F_q)} \bold 1_i+\sum_{r=0,1,t,\infty} (\bold 1_r+\psi_r).
$$
The operator $H_z$ also has the eigenvalue 
$$
\lambda_-=(1+q^{-1})\frac{1-\sqrt{1+\frac{32q}{(q+1)^2}}}{2}=-8q^{-1}+O(q^{-2}).
$$
with eigenvector 
$$
v_-=\lambda_-\sum_{i\in \Bbb P^1(\Bbb F_q)} \bold 1_i+\sum_{r=0,1,t,\infty} (\bold 1_r+\psi_r).
$$
These eigenvalues are the roots of the equation 
$$
q\lambda^2-(q+1)\lambda-8=0.
$$ 

(ii) $H_z$ has eigenvalue $0$ with multiplicity at least $3$. Namely, the 
vectors
$$
\sum_{r=0,1,t,\infty} a_r(\bold 1_r+\psi_r)
$$
with $a_1+a_2+a_3+a_4=0$ are null vectors for $H_z$. 
\end{proposition}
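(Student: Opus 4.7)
The plan is a direct computation in the orthonormal basis $\{\widehat{\bold 1}_i,\widehat{\psi}_r\}$ using the matrix entries from parts (1)--(5) of the proposition. Set $\epsilon_{ji}:=q\,(\widehat{\bold 1}_j,H_z\widehat{\bold 1}_i)\in\{0,1,2\}$ and let $\chi$ be the quadratic character of $\Bbb F_q$ with $\chi(0)=0$. A substitution at each of the eight special positions shows that $f_{t_0}(j,i,z_0)=0$ there, so the uniform identity
\[
\epsilon_{ji}\;=\;1+\chi\!\bigl(f_{t_0}(j,i,z_0)\bigr)
\]
holds in every case (there is no need for a separate ``8-position correction''). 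Write $\iota(0)=t_0/z_0$, $\iota(1)=(z_0-t_0)/(z_0-1)$, $\iota(t_0)=t_0(z_0-1)/(z_0-t_0)$, $\iota(\infty)=z_0$ for the four partners appearing in the 8-positions, and set $A:=\iota(\{0,1,t_0,\infty\})$. The $\widehat{\psi}_r$-coefficient of $H_zv$ then equals $\lambda/q$ via the unique nonzero entry $(\widehat{\psi}_r,H_z\widehat{\bold 1}_{\iota(r)})=q^{-1/2}$, which matches eigenvalue $\lambda$ on the $\widehat{\psi}_r$-component of $v$.

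The heart of the argument is the evaluation of the row sum $R_j:=\sum_i\epsilon_{ji}$ and the partial sum $R_j^0:=\sum_{r\in\{0,1,t_0,\infty\}}\epsilon_{j,r}$; this is the main obstacle. For $j\notin\{0,1,t_0,\infty\}$ with $j\ne z_0$ the polynomial $f_{t_0}(j,\cdot,z_0)$ is a nondegenerate quadratic in $i$ with leading coefficient $(j-z_0)^2$, and the classical identity $\sum_{i\in\Bbb F_q}\chi(ai^2+bi+c)=-\chi(a)$ for nonzero discriminant, together with $\chi((j-z_0)^2)=1$ at $i=\infty$, yields $R_j=q+1$. The delicate case is $j\in\{0,1,t_0,\infty\}$: using the full $\Bbb S_3$-symmetry of $f_{t_0}(x,y,z)$ under permutations of $(x,y,z)$ (the Okamoto symmetry of Section~\ref{Four parabolic points}) together with explicit identities such as $f_t(x,0,0)=t^2$, one verifies that $f_{t_0}(j,i,z_0)$ is a \emph{perfect square} in $i$ with a double root at $i=\iota(j)$, forcing $\sum_i\chi=q$ and hence $R_j=2q+1$. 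The same perfect-square analysis at the four second indices $r\in\{0,1,t_0,\infty\}$ gives $R_j^0=8$ when $j\notin A$ and $R_j^0=7$ when $j\in A$, the unit deficit coming from the unique 8-position entry.

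Combining these inputs, the eigenvalue equation $H_zv=\lambda v$ reads, coefficient by coefficient along $\widehat{\bold 1}_j$,
\[
q^{-3/2}\bigl[\lambda R_j+R_j^0\bigr]+(\text{psi-contribution})\;=\;\lambda\,c_j,
\]
where the psi-contribution equals $q^{-3/2}$ exactly when $j\in A$, and $c_j$ equals $\lambda q^{-1/2}$ or $(\lambda+1)q^{-1/2}$ depending on whether $j$ lies outside or inside $\{0,1,t_0,\infty\}$. In each of the three regimes (generic $j$; $j\in A$; $j\in\{0,1,t_0,\infty\}$) the shortfalls in $R_j$ or $R_j^0$ are exactly compensated by the psi-contribution or by the extra $+1$ in $c_r$, and one arrives \emph{uniformly} at the single equation $q\lambda^2-(q+1)\lambda-8=0$; its roots are $\lambda_\pm$, and the stated asymptotics follow from expanding $\sqrt{(q+1)^2+32q}$. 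For part (ii) the same bookkeeping applied to $w=\sum_r a_r(\bold 1_r+\psi_r)$ with $\sum_r a_r=0$ makes every term proportional to $\sum a_r$ drop out, and the residual contribution $-a_{r_0}$ at $j=\iota(r_0)\in A$ is exactly cancelled by the psi-contribution $+a_{r_0}$; hence $H_zw=0$, and since the space of $(a_r)$ with $\sum a_r=0$ has dimension three, we obtain a three-dimensional null space, as claimed.
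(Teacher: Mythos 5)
The paper states Proposition~\ref{5eig} without a written proof (it appears immediately after the matrix description in the preceding proposition), so your job is to supply the verification, and what you give is essentially the natural direct check in the orthonormal basis: the uniform formula $\epsilon_{ji}=1+\chi(f_{t_0}(j,i,z_0))$ (valid because $f_{t_0}$ vanishes at all eight special positions), the row sums $R_j$ and $R_j^0$, the $\psi$-contribution $B_j=\#\{r:\iota(r)=j\}$, and the observation that the three regimes all reduce to $q\lambda^2-(q+1)\lambda-8=0$. This matches what the paper's surrounding computations are setting up, and I checked the bookkeeping — including the non-injective cases of $\iota$, where $R_j^0$ drops by $B_j$ exactly as your cancellation requires — against the explicit $q=5,7$ matrices; it all works.

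There is one small gap you leave open. In computing $R_j=\sum_i\epsilon_{ji}$ you restrict to $j\notin\{0,1,t_0,\infty\}$ \emph{and} $j\ne z_0$, invoking the identity $\sum_{i\in\Bbb F_q}\chi(ai^2+bi+c)=-\chi(a)$ for a nondegenerate quadratic; you then treat only the delicate case $j\in\{0,1,t_0,\infty\}$, and never return to $j=z_0$, which lies in $A$ and therefore \emph{is} needed in your ``three regimes'' argument. At $j=z_0$ the leading coefficient $(j-z_0)^2$ vanishes, so $f_{t_0}(z_0,\cdot,z_0)$ is linear in $i$, not quadratic. Concretely $f_t(z,y,z)=-4yz(z-1)(z-t)+(z^2-t)^2$, a nonconstant affine function of $y$; hence $\sum_{i\in\Bbb F_q}\chi(f_{t_0}(z_0,i,z_0))=0$, and the entry at $i=\infty$ (the eight-position $(z_0,\infty)$) contributes $\epsilon=1$, giving $R_{z_0}=q+1$ just as in the generic case. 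So the conclusion stands, but the argument as written skips this case; you should add the one-line linear computation. With that patch, the proof is complete and correct.
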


\subsection{Relation to Hecke operators over a finite field} \label{nonar3} 
The eigenvectors of Proposition \ref{5eig} span the $5$-dimensional space 
$V_0$ with basis 
$\bold 1:=\sum_{i\in \Bbb P^1(\Bbb F_q)} \bold 1_i$ and $e_r:=\bold 1_r+\psi_r$, 
$r=0,1,t,\infty$. Note that the matrix of the Hecke operator $H_z$ on $V_0$ in this basis 
does not depend on $t$. 

Let us now consider the orthogonal complement $V=V_1$ of this space, which has dimension $q$. 
Namely, $V$ is the space of vectors 
$$
\sum_{i\in \Bbb P^1(\Bbb F_q)} a_i\bold 1_i-q\sum_r a_r\psi_r
$$
where $\sum_{i\in \Bbb P^1(\Bbb F_q)}a_i=0$. Its basis is formed by the 
vectors 
$v_k:={\bf 1}_k-{\bf 1}_\infty+q\psi_\infty$ if $k\ne 0,1,t$ and $v_k:={\bf 1}_k-{\bf 1}_\infty-q\psi_k+q\psi_\infty$ 
if $k=0,1,t$. 

Thus the matrix of the operator $qH_z$ on $V$ in this basis is given by the formula
$$
[qH_z|_V]_{x,y}=N(t,z,x,y)-2+(q+1)\delta_{x,z}-q\delta_{0,y}\delta_{x,\frac{t}{z}}
-q\delta_{1,y}\delta_{x,\frac{z-t}{z-1}}-q\delta_{t,y}\delta_{x,\frac{t(z-1)}{z-t}}.
$$
where $N(t,z,x,y)$ is the number of solutions of 
the equation $w^2=f_t(x,y,z)$ in $\Bbb F_q$ (which is $2,1$ or $0$). 
This means that $H_z|_V=-q^{-1}T_z$, where $T_z$ 
is the cuspidal component of the Hecke operator over $\Bbb F_q$ (see \cite{K}, p.4). 
Namely, the formula for the matrix $T_z$ (also valid if $z=0,1,t$) is 
$$
(T_z)_{x,y}=2-N(t,z,x,y)-c_z\delta_{x,z} +(1-\delta_{z(z-1)(z-t),0})(q\delta_{0,y}\delta_{x,\frac{t}{z}}
+q\delta_{1,y}\delta_{x,\frac{z-t}{z-1}}+q\delta_{t,y}\delta_{x,\frac{t(z-1)}{z-t}}),
$$
where $c_z=1$ if $z=0,1,t$ and $c_z=q+1$ if $z\ne 0,1,t$.\footnote{Here we correct a misprint in the formula on the top of p.4 in \cite{K}, where it is written that $c_z=q+1$ if $z=0,1,t$ and $c_z=1$ if $z\ne 0,1,t$. So these should be switched.} 

\begin{example}Let $q=5$, $t=4$, $z=2$.\footnote{We thank M. Vlasenko for sharing her computations in this case.} 
 Then the 10-by-10 matrix of 
$qH_z$ in the basis $\bold 1_x,\psi_r$ is 
$$
\begin{pmatrix} 
2& 2& 1& 2& 2& 2&  &  &  &   \\ 2& 2& 2& 1& 2& 2&  &  &  &   \\ 1& 2&  &  
& 2& 1& 1&  &  & 1\\ 2& 1&  &  & 1& 2&  & 1& 1&   \\ 2& 2& 2& 1& 2& 2&  & 
 &  &   \\ 2& 2& 1& 2& 2& 2&  &  &  &   \\  &  & 5&  &  &  &  &  &  &   \\  &  &  & 5&  &  &  &  &  &   \\  &  &  & 5&  &  &  &  &  &   \\  &  & 5&  &  &  &  &  &  & \end{pmatrix}
$$
(where empty positions stand for zeros, for better viewing). The eigenvalues of this matrix 
are $0$ (with multiplicity $5$), $10, -4, 2, 2\sqrt{3},-2\sqrt{3}$. The Perron-Frobenius eigenvector (with eigenvalue $10$) has coordinates  
$(3,3,2,2,3,3,1,1,1,1)$.

The matrix $-qH_z|_V=T_z$ has the form 
$$
\begin{pmatrix} 
0& 0& 1& 0& 0\\
0&  0& 0& 1  & 0 \\ 
0& -6& -4& -4& -6\\
 0& 6 & 2 & 2  & 6  \\
 0 & 0&  0& 1 & 0& 
 \end{pmatrix}
$$
and has eigenvalues $-2,2\sqrt{3},-2\sqrt{3},0,0$. 

Note that this describes all the cases for $q=5$. Indeed, we have the group $\Bbb S_3$ acting 
on the values of $t$ (generated by $t\mapsto 1-t$ and $t\mapsto 1/t$) which acts transitively 
on the possible values $t=2,3,4$, and the stabilizer of $t=4$ is generated by $t\mapsto 1/t$ 
which exchanges the possible values $z=2,3$.  
\end{example} 

\setcounter{MaxMatrixCols}{20}

\begin{example} Let $q=7$, $t=6$, $z=2$. Then the 12-by-12 matrix of 
$qH_z$ in the basis $\bold 1_x,\psi_r$ is 
$$
\begin{pmatrix}
2& 2& 2& 1& 2& 2& 2& 2&   &   &   &   \\ 
2& 2& 2& 1& 2& 2& 2& 2&   &   &   &   \\ 
2& 2&   & 2&   &   & 1& 1&   &   & 1& 1\\ 
1& 1& 2&   &   &   & 2& 2& 1& 1&   &   \\ 
2& 2&   &   &   &   & 2& 2&   &   &   &    \\ 
2& 2&   &   &   &   & 2& 2&   &   &   &    \\ 
 2& 2& 1& 2& 2& 2& 2& 2&  &   &   &    \\ 
 2& 2& 1& 2& 2& 2& 2& 2&  &   &   &    \\ 
   &   &   & 7&   &   &   &   &   &   &  &    \\ 
   &   &   & 7&   &   &   &   &   &   &  &    \\ 
   &   &  7&  &   &   &   &   &   &   &  &    \\ 
   &   &  7&  &   &   &   &   &   &   &  & 
\end{pmatrix} 
$$
The eigenvalues of this matrix are $0$ (multiplicity $6$), 
$4,-2,-1\pm \sqrt{17},4\pm 6\sqrt{2}$. The Perron-Frobenius eigenvalue is 
$4+6\sqrt{2}$. 

Similarly, if instead $z=3$ (with the same $q,t$) then the eigenvalues are 
$0$ (multiplicity $6$), 
$4,-2,1\pm \sqrt{17},4\pm 6\sqrt{2}$. The Perron-Frobenius eigenvalue is still $4+6\sqrt{2}$. 

This covers all the cases with $q=7$, $t=2,4,6$ due to the $\Bbb S_3$-symmetry. 
\end{example} 

\begin{example} Let $q=7$, $t=3$, $z=2$.
Then the 12-by-12 matrix of 
$qH_z$ in the basis $\bold 1_x,\psi_r$ is 
$$
\begin{pmatrix}
2& 2& 2& 2& 2& 1& 2& 2&   &   &   &   \\ 
2& 2& 2& 2& 2& 2& 1& 2&   &   &   &   \\ 
2& 2&   & 2&   &   & 1& 1&   &   &  & 1\\ 
2& 2& 2& 2&  1&2& 2& 2&  &   &   &   \\ 
2& 2&   &  1&   & 1  & & 2&   &1   &   &    \\ 
1& 2&   &   2&  1 &   & & 2&   &   & 1  &    \\ 
 2& 1& 1& 2& & & & 2&       1&   &   &    \\ 
 2& 2& 1& 2& 2& 2& 2& 2&  &   &   &    \\ 
   &   &   & &   &   &  7 &   &   &   &  &    \\ 
   &   &   & &  7 &   &   &   &   &   &  &    \\ 
   &   &  &  &   &  7 &   &   &   &   &  &    \\ 
   &   &  7&  &   &   &   &   &   &   &  & 
\end{pmatrix} 
$$
The eigenvalues of this matrix are 
$0$ (multiplicity $3$), $\frac{-1\pm \sqrt{33}}{2}$ (each with multiplicity $2$), 
$\frac{1\pm \sqrt{33}}{2}$, $1$, $4\pm 6\sqrt{2}$. The Perron-Frobenius eigenvalue is again 
$4+6\sqrt{2}$. 

Via the $\Bbb S_3$-symmetry this covers all cases with $t=3$ except $z=6$. 
In this case the eigenvalues 
are $0$ (multiplicity $3$), $\frac{1\pm \sqrt{33}}{2}$ (each with multiplicity $3$), 
$1$, $4\pm 6\sqrt{2}$. The Perron-Frobenius eigenvalue is again 
$4+6\sqrt{2}$. 

This also covers the case $t=5$ via the $\Bbb S_3$-symmetry. Thus we have described all the cases with $q=7$. 
\end{example} 

\begin{remark} By a theorem of Drinfeld (\cite{Dr2}), the eigenvalues of $H_z$ on $V=V_1$ have absolute value 
$\le 2q^{-\frac{1}{2}}$ and have the form $2q^{-\frac{1}{2}}{\rm Re}\lambda$, where $|\lambda|=1$ 
and $\lambda$ are asymptotically uniformly distributed on the circle for large $q$. 
\end{remark} 

\section{Singularities of eigenfunctions}\label{sing}

In this section we study the singularities of eigenfunctions of Hecke operators for $G=PGL_2$ in genus zero (with parabolic points). We will use 
the Gaudin operators, so this analysis only applies to the archimedian case. However, we expect that the behavior of eigenfunctions near singularities is essentially the same over all local fields. 

\subsection{Singularities for $N=m+2$ parabolic points} \label{sing1}

Consider the (modified) Gaudin operators 
$$
G_i:=\sum_{0\le j\le m, j\ne i}\frac{1}{t_i-t_j}\left(-(y_i-y_j)^2\partial_i\partial_j+(y_i-y_j)(\partial_i-\partial_j)\right). 
$$
To study the behavior of eigenfunctions, we will quotient out the symmetry under the group $\Bbb G_m\ltimes \Bbb G_a$ of affine transformations of 
$\Bbb A^1$ and write these operators 
in the coordinates $y_1,...,y_{m-1}$ on $Bun_0^\circ$. 

First we quotient out the translation group $\Bbb G_a$. Set $t_0=0$ and consider the action of $G_i$ on shift-invariant functions, i.e., such that $\sum_i \partial_if=0$. Thus 
$$
\partial_0 f=-\sum_{1\le i\le m}\partial_i f.
$$ 
Substituting this and setting $y_0=0$, we have, for $1\le i\le m$:
$$
G_i=\frac{1}{t_i}\left(y_i^2\partial_i(\sum_{1\le j\le m} \partial_j)+y_i(2\partial_i+\sum_{j\ne i,1\le j\le m}\partial_j)\right)+
$$
$$
\sum_{j\ne i, 1\le j\le m}\frac{1}{t_i-t_j}\left(-(y_i-y_j)^2\partial_i\partial_j+(y_i-y_j)(\partial_i-\partial_j)\right). 
$$

Now we quotient out the dilation group $\Bbb G_m$. To this end consider the action of the Gaudin operators on functions of homogeneity degree $-m/2$. 
On such functions 
$$
\partial_mf=-y_m^{-1}(\tfrac{m}{2}+\sum_{i=1}^{m-1}y_i\partial_i)f.
$$
Substituting this and setting $y_m=1$, $t_m=1$ we get the following proposition

\begin{proposition} For $1\le i\le m-1$ we have 
$$
G_i=\frac{1}{t_i}\left(y_i^2\partial_i\left(\sum_{1\le j\le m-1} (1-y_j)\partial_j-\tfrac{m}{2}\right)+y_i\left((2-y_i)\partial_i+\sum_{j\ne i,1\le j\le m-1}(1-y_j)\partial_j-\tfrac{m}{2}\right)\right)+
$$
$$
\sum_{j\ne i,1\le j\le m-1}\frac{1}{t_i-t_j}\left(-(y_i-y_j)^2\partial_i\partial_j+(y_i-y_j)(\partial_i-\partial_j)\right)+
$$
$$
\frac{1}{t_i-1}\left((y_i-1)^2\partial_i(\sum_{1\le j\le m-1} y_j\partial_j+\tfrac{m}{2})+(y_i-1)((1+y_i)\partial_i+\sum_{1\le j\le m-1,j\ne i} y_j\partial_j+\tfrac{m}{2})\right).
$$
\end{proposition}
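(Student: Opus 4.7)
The proposition is a purely algebraic reduction: substitute the constraints coming from translation invariance and from degree $-m/2$ homogeneity into the original formula
\[
G_i=\sum_{0\le j\le m,\,j\ne i}\frac{1}{t_i-t_j}\bigl(-(y_i-y_j)^2\partial_i\partial_j+(y_i-y_j)(\partial_i-\partial_j)\bigr),
\]
and restrict to the slice $y_0=0$, $y_m=1$ (with $t_0=0$, $t_m=1$). My plan is to execute this in two stages matching the two one-parameter subgroups $\mathbb{G}_a$ and $\mathbb{G}_m$ of the affine group, exactly as indicated immediately above the statement of the proposition.

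First I would carry out the translation reduction. On functions with $\sum_{j=0}^m\partial_j f=0$, one has $\partial_0=-\sum_{j=1}^m\partial_j$. The only part of $G_i$ (for $1\le i\le m$) that involves $\partial_0$ is the $j=0$ summand; substituting and using $y_0=0$ gives
\[
\frac{1}{t_i}\Bigl(y_i^2\partial_i{\textstyle\sum_{j=1}^m}\partial_j+y_i\bigl(2\partial_i+{\textstyle\sum_{j\ne i,\,1\le j\le m}}\partial_j\bigr)\Bigr),
\]
which reproduces the intermediate formula already displayed in the excerpt. No subtleties arise here since $\partial_0$ appears only linearly.

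Next I would carry out the dilation reduction. For $f$ homogeneous of degree $-m/2$ (and translation-invariant, so $y_0=0$), Euler's relation is $\sum_{j=1}^{m}y_j\partial_j f=-\tfrac{m}{2}f$, so on the slice $y_m=1$ we may replace $\partial_m$ by the operator $-\tfrac{m}{2}-\sum_{j=1}^{m-1}y_j\partial_j$. The one place that requires attention is the composite $\partial_i\partial_m$ that appears in the $j=0$ and $j=m$ summands: since $\partial_m f$ is itself homogeneous of degree $-m/2-1$, differentiating Euler's relation yields the operator identity
\[
\partial_i\partial_m=-\partial_i\Bigl(\tfrac{m}{2}+{\textstyle\sum_{j=1}^{m-1}}y_j\partial_j\Bigr)
\]
on the subspace in question; this follows from the commutator $[\partial_i,y_j]=\delta_{ij}$ together with a one-line cancellation. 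With this identity in hand, substitution in the $j=m$ summand produces exactly the last bracket in the proposition, and substitution in the already-reduced $j=0$ summand produces the first bracket, with the coefficient $(2-y_i)\partial_i$ arising from combining the $2\partial_i$ inherited from the translation step with a $-y_i\partial_i$ contributed by the homogeneity constraint applied to $\sum_{j=1}^{m}\partial_j=\sum_{j=1}^{m-1}(1-y_j)\partial_j-\tfrac{m}{2}$.

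The summands with $1\le j\le m-1$, $j\ne i$ involve neither $\partial_0$ nor $\partial_m$ and are carried through unchanged, giving the middle sum in the statement. Collecting the three contributions yields the displayed formula. There is no genuine obstacle; the only thing to watch is the commutator correction in the $\partial_i\partial_m$ identity above, which is why one must write $\partial_i(\sum y_j\partial_j+\tfrac{m}{2})$ on the outside rather than $(\sum y_j\partial_j+\tfrac{m}{2})\partial_i$.
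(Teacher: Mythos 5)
Your proposal is correct and follows the same two-step reduction (translation, then dilation) that the paper carries out immediately before stating the proposition; the paper does not give a separate proof beyond "Substituting this and setting $y_m=1$, $t_m=1$." You add a useful explicit remark that the paper leaves implicit: when eliminating $\partial_m$ inside the composite $\partial_i\partial_m$, the substitution must be written with $\partial_i$ on the \emph{outside}, i.e.\ $\partial_i\partial_m=-\partial_i\bigl(\sum_{j<m}y_j\partial_j+\tfrac{m}{2}\bigr)$, not $-\bigl(\sum_{j<m}y_j\partial_j+\tfrac{m}{2}\bigr)\partial_i$; the cleanest justification is simply that $\partial_m=-y_m^{-1}\bigl(\tfrac{m}{2}+\sum_{j<m}y_j\partial_j\bigr)$ on degree $-m/2$ functions, $\partial_i$ commutes with $y_m^{-1}$ for $i<m$, and then one sets $y_m=1$ — your phrasing via "differentiating Euler's relation" is a little indirect, but the commutator $[\partial_i,y_j]=\delta_{ij}$ observation and the resulting operator identity are right. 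The algebra you describe, including the source of the $(2-y_i)\partial_i$ and $(1+y_i)\partial_i$ terms, reproduces the stated formula.
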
 

Consider now the quantum Gaudin system 
$$
G_i\psi=\mu_i\psi. 
$$
As explained in Subsection \ref{hit}, generically on $Bun_0^\circ$, this is a holonomic system of rank $2^{N-3}$
whose singularities are located on the wobbly divisor. Let us consider its solutions near a generic point $\bold y^0$ of
the divisor $y_{m-1}=0$, which is a component of the wobbly divisor. 
So we will set $y_{m-1}^0=0$, $y_i^0=a_i$
for $i=1,...,m-2$ with $a_i$ generic, and $z=y_{m-1}$. 
We want to find a solution in the form 
$$
\psi(y_1,...,y_{m-2},z)=\sum_{n\ge 0} a_n(y_1,...,y_{m-2})z^{\lambda+n}. 
$$
To this end, we have to compute the leading 
term $G_i^0$ of $G_i$ with respect to $z$. For $i<m-1$ we get:  
$$
G_i^0=\frac{t_{m-1}}{t_{m-1}-t_i}y_i(y_i\partial_i+1)\partial_z,
$$
and for $i=m-1$ we get 
$$
G_{m-1}^0=\sum_{1\le j\le m-2}\frac{1}{t_j-t_{m-1}}y_j(y_j\partial_j+1)\partial_z+
\frac{1}{t_{m-1}-1}\left(\partial_z (z\partial_z+\sum_{j\le m-2}y_j\partial_j+\tfrac{m-2}{2})\right),
$$
both of degree $-1$. Thus we have 
$$
G_i^0( a_0(y_1,...,y_{m-2})z^\lambda)=0,\ 1\le i\le m-1.
$$
This yields 
\begin{equation}\label{a0eq}
\lambda(y_i\partial_i+1)a_0=0,
\end{equation} 
and
$$
(\lambda^2-\tfrac{m-2}{2}\lambda)a_0=0.
$$
We thus obtain $\lambda=0$ or $\lambda=\frac{m-2}{2}$, and for $m>2$ the space of solutions 
with $\lambda=\frac{m-2}{2}$ is $1$-dimensional. Also replacing $\lambda$ with $\varepsilon$ 
where $\varepsilon^2=0$ (over the base ring $\Bbb C[\varepsilon]/\varepsilon^2$), we 
see that if $m>2$ then there are no solutions of 
the form $ a_0(y_1,...,y_{m-2})\log(z)+O(1)$ with $a_0\ne 0$. 
Thus, applying permutations of indices, we arrive at the following proposition. 

\begin{proposition} \label{bounded} 
For $N\ge 5$ points all solutions of the Gaudin system are bounded
near a generic point $\bold y$  of the divisor $y_i=y_j$.
Hence single-valued eigenfunctions of Gaudin operators and in particular 
eigenfunctions of the Hecke operators are continuous (albeit not $C^\infty$) 
at $\bold y$. 
\end{proposition}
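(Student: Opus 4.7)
The plan is to combine the indicial computation just performed with the manifest symmetry of the problem. For $N \geq 5$ (i.e.\ $m \geq 3$), both indicial roots $\lambda = 0$ and $\lambda = \tfrac{m-2}{2}$ computed above are non-negative. Applying standard Frobenius theory to the regular-singular restriction of the $O$-coherent rank-$2^{m-1}$ $D$-module $M(\bmu)$ to a generic transversal disk to the smooth wobbly component $D_{\{0,m-1\}} = \{y_{m-1} = 0\}$ (Proposition \ref{dpan}(i)), every local solution of $G_i \psi = \mu_i \psi$ at a generic point $\bold y^0$ admits a finite sum expansion
\[
\psi \;=\; \sum_{\lambda \in \{0,\, (m-2)/2\}} z^\lambda \bigl( A_\lambda(\bold y', z) + B_\lambda(\bold y', z) \log z \bigr),
\]
with $A_\lambda, B_\lambda$ holomorphic in $(\bold y', z) = (y_1,\ldots,y_{m-2},z)$ and $B_0(\bold y',0) \equiv 0$ --- this last condition being precisely what the $\varepsilon^2 = 0$ test in the excerpt established. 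Both exponents being $\geq 0$ and the leading-order log at $\lambda = 0$ being excluded, $\psi$ is bounded as $z \to 0$: the $\lambda = 0$ part tends to $A_0(\bold y', 0)$, while the $\lambda = (m-2)/2$ part vanishes (any log factor there is multiplied by a positive power of $z$).

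Next, the divisor $\{y_{m-1} = 0\} = \{y_{m-1} = y_0\}$ just analyzed is the wobbly component $D_{\{0, m-1\}}$; an arbitrary divisor $\{y_i = y_j\}$ is $D_{\{i,j\}}$, and the symmetric group $\Bbb S_N$ acts on the problem by relabeling parabolic points, carrying the Gaudin system to itself with permuted parameters $\mu_\ell$ and sending $D_{\{0,m-1\}}$ to any desired $D_{\{i,j\}}$ (alternatively, invoke the transitive action of $\Bbb W = \Bbb S_N \ltimes \Bbb V$ on the components of $D$, Proposition \ref{dpan}(ii)). This reduces the asymptotic analysis at a generic point of any $\{y_i = y_j\}$ to the case treated above, establishing the first assertion.

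Finally, a single-valued eigenfunction of the Gaudin operators near such a generic point is a $\Bbb C$-linear combination of the bounded Frobenius solutions just described, and is therefore continuous at $\bold y$ (though not $C^\infty$, because of the fractional exponent $\tfrac{m-2}{2}$ for odd $m$ and the residual log term for even $m$). Hecke eigenfunctions inherit this continuity via Theorem \ref{mainth}(iv), which identifies them with Gaudin eigenfunctions. The only non-routine step in this plan is verifying that the restriction of $M(\bmu)$ to a transversal disk through a generic wobbly point is genuinely Fuchsian with the two indicial exponents as above, so that the Frobenius expansion is valid; I expect this to be the main technical hurdle, but it follows from the first-order pole structure of the Gaudin operators $G_i$ along each $\{y_i = y_j\}$ combined with the $O$-coherence of $M(\bmu)$ on $Bun^{\rm vs}$.
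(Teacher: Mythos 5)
Your proposal is correct and takes essentially the same route as the paper: compute the indicial exponents $0$ and $(m-2)/2$ of the Gaudin system along the model divisor $y_{m-1}=0$, rule out the log at $\lambda=0$ via the $\varepsilon^2=0$ test, observe both exponents are nonnegative so solutions are bounded, and transport to any $\{y_i=y_j\}$ by the $\Bbb S_N$-action relabeling parabolic points. The extra detail you add (the explicit Frobenius expansion, the appeal to $O$-coherence to justify regular singularity along the transversal disk) is a reasonable fleshing-out of the paper's terse closing line ``applying permutations of indices, we arrive at the following proposition.''
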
 

\begin{remark} 1. More precisely, our analysis shows that eigenfunctions belong to the local H\"older space
${\bf H}^{\frac{m-2}{2}}$ for $F=\Bbb R$ and to ${\bf H}^{m-2}$ for 
$F=\Bbb C$ near $\bold y$.\footnote{Here for a nonnegative integer $k$ and 
$0<\alpha\le 1$, we write ${\bf H}^{k+\alpha}$ for the local H\"older space $C^{k,\alpha}$. 
In particular, for a positive integer $k$, we have ${\bf H}^{k+\delta}\subset C^k$ for any $\delta>0$, but $C^k$ is a proper subspace of ${\bf H}^k$.}  

2. On the contrary, we have seen that for $4$ points the eigenfunctions grow logarithmically 
at the singularities. 
\end{remark}

\subsection{The monodromy of the Gaudin system}

\begin{proposition} \label{bounded1} 
For $N\ge 5$ points all solutions of the Gaudin system are bounded
near a generic point $\bold y$  of the wobbly divisor $D$.
Hence single-valued eigenfunctions of the Gaudin operators and in particular 
eigenfunctions of the Hecke operators are continuous (although not $C^\infty$) at $\bold y$. 
\end{proposition}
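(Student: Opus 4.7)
The plan is to deduce this from Proposition \ref{bounded} by exploiting the transitive action of the Weyl group $\Bbb W = \Bbb S_N \ltimes \Bbb V$ on the set of irreducible components of $D$, which was established in Proposition \ref{dpan}(ii) under the standing assumption $N \geq 5$. By that proposition, the components of $D$ are the divisors $D_S$ indexed by subsets $S \subset [0,N-1]$ of even cardinality, and the components with $|S|=2$ are exactly the closures of the loci $y_i=y_j$. Thus Proposition \ref{bounded}, after a relabeling of parabolic points by an element of $\Bbb S_N$, already covers generic points of $D_S$ with $|S|=2$, and it remains to transport this conclusion to generic points of the other components.

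For this I would establish that the Gaudin $D$-module transforms covariantly under $\Bbb W$: for every $g \in \Bbb W$ there is a parameter vector $\bmu^g$ (for the possibly permuted configuration of parabolic points) such that $g^\ast$ sends local multivalued solutions of the Gaudin system with parameters $\bmu$ to local multivalued solutions of the Gaudin system with parameters $\bmu^g$. For $\sigma \in \Bbb S_N$ this is immediate, since relabeling simultaneously permutes the $G_i$ and the $\mu_i$. For the generators $S_i$ of $\Bbb V$, the cleanest argument is that $G_0,\dots,G_m$ together with the linear relations $\sum G_i=0$ and $\sum t_i G_i=m/4$ span the Beilinson--Drinfeld algebra of quantum Hitchin hamiltonians on $Bun^\circ$, which is canonically attached to $(X,t_0,\dots,t_{m+1})$; since $S_i$ is a regular involution of $Bun^\circ$ (Proposition \ref{invo}) fixing the set of parabolic points, $S_i^\ast$ preserves this algebra and therefore sends joint eigenfunctions to joint eigenfunctions. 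An alternative analytic route uses Proposition \ref{strocomm} together with the asymptotic $H_x \sim \norm{x-t_i}^{1/2}\log(\norm{x-t_i}^{-1}) S_i$ of Proposition \ref{asym}: taking the leading term of the identity $[H_x,G_j]=0$ as $x\to t_i$ shows that $S_i$ preserves the joint eigenspaces of the $G_j$, which is exactly the required covariance.

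Granting the covariance, I would argue as follows. Fix a generic point $\bold y \in D_S$ and choose $g\in \Bbb W$ with $g(D_{\{0,1\}})=D_S$; since $g$ is a regular isomorphism of $Bun^\circ$, the preimage $g^{-1}(\bold y)$ is again a generic point of $D_{\{0,1\}}$. For any multivalued solution $\psi$ of the Gaudin system near $\bold y$, the pullback $g^\ast \psi$ is a multivalued solution of the Gaudin system with parameters $\bmu^g$ near $g^{-1}(\bold y)$, and is therefore bounded by Proposition \ref{bounded}; hence $\psi$ is bounded near $\bold y$. For the continuity statement, a single-valued eigenfunction of the Gaudin operators is locally a flat section of the Gaudin $D$-module; the fact, established in the proof of Proposition \ref{bounded}, that the local exponents at generic points of $D_{\{0,1\}}$ are nonnegative rationals $\lambda = 0$ or $\lambda = (m-2)/2$ with \emph{no} logarithmic terms is invariant under the change of coordinates induced by $g$, so $\psi$ extends continuously to $\bold y$.

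The main obstacle is verifying the $\Bbb V$-covariance of the Gaudin $D$-module rigorously. The algebraic route requires one to identify $S_i^\ast$ of the quantum Hitchin algebra with itself, which is conceptually clear from the Beilinson--Drinfeld construction but should ideally be checked on generators using the explicit birational formulas of Proposition \ref{Si}, likely with a nontrivial transformation $\bmu \mapsto \bmu^{S_i}$ on the parameters. The analytic route must handle a strong-operator limit in the commutation relation $[H_x,G_j]=0$, where $G_j$ is unbounded; this should be legitimate provided one works on the common domain $S(\mathcal{A})=\mathcal{S}$ of Subsection \ref{Schwartz space}, where $\mathcal{A}$ acts by honest operators and the joint eigenspaces of the $G_j$ are preserved by every bounded operator commuting with $\mathcal{A}$.
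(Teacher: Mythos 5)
Your proposal follows the paper's own approach precisely: the paper's one-line proof invokes Proposition~\ref{dpan} (transitivity of $\Bbb W$ on the components of $D$) together with Proposition~\ref{bounded} (boundedness near $y_i=y_j$), which is exactly the strategy you lay out. You correctly flag the one step the paper leaves implicit — that $\Bbb W$-covariance of the Gaudin $D$-module is needed to transport the local analysis to all components $D_S$ — and both routes you sketch for it are sound (in fact, the analytic route via Propositions~\ref{asym} and~\ref{strocomm} shows $S_i$ commutes with each $G_j$ individually, so there is no parameter transformation $\bmu\mapsto\bmu^{S_i}$, though this makes no difference since the indicial exponents are independent of $\bmu$).
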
 

\begin{proof} This follows from Proposition \ref{dpan} and Proposition \ref{bounded}.
\end{proof}  

We also obtain the following results on the monodromy of Gaudin  systems for $F=\Bbb C$. 

\begin{proposition} The monodromy operator $\gamma$ of the Gaudin system 
around a component of $D$ is a {\bf reflection} for odd number of points $N$ (diagonalizable, 
one eigenvalue $-1$, other eigenvalues $1$) and a {\bf transvection} 
for even $N$ (unipotent, $\gamma-1$ has rank $1$). 
\end{proposition}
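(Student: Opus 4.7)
The plan is to deduce the monodromy structure directly from the local Frobenius analysis already carried out in Subsection \ref{sing1}, supplemented by a non-triviality argument in the even-$N$ case. First, by the transitive action of $\Bbb W = \Bbb S_N \ltimes \Bbb V$ on the components of $D$ (Proposition \ref{dpan}(ii)) and the equivariance of $M(\bmu)$ under this action (the $\Bbb S_N$-factor simultaneously permutes $(t_i, \mu_i)$, while $\Bbb V$ acts through the involutions $S_i$), it suffices to compute $\gamma$ around a single chosen component, which I take to be $D_{\{0, m-1\}}$ in the notation of Subsection \ref{sing1}. The local analysis there shows that near a generic point of this component, with transverse coordinate $z = y_{m-1}$, the Frobenius exponents of the Gaudin system are $\lambda = 0$ and $\lambda = (N-4)/2$, the latter with multiplicity $1$; since the total rank of $M(\bmu)$ equals $2^{N-3}$ (Subsection \ref{hit}), $\lambda = 0$ appears with multiplicity $2^{N-3} - 1$. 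Moreover, the absence-of-log observation in Subsection \ref{sing1} (no Frobenius solution of the form $a_0 \log z + O(1)$ with $a_0 \ne 0$ when $m > 2$) rules out any self-mixing among the $\lambda = 0$ solutions.

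For $N$ odd, the exponents $0$ and $(N-4)/2$ differ by a non-integer, and the standard theory of regular singular local systems gives a monodromy-invariant direct sum decomposition on which $\gamma$ acts by the scalars $1$ and $e^{i\pi(N-4)} = -1$ respectively; hence $\gamma$ is a reflection. For $N$ even, $(N-4)/2$ is a nonnegative integer, both eigenvalues equal $1$, and $\gamma$ is unipotent. The rank-one multiplicity of $\lambda = (N-4)/2$ then restricts any logarithmic mixing to a single $2 \times 2$ Jordan block coupling the unique $\lambda = (N-4)/2$ solution with one chosen $\lambda = 0$ solution, so $\operatorname{rank}(\gamma - 1) \le 1$.

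The main obstacle is to show that for $N$ even (with $N \ge 6$) the monodromy $\gamma$ is genuinely non-trivial, i.e., that a term $z^{(N-4)/2} \log z$ really arises in some Frobenius solution. The cleanest approach I see is to invoke the Beilinson-Drinfeld theorem that the singular support of the quantum Hitchin $D$-module $M(\bmu)$ equals (not merely is contained in) the full nilpotent cone $\mathcal N \subset T^*Bun_0^\circ$. By the construction in the proof of Proposition \ref{dpan}, at a generic point of $D_S$ the space of nilpotent Higgs fields is precisely one-dimensional and coincides with the conormal line to $D_S$; this line thus lies in the singular support. If $\gamma$ were trivial, then by Riemann-Hilbert $M(\bmu)|_{Bun^{\rm vs}}$ would extend to a smooth ($O$-coherent) $D$-module across the generic part of $D_S$, contradicting the presence of the conormal to $D_S$ in the singular support. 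Consequently $\gamma \ne 1$, and combined with $\operatorname{rank}(\gamma - 1) \le 1$ this forces $\gamma$ to be a transvection. As a backup, one can also argue non-triviality directly by computing the Frobenius recursion: at order $n = (N-4)/2$, the recursion operator degenerates to the indicial operator, whose one-dimensional cokernel receives a generically nonzero obstruction from the previously determined coefficients, forcing the $\log z$ term.
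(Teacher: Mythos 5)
Your local Frobenius analysis (exponents $0$ and $(N-4)/2$, uniqueness of the $(N-4)/2$ solution, no self-mixing at exponent $0$, hence $\operatorname{rank}(\gamma - 1) \le 1$) matches the paper exactly, as does the reduction to a single component via the $\Bbb W$-action. Where you diverge is the non-triviality step for even $N$: the paper simply observes that if $\gamma=1$ then (by the same transitivity) the local system $M(\bmu)$ on $Bun^{\rm vs}$ would be monodromy-free, hence a direct sum of copies of $\mathcal O$, contradicting the irreducibility established in Proposition \ref{irredu}. Your route via singular support is a genuinely different and in principle valid argument, but it imports facts the paper never establishes: Subsection \ref{hit} states only that the singular support of the quantum Hitchin $D$-module is \emph{contained in} $\mathcal N$, whereas you need equality, and you also implicitly need the extension $\Delta_\mu$ of $M(\bmu)$ across $D$ to be detected by the generic monodromy (e.g.\ that it has no composition factor supported on $D$, or is the minimal extension) in order for ``$\gamma=1$ implies $O$-coherent near $D_S$'' to contradict ``singular support contains the conormal to $D_S$.'' Both premises are true (they go back to Beilinson--Drinfeld and the separation-of-variables identification cited in the proof of Proposition \ref{irredu}), but the paper's argument is strictly more economical since it consumes nothing beyond Proposition \ref{irredu} itself, which is already in place. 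Your ``backup'' Frobenius-recursion argument is only a sketch: the claim that the cokernel of the indicial operator receives a generically nonzero obstruction at order $(N-4)/2$ would require an actual computation, and as stated it does not constitute a proof.
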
 

\begin{proof} It follows from our description of the local behavior of solutions 
that for even $N$ all eigenvalues of $\gamma$ are $1$, and for odd $N$ all of them but one are $1$ 
and one eigenvalue is $-1$, and that $\gamma-1$ has rank $\le 1$. But if $\gamma=1$ then 
the corresponding $D$-module on an open set of $\Bbb P^{N-3}$ is monodromy-free, hence a multiple of $\mathcal O$. Thus it 
cannot be irreducible, a contradiction with Proposition \ref{irredu}. 
\end{proof} 

\begin{corollary} The eigenfunctions of Gaudin (or Hecke) operators 
near a generic point $\bold y$ of the wobbly divisor $D$ have the form 
$$
f=f_0+f_1|z|^{m-2}
$$
for odd $m$ and 
$$
f=f_0+f_1|z|^{m-2}\log|z|
$$
for even $m$, where $f_0$ and $f_1$ are real analytic functions near $\bold y$
and $z$ is a complex coordinate on $Bun_0^\circ$ such that $D$ is locally 
near $\bold y$ 
defined by the equation $z=0$. 
\end{corollary}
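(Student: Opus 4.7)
The plan is to combine the Frobenius analysis of solutions of the Gaudin system near a generic point of $D$ (as carried out in Subsection \ref{sing1}) with the monodromy computation just established, and then note that a single-valued real analytic eigenfunction $f$ must be a monodromy-invariant bilinear combination of holomorphic and antiholomorphic local solutions.

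More precisely, pick a local complex coordinate $z$ on $Bun_0^\circ$ with $D=\{z=0\}$ near $\bold y$. By the analysis of Subsection \ref{sing1}, the characteristic exponents of the holomorphic Gaudin system along $D$ are $0$ (multiplicity $2^{N-3}-1$) and $(m-2)/2$ (multiplicity $1$). First I would treat the case of \emph{odd} $m$, where $(m-2)/2$ is a half-integer and hence there is no resonance. Standard Frobenius theory then provides a basis $\phi_1,\dots,\phi_{2^{N-3}-1}$ of holomorphic solutions together with a multivalued solution $\phi_0=z^{(m-2)/2}g(z,\cdot)$ with $g$ holomorphic and $g(\bold y)\neq 0$; the monodromy acts by $-1$ on $\phi_0$ and trivially on the rest, in agreement with the reflection description of the previous proposition. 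Writing $f=\sum c_{ij}\phi_i\bar\phi_j$ on the universal cover of a punctured neighborhood and imposing monodromy invariance forces $c_{0j}=c_{i0}=0$ for $i,j\geq 1$, leaving
\[
f=\sum_{i,j\geq 1}c_{ij}\phi_i\bar\phi_j+c_{00}\,\phi_0\bar\phi_0=f_0+c_{00}|z|^{m-2}|g|^2,
\]
which is of the required form with $f_0$ and $f_1=c_{00}|g|^2$ real analytic.

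Next I would treat \emph{even} $m$, where $(m-2)/2$ is a positive integer and so the two exponents are in resonance. By the previous proposition the local monodromy is a transvection, and this is precisely the condition forcing the Frobenius solution of exponent $(m-2)/2$ to acquire a logarithm. Thus one can choose a basis $\phi_0,\phi_1,\dots,\phi_{2^{N-3}-1}$ of holomorphic solutions, with $\phi_0=z^{(m-2)/2}g(z,\cdot)$ in the image of $\gamma-1$, together with one logarithmic solution of the form $\phi_0'=\frac{1}{2\pi i}\log z\cdot\phi_0+\tilde\phi$ with $\tilde\phi$ holomorphic and $\gamma\phi_0'=\phi_0'+\phi_0$. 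A direct calculation of monodromy invariance of $f$, written as a bilinear sum in this extended basis, shows that the coefficient of $\phi_0'\bar\phi_0'$ vanishes, the coefficient of $\phi_0'\bar\chi$ and $\chi\bar\phi_0'$ vanish for $\chi\notin\mathbb{C}\phi_0$, and the coefficients of $\phi_0'\bar\phi_0$ and $\phi_0\bar\phi_0'$ are opposite. Substituting $\phi_0'=\frac{1}{2\pi i}\log z\cdot\phi_0+\tilde\phi$ gives
\[
\phi_0'\bar\phi_0-\phi_0\bar\phi_0'=\frac{\log|z|}{\pi i}\,\phi_0\bar\phi_0+(\tilde\phi\bar\phi_0-\phi_0\bar{\tilde\phi}),
\]
and since $\phi_0\bar\phi_0=|z|^{m-2}|g|^2$, while $|z|^{m-2}$ is itself real analytic when $m-2$ is even, one obtains $f=f_0+f_1|z|^{m-2}\log|z|$ as claimed, with $f_0,f_1$ real analytic.

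The main technical point, and where care is needed, is the justification that in the even case the logarithmic solution really takes the canonical form $\frac{1}{2\pi i}\log z\cdot\phi_0+\tilde\phi$ with $\tilde\phi$ holomorphic on a full neighborhood of $\bold y$ (rather than merely meromorphic in $z$), so that the ``real analytic'' regularity of $f_0$ and $f_1$ really holds across $z=0$. This follows from the Frobenius theory for regular singular points along a smooth divisor (applied at a generic, hence smooth, point of the component of $D$), once one knows from Proposition \ref{dpan} that $D$ is generically smooth and from Proposition \ref{irredu} that the Gaudin $D$-module is irreducible (ruling out degenerate cases in which the would-be holomorphic correction blows up). Given these inputs, everything else is bookkeeping and the corollary follows.
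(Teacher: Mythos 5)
Your argument is correct and follows the same route as the paper: identify the local Frobenius basis near a generic point of $D$ (one solution proportional to $z^{(m-2)/2}$, plus a logarithmic solution in the even-$m$ resonant case, the rest holomorphic of exponent $0$) and read off the claimed form from the monodromy-invariance constraints on the sesquilinear expansion $\sum c_{ij}\phi_i\bar\phi_j$ of a single-valued real analytic solution. The paper compresses the second step to ``This implies the statement,'' whereas you spell out the invariance computation explicitly; apart from a small indexing slip in the even-$m$ case (you list $\phi_0,\dots,\phi_{2^{N-3}-1}$ plus $\phi_0'$, which is $2^{N-3}+1$ basis elements, one too many --- it should be $2^{N-3}-1$ holomorphic ones plus the logarithmic one), the details check out.
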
 

\begin{proof} Let $\bold y\in D$. The above analysis implies that 
for odd $m$ there is a basis of solutions of the Gaudin system in which all but one element are holomorphic  
 at $\bold y$ and one element is 
 of the form $hz^{\frac{m-2}{2}}$ where $h$ is holomorphic at $\bold y$, and for even $m$ 
there is a basis with all but two elements holomorphic at $\bold y$, and the remaining two elements 
have the form $hz^{\frac{m-2}{2}}$ and $g+h z^{\frac{m-2}{2}}\log z$, where $g,h$ are holomorphic at $\bold y$. This implies the statement. 
\end{proof} 

\begin{corollary} The monodromy group of the quantum Gaudin system with any eigenvalues $\mu_i$ is generated by reflections for odd $N$ and by 
transvections for even $N$. 
\end{corollary}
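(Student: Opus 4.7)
The plan is to reduce the statement to the previous corollary via a standard fundamental group argument. The $D$-module $M(\bmu)$ is $O$-coherent on $Bun^{\rm vs}=Bun^\circ\setminus D$, so its monodromy is a representation
$$
\rho : \pi_1(Bun^{\rm vs}(\Bbb C),\bold y_0)\longrightarrow GL_{2^{m-1}}(\Bbb C).
$$
The monodromy group is by definition the image of $\rho$, so it suffices to exhibit a generating set of $\pi_1(Bun^{\rm vs}(\Bbb C),\bold y_0)$ consisting of loops whose images under $\rho$ are the local monodromies treated in the preceding corollary.

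First I would show that $\pi_1(Bun^{\rm vs}(\Bbb C))$ is generated by small loops around the smooth points of the irreducible components $D_S$ of the wobbly divisor listed in Proposition \ref{dpan}. For this, note that $Bun_0^\circ$ is a smooth rational variety: by Lemma \ref{param} the birational map $\pi : Bun_0^\circ\to \Bbb P^{m-1}$ is regular, and the indeterminacy locus of $\pi^{-1}$ has codimension $\ge 2$, so removing it and its preimage does not affect $\pi_1$. Thus $Bun_0^\circ(\Bbb C)$ is simply connected. The standard transversality/normal slice argument (Zariski--van Kampen, or equivalently the long exact sequence for the pair $(Bun_0^\circ,Bun^{\rm vs})$ combined with purity) then shows that $\pi_1(Bun^{\rm vs}(\Bbb C))$ is normally generated by meridians around the smooth points of each component $D_S$; since the group $\Bbb W$ acts on $Bun_0^\circ$ permuting the $D_S$ transitively (Proposition \ref{dpan}(ii)), any single meridian orbit already suffices.

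Next I would apply the previous corollary to each such meridian. A meridian around a smooth point of a component $D_S$ is, by definition, a small loop $\gamma_S$ encircling $D_S$ in a normal transverse complex disk. The previous corollary identifies $\rho(\gamma_S)$ up to conjugacy with a reflection for odd $N=m+2$ and a transvection for even $N$. Combining these two inputs, the image of $\rho$ is generated by conjugates of reflections (resp.\ transvections), and conjugates of reflections (resp.\ transvections) are again reflections (resp.\ transvections). This gives the corollary.

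The main obstacle is the first step: justifying that $\pi_1(Bun^{\rm vs}(\Bbb C))$ is indeed generated (not merely normally generated in the ambient $\pi_1(Bun_0^\circ)$, which is trivial here) by meridians around the $D_S$. For $D$ with simple normal crossings this is immediate from the local picture $(\Bbb C^*)^k\times \Bbb C^{m-1-k}\hookrightarrow Bun_0^\circ$ at a depth-$k$ stratum, and at a generic point of each $D_S$ we indeed have such a local picture. The only subtlety is ensuring that the more singular strata of $D$ (where components of $D_S$ meet) do not contribute extra generators; but this is automatic as long as the divisor $D$ has codimension one and the ambient space is simply connected, since a general loop can be moved off the strata of codimension $\ge 2$.
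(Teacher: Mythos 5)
Your argument is the natural one and, since the paper offers no explicit proof of this corollary, it essentially reconstructs the intended reasoning: (1) $Bun_0^\circ(\Bbb C)$ is simply connected, (2) hence $\pi_1(Bun^{\rm vs}(\Bbb C))$ is normally generated by meridians of the irreducible components of $D$, (3) the preceding \emph{Proposition} (you misattribute it as a ``corollary'') shows each such meridian maps to a reflection (odd $N$) or transvection (even $N$), and (4) conjugates of reflections are reflections, so the image of $\rho$ is generated by them. That chain is correct.

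Two small points deserve attention. First, the argument only applies when $N\geq 5$: for $N=4$ one has $Bun_0^\circ = \Bbb P^1\setminus\{0,t,1,\infty\}$, which is far from simply connected, so your step 1 fails (the corollary still holds in that case for a different reason, namely that the whole fundamental group is already generated by loops around the four punctures, which are the ``wobbly points''). This restriction is implicit in the surrounding section, since the reflection/transvection Proposition itself relies on Proposition \ref{bounded}, stated for $N\geq 5$, but you should say so. Second, you assert without argument that the indeterminacy locus of $\pi^{-1}\colon \Bbb P^{m-1}\dashrightarrow Bun_0^\circ$ has codimension $\geq 2$; this is not automatic when the target is non-proper. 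What saves you is that the locus of $\bold y\in\Bbb P^{m-1}$ where $E_{\bold y,0}$ fails to be stable (which is where $\pi^{-1}$ is undefined) requires at least $\lceil N/2\rceil$ of the $y_i$ to coincide, and this has codimension $\geq 2$ in $\Bbb P^{m-1}$ precisely when $N\geq 5$. With that checked, $\pi$ restricts to an isomorphism $\pi^{-1}(U)\to U$ with $\Bbb P^{m-1}\setminus U$ of codimension $\geq 2$, and the surjection $\pi_1(\pi^{-1}(U))\twoheadrightarrow\pi_1(Bun_0^\circ)$ for open dense inclusions finishes step 1. Finally, the worry in your last paragraph about ``generated versus normally generated'' is a non-issue: normal generation in a group that surjects onto the trivial group means the normal closure of the meridians is everything, and the image of a normal closure is the subgroup generated by conjugates of the images, which here are still reflections (resp.\ transvections).
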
 

\begin{corollary} The monodromy groups of the quantum Gaudin systems corresponding to real opers 
(i.e., satisfied by eigenfunctions of Hecke operators) are real, i.e., 
contained in $GL(2^{N-3},\Bbb R)$ up to conjugation. 
\end{corollary}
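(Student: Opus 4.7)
The plan is to deduce the realness of the Gaudin monodromy from the defining property of a real oper---that its rank 2 monodromy representation on $X^\circ := \mathbb{C}\mathbb{P}^1 \setminus \{t_0,\ldots,t_{m+1}\}$ is conjugate into $SL_2(\mathbb{R})$---by transporting this real structure through the separation of variables transform already invoked in the proof of Proposition \ref{irredu}.

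The first step is to observe that, by the definition of a real oper, the rank 2 local system $\mathcal{E}_\bmu$ of solutions of the oper equation on $X^\circ$ admits a flat antilinear involution $\tau$ whose fixed sections form a real form of $\mathcal{E}_\bmu$. The external tensor power $\mathcal{E}_\bmu^{\boxtimes(m-1)}$ on $(X^\circ)^{m-1}$ inherits an involution $\tau^{\boxtimes(m-1)}$ which is $\mathbb{S}_{m-1}$-equivariant, so after pushforward to (the smooth locus of) $\mathrm{Sym}^{m-1}(X^\circ)$ and extraction of the appropriate summand one obtains a rank $2^{m-1}$ local system equipped with an induced real structure.

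Next I would invoke the separation of variables transform of Sklyanin (following \cite{F:icmp} and \cite{Skl}), which provides a birational identification between this local system on $\mathrm{Sym}^{m-1}(X^\circ)$ and the Gaudin $D$-module $M(\bmu)$ on $Bun^{\rm vs}$. Since both the birational identification $Bun^{\rm vs} \dashrightarrow \mathrm{Sym}^{m-1}(X^\circ)$ and the intertwining operator realizing the equivalence are defined by $\mathbb{Q}$-rational geometric constructions (pullbacks and pushforwards along morphisms of stacks defined over $\mathbb{Q}$), they commute with complex conjugation. Hence the real structure on the symmetric-power side transports to a real structure on $M(\bmu)$, yielding the desired conjugation of the monodromy representation into $GL(2^{m-1},\mathbb{R}) = GL(2^{N-3},\mathbb{R})$.

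The main obstacle will be to make the separation of variables transform sufficiently explicit to rigorously track the $\tau$-structure through it; in particular, one must verify that the intertwining kernel is real-algebraic and that the $\mathbb{S}_{m-1}$-invariants on both sides are matched in a $\tau$-equivariant way. A more elementary alternative route would leverage the structural results of the two preceding corollaries: the monodromy is generated by reflections (odd $N$) or transvections (even $N$) along the components $D_S$ of the wobbly divisor, each defined over $\mathbb{R}$ by the explicit equations of Subsection \ref{sing2}. One could then use the local analysis of Subsection \ref{sing1}, combined with the real single-valued Hecke eigenfunctions $\psi_{\bmu_k}$ furnished by Theorem \ref{mainth}(ii), to pin down the reality of the $(-1)$-eigenspace (respectively, the kernel-image data) of each local monodromy generator. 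The subtle point in this alternative route is to show that these locally defined real structures are compatible and propagate consistently between the components of $D$, which ultimately reduces to essentially the same compatibility that the separation of variables approach encodes globally.
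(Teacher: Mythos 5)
Your first route via the separation of variables transform is a genuinely different approach from the paper's, and it could in principle succeed, but as presented it has a real gap that you yourself flag. The assertion that the birational identification and the intertwining functor ``are defined by $\mathbb{Q}$-rational geometric constructions \ldots so they commute with complex conjugation'' is not quite the right justification: the constructions are defined over $\mathbb{Q}(t_0,\ldots,t_{m+1},\bmu)$, a field that is not preserved by complex conjugation when the $t_i$ or the Gaudin eigenvalues $\mu_i$ are non-real (and in the $F=\mathbb{C}$ setting they generally are). What one actually needs is the more delicate statement that the assignment $\rho\mapsto\sigma$ from oper monodromy to Gaudin monodromy is a functor on representations of fundamental groups built purely from topological operations (external tensor, descent along the finite cover $(X^\circ)^{m-1}\to\mathrm{Sym}^{m-1}X^\circ$, pullback along a birational map), and that such a functor commutes with entrywise complex conjugation of matrices; then $\rho\cong\overline{\rho}$ would give $\sigma\cong\overline{\sigma}$ directly. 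Establishing that the separation-of-variables identification of Drinfeld/Sklyanin really has this form, uniformly over the relevant open locus and compatibly with the $\mathbb{S}_{m-1}$-descent, is exactly the nontrivial verification you defer, so the proposal is incomplete as written. Your second route also does not match the paper: the paper never establishes reality of individual local monodromy generators and then glues local real structures.

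The paper's actual argument is shorter and avoids separation of variables entirely. It invokes \cite{EFK2}, Remark 1.8 (essentially: the existence of the single-valued real-analytic eigenfunction $\psi_\bmu$ forces the monodromy group to lie in \emph{some} inner real form of $GL(2^{N-3},\mathbb{C})$), then notes that the only inner real forms are the split one $GL(2^{N-3},\mathbb{R})$ and, when $2^{N-3}$ is even, the quaternionic one $GL(2^{N-4},\mathbb{H})$, and finally rules out the quaternionic form using the preceding corollaries: the monodromy around each wobbly component is a reflection (odd $N$) or transvection (even $N$), whose space of invariants has odd complex dimension $2^{N-3}-1$ and therefore cannot be a quaternionic subspace. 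This is a clean algebraic dichotomy; your first route, if completed, would prove the same thing by a heavier machinery, while your second route gestures at a local-to-global gluing of real structures that the paper sidesteps entirely by the inner-real-form classification.
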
 

\begin{proof} Consider the corresponding monodromy group in 
$GL(2^{N-3},\Bbb C)$. 
By the argument of \cite{EFK2}, Remark 1.8, this monodromy group is
contained in some inner real form of $GL(2^{N-3},\Bbb C)$. There are only two such forms -- the split one with the group of real points $GL(2^{N-3},\Bbb R)$ and the quaternionic one with group of real points $GL(2^{N-4},\Bbb H)$. But the quaternionic form does not contain images of transvections or reflections, since the space of invariants of such an element has odd complex dimension, hence cannot be a quaternionic vector space. 
This implies the statement. 
\end{proof} 

\subsection{The case of 5 points} \label{The case of 5 points} 

Consider now the case of $N=5$ points, i.e., $m=3$. We let $t_1=s$, 
$t_2=t$, $y_1=y$, $y_2=z$. 

\subsubsection{Behavior of eigenfunctions}
 Recall (see e.g. \cite{C},\cite{DP}) that $Bun_0^\circ$ is $\Bbb P^2$ blown up at points $(0,0,1)$, $(0,1,0)$, $(1,0,0)$, $(1,1,1)$, $(s,t,1)$. 
According to \cite{DP}, Subsection 5.4 or Proposition \ref{dpan}, the components of the divisor $D$ 
are as follows: 

1) exceptional fibers over these $5$ points; 

2) 10 lines 
$$
y=0,\ y=1,\ y=s,\ z=0,\ z=1,z=s, y=z,\ ty=sz,\ (t-1)(y-1)=(s-1)(z-1),\ \text{line at }\infty;
$$

3) quadric $st(y-z)+(t-s)yz+sz-ty=0$. 

So there are 16 components, permuted transitively by the Weyl group 
$W(D_5)=\Bbb S_5\ltimes \Bbb V$, where $\Bbb V$ is the 4-dimensional reflection representation over 
$\Bbb F_2$. Namely, the set of components is $W(D_5)/W(A_4)=\Bbb S_5\ltimes \Bbb V/\Bbb S_5=\Bbb V$, where $\Bbb V$ acts by translations and $\Bbb S_5$ by reflections. The origin in $\Bbb V$ corresponds to the component over $(s,t,1)$ which is the component of bundles isomorphic to $O(1)\oplus O(-1)$. 

The components of the divisor $D$ can be subdivided according to the invariant $r=|S|/2$. Namely, 

$\bullet$ $r=0$ corresponds to the exceptional fiber over $(t,s,1)$ (bundles $O(-1)\oplus O(1)$; $\binom{5}{0}=1$ component); 

$\bullet$ $r=1$ corresponds to lines $y=0,y=1,z=0,z=1,z=y$, the line at infinity, and the remaining $4$ exceptional fibers at $(0,0,1)$, $(0,1,0)$, $(1,0,0)$, $(1,1,1)$ ($\binom{5}{2}=10$ components);

$\bullet$ $r=2$ corresponds to lines $y=s$, $z=t$, 
$ty=sz$, $(t-1)(y-1)=(s-1)(z-1)$, and the quadric 
($\binom{5}{4}=5$ components). 

The divisor has normal crossings, and the crossings define the {\bf Clebsch graph}, which is the regular 16-vertex, 40-edge graph obtained by the 5-dimensional hypercube graph by quotienting by the central symmetry. 

Thus using the standard theory of holonomic systems with regular singularities on a normal crossing divisor, we get the following theorem for $F=\Bbb C$. 

\begin{theorem}\label{main5} (i) Near a generic point of $D$ there is a basis $f_1,f_2,f_3,f_4$ of 
solutions of the Gaudin system such that $f_1,f_2,f_3$ are holomorphic 
and $f_4=h\sqrt{z}$ where $h$ is holomorphic, where $D$ is locally defined by the equation $z=0$. 

(ii) Near the intersection point of two components of $D$ there is a basis $f_1,f_2,f_3,f_4$ of 
solutions of the Gaudin system such that $f_1,f_2$ are holomorphic, $f_3=h_1\sqrt{w}$ and $f_4=h_2\sqrt{z}$ where $h_1,h_2$ are holomorphic and $D$ is locally defined by the equation $zw=0$. 

(iii) Single-valued eigenfunctions of the Gaudin operators 
are continuous, near a generic point of the divisor $D$ are of the form $\psi_0+\psi_1|z|$, 
where $\psi_0,\psi_1$ are real analytic, and near the intersection of two 
components 
of $D$ are of the form $\psi_0+\psi_1|z|+\psi_2|w|$ where $\psi_0,\psi_1,\psi_2$ are real analytic. 
Thus all of them are in $L^2$ and satisfy the Gaudin equations as distributions, i.e., coincide with eigenfunctions of the Hecke operators. Thus eigenvalues of Hecke operators are in bijection with real opers (i.e., the inclusion $\Sigma\hookrightarrow \mathcal R$ is a bijection). 

(iv) The monodromy of the Gaudin system can be conjugated into $GL_4(\Bbb 
R)$. Moreover, the monodromy 
around components of $D$ is conjugate to ${\rm diag}(1,1,1,-1)$
and near intersection of two components to $({\rm diag}(1,1,1,-1),{\rm diag}(1,1,-1,1))$. 
\end{theorem}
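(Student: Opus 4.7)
The plan is to deduce (i)--(iv) from three inputs already in hand: the indicial analysis carried out in Subsection \ref{sing1} at a generic point of a wobbly component; the structural description of $D$ recalled before the statement (for $N=5$, the $16$ components permuted transitively by $W(D_5)$, with normal crossings along the $40$ edges of the Clebsch graph); and the irreducibility of the Gaudin $D$-module (Proposition \ref{irredu}), together with the reflection/transvection description of its local monodromy established just before the theorem.

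For (i), specializing Subsection \ref{sing1} to $m=3$ (so the Gaudin $D$-module has rank $2^{m-1}=4$) and choosing a local transverse coordinate $z$ to a component of $D$, the indicial equation has roots $\lambda=0$ (with a $3$-dimensional solution space for the leading coefficient $a_0$, from $(y_i\partial_i+1)a_0=0$) and $\lambda=(m-2)/2=1/2$ (simple). Since $1/2$ is not a nonzero integer, no resonance logarithm connects the two series; purely logarithmic solutions inside the $\lambda=0$ block would force the local monodromy to have a nontrivial unipotent block, contradicting the reflection form $\mathrm{diag}(1,1,1,-1)$ established before the theorem. This gives the desired basis $f_1,f_2,f_3$ holomorphic and $f_4=h\sqrt z$, and by $W(D_5)$-transitivity the picture is uniform over all $16$ components. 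For (ii), at a normal crossing $\{zw=0\}$ the local monodromies around $\{z=0\}$ and $\{w=0\}$ commute; each has the form $\mathrm{diag}(1,1,1,-1)$; and irreducibility of the $D$-module forces them to be unequal simultaneously diagonalizable involutions. The joint diagonalization is $\mathrm{diag}(1,1,1,-1)$ and $\mathrm{diag}(1,1,-1,1)$, and the Frobenius basis $f_1,f_2,f_3=h_1\sqrt w,f_4=h_2\sqrt z$ realizes this structure.

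For (iii), let $\psi\in \mathcal H_k$ be an eigenfunction of the Hecke operators. Away from $D$ it is a real analytic single-valued solution of the Gaudin--anti-Gaudin pair, hence locally a Hermitian combination $\psi=\sum_{i,j}H_{ij}f_i\overline{f_j}$ with $H$ preserved by the monodromy. Using (i), the only contribution with a fractional power is $H_{44}|h|^2|z|$, and cross-terms of $f_4$ with $f_1,f_2,f_3$ are not single-valued and so vanish; thus $\psi=\psi_0+\psi_1|z|$ with $\psi_0,\psi_1$ real analytic, and similarly $\psi=\psi_0+\psi_1|z|+\psi_2|w|$ at crossings. Hence $\psi$ is continuous and bounded, so $\psi\in L^2(Bun^\circ_0(\Bbb C))$. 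To upgrade this from a solution on $Bun^{\rm vs}(\Bbb C)$ to a distributional solution on all of $Bun^\circ_0(\Bbb C)$, I will check that the formal equality $G_i\psi=\mu_{i,k}\psi$, tested against a smooth compactly supported test half-density, produces no residual distribution supported on $D$: the only potentially singular combinations $\partial_z\partial_{\overline z}|z|$ have no delta-function part (they give the $L^1_{\mathrm{loc}}$ distribution $\tfrac{1}{2|z|}$), and the Gaudin symbols annihilate these combinations by construction, since $f_4\overline{f_4}=|h|^2|z|$ is genuinely in the kernel of $G_i-\mu_{i,k}$ away from $D$ and the extension across $\{z=0\}$ is controlled by the indicial analysis. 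Applying Theorem \ref{mainth}(v) then yields the equality $\Sigma=\mathcal R$, establishing the remaining half of Conjecture 1.8(2) of \cite{EFK2} in this case.

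For (iv), the diagonal forms of the monodromy around a smooth point of $D$ and around a crossing are precisely the forms realized on the Frobenius bases of (i) and (ii). The reality of the monodromy group (up to conjugation in $GL_4(\Bbb C)$) follows from the same Hermitian pairing argument used in (iii): the existence of a single-valued eigenfunction for every real oper (now known by (iii)) produces a monodromy-invariant Hermitian form of split signature, which combined with the reflection/transvection form of the generators excludes the quaternionic real form $GL_2(\Bbb H)$ and forces the split form $GL_4(\Bbb R)$. The main obstacle is the distributional verification at the heart of (iii): even with $\psi$ continuous and bounded, one must confirm that the $|z|$- and $|w|$-singularities do not generate delta-function residues against the second-order Gaudin symbols on $D$. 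This reduces to a local computation in the Frobenius basis of (i) and (ii), but it is the step that ties together the local geometric analysis with the functional-analytic spectral theory of Hecke operators.
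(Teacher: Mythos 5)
Your proposal tracks the paper's route exactly: the indicial analysis from Subsection \ref{sing1}, the normal-crossings description of $D$ and the Clebsch graph, the reflection form of the local monodromy, and the Hermitian-form argument to kill cross-terms $f_4\overline{f_j}$. The paper itself condenses this to ``standard theory of holonomic systems with regular singularities on a normal crossing divisor,'' so your fleshed-out version of (i), (ii) and (iv) is in essence the intended argument.

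The distributional verification you flag in (iii) is legitimate, but it is not actually an obstacle once you use the codimension count; you can close it as follows. In the Frobenius basis the eigenfunction near a smooth point of $D$ is $\psi=\psi_0+H_{44}\,|h|^2\,|z|$ with $\psi_0$ real analytic. The function $|z|$ is Lipschitz: its first derivatives $\partial_z|z|=\overline z/(2|z|)$, $\partial_{\overline z}|z|=z/(2|z|)$ are bounded, and its second derivatives are $O(1/|z|)$, hence $L^1_{\mathrm{loc}}$ because $D=\{z=0\}$ has real codimension $2$ in the four-real-dimensional $Bun^\circ_0(\Bbb C)$. For a Lipschitz function with $L^1_{\mathrm{loc}}$ pointwise second derivatives, the distributional and pointwise second derivatives agree with no singular correction supported on $D$ (integration by parts across a tubular neighborhood of $D$ produces boundary terms that are bounded and integrated over a set of vanishing measure). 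Therefore $(G_i-\mu_{i,k})\psi$, which vanishes pointwise on $Bun^{\rm vs}$ and is $L^1_{\mathrm{loc}}$, vanishes as a distribution. At a normal crossing, the same computation applies to $|z|$ and $|w|$ separately. This is precisely the content of ``standard theory'' the paper invokes; once you have it, Theorem \ref{mainth}(iv)--(v) deliver $\Sigma=\mathcal R$ as you say. So your proof is essentially complete; the only missing step is the elementary observation that a real codimension-$2$ locus cannot carry a delta residue for a Lipschitz function.
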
  

Thus in the case of $G=PGL_2$ and $X=\Bbb P^1$ with $5$ parabolic points we have proved all the conjectures from \cite{EFK2} (for $F=\Bbb C$). 

\subsubsection{The Schwartz space} 
We can now geometrically describe the Schwartz space $\mathcal S$ in the case of $5$ points for $F=\Bbb C$.\footnote{We are grateful to D. Gaiotto and E. Witten for helping us formulate this description.} (The geometric description of the Schwartz space for $4$ points was given in \cite{EFK}). 

Namely, consider an eigenfunction $\psi$ near a generic point $\bold y$ of one of the components of $D$, say $D_\emptyset$, the exceptional line over $(s,t,1)\in \Bbb P^2$. By Theorem \ref{main5}(iii), we have $\psi=\psi_0+\psi_1|z|$ for smooth $\psi_0,\psi_1$ near $\bold y$. A more careful analysis shows that $\psi_1$ is not only smooth but has an additional property. To fomulate this property, recall that $\psi$ is not a function but rather a half-density. It turns out that the condition on the half-density 
$\psi_1|z|$ is that it has to be the pullback of a smooth half-density from a neighborhood of $(s,t,1)\in \Bbb P^2$ under the map $\pi: Bun_0^\circ\to \Bbb P^2$. Note that in suitable local coordinates 
$\pi$ is given by the formula 
$$
\pi(z,w)=(z,zw),
$$
so a pullback of a smooth half-density $f(z,u)|dz\wedge du|$ looks like 
$$
f(z,zw)|dz\wedge d(zw)|=f(z,zw)|z|\cdot |dz\wedge dw|.
$$
(note that it is {\bf not} smooth!). In other words, we find that in such coordinates
$$
\psi_1(z,w)=f(z,zw)
$$
for a smooth function $f$. This can be deduced from the Gaudin system for $\psi$.
For example, the fact that $\psi_1(0,w)=f(0,0)$ is constant follows from equation \eqref{a0eq}
after passing to the coordinates $z,w$. 

This leads to the following description of the Schwartz space $\mathcal{S}\subset \mathcal{H}$ defined 
in Subsection \ref{Schwartz space}.

\begin{theorem} For $N=5$ the Schwartz space $\mathcal S$ consists of continuous half-forms on $Bun_0^\circ$ which are 

(1) smooth outside $D$;

(2) Near a generic point of a component $D_S\subset D$ have the form 
$\psi_0+\psi_1|z|$, where $\psi_0$ is smooth and $\psi_1|z|$ 
is the pullback of a smooth half-density from the blow-down of $Bun_0^\circ$ along $D_S$; 

(3)  Near the intersection of two components $D_{S_1},D_{S_2}\subset D$ have the form 
$\psi_0+\psi_1|z|+\psi_2|w|$, where $\psi_0$ is smooth and $\psi_1|z|$, $\psi_2|w|$  
are pullbacks of smooth half-densities from the blow-downs of $Bun_0^\circ$ along $D_{S_1}$ and $D_{S_2}$, respectively.  
\end{theorem}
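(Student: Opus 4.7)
The plan is to establish the two inclusions using the characterization $\mathcal{S}=S(\mathcal{A})$ proven earlier, together with the local analysis of the quantum Gaudin $D$-module near the wobbly divisor $D$ provided by Theorem \ref{main5}. Throughout, one works locally in complex coordinates adapted either to a single smooth component $D_S$ or to a normal crossing $D_{S_1}\cap D_{S_2}$, and reduces to a computation with the leading-order Gaudin operators from Subsection \ref{sing1}.

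For the inclusion $\mathcal{S}\subset \{\text{half-forms satisfying (1)-(3)}\}$: any $\psi\in\mathcal{S}$ decomposes as $\psi=\sum_k\psi_k$ along the joint eigenbasis $\psi_k$ of the Hecke operators, with each $\psi_k$ a joint eigenfunction of the Gaudin algebra by Proposition \ref{eigenfun}. Smoothness on $Bun^{\rm vs}(F)$, i.e. property (1), follows from elliptic regularity for $D_1^2+\cdots+D_r^2\in\mathcal{A}$, as in the proof that $S(\mathcal{A})=\mathcal{S}$. Near a generic point of a component $D_S$, I would choose coordinates $(z,w_1,w_2,w_3)$ with $D_S=\{z=0\}$; by Theorem \ref{main5}(i), a basis of local solutions of the Gaudin system consists of three holomorphic sections and one of the form $h\sqrt{z}$. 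Single-valuedness, reality, and $L^2$-integrability force $\psi_k$ locally to be a real-analytic combination of products $f\overline{g}$ from this basis, giving the form $\psi_0+\psi_1|z|$ with $\psi_0,\psi_1$ real-analytic. To upgrade the real-analyticity of $\psi_1$ to the statement that $\psi_1|z|$ is pulled back from $\Bbb P^2$ along $\pi$, I would reintroduce coordinates $(z,w)$ adapted to the blow-down $\pi$ and inspect the leading singular term of the Gaudin operators: as in the calculation preceding equation \eqref{a0eq}, the leading operator annihilates exactly the functions of the combination $(z,zw)$, which is the pullback condition. The corner case (3) follows from Theorem \ref{main5}(ii) by applying this analysis along each branch separately.

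For the reverse inclusion, given a continuous half-form $\psi$ satisfying (1)-(3), I need to verify that $A\psi\in L^2$ for every $A\in\mathcal{A}$. Away from $D$ this is immediate from (1). Near a generic point of $D_S$, writing $\psi_1|z|\cdot|dz\wedge dw|^{1/2}$ as the pullback of a smooth half-density on $\Bbb P^2$ under $\pi(z,w)=(z,zw)$, one checks by direct local computation in the coordinates $(z,w)$ that every differential operator in $\mathcal{A}$ preserves this class of half-densities modulo smooth ones, so the singular part of $A\psi$ remains of the form (real-analytic)$\cdot|z|$, which is locally $L^2$. The corner condition (3) ensures that the analogous estimate holds at $D_{S_1}\cap D_{S_2}$. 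Once $A\psi\in L^2=\mathcal{H}$ for all $A\in \mathcal{A}_{\Bbb R}$, the spectral decomposition $\psi=\sum_k\psi_k$ gives $\sum_k|\bmu_k(A)|^2\|\psi_k\|^2=\|A\psi\|^2<\infty$, which is the definition of $\mathcal{S}$.

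The main obstacle is the precise identification of the pullback condition on $\psi_1|z|$: it must be extracted from the Gaudin equations in the forward direction and shown to be stable under the entire quantum Hitchin algebra in the reverse direction. Both verifications are local computations in coordinates adapted to the blow-down $\pi$, but the second one is more delicate, because one must control not only the symbols of Beilinson-Drinfeld operators but also their subleading terms near $D_S$; the fact that the pullback form is preserved reflects the compatibility between the Hecke correspondence geometry and the wobbly locus, and is essentially where the special role of components $D_S$ of the wobbly divisor (as opposed to arbitrary divisors) enters.
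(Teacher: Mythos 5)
Your outline follows essentially the same route as the paper's own (informal) justification for this theorem: use $\mathcal S = S(\mathcal A)$, invoke Theorem \ref{main5} for the local model of solutions near $D$, and extract the pullback condition on $\psi_1|z|$ from the leading term \eqref{a0eq} of the Gaudin operators in coordinates adapted to the blow-down $\pi(z,w)=(z,zw)$. The paper does not spell out a formal proof; it only treats a single eigenfunction and then states the theorem, so your level of detail is comparable.

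There is, however, one step in your reverse inclusion that does not work as written. You argue that once $A\psi\in L^2$ for all $A\in\mathcal A_{\Bbb R}$, one gets $\sum_k|\bmu_k(A)|^2\|\psi_k\|^2=\|A\psi\|^2<\infty$. This equality requires $(A\psi)_k=\bmu_k(A)\psi_k$, i.e. $(A\psi,v)=(\psi,A^\dagger v)$ for $v\in\mathcal H_k$. But $v$ is an eigenfunction which is not compactly supported in $Bun^{\rm vs}$ and is itself only continuous (not $C^1$) across $D$, and $A\psi$ is a priori defined only as a distribution on $Bun^{\rm vs}$. So the integration by parts across $D$ must be justified: one has to check that the boundary contributions along $D$ vanish, using precisely the local structure (2)--(3) for both $\psi$ and $v$ (the cancellation of $|z|$-type singularities against the holomorphic pieces, and the pullback form ensuring the normal derivative of the singular part is controlled). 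Without this, $A\psi\in L^2$ is a weaker statement than $\psi\in S(\mathcal A)$, and the conclusion $\psi\in\mathcal S$ does not follow. Relatedly, in the forward direction you pass from ``each $\psi_k$ has the form $\psi_{k,0}+\psi_{k,1}|z|$'' to ``$\psi=\sum_k\psi_k$ has the form $\psi_0+\psi_1|z|$ with $\psi_0,\psi_1$ smooth''; this needs the Schwartz decay of $\|\psi_k\|$ to force locally uniform convergence of $\sum_k\psi_{k,0}$ and $\sum_k\psi_{k,1}$ together with all derivatives, which is again a nontrivial (if plausible) elliptic estimate along $D$ and should be flagged as the analytic content being used.
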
 

\section{Appendix: auxiliary results} 
 
\subsection{Lemmas on integrals over local fields} 

Let $F$ be a local field. 

\begin{lemma}\label{l2} We have 
$$
\int_{u\in F: \norm{u}\le 1}\frac{\norm{du}}{\sqrt{\norm{u(u+\varepsilon)}}}\sim \log\norm{\varepsilon^{-1}},\ \varepsilon\to 0. 
$$ 
\end{lemma}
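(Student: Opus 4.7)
The plan is to rescale the integration variable and then exploit the basic identity $\int_{1\le \norm{v}\le R}\norm{v}^{-1}\norm{dv}=\log R$ that defines our normalization of the Haar measure.

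First I would substitute $u=\varepsilon v$, using $\norm{du}=\norm{\varepsilon}\norm{dv}$ and $\sqrt{\norm{u(u+\varepsilon)}}=\norm{\varepsilon}\sqrt{\norm{v(v+1)}}$, which transforms the integral into
$$
I(\varepsilon):=\int_{\norm{v}\le \norm{\varepsilon^{-1}}}\frac{\norm{dv}}{\sqrt{\norm{v(v+1)}}}.
$$
This change of variables is the key move: it converts the singular parameter $\varepsilon$ into the size of the domain, and decouples the two potentially singular points $u=0$ and $u=-\varepsilon$ into the fixed points $v=0$ and $v=-1$ of the new integrand.

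Next I would split the domain as $\{\norm{v}\le 2\}\cup\{2\le \norm{v}\le \norm{\varepsilon^{-1}}\}$ (assuming $\norm{\varepsilon}$ is small enough that the split is nonempty). On the first region the integrand $\norm{v(v+1)}^{-1/2}$ is locally integrable: near $v=0$ and $v=-1$ it behaves like $\norm{w}^{-1/2}$ in a local coordinate $w$, which is integrable over any local field (by direct computation, $\int_{\norm{w}\le r}\norm{w}^{-1/2}\norm{dw}<\infty$). Hence the contribution of this region is a bounded constant $C$ independent of $\varepsilon$. On the second region we have $\norm{v+1}=\norm{v}$, so $\sqrt{\norm{v(v+1)}}=\norm{v}$, and the integral becomes
$$
\int_{2\le \norm{v}\le \norm{\varepsilon^{-1}}}\frac{\norm{dv}}{\norm{v}}=\log\norm{\varepsilon^{-1}}-\log 2
$$
by the chosen normalization of the Haar measure (applied to the annulus $2\le \norm{v}\le \norm{\varepsilon^{-1}}$).

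Adding the two contributions, $I(\varepsilon)=\log\norm{\varepsilon^{-1}}+O(1)$ as $\varepsilon\to 0$, which is exactly the claimed asymptotic $I(\varepsilon)\sim \log\norm{\varepsilon^{-1}}$. There is no real obstacle here — the only minor subtlety is checking that the boundary of the rescaled region $\norm{v}=\norm{\varepsilon^{-1}}$ is consistent with the original constraint $\norm{u}\le 1$, which holds exactly by the multiplicativity of $\norm{\cdot}$, and that the normalization constant in $\norm{dv}$ is independent of $\varepsilon$, which is automatic since $v\mapsto \varepsilon v$ is an affine change of variables with constant Jacobian $\norm{\varepsilon}$.
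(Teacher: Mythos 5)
Your proof is correct and takes exactly the approach the paper indicates (the paper's proof is a one-liner: ``easily obtained from the change of variable $u=\varepsilon v$''), with the details filled in via the natural dyadic split of the domain. One small imprecision: the equality $\norm{v+1}=\norm{v}$ on $\norm{v}\ge 2$ is exact only in the non-archimedean case; for $F=\Bbb R$ or $\Bbb C$ one has $\sqrt{\norm{v(v+1)}}=\norm{v}(1+O(\norm{v}^{-1}))$, but the resulting error term is absolutely integrable over the annulus and hence still contributes only $O(1)$, so the asymptotic conclusion is unaffected.
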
 

\begin{proof} This is easily obtained from the change of variable $u=\varepsilon v$.  
\end{proof} 

Now for $x\in F$, $x\ne 0,1$ consider the distribution on $\Bbb P^1(F)$ given by  
$$
E_x(\phi):=\int_{F}\phi(s)\frac{\norm{ds}}{\sqrt{\norm{s(s-1)(s-x)}}},
$$
where $\phi$ is a smooth function on $\Bbb P^1(F)$. 

\begin{lemma}\label{l2ell} We have 
$$
E_x\sim \norm{x}^{-\frac{1}{2}}\log\norm{x}\delta_\infty, \ x\to \infty,
$$
where $\delta_\infty$ is the delta distribution at $\infty$. 
In other words, for any test function $\phi$, 
$$
\lim_{x\to \infty}\frac{E_x(\phi)}{\norm{x}^{-\frac{1}{2}}\log\norm{x}}=\phi(\infty). 
$$
\end{lemma}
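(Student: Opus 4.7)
The claim reduces in a straightforward way to Lemma~\ref{l2} once the integral is rewritten in a coordinate $t$ vanishing at $s=\infty$. The logarithmic blow-up arises solely from the coalescence of the singularities of the integrand at $s=x$ and $s=\infty$ as $x\to\infty$; every other singularity contributes only $O(\norm{x}^{-1/2})$ and is negligible.

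First I would substitute $t = 1/s$. Using $\norm{ds}=\norm{dt}/\norm{t}^{2}$ and the identity $s(s-1)(s-x) = (1-t)(1-tx)/t^{3}$, one obtains
$$
E_x(\phi) = \int_F \widetilde\phi(t)\,\frac{\norm{dt}}{\sqrt{\norm{t(1-t)(1-tx)}}},
$$
where $\widetilde\phi(t):=\phi(1/t)$ is a test function on $\Bbb P^1(F)$ with $\widetilde\phi(0)=\phi(\infty)$. Writing $1-tx = -x(t-\epsilon)$ with $\epsilon:=1/x$ and pulling out $\norm{x}^{1/2}$ gives
$$
E_x(\phi) = \norm{x}^{-1/2}\, J_\epsilon, \qquad J_\epsilon := \int_F \widetilde\phi(t)\,\frac{\norm{dt}}{\sqrt{\norm{t(1-t)(t-\epsilon)}}}.
$$
It therefore suffices to show $J_\epsilon/\log\norm{\epsilon^{-1}} \to \phi(\infty)$ as $\epsilon\to 0$.

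To prove this I would fix a small $\delta>0$ and split $J_\epsilon = J_\epsilon^{\rm out}+J_\epsilon^{\rm in}$ according to whether $\norm{t}>\delta$ or $\norm{t}\le\delta$. On the outer region, once $\norm{\epsilon}<\delta/2$ the factor $\sqrt{\norm{t-\epsilon}}$ is uniformly bounded below, while the remaining singularities at $t=1$ and $t=\infty$ are $\epsilon$-independent and integrable; hence $J_\epsilon^{\rm out}=O(1)$ uniformly in $\epsilon$. On the inner region, write
$\widetilde\phi(t)/\sqrt{\norm{1-t}} = \widetilde\phi(0)+R(t)$, where $R$ is smooth near $0$ with $R(0)=0$ (so $\norm{R(t)}\lesssim \norm{t}^{1/2}$ in the archimedian case, and $R$ is locally constant vanishing at $0$ in the non-archimedian case). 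Then
$$
J_\epsilon^{\rm in} = \widetilde\phi(0)\int_{\norm{t}\le\delta}\frac{\norm{dt}}{\sqrt{\norm{t(t-\epsilon)}}} + \int_{\norm{t}\le\delta}\frac{R(t)\,\norm{dt}}{\sqrt{\norm{t(t-\epsilon)}}}.
$$
By Lemma~\ref{l2} (after rescaling $t\mapsto t/\delta$, which only changes the value by $O(1)$) the main term equals $\widetilde\phi(0)\log\norm{\epsilon^{-1}} + O(1)$.

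The main (and only mildly delicate) obstacle is bounding the remainder uniformly in $\epsilon$. I would split the inner region further into $\norm{t}\le 2\norm{\epsilon}$, where $\norm{R(t)}\lesssim \norm{\epsilon}^{1/2}$ dominates the nearby pole and the integral of $1/\sqrt{\norm{t-\epsilon}}$ is $O(\norm{\epsilon}^{1/2})$, and $\norm{t}>2\norm{\epsilon}$, where $\norm{t-\epsilon}$ is comparable to $\norm{t}$ and the integrand is $O(\norm{t}^{-1/2})$, hence uniformly integrable. Both pieces contribute $O(1)$. Combining everything yields $J_\epsilon = \phi(\infty)\log\norm{\epsilon^{-1}}+O(1)$, and dividing by $\log\norm{x}=\log\norm{\epsilon^{-1}}$ and letting $x\to\infty$ gives the claim.
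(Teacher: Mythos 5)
Your proposal is correct and takes the same route as the paper's (very terse) proof, which simply says ``This follows easily from Lemma~\ref{l2} by making a change of variable $s\mapsto s^{-1}$.'' You substitute $t=1/s$, pull out $\norm{x}^{-1/2}$, and apply Lemma~\ref{l2}; the rest of your write-up just fills in the uniform bound on the remainder that the paper leaves implicit, and that filling-in is carried out correctly.
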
 

\begin{proof} This follows easily from Lemma \ref{l2} by making a change of variable $s\mapsto s^{-1}$. 
\end{proof} 

Let $\theta: F\to \Bbb C$ be the function defined by the formula 
$\theta(a)=1$ if $a$ is a square and $\theta(a)=0$ otherwise.

\begin{lemma}\label{l33} For $b\in F^\times$ one has 
$$
\int_F\left(\frac{2\theta(x^2-4b)}{\sqrt{\norm{x^2-4b}}}-\frac{1+{\rm sign}(\log\norm{x})}{\norm{x}}\right)\norm{dx}=-\log\norm{b}.
$$
\end{lemma}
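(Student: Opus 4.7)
The plan is to use the substitution $x = u + b/u$, $u \in F^\times$, which satisfies the algebraic identity $x^2 - 4b = (u - b/u)^2$. This shows $x^2 - 4b$ is automatically a square in $F$ for any $x$ in the image, and the map $u \mapsto x$ is a $2$-to-$1$ covering of its image $\{x : \theta(x^2 - 4b) = 1\}$, with fibers $\{u, b/u\}$. A short change-of-variables computation, using $dx/du = (u^2 - b)/u^2$ and $\sqrt{\norm{x^2-4b}} = \norm{(u^2-b)/u}$, produces the clean identity
$$
\frac{\norm{dx}}{\sqrt{\norm{x^2-4b}}} = \frac{\norm{du}}{\norm{u}}.
$$

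Next I would regularize by truncating at $\norm{x} \le R$, for large $R \in \norm{F^\times}$. Combining the above with the $2$-to-$1$ cover (the factor $2$ in the integrand absorbs into the degree) gives
$$
I(R) := \int_{\norm{x} \le R} \frac{2\theta(x^2-4b)\,\norm{dx}}{\sqrt{\norm{x^2-4b}}} = \int_{S(R)} \frac{\norm{du}}{\norm{u}}, \quad S(R) := \{u \in F^\times : \norm{u + b/u} \le R\}.
$$
The involution $u \mapsto b/u$ preserves $S(R)$ and interchanges $\norm{u}$ with $\norm{b}/\norm{u}$, so $S(R)$ is essentially the symmetric annulus $\{\norm{b}/R \le \norm{u} \le R\}$. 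Granting this, the integral evaluates to $\log(R^2/\norm{b}) = 2\log R - \log\norm{b}$, and subtracting $\int_{1 \le \norm{x} \le R} 2\norm{dx}/\norm{x} = 2\log R$ yields the claimed limit $-\log\norm{b}$ as $R \to \infty$.

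The hard part will be to justify the description of $S(R)$. In the non-archimedean case this is immediate from the ultrametric inequality: on $\norm{u} > \sqrt{\norm{b}}$ one has $\norm{u + b/u} = \norm{u}$, on $\norm{u} < \sqrt{\norm{b}}$ one has $\norm{u + b/u} = \norm{b}/\norm{u}$, and the sphere $\norm{u} = \sqrt{\norm{b}}$ is contained in $\{\norm{b}/R \le \norm{u} \le R\}$; so $S(R)$ equals this annulus on the nose for $R \ge \sqrt{\norm{b}}$ and there is no error. In the archimedean case, cancellation in $u + b/u$ near the locus $u^2 = -b$ could enlarge $S(R)$, but the offending region is a thin neighborhood whose contribution is $o(1)$. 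Alternatively, one can bypass $u$ entirely: for $F = \Bbb R$, the substitution $x = 2\sqrt{|b|}\cosh t$ (resp.\ $\sinh t$ for $b<0$) gives $I(R) = 2\operatorname{arccosh}(R/(2\sqrt{|b|})) = 2\log R - \log\norm{b} + o(1)$; and for $F = \Bbb C$, writing $u = \sqrt{|b|}\,e^{v + i\alpha}$ in polar coordinates reduces $I(R)$ to $\pi^{-1}\int_0^{2\pi} \operatorname{arccosh}(R/(2|b|) - \cos\gamma)\,d\gamma$, which is asymptotic to $2\log(R/|b|) = 2\log R - \log\norm{b}$ using $\norm{b} = |b|^2$.
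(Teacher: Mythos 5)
Your substitution $x = u + b/u$ is precisely the change of variables the paper uses (in the form $u = (x-y)/2$, $v = (x+y)/2$ with $uv = b$), leading to the same identity $\norm{dx}/\sqrt{\norm{x^2-4b}} = \norm{du}/\norm{u}$ and the same asymptotic comparison of $\{u : \norm{u + b/u}\le R\}$ with the annulus $\{R^{-1}\norm{b} \le \norm{u}\le R\}$. The paper likewise dispatches the error in this comparison with a one-line assertion that it vanishes as $R\to\infty$; your extra discussion (exact match in the non-archimedean case, and the alternative $\cosh$/polar computations for $\mathbb{R}$, $\mathbb{C}$) is sound but amounts to filling in the same step rather than following a different route.
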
 

\begin{proof} We have 
$$
\int_{\norm{x}\le R}\frac{2\theta(x^2-4b)\norm{dx}}{\sqrt{\norm{x^2-4b}}}=
\int_{\norm{x}\le R, x^2-y^2=4b}\frac{\norm{dx}}{\norm{y}}.
$$
Making the change of variables $u=\frac{x-y}{2},v=\frac{x+y}{2}$, we get $v=bu^{-1}$, 
$x=u+bu^{-1}$, $y=u-bu^{-1}$. Thus we get $\frac{dx}{y}=\frac{du}{u}$, hence
$$
\int_{\norm{x}\le R, x^2-y^2=4b}\frac{\norm{dx}}{\norm{y}}=\int_{\norm{u+bu^{-1}}\le R} 
\norm{\frac{du}{u}}. 
$$
For large $R$, the latter integral is close to 
$$
\int_{\norm{u},\norm{bu^{-1}}\le R} 
\norm{\frac{du}{u}}=\int_{R^{-1}\norm{b}\le \norm{u}\le R}\norm{\frac{du}{u}}=
\int_{\norm{x}\le R}\frac{1+{\rm sign}(\log\norm{x})}{\norm{x}}\norm{dx}-\log \norm{b},
$$
in the sense that the difference goes to zero as $R\to \infty$. 
This implies the statement. 
\end{proof} 

\subsection{Elliptic integrals}\label{ellint} In particular, the asymptotics of Lemma \ref{l2ell} applies to the function 
$$
E(x):=E_x(1)=\int_{F}\frac{\norm{ds}}{\sqrt{\norm{s(s-1)(s-x)}}}.
$$
If $F=\Bbb R$, this function is a classical elliptic integral, and it is also 
expressible in terms of elliptic integrals for $F=\Bbb C$. Thus we will 
call $E(x)$ 
{\bf the elliptic integral of $F$.} 

\begin{example}\label{ellintex} Let us compute the elliptic integral $E(x)$ over a non-archimedian local field $F$ with residue field $\Bbb F_q$. 

\begin{proposition} For $x\in F$ with $0<\norm{x}\le 1,\norm{1-x}=1$, we have 
$$
E(x)=\frac{1+4q^{-\frac{1}{2}}+q^{-1}}{1-q^{-1}}\log q-\log\norm{x},
$$
For other $x\ne 0,1$, the value of $E(x)$ 
is computed from this formula using the equalities
$$
E(x)=E(1-x),\ 
E(x^{-1})=\norm{x}^{\frac{1}{2}}E(x).
$$ 
\end{proposition}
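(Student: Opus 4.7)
The strategy is to first dispose of the two symmetries by elementary changes of variable, which lets us reduce to the case $0 < \norm{x} \le 1$, $\norm{1-x} = 1$, and then to compute the integral directly by breaking $F$ into annuli. Write $\norm{x} = q^{-v}$ with $v \ge 0$; the hypothesis $\norm{1-x}=1$ just rules out $v=0$ with $x \equiv 1 \pmod \pi$.

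For the symmetry $E(x) = E(1-x)$, substitute $s \mapsto 1-s$: then $s(s-1)(s-x) \mapsto -s(s-1)(s-(1-x))$, which has the same norm, and $\norm{ds}$ is translation-invariant. For $E(x^{-1}) = \norm{x}^{1/2}E(x)$, substitute $s = x^{-1}u$: one factors $s(s-1)(s-x^{-1}) = x^{-3}\,u(u-x)(u-1)$, while $\norm{ds} = \norm{x}^{-1}\norm{du}$, and the ratio $\norm{x}^{-1}/\norm{x}^{-3/2}$ yields the claimed factor $\norm{x}^{1/2}$.

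For the main computation, decompose $F$ according to $\norm{s}$, using throughout that the Haar measure of $\{\norm{s}=q^k\}$ in the paper's normalization equals $q^k\log q$. The analysis splits into the following regions:
\textbf{(i)} $\norm{s} = q^k$ with $k \ge 1$: here $\norm{s-1} = \norm{s-x} = \norm{s}$, so integrand $= \norm{s}^{-3/2}$ and summing the geometric series gives $(q^{-1/2}+q^{-1})\frac{\log q}{1-q^{-1}}$.
\textbf{(ii)} $\norm{s} = \norm{s-1} = 1$: then $\norm{s-x} = 1$ (using $v \ge 1$, or $v=0$ after excluding the residue of $x$, which contributes negligibly after (iv) below), giving $(1-2q^{-1})\frac{\log q}{1-q^{-1}}$.
\textbf{(iii)} $\norm{s-1} < 1$ (so $\norm{s} = \norm{s-x}=1$): substitute $s = 1+u$ to get $\int_{\norm{u}<1}\norm{u}^{-1/2}\norm{du}$, which sums to $(q^{-1/2}+q^{-1})\frac{\log q}{1-q^{-1}}$.
\textbf{(iv)} $\norm{s} = q^{-\ell}$, $\ell \ge 1$: subdivide further. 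For $\ell < v$ we have $\norm{s-x} = \norm{s}$, giving contribution $(1-q^{-1})\frac{\log q}{1-q^{-1}}$ per $\ell$, hence $(v-1)\log q$ in total. For $\ell > v$ we have $\norm{s-x} = \norm{x}$, and summing the geometric series gives $(q^{-1/2}+q^{-1})\frac{\log q}{1-q^{-1}}$. The critical case $\ell = v$ is reduced by the substitution $s = xu$ to the unit integral $\int_{\norm{u}=1}\norm{u-1}^{-1/2}\norm{du}$, which splits into $\norm{u-1}=1$ and $\norm{u-1}<1$ and evaluates to $(1 - q^{-1} + q^{-1/2})\frac{\log q}{1-q^{-1}}$.

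Adding all contributions yields
\[
E(x) = \Bigl[(1 + 4q^{-1/2} + q^{-1}) + v(1-q^{-1})\Bigr]\frac{\log q}{1-q^{-1}} = \frac{1+4q^{-1/2}+q^{-1}}{1-q^{-1}}\log q + v\log q,
\]
and since $v\log q = -\log\norm{x}$, this is the claimed formula. The extension to all $x \ne 0,1$ then follows from the symmetries. The only real obstacle is the bookkeeping in case (iv) at $\ell=v$, where the naive comparison of norms fails and one must perform the substitution $s = xu$ to evaluate the residual unit integral cleanly; everything else is routine summation of geometric series.
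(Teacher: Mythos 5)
Your proof is essentially correct and follows the same route as the paper: decompose $F$ into shells $\{\norm{s}=q^n\}$ and sum geometric series, case-splitting on whether $s$ is near $0$, $1$, or $x$. The main organizational difference is that the paper exploits the internal symmetry $E_n(x)=E_{-k-n}(x)$ (coming from the substitution $s\mapsto x/s$, where $\norm{x}=q^{-k}$), which pairs the shell $\norm{s}=q^n$ with the shell $\norm{s}=q^{-k-n}$ and so halves the case analysis; you instead compute both ends directly, which costs a bit more bookkeeping but is equivalent. A small plus on your side: you give proofs of the two symmetries $E(x)=E(1-x)$ and $E(x^{-1})=\norm{x}^{1/2}E(x)$, which the paper only asserts, and both substitutions check out.

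The one place your argument is not quite right as written is the case $v=0$, i.e.\ $\norm{x}=1$. You say that in region (ii) one must also exclude the residue class of $x$, and that this excluded disk ``contributes negligibly after (iv) below.'' That phrasing is wrong. When $\norm{x}=1$ the unit shell $\norm{s}=1$ contains two singular residues, $s\equiv 1$ and $s\equiv x$, so region (ii) has measure $1-3q^{-1}$ rather than $1-2q^{-1}$, and the disk $s\equiv x$ contributes an extra $(q^{-1/2}+q^{-1})\tfrac{\log q}{1-q^{-1}}$ by the same computation as region (iii) --- certainly not negligible. Simultaneously, for $v=0$ region (iv) loses both its $\ell<v$ and $\ell=v$ subcases, which also changes the totals. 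The sum still comes out to $1+4q^{-1/2}+q^{-1}$, matching your general-$v$ formula at $v=0$, but that verification is not contained in what you wrote. It is a short, purely bookkeeping repair; strike ``contributes negligibly'' and do the $v=0$ accounting separately.
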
 

\begin{proof} 
Let $\pi\in F$ be a uniformizer, then $\norm{\pi}=q^{-1}$. 
Since $\norm{x}\le 1$, we have $\norm{x}=q^{-k}$ for $k\ge 0$. 

We will compute $E_*(x):=\frac{1-q^{-1}}{\log q}E(x)$. We have 
$$
E_*(x)=\sum_{n\in \Bbb Z}E_n(x),
$$
where 
$$
E_n(x):=q^{-\frac{n}{2}}\int_{s: \norm{s}=q^n}\frac{\norm{ds}}{\sqrt{\norm{(s-1)(s-x)}}},
$$
with the usual normalization of $\norm{ds}$. Note that the change of variable 
$s'=xs^{-1}$ yields the identity 
$$
E_n(x)=E_{-k-n}(x). 
$$

For $s\in F$ with $\norm{s}=q^n$ let $\bar s\in \Bbb F_q^\times$ be the 
image of $\pi^ns$. Thus 
$$
E_n(x)=\sum_{a\in \Bbb F_q^\times}E_{n,a}(x), 
$$
where 
$$
E_{n,a}(x):=q^{-\frac{n}{2}}\int_{s: \norm{s}=q^n,\bar s=a}\frac{\norm{ds}}{\sqrt{\norm{(s-1)(s-x)}}}.
$$
Thus if $n>0$, we get 
$$
E_{n,a}(x):=q^{-\frac{n}{2}-1},\ E_n(x)=(q-1)q^{-\frac{n}{2}-1}.
$$
So 
$$
\sum_{n>0}E_n(x)=\sum_{n<-k}E_n(x)=q^{-\frac{3}{2}}\frac{q-1}{1-q^{-\frac{1}{2}}}=q^{-\frac{1}{2}}+q^{-1}.
$$

Assume now that $k=0$ (which can happen only if $q\ge 3$). Then it remains to compute $E_0(x)$. Here we will use that $\norm{1-x}=1$. 
For $a\ne 1,\bar x$ we have 
$$
E_{0,a}(x)=q^{-1},
$$
and 
$$
E_{0,1}(x)=E_{0,\bar x}(x)=q^{-\frac{1}{2}}\int_{\norm{u}\le 1}\frac{\norm{du}}{\norm{u}^{\frac{1}{2}}}=
q^{-\frac{1}{2}}\sum_{n=0}^\infty (1-q^{-1})q^{-\frac{n}{2}}=q^{-\frac{1}{2}}+q^{-1}.
$$
So altogether we get 
$$
E_*(x)=4(q^{-\frac{1}{2}}+q^{-1})+(q-3)q^{-1}=1+4q^{-\frac{1}{2}}+q^{-1}.
$$
Now assume that $k>0$. Then similarly for $0<j<k$ we have 
$$
E_j(x)=(q-1)q^{-1},
$$
and 
$$
E_0(x)=E_k(x)=(q-2)q^{-1}+q^{-\frac{1}{2}}+q^{-1}=(q-1)q^{-1}+q^{-\frac{1}{2}}. 
$$
So altogether we get 
$$
E_*(x)=2(q^{-\frac{1}{2}}+q^{-1})+(k+1)(q-1)q^{-1}+2q^{-\frac{1}{2}}=1+4q^{-\frac{1}{2}}+q^{-1}+k(1-q^{-1}). 
$$
Thus 
$$
E(x)=\left(\frac{1+4q^{-\frac{1}{2}}+q^{-1}}{1-q^{-1}}+k\right)\log q=\frac{1+4q^{-\frac{1}{2}}+q^{-1}}{1-q^{-1}}\log q-\log\norm{x},
$$
as claimed. 
\end{proof} 
\end{example} 

It is also useful to consider (over any local field $F$) the {\bf modified elliptic integral} 
$$
E_+(x):=2\int_{F}\frac{\theta(s(s-1)(s-x))\norm{ds}}{\sqrt{\norm{s(s-1)(s-x)}}},
$$ 
where we recall that $\theta(a)=1$ if $a$ is a square and $\theta(a)=0$ 
otherwise. This function computes the volume of the elliptic curve 
$C_x$ given by the equation $$v^2=u(u-1)(u-x)$$ over $F$ 
with respect to the Haar measure $\norm{\frac{du}{v}}$. 

\subsection{The Harish-Chandra theorem} 

Let $\bold G$ be a reductive algebraic group defined over a local field $F$, and $\rho: \bold G(F)\to {\rm Aut}(V)$ be an irreducible unitary representation of $\bold G(F)$. Let $dg$ be a Haar measure on $\bold G(F)$. 

\begin{theorem}\label{hcthm} (see \cite{HC} for the archimedian case and \cite{J} for the non-archimedian case) 
For any smooth compactly supported function $\phi$ on $\bold G(F)$,\footnote{As usual, over a non-archimedian field smooth means locally constant.}  the operator 
$$
A_\phi:=\int_{\bold G(F)} \rho(g)\phi(g)dg
$$
on $V$ is trace class. 
\end{theorem}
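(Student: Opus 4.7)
The plan is to handle the non-archimedian and archimedian cases by rather different arguments, with admissibility of irreducible unitary representations of $\bold G(F)$ as the common foundation. In both cases I will reduce to showing that $A_\phi$ has finite rank, or is the product of two Hilbert-Schmidt operators.

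First I would dispose of the non-archimedian case, which is essentially formal. Since $\phi$ is smooth (locally constant) and compactly supported, there exists an open compact subgroup $K\subset \bold G(F)$ such that $\phi$ is bi-$K$-invariant, i.e., $\phi(k_1 g k_2)=\phi(g)$ for all $k_1,k_2\in K$. Averaging over $K$ on the left and right then gives $A_\phi=P_K A_\phi P_K$, where $P_K$ is the orthogonal projection onto the subspace $V^K$ of $K$-fixed vectors. Since $V$ is irreducible and admissible (Bernstein–Jacquet), $V^K$ is finite-dimensional, so $A_\phi$ has finite rank and hence is trace class.

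For the archimedian case, I would proceed in two steps. The first step is Dixmier–Malliavin: any $\phi\in C^\infty_c(\bold G(F))$ can be written as a finite sum of convolutions $\phi=\sum_i \phi_i*\psi_i$ with $\phi_i,\psi_i\in C^\infty_c(\bold G(F))$. Thus $A_\phi=\sum_i A_{\phi_i}A_{\psi_i}$, and it suffices to show that $A_\psi$ is Hilbert–Schmidt for every $\psi\in C^\infty_c(\bold G(F))$. The second, and main, step is the Hilbert–Schmidt estimate. Let $K\subset \bold G(F)$ be a maximal compact subgroup; by admissibility of the irreducible unitary representation $V$, each $K$-isotypic component $V(\tau)$ is finite-dimensional, of dimension $d(\tau)\le C\cdot \dim(\tau)^N$ for some constants $C,N$ depending only on $\bold G$. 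I would use an elliptic element $D=1-\Delta\in U(\mathfrak{g})$ (say with $\Delta$ the Casimir), apply $D^M$ on the left and right of $\phi$ (allowed because $\phi$ is smooth and compactly supported), and note that on $V(\tau)$ the operator $\rho(D)$ acts by eigenvalues bounded below polynomially in the infinitesimal character. For $M$ sufficiently large this gives $\sum_\tau d(\tau)\,\|A_\psi|_{V(\tau)}\|_{\text{op}}^2<\infty$, which is precisely the Hilbert–Schmidt condition.

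The main obstacle is the archimedian Hilbert–Schmidt estimate, since it requires balancing two quantitative inputs: the Weyl-type bound on multiplicities $d(\tau)$ of $K$-types in $V$, and decay of matrix coefficients of $A_\psi$ between $K$-types, gained by using that $\psi$ is smoothed by convolution with a large power of $D$ from Dixmier–Malliavin. Both inputs are classical (the former is Harish-Chandra's admissibility theorem, the latter is the standard bootstrap from smoothness via an elliptic regularizer); the delicate point is to arrange the powers so that the resulting series converges absolutely. Once this is done, factoring $\phi$ as a convolution and using the ideal property of trace-class operators (the product of two Hilbert–Schmidt operators is trace class) finishes the argument.
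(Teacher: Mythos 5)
The paper does not give a proof of this theorem; it simply cites \cite{HC} for the archimedian case and \cite{J} for the non-archimedian case, and adds as a remark that $A_\phi$ has finite rank in the non-archimedian case. Your sketch supplies the standard argument behind these references, and in broad outline it is correct: the non-archimedian part (sandwiching between projections onto $K$-fixed vectors, then admissibility) is exactly the classical finite-rank argument, and the archimedian part (Dixmier--Malliavin factorization plus an elliptic-regularity Hilbert--Schmidt bound, using $d(\tau)\le C\dim(\tau)^N$) is the usual route.

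One genuine slip in the archimedian part: the element $D=1-\Delta$ with $\Delta$ the Casimir of $\mathfrak{g}$ does not do the job, because $\rho(\Delta)$ acts by a \emph{single scalar} on the irreducible representation $V$ (the infinitesimal character), hence carries no information about which $K$-type you are on and cannot produce decay in $\tau$. You need an element that separates $K$-types: the standard choice is the Casimir $\Omega_{\mathfrak{k}}$ of the maximal compact (or, if one wants an element that is elliptic on $G$, a combination such as $\Omega_{\mathfrak{g}}+2\Omega_{\mathfrak{k}}$), whose eigenvalue on $V(\tau)$ grows like $\|\lambda_\tau\|^2$ and makes the geometric-series estimate go through. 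With that replacement, and noting that $\dim V(\tau)=d(\tau)\dim\tau$ (so the sufficient condition is $\sum_\tau d(\tau)\dim(\tau)\,\|A_\psi|_{V(\tau)}\|^2_{\mathrm{op}}<\infty$, with the extra $\dim\tau$ factor absorbed by taking $M$ larger), your argument is complete.
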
 

\begin{remark} In the non-archimedian case, the operator $A_\phi$ is actually of finite rank.  
\end{remark} 

This allows us to define the {\bf Harish-Chandra character} of $\rho$ to be the distribution 
$\chi=\chi_\rho$ on $G$ such that 
$$
{\rm Tr}A_\phi=(\chi,\phi). 
$$ 

\subsection{The principal series representation $W$ and its Harish-Chandra character}\label{prinse} 

Let $F$ be a local field. Then the group $G=PGL_2(F)$ acts by unitary operators 
on the space of square-integrable half-densities $W:=L^2(\Bbb P^1(F))$. 

Explicitly, this space can be realized as the space $L^2(F)$ 
with the norm defined by the Haar measure and right action of 
$G$ given by the formula 
$$
\left(\rho\left(\begin{matrix} a& b\\ c& d\end{matrix}\right)f\right)(z)=\frac{\norm{ad-bc}^{\frac{1}{2}}}{\norm{cz+d}}f\left(\frac{az+b}{cz+d}\right).
$$
This is the principal series representation with parameter $\nu=0$. 

The following proposition is classical. 

\begin{proposition}\label{charsl2} \cite{GenFun}
(i) The Harish-Chandra character $\chi_W$ of the representation $W$ 
is a locally integrable function on $G$;

(ii) $\chi_W$ is given by the formula 
$$
\chi_W(g)=\frac{2}{\norm{\frac{x}{y}-\frac{y}{x}}}
$$
if $g$ is hyperbolic with a lift to $GL_2(F)$ having eigenvalues $x,y\in F$, and $\chi_W(g)=0$ otherwise. 
\end{proposition}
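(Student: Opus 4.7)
The plan is to compute $\chi_W$ by first writing $W$ as a parabolic induction, then applying the Frobenius-type formula for the character of an induced representation together with the Weyl integration formula on $G = PGL_2(F)$.

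First I would recall that $W$ is unitarily equivalent to the normalized induction $\mathrm{Ind}_B^G(\mathbf{1})$ from the upper triangular Borel $B\subset G$, acting on half-densities on $G/B = \mathbb{P}^1(F)$; the formula for $\rho$ displayed above makes this transparent. For any $\phi\in C_c^\infty(G)$ the operator $A_\phi = \int_G \phi(g)\rho(g)\,dg$ then has an integral kernel $K_\phi$ on $\mathbb{P}^1(F)\times\mathbb{P}^1(F)$ obtained by fixing $z\in \mathbb{P}^1(F)$, changing variables $g\mapsto(w=g\cdot z,\;h\in B_z)$ where $B_z$ is the stabilizer of $z$, and integrating $\phi$ over the fiber $B_z$ against the resulting Jacobian. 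Because $\phi$ is smooth and compactly supported and the map $g\mapsto(z,g\cdot z)$ is a submersion off the diagonal, $K_\phi$ is a smooth function on $\mathbb{P}^1(F)\times\mathbb{P}^1(F)$ minus the diagonal, and by Theorem \ref{hcthm} the trace equals the integral of its pullback to the diagonal.

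Next I would carry out the Weyl integration formula for $G = PGL_2(F)$: the regular semisimple locus decomposes up to measure zero into the hyperbolic and (in the archimedean case) elliptic strata, and on the split (hyperbolic) stratum one parametrizes $g$ by a pair of distinct fixed points $(z_1,z_2)\in\mathbb{P}^1(F)^2$ and the ratio $\lambda=x/y\in F^\times$ of the two eigenvalues of a $GL_2$-lift. Under this parametrization the Haar measure on the hyperbolic locus becomes, up to the Weyl $\mathbb{Z}/2$ swapping $(z_1,z_2)$, proportional to
$$
\frac{\|d z_1\|\,\|d z_2\|}{\|z_1-z_2\|^2}\cdot \bigl\|\lambda - \lambda^{-1}\bigr\|\cdot\frac{\|d\lambda\|}{\|\lambda\|},
$$
where the factor $\|\lambda-\lambda^{-1}\|$ is the Weyl denominator for $PGL_2$. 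Pulling back $K_\phi$ to the diagonal and doing the $z$-integration separates into contributions from the two fixed points $z_1$ and $z_2$ of $g$; each contribution produces a factor $\|cz_i+d\|^{-1}$, which after conjugating $g$ to diagonal form $\mathrm{diag}(x,y)$ evaluates at the fixed points $0$ and $\infty$ to $\|\lambda\|^{1/2}$ and $\|\lambda\|^{-1/2}$ respectively. Combining the two fixed-point contributions with the Weyl denominator and the normalization of induced half-densities produces exactly $2\bigl\|\lambda-\lambda^{-1}\bigr\|^{-1} = 2\|x/y - y/x\|^{-1}$, and matching this with $\mathrm{Tr}(A_\phi) = \int_G \phi(g)\chi_W(g)\,dg$ yields the formula in (ii) on the hyperbolic stratum and zero on its complement in the regular semisimple locus.

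Finally, for (i) the local integrability of $\chi_W$ amounts to checking that the function $\lambda\mapsto\|\lambda-\lambda^{-1}\|^{-1}$ is in $L^1_{\mathrm{loc}}(F^\times,\|d\lambda\|/\|\lambda\|)$ near $\lambda=\pm 1$, which follows from Lemma \ref{l2} after the substitution $\lambda = 1+u$; the elliptic and non-semisimple strata have measure zero, so assigning $\chi_W$ the value $0$ there gives a well-defined locally integrable function whose integral against any $\phi\in C_c^\infty(G)$ coincides with $\mathrm{Tr}(A_\phi)$ by the computation above. The main obstacle is the careful accounting of the normalization constants: one must pin down the precise form of the Haar measure on $G$ under the hyperbolic parametrization, track the effect of the $\|\cdot\|^{1/2}$-twist from normalized induction (including in the non-archimedean case the role of the modular character $\delta_B$), and verify that the Jacobian factor $\|z_1-z_2\|^{-2}$ is consumed by the two fixed-point contributions so that exactly the denominator $\|x/y-y/x\|$ (rather than a power thereof) remains; once these identifications are made, the asserted formula and local integrability fall out simultaneously from the Weyl integration formula.
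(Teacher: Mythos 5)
The paper does not itself prove this proposition; it is cited as a classical result from \cite{GenFun}, so your sketch is a fresh derivation rather than a reconstruction of an argument in the text. The overall strategy --- realize $W$ as normalized induction from $B$, express $\mathrm{Tr}(A_\phi)$ as the integral of the kernel over the diagonal, and localize at the two fixed points of a hyperbolic element via the Weyl integration formula --- is the standard route to such character formulas and is sound in outline.

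That said, two of your explicit ingredients are off, and carrying the computation through carefully does not reproduce the displayed formula. First, the Weyl discriminant you write down is not the correct one for $PGL_2$: with $\lambda = x/y$ one has $\norm{D(\lambda)} = \norm{(1-\lambda)(1-\lambda^{-1})} = \norm{\lambda-1}^2/\norm{\lambda}$, whereas $\norm{\lambda-\lambda^{-1}} = \norm{\lambda-1}\norm{\lambda+1}/\norm{\lambda}$; these are genuinely different. Second, when you localize $K_\phi$ at a fixed point $z_i$, you account only for the half-density cocycle factor $\norm{g'(z_i)}^{1/2}$ and omit the Jacobian $\norm{1-g'(z_i)}^{-1}$ coming from the delta function in $w - g\cdot z$ restricted to the diagonal; the full contribution at $z_i$ is $\norm{g'(z_i)}^{1/2}/\norm{1-g'(z_i)}$. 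Summing over the two fixed points, with multipliers $\lambda$ and $\lambda^{-1}$, gives
$$
\chi_W(g)=\frac{\norm{\lambda}^{1/2}}{\norm{1-\lambda}}+\frac{\norm{\lambda}^{-1/2}}{\norm{1-\lambda^{-1}}}=\frac{2\norm{\lambda}^{1/2}}{\norm{\lambda-1}}=\frac{2\norm{xy}^{1/2}}{\norm{x-y}},
$$
which agrees with the van Dijk / Jacquet--Langlands character formula for the $\nu=0$ principal series (two Weyl translates, each contributing $\norm{D_{G/T}}^{-1/2}$). This does \emph{not} coincide with the displayed expression $\frac{2}{\norm{x/y - y/x}} = \frac{2\norm{xy}}{\norm{x-y}\,\norm{x+y}}$; indeed the latter diverges at the regular split element with $x=1$, $y=-1$, which a genuine character cannot do. The displayed formula appears to be a misprint (the $SL_2$ character $\frac{2}{\norm{a-a^{-1}}}$ written with $a=x/y$ in place of $a^2=x/y$), and your vague "combining" step cannot actually produce it from the quantities you list. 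With the two corrections above your argument does give a valid proof of the corrected formula, which retains the qualitative features the paper actually uses (nonnegativity, local integrability via Lemma \ref{l2}, logarithmic growth near the singular set), so the application in Lemma \ref{tra} and Proposition \ref{numeig} is unaffected.
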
    

\subsection{A lemma on algebraic groups} 

Let $\bold k$ be a field, $C$ an irreducible algebraic curve over $\bold k$. 
Let $\bold G$ be a connected algebraic group over 
$\bold k$ of dimension $d$ and $\xi: C\to \bold G$ a regular map.  
Suppose that $\bold G$ is generated by the image of $\xi$. 
Let $\xi_n: C^n\to \bold G$ be the map given by
$\xi_n(s_1,...,s_n)=\xi(s_1)...\xi(s_n)$, and $Z_n$ be the closure
of the image of $\xi_n$. 

\begin{lemma}\label{gt} $\dim Z_n=n$ for $n\le d$. 
In particular, $Z_n=\bold G$ and $\xi_n$ is dominant 
for any $n\ge d$. 
\end{lemma}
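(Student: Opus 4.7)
The proof is by induction on $n$, showing that $\dim Z_{n+1} > \dim Z_n$ whenever $\dim Z_n < d$. Write $\bar Y := \overline{\xi(C)}$; since $C$ is irreducible, $\bar Y$ and each $Z_n = \overline{\xi_n(C^n)}$ are irreducible as Zariski closures of images of irreducible varieties. Monotonicity $\dim Z_{n+1} \ge \dim Z_n$ follows from the inclusion $Z_n \cdot y \subseteq Z_{n+1}$ for any $y \in \bar Y$, while $\dim Z_n \le n = \dim C^n$ is immediate. Given monotonicity and the upper bound, the desired conclusion will follow once strict increase is established in the range $\dim Z_n < d$.

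Suppose for contradiction that $\dim Z_n = \dim Z_{n+1} < d$. Since $Z_{n+1}$ is irreducible and contains the closed irreducible subset $Z_n \cdot y$ of the same dimension for each $y \in \bar Y$, we must have $Z_n \cdot y = Z_{n+1}$ for every $y \in \bar Y$. Comparing two such equalities gives $Z_n \cdot y_1 y_2^{-1} = Z_n$ for all $y_1, y_2 \in \bar Y$, so the irreducible constructible set $\bar Y \cdot \bar Y^{-1}$ (which contains the identity) is contained in the closed right-stabilizer $H := \{g \in \bold G : Z_n \cdot g = Z_n\}$. Consequently $H$ contains the closed algebraic subgroup $K$ of $\bold G$ generated by $\bar Y \bar Y^{-1}$; by the classical theorem of Borel on subgroups generated by irreducible constructible subsets containing the identity, $K$ is a closed connected algebraic subgroup equal set-theoretically to $(\bar Y \bar Y^{-1})^N$ for some $N$. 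Invoking the hypothesis that $\xi(C)$ generates $\bold G$ to conclude $K = \bold G$ (see the remark below) gives $H = \bold G$, so $Z_n = Z_n \cdot \bold G = \bold G$, contradicting $\dim Z_n < d$.

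The main subtle point is the step $K = \bold G$, which must be extracted from the terse hypothesis ``$\bold G$ is generated by the image of $\xi$''. The naive reading, that the smallest closed subgroup of $\bold G$ containing $\xi(C)$ is $\bold G$, is actually too weak: for instance, taking $\bold G = \mathbb{G}_a^2$ over $\mathbb{C}$ and $\xi(s) = (s,1)$, the image $\xi(\mathbb{G}_a)$ generates $\bold G$ as a closed subgroup in this weak sense (since $\mathbb{Z}$ is Zariski-dense in $\mathbb{A}^1$), yet $Z_n = \mathbb{G}_a \times \{n\}$ is only one-dimensional for all $n$. What is required—and what holds in the intended application to the one-parameter family $\xi(s) = (g_{s,x,i})_i$ in $PGL_2^{m-1}$—is the stronger non-degeneracy statement that the closed algebraic subgroup generated by the translates $\bar Y \cdot y_0^{-1}$, equivalently by $\bar Y \bar Y^{-1}$, already equals $\bold G$. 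This stronger reading is the content that makes the dichotomy argument collapse $H$ to $\bold G$, and is the principal hurdle in writing out the proof carefully.
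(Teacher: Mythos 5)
Your argument runs along the same lines as the paper's three-line proof, but you have correctly located an unjustified step in it. The paper opens by asserting $Z_n \subset Z_{n+1}$, which is only automatic when $1 \in \overline{\xi(C)}$. In the application inside the proof of Proposition \ref{compa}, $\xi(s) = g_{s,x}$ is never the identity of $\bold G = PGL_2^{m-1}$ (from \eqref{gsx} one checks that $g_{s,x,i}$ is never $u\mapsto u$, and the limits of $g_{s,x,i}$ as $s$ tends to $0,1,x,\infty$ are degenerate matrices), so $1\notin \overline{\xi(C)}$ and the containment $Z_n\subset Z_{n+1}$ actually fails there. Your replacement, $Z_n \cdot y \subseteq Z_{n+1}$ for $y\in\bar Y$, is the right fix, and the rest of your reduction (irreducibility, strict increase of dimension unless the stabilizer is large) matches the paper's dichotomy.

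The price is exactly what you identify. In the paper's version the equality $Z_n=Z_{n+1}$ would put $\overline{\xi(C)}$ itself in the left stabilizer, after which any reasonable reading of ``generated'' finishes; in your more careful version one only places the difference set $\bar Y\bar Y^{-1}$ in a stabilizer, and your $\mathbb G_a^2$ example correctly shows that the \emph{naive} reading (``smallest closed subgroup containing $\xi(C)$ is $\bold G$'') is then insufficient --- indeed that example refutes the lemma under that reading, and also has $1\notin\bar Y$, so it exhibits the same issue in the paper's own proof. The lemma is correct under the \emph{strong} reading that $\xi\bigl(C(\bar{\bold k})\bigr)$ generates $\bold G(\bar{\bold k})$ as an abstract group. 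With $H$ the right stabilizer of $Z_n$, one gets $\bar Y\subset Hy_0$ (since $\bar Yy_0^{-1}\subset\bar Y\bar Y^{-1}\subset H$), and $y_0$ normalizes $H$ because $y_0Z_n=Z_{n+1}=Z_ny_0$ (run your equal-dimension argument on the left as well as the right); thus $H$ is normal in $\bold G=\langle\bar Y\rangle$, the quotient $\bold G/H$ is a connected algebraic group which is abstractly cyclic, hence trivial, so $H=\bold G$ and $Z_n=\bold G$, a contradiction. Note that this argues $H=\bold G$ directly; your intermediate target $K=\bold G$ for the closed subgroup generated by $\bar Y\bar Y^{-1}$ is \emph{a priori} stronger than needed and is harder to extract from the hypothesis, since without normality of $K$ in hand one cannot immediately identify $\langle\bar Y\rangle$ with $\langle K,y_0\rangle$. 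So: your diagnosis of the gap is accurate, the gap is genuinely present in the paper's proof as well, and your sketch of the repair is essentially right once it is re-aimed at $H$ rather than $K$.
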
 

\begin{proof} We have $Z_n\subset Z_{n+1}$ and $Z_n$ are irreducible closed subvarieties of $\bold G$. 
So if $Z_n\ne Z_{n+1}$ then $\dim Z_{n+1}>\dim Z_n$. 
On the other hand, if $Z_n=Z_{n+1}$ then $\xi(s)Z_n\subset Z_n$ for all 
$s$, hence 
$Z_n=\bold G$. This implies the statement. 
\end{proof} 

\subsection{Irreducibility of the character variety} 

Let $c_0,...,c_{m+1}\in \Bbb C^\times$ and ${\rm Loc}_m(c_0,...,c_{m+1})$ be the variety of irreducible rank $2$ local systems on $\Bbb C\Bbb P^1\setminus \lbrace t_0,...,t_{m+1}\rbrace$ with local monodromies at $t_i$ 
regular with eigenvalues $c_i,c_i^{-1}$. It is easy to show that 
this is a smooth variety of pure dimension $2(m-1)$.

The following lemma is well known, but we provide a proof for reader's convenience. 

\begin{lemma}\label{irreduci} For $m\ge 1$, the variety ${\rm Loc}_m(c_0,...,c_{m+1})$ is irreducible (equivalently, connected), and for $m\ge 2$ it is nonempty.
\end{lemma}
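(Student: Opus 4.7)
The plan is induction on $m$. First, I would reformulate: ${\rm Loc}_m(c_0,\ldots,c_{m+1})$ is the GIT quotient $V_m/PGL_2$, where $V_m \subset \prod_{i=0}^{m+1} C_i$ consists of tuples $(A_0,\ldots,A_{m+1})$ satisfying $A_0 A_1 \cdots A_{m+1}=I$ whose image generates an irreducible subgroup of $SL_2$. Each regular $SL_2$-conjugacy class $C_i$ is two-dimensional, the product relation is of codimension $3$, and the $PGL_2$-action is free on the irreducible locus, so the expected dimension is $2(m+2)-3-3=2(m-1)$. Smoothness at an irreducible tuple follows from a standard cohomological computation: obstructions lie in $H^2$ of the adjoint local system on the affine curve $\mathbb{CP}^1\setminus\{t_0,\ldots,t_{m+1}\}$, which vanishes.

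For the base case $m=1$ (three punctures), the expected dimension is zero. By the classical rigidity theorem for three-point local systems on $\mathbb{P}^1$, any irreducible triple $(A_0,A_1,A_2)$ with $A_0A_1A_2=I$ and prescribed regular conjugacy classes is unique up to simultaneous conjugation; hence ${\rm Loc}_1$ is either empty or a reduced point, and in either case trivially irreducible. For non-emptiness when $m\ge 2$: pick $A_0,A_1$ in $C_0,C_1$ sharing no eigenvector (always possible, since the shared-eigenvector locus in $C_0\times C_1$ has dimension at most $3<4$), and then choose $A_2,\ldots,A_{m+1}$ so that the product relation holds. Such completions exist because the convolution map $C_2\times\cdots\times C_{m+1}\to SL_2$ is dominant for $m\ge 2$ (dimension count: $2m\ge 4>3=\dim SL_2$).

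For the inductive step I would use the ``merging'' morphism that collapses the last two punctures:
\[
\pi\colon V_m\longrightarrow \bigsqcup_{[C']} V_{m-1}(c_0,\ldots,c_{m-1},c'),\qquad (A_0,\ldots,A_{m+1})\longmapsto (A_0,\ldots,A_{m-1},A_m A_{m+1}),
\]
where the target fibers over the affine line of possible traces $c'+c'^{-1}=\mathrm{tr}(A_m A_{m+1})$. Over the Zariski-open locus where $A_m A_{m+1}$ is regular semisimple and the merged $(m-1)$-puncture tuple is irreducible, the base is irreducible by the inductive hypothesis (as a one-parameter family over the $c'$-line, fibered in irreducibles of constant dimension), and the fiber of $\pi$ is the ``pair-of-pants'' variety $\{(X,Y)\in C_m\times C_{m+1}:XY=B\}$ for fixed $B\in C'$, which by the $m=1$ base case is a single $PGL_2$-orbit, hence irreducible of dimension $3$. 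Thus the open dense subset of $V_m$ lying over this stratum is irreducible of the expected dimension $2m+1$, and smoothness plus purity of dimension promote this to irreducibility of all of $V_m$.

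The hard part is controlling the degenerations: at loci where $A_mA_{m+1}$ becomes non-regular (scalar, or a Jordan block with trace $\pm 2$), or where the merged $(m-1)$-tuple ceases to be irreducible, the fibers of $\pi$ can jump in dimension and a priori new components of $V_m$ could appear. I would verify that these bad loci sit in codimension $\ge 1$ inside $V_m$, so that the good open stratum is Zariski-dense: reducibility of the merged tuple forces a common invariant line for $A_0,\ldots,A_{m-1}$ (a positive-codimension condition once $m\ge 2$), and the non-regular-$B$ locus is cut out by the equation $\mathrm{tr}(A_m A_{m+1})^2=4$. Together with the smoothness established above, this shows the good stratum is dense, completing the induction. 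A robust alternative, if the codimension bookkeeping becomes delicate, is to invoke Crawley-Boevey's theorem on irreducibility of Deligne--Simpson character varieties for star-shaped quivers, which applies directly since all the $C_i$ are regular.
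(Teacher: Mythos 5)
Your overall strategy is the same as the paper's: induct on $m$, pass to the finite cover $V_m=\widetilde{\rm Loc}_m$ of irreducible tuples $(A_0,\ldots,A_{m+1})$ with $\prod A_i=1$, consider the ``merge the last two punctures'' map, argue the good open stratum is irreducible, and use smoothness and purity of dimension to promote this to global irreducibility. The paper phrases its merging map as $\pi:\widetilde{\rm Loc}_m\to SL_2$, $(C_0,\ldots,C_{m+1})\mapsto C_mC_{m+1}$, rather than to a fibered union of smaller character varieties, but this is cosmetic.

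There is, however, one genuine arithmetic error and one real gap. The error: you assert that the fiber of $\pi$ over a point of the base, namely $\{(X,Y)\in C_m\times C_{m+1}: XY=B\}$ for \emph{fixed} $B\in C'$, is ``a single $PGL_2$-orbit, hence irreducible of dimension $3$.'' It is not a $PGL_2$-orbit: conjugation by a generic $g\in PGL_2$ changes $B$, so $PGL_2$ does not act on this set. By the $m=1$ rigidity, the locus $\{(X,Y): X\in C_m, Y\in C_{m+1}, XY\in C'\}$ with $B$ allowed to move in its conjugacy class \emph{is} a single $PGL_2$-orbit of dimension $3$; fixing $B$ cuts that down to a single orbit of the centralizer $Z(B)\cong\mathbb{G}_m$, which is irreducible of dimension $1$, not $3$. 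That corrected dimension is the one consistent with your own count $\dim V_m=2m+1$ and $\dim(\text{base})=2m$. The fiber being connected of dimension $1$ is exactly what the paper uses in its Case (i) (``$(2m-3)+1=2m-2$'' for fixed $C$, then $+3$ over all $C\in SL_2$).

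The gap: your last paragraph, ``I would verify that these bad loci sit in codimension $\ge 1$,'' is precisely the part of the argument that the paper spends most of its proof on. It is not automatic that reducibility of the merged $(m-1)$-tuple is a positive-codimension condition \emph{inside $V_m$}: the relevant estimate is the bound $\dim \widetilde{\rm Loc}_m^{\rm red}\le m+2$ on reducible tuples (which the paper derives by parametrizing the invariant line and the extension data), and then one must check case by case that the contributions from a reducible subtuple, or from $C_mC_{m+1}$ scalar, or from special eigenvalues, stay below $2m+1$. The paper does this explicitly (its cases (i)--(iii) for non-scalar $C$, plus two cases for $C=\pm 1$), and it is only because $m+2\le 2m$ for $m\ge 2$ that the bound closes. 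Your sketch hand-waves exactly this step, which is where the content lies. The appeal to Crawley-Boevey's theorem is a legitimate fallback, but if you are going to cite it you should verify the root-theoretic hypotheses rather than simply assert applicability.
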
 

\begin{example}\label{m=1} A simple computation shows that the variety ${\rm Loc}_1(c_0,c_1,c_2)$ consists of one point or is empty, and the latter happens iff $c_0^{\pm 1}c_1^{\pm 1}c_2^{\pm 1}=1$ for some choice of signs. 
\end{example} 

\begin{proof} Let $\widetilde{\rm Loc}_m(c_0,...,c_{m+1})$ be the variety of 
irreducible $m+2$-tuples of non-scalar $2$-by-$2$ 
matrices $(C_0,...,C_{m+1})$ such that $\prod_{i=0}^{m+1}C_i=1$ and the eigenvalues of $C_i$ are $c_i,c_i^{-1}$. Then $PGL_2$ acts freely on $\widetilde{\rm Loc}_m(c_0,...,c_{m+1})$ and ${\rm Loc}_m(c_0,...,c_{m+1})=\widetilde{\rm Loc}_m(c_0,...,c_{m+1})/PGL_2$. Thus $\widetilde{\rm Loc}_m(c_0,...,c_{m+1})$ is a smooth variety of pure dimension $2m+1$, and 
it suffices to show that it is irreducible.   

First consider the variety $\widetilde{\rm Loc}_m(c_0,...,c_{m+1})^{\rm red}$ of {\it reducible} tuples satisfying the same conditions (which is non-empty
only for special eigenvalues). Such a tuple  
preserves a line in the 2-dimensional space, and once such line is chosen, 
is determined by an extension between two 1-dimensional representations 
of the free group $F_{m+1}$ in which the generators map to $c_i^{\pm1}$. Thus 
\begin{equation}\label{reduci}
\dim \widetilde{\rm Loc}_m(c_0,...,c_{m+1})^{\rm red}\le m+2.  
\end{equation} 

We will prove the lemma by induction in $m$. 
The base case $m=1$ is discussed in Example \ref{m=1}. So let $m\ge 2$ and suppose the statement is known for smaller values. Let $C=C_{m}C_{m+1}$. 
It is easy to see that the matrix $C$ can be any non-scalar matrix. 
So we have a dominant map 
$$
\pi: {\rm Loc}_m(c_0,...,c_{m+1})\to SL_2
$$
which sends $(C_0,...,C_{m+1})$ to $C_{m}C_{m+1}$. 

Let us compute the fiber $\pi^{-1}(C)$. If $C$ is not a scalar and has eigenvalues $c,c^{-1}$ then $\pi^{-1}(C)$ consists of four parts: 

(i) $(C_0,...,C_{m-1},C)$ and $(C^{-1},C_{m},C_{m+1})$ are both irreducible. 
By the induction assumption the first one runs over a variety of dimension $2m-3$ and the second over a variety of dimension $1$ (as $C$ is fixed), so this piece 
has dimension $(2m-3)+1=2m-2$. Moreover, it is non-empty for generic $C$. So the union over all $C$ is non-empty of dimension $2m+1$, and it is irreducible by the induction assumption. 

(ii) $(C_0,...,C_m,C)$ is irreducible, but $(C^{-1},C_{m+1},C_m)$ is reducible. This can happen only for special eigenvalues of $C$ and gives dimension 
$(2m-3)+1=2m-2$ as well. So the union over all $C$ is of dimension $\le 2m$. 

(iii) $(C_0,...,C_{m-1},C)$ is reducible, but $(C^{-1},C_{m+1},C_m)$ is irreducible. 
Then by \eqref{reduci} 
the first tuple runs over a variety of dimension $\le m-1$ (as $C$ is fixed), while the second 
one runs over a variety of dimension $1$, so this piece has dimension $\le (m-1)+1=m$. 
As this occurs only for special eigenvalues of $C$, the union over all $C$ has dimension $\le m+2$. 

It remains to consider the case when $C=\pm 1$ is a scalar. These two cases are equivalent so we only consider $C=1$. Thus we have two options: 

(i) $(C_0,...,C_{m-1})$ is irreducible. Then we get a piece of dimension 
$(2m-3)+2=2m-1$ (the summand $2$ accounts for the choice of $C_m$ in its conjugacy class).  

(ii) $(C_0,...,C_{m-1})$ is reducible. By \eqref{reduci} 
this gives a piece of dimension 
$\le m+2$.

Since $m+2\le 2m$ and the variety 
$\widetilde{\rm Loc}_m(c_0,...,c_{m+1})$ has pure dimension, 
we obtain that this variety is irreducible, as claimed. 
\end{proof} 

\subsection{Interpolation of rational functions}\label{interpo} 

 Let $x_0,...,x_{2n}$ 
 be a fixed collection of distinct points on $\Bbb P^1$ over a field $\bold k$. Let ${\mathcal U}_n$ be the variety of rational functions $f(x)$ of degree $n$, i.e., defining 
 a degree $n$ map $\Bbb P^1\to \Bbb P^1$. We have a regular map $\iota_{\bold x}: {\mathcal U}_n\to (\Bbb P^1)^{2n+1}$ given by 
 $$
 \iota_{\bold x}(f):=(f(x_0),...,f(x_{2n})).
 $$
 
\begin{lemma}\label{interp} (i) $\iota_{\bold x}$ is an open embedding. 

(ii) If $x_i\in \bold k$ and $\bold s=(s_0,...,s_{2n})$ 
then $\iota_{\bold x}^{-1}(\bold s)=\frac{\det A(\bold x,\bold s,w)}{\det B(\bold x,\bold s,w)}$, 
where $A=(a_{ij}), B=(b_{ij})$ are the following square matrices of size $2n+1$: 
$$
a_{i0}=\tfrac{s_i}{w-x_i},\ b_{i0}=\tfrac{1}{w-x_i};\quad  a_{ij}=b_{ij}=s_ix_i^{j-1},\ 1\le j\le n;\quad a_{ij}=b_{ij}=x_i^{j-n-1},\ n+1\le j\le 2n.
$$
\end{lemma}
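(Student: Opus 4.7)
The strategy is to prove (ii) by direct computation and then deduce (i) from it. Expanding the determinants along the first column gives
$$\det A = \sum_{i=0}^{2n} \frac{(-1)^i s_i}{w - x_i}\, M_i, \qquad \det B = \sum_{i=0}^{2n} \frac{(-1)^i}{w - x_i}\, M_i,$$
where $M_i$ is the common $(2n) \times (2n)$ minor obtained by deleting row $i$ and column $0$ (the two matrices differ only in column $0$). By the theory of cofactors, the vector $((-1)^i M_i)_{i=0}^{2n}$ lies in the left kernel of the $(2n+1) \times 2n$ matrix $N$ with rows $(s_i, s_i x_i, \ldots, s_i x_i^{n-1}, 1, x_i, \ldots, x_i^{n-1})$, which yields the orthogonality relations
$$\sum_{i=0}^{2n} (-1)^i M_i\, s_i x_i^k = 0, \qquad \sum_{i=0}^{2n} (-1)^i M_i\, x_i^k = 0, \qquad 0 \leq k \leq n-1.$$

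The key step is to substitute $\frac{1}{w-x_i} = \sum_{k \geq 0} x_i^k w^{-k-1}$ and observe that the orthogonality relations annihilate the first $n$ terms in the expansion at $w = \infty$; hence both $\det A$ and $\det B$ are $O(w^{-n-1})$ as $w \to \infty$. Multiplying by $\prod_j(w - x_j)$ cancels the simple poles at $w = x_i$, producing polynomials $P(w) := (\det A)\prod_j(w - x_j)$ and $Q(w) := (\det B)\prod_j(w - x_j)$ of degree at most $n$ in $w$. Evaluating at $w = x_k$, only the $i = k$ term survives in each sum (the others carry the factor $w - x_k$), giving $P(x_k) = (-1)^k s_k M_k \prod_{j \neq k}(x_k - x_j)$ and $Q(x_k) = (-1)^k M_k \prod_{j \neq k}(x_k - x_j)$, so $P(x_k)/Q(x_k) = s_k$. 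Thus $\det A/\det B = P/Q$ is a rational function of degree at most $n$ in $w$ interpolating $\mathbf{s}$ at $\mathbf{x}$, which proves (ii).

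For (i), I will use that $\mathcal{U}_n$ embeds as the open subset of $\mathbb{P}(\mathbf{k}[w]_{\leq n}^{\oplus 2}) \cong \mathbb{P}^{2n+1}$ cut out by coprimality and $\max(\deg p, \deg q) = n$, hence is smooth and irreducible of dimension $2n+1 = \dim(\mathbb{P}^1)^{2n+1}$. Injectivity of $\iota_{\mathbf{x}}$ follows by clearing denominators: if $p_1/q_1, p_2/q_2 \in \mathcal{U}_n$ agree at all $x_k$, then $p_1 q_2 - p_2 q_1$ is a polynomial of degree at most $2n$ with $2n+1$ zeros, hence vanishes identically. Combined with the explicit regular inverse on the open set $\{\det B \not\equiv 0\}$ supplied by (ii), this shows $\iota_{\mathbf{x}}$ is a birational injection between smooth irreducible varieties of the same dimension, and is therefore an open embedding by Zariski's Main Theorem.

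The main obstacle is the degree reduction in the second paragraph: a priori $P(w)$ has degree at most $2n$, and pushing this bound down to $n$ uses the cofactor orthogonality crucially. The rest of the argument is essentially formal, once one has correctly matched the rows of $N$ to the structure of the deleted minor.
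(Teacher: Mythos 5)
Your proof is correct. The main departure from the paper is in part (ii): the paper simply cites the Cauchy--Jacobi formula as following from Cramer's rule and a reference, whereas you give a self-contained derivation via cofactor expansion. The key move---using the vanishing of $\sum_i (-1)^i M_i s_i x_i^k$ and $\sum_i (-1)^i M_i x_i^k$ for $0 \le k \le n-1$ (from the ``two equal columns'' trick) to kill the leading $n$ coefficients in the formal Laurent expansion of $\det A$ and $\det B$ at $w=\infty$---is a clean way to see that $P,Q$ have degree $\le n$, which is exactly where the $(n,n)$ partition of the $2n$ remaining columns is used. Evaluating $P/Q$ at $x_k$ via the residue-type computation then finishes the job. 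For part (i) you and the paper both rely on the elementary injectivity argument ($p_1q_2-p_2q_1$ has $2n+1$ roots) plus Zariski's Main Theorem, but you arrive at birationality through the explicit inverse furnished by (ii), whereas the paper gets there by a dimension count (injective $\Rightarrow$ finite fibers $\Rightarrow$ dominant by equality of dimensions). Both routes are valid; the paper's is shorter since it keeps (i) independent of (ii), while yours buys an actual proof of (ii) rather than a citation.

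One small point to tighten: when you invoke ``the explicit regular inverse on $\{\det B \not\equiv 0\}$'', note that for the resulting $P/Q$ to define a point of $\mathcal{U}_n$ you also need $P,Q$ coprime with $\max(\deg P,\deg Q)=n$; this holds on a further dense open subset of $(\Bbb P^1)^{2n+1}$, which is all that is needed for birationality, but worth stating. Also, to keep the logical order straight it is cleanest to state the injectivity lemma first (it needs nothing from (ii)), then deduce (ii) (where injectivity gives uniqueness of the interpolant), and only then finish (i).
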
  

\begin{proof} The lemma is classical but we give a proof for reader's convenience.  
If $\iota_{\bold x}(f_1)=\iota_{\bold x}(f_2)$ and $f_k=p_k/q_k$, $k=1,2$, where $p_k,q_k$ are polynomials of degree $n$, then $p_1q_2-q_1p_2$ 
is a polynomial of degree $\le 2n$
which vanishes at $2n+1$ points $x_0,x_1,...,x_{2n}$, so it vanishes 
identically and $f_1=f_2$. Thus $\iota_{\bold x}$ is injective. Since $\dim {\mathcal U}_n=2n+1$, 
this implies that $\iota_{\bold x}$ is dominant. Thus $\iota_{\bold x}$ is birational, and moreover 
Zariski's main theorem implies that it is actually an open embedding, proving (i). 
Part (ii) (due to Cauchy and Jacobi) follows from Cramer's rule
(see e.g. \cite{GM}). 
\end{proof} 

\subsection{The exponent of a sequence} \label{exponent} 

Let $\bold a:=\lbrace{a_i, i\ge 1\rbrace}$ be a sequence of nonnegative 
numbers such that $a_i\to 0$. Let ${\bf n}_{\bold a}(\varepsilon)$ be the 
number of $i$ such that $|a_i|\ge \varepsilon$. 
Then we have the following well known lemma from calculus. 

\begin{lemma}\label{seq} (i) Suppose that ${\bf n}_{\bold a}(\varepsilon)=O(\varepsilon^{-b})$ as $\varepsilon\to 0$, for some $b\ge 0$. Then for any $p>b$ we have $\sum_{i\ge 1} a_i^p<\infty$. 

(ii) If $\sum_{i\ge 1} a_i^p<\infty$ for some $p>0$ then ${\bf n}_{\bold a}(\varepsilon)=o(\varepsilon^{-p})$ as $\varepsilon\to 0$. 
\end{lemma}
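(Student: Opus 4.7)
The plan rests on the standard layer-cake identity
\[
\sum_{i\ge 1} a_i^p \;=\; \int_0^\infty p\varepsilon^{p-1}\,{\bf n}_{\bold a}(\varepsilon)\,d\varepsilon,
\]
which follows from $a_i^p = \int_0^\infty p\varepsilon^{p-1}\mathbf{1}_{\{a_i>\varepsilon\}}\,d\varepsilon$ and Fubini (the defining condition $a_i\to 0$ ensures ${\bf n}_{\bold a}(\varepsilon)<\infty$ for each $\varepsilon>0$ and ${\bf n}_{\bold a}(\varepsilon)=0$ for $\varepsilon>\sup_i a_i$).

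For part (i), I would fix $\varepsilon_0>0$ small enough that ${\bf n}_{\bold a}(\varepsilon)\le C\varepsilon^{-b}$ on $(0,\varepsilon_0]$ and split the integral at $\varepsilon_0$. The lower piece is dominated by $Cp\int_0^{\varepsilon_0}\varepsilon^{p-b-1}\,d\varepsilon$, which is finite precisely because $p>b$. On $[\varepsilon_0,\infty)$ the counting function is bounded by ${\bf n}_{\bold a}(\varepsilon_0)$ and vanishes beyond $\sup_i a_i<\infty$, so that piece is trivially finite.

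For part (ii), I would pass to the decreasing rearrangement $a_{(1)}\ge a_{(2)}\ge\cdots$. The desired conclusion ${\bf n}_{\bold a}(\varepsilon)=o(\varepsilon^{-p})$ is equivalent to $k\,a_{(k)}^p\to 0$, since ${\bf n}_{\bold a}(a_{(k)}-\delta)\ge k$ for every small $\delta>0$. Suppose the contrary: there exist $c>0$ and indices $k_1<k_2<\cdots$ with $k_j\,a_{(k_j)}^p\ge c$, and after thinning I may assume $k_{j+1}>2k_j$. Then for $\lfloor k_j/2\rfloor<i\le k_j$ monotonicity gives $a_{(i)}^p\ge a_{(k_j)}^p\ge c/k_j$, so each of these disjoint blocks contributes at least $c/2$ to the series, forcing $\sum_i a_i^p=\infty$ and a contradiction.

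There is essentially no analytic obstacle; the only point requiring minor care is the bookkeeping between the definition of ${\bf n}_{\bold a}$ (a $\ge$-threshold count) and the rearrangement $a_{(k)}$, handled by perturbing the threshold infinitesimally. The argument is a textbook distribution-function exercise, included in the appendix only to make the paper self-contained for its use in Lemma~\ref{tracelem}.
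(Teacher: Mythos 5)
Your proof is correct and takes essentially the same approach as the paper: part~(i) is the same layer-cake estimate (the paper writes it as a discrete Abel summation $\sum_m (m^{-p}-(m+1)^{-p})\,{\bf n}_{\bold a}(1/(m+1))$ rather than the continuous integral, but the two are interchangeable), and part~(ii) is the same dyadic-thinning contradiction, recast via the decreasing rearrangement $a_{(k)}$ and blocks of indices $k_{j+1}>2k_j$ instead of the paper's thresholds $\varepsilon_{m+1}\le\varepsilon_m/2$.
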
 

\begin{proof} By rescaling we may assume that $a_i<1$ for all $i$.  
Then 
$$
\sum_{i\ge 1}a_i^p\le \sum_{m\ge 1}\left(\frac{1}{m^p}-\frac{1}{(m+1)^p}\right){\bf n}_{\bold a}\left(\frac{1}{m+1}\right)\le C\sum_{m\ge 1}\frac{1}{m^{p+1-b}}
$$
for some $C>0$, which implies the statement. 

(ii) Assume the contrary, then there is a sequence $\varepsilon_m\to 0$ 
such that ${\bf n}_{\bold a}(\varepsilon_m)\ge K\varepsilon_m^{-p}$ for some $K>0$. 
We may choose this sequence in such a way that $\varepsilon_{m+1}\le \frac{\varepsilon_{m}}{2}$. Then 
$$
\sum_{m\ge 1}(\varepsilon_m^p-\varepsilon_{m+1}^p){\bf n}_{\bold a}(\varepsilon_m)\le \sum_{i\ge 1}a_i^p<\infty. 
$$
Thus 
$$
\sum_{m\ge 1}(\varepsilon_m^p-\varepsilon_{m+1}^p)\varepsilon_m^{-p}<\infty,
$$
which is a contradiction since $(\varepsilon_m^p-\varepsilon_{m+1}^p)\varepsilon_m^{-p}>1-2^{-p}$. 
\end{proof} 

Thus if ${\bf n}_a(\varepsilon)$ has at most polynomial growth in $\varepsilon^{-1}$ then 
the number $b=b(\bold a):={\rm inf}\lbrace{p: \sum_i a_i^p<\infty\rbrace}$ may also be described as ${\rm limsup} \frac{\log {\bf n}_{\bold a}(\varepsilon)}{\log(\varepsilon^{-1})}$. 

\begin{definition} We will call the {\bf reciprocal} $1/b$ 
of this number the {\bf exponent} of $\bold a$. 
\end{definition} 

For example, the exponent of the sequence $a_m=\frac{1}{m^p}$ is $p$. 

\subsection{Lemmas on compact operators} 

Let $A$ be a compact self-adjoint operator on a separable Hilbert space 
$\mathcal H$. Denote by ${\bf n}(A,\varepsilon)$ the number of eigenvalues of $A$ of magnitude $\ge \varepsilon$ counted with multiplicities. 

\begin{lemma}\label{compalem} (i) Let $A=B+E$, where $B$ has rank $r$ and $E$ has norm $<\varepsilon$. Then ${\bf n}(A,\varepsilon)\le r$. 

(ii) Let $D$ be another compact self-adjoint operator such $\norm{A-D}\le 
\delta$. Then for any $\varepsilon>0$, 
$$
{\bf n}(A,\delta+\varepsilon)\le {\bf n}(D,\varepsilon).
$$
\end{lemma}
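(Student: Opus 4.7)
Both parts are standard perturbation/min--max facts, and I would prove them in the natural order.

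For (i), the plan is to argue by contradiction using a dimension count. Suppose $\mbf n(A,\varepsilon)\ge r+1$. Since $A$ is compact and self-adjoint, the spectral subspace $W\subset \mcH$ spanned by the eigenvectors of $A$ with eigenvalue of absolute value $\ge \varepsilon$ has dimension $\ge r+1$, and the inequality $\norm{Av}\ge \varepsilon\norm{v}$ holds for every $v\in W$ (this is immediate from the spectral decomposition on $W$ since the restriction $A|_W$ is diagonalizable with each eigenvalue of modulus $\ge \varepsilon$). On the other hand $\ker B$ has codimension $\le r$, so $W\cap \ker B\ne 0$. Pick a nonzero $v$ in this intersection; then $Av=Ev$, hence $\norm{Av}=\norm{Ev}\le \norm{E}\cdot\norm{v}<\varepsilon\norm{v}$, contradicting $\norm{Av}\ge \varepsilon\norm{v}$.

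For (ii), the plan is to reduce to (i) by a spectral truncation of $D$. Let $n:=\mbf n(D,\varepsilon)$; then by the spectral theorem for compact self-adjoint operators we may write $D=D_1+D_2$, where $D_1$ is the (finite-rank) operator obtained by keeping only the eigenvalues of $D$ of absolute value $\ge \varepsilon$, so $\rank D_1=n$, while $D_2$ is the complementary spectral part, which satisfies $\norm{D_2}<\varepsilon$ (here we use that $D$ is compact, so there are only finitely many eigenvalues at or above any positive threshold, and below $\varepsilon$ the supremum of eigenvalue moduli is strictly less than $\varepsilon$). Then
$$
A = D_1 + \bigl(D_2+(A-D)\bigr),
$$
with $B':=D_1$ of rank $n$ and $E':=D_2+(A-D)$ of norm $\le \norm{D_2}+\delta<\varepsilon+\delta$. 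Applying part (i) with threshold $\varepsilon+\delta$ to this decomposition yields $\mbf n(A,\varepsilon+\delta)\le n=\mbf n(D,\varepsilon)$, as desired.

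The whole argument is elementary; the only very minor subtlety is the strictness of the inequality $\norm{D_2}<\varepsilon$, which is what allows the estimate $\norm{E'}<\varepsilon+\delta$ needed to invoke (i) (whose hypothesis uses the strict inequality $\norm{E}<\varepsilon$). No further obstacles arise.
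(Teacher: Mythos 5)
Your proof is correct and follows essentially the same route as the paper: for (i) the dimension count against $\ker B$ inside the spectral subspace $W$ of $A$ for eigenvalues of magnitude $\ge\varepsilon$, and for (ii) the spectral truncation $D=D_1+D_2$ feeding back into (i). One small remark on (i): the paper's version estimates $|(Av,v)|\ge\varepsilon(v,v)$ for $v\in W$, which is actually false when $A$ has eigenvalues of both signs at magnitude $\ge\varepsilon$ (e.g.\ $v=v_1+v_2$ with $Av_1=\varepsilon v_1$, $Av_2=-\varepsilon v_2$ gives $(Av,v)=0$); your operator-norm estimate $\norm{Av}\ge\varepsilon\norm{v}$, which does hold on $W$ since $A|_W$ is diagonalizable with spectrum of modulus $\ge\varepsilon$, is the correct statement and makes the contradiction go through cleanly, so your version is the one that should actually be used.
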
 

\begin{proof} (i) Let $V$ be the span of all eigenvectors of $A$ with eigenvalues 
of magnitude $\ge \varepsilon$. Then for any nonzero vector
$v\in V$, we have $|(Av,v)|\ge \varepsilon (v,v)$. 
On the other hand, $(Av,v)=(Bv,v)+(Ev,v)$ and $|(Ev,v)|<\varepsilon (v,v)$. 
Thus $Bv\ne 0$. Hence $\dim V\le r$. 

(ii) Let $\Pi$ be the orthogonal projection to the sum of eigenspaces of $D$ with eigenvalues of magnitude $\ge \varepsilon$. 
Then $D=\Pi D+(1-\Pi)D$, so $A=B+E$, where $B=\Pi D$ and 
$E=(1-\Pi)D+(A-D)$. Since $\norm{(1-\Pi)D}<\varepsilon$, we have  $\norm{E}< \delta+\varepsilon$. Also, the rank of $\Pi D$ is ${\bf n}(D,\varepsilon)$. Thus the result follows from (i). 
\end{proof} 

\begin{lemma}\label{Schatt} Let $T: L^2(M)\to L^2(M)$ be a bounded integral operator on an $n$-dimensional analytic $F$-manifold $M$ whose Schwartz kernel is supported on a submanifold of $M\times M$ of dimension $n+k$, where $k<n$. If ${\rm Tr}|T|^{2p}<\infty$ then $p\ge \frac{n}{k}$.\footnote{Here $|T|:=\sqrt{T^\dagger T}$.}
\end{lemma}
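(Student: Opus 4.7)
The plan is to deduce from $\mathrm{Tr}|T|^{2p}<\infty$ combined with $pk<n$ that some integer power of $T$ must vanish, contradicting the implicit nontriviality of $T$. The heart of the argument is a dimension count on iterated Schwartz kernels together with the H\"older inequality for Schatten classes.

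First I would track the support of the kernel of $T^q$ for a positive integer $q$. Writing $Z\subset M\times M$ for the subvariety supporting the kernel of $T$ (so $\dim Z=n+k$), the kernel of $T^q$ is supported on the $q$-fold composition $Z^{(q)}:=\{(x_0,x_q)\in M\times M:\exists\,x_1,\dots,x_{q-1}\in M\text{ with }(x_{i-1},x_i)\in Z\text{ for all }i\}$, which is the image in $M\times M$ of the iterated fiber product $Z\times_M Z\times_M\cdots\times_M Z$ ($q$ copies). This fiber product has dimension at most $n+qk$, since each additional factor fibered over $M$ contributes at most $k$ dimensions; hence $\dim Z^{(q)}\le n+qk$.

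Second, I would use Schatten--H\"older to extract a Hilbert--Schmidt power of $T$. The hypothesis places $T$ in the Schatten class $\mathcal{S}_{2p}$, and $\|T^q\|_{2p/q}\le \|T\|_{2p}^q$ for any positive integer $q\le 2p$. Choosing $q:=\lceil p\rceil$ (after first enlarging $p$ to $1$ via $\mathcal{S}_{2p}\subset\mathcal{S}_2$ if $p<1$) gives $2p/q\le 2$, so $T^q\in\mathcal{S}_2$; that is, $T^q$ is Hilbert--Schmidt and its kernel is an honest $L^2$ function on $M\times M$. But this kernel is supported on $Z^{(q)}$ of dimension at most $n+qk$, so if $qk<n$ the support has Lebesgue measure zero in the $2n$-dimensional manifold $M\times M$, forcing $T^q=0$. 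For $T$ normal (the case of the intended application, where $T=H_x$ is self-adjoint), this yields $T=0$, a contradiction.

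Thus $qk\ge n$, i.e.\ $\lceil p\rceil\ge n/k$, which gives $p\ge n/k$ in the regime where $n/k$ is an integer---precisely the situation of the application ($n=m-1$, $k=1$, producing $b_m\ge 2(m-1)$). The main obstacle I anticipate is the rigorous transfer of the dimension estimate $\dim Z^{(q)}\le n+qk$ to the analytic level over $M(F)$: iterated fiber products may have components of excessive dimension at special loci, and what one really needs is that $Z^{(q)}$ has zero Lebesgue measure on $M(F)\times M(F)$, which does follow from the algebraic bound but requires some care over non-archimedean fields. A secondary subtlety is the boundary non-integer case $\lceil p\rceil=n/k$, which does not intervene in the application; for non-normal $T$ the argument can be adapted by running it on $(T^*T)^q$ at the cost of a slightly weaker exponent.
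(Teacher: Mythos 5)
Your outline has the right general shape---track the support of the kernel of $T^q$, invoke Schatten--H\"older to land in the Hilbert--Schmidt class, then argue by measure---but the ceiling rounding at the end creates a genuine gap, not merely a boundary subtlety that can be waved away. Taking $q=\lceil p\rceil$ and concluding $qk\ge n$ proves only $\lceil p\rceil\ge n/k$, which is equivalent to $p>n/k-1$, and this remains strictly weaker than the stated $p\ge n/k$ \emph{even when $n/k$ is an integer}: for any $p\in(n/k-1,\,n/k)$ you have $\lceil p\rceil=n/k$, so $qk<n$ fails and your contradiction evaporates, yet the lemma still demands $p\ge n/k$. In the intended application ($n=m-1$, $k=1$) your argument therefore gives $b_m\ge 2(m-2)$, not the required $b_m\ge 2(m-1)$. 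Tensoring $T\mapsto T^{\otimes N}$ does not repair this, since $n'/k'=n/k$ is unchanged by tensoring and the ceiling loss persists at the same absolute size.

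The paper avoids the rounding loss by composing with an auxiliary operator $A$ whose kernel lives on an $(n+1)$-dimensional submanifold of $M\times M$ and which satisfies $\mathrm{Tr}|A|^{2n}<\infty$ and $A^{n-k-1}T^\dagger\ne0$. The composite $A^{n-k-1}T^\dagger$ then has kernel supported in codimension $1$ in $M\times M$, hence cannot be Hilbert--Schmidt; two applications of Schatten--H\"older convert this fact into the inequality $p>n/(k+1)$, and only \emph{then} does the tensor-power trick become effective, because $Nn/(Nk+1)\to n/k$ as $N\to\infty$. The ingredient your approach is missing is precisely this insertion of an auxiliary operator of tuned kernel codimension, which plays the role of a fractional interpolation between integer powers of $T$. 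A secondary point: your contradiction $T^q=0\Rightarrow T=0$ requires normality of $T$ (as you acknowledge), whereas the lemma is stated for a general bounded $T$; the paper's route only needs $A^{n-k-1}T^\dagger\ne0$, which is arranged by choosing $A$ generically and makes no nonvanishing assumption on powers of $T$.
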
 

\begin{proof} First suppose that $k+1<n$ and $p>1$ is such that ${\rm Tr}|T|^{2p}<\infty$. 
Pick an integral operator $A$ on $L^2(M)$ with Schwartz kernel supported
on an $n+1$-dimensional submanifold such that ${\rm Tr}|A|^{2n}<\infty$ and $A^{n-k-1}T^\dagger\ne 0$.  
Such $A$ exists; namely, this holds if the Schwartz kernel of $A$ is a smooth compactly supported half-density concentrated on an $n+1$-dimensional analytic submanifold of $M\times M$, both chosen in a sufficiently generic way. Then the operator $A^{n-k-1}T^\dagger$ has Schwartz kernel 
supported in codimension $1$, so it is not Hilbert-Schmidt. Hence 
$$
{\rm Tr}(T(A^{n-k-1})^\dagger A^{n-k-1}T^\dagger)={\rm Tr}(|T|^2|A^{n-k-1}|^2)=\infty.
$$
Thus by the H\"older inequality for Schatten norms
$$
({\rm Tr}|T|^{2p})^{\frac{1}{p}}({\rm Tr}|A^{n-k-1}|^{\frac{2p}{p-1}})^{\frac{p-1}{p}}=\infty.
$$ 
Hence ${\rm Tr}|A^{n-k-1}|^{\frac{2p}{p-1}}=\infty$. Using the H\"older 
inequality again, this yields
${\rm Tr}|A|^{\frac{2(n-k-1)p}{p-1}}=\infty$. This implies that 
$$
n>\frac{(n-k-1)p}{p-1},
$$
i.e. 
\begin{equation}\label{pbound} 
p>\frac{n}{k+1}. 
\end{equation} 
Now consider the operator $T^{\otimes N}$. It is an integral operator on 
the manifold $M^N$ of dimension $Nn$ with Schwartz kernel supported 
on a submanifold of dimension $Nn+Nk$. Also ${\rm Tr}|T^{\otimes N}|^{2p}=
({\rm Tr}|T|^{2p})^N<\infty$. So applying the bound \eqref{pbound} to $T^{\otimes N}$, 
we obtain 
$$
p>\frac{Nn}{Nk+1}.
$$
 Now sending $N\to \infty$, we get $p\ge \frac{n}{k}$, as claimed. 
\end{proof}

\subsection{Self-adjoint second order differential operators on the circle}\label{selfadjo} 
Let $L:=\partial b\partial+U$ be a second order differential operator on $\Bbb R/\Bbb Z$, where $b,U$ are smooth real functions such that $b$ has simple zeros (note that the number of zeros of $b$ is necessarily even). We'd like to study self-adjoint extensions of $L$ initially defined on the domain $C^\infty(\Bbb R/\Bbb Z)$, on which $L$ is symmetric. Our main example will be the Lam\'e operator $L=\partial z(z-1)(z-t) \partial+z$, $t\in \Bbb R$, $t\ne 0,1$, acting on half-densities on $\Bbb R$, which in the coordinate $x=\pi^{-1}{\rm arccot}(z)$ takes the form $L=\partial b\partial+U$ for 
$$
b(x)=\pi^{-2}\cos \pi x\sin \pi x (\sin \pi x-\cos \pi x)(t\sin\pi x-\cos\pi x), 
$$  
which has four simple zeros on $\Bbb R/\Bbb Z$, namely $0,\frac{1}{4},\frac{1}{2},\pi^{-1}{\rm arccot}(t)$. 

Consider the closure of $L$ on $C^\infty(\Bbb R/\Bbb Z)$, and let $V\subset L^2(\Bbb R/\Bbb Z)$ be the domain of the adjoint operator $L^\dagger$. 
Then $L^\dagger$ is not symmetric on $V$ if $b$ has zeros. So we may define a skew-hermitian form on $V$ given by $\omega(v,u)=(L^\dagger v,u)-(v,L^\dagger u)$. 

\begin{proposition} $\omega$ has rank $2r$, where $r$ is the number of zeros of $b$, and signature $(r,r)$. Moreover, the domain of the closure of 
$L$ is $V^\perp\subset V$, so $V/V^\perp$ has dimension $2r$. 
\end{proposition}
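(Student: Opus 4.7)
The plan is to reduce everything to a purely local analysis at each simple zero $x_i$ of $b$, since $L$ is a regular self-adjoint elliptic operator off the singular locus and contributes no boundary term to $\omega$. Near $x_i$ one has $b(x)=c_i(x-x_i)+O((x-x_i)^2)$ with $c_i\ne 0$, and Frobenius theory for the regular singular point $x_i$ of the second order ODE $Lv=0$ shows that the local solution space is spanned by functions with leading behavior $1$ and $\log|x-x_i|$, both locally $L^2$. I would therefore first argue that any $v\in V$ admits one-sided expansions
\[
v(x)=A_\pm(v)+B_\pm(v)\log|x-x_i|+o(1),\qquad x\to x_i^\pm.
\]
Computing $bv'$ (with $\partial\log|x-x_i|$ interpreted as the principal value distribution) shows that $bv'$ is a bounded function whose one-sided limits at $x_i$ equal $c_iB_\pm(v)$; hence $\partial(bv')$ contains the delta $c_i(B_+(v)-B_-(v))\delta_{x_i}$, and the requirement $Lv\in L^2$ forces $B_+(v)=B_-(v)$. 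Setting $B(v):=B_\pm(v)$ and $J(v):=A_+(v)-A_-(v)$, the local data of a generic $v\in V$ at $x_i$ is $(A_+,A_-,B)$ with no further constraints.

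Next I would compute $\omega(v,u)=(L^\dagger v,u)-(v,L^\dagger u)$ by splitting $\mathbb R/\mathbb Z$ into the $r$ subintervals between consecutive $x_i$, integrating by parts twice, and observing that the bulk contributions cancel (both $b$ and $U$ are real). What survives is a sum of boundary terms $\bigl[b(\partial v\,\overline u-v\,\partial\overline u)\bigr]_{x_i^\pm}$. Inserting the local expansion and noting that the $\log|x-x_i|$-divergent cross terms cancel exactly when $B_+=B_-$, one obtains
\[
\omega_{x_i}(v,u)=c_i\bigl[J(v)\,\overline{B(u)}-B(v)\,\overline{J(u)}\bigr],\qquad \omega(v,u)=\sum_{i=1}^r\omega_{x_i}(v,u).
\]
Thus $\omega$ is an orthogonal direct sum of $r$ rank-two skew-Hermitian forms, one on each two-dimensional local singular-data space $(J,B)$ at $x_i$. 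The Hermitian form $-i\omega_{x_i}$ has matrix $\begin{pmatrix}0 & -ic_i\\ ic_i & 0\end{pmatrix}$ with eigenvalues $\pm c_i$, hence signature $(1,1)$ regardless of the sign of $c_i$. Summing over the singular points gives rank $2r$ and signature $(r,r)$, proving the first assertion.

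For the identification of $V^\perp$ with the domain of the closure, I would invoke the standard fact from the theory of symmetric operators in Hilbert space: the domain of the closure of a closable symmetric operator $S$ equals the radical of the Weyl boundary form on $D(S^\dagger)$. The inclusion $V^{\mathrm{clos}}\subset V^\perp$ is immediate, since $\omega$ is continuous in the graph norm and vanishes on $C^\infty(\mathbb R/\mathbb Z)$ (all of whose elements have $J=B=0$ at every $x_i$). The reverse inclusion follows from von Neumann's orthogonal decomposition $D(L^\dagger)=D(\overline L)\oplus\ker(L^\dagger-i)\oplus\ker(L^\dagger+i)$ combined with the fact that $\omega$ is definite of opposite signs on the two deficiency subspaces, so that its radical on $D(L^\dagger)$ is precisely $D(\overline L)$. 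Consequently $\dim V/V^\perp=2r$, as claimed.

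The hardest step is the local analysis above: one must make precise the statement that every $v\in V$ admits the stated Frobenius expansion with $A_\pm,B_\pm$ well-defined and continuous in the graph norm, and that the sole membership condition at $x_i$ is $B_+(v)=B_-(v)$ with no subtler constraint arising from the lower order corrections. This amounts to a Frobenius normal-form theorem at each regular singular point of $L$, guaranteeing that any distributional $L^2$-solution of $Lv=f\in L^2$ has the asserted expansion with remainder $O((x-x_i)^\sigma)$ for some $\sigma>0$ and coefficients extractable as explicit continuous linear functionals on $V$; everything else in the proof is bookkeeping once this technical point is secured.
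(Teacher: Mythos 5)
Your plan is correct and in essence the same as the paper's: compute $\omega$ as a sum of local boundary contributions at the simple zeros $x_i$, identify the local singular data as a two-dimensional space at each $x_i$, and recognize the form as an orthogonal direct sum of $r$ hyperbolic (signature $(1,1)$) skew-Hermitian blocks. The paper organizes this slightly differently — it first introduces the codimension-$r$ subspace $V_0=\{f\in V : bf'(x_i)=0\ \forall i\}$, shows $V_0$ is isotropic (giving $\operatorname{rank}\omega\le 2r$), and then exhibits the explicit test functions $v_i=\log|x-x_i|$ and $u_i=\theta(x-x_i)$ to saturate the bound — but this is the same computation you are doing in the $(J,B)$ coordinates.

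The one place where you are glossing over something the paper handles more carefully is the ``Frobenius normal-form theorem'' you appeal to. Classical Frobenius theory applies to the homogeneous equation with analytic coefficients; here $Lv=f$ with $f\in L^2$ only, so you cannot cite Frobenius directly. What replaces it is elementary and is exactly the paper's route: $(bv')'\in L^2$ implies $bv'\in W^{1,2}$, hence $bv'$ is continuous (in particular single-valued at $x_i$, which gives $B_+=B_-$ automatically without the ``no delta'' argument); and then, writing $b(x)v'(x)-bv'(x_i)=\int_{x_i}^x\phi$ with $\phi\in L^2$, Cauchy--Schwarz gives $|b v'(x)-bv'(x_i)|=o(|x-x_i|^{1/2})$, which after dividing by $b\sim c_i(x-x_i)$ and integrating yields exactly your expansion $v=A_\pm+B\log|x-x_i|+O(|x-x_i|^{1/2})$ with $B=bv'(x_i)/b'(x_i)$. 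Once that estimate is in place, your computation of the $r$ rank-two blocks and the invocation of the von Neumann/Weyl boundary-form criterion for $V^\perp=D(\overline L)$ are both correct, and the signature $(r,r)$ follows as you say from the $\begin{pmatrix}0&-ic_i\\ ic_i&0\end{pmatrix}$ block having eigenvalues $\pm|c_i|$ independently of the sign of $c_i=b'(x_i)$.
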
 

\begin{proof} 
Let $x_i$, $i=1,...,r$, be the zeros of $b$. Note that $V$ is the space 
of $L^2$ functions $f$ on $\Bbb R/\Bbb Z$ such that $(bf')'\in L^2$ (thus 
$f$ is in $C^1$ outside $x_i$). Hence $bf'$ belongs to the Sobolev space $W^{1,2}$, so it is continuous. Let $bf'(x_i)=c_i$. 
Let $h$ be a smooth function on $\Bbb R/\Bbb Z$ except at $x_i$ such that 
$h=c_i\log|x-x_i|$ near $x_i$. 
Then  $h\in V$ and $bh'(x_i)=c_i$. Thus we have a subspace $V_0\subset V$ of codimension 
$r$  which is the space of $f\in V$ such that $c_i=0$. 

Let $f\in V_0$. Then $b(x)f'(x)=\int_{x_i}^x\phi(t)dt$, for each $i$, where $\phi\in L^2(\Bbb R/\Bbb Z)$. 
Thus 
$$
f(x)=\int \frac{\int_{x_i}^x\phi(t)dt}{b(x)}dx+\sum_i a_i\theta(x-x_i)+C
$$
near $x=x_i$, where $\theta$ is the Heaviside function.
By the Cauchy-Schwarz inequality we have 
$$
\left|\int_{x_i}^x\phi(t)dt\right|^2\le |x-x_i|\int_{x_i}^x|\phi(t)|^2dt,
$$
which means that 
$$
\int_{x_i}^x\phi(t)dt=o(|x-x_i|^{\frac{1}{2}}),\ x\to x_i.
$$
Thus $f$ is continuous on each side near $x_i$ (but may jump at $x_i$). 

Now, we have 
$$
\omega(v,\overline u)=\int_0^1 (bv')'udx-\int_0^1 v(bu')'dx.
$$
Integrating this by parts, we get 
$$
\omega(v,\overline u)=\lim_{\varepsilon\to 0}\sum_i (b(v'u-u'v)(x_i+\varepsilon)-b(v'u-u'v)(x_i-\varepsilon)).
$$
Assume that $v,u\in V_0$. Then, since $bu'(x_i)=bv'(x_i)$, and $u$ and $v$ are continuous near $x_i$ on each side, we get 
$$
\omega(v,\overline u)=0.
$$
Thus $V_0$ is an isotropic subspace of $V$ with respect to $\omega$, which shows that ${\rm rank}(\omega)\le 2r$ and $\dim V/V^\perp\le 2r$. 

It remains to show that this dimension is exactly $2r$. For this purpose for each $i$ let $v=\log|x-x_i|$ and $u=\theta(x-x_i)$ near $x=x_i$ 
(and smooth at other $x_j$). Then 
$$
\omega(v_i,\overline u_i)=\lim_{x\to x_i}b(x-x_i)(x-x_i)^{-1}=b'(x_i)\ne 0.
$$
On the other hand, 
$$
\omega(v_i,\overline v_j)=\omega(u_i,\overline u_j)=\omega(v_i,\overline u_j)=0
$$
for $i\ne j$, and $\omega(u_i,\overline u_i)=\omega(v_i,\overline v_i)=0$ for all $i$. 
This implies the statement. 
\end{proof} 

From the proof we also obtain the following corollary. 

\begin{corollary} (i) $V^\perp=V\cap C(\Bbb R/\Bbb Z)$, the space of all continuous functions in $V$. 

(ii) Near each point $x_i$ every element $f\in V$ can be uniquely written 
as 
$$
f(x)=c_i\log|x-x_i|+a_i\theta(x-x_i)+f_i(x),
$$
where $f_i$ is continuous. Moreover, we have 
$$
\omega(v,u)=\sum_{i=1}^r(c_i(v)\overline{a}_i(u)-a_i(v)\overline{c}_i(u)).
$$

(iii) The subspaces $W\subset V$ containing $V^\perp$ on which $L^\dagger$ is closed symmetric (respectively, self-adjoint)  are in natural bijection with isotropic (respectively, Lagrangian) subspaces in $V/V^\perp=\Bbb C^{2r}$ with respect to the skew-hermitian form $\sum_i(c_i\overline{a}_i'-a_i\overline{c}_i')$, via $W\mapsto W/V^\perp$. 

(iii) Let $C^\infty_W$ be the space of functions in $W$ 
which are smooth outside the points $x_i$ and of the form (ii) with $f_i$ 
smooth at each $x_i$. Then for any 
$V^\perp\subset W\subset V$, the operator $L$ maps $C^\infty_W$ 
to itself, and $W$ is the domain of the closure of $L$ initially defined on $C^\infty_W$. Moreover, $L$ is symmetric (respectively, essentially self-adjoint) on $C^\infty_W$ if and only if $W$ is isotropic (respectively, Lagrangian). 
\end{corollary}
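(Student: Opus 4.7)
The plan is to prove part (ii) first and deduce the other parts from it.

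For part (ii), let $f\in V$ and set $g:=bf'$. By the definition of $V$, $g'=(bf')'\in L^2(\Bbb R/\Bbb Z)$, so $g\in W^{1,2}$, and in particular $g$ is continuous. Exactly as in the preceding proposition, $g(x)-g(x_i)=\int_{x_i}^{x}g'(t)\,dt$ combined with Cauchy--Schwarz gives $g(x)-g(x_i)=o(|x-x_i|^{1/2})$. Since $b(x)=b'(x_i)(x-x_i)+O((x-x_i)^2)$ near $x_i$, this yields $f'(x)=g(x_i)/(b'(x_i)(x-x_i))+r_i(x)$ with $r_i\in L^1_{\rm loc}$ near $x_i$ and $\int_{x_i}^{x}r_i(t)\,dt$ continuous at $x_i$. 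Integrating produces the local decomposition $f(x)=c_i\log|x-x_i|+a_i\theta(x-x_i)+f_i(x)$ with $f_i$ continuous at $x_i$; uniqueness of the three coefficients is immediate from the linear independence of $\log|x-x_i|$, $\theta(x-x_i)$, and continuous functions. The formula for $\omega$ then follows from the same integration-by-parts computation carried out in the proof of the preceding proposition, now expressed in the canonical local basis $\{\log|x-x_i|,\theta(x-x_i)\}$ at each $x_i$ (with the scalar factor $b'(x_i)$ produced by that computation absorbed into the normalization of $c_i$).

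Part (i) is an immediate consequence: any continuous $f\in V$ satisfies $c_i(f)=a_i(f)=0$ by uniqueness in (ii), so $\omega(f,\cdot)\equiv 0$ by the formula in (ii), whence $f\in V^\perp$; conversely, if $\omega(f,\cdot)\equiv 0$, the local reference functions $v_i,u_i$ from the preceding proposition realize arbitrary $(c_i,a_i)\in\Bbb C^{2r}$ as their data, so the vanishing of $\omega(f,\cdot)$ forces $c_i(f)=a_i(f)=0$ and hence $f$ is continuous. Part (iii) is then the standard von Neumann dictionary for intermediate closed symmetric extensions: closed domains $V^\perp\subset W\subset V$ correspond bijectively to closed subspaces of $V/V^\perp\cong\Bbb C^{2r}$, and such a $W$ defines a symmetric (respectively, self-adjoint) extension of $\overline{L}$ iff $\omega$ vanishes on $W\times W$ (respectively, $W$ coincides with its own $\omega$-orthogonal inside $V$); by the explicit formula in (ii), $\omega$ descends to the non-degenerate skew-hermitian form on $\Bbb C^{2r}$ described there, so these conditions translate precisely into isotropy, respectively Lagrangianness, of $W/V^\perp$.

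For the final part, the invariance $L(C^\infty_W)\subset C^\infty_W$ is a direct computation: $\partial b\partial+U$ applied to a function of the form $c\log|x-x_i|+a\theta(x-x_i)+f_i$ with $f_i$ smooth produces another function of the same form, since the vanishing of $b$ at $x_i$ absorbs the singular parts of $\partial b\partial$ acting on $\log|x-x_i|$ and $\theta(x-x_i)$ into the smooth remainder. The density of $C^\infty_W$ in $W$ in the graph norm of $L^\dagger$ is proved by a cutoff-plus-mollification scheme: fix once and for all smooth reference functions in $V$ realizing any prescribed local data $(c_i,a_i)$ at each $x_i$; given $f\in W$, subtract its data-carrying reference, reducing to the case $f\in V^\perp$ (so $f$ is continuous with $L^\dagger f\in L^2$); then standard mollification on $\Bbb R/\Bbb Z\setminus\{x_1,\ldots,x_r\}$, combined with a short continuous interpolation across each $x_i$, produces an approximating sequence in $C^\infty_W$ converging in the graph norm. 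The symmetric/self-adjoint equivalences then reduce directly to (iii). The main technical obstacle is this density step, since neither naive convolution nor pure cutoff preserves both the continuity of $f$ at the $x_i$ and the graph norm; the reference-function trick disposes of the singular and jump contributions once and for all, leaving a one-variable approximation problem for continuous $L^2$-functions whose image under $L^\dagger$ is controlled in $L^2$ — routine but the only non-formal part of the argument.
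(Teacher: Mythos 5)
The paper itself gives no explicit proof of this corollary --- it simply says "from the proof we also obtain the following corollary," referring to the proof of the preceding proposition. Your proof unpacks that reference in essentially the way the authors intended: you re-run the $g=bf'\in W^{1,2}$ argument plus Cauchy--Schwarz to get the local decomposition in (ii), derive (i) from the explicit formula for $\omega$ together with the test functions $v_i,u_i$, and invoke the standard finite-deficiency-index dictionary for (iii). This is the right reading and the right approach.

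Two small remarks. First, you correctly flagged the $b'(x_i)$ discrepancy between the raw integration-by-parts value $\omega(v_i,\bar u_i)=b'(x_i)$ and the corollary's formula $\sum_i(c_i\overline{a}_i-a_i\overline{c}_i)$ if $c_i$ is literally the coefficient of $\log|x-x_i|$; the paper (carrying over the notation $bf'(x_i)=c_i$ from the proposition) is in fact implicitly normalizing, and your observation that the factor should be absorbed into the definition of $c_i$ is more careful than the source. Second, and more substantively, the mollification step at the end of your proof is redundant: once you have subtracted a data-carrying reference function in $C^\infty_W$ so that the remainder lies in $V^\perp$, you are done, because the preceding proposition already identifies $V^\perp$ as the domain of the closure of $L$ on $C^\infty(\Bbb R/\Bbb Z)$, and $C^\infty(\Bbb R/\Bbb Z)\subset C^\infty_W$ for every admissible $W$. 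Re-deriving this density by cutoff-and-mollify is harmless but reproves something already contained in the cited proposition, and it is precisely the detail that the paper's phrase "from the proof" is telling you not to redo. With that shortcut the argument becomes purely soft, which is how the paper treats it.
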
  

We have seen that one example of a space $W$ with Lagrangian $W/V^\perp$ is $W=V_0$, which may be characterized as the space of bounded functions in $V$. Another example is the space $W=V_1$ of functions which near $x_i$ look like 
$$
f(x)=c_i\log|x-x_i|+f_i(x),
$$
where $f_i$ is continuous. 

\begin{corollary} The operator $L$ is essentially self-adjoint on the spaces $C^\infty_{V_0}$ 
and $C^\infty_{V_1}$. 
\end{corollary}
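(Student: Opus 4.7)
The proof is essentially an immediate application of part (iii) of the preceding corollary, so my plan is to verify the Lagrangian criterion for each of the two candidate subspaces.

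First I would recall the explicit coordinate description of $V/V^\perp$ given in part (ii) of the preceding corollary: every $f \in V$ admits a unique decomposition
$$
f(x) = c_i(f)\log|x-x_i| + a_i(f)\theta(x-x_i) + f_i(x)
$$
near each $x_i$, with $f_i$ continuous, and the coefficients $(c_i, a_i)_{i=1}^r$ provide an isomorphism $V/V^\perp \xrightarrow{\sim} \mathbb{C}^{2r}$ under which the skew-hermitian form takes the form $\omega(v,u) = \sum_i \bigl(c_i(v)\overline{a}_i(u) - a_i(v)\overline{c}_i(u)\bigr)$.

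Next I would identify $V_0/V^\perp$ and $V_1/V^\perp$ inside this $\mathbb{C}^{2r}$. By construction, $V_0 = \{f \in V : c_i(f) = 0 \text{ for all } i\}$, which corresponds to the coordinate subspace $\{(c_i,a_i) : c_i = 0\}$ of dimension $r$; similarly $V_1 = \{f \in V : a_i(f) = 0 \text{ for all } i\}$ corresponds to $\{(c_i,a_i) : a_i = 0\}$, also of dimension $r$. Restricting $\omega$ to either space, every term $c_i \overline{a}_i' - a_i\overline{c}_i'$ vanishes because one factor in each product is zero; hence both $V_0/V^\perp$ and $V_1/V^\perp$ are isotropic. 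Since $V/V^\perp$ has dimension $2r$ and has signature $(r,r)$, any isotropic subspace of dimension $r$ is automatically Lagrangian, so both $V_0/V^\perp$ and $V_1/V^\perp$ are Lagrangian.

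Finally, I would invoke part (iii) of the preceding corollary: since $V_0$ and $V_1$ both satisfy $V^\perp \subset W \subset V$ with $W/V^\perp$ Lagrangian, the operator $L$ is essentially self-adjoint on $C^\infty_{V_0}$ and on $C^\infty_{V_1}$. There is no real obstacle here — the substantive analytic work (the $W^{1,2}$-regularity of $bf'$, the precise asymptotics at the $x_i$, the symplectic pairing computation, and the criterion relating self-adjoint extensions to Lagrangians) has already been absorbed into the preceding proposition and corollary, and this last statement is simply the observation that the two most natural boundary conditions at the turning points — bounded functions ($V_0$) and functions with only logarithmic singularities and no jumps ($V_1$) — each cut out a Lagrangian.
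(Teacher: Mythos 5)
Your proof is correct and takes essentially the same approach the paper intends: identify $V_0/V^\perp$ and $V_1/V^\perp$ as the coordinate subspaces $\{c_i=0\}$ and $\{a_i=0\}$ of $V/V^\perp\cong\Bbb C^{2r}$, observe that each is isotropic of dimension $r$ for a form of signature $(r,r)$ (hence Lagrangian), and then apply part (iii) of the preceding corollary. The paper simply asserts this in the sentence directly before the statement, so your write-up is the expected unpacking.
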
 

\begin{remark} 1. When $L$ is essentially self-adjoint, it has a discrete 
spectrum, determined from the corresponding singular Sturm-Liouville problem.  

2. If we want to have an invariant domain that contains the eigenfunctions of $L$, 
we should slightly enlarge the spaces $C^\infty_{V_0}$, $C^\infty_{V_1}$. 

Namely, they can be replaced with the space 
$\widetilde C^\infty_{V_0}$ of functions in $V$ which are smooth outside $x_i$ and near $x_i$ are of the form 
$$
f(x)=g_i(x)\theta(x-x_i)+f_i(x),
$$
where $f_i,g_i$ are smooth, and the space 
$\widetilde C^\infty_{V_1}$ of functions in $V$ which are smooth outside $x_i$ and near $x_i$ are of the form 
$$
f(x)=g_i(x)\log|x-x_i|+f_i(x),
$$
where $f_i,g_i$ are smooth. 
\end{remark}

\begin{remark} 1. The space $V$ is a module over $C^\infty(\Bbb R/\Bbb Z)$. 
Indeed, if $\phi\in C^\infty(\Bbb R/\Bbb Z)$ then 
$L(\phi f)=fL\phi+\phi Lf+2\phi'bf'$. It is clear that the first two summands are in $L^2$, and so is the last 
summand, since $bf'\in W^{1,2}$, hence in $L^2$. 

2. $V$ is invariant under changes of variable $x$ which do not move the points $x_i$. 
Indeed, let $x=g(u)$ be a change of variable. 
Then $L$ transforms to $L'=g'(u)^{-1}\partial_u b(g(u))g'(u)^{-1}\partial_u$. Thus we need to check that 
for $f\in V$ we have $\partial_u b(g(u))g'(u)^{-1}\partial_u f\in L^2$. Let $h(u)=\frac{b(g(u))}{b(u)g'(u)}$. 
So we need to check that $(bhf')'\in L^2$. We have 
$$
(bhf')'=hLf+h'bf', 
$$ 
which is in $L^2$ since $bf'\in W^{1,2}$, hence in $L^2$. 

3. Let $c\in C^\infty(\Bbb R/\Bbb Z)$, and consider the operator 
$$
L(b,c,U)=\partial b\partial+i(bc\partial+\partial bc)+U
$$ 
(so that the previous setting is recovered by putting $c=0$). 
This operator is symmetric on smooth functions 
if $b,c,U$ are real-valued. Then the space $V=V(b,c,U)$ of functions 
$f\in L^2(\Bbb R/\Bbb Z)$ and $L(b,c,U)f\in L^2(\Bbb R/\Bbb Z)$ is independent on 
$b,c,U$ (as long as the points $x_i$ are fixed). 
Indeed, by the $C^\infty$-version of 
\cite{EFK}, Lemma 13.2, locally near $x_i$ by changes of variable and left and right 
multiplication by invertible functions, $L(b,c,U)$ can be uniquely 
brought to the form $\partial x \partial$, and we have seen 
that such transformations preserve the spaces $V$. 

4. Our arguments show that the space $V^\perp$ (for any $b,c,U$) has the following explicit description. 
It is the space of functions of the form 
$f=\int \frac{gdx}{b}+C$, where $g\in W^{1,2}$ is such that $g(x_i)=0$ and 
$\int_{\Bbb R/\Bbb Z}\frac{gdx}{b}=0$. Namely, $g=bf'$. Note that the 
integral is (absolutely) convergent 
since any function in $W^{1,2}$ is $1/2$-H\"older continuous (Morrey inequality). 
For the same reason (continuity of $g$) this description is independent on the choice of $b$. 
\end{remark} 

Thus, we get the following proposition. 

\begin{proposition} The space $V(b,c,U)$ is independent on $b,c,U$ and can be described as the space of functions 
$$
f(x)=\sum_i c_i\theta(x-x_i)+\int\frac{g(x)}{b(x)}dx+C,\ c_i\in \Bbb C, 
$$
where $g\in W^{1,2}$ is such that $\int_{\Bbb R/\Bbb Z}\frac{g(x)}{b(x)}dx=-\sum_i c_i$ (where the integral is understood in the sense of principal value). In particular, 
the continuous part $f(x)-c_i\theta(x-x_i)$ of $f$ at $x_i$ is $\frac{1}{2}$-H\"older continuous at $x_i$ (so is ${\rm const}+O(|x-x_i|^{\frac{1}{2}})$ as $x\to x_i$). 
\end{proposition}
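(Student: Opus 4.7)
The plan is to prove the statement in three stages: (i) independence of $V(b,c,U)$ from $(b,c,U)$, (ii) the explicit description of elements of $V$, and (iii) the $\frac{1}{2}$-H\"older regularity.

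For (i), I would combine three invariances, two already verified in the preceding remarks: invariance of $V$ under smooth changes of coordinate fixing the $x_i$ (Remark 2), invariance under left or right multiplication by smooth nowhere-vanishing functions (Remark 1), and invariance under the addition of smooth lower order perturbations. The third point is not explicitly handled but is easy: given $f\in V(b,0,0)$, one has $bf'\in W^{1,2}\hookrightarrow L^\infty$, hence $bcf'\in L^\infty\subset L^2$, and $(bcf)'=(bc)'f+bcf'\in L^2$, so the extra terms $i(bc\partial+\partial bc)f+Uf$ lie in $L^2$ and do not enlarge or shrink $V$. By the $C^\infty$-version of \cite{EFK}, Lemma 13.2 (as in Remark 3), these three moves suffice to bring $L(b,c,U)$ to the canonical form $\partial x\partial$ locally near each $x_i$. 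Away from the $x_i$, both spaces coincide with the standard Sobolev space $W^{2,2}$, so a partition-of-unity argument glues the local normalizations into a global identification $V(b,c,U)=V(b,0,0)$.

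For (ii), I fix $L=\partial b\partial$ (using independence) and let $f\in V$. Set $g:=bf'$ in the distributional sense. Then $g'=Lf\in L^2(\mathbb{R}/\mathbb{Z})$, so $g\in W^{1,2}(\mathbb{R}/\mathbb{Z})$, and Morrey's embedding makes $g$ continuous and $\frac{1}{2}$-H\"older. On any subinterval of $\mathbb{R}/\mathbb{Z}\setminus\{x_i\}$, $f'=g/b$ pointwise, so $f$ equals an antiderivative of $g/b$ plus a locally constant function. To globalize, I split near each $x_i$
\[
\frac{g(x)}{b(x)}=\frac{g(x_i)}{b'(x_i)(x-x_i)}+R_i(x),
\]
where $R_i\in L^1_{\mathrm{loc}}$ by the H\"older vanishing $g(x)-g(x_i)=O(|x-x_i|^{1/2})$ together with $b(x)\sim b'(x_i)(x-x_i)$. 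The polar part is integrated in the principal value sense, the remainder in the ordinary sense, so $\int g/b\,dx$ is well defined on $\mathbb{R}/\mathbb{Z}\setminus\{x_i\}$. The locally constant discrepancies across the $x_i$ yield jumps $c_i$ which combine into $\sum_i c_i\theta(x-x_i)$, and the periodicity of $f$ forces the compatibility $\int_{\mathbb{R}/\mathbb{Z}}g/b\,dx=-\sum_i c_i$ in the PV sense. The converse, that any $f$ of the stated form lies in $V$, is a direct verification: $bf'=g\in W^{1,2}$, hence $Lf=g'\in L^2$.

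For (iii), the H\"older regularity follows from the decomposition above. The remainder $R_i$ satisfies $|R_i(x)|=O(|x-x_i|^{-1/2})$, whose antiderivative is $O(|x-x_i|^{1/2})$, i.e.\ $\frac{1}{2}$-H\"older; any log singularity from the polar part is absorbed into the principal-value prescription of the integral, so the continuous part of $f$ at $x_i$ is $\mathrm{const}+O(|x-x_i|^{1/2})$. The main obstacle I expect is the careful bookkeeping around principal-value integration through several singular points simultaneously: one must choose consistent PV conventions, check that the resulting primitive $\int g/b\,dx$ is unambiguously defined modulo a global constant, and verify that the periodicity constraint translates exactly into $\int_{\mathbb{R}/\mathbb{Z}}g/b=-\sum c_i$. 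Once this bookkeeping is executed, the Sobolev embedding for $g$ and the simple-zero behavior of $b$ deliver the remaining analytic claims.
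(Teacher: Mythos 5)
Your overall plan coincides with the paper's: step (i) assembles Remarks~1--3 and the $C^\infty$ version of \cite{EFK}, Lemma 13.2 (local normalization of $L$ to $\partial x\partial$ near each $x_i$, combined with the $C^\infty$-module property of $V$ and its invariance under coordinate changes fixing the $x_i$); your additional observation that the lower-order terms $i(bc\partial+\partial bc)+U$ are harmless because $bf'\in W^{1,2}\hookrightarrow L^\infty$ is exactly the point one needs. Steps (ii) and the identification $g:=bf'\in W^{1,2}$, with $f$ recovered on each arc as an antiderivative of $g/b$ and the jumps collected into $\sum_i c_i\theta(x-x_i)$ subject to the single-valuedness constraint $\mathrm{PV}\!\int_{\mathbb{R}/\mathbb{Z}} g/b = -\sum_i c_i$, is also what Remark~4 does (extended from $V^\perp$ to $V$).

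The one genuine misstatement is in your step (iii): the claim that ``any log singularity from the polar part is absorbed into the principal-value prescription of the integral'' is not correct. The principal value only renders the total circle integral $\int_{\mathbb{R}/\mathbb{Z}} g/b$ finite; it does nothing to the antiderivative near $x_i$, which still behaves like $\frac{g(x_i)}{b'(x_i)}\log|x-x_i|$ whenever $g(x_i)\ne 0$. So $f(x)-c_i\theta(x-x_i)$ is not in general bounded at $x_i$, let alone H\"older. The correct reading (consistent with the Corollary immediately preceding the Proposition, where a generic element of $V$ is written as $c_i\log|x-x_i|+a_i\theta(x-x_i)+f_i(x)$ with $f_i$ continuous) is that the $\frac{1}{2}$-H\"older estimate applies to the piece remaining \emph{after} subtracting both the jump and the logarithmic term, i.e.\ to the primitive of your remainder $R_i=O(|x-x_i|^{-1/2})$. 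Your own decomposition $g/b=\frac{g(x_i)}{b'(x_i)(x-x_i)}+R_i$ already exhibits this cleanly, so the fix is purely in the last sentence of (iii): attribute the log to the polar part and the H\"older estimate to $\int R_i$, rather than to the PV convention.
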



\begin{thebibliography}{EFK2}  

\bibitem[Be]{Be} F. Beukers, {\em Unitary Monodromy of Lam\'e
  Differential Operators}, Regular and Chaotic Dynamics {\bf 12},
  No. 6 (2007) 630--641.

\bibitem[BD1]{BD} A.~Beilinson and V.~Drinfeld, {\em Quantization of
    Hitchin's integrable system and Hecke eigensheaves}, Preprint
     \url{http://www.math.uchicago.edu/~drinfeld/langlands/QuantizationHitchin.pdf}

\bibitem[BD3]{BD:opers} A.~Beilinson and V.~Drinfeld, {\em Opers},
 arXiv:math/0501398.

\bibitem[C]{C} C. Casagrande, {\em Rank 2 quasiparabolic vector
  bundles on $\Bbb P^1$ and the variety of linear subspaces contained
  in two odd-dimensional quadrics}, Math. Z. {\bf 280} (2015) 981--988.

\bibitem[CC]{CC} D.V. Chudnovsky and G. V. Chudnovsky, {\em
  Computational problems in arithmetic of linear differential
  equations. Some Diophantine applications}, in Number theory,
  eds. D.V. Chudnovsky, e.a. pp. 12--49, Lecture Notes in Math. {\bf
    1383}, Springer, Berlin, 1989.

\bibitem[De]{De} P. Deligne, {\em Un th\'eor\'eme de finitude pour la
  monodromie}, in Discrete groups in geometry and analysis,
  ed. R. Howe, pp. 1--19, Prog. in Math. {\bf 67}, Birkh\"auser,
  1987.

\bibitem[Dr1]{Dr} V. G. Drinfeld, {\em Two-dimensional $l$-adic
  representations of the fundamental group of a curve over a finite
  field and automorphic forms on $GL(2)$}, Amer. J. Math.
  {\bf 105} (1983) 85--114.
  
\bibitem[Dr2]{Dr2} V. Drinfeld, {\em Langlands conjecture for $GL(2)$
  over functional fields}, Proc. of Int. Congress of Mathematicians
  (Helsinki, 1978), pp. 565---574, Acad. Sci. Fennica, Helsinki, 1980.

\bibitem[DP1]{DP} R. Donagi and T. Pantev, {\em Parabolic Hecke
  eigensheaves}, arXiv:1910.02357.
  
\bibitem[DP2]{DP2} R. Donagi and T. Pantev, private communication.
  
\bibitem[EFK1]{EFK} P. Etingof, E. Frenkel, and D. Kazhdan, {\em An
  analytic version of the Langlands correspondence for complex
  curves}, in Integrability, Quantization, and Geometry, dedicated to
  Boris Dubrovin, Vol. II, eds. S. Novikov, e.a., pp. 137--202,
  Proc. Symp. Pure Math. {\bf 103.2}, AMS, 2021 (arXiv:1908.09677).

\bibitem[EFK2]{EFK2} P. Etingof, E. Frenkel, and D. Kazhdan, {\em
  Hecke operators and analytic Langlands correspondence for curves
  over local fields}, arXiv:2103.01509.

\bibitem[EK]{EK} P. Etingof and D. Kazhdan, {\em Characteristic
  functions of $p$-adic integral operators}, arXiv:2101.05185.

\bibitem[Fa]{F} G. Faltings, {\em Real projective structures on
    Riemann surfaces}, Compositio Math. {\bf 48} (1983)
223--269.
  
\bibitem[Fr1]{F:icmp} E. Frenkel, {\em Affine algebras, Langlands
  duality and Bethe ansatz}, in Proc. of Int. Congress of Math.
  Phys. (Paris, 1994), ed. D. Iagolnitzer, pp. 606--642,
  International Press, 1995 (arXiv:qalg/9506003).
  
  \bibitem[FS]{FS} E. Frenkel and A. Szenes, {\em Thermodynamic Bethe
    ansatz and dilogarithm identities. I}, Math.Res. Lett. {\bf 2}
    (1995) 677--693.

\bibitem[Ga]{Gai:outline} D. Gaitsgory, {\em Outline of the proof of
    the geometric Langlands conjecture for $GL_2$}, Ast\'erisque {\bf
       370} (2015) 1--112.

\bibitem[GGP]{GenFun} I. M. Gelfand, M. I. Graev, and
  I. I. Piatetski-Shapiro, Generalized functions, Vol. 6,
  Representation theory and automorphic functions, AMS Chelsea
  Publishing {\bf 382}, 1969.

\bibitem[GT]{GT} F. Gliozzi and R. Tateo, {\em Thermodynamic Bethe
  ansatz and three-fold triangulations}, Int. J. Mod. Phys. {\bf A11}
  (1996) 4051--4064.

\bibitem[Go]{Go} W. Goldman, {\em Projective structures with Fuchsian
    holonomy}, J. Diff. Geom. {\bf 25} (1987) 297--326. 

\bibitem[GM]{GM} P. R. Graves-Morris, {\em Symmetrical formulas for
  rational interpolants}, J. Comp. and Appl.  Math. {\bf 10} (1984)
  107--111.

\bibitem[HS]{HS} P. R. Halmos and V. S. Sunder, Bounded integral
  operators on $L^2$-spaces, Springer-Verlag, Berlin, 1978.

\bibitem[HC]{HC} Harish-Chandra, {\em Representations of semisimple Lie
  groups III}, Trans. Amer. Math. Soc. {\bf 76} (1954) 234--253.

\bibitem[Hi]{Hi} N. Hitchin, {\em Stable bundles and integrable
  systems}, Duke Math. J. {\bf 54} (1987) 91--114.

\bibitem[J]{J} H. Jacquet, {\em Sur les repr\'esentations des groupes
  r\'eductifs p-adiques}, C. R. Acad. Sci. Paris S\'er. A-B {\bf 280}
  (1975) A1271--A1272.

\bibitem[K]{K} M. Kontsevich, {\em Notes on motives in finite
  characteristic}, in Algebra, Arithmetic, and Geometry, in honor of
  Yu.I. Manin, Vol. II, eds. Yuri Tschinkel and Yuri Zarhin,
  pp. 213--247, Prog. in Math. {\bf 270}, Birkh\"auser, 2010
  (arXiv:math/0702206).

\bibitem[KR]{KR} M. G. Krein and A. G. Rutman, {\em Linear operators
  leaving invariant a cone in a Banach space}, Uspekhi Mat. Nauk {\bf
  3}, No. 1 (1948) 3--95 (Amer. Math. Soc. Transl. Ser. I, {\bf 10}
  (1962) 199--325).

\bibitem[KNS]{KNS} A. Kuniba, T. Nakanishi, and J. Suzuki, {\em
  T-systems and Y-systems in integrable systems}, J.Phys. {\bf A44}
  (2011) 103001.

\bibitem[L]{La} R.P.~Langlands, {\em On analytic form of
    geometric theory of automorphic forms} (in Russian), Preprint
  \url{http://publications.ias.edu/rpl/paper/2678}

\bibitem[Mu]{Mu} S. Mukai, An introduction to invariants and moduli,
  Cambridge Stud. in Adv. Math. {\bf 81}, Cambridge University Press,
  2003.

\bibitem[MS]{MS} V. B. Mehta and C. S. Seshadri, {\em Moduli of vector
  bundles on curves with parabolic structures}, Math. Ann. {\bf 248}
  (1980) 205--239.

\bibitem[NR]{NR} M.S. Narasimhan and S. Ramanan, {\em Moduli of vector
  bundles on a compact Riemann surface}, Ann. of Math. {\bf 89} (1969)
  14--51.

\bibitem[Ru]{Ru} S. Ruijsenaars, {\em Hilbert-Schmidt Operators
  vs. Integrable Systems of Elliptic Calogero-Moser Type III. The Heun
  Case}, SIGMA {\bf 5} (2009), 049 (arXiv:0904.3250).

\bibitem[S]{S} C. S. Seshadri, Moduli of vector bundles on curves with
  parabolic structures, Bull. Amer. Math. Soc. {\bf 83} (1977)
  124--126.

\bibitem[Sk]{Skl} E.K. Sklyanin, {\em Separation of variables in the
  Gaudin model}, J. Sov. Math. {\bf 47} (1989) 2473--2488.

\bibitem[Ta]{Ta} L. Takhtajan, {\em On real projective connections,
  V.I. Smirnov's approach, and black hole type solutions of the
  Liouville equation}, Theor. Math. Phys. {\bf 181} (2014) 1307--1316.

\bibitem[Te]{Te} J. Teschner, {\em Quantisation conditions of the
  quantum Hitchin system and the real geometric Langlands
  correspondence}, in Geometry and Physics, in honour of Nigel
  Hitchin, Vol. I, eds.  Dancer, e.a., pp. 347--375, Oxford University
  Press, 2018 (arXiv:1707.07873).

\bibitem[udB]{udB} Niels uit de Bos, {\em An explicit geometric
  Langlands correspondence for the projective line minus four points},
  arXiv:1906.03240.

\bibitem[We1]{We1} A. Weil, {\em L'Int\'egration dans les groupes
  topologiques et ses applications}, Actualit\'es Sci. et Ind. {\bf
  1145}, Hermann, 1965.

\bibitem[We2]{We} A. Weil, {\em Ad\`eles et groupes alg\'ebriques},
  S\'eminaire Bourbaki, {\bf 5}, Exp. 186, pp. 249--257, 1959.

\bibitem[Z]{Z} D. Zagier, {\em The dilogarithm function}, in Frontiers
  in Number Theory, Physics, and Geometry II, eds. P. Cartier, e.a.,
  pp. 3--65, Springer, Berlin, 2007.

\end{thebibliography}
\end{document}